\documentclass[11pt, reqno]{amsart}
\usepackage[english]{babel}
\usepackage[T1]{fontenc}
\usepackage[utf8]{inputenc}

\usepackage{mathrsfs}
\usepackage{amsmath}
\usepackage{amssymb}
\usepackage{amsfonts}
\usepackage{amsthm}
\usepackage{amscd}%
\usepackage{graphicx}
\usepackage{setspace}
\usepackage{caption}
\usepackage{subcaption}
\linespread{1.05}         
\usepackage[T1]{fontenc}
\usepackage{geometry}
\usepackage{mathtools}
\usepackage{faktor}
\usepackage[dvipsnames]{xcolor}
\usepackage[colorlinks,citecolor=NavyBlue,urlcolor=NavyBlue]{hyperref}
\usepackage{cleveref}
\usepackage[all]{xy}
\usepackage{enumitem}
\usepackage{comment}
\usepackage{ dsfont }
\usepackage{ stmaryrd }
\usepackage[backend=biber,style=alphabetic]{biblatex}
\addbibresource{ENDOMME.bib}

\setcounter{tocdepth}{1}
\geometry{hmargin=2.5cm,vmargin=3cm}
\newcounter{ustep}

\newcommand{\UStep}[1][]{%
	\refstepcounter{ustep}
	\par\noindent\textbf{Step U.I.~\theustep.}%
	\ifx\relax#1\relax\else\ \textbf{#1}\fi
	\
	}

\newcommand{\stepref}[1]{%
	\hyperref[#1]{\textcolor{red}{(U.I.\ref*{#1})}}%
}

\newcounter{estep}

\newcommand{\EStep}[1][]{%
	\refstepcounter{estep}
	\par\noindent\textbf{Step H.U.I.~\theestep.}%
	\ifx\relax#1\relax\else\ \textbf{#1}\fi
	\
}

\newcommand{\estepref}[1]{%
	\hyperref[#1]{\textcolor{red}{(H.U.I.\ref*{#1})}}%
}

\newcounter{sstep}

\newcommand{\SStep}[1][]{%
	\refstepcounter{sstep}
	\par\noindent\textbf{Step S.I.~\thesstep.}%
	\ifx\relax#1\relax\else\ \textbf{#1}\fi
	\
}

\newcommand{\sstepref}[1]{%
	\hyperref[#1]{\textcolor{red}{(S.I.\ref*{#1})}}%
}
\makeatletter
\def\saveenum{\xdef\@savedenum{\the\c@enumi\relax}}
\def\resetenum{\global\c@enumi\@savedenum}
\makeatother

\numberwithin{equation}{section}

\newcommand{\bbR}{\mathbb{R}}

\newcommand{\bbZ}{\mathbb{Z}}

\newcommand{\bbN}{\mathbb{N}}

\newcommand{\cC}{\mathcal{C}}

\newcommand{\hx}{\hat{x}}
\newcommand{\hy}{\hat{y}}
\newcommand{\hf}{\hat{f}}
\newcommand{\Es}{E^s}

\newcommand{\Eu}{E^u}

\newcommand{\Ws}{W^s}

\newcommand{\Wu}{W^u}

\newcommand{\inv}{^{-1}}

\newcommand{\cO}{\mathcal{O}}
\newcommand{\intert}{\cap\kern-0.7em|\kern0.7em}

\newcommand{\bbP}{\mathbb{P}}

\newcommand{\diam}{\operatorname{diam}}
\newcommand{\id}{\operatorname{id}}
\newcommand{\supp}{\operatorname{supp}}

\newcommand{\hmu}{\hat{\mu}}
\newcommand{\hnu}{\hat{\nu}}
\newcommand{\hz}{\hat{z}}

\newcommand{\hO}{\hat{\mathcal{O}}}

\newcommand{\norm}[1]{
	\lVert #1 \rVert
}

\newtheorem{thm}{Theorem}[section]
\newtheorem{cor}[thm]{Corollary}
\newtheorem{lemma}[thm]{Lemma}
\newtheorem{prop}[thm]{Proposition}
\newtheorem{question}{\textbf{Question}}

\newtheorem*{conjecture1}{\textbf{Conjecture 1}}
\newtheorem*{conjecture2}{\textbf{Conjecture 2}}

\newtheorem{claim}[thm]{\textbf{Claim}}
\newtheorem{defn}[thm]{Definition}
\newtheorem{hyp}{Hypothesis}

\newtheorem{mainthm}{Theorem}

\newtheorem*{maincor}{Corollary}

\theoremstyle{remark}
\newtheorem*{remark} {\textbf{Remark}}

\newcommand{\NUH}{\Lambda_{\chi, \epsilon, l}}
\newcommand{\hNUH}{\hat{\Lambda}_{\chi, \epsilon, l}}

\newcommand{\simh}{\overset{h}{\sim}}

\newcommand{\parbreak}{%
	\par\vspace{1em}\noindent
}

\newcounter{mysubequations}

\title{Finiteness of measures of maximal entropy\\ for smooth saddle surface endomorphisms}
\author{Matéo Ghezal}
\date{}

\begin{document}

\begingroup
\renewcommand\thefootnote{}
\footnotetext{Keywords: measure maximizing the entropy, surface diffeomorphisms, homoclinic classes, Pesin theory, symbolic dynamics.}
\addtocounter{footnote}{-1}
\endgroup

\begingroup
\renewcommand\thefootnote{}
\footnotetext{AMS classification: 37C40, 37D25, 37E30.}
\addtocounter{footnote}{-1}
\endgroup

\begin{abstract}
	We show that $\cC^{\infty}$ local diffeomorphisms of closed surfaces whose topological entropy is larger than the logarithm of their degree admit a finite number of ergodic measures of maximal entropy. To do this, we construct families of rectangles, with a nice geometry, displaying a Markov property. We then analyze the behavior of the iterates of unstable curves intersecting these rectangles, using Yomdin theory.
\end{abstract}

\maketitle

\tableofcontents

\section{Introduction}
\subsection{Main result on measures of maximal entropy}
Throughout this paper, $M$ denotes a closed surface: a compact two-dimensional $\cC^{\infty}$ Riemannian manifold without boundary. Let $f:M\rightarrow M$ be a local diffeomorphism: a $\cC^1$ map such that $d_xf$, the tangent map at $x$, is an isomorphism for any $x \in M$. Denote by $\deg(f)$ the topological degree of $f$. Recall that the topological entropy $h_{top}(f)$ is related to the measured entropy $h(f,\mu)$ of $f$-invariant Borel probability measures $\mu$ by the variational principle (see Section~\ref{sec:EntropyYomdin} for more precise definitions):
\begin{equation*}
	h_{top}(f) = \sup_{\mu \in \bbP(f)}h(f,\mu) = \sup_{\mu \in \bbP_e(f)} h(f,\mu)
\end{equation*}
where $\bbP(f)$ is the set of $f$-invariant Borel probability measures and $\bbP_e(f)$ the set of ergodic $\mu \in \bbP(f)$. We say that $\mu \in \bbP_e(f)$ is a measure of maximal entropy, also called m.m.e., if $h_{top}(f)=h(f,\mu)$.

In~\cite{MisuirewiczPrzytycki}, Misiurewicz and Przytycki have shown the following link between the entropy of $f$ and its topological degree:
\begin{equation*}
	h_{top}(f)\geq \log \deg(f).
\end{equation*}
In a recent major work~\cite{buzzi2022measures}, Buzzi, Crovisier, and Sarig have shown that for $\cC^{\infty}$ systems, if $\deg(f)=1$ and $h_{top}(f)>0$, then $f$ admits a finite number of measures of maximal entropy. In the present work, we show the following generalization to any degree.

\begin{mainthm}
	Let $M$ be a closed surface and let $f:M \rightarrow M$ be a $\cC^{\infty}$ local diffeomorphism. If $h_{top}(f) > \log \deg(f)$, then $f$ admits a finite number of ergodic measures of maximal entropy.
	\label{mainthm:A}
\end{mainthm}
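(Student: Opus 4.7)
I would begin by passing to the natural extension $\hf: \hM \to \hM$ of $f$, which is a homeomorphism on the inverse-limit space, and noting that the canonical projection $\hM \to M$ induces an entropy-preserving bijection between the invariant probabilities of $\hf$ and those of $f$. A preliminary step is to show that the hypothesis $h_{top}(f) > \log \deg(f)$ forces every ergodic m.m.e.\ of $f$ to be \emph{saddle hyperbolic}, i.e.\ to have one strictly positive and one strictly negative Lyapunov exponent along orbits of $\hf$. Heuristically, Ruelle's inequality together with the fact that the entropy a local diffeomorphism can support in purely expanding directions is bounded by $\log \deg(f)$ (a consequence of the Misiurewicz--Przytycki phenomenon) rules out the case of two non-negative exponents; an analogous argument rules out two non-positive exponents. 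Thus Pesin theory applies in the natural extension to every ergodic m.m.e.

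The main body of the proof would then follow the Buzzi--Crovisier--Sarig strategy for the invertible surface case, adapted to endomorphisms. I would construct a countable family of Pesin-type rectangles in $M$, bounded by stable and unstable arcs, satisfying (i) every saddle hyperbolic measure of $f$ charges the union, and (ii) the forward dynamics is Markov on this family. This yields a factor coding by a countable Markov shift $\Sigma$ through an entropy-preserving factor map. The non-invertibility of $f$ creates substantial new difficulty here: each rectangle has up to $\deg(f)$ preimage components, so the rectangles must be arranged so that every preimage branch respects the Markov structure, while keeping $\Sigma$ countable and with controlled branching.

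Yomdin theory enters at the finiteness stage. In $\cC^{\infty}$ regularity, the exponential growth rate of the length of forward iterates of a smooth unstable arc cutting a given rectangle is controlled by $h_{top}(f)$; this gives an upper bound on the number of distinct Markov return words of length $n$ that can appear with positive frequency under any m.m.e. Combined with the Gurevich--Sarig theory of countable Markov shifts, which asserts that each irreducible component of maximal Gurevich entropy supports at most one m.m.e., this yields finiteness of the relevant components of $\Sigma$ at the top of the entropy spectrum, and hence finitely many ergodic m.m.e.\ on $M$ after pushing down.

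The main obstacle, in my view, is the geometric construction of the Markov rectangles under preimage branching: they must be simultaneously uniform enough that Yomdin's semialgebraic estimates apply, truly Markov under both forward iteration and each preimage branch, and visible to \emph{every} ergodic m.m.e.\ (since we cannot presuppose the finiteness we are trying to prove). The hypothesis $h_{top}(f) > \log \deg(f)$ is exactly what separates the genuine hyperbolic entropy from the entropy created by preimage multiplicity, and weaving this separation through both the geometric construction and the Yomdin-type counting is, I expect, the technical heart of the argument.
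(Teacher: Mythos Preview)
Your overall architecture is right: pass to the natural extension, use Ruelle's inequality (in Liu's endomorphism form) to force every ergodic m.m.e.\ to be saddle hyperbolic, invoke the symbolic coding of Lima--Obata--Poletti to get \emph{at most one} m.m.e.\ per measured homoclinic class, and then reduce everything to proving that only finitely many homoclinic classes carry an m.m.e. That is exactly the skeleton of the paper's argument.

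The gap is in how you propose to get finiteness of classes. You suggest that Yomdin bounds on the growth of unstable arcs control ``the number of distinct Markov return words of length $n$'' and that Gurevich--Sarig then yields finitely many top-entropy irreducible components. This step does not go through as stated, and the paper does something genuinely different. The obstructions are structural: for a non-invertible map, unstable manifolds depend on the chosen past and \emph{can cross one another}, so they do not form a lamination and you cannot bound rectangles by ``unstable arcs'' in a way that survives iteration; dually, stable manifolds are disconnected (each backward step multiplies components by $\deg(f)$), and the conditional measures along their connected pieces may have dimension zero. These facts kill the direct Buzzi--Crovisier--Sarig rectangle argument, and no amount of bookkeeping over preimage branches recovers it.

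What the paper actually does for finiteness is an entropy argument \emph{by contradiction on a sequence} $\mu_k\to\nu$ of m.m.e.'s in distinct classes. For $\nu$ it builds two finite families of $us$-rectangles $(R_i)$ and $(\tilde R_i)$ with a one-sided Markov-type property ($f^N(\tilde R_i)$ avoids $\partial^u\tilde R_j$), where the unstable boundaries of the $R_i$ lie in a foliation tangent to an unstable cone (not in unstable manifolds). One then assumes that iterates of a $\mu_k$-typical unstable curve never meet the \emph{stable} local manifolds containing $\partial^s R_i$, and shows via a crossing/folding dichotomy plus Yomdin reparametrizations that this forces $h(f,\mu_k)$ strictly below $h_{top}(f)$---a contradiction. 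A separate, symmetric argument on stable sets in the natural extension (where the ``symbolic third direction'' contributes at most $\log\deg(f)$ to backward separation, and this is where $h_{top}(f)>\log\deg(f)$ is really used) produces the reverse intersections. Together these give $\mu_k\simh\hat{\cO}_{i(k)}$ for finitely many periodic saddles $\hat{\cO}_i$, hence the contradiction. Yomdin theory does appear, but as a tool to reparametrize unstable curves inside the trapping argument, not to count return words in a symbolic model.
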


Note that this theorem is not true without the assumption $h_{top}(f)>\log\deg(f)$. Indeed, consider the product of the identity and a uniformly hyperbolic map on the circle (for example $x\mapsto 2x$). This gives trivial counter-examples since any fiber will carry a measure of maximal entropy.

The hypothesis $h_{top}(f)>\log\deg(f)$ forces the m.m.e.'s to have one positive and one negative Lyapunov exponent. When $h_{top}(f)=\log\deg(f)$, this is no longer the case, but we believe that our methods could apply to systems displaying enough hyperbolic properties. This will be discussed later in sub-Section~\ref{subsec:Intro:Comments}.

Note that Buzzi, Crovisier, and Sarig obtained uniqueness of the measure of maximal entropy for $\cC^{\infty}$ transitive surface diffeomorphisms with positive entropy. Here, we do not know if transitivity gives uniqueness. Even under stronger hypotheses like strong transitivity, (i.e. assuming that for any $x \in M$, the set $\cup_{n\geq0}f^{-n}(x)$ is dense), we do not know if such a local diffeomorphism could admit more than one m.m.e.

\subsection{Homoclinic classes}
Given a dynamical system, a natural way to try to understand the dynamics is to decompose the system into invariant elementary pieces. A famous example is Smale's spectral decomposition for Axiom A diffeomorphisms~\cite{smale1967differentiable}. In a more general setting, a good candidate for such elementary pieces are homoclinic classes. Buzzi, Crovisier, and Sarig gave a generalization of Smale's theorem to the non-uniformly hyperbolic setting when $f$ is a surface diffeomorphism in~\cite{buzzi2022measures}. We discuss here partial generalizations of their result on \textit{measured homoclinic classes} for any topological degree.

Let us first introduce the natural extension of $f$. Define the set:
\begin{equation*}
	M_f = \big\{(x_n)_{n\in \bbZ} \in M^{\bbZ}, \ f(x_n) = x_{n+1} \ \forall n \in \bbZ\big\}.
\end{equation*}
Let $\hf : M_f \rightarrow M_f$ be the homeomorphism defined by:
\begin{equation*}
	\hf\big((x_n)_{n\in \bbZ}\big) = (f(x_n))_{n\in \bbZ} = (x_{n+1})_{n \in \bbZ}.
\end{equation*}
Let $\pi:M_f\rightarrow M$ be the natural projection on the $0$-th coordinate. The system $(M_f,\hf)$ is an extension of $(M,f)$ in the sense that $\pi\circ\hf = f\circ\pi$.

Let us also recall the notion of measured homoclinic classes. It was first introduced in the case of periodic orbits in~\cite{NewhouseHypLimSet}. The measured version was defined in~\cite{buzzi2022measures}. In the case of endomorphisms, these notions were revisited in~\cite{lima2024measures}. See Section~\ref{sec:HomClasses} for a more precise introduction. Assume here that $f:M\rightarrow M$ is a $\cC^r$, $r>1$ local diffeomorphism. Let $\mu$ be an ergodic $f$-invariant measure. There exists a unique $\hf$-invariant lift to $M_f$, which we denote by $\hmu$. We say that $\mu$ is \textit{hyperbolic of saddle type} if $\mu$ has one positive and one negative Lyapunov exponent. In that case, $\hmu$-almost every $\hx$ has a well-defined stable and unstable manifold $W^s(\hx),W^u(\hx)\subset M_f$. Note that a point $y$ belongs to $\Wu(\hx)$ if it has a lift $\hy\in M_f$ such that the past of $\hy$ converges to the past of $\hx$. We define Smale's pre-order on the set of ergodic hyperbolic measures of saddle type as follows. Consider two such measures $\mu_1,\mu_2$. We write $\mu_1\preceq\mu_2$ if there exist two measurable sets $\hat{A}_1,\hat{A}_2\subset M_f$ with $\hmu_i(\hat{A}_i)>0$ such that for any $(\hx_1,\hx_2)\in \hat{A}_1\times\hat{A}_2$ the sets $W^u(\hx_1)$ and $W^s(\hx_2)$ have a point of transverse intersection. This is equivalent as requiring that there exists two integers $n,m\geq 0$ such that $f^{n+m}(\Wu_{loc}(\hf^{-m}(\hx_1)))$ and $\Ws_{loc}(\hf^n(\hx_2))$ have a point of transverse intersection, where $W^{s/u}_{loc}(\hx)$ is the classical Pesin local unstable/stable manifold at $\hx$. Note that in the case of ergodic hyperbolic measure of saddle type, the Pesin invariant manifolds are which embedded curves. We say that $\mu_1$ and $\mu_2$ are \textit{homoclinically related}, and we denote it by $\mu_1\simh\mu_2$, if $\mu_1\preceq\mu_2$ and $\mu_2\preceq\mu_1$. We denote the measured homoclinic class of some ergodic measure $\mu$ by $\mathcal{M}(\mu)$. Lima, Obata, and Poletti proved in~\cite[Proposition 4.3]{lima2024measures} that the homoclinic relation is an equivalence relation on the set of ergodic hyperbolic measures of saddle type. Their argument is adapted from~\cite[Proposition 2.11]{buzzi2022measures}. The equivalence classes by the homoclinic relation are called the measured homoclinic classes. Buzzi, Crovisier, and Sarig proved in~\cite{buzzi2022measures} that the number of measured homoclinic classes containing a measure with entropy larger than $\chi$ is finite for any $\chi>0$, in the setting of $\cC^{\infty}$ surface diffeomorphisms with positive entropy. In the present work, we obtain a similar result for measured homoclinic classes containing a measure of entropy close enough to the topological entropy. It can be seen as a partial generalization of Buzzi, Crovisier, and Sarig's result to any topological degree.

\begin{mainthm}
	Let $M$ be a closed surface and let $f:M\rightarrow M$ be a $\cC^{\infty}$ local diffeomorphism. Suppose that $h_{top}(f)>\log\deg(f)$. Then, there exists a constant $\epsilon>0$ such that the set of measured homoclinic classes $\mathcal{M}(\mu)$ containing a measure $\nu\in \mathcal{M}(\mu)$ satisfying $h(f,\nu)>h_{top}(f)-\epsilon$ is finite.
	\label{mainthm:B}
\end{mainthm}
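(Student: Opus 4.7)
The plan is to argue by contradiction, combining Theorem~\ref{mainthm:A}, Newhouse's upper semicontinuity of the entropy function on $\mathcal{C}^{\infty}$ systems, and the geometry of the Markov rectangles constructed later in the paper.

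Suppose, for a contradiction, that the conclusion fails: there exists a sequence $(\nu_n)_{n\ge 1}$ of ergodic measures lying in pairwise distinct measured homoclinic classes with $h(f,\nu_n)\to h_{top}(f)$. A preliminary step is to show that, for $\epsilon$ small enough, every such $\nu_n$ is hyperbolic of saddle type with Lyapunov exponents bounded away from zero by a uniform constant $\chi>0$. This is forced by the strict inequality $h_{top}(f)>\log\deg(f)$: combining the folding-entropy bound $h(f,\nu)\le\lambda^+(\nu)+\log\deg(f)$ available for $\mathcal{C}^1$ endomorphisms with $h(f,\nu_n)>\log\deg(f)$ yields a uniform lower bound on $\lambda^+(\nu_n)$, and a symmetric argument on the natural extension $\hf$ gives the uniform upper bound on $\lambda^-(\nu_n)$.

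By Prokhorov compactness, I extract a weak-$*$ subsequential limit $\nu_n\to\nu_\infty$. Newhouse's theorem yields $h(f,\nu_\infty)\ge\limsup_n h(f,\nu_n)=h_{top}(f)$, so $\nu_\infty$ is a measure of maximal entropy. By affinity of entropy under ergodic decomposition, $\nu_\infty$-a.e.\ ergodic component is itself a m.m.e., which by Theorem~\ref{mainthm:A} is drawn from the finite set $\{\mu^{(1)},\dots,\mu^{(K)}\}$ of ergodic m.m.e.'s.

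The crux, and the main obstacle, is to show that for $n$ large each $\nu_n$ is homoclinically related to some $\mu^{(j)}$, contradicting the assumption that the $\mathcal{M}(\nu_n)$ are pairwise distinct. I would use the Markov rectangles constructed in the body of the paper at hyperbolicity level $\chi$. These organize, via the induced Markov coding, into finitely many irreducible components, and any ergodic hyperbolic measure with entropy close to $h_{top}(f)$ must lift to a shift-invariant measure supported on a single such component. Two measures coded by the same irreducible component are automatically homoclinically related, because the product structure of the rectangles produces transverse intersections between local unstable curves of $\hx$'s typical for one and local stable curves of $\hy$'s typical for the other. Upgrading the weak-$*$ convergence $\nu_n\to\nu_\infty$ into a ``combinatorial'' convergence at the level of the coding---so that $\nu_n$ eventually lands in the same irreducible component as some $\mu^{(j)}$---is the delicate step. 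This is precisely where the Yomdin-theoretic analysis of iterates of unstable curves developed for Theorem~\ref{mainthm:A} plays the decisive role, by ruling out the pathological folding behaviors that the non-invertibility of $f$ could otherwise produce.
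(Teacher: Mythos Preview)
Your proposal has two serious problems.

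First, there is a circularity: you invoke Theorem~\ref{mainthm:A} to produce the finite list $\{\mu^{(1)},\dots,\mu^{(K)}\}$ of ergodic m.m.e.'s, but in this paper Theorem~\ref{mainthm:A} is deduced \emph{from} Theorem~\ref{mainthm:B} (together with Theorem~\ref{thm:CodingMme}). There is no independent proof of Theorem~\ref{mainthm:A} available to you here.

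Second, and more fundamentally, the part you label ``the crux'' is not actually carried out. You propose to lift the $\nu_n$ to a ``Markov coding'' and argue that high-entropy measures land in finitely many ``irreducible components'' of that coding, so that two measures in the same component are automatically homoclinically related. But the rectangles built in Section~\ref{sec:Rectangles} do \emph{not} give a symbolic coding of the dynamics: property~\ref{property:MR5} is a one-sided Markov-type condition on how images of $\tilde R_i$ meet the unstable boundaries of $\tilde R_j$, and there is no partition, no return map, and no countable shift to which the $\nu_n$ lift. The sentence ``upgrading the weak-$*$ convergence into a combinatorial convergence at the level of the coding'' is precisely the whole difficulty, and your proposal gives no mechanism for it.

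The paper's actual argument is structurally different. It does not pass through any symbolic picture. Instead it applies two separate transversality results directly to the sequence $(\mu_k)$ and its limit $\nu$. Proposition~\ref{prop:StableUnstableIntersection} (which uses $h_{top}(f)>\log\deg(f)$ in an essential way to control backward separation along stable sets) produces finitely many hyperbolic periodic orbits $\hat{\cO}_1,\dots,\hat{\cO}_L$ such that, for $k$ large, $\Ws(\hx)\pitchfork\Wu(\hat{\cO}_{j(k)})\ne\emptyset$ for $\hmu_k$-a.e.\ $\hx$, and moreover each $\hat{\cO}_i$ satisfies $\hnu(\{\hx:\hx\simh\hat{\cO}_i\})>0$. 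Then Proposition~\ref{prop:IntersectionsUnstableMaxEntropy} is applied once for each $\hat{\cO}_i$ (with $\alpha=1$, since all ergodic components of $\nu$ are m.m.e.'s and hence hyperbolic of saddle type) to get $\Wu(\hx)\pitchfork\Ws(\hat{\cO}_i)\ne\emptyset$ for $\hmu_k$-a.e.\ $\hx$ and all large $k$. Combining the two directions gives $\hmu_k\simh\hat{\cO}_{i(k)}$ for all large $k$, and the pigeonhole principle yields the contradiction. The Yomdin-theoretic work and the rectangles are inputs to these two propositions, not to a coding argument.
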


Theorem~\ref{mainthm:A} is a corollary of Theorem~\ref{mainthm:B} since measured homoclinic classes contain at most one ergodic measure of maximal entropy, by Lima, Obata, and Poletti's result in~\cite[Theorem 4.1]{lima2024measures}.

\begin{remark} We were not able to treat the homoclinic classes of large entropy. However, we believe that the techniques developed in the present work, combined with a better understanding of homoclinic classes of endomorphisms, permit understanding such homoclinic classes. When considering a measured homoclinic class $\mathcal{M}(\mu)$, one can take the closure of all the union of the supports of the measures contained in $\mathcal{M}(\mu)$. Such a set is called a \textit{topological homoclinic class} and is denoted by $\text{HC}(\mu)$. It is well-known that topological homoclinic classes are $f$-invariant transitive sets. Recall that for a $f$-invariant set $K$, the quantity $h_{top}(f,K)$ denotes the topological entropy of $f$ on $K$.
\end{remark}

\begin{conjecture1}
	Let $M$ be a closed surface and $f$ be a $\cC^{\infty}$ local diffeomorphism. Then, for any $\chi>0$, the number of topological homoclinic classes $\text{HC}(\mu)$ such that $h_{top}(f,\text{HC}(\mu))> \log \deg(f)+\chi$ is finite.
\end{conjecture1}

This conjecture is a generalization to any topological degree of the spectral decomposition proved by Buzzi, Crovisier, and Sarig in~\cite[Theorem 1]{buzzi2022measures}. In the present work, we are able to obtain one side of the homoclinic relation in a more general setting, without the assumption on the entropy of $f$.

\begin{mainthm}
	Let $M$ be a closed surface and let $f:M\rightarrow M$ be a $\cC^{\infty}$ local diffeomorphism. Suppose that $h_{top}(f)>0$ and that any ergodic measure of maximal entropy of $f$ is hyperbolic. Let $(\mu_k)_{k\geq1}$ be a sequence of ergodic, hyperbolic of saddle type, measures of maximal entropy. Then, there exists a finite number of ergodic, hyperbolic of saddle type, $f$-invariant measures $\nu_1,...,\nu_L$, which have positive entropy, displaying the following property. For any $k\geq1$, there exists $i(k)\in \{1,...,L\}$ such that $\mu_k\preceq\nu_{i(k)}$.
	\label{mainthm:C}
\end{mainthm}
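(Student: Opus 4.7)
The plan is to use weak-$*$ compactness of $\bbP(f)$, the upper semi-continuity of the entropy map for $\cC^{\infty}$ systems (Newhouse–Yomdin), and the rectangle machinery developed earlier in the paper, in a direct covering argument. Let $\cL\subset\bbP(f)$ be the nonempty compact set of weak-$*$ accumulation points of $(\mu_k)_{k\geq 1}$. By upper semi-continuity, every element of $\cL$ is a measure of maximal entropy (possibly non-ergodic), and the hypothesis on ergodic m.m.e.'s together with $h_{top}(f)>0$ forces $\mu^*$-a.e.\ ergodic component of every $\mu^*\in\cL$ to be a hyperbolic saddle of positive entropy.

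For each $\mu^*\in\cL$, I would select an ergodic component $\nu=\nu(\mu^*)$ of $\mu^*$ of this type, and apply the Pesin rectangle construction of the previous sections to $\nu$. This produces a compact rectangle $R=R(\mu^*)\subset M$ with a lift $\hat{R}\subset M_f$ such that $\hat{\nu}(\hat{R})>0$ and any full unstable crossing of $R$ meets any full stable crossing at a unique, transverse point. The key technical claim would be that there is a weak-$*$ open neighborhood $U(\mu^*)$ of $\mu^*$ in $\bbP(f)$ such that every m.m.e.\ $\mu'\in U(\mu^*)$ assigns positive $\hat{\mu}'$-mass to a sub-rectangle $\hat{R}'\subset\hat{R}$ whose typical points carry full unstable crossings of $R$. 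Granting this claim, the geometric property of $R$ immediately yields $\mu'\preceq\nu(\mu^*)$. By compactness of $\cL$, finitely many such neighborhoods $U(\mu^*_1),\dots,U(\mu^*_L)$ cover $\cL$, and hence also a tail of $(\mu_k)$; the finitely many remaining terms of the sequence are themselves ergodic hyperbolic saddles of positive entropy and can simply be appended to the final list of $\nu_i$'s.

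The main obstacle is the construction of the neighborhood $U(\mu^*)$, where two complications arise. First, weak-$*$ convergence $\mu_k\to\mu^*$ in $\bbP(f)$ does not automatically yield convergence of the natural-extension lifts $\hat{\mu}_k\to\hat{\mu}^*$ in $\bbP(\hf)$; I would circumvent this by arranging $\hat{R}$ to be a cylinder determined by finitely many iterates of $f$, so that $\hat{\mu}(\hat{R})$ depends continuously on $\mu$ under weak-$*$ convergence of the base measure. Second, the rectangle $R$ is built from Pesin data attached to the specific component $\nu$, which a priori is not directly visible to nearby m.m.e.'s; I would handle this by leveraging that the construction depends only on finite-order geometric data (cone fields, angle bounds, contraction/expansion scales), so that any m.m.e.\ in a small enough weak-$*$ neighborhood of $\mu^*$ recognizes the same rectangle through its own hyperbolicity, which is guaranteed by the assumption that every ergodic m.m.e.\ is hyperbolic. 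Once both obstacles are cleared, transversality in the pre-order $\preceq$ is automatic from the angle bounds built into the rectangle.
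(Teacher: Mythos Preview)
Your proposal has a genuine gap at the ``key technical claim'': that every m.m.e.\ $\mu'$ in a weak-$*$ neighborhood of $\mu^*$ has typical points whose local unstable manifolds are \emph{full unstable crossings} of the rectangle $R$ built from the Pesin data of the ergodic component $\nu$. Weak-$*$ closeness of $\mu'$ to $\mu^*$, together with positive $\hat\mu'$-mass on $\hat R$, only tells you that $\mu'$-typical orbits visit the rectangle; it says nothing about the \emph{shape} of their unstable manifolds relative to the stable boundaries of $R$. Those boundaries are local stable manifolds of $\nu$-typical points, sitting in a fixed Pesin block for $\nu$, while the unstable manifolds of $\mu'$-typical points are governed by the Oseledets data of $\mu'$. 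Under the hypotheses of Theorem~C (only $h_{top}(f)>0$ and hyperbolicity of every ergodic m.m.e.) there is no uniform lower bound on $|\lambda^-(\mu')|$, so you have no a priori control on the length, angle, or Pesin constants of $W^u_{loc}(\hx)$ for $\hx$ $\mu'$-typical. Nothing prevents these unstable pieces from being short, or from entering $R$ and folding back without reaching either stable boundary. Your two proposed fixes do not touch this: arranging $\hat R$ to be a finite-time cylinder controls $\hat\mu'(\hat R)$ but not crossings, and the remark that the rectangle is determined by ``finite-order geometric data'' does not explain why $\mu'$-typical unstable manifolds respect that geometry.

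The paper resolves exactly this issue by an entropy argument (Proposition~\ref{prop:UnstableStableIntersection}): one assumes that iterates of $W^u_{loc}(\hx)$ never meet the local stable manifolds containing the stable boundaries of the $\nu_1$-rectangles, builds Yomdin reparametrizations along the orbit decomposition (rectangle / small-entropy / wild segments), and uses the Markov property~\ref{property:MR5} together with the crossing/folding dichotomy of Section~\ref{sec:CurvesIntersectingFamiliesRectangles} to bound the number of reparametrizations. This forces $h(f,\mu_k,\epsilon)$ strictly below $h_{top}(f)$, contradicting that $\mu_k$ is an m.m.e.; hence some iterate must cross, and Sard (Theorem~\ref{thm:Sard}) upgrades the intersection to a transverse one. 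Your compactness/covering scheme is fine as an outer wrapper, but the core mechanism that produces crossings is this entropy contradiction, not weak-$*$ continuity.
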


\begin{remark} Theorem~\ref{mainthm:C} does not directly give the finiteness of measured homoclinic classes. However, the interest of Theorem~\ref{mainthm:C} lies in the fact that we can apply it in the critical setting, namely when $h_{top}(f)=\log\deg(f)$. The new difficulties appearing in that case will be discussed in sub-Section~\ref{sub-sec:Difficulties}. It gives one direction to treat measures of maximal entropy in a setting where the ergodic m.m.e.'s can be either hyperbolic of saddle type, either hyperbolic of expanding type, or non-hyperbolic. This part is a real new contribution and is the core of the present work.

Let us emphasize that we strongly believe that, under more restrictive entropy hypotheses, a version of Theorem~\ref{mainthm:C} giving the other side of the homoclinic relation should be true. However, it would not give the finiteness of measured homoclinic classes. Indeed, typical $\mu_k$ points would accumulate on some $\nu_i$ in the past and on some $\nu_j$ in the future. However, we cannot homoclinically link $\nu_i$ and $\nu_j$.
\end{remark}

\subsection{Maps with a non-singular attractor}
By a classical argument, the Euler characteristic of any manifold admitting a local diffeomorphism with topological degree strictly greater than one, and which is globally defined, has to be zero. Thus, if $M$ is an orientable closed surface and $f:M\rightarrow M$ is a local diffeomorphism, $M$ has to be the two-torus. The setting of Theorem~\ref{mainthm:A} is then restrictive on the topology of $M$. However, this result extends to $\cC^{\infty}$ maps (not necessarily local diffeomorphisms) defined on two-dimensional manifolds (not necessarily compact) displaying compact non-singular attractors. Let $\mathscr{C}$ be the critical set of $f$. We say that $K$ is a compact non-singular attractor if: $K$ is a compact completely $f$-invariant subset and there exists an open connected neighborhood $U$ of $K$ such that the following holds: $f(\overline{U})\subset U$, for any $x \in U$ we have $d(f^n(x),K)\rightarrow 0$ when $n\rightarrow +\infty$ and $U\cap(\cup_{n\geq 0}f^n(\mathscr{C}))=\emptyset$. Note that under these hypotheses, the number of pre-images is constant on $U$ and then on $K$. The topological degree of $f$ on $K$ is then well-defined and we denote it by $\deg(f_{|K})$. We focus on the following consequence of Theorem~\ref{mainthm:A}.

\begin{maincor}
	Let $M$ be a two-dimensional manifold and let $f:M\rightarrow M$ be a $\cC^{\infty}$ map. Let $K$ be a compact non-singular attractor. Suppose that $h_{top}(f_{|K})>\log\deg(f_{|K})$. Then the set of ergodic $f$-invariant measures $\mu$ supported on $K$ such that $h_{top}(f_{|K})=h(f,\mu)$ is finite.
	\label{maincor}
\end{maincor}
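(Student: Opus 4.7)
The plan is to reduce Corollary~\ref{maincor} to Theorem~\ref{mainthm:A} by replacing $(M,f)$ with a closed surface $(\tilde{M},\tilde{f})$ whose dynamics coincides with that of $f$ on a neighborhood of the attractor and whose entropy is concentrated on a copy of $K$. First I would produce a smooth trapping neighborhood $V\subset U$: since $U\cap\bigcup_{n\geq 0} f^n(\mathscr{C})=\emptyset$, the map $f$ is a local diffeomorphism on a neighborhood of $K$, and using the attracting property of $K$, one can pick a compact $\cC^{\infty}$ submanifold with boundary $V$ satisfying $K\subset V^\circ\subset V\subset U$, $f(\overline{V})\subset V^\circ$, and such that $f|_V$ is a $\cC^{\infty}$ local diffeomorphism of constant degree $d=\deg(f|_K)$; for instance one may take $V=f^N(V_0)$ for a small tubular neighborhood $V_0$ of $K$ and $N$ sufficiently large.

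Next, I would embed $V$ smoothly into a closed surface $\tilde{M}$ (forced to be the torus $\bbT^2$ when $d>1$, by the Euler-characteristic obstruction recalled in the introduction) and extend $f|_V$ to a $\cC^{\infty}$ local diffeomorphism $\tilde{f}\colon\tilde{M}\to\tilde{M}$ of degree $d$. The extension should be designed so that on $\tilde{M}\setminus V$, the map $\tilde{f}$ is a uniformly expanding model of topological entropy exactly $\log d$, glued smoothly to $f|_V$ along a collar of $\partial V$; in particular every ergodic $\tilde{f}$-invariant measure of entropy strictly greater than $\log d$ must be supported in $V$, hence in $K$ by the attracting property. When $d=1$, the construction is simpler: take any closed surface containing $V$ as a codimension-zero submanifold, and extend $f|_V$ to a diffeomorphism with trivial (e.g.\ gradient-like) dynamics on the complement.

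Once the extension is in place, Theorem~\ref{mainthm:A} applies to $\tilde{f}$: since $h_{top}(\tilde{f})=h_{top}(f|_K)>\log d = \log\deg(\tilde{f})$, the map $\tilde{f}$ has only finitely many ergodic measures of maximal entropy. By construction each such measure is supported on the image of $K$ in $\tilde{M}$ and pushes down to an ergodic measure of maximal entropy of $f|_K$; conversely every ergodic m.m.e.\ of $f|_K$ lifts to an ergodic m.m.e.\ of $\tilde{f}$. This gives the desired finiteness.

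The main obstacle is the degree-$d>1$ extension step: one must realize $\tilde{f}$ as a genuinely smooth local diffeomorphism of $\bbT^2$ agreeing with $f|_V$ on $V$, of global degree exactly $d$, smooth across $\partial V$, and with low-entropy expanding dynamics on the complement. If this global construction turns out to be too rigid for an arbitrary $f|_V$, an alternative route is to bypass it altogether: the core arguments in the proof of Theorem~\ref{mainthm:A} (Pesin theory, the construction of Markov rectangles with nice geometry, and the Yomdin volume estimates) are purely local in a neighborhood of the support of the measures under consideration, and should apply verbatim to $f|_V$ acting on the trapping neighborhood $V$, with $K$ playing the role of the global attractor.
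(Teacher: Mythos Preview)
Your fallback route in the final paragraph is exactly the paper's own argument: the paper does not attempt any global extension, but simply observes that every ingredient in the proof of Theorem~\ref{mainthm:A} (Pesin charts, graph transforms, the rectangle constructions of Sections~\ref{sec:Rectangles}--\ref{sec:CurvesIntersectingFamiliesRectangles}, and the entropy/reparametrization arguments of Sections~\ref{sec:UnstableStableIntersection}--\ref{sec:StableUnstableIntersections}) only uses that $f$ is a local diffeomorphism on a neighborhood of the measures involved. The paper makes this explicit by noting that forward iterates of unstable curves stay in $U$ (so Section~\ref{sec:UnstableStableIntersection} applies verbatim) and that backward iterates along local stable manifolds remain in $U$ and hence avoid the critical set (so Section~\ref{sec:StableUnstableIntersections} applies verbatim). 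So your alternative is not merely a backup---it is the proof.

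Your primary approach, by contrast, has a genuine topological obstruction that you flag but do not resolve. When $d>1$, the closed surface $\tilde M$ is forced to have Euler characteristic zero, hence to be $\bbT^2$ or the Klein bottle. But the trapping neighborhood $V$ is an arbitrary compact surface with boundary sitting inside an arbitrary (possibly non-compact, possibly high-genus) $M$; nothing prevents $V$ from having genus $\geq 2$, in which case it does not embed in $\bbT^2$ at all, and the extension cannot exist. Even when the embedding exists, producing a $\cC^\infty$ degree-$d$ local diffeomorphism on $\tilde M\setminus V$ that matches $f$ to infinite order along $\partial V$ and has topological entropy exactly $\log d$ is a nontrivial construction that you would need to carry out. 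The paper's local approach avoids all of this; I would recommend promoting your final paragraph to the actual proof and dropping the extension strategy.
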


\subsection{New difficulties in the endomorphism case}\label{sub-sec:Difficulties}
To prove the finiteness of measures of maximal entropy for surface diffeomorphisms, two main approaches have been developed recently. Both of them use coding techniques and reduce the problem to showing finiteness of homoclinic classes. The first one is a quantitative approach. Given a sequence $(\mu_k)_k$ of m.m.e.'s converging to some measure $\nu$, it decomposes the measure $\nu$ and analyzes the behavior of iterates of typical $\mu_k$-points, distinguishing iterates seeing expansion with "neutral" parts. This precise and quantitative analysis leads, in the case of surface diffeomorphisms, to proving that m.m.e.'s give uniform measure to a common non-uniformly hyperbolic set (this is called the SPR property, see~\cite{buzzi2025SPR}). Since stable and unstable manifolds have uniform size inside that common set, it leads to the finiteness of homoclinic classes. This approach has been developed in~\cite{buzzi2022continuity}. The second, which goes back to~\cite{buzzi2022measures}, takes a more topological point of view. The idea is that measures of positive entropy are carried by a finite number of small rectangles, bounded by stable and unstable manifolds. Typical $\mu_k$-unstable manifolds must intersect the rectangles. But if $\mu_k$ has large entropy then its typical unstable manifolds cannot be contained in these rectangles, which also leads to the finiteness of homoclinic classes. The strategy of the present work is new and uses a combination of a quantitative analysis and topological techniques. To motivate it, we present here the new main difficulties appearing in the endomorphism case, which make the previous approaches fail.

\parbreak\textbf{Different types of the measures of maximal entropy.}
When the map is invertible, any measure of positive entropy is hyperbolic of saddle type, and measures of expanding (or contracting) type are supported on periodic orbits. When the map becomes non-invertible, this is no longer the case. Three very different behaviors are possible for the ergodic m.m.e.'s: hyperbolic of saddle type, hyperbolic of expanding type, or non-hyperbolic. Note that the hypothesis $h_{top}(f)>\log\deg(f)$ authorizes only the first type.

\parbreak\textbf{No uniform control on the Lyapunov exponents.}
Ruelle's inequality gives a lower bound on the Lyapunov exponents in terms of entropy. In the case of diffeomorphisms, it implies that m.m.e.'s have Lyapunov exponents uniformly bounded away from zero. This is a crucial fact in both approaches. For endomorphisms, only the top Lyapunov exponent displays this property.

\parbreak\textbf{Unstable manifolds can cross.}
The non-invertibility forces unstable manifolds to depend on the past of a point. Thus, the same point in the manifold can have uncountably many unstable manifolds, and two different unstable manifolds can cross. The fact that unstable manifolds form a lamination was absolutely crucial in the topological approach.

\parbreak\textbf{Stable manifolds are not connected and their connected components can have dimension zero.}
When trying to define stable manifolds for local diffeomorphisms, one has to take the pre-image of embedded curves. This implies that stable manifolds become disconnected. Again, connectedness of the stable manifolds was crucial in the topological approach. Another important property that m.m.e.'s display in the diffeomorphism case is positive dimension on their stable and unstable manifolds. Namely, the Hausdorff dimension of typical points along stable and unstable manifolds is positive. When the map is non-invertible, it is not guaranteed that the set of typical points has positive Hausdorff dimension on connected components of stable manifolds, even when the measure is a m.m.e. This leads to real difficulties when trying to construct rectangles, or applying Sard's theorem to find transverse intersections.

\parbreak\textbf{The symbolic "third" stable direction.}
This idea contains, in some sense, all the previous difficulties. When fixing an ergodic measure of positive entropy, Oseledets' theorem gives two invariant directions for almost every point. One has to be expanding, and the other one can be of the three types discussed before: contracting, expanding, or neutral. The different pasts of a given point produce a third symbolic direction. Indeed, since the number of pre-images is finite and constant, choosing a past for a given point is equivalent to choosing an infinite sequence on a finite alphabet. On the set of all possible choices of past for some fixed point, the dynamics becomes equivalent to a shift. Thus, the symbolic direction is stable: indeed, applying $n$ times the dynamics to two different choices of past for the same point implies that the $n$-th first letters of these two infinite sequences become equal. This idea has a geometric interpretation in the natural extension. The lift of any small enough neighborhood, contained in the manifold, to the natural extension is homeomorphic to the product of a disc and a Cantor set. The disc represents the two "differentiable" directions and the Cantor set is the third "symbolic" one. Two unstable manifolds can cross because when lifted to the natural extension, they belong to two different connected components of the Cantor set. One can think of two curves, one above the other, not intersecting in a 3-dimensional space. Lifted to the natural extension, stable manifolds are locally homeomorphic to the product of a curve and a Cantor set and are then disconnected. Iterating backward a fiber in the natural extension gives $\deg(f)$ new fibers, which implies that stable manifolds become more and more disconnected whence iterating backward. Measures of positive entropy can have "bad" disintegration on these sets, for example being supported on the symbolic direction, which will imply that they have zero dimension in the differential direction. In some sense, the difficulties arising in the case of endomorphisms are linked to dimensions and are similar to those appearing for diffeomorphisms in dimension three.

\parbreak The tools developed in the present work overcome some of these difficulties, and we believe that they may be applied in other settings, like three-dimensional diffeomorphisms, and to other problems for surface endomorphisms.

\subsection{A heuristic overview of the proof}\label{subsec:DiscussionProof}
We want to use symbolic dynamics on homoclinic classes and the fact that a measured homoclinic class can carry at most one ergodic m.m.e.~\cite[Theorem 4.1]{lima2024measures}. Proving  finiteness of the set of ergodic m.m.e.'s is then reduced to proving finiteness of the set of measured homoclinic classes carrying an m.m.e. The proof is split into three main parts that we explain here. Let us consider a sequence of ergodic $f$-invariant measures of maximal entropy $(\mu_k)_k$. They then satisfy $h(f,\mu_k) > \log\deg(f)$, which implies in particular that they are hyperbolic of saddle type. The sequence converges to some measure $\nu$. Almost every ergodic component of $\nu$ is an m.m.e. by upper semi-continuity of the measured entropy in $\cC^{\infty}$. In particular hyperbolic of saddle type. Remark that this an important part where the $\cC^{\infty}$ hypothesis is used.

\parbreak\textbf{Part 1: Constructing nice rectangles for the measure $\nu$.}
The goal of the first part of the proof is to construct two finite families of rectangles $R_1,...,R_L$ and $\tilde{R}_1,...,\tilde{R}_L$ which have the following properties.
\begin{itemize}[label={--}]
	\item Their boundary is a Jordan curve, which is the union of four curves such that two of them are contained in local stable manifolds of well-chosen points. We call them the stable boundaries of the $R_i$. The two other curves are the unstable boundaries. They are contained in leaves of a foliation tangent to a natural unstable cone field.
	\item The $\nu$-measure of their union can be chosen arbitrarily close to one.
	\item We have good control on their geometry: they have a small diameter but the distance between their unstable boundaries is bounded from below.
	\item Each rectangle $R_i$ is contained in $\tilde{R}_i$, and the stable boundaries of $\tilde{R}_i$ contain the stable boundaries of $R_i$.
	\item They display a kind of Markov property. If the image by $f$ of some $R_i$ intersects some $R_j$ in a nice way, then the image of $\tilde{R}_i$ has to intersect $\tilde{R}_j$ without crossing the unstable boundaries of $\tilde{R}_j$. To sum up, we want the rectangles to be stretched along the unstable direction by $f$ in a nice way.
\end{itemize}
Only one family would have been enough if we could have built a family of rectangles $(R_i)_i$ displaying the Markov property we described. We were not able to build such a family and we bypassed this difficulty by building the second family $(\tilde{R}_i)_i$ from the first one.

Let us explain the important points of the construction. They rely on some graph transform results for continuous maps tangent to unstable cones. We build the first family of rectangles by taking the intersection between the leaves of some foliation tangent to a well-chosen unstable cone with local stable manifolds of well-chosen points. We build the second family from the first by successive approximations. If $f(R_i)$ intersects some $R_j$ in a way which does not respect the Markov property, we add to $R_j$ a strip so that the intersection becomes legal. The idea is that at each iteration $n$, the images by $f^n$ of the rectangles are exponentially contracted in the stable direction, so the width of the strips we add are exponentially small.

\parbreak\textbf{Part 2: Obtaining transverse intersections between unstable manifolds of typical $\mu_k$-points and stable boundaries of the $\nu$-rectangles.}
This part is the core of the present work, and contains new ideas to treat measures of maximal entropy. We analyze the behavior of typical $\mu_k$-points $x$, inside a given unstable manifold. We show that for $k$ and $n$ large enough, we can decompose the orbit $(x,...,f^n(x))$ into sub-segments of the following two types.
\begin{itemize}[label={--}]
	\item \textbf{Rectangle segment:} these are long segments of a fixed length inside the rectangles $\cup_i R_i$.
	\item \textbf{Wild segments:} these are segments which are highly infrequent, where we do not have any further information.
\end{itemize}
By Ledrappier-Young theory, the entropy inside a typical unstable manifold is equal to the entropy of the measure $\mu_k$. We choose a set $F_k$ of positive $\mu_k$-measure such that any orbit $(x,...,f^n(x))$ of points $x\in F_k$ has a decomposition as above, for a uniform $n$. Fix $x\in F_k$ and consider a curve $\gamma$ contained in the local unstable manifold of $x$. We argue by contradiction and assume the following hypothesis.
\begin{center}
	\textit{For any $n$ and for any rectangle $R_i$, the curve $f^n(\gamma)$ does not intersect the stable manifolds containing the stable boundaries of $R_i$.}
\end{center}
We analyze the entropy of the points of $\gamma\cap F_k$ having the same orbit decomposition. Wild segments can create little entropy since their frequency is arbitrarily small. The main original part of the present work lies in the method used to control the entropy during the rectangle segments. Let us explain here how it is estimated.

The first difficulty appears at the starting iterate $f^n(\gamma)$ of any rectangle segment. We have to cover the intersection between the union of the $R_i$ and $f^n(\gamma)$ by a uniform number of rectangles $R_i$. We first use Yomdin theory to start with a sub-curve $\psi\subset f^n(\gamma)$ with bounded geometry. Since $f$ is $\cC^{\infty}$, the number of such sub-curves has the same magnitude as the entropy. To bound the number of rectangles needed to cover $\psi$, we look at the topology of the intersection between $\psi$ and the $R_i$'s. We exploit the fact that $\psi$ cannot cross the stable manifolds containing the stable boundaries of the $R_i$'s to show that there can be only two types of intersections, which we call \textit{crossing} and \textit{folding} intersections. The first type appears when some sub-curve of $\psi$ crosses a rectangle $R_i$ from the left unstable boundary to the right one. We use the fact that the distance between two unstable boundaries of some $R_i$ is bounded from below to bound the number of crossing intersections. For the folding ones, we were not able to bound the number of such intersections, but we show, using topological arguments, that they are contained in a uniform number of rectangles. We then treat the crossing and the folding intersections separately. 

We use the connectedness of crossing intersections and the Markov property to conclude that a given crossing intersection stays inside a unique rectangle at each iteration of the whole rectangle segment. Indeed, if one takes a crossing intersection $\varphi\subset \psi$, there exists some rectangle such that $\varphi\subset \tilde{R}_i$. Then, since $f(\varphi)$ intersects some $R_j$, the curve $\varphi$ is trapped between the two stable boundaries of $\tilde{R}_j$ and the two unstable boundaries of $f(\tilde{R}_i)$. The Markov property implies that the two unstable boundaries of $f(\tilde{R}_i)$ lie between the two unstable boundaries of $\tilde{R}_j$, which then implies that $\varphi$ is contained in $\tilde{R}_j$. Since the diameter of the rectangles is small, such crossing curves do not contribute to orbit separation. 

For the folding ones, we treat each rectangle containing such intersections separately. Let us fix such a rectangle $R_i$. The union of folding sub-curves of $\psi$ contained in $R_i$ is disconnected. Nevertheless, the stable boundaries $R_i$ are contained in local stable manifolds that are much longer than the diameter of $R_i$ and the length of $\psi$. Using again the fact that $\psi$ cannot meet these stable manifolds, the Markov property of the $R_i$'s allows us again to conclude that each iterate of every folding curve contained in $R_i$ is still contained in a unique rectangle. Again, this shows that folding curves do not create entropy during the rectangle segments.

This argument concludes that the entropy of the unstable manifolds of points in $F_k$ is very close to zero. As a corollary of this argument, and since the $\mu_k$'s are m.m.e.'s, typical $\mu_k$-unstable manifolds have to meet all the stable manifolds containing the stable boundaries of all the rectangles of the family $(R_i)_i$, otherwise their entropy would be strictly smaller than the topological entropy. We conclude that for any $k$ large enough and any $x \in F_k$, the unstable manifold of $x$ intersects topologically all the stable manifolds containing the stable boundaries of the rectangles $R_i$'s. Finally, we use Sard Theorem to find transverse intersections.

\parbreak\textbf{Part 3: Obtaining transverse intersections between stable manifolds of typical $\mu_k$-points and unstable boundaries of some $\nu$-rectangles.}
The last part of the proof is a kind of stable version of the second part. Here, we will analyze the separation, under the map $f^{-1}$, of typical $\mu_k$-points inside stable manifolds. The non-invertibility forces us to work in the natural extension $M_f$. Lifted to $M_f$, stable local manifolds are homeomorphic to the product of a curve and a Cantor set, which corresponds to the symbolic direction. The only changing part compared to Part 2 is the control on the entropy during rectangle segments.

First, we can modify the unstable boundaries of the $\nu$-rectangles to replace them with unstable manifolds of well-chosen points. We denote this new family by $T_1,...,T_L$. Note that we lose the Markov property and the bound from below of the distance between unstable boundaries. Nevertheless, these new rectangles still have small diameter.

Let us now explain how we control the separation of points of a well-chosen stable manifold $V$ during the rectangle segments. Recall that $\pi$ is the projection from $M_f$ to $M$ and that  $\hf$ is the lift of $f$ to $M_f$. When we lift some rectangle $T_i$ to the natural extension, $\pi^{-1}(T_i)$ is now homeomorphic to the product of a disc and a Cantor set. We argue again by contradiction and assume the following.
\begin{center}
	\textit{For any $n$, for any rectangle $\pi^{-1}(T_i)$ and for any connected component $W\subset \hf^{-n}(V)$ intersecting $\pi\inv(T_i)$, the curve $W$ is contained in $\pi^{-1}(T_i)$.}
\end{center}
During rectangle segments, only the symbolic direction can then create entropy. We conclude that the orbit separation during rectangle segments is bounded by $\deg(f)^m$, where $m$ is the length of the segment. This leads to a contradiction since $h(f,\mu_k)>\log\deg(f)$, and implies that, for $k$ large enough, $\mu_k$-typical stable manifolds meet some unstable boundary of a rectangle $T_i$. We again conclude using Sard Theorem.

\parbreak Let us explain how to combine Part 2 and Part 3 to prove that for $k$ large enough, the measure $\mu_k$ are homoclinically related to a finite number of hyperbolic saddle periodic orbits. Firstly, Part 3 implies that typical $\mu_k$-stable manifolds intersect transversely the unstable boundary of some rectangle $T_{i(k)}$. Secondly, Part 2 gives that typical $\mu_k$ unstable manifolds cross the stable manifolds containing the stable boundaries of all the $R_i$'s, and in particular of $R_{(i_k)}$, which can be tough as the same rectangle as $T_{i(k)}$, since only their unstable boundaries differ. Now, using $\lambda$-lemma and Katok Theorem, we conclude that there exists a saddle periodic orbit $\cO_{i(k)}$ such that its stable and unstable manifolds accumulates on the boundaries of $T_{i(k)}$. This proves that $\mu_k$ is homoclinically related to $\cO_{i(k)}$. Since the number of rectangles is finite, this concludes the proof.

\subsection{Comments and questions about finite regularity, entropy assumption and uniqueness of the m.m.e.}\label{subsec:Intro:Comments}
In this paragraph, we discuss the behavior of measures of maximal entropy in some more general cases.

\parbreak\textbf{Regularity.}
Even though our results stand in the $\cC^{\infty}$ setting, we can expect results in lower regularity. Indeed, Proposition~\ref{prop:IntersectionsUnstableMaxEntropy}, which tackles the principal difficulty linked to unstable manifolds, is already stated for $\cC^r$ local diffeomorphisms, under an assumption on the entropy of the map. Proposition~\ref{prop:StableUnstableIntersection} is stated for $\cC^{\infty}$ local diffeomorphisms, but we think it should be true in lower regularity under some stronger entropy assumption as in Proposition~\ref{prop:IntersectionsUnstableMaxEntropy}, with few changes in the proof. The issue is that we do not have any control on the number $\beta$ in Proposition~\ref{prop:IntersectionsUnstableMaxEntropy}, which is, informally, the minimal weight given by $\nu$ to the homoclinic classes supporting the $\nu_i$. We believe that a finer investigation of the behavior of the $\mu_k$-typical orbits could give some control on $\beta$.

\begin{conjecture2}
	Fix $r>1$. Let $f:M\rightarrow M$ be a $\cC^r$ local diffeomorphism. There exists a constant $C:=C(\norm{f}_{\cC^1},r)>0$ depending only on the $\cC^1$ norm of $f$ and $r$ such that if $h_{top}(f)>C$, then $f$ admits a finite number of ergodic measures of maximal entropy.
\end{conjecture2}

\parbreak\textbf{Entropy assumption.}
As we said before, the assumption $h_{top}(f)>\log\deg(f)$ has several consequences. First, every ergodic measure of maximal entropy is hyperbolic of saddle type. Second, it also implies that backward separation in stable typical sets of some measure of maximal entropy does not only come from the symbolic direction. Removing the entropy assumption gives then two difficulties to treat. If we want to study hyperbolic measures of saddle type and of maximal entropy, the first one seems not to be insurmountable. Indeed, if $(\mu_k)_k$ is a sequence of hyperbolic m.m.e.'s of saddle type converging to $\nu$, then $\nu$ has a decomposition $\nu=\alpha\nu_1+(1-\alpha)\nu_0$, where $\alpha \in (0,1]$ and almost every ergodic component of $\nu_1$ is hyperbolic of saddle type with positive entropy. It allows us to apply Proposition~\ref{prop:UnstableStableIntersection}, which gives one side of the intersections desired. When trying to get a statement analogous to Proposition~\ref{prop:StableUnstableIntersection}, the second difficulty becomes a serious problem. Indeed, we were not able to give an upper bound on the backward entropy inside the stable manifolds in that case (Part 3 of the proof). This leads us to the following questions.

\begin{question}
	If $f$ is a $\cC^{\infty}$ local diffeomorphism, which weaker assumption guarantees that hyperbolic measures of saddle type with large entropy have: 
	\begin{itemize}[label={--}]
		\item typical stable sets such that backward exponential separation of points does not uniquely come from the symbolic direction?
		\item a nice disintegration of these measures along connected components of their stable sets?
		\item positive dimension along these curves?
	\end{itemize}
\end{question}

As we said previously, it is easy to find examples in the critical setting where $h_{top}(f)=\log\deg(f)$ and such that $f$ admits an infinite number of ergodic measures of maximal entropy. However, these examples seem to be very rigid and we believe that this cannot happen in general.

\begin{question}
	In the set of $\cC^{\infty}$ local diffeomorphisms $f$ such that $h_{top}(f)=\log\deg(f)$, is the property of having finitely many ergodic hyperbolic m.m.e.'s of saddle type generic?
\end{question}

\parbreak\textbf{Transitivity.}
Buzzi, Crovisier, and Sarig obtained in~\cite{buzzi2022measures} the uniqueness of the measure of maximal entropy in the case where $f$ is transitive. They use a topological argument which we describe here. Given two measures $\mu$ and $\nu$ carrying rectangles, transitivity implies that iterates of the $\mu$-rectangles have to cross the $\nu$-rectangles, without being contained in the $\nu$-rectangles. This implies that $\mu$-typical unstable manifolds intersect transversely $\nu$-typical stable manifolds. Since the argument is symmetric, it implies that $\mu$ and $\nu$ are homoclinically related, and then that the measure of maximal entropy is unique. The argument clearly fails in the endomorphism case since unstable manifolds can cross. However, we do not know an example of a transitive local diffeomorphism having two measures of maximal entropy of saddle type.

\begin{question}
	Is there a $\cC^{\infty}$ local diffeomorphism having more than one measure of maximal entropy of saddle type?
\end{question}

\subsection{Previous finiteness results}
The question of the number of ergodic measures of maximal entropy has been studied a lot since the 1970s. Various classes of systems have finitely many ergodic m.m.e.'s. Let us give the main results which have been obtained in the context of smooth dynamics. Finiteness of m.m.e.'s was first shown for uniformly hyperbolic systems: Anosov diffeomorphisms~\cite{sinai1972gibbs} and Axiom A diffeomorphisms~\cite{bowen1971periodic}.

This was then generalized to non-uniformly expanding maps: piecewise monotonic interval maps \cite{hofbauer1981piecewisemono}, smooth interval maps~\cite{buzzi1997interval} in dimension one, skew products of those~\cite{buzziSubshitQF} and surface maps with singularities~\cite{lima2018symbolicsurface} in dimension two.

Non-uniformly hyperbolic invertible dynamics had also been considered: piecewise affine surface homeomorphisms~\cite{buzzi2009mmepiecewiseaffine}, various derived from Anosov~\cite{newhouse2006dynamicsskew,buzzi2012DA,ures2012intrinsic,buzzi2011entropic,climenhaga2018unique} or some partially hyperbolic diffeomorphisms~\cite{hertz2012maximizing,buzzi2019dichotomy}.

A lot of recent progress has been done using coding techniques, first built by Sarig in dimension 2 in~\cite{sarig2013symbolic} then by Ben Ovadia in greater dimension in~\cite{BenOvadiaCoding}, and developed to study measures of maximal entropy in~\cite{buzzi2022measures}. Sarig proved in~\cite{sarig2013symbolic} that $\cC^r$ surface diffeomorphisms with positive topological entropy have countably many ergodic m.m.e.'s for $r>1$. In~\cite{buzzi2022measures} Buzzi, Crovisier, and Sarig proved the finiteness of ergodic m.m.e.'s for such $\cC^{\infty}$ surface diffeomorphisms. Mongez and Pacifico proved in~\cite{mongez2024finite} and~\cite{mongez2023robustness} finiteness of the number of m.m.e.'s for an open set of partially hyperbolic systems with a one-dimensional center direction, and they obtain uniqueness in certain cases.

For non-invertible dynamics, Araujo, Lima, and Poletti built symbolic extensions for non-uniformly hyperbolic maps in~\cite{araujo2024symbolic}, under some hypotheses on the critical set. Lima, Obata, and Poletti developed it in~\cite{lima2024measures} and proved finiteness of the set of ergodic m.m.e.'s for some partially hyperbolic endomorphisms with one-dimensional center direction and some non-uniformly expanding maps, including Viana maps. Note that in the first setting, they also used the assumption $h_{top}(f)>\log\deg(f)$.

\subsection{Outline of the paper}
In Section~\ref{section:PesinTheory}, we recall some well-known facts about surface endomorphisms and establish some Pesin theory results in our setting.

In Section~\ref{sec:Rectangles}, we build, for a given hyperbolic $f$-invariant measure of saddle type, two families of rectangles displaying a Markov type property.

Section~\ref{sec:CurvesIntersectingFamiliesRectangles} gives a uniform bound on the number of rectangles needed to cover a curve intersecting them, under the hypothesis that such a curve does not cross the stable manifolds containing their stable boundaries. In this context, Section~\ref{sec:CurvesIntersectingFamiliesRectangles} defines the crossing and folding intervals of the curve we consider.

Section~\ref{sec:HomClasses} introduces homoclinic classes for endomorphisms and reminds some of their properties. In particular, it contains the fact that homoclinic classes carry at most one m.m.e.

Section~\ref{sec:EntropyYomdin} recalls some facts about entropy, Yomdin theory, and dimension theory. Some of them are well-known or adaptations to endomorphisms.

Sections~\ref{sec:UnstableStableIntersection} and~\ref{sec:StableUnstableIntersections} contain the entropy argument. Section~\ref{sec:UnstableStableIntersection} shows that sequences of ergodic measures of high entropy cannot accumulate on rectangles without displaying intersections between their unstable manifolds and stable manifolds of the limit measure. Section~\ref{sec:StableUnstableIntersections} proves the symmetric property.

Finally, Section~\ref{sec:ProofMainThm} concludes by deducing the main results, announced in the introduction, from the results proved so far.\\

\textbf{Conventions.}
Through all this work, for any constant $c$, we say that a property holds for $c$ sufficiently small when there exists some $c_0>0$ such that the property holds for any $0<c<c_0$. Moreover our constructions and proofs, especially in Section \ref{section:PesinTheory} and Section \ref{sec:Rectangles}, involve on a lot of constants. In any statement, when we introduce a constant by $c:=c(a,b)$, it means that $c$ depends on $a$ and $b$. It allows then the reader to keep track of the interdependence between each constants.\\

\textbf{Acknowledgments.} The author is deeply grateful to Sylvain Crovisier for many fruitful discussions about the proof in the diffeomorphism case, for his generosity in sharing ideas, and for his careful proofreading. We are also grateful to Yuri Lima, Davi Obata, and Mauricio Poletti for valuable discussions on endomorphisms, and especially for pointing out the difficulties related to stable sets. We further thank Rafael Potrie for insightful discussions about the overall strategy and for providing examples of such endomorphisms. Finally, we thank Juan Carlos Mongez for many helpful discussions regarding this work.

\section{Pesin theory for local diffeomorphisms}\label{section:PesinTheory}
In this section, we collect some Pesin theory results in the case of endomorphisms. We prove a graph transform for curves tangent to some contracted cone field and show that this graph transform is a contraction. A classical presentation of Pesin theory in the diffeomorphism case is contained in \cite[Chapter S]{katok1995introduction}. A more recent presentation, which contains all the ideas present in our work, is developed in \cite{sarig2013symbolic}. Pesin theory for endomorphisms is also treated in \cite{qian2009PesinTheoryEndo}.
In the present section, $M$ is a closed surface and $f:M\rightarrow M$ is a $\cC^2$ local diffeomorphism ($\cC^{1+}$ is sufficient).

\subsection{Preliminaries and well-known facts about surface endomorphisms}
Here, we recall some well-known material about the dynamics of smooth endomorphisms and establish some notation.
We equip the surface $M$ with a $\cC^{\infty}$ Riemannian metric that we denote by $(M, \langle \cdot, \cdot \rangle)$. We denote the distance function induced by the metric by $d(\cdot, \cdot)$. By a local diffeomorphism $f$ (or just endomorphism), we mean that for all $x \in M$, the tangent map $d_xf : T_xM \rightarrow T_{f(x)}M$ is a linear isomorphism. We recall that in this setting, the number of pre-images of each point is constant and equal to the topological degree of $f$, which we denote by $\deg(f)$.
We endow $\bbR^2$ with the standard Euclidean metric. Let us set up some notation:
\begin{itemize}[label = {--}]
	\item For $x \in M$ and $r>0$, we denote by $B(x,r)$ the ball centered at $x$ with radius $r$ for the metric $d(\cdot, \cdot)$.
	\item For any $v \in \bbR^2$ and $r>0$, we also denote by $B(v,r)$ the ball centered at $v$ of radius $r$ when there is no ambiguity with the previous notation.
	\item For $x \in M$, $v \in T_xM$ and $r>0$, we write $B_x(v,r) = \{u \in T_xM, \ \norm{u-v}<r\}$.
	\item For any $v \in \bbR^2$ and $r>0$, we write $R(v,r) = \{u=(u_1,u_2) \in \bbR^2, \ |u_1|,|u_2|<r \}$.
\end{itemize}
Let $\exp : TM \rightarrow M$ be the exponential map. Since $M$ is compact, $\exp_x : T_xM \rightarrow M$ is well-defined for all $x\in M$ and there exists a constant $r(M) >0$ such that the map:
\begin{equation*}
	\exp_x : B_x(0,r(M)) \subset T_xM \rightarrow M
\end{equation*}
is a diffeomorphism onto its image.

\parbreak\textbf{Natural extension.} Let us now introduce an important tool in the study of endomorphisms: the natural extension. We define the following set:
\begin{equation*}
	M_f = \big\{(x_n)_{n\in \bbZ} \in M^{\bbZ}, \ f(x_n) = x_{n+1} \ \forall n \in \bbZ\big\}.
\end{equation*}
For any $\hx \in M_f$, we denote by $x_n$ the image of the projection on the $n$-th coordinate in $M$. We equip $M_f$ with a distance, which we denote by $d_{M_f}(\cdot, \cdot)$, such that for $\hx,\hy \in M_f$, we have:
\begin{equation*}
	d_{M_f}(\hx,\hy) = \sum_{i=0}^{+\infty}\frac{d(x_{-i},y_{-i})}{2^i}.
\end{equation*}
When there is no ambiguity, we will also denote the distance function of $M_f$ by $d$. The space $(M_f,d)$ is a compact metric space. Denote by $\pi$ the projection on the $0$-th coordinate. In this setting, it is known that for all $x \in M$, the set $\pi\inv(x)$ is compact, totally disconnected, and has no isolated points, and hence is homeomorphic to a Cantor set. Let $\hf : M_f \rightarrow M_f$ be the map defined by:
\begin{equation*}
	\hf\big((x_n)_{n\in \bbZ}\big) = (f(x_n))_{n\in \bbZ} = (x_{n+1})_{n \in \bbZ}.
\end{equation*}
Note that $\hf$ is a bilateral shift; its inverse is the map $\hf\inv\big((x_n)_{n\in \bbZ}\big) = (x_{n-1})_{n \in \bbZ}$ and $\hf$ is a homeomorphism of $(M_f,d)$. The system $(M_f,\hf)$ is an extension of $(M,f)$, called the natural extension, such that $\pi\circ\hf=f\circ\pi$.

\parbreak\textbf{Lift of invariant measures.}
Let $\bbP(f)$ denote the set of probability $f$-invariant measures on $M$, and $\bbP_e(f) \subset \bbP(f)$ the ones that are ergodic. It is well known that a measure $\mu \in \bbP_e(f)$ has a unique ergodic lift to $\bbP_e(\hf)$. The projection $\pi$ induces a bijection $\pi_{\star}$ from $\bbP_e(\hf)$ to $\bbP_e(f)$. We usually write $\hmu:=\pi_{\star}\inv\mu$. If $\mu$ is not ergodic, we can lift every ergodic measure of its ergodic decomposition with $\pi$. Two lifts of $\mu$ coincide then on a set of full measure and we can still write $\hmu:=\pi\inv\mu$.

\parbreak\textbf{Lyapunov exponents.}
We can define an invertible cocycle over $\hf$. For each $\hx \in M_f$, define the vector space $T_{\hx}M = T_{\pi(\hx)}M$. Let:
\begin{equation*}
	TM_f = \bigsqcup_{\hx \in M_f} T_{\hx}M.
\end{equation*}
Note that $TM_f$ defines a topological vector bundle over $M_f$. We then introduce the cocycle $d\hf : TM_f \rightarrow TM_f$, defined for $\hx \in M_f$ by:
\begin{equation*}
	d_{\hx}\hf^n =
	\left\{
	\begin{aligned}
		&d_{x_0}f^n \ \text{if} \ n\geq 0\\
		&(d_{x_{-n}}f^n)\inv \ \text{if} \ n<0.
	\end{aligned}
	\right.
\end{equation*}
This is a well-defined continuous invertible cocycle over $\hf$, which coincides with the tangent map of $f$ along every orbit $\hx \in M_f$. We can then apply Oseledets' theorem to this cocycle.
Let $\hmu \in \bbP(\hf)$. For $\hmu$-almost every $\hx$, there exist two $d\hf$-invariant subspaces $E^+(\hx), E^-(\hx) \subset T_{\hx}M$ such that $T_{\hx}M = E^+(\hx) \oplus E^-(\hx)$, and real numbers $\lambda^+(\hx)> \lambda^-(\hx)$ which satisfy:
\begin{equation*}
	\lim_{n\rightarrow +\infty} \frac{1}{n} \log\norm{d_{\hx}\hf^n_{|E^+(\hx)}} = \lambda^+(\hx) \ \text{and} \ \lim_{n\rightarrow +\infty} \frac{1}{n} \log\norm{d_{\hx}\hf^n_{|E^-(\hx)}} = \lambda^-(\hx).
\end{equation*}
The numbers $\lambda^+(\hx)$ and $\lambda^-(\hx)$ are the Lyapunov exponents at the point $\hx$. These are $\hf$-invariant functions, so they are constant almost everywhere when the measure $\hmu$ is ergodic. In that case, we denote them $\lambda^+(\hmu)$ and $\lambda^-(\hmu)$. Note that the measurable sub-bundle $E^-$ does not depend on the choice of the past. Namely, let $\mu:=\pi_{\star}\hmu$. For $\mu$-almost every $x$ and for any $\hx_1,\hx_2\in\pi\inv(x)$, we have $E^-(\hx_1)=E^-(\hx_2)$.
Let us mention a Ruelle inequality proved by Liu in \cite{liu2003ruelle} which links, in the context of local diffeomorphisms, entropy, non-positive Lyapunov exponents, and degree.
\begin{prop}[Ruelle inequality]
	Let $\mu \in \bbP_e(f)$ and $\hmu = \pi_*\inv\mu$. The following inequality holds:
	\begin{equation*}
		h(f,\mu) \leq \sum_{\lambda(\mu)<0} \lambda(\mu) + \log\deg(f).
	\end{equation*}
	\label{prop:Ruelle}
\end{prop}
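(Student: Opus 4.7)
This is Liu's version of the Ruelle--Margulis inequality for endomorphisms~\cite{liu2003ruelle}, and I will sketch the approach: it is a Bowen--Ma\~n\'e-type volume count applied to the backward dynamics on the natural extension.

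By ergodic decomposition and the affinity of both sides in $\mu$, I reduce at once to the case $\mu$ ergodic. Since $(M_f,\hf,\hmu)$ is the natural extension of $(M,f,\mu)$, one has $h(f,\mu)=h(\hf,\hmu)$, and since $\hf$ is a homeomorphism this also equals $h(\hf^{-1},\hmu)$. The virtue of working with $\hf^{-1}$ is that it makes visible the two mechanisms producing entropy in the non-invertible setting: the choice of a preimage branch of $f$ at each backward step (at most $\deg(f)$ choices), and, when $\lambda^-<0$, the expansion of the Oseledets direction $E^-$ by the derivative cocycle $d\hf^{-1}$ at rate $-\lambda^-$.

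I would then fix a finite measurable partition $\xi$ of $M$ with $\mu(\partial\xi)=0$ and atoms of diameter less than some small $\rho>0$, and set $\hat\xi:=\pi^{-1}\xi$. For $\rho$ small, $\hat\xi$ separates points in $M_f$ and is therefore generating for $\hf$, so by Kolmogorov--Sinai
\[
h(f,\mu)\;=\;\lim_{n\to\infty}\frac{1}{n}\,H_{\hmu}\Bigl(\bigvee_{k=0}^{n-1}\hf^{k}\hat\xi\Bigr),
\]
the atoms of the refinement on the right recording the $\xi$-labels of the past coordinates $x_0,x_{-1},\dots,x_{-n+1}$ of each $\hx\in M_f$. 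The heart of the argument is a Ma\~n\'e-type cardinality estimate: for $\hmu$-a.e.\ $\hx$ and every $\varepsilon>0$, the number of atoms of this refinement meeting a small neighborhood of $\hx$ in $M_f$ is, for all $n$ large, at most $\exp\bigl(n(\log\deg(f)+\sum_{\lambda(\mu)<0}\lambda(\mu)+\varepsilon)\bigr)$; per backward step there are at most $\deg(f)$ preimage branches of $f^{-1}$, and within a single branch Oseledets' theorem controls the $E^-$-length of the relevant Bowen ball, so that a covering by $\xi$-atoms in a local smooth chart of $M$ yields the $E^-$-contribution. Applying $H_{\hmu}\le\log(\#\text{atoms})$ and sending $\varepsilon\to 0$ closes the proof.

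The main technical obstacle is that $M_f$ carries no intrinsic smooth structure, so one cannot invoke Ruelle's classical inequality for $\hf^{-1}$ verbatim; the volume count must be carried out fiberwise in local smooth charts of the base $M$ (where $f$ is genuinely $C^{1+}$), using only that $d\hf^{\pm 1}$ is an honest continuous invertible cocycle over $\hf$ with a measurable Oseledets splitting $E^+\oplus E^-$ of fiber dimension two. A secondary point is the degenerate case $\lambda^-\ge 0$, in which the sum over negative exponents is empty and the bound reduces to $h(f,\mu)\le\log\deg(f)$; this case is the easier one, since the derivative cocycle then offers no backward expansion and all entropy production must come from the branching.
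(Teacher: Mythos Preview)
The paper does not prove this proposition; it simply states it and attributes it to Liu~\cite{liu2003ruelle}. There is thus nothing in the paper to compare your argument against. Your sketch---lift to the natural extension, use $h(f,\mu)=h(\hf^{-1},\hmu)$, and run a Ma\~n\'e-type volume/cardinality count for the backward dynamics in which each step contributes at most $\deg(f)$ branch choices and the stable direction $E^-$ expands at rate $-\lambda^-$---is indeed the standard approach and is essentially how Liu proceeds; your remark that one must work fiberwise in smooth charts of $M$ because $M_f$ itself has no smooth structure is exactly the right technical caveat.

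One genuine issue: the inequality as printed in the paper carries a sign error, and you have reproduced it in your cardinality bound. The correct statement is
\[
h(f,\mu)\ \le\ \log\deg(f)\ -\!\!\sum_{\lambda_i(\mu)<0}\!\lambda_i(\mu)\ =\ \log\deg(f)\ +\!\!\sum_{\lambda_i(\mu)<0}\!|\lambda_i(\mu)|.
\]
Your own heuristic (``expansion of $E^-$ by $d\hf^{-1}$ at rate $-\lambda^-$'') gets the sign right, but the bound you then write, $\exp\bigl(n(\log\deg(f)+\sum_{\lambda<0}\lambda+\varepsilon)\bigr)$, can be strictly less than $1$ when $|\lambda^-|>\log\deg(f)+\varepsilon$, which is absurd for a count of partition atoms. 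Replace $\sum_{\lambda<0}\lambda$ by $-\sum_{\lambda<0}\lambda$ there and the argument is consistent; this is also the form in which the paper actually \emph{uses} the inequality (to deduce that $h>\log\deg(f)$ forces a negative exponent).
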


\parbreak\textbf{Hyperbolic measures.}
Let $\mu \in \bbP(f)$ and $\hmu = \pi_*\inv \mu$. We say that $\mu$ is hyperbolic if for $\hmu$-almost every $\hx \in M_f$, we have $\lambda^+(\hx),\lambda^-(\hx) \ne 0$. If $\mu$ is hyperbolic and $\lambda^+(\hx) > 0 > \lambda^-(\hx)$ for $\hmu$-almost every $\hx \in M_f$, we say that $\mu$ is hyperbolic of saddle type. Remark that in the case where $h_{top}(f)>\log\deg(f)$, Ruelle inequality \ref{prop:Ruelle} implies that any ergodic measure of maximal entropy is hyperbolic saddle.

\subsection{Pesin blocks}
Let us now enter the core of Pesin theory. We start by defining the Pesin blocks, the sets where hyperbolicity looks like uniform.
\begin{defn}
	Let $\chi,\epsilon >0$, $l>1$, with $\epsilon <\chi$. Let $k\leq 2$ be an integer. The Pesin block with constants $\chi,\epsilon,l,k$ is the set $\hNUH^k \subset M_f$ defined by the following. For each $\hx \in \hNUH^k$, there exists a $d\hf$-invariant splitting $T_{\hx}M = \Es(\hx) \oplus \Eu(\hx)$ where for all $n \geq 0, \ m\in \bbZ$:
	\begin{enumerate}
		\item[(PS1)] $\norm{d_{\hf^m(\hx)}\hf^n_{|\Es(\hf^m(\hx))}} \leq le^{-\chi n}e^{\epsilon|m|} \ \text{and} \ \norm{d_{\hf^m(\hx)}\hf^{-n}_{|\Es(\hf^m(\hx))}} \leq le^{-\chi n}e^{\epsilon|m|}$.
		\item[(PS2)] $\sin\measuredangle\Big(\Es\big(\hf^m(\hx)\big),\Eu\big(\hf^m(\hx)\big)\Big) \geq l\inv e^{-\epsilon|m|}$.
		\item[(PS3)] $\dim(\Eu(\hx)) = k$.
	\end{enumerate}
	\label{def:PesinBlock}
\end{defn}

It is well known that the Pesin blocks are compact sets and that the splitting $\hx \in \hNUH^k \rightarrow E^{u/s}(\hx)$ is continuous. The Pesin blocks are not invariant, but we notice that for each $m \in \bbZ$, we have $\hf^m(\hNUH^k) \subset \hat{\Lambda}_{\chi,\epsilon,le^{\epsilon|m|}}^k$. We also notice that if $\chi_1\leq\chi_2$, $\epsilon_2\leq\epsilon_1$, and $l_2\leq l_1$, then $\hat{\Lambda}_{\chi_2,\epsilon_2,l_2}^k \subset \hat{\Lambda}_{\chi_1,\epsilon_1,l_1}^k$. For constants $0<\epsilon<\chi$ and for an integer $k\leq 2$, we then define the corresponding Pesin set by:
\begin{equation*}
	\hat{\Lambda}_{\chi,\epsilon}^k = \bigcup_{l>1} \hNUH^k.
\end{equation*}
Notice that this set is $\hf$-invariant. We also denote the projection of these sets by:
\begin{equation*}
	\NUH^k = \pi\big(\hNUH^k\big) \quad \Lambda_{\chi,\epsilon}^k = \pi\big(\hat{\Lambda}_{\chi,\epsilon}^k\big).
\end{equation*}
It is also well known that for any hyperbolic measure $\mu \in \bbP_e(f)$, there exists a constant $\chi>0$ such that for all $\epsilon<\chi/2$, we have $\hmu(\hat{\Lambda}_{\chi,\epsilon}^k) = 1$, where $\hmu = \pi_*\inv\mu$.

\subsection{Lyapunov change of coordinates}

From now on, we fix $k=1$ and a Pesin set $\hat{\Lambda}^k_{\chi,\epsilon}$. To lighten the notation, we will write $\hat{\Lambda}_{\chi,\epsilon} = \hat{\Lambda}^k_{\chi,\epsilon}$ and $\Lambda^k_{\chi,\epsilon} = \Lambda_{\chi,\epsilon}$. For each $\hx \in \hat{\Lambda}_{\chi,\epsilon}$, define two unit vectors $e^s(\hx),e^u(\hx) \in T_{\hx}M$ such that $E^{u/s}(\hx) = \text{span}\{e^{u/s}(\hx)\}$. We also denote by $\{e_1,e_2\}$ the canonical basis of $\bbR^2$. Given $\hx \in \hat{\Lambda}_{\chi,\epsilon}$, let:
\begin{equation*}
	\begin{aligned}
		&s(\hx) = \Big(\sum_{n=0}^{+\infty} e^{n\chi} \norm{d_{\hx}\hf^n e^s(\hx)}^2\Big)^{\frac{1}{2}}\\
		&u(\hx) = \Big(\sum_{n=0}^{+\infty} e^{n\chi} \norm{d_{\hx}\hf^{-n} e^u(\hx)}^2\Big)^{\frac{1}{2}}.
	\end{aligned}
\end{equation*}
Note that these sums are well defined by property (PS1) from Definition \ref{def:PesinBlock}, and that the maps $\hx \rightarrow s(\hx)$ and $\hx \rightarrow u(\hx)$ are continuous on $\hNUH$.
\begin{defn}
	Given $\hx \in \hNUH$, the Lyapunov change of coordinates at $\hx$ is the linear map $C(\hx): \bbR^2 \rightarrow T_{\hx}M$ such that:
	\begin{equation*}
		C(\hx) e_1 = s(\hx)\inv e^s(\hx) \ \text{and} \ C(\hx) e_2 = u(\hx)\inv e^u(\hx).
	\end{equation*}
	\label{def:LyapunovChangeCoordinates}
\end{defn}
An easy calculation shows the following estimate.
\begin{lemma}
	For all $\hx \in \hNUH$, we have $\norm{C(\hx)} \leq \sqrt{2}$.
	\label{lem:LCCisContraction}
\end{lemma}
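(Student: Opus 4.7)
The plan is to unpack the definition of $C(\hx)$ and exploit the fact that the $n=0$ term in the series defining $s(\hx)$ and $u(\hx)$ already gives $1$, so both quantities are bounded below by $1$.

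First, I observe that since $e^s(\hx)$ is a unit vector, the term corresponding to $n=0$ in the sum defining $s(\hx)^2$ is $e^{0\cdot \chi} \norm{e^s(\hx)}^2 = 1$. All other terms are non-negative, so $s(\hx) \geq 1$, and hence $s(\hx)^{-1} \leq 1$. The exact same argument applied to $u(\hx)$ gives $u(\hx)^{-1} \leq 1$. In particular, the two columns of $C(\hx)$ satisfy $\norm{C(\hx) e_1} = s(\hx)^{-1} \leq 1$ and $\norm{C(\hx) e_2} = u(\hx)^{-1} \leq 1$, since $e^s(\hx)$ and $e^u(\hx)$ are unit vectors.

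Next, I take an arbitrary vector $v = (v_1, v_2) \in \bbR^2$ of unit Euclidean norm and estimate $\norm{C(\hx) v}$ using the triangle inequality:
\begin{equation*}
\norm{C(\hx) v} \leq |v_1|\, s(\hx)^{-1} \norm{e^s(\hx)} + |v_2|\, u(\hx)^{-1} \norm{e^u(\hx)} \leq |v_1| + |v_2|.
\end{equation*}
Finally, by Cauchy--Schwarz (or equivalently $(a+b)^2 \leq 2(a^2 + b^2)$), for any unit vector $v \in \bbR^2$ we have $|v_1| + |v_2| \leq \sqrt{2}$. Taking the supremum over all unit $v$ yields the claimed bound $\norm{C(\hx)} \leq \sqrt{2}$.

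There is no real obstacle here: the estimate is a direct consequence of the definition and the fact that the defining series start at $n=0$ with value $1$. The only point worth noting is that the bound does not depend on the angle between $e^s(\hx)$ and $e^u(\hx)$, which is controlled separately by the Pesin constant $l$; the loss of orthogonality is not seen at the level of the operator norm of $C(\hx)$, but will appear when one estimates the norm of $C(\hx)^{-1}$.
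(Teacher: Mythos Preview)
Your argument is correct and is exactly the ``easy calculation'' the paper alludes to without writing out: the $n=0$ terms force $s(\hx),u(\hx)\geq 1$, so each column of $C(\hx)$ has norm at most $1$, and the $\sqrt{2}$ comes from the standard $\ell^1$--$\ell^2$ bound on $\bbR^2$. There is nothing to add.
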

The Lyapunov changes of coordinates are also continuous on Pesin blocks in the following way.
\begin{lemma}
	For all $\delta^{'}$ small enough, there exists $\delta$ such that for all $\hx,\hy \in \hNUH$:
	\begin{equation*}
		d(\hx,\hy) < \delta \implies \norm{C(\hx) - P_{y_0,x_0}C(\hy)} < \delta^{'}
	\end{equation*}
	where $P_{y_0,x_0}$ denotes the parallel transport from $y_0:=\pi(\hy)$ to $x_0:=\pi(\hx)$.
\end{lemma}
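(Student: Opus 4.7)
\medskip

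The plan is to reduce the continuity statement to continuity of the three ingredients defining $C(\hx)$: the unit stable and unstable vectors $e^s(\hx), e^u(\hx)$, and the scalars $s(\hx), u(\hx)$. Since $C(\hx) e_1 = s(\hx)^{-1} e^s(\hx)$ and $C(\hx) e_2 = u(\hx)^{-1} e^u(\hx)$, once each ingredient is known to be uniformly continuous on $\hNUH$ (after identifying nearby tangent spaces via parallel transport), a triangle-inequality argument and the uniform lower bounds $s(\hx), u(\hx) \geq 1$ (which are visible from the $n=0$ term in the series) will yield the claim. More precisely, I would fix $\delta' > 0$, set a target for each of the three ingredients to be within $\delta''$ of each other (with $\delta''$ a small fraction of $\delta'$), and then find a common $\delta$ realizing all of them.

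First, I would invoke the standard fact, already recorded after Definition~\ref{def:PesinBlock}, that the splitting $\hx \mapsto E^{u/s}(\hx)$ is continuous on the compact set $\hNUH$. Choosing the unit vectors $e^{u/s}(\hx)$ consistently (e.g., continuously in each connected component), uniform continuity of $\hx \mapsto e^{u/s}(\hx)$ up to parallel transport follows from compactness: if $d(\hx,\hy) < \delta$, then $d(x_0, y_0) < \delta$, the parallel transport $P_{y_0, x_0}$ is well-defined and close to an isometry, and $\|e^s(\hx) - P_{y_0,x_0} e^s(\hy)\|$, $\|e^u(\hx) - P_{y_0,x_0} e^u(\hy)\|$ can be made arbitrarily small.

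The main step is the uniform continuity of $\hx \mapsto s(\hx)$ and $\hx \mapsto u(\hx)$ on $\hNUH$. The critical observation is that (PS1) gives, for every $\hx \in \hNUH$ and every $n \geq 0$, the bound $e^{n\chi}\|d_{\hx}\hf^n e^s(\hx)\|^2 \leq l^2 e^{-n\chi}$, and similarly for the unstable term. Hence the tails $\sum_{n \geq N} e^{n\chi}\|d_{\hx}\hf^n e^s(\hx)\|^2$ are bounded by $l^2 e^{-N\chi}/(1-e^{-\chi})$, uniformly over $\hNUH$. I would then choose $N$ large enough that these tails contribute less than $\delta''/2$ to each sum, and treat the remaining finite sum of continuous functions of $\hx$ (using continuity of $d\hf^n$ along orbits and of $e^{u/s}(\hx)$) by choosing $\delta$ small. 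Since $s, u \geq 1$, the resulting smallness of $|s(\hx) - s(\hy)|$ and $|u(\hx) - u(\hy)|$ translates into smallness of $|s(\hx)^{-1} - s(\hy)^{-1}|$ and analogously for $u$.

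The main obstacle I foresee is purely cosmetic: in the finite-sum step one must compare $\|d_{\hx}\hf^n e^s(\hx)\|$ with $\|d_{\hy}\hf^n e^s(\hy)\|$ although these live over different base points $x_0, y_0 \in M$. This is harmless because $f$ is $\cC^2$, parallel transport on a Riemannian manifold is smooth, and for each fixed $n$ the map $\hx \mapsto d_{\hx}\hf^n$ is continuous in the natural bundle sense; combined with continuity of $e^{u/s}$ this makes each individual term uniformly continuous in $\hx$. Once the three ingredients match up to $\delta''$, expanding $C(\hx) - P_{y_0,x_0} C(\hy)$ on the basis $\{e_1, e_2\}$ and bounding each component gives $\|C(\hx) - P_{y_0,x_0} C(\hy)\| < \delta'$ for $\delta''$ chosen proportionally small to $\delta'$, which completes the argument.
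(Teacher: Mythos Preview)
The paper states this lemma without proof; it is treated as a standard fact of Pesin theory, and the paper has already recorded just before Definition~\ref{def:LyapunovChangeCoordinates} that $s$ and $u$ are continuous on $\hNUH$. Your argument is correct and is exactly the standard one: the uniform tail bound from (PS1) makes the series for $s$ and $u$ converge uniformly on the compact block, hence they are continuous; combined with the continuity of $E^{u/s}$ (already noted after Definition~\ref{def:PesinBlock}) and the lower bound $s,u\geq 1$ from the $n=0$ term, the continuity of $C(\hx)=\big(s(\hx)^{-1}e^s(\hx),\,u(\hx)^{-1}e^u(\hx)\big)$ follows.
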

\begin{thm}
	There exists a constant $C_f$, which only depends on $f$, such that for all $\hx \in \hNUH$:
	\begin{equation*}
		C(\hf(\hx))\inv \circ d_{\hx}\hf \circ C(\hx) = \begin{pmatrix*}
			A(\hx) & 0\\
			0 & B(\hx)
		\end{pmatrix*}
	\end{equation*}
	where $C_f\inv < A(\hx) \leq e^{-\chi/2}$ and $e^{\chi/2} \leq B(\hx) < C_f$.
\end{thm}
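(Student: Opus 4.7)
The plan is to first observe that the conjugated matrix is diagonal, and then extract closed forms for the two diagonal entries from which the bounds follow almost immediately. Since $E^s$ and $E^u$ are $d\hf$-invariant one-dimensional sub-bundles, we can write
\[
d_{\hx}\hf\, e^s(\hx) = \alpha(\hx)\, e^s(\hf(\hx)), \qquad d_{\hx}\hf\, e^u(\hx) = \beta(\hx)\, e^u(\hf(\hx)),
\]
for scalars $\alpha(\hx), \beta(\hx)$, with signs of $e^s$ and $e^u$ chosen consistently along the orbit so that $\alpha, \beta > 0$. Applying $d_{\hx}\hf$ to $C(\hx)e_1 = s(\hx)^{-1} e^s(\hx)$ and $C(\hx)e_2 = u(\hx)^{-1} e^u(\hx)$ and reading the result in the basis $\{C(\hf(\hx))e_1, C(\hf(\hx))e_2\}$ yields no off-diagonal entries and
\[
A(\hx) = \alpha(\hx)\,\frac{s(\hf(\hx))}{s(\hx)}, \qquad B(\hx) = \beta(\hx)\, \frac{u(\hf(\hx))}{u(\hx)}.
\]
This already proves the diagonal form.

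Next I would exploit the telescoping structure of the two series. Peeling off the $n=0$ term in $s(\hx)^2$ and reindexing using $d_{\hx}\hf^n\,e^s(\hx) = \alpha(\hx)\, d_{\hf(\hx)}\hf^{n-1}\,e^s(\hf(\hx))$ yields
\[
s(\hx)^2 \;=\; 1 + e^{\chi}\,\alpha(\hx)^2\, s(\hf(\hx))^2,
\]
while the same manipulation for $u$, now using $d_{\hf(\hx)}\hf^{-1}\,e^u(\hf(\hx)) = \beta(\hx)^{-1} e^u(\hx)$, gives
\[
u(\hf(\hx))^2 \;=\; 1 + e^{\chi}\,\beta(\hx)^{-2}\, u(\hx)^2.
\]
Substituting these into the formulas above produces the clean identities
\[
A(\hx)^2 \;=\; e^{-\chi}\bigl(1 - s(\hx)^{-2}\bigr), \qquad B(\hx)^2 \;=\; e^{\chi} + \beta(\hx)^2\, u(\hx)^{-2}.
\]

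The required bounds then follow at once. The $n=0$ term in each series forces $s(\hx), u(\hx) \geq 1$, whence $A(\hx)^2 \leq e^{-\chi}$ and $B(\hx)^2 \geq e^{\chi}$, which are the two inequalities involving $\chi$. For the two inequalities involving $C_f$, the key input is that $f$ is a $\cC^1$ local diffeomorphism on the compact manifold $M$: there exist constants $m_f, M_f > 0$, depending only on $f$, such that $m_f \leq \|d_x f\,v\| \leq M_f$ for every unit tangent vector $v$ and every $x \in M$. Applied to $v = e^s(\hx)$ this gives $\alpha(\hx) \geq m_f$, hence $s(\hx)^2 \geq 1 + e^{\chi} m_f^2$ and therefore $A(\hx)^2 \geq m_f^2/(1 + e^{\chi} m_f^2) > 0$. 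Symmetrically, $\beta(\hx) \leq M_f$ and $u(\hx)^{-2} \leq 1$ give $B(\hx)^2 \leq e^{\chi} + M_f^2$. Choosing $C_f$ larger than both $\sqrt{e^{\chi} + M_f^2}$ and $m_f^{-1}\sqrt{1 + e^{\chi} m_f^2}$ closes the argument.

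The only real obstacle is bookkeeping: orienting $e^s$ and $e^u$ coherently along the orbit so that $\alpha$ and $\beta$ are positive, and reindexing the two series without losing track of the base point. Everything substantive is encoded in the $d\hf$-invariance of the Oseledets splitting and in the compactness of $M$.
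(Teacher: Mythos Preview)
Your argument is correct and is exactly the standard proof of the Oseledets--Pesin reduction theorem; the paper itself does not give a proof here, simply recording the statement as a known result and referring the reader to \cite{katok1995introduction} and \cite{sarig2013symbolic} for Pesin theory background. One small point worth tightening: as written, your constant $C_f$ involves $e^{\chi}$ and thus appears to depend on $\chi$ rather than on $f$ alone. This is easily fixed by observing that a non-empty block forces $e^{\chi}\le m_f^{-1}$: since $E^s$ is one-dimensional, $\|d_{\hx}\hf^n e^s(\hx)\|=\prod_{k=0}^{n-1}\alpha(\hf^k(\hx))\ge m_f^n$, and comparing with the bound $\|d_{\hx}\hf^n e^s(\hx)\|\le l e^{-\chi n}$ from (PS1) and letting $n\to\infty$ gives $m_f e^{\chi}\le 1$. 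Substituting this into your expressions yields bounds depending only on $m_f$ and $M_f$, hence only on $f$.
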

\subsection{Pesin charts}

By compactness, there exists a constant $\rho(M)>0$ such that for all $x \in M$, we have:
\begin{itemize}[label = {--}]
	\item $B(x,\rho(M)) \subset \exp_x\big(B_x(0,r(M))\big)$.
	\item For all $z \in M$, the map $(x,y) \rightarrow \exp_x(y)$ is well defined and $2$-Lipschitz in $B(z,\rho(M)) \times B(z,\rho(M))$.
	\item For all $z \in M$ and for all $x,y \in B(z,\rho(M))$, we have $\norm{d_y(\exp_x)\inv} \leq 2$.
\end{itemize}
For every $\hx \in \hNUH$, we define the map
\begin{equation*}
	\Psi_{\hx} = \exp_{x_0} \circ C(\hx).
\end{equation*}
By Lemma \ref{lem:LCCisContraction}, the map $\Psi_{\hx}$ is a diffeomorphism from $R(0,\frac{r(M)}{\sqrt{2}})$ onto its image. Define also the map
\begin{equation*}
	f_{\hx} = \Psi_{\hf(\hx)}\inv \circ f \circ \Psi_{\hx}
\end{equation*}
and note that the tangent map of $f_{\hx}$ at $0$ is the linear hyperbolic matrix
\begin{equation*}
	\begin{pmatrix*}
		A(\hx) & 0\\
		0 & B(\hx)
	\end{pmatrix*}.
\end{equation*}
\begin{thm}[Pesin]
	The following holds for all $\epsilon$ sufficiently small.
	
	There exists a measurable function $Q(\chi,\epsilon) : \hat{\Lambda}_{\chi,\epsilon} \rightarrow \bbR^+$ with the following properties for all $\tilde{\epsilon}\leq 1$.
	\begin{enumerate}
		\item $Q(\chi,\epsilon)$ is continuous on each $\hNUH$, $Q(\chi,\epsilon) < \epsilon/2$ and $e^{-\epsilon} \leq \big(Q(\chi,\epsilon)\circ \hf\big)/Q(\chi,\epsilon) \leq e^{\epsilon}$.
		\item The map $f_{\hx} = \Psi_{\hf(\hx)}\inv \circ f\circ \Psi_{\hx}$ is a well-defined diffeomorphism from $R(0,2Q(\chi,\epsilon)(\hx))$ onto its image and we have:
		\begin{enumerate}
			\item $f_{\hx}(0) = 0$ and $d_0f_{\hx} = \begin{pmatrix*}A(\hx) & 0\\ 0 & B(\hx)\end{pmatrix*}$.
			\item $\norm{f_{\hx} - d_0f_{\hx}}_{\cC^1} < \tilde{\epsilon}$ on $R(0,\tilde{\epsilon}2Q(\chi,\epsilon)(\hx))$.
		\end{enumerate}
		\item The symmetric statement holds for $f_{\hx}\inv$.
	\end{enumerate}
	\label{thm:finPesinChart}
\end{thm}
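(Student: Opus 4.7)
The plan is the classical Pesin chart construction adapted to the natural-extension setting: define $Q(\chi,\epsilon)$ explicitly in terms of $\norm{C(\hx)\inv}$ and the $\cC^2$-norm of $f$, then verify the conclusions by a second-order Taylor expansion of $f$ in exponential coordinates. Concretely, I take
\[
Q(\chi,\epsilon)(\hx) = \min\Big(\tfrac{\epsilon}{4},\; K\, (1+\norm{f}_{\cC^2})\inv\,\norm{C(\hx)\inv}\inv\Big),
\]
with a constant $K$ depending only on the Lipschitz constants of $\exp$ on $M$. On each Pesin block $\hNUH$, the norm $\norm{C(\hx)\inv}$ is bounded by a constant (depending on $\chi$) times $l$, using (PS2) together with the estimates $s(\hx), u(\hx) \leq l/\sqrt{1-e^{-\chi}}$ obtained from (PS1) and the definitions of $s, u$; this yields continuity of $Q$ on each block and the bound $Q<\epsilon/2$. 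The slowly varying estimate $e^{-\epsilon}\leq Q\circ\hf/Q\leq e^{\epsilon}$ reduces to controlling the ratios $s(\hf(\hx))/s(\hx)$ and $u(\hf(\hx))/u(\hx)$, which follow from the recursion $s(\hx)^2 = 1+e^{\chi}\norm{d\hf\,e^s(\hx)}^2 s(\hf(\hx))^2$ and its analogue for $u$, combined with (PS1) (possibly after replacing $\epsilon$ by a fixed fraction of itself).

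Condition (2a) is essentially automatic. By construction $\Psi_{\hx}(0)=x_0$, so $f\circ\Psi_{\hx}(0) = x_1 = \Psi_{\hf(\hx)}(0)$, and the chain rule combined with the preceding theorem gives $d_0 f_{\hx} = C(\hf(\hx))\inv \circ d_{x_0}f \circ C(\hx) = \mathrm{diag}(A(\hx),B(\hx))$. The well-definedness of $f_{\hx}$ on $R(0,2Q(\hx))$ reduces to checking $\Psi_{\hx}(R(0,2Q(\hx)))\subset B(x_0,\rho(M))$ and $f\circ\Psi_{\hx}(R(0,2Q(\hx)))\subset B(x_1,\rho(M))$, both guaranteed by $\norm{C(\hx)}\leq\sqrt{2}$, compactness of $M$, and $Q$ being sufficiently small.

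For condition (2b), I would write
\[
f_{\hx}(v) - d_0 f_{\hx}(v) = C(\hf(\hx))\inv\Big(\exp_{x_1}\inv\!\circ f\circ\exp_{x_0}(C(\hx)v) - d_{x_0}f\,(C(\hx)v)\Big),
\]
and expand the inner bracket by Taylor's formula. Its $\cC^2$-norm on $B_{x_0}(0,r(M))$ is bounded by a constant times $\norm{f}_{\cC^2}\norm{C(\hx)}^2$, which produces $\cC^0$ and $\cC^1$ bounds on $R(0,\tilde{\epsilon}\,2Q(\hx))$ of order $\norm{C(\hf(\hx))\inv}\,\norm{f}_{\cC^2}\,(\tilde{\epsilon}Q(\hx))^2$ and $\norm{C(\hf(\hx))\inv}\,\norm{f}_{\cC^2}\,\tilde{\epsilon}Q(\hx)$ respectively. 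Inserting $\norm{C(\hf(\hx))\inv}\leq e^{\epsilon}\norm{C(\hx)\inv}$ from the slowly varying estimate and substituting the definition of $Q$, both quantities fall below $\tilde{\epsilon}$ uniformly in $\hx$ and $\tilde{\epsilon}\leq 1$, provided $K$ is chosen small enough.

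The symmetric statement for $f_{\hx}\inv$ follows by the same scheme with $\hx$ and $\hf(\hx)$ exchanged, using the uniform bounds $A(\hx)>C_f\inv$ and $B(\hx)\inv >C_f\inv$ of the previous theorem to control $(d_0 f_{\hx})\inv$. The main obstacle, as in classical Pesin theory, is the bookkeeping between the two scales $\norm{C(\hx)}\leq\sqrt{2}$ and $\norm{C(\hx)\inv}$ (which is unbounded on $\hat{\Lambda}_{\chi,\epsilon}$): picking the right power of $\norm{C(\hx)\inv}$ in the definition of $Q$ is what makes the $\cC^1$ estimate on $R(0,2Q(\hx))$ compatible with the slowly varying property of $Q$ and with the requirement that $f_{\hx}$ land in the domain of $\Psi_{\hf(\hx)}\inv$. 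The recursions for $s$ and $u$ combined with (PS1) and (PS2) are precisely what makes the bookkeeping close consistently.
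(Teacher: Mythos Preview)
The paper does not prove this theorem; it is stated as a classical Pesin-theory fact, with the reader referred to Katok--Hasselblatt (Chapter~S) and Sarig (2013) for complete arguments. Your outline follows the standard construction and is correct in its broad strokes: the Taylor expansion argument for (2b), the chain-rule verification of (2a), and the treatment of $f_{\hx}\inv$ are all fine.

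There is, however, one genuine gap in your sketch: the slowly varying estimate $e^{-\epsilon}\leq Q\circ\hf/Q\leq e^{\epsilon}$ does \emph{not} follow from the recursion for $s$ and $u$ alone. The recursion $s(\hx)^2 = 1 + e^{\chi}\norm{d\hf\,e^s(\hx)}^2\,s(\hf(\hx))^2$ only gives $s(\hf(\hx))/s(\hx)$ bounded between constants depending on $\chi$ and $\norm{f}_{\cC^1}$ (roughly $[C_f\inv e^{\chi/2}, C_f e^{-\chi/2}]$), not between $e^{-\epsilon}$ and $e^{\epsilon}$; the same applies to $u$ and to the angle contribution in $\norm{C(\hx)\inv}$. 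With your formula $Q\sim\norm{C(\hx)\inv}\inv$, the ratio $Q\circ\hf/Q$ is therefore bounded but not $\epsilon$-close to~$1$, and replacing $\epsilon$ by a fraction of itself does not help. The standard remedy is Pesin's tempering lemma: starting from a preliminary $Q_0$ satisfying (2) and (3) (your construction), one sets $Q(\hx)=\inf_{n\in\bbZ}e^{|n|\epsilon}Q_0(\hf^n(\hx))$, which is automatically $\epsilon$-tempered and still satisfies (2)--(3) because $Q\leq Q_0$. This step is what is missing from your outline; once it is inserted, the argument goes through.
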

From now on, we will write $q_{\tilde{\epsilon}}$ for the map $\tilde{\epsilon}Q(\chi,\epsilon)$. We emphasize that $q_{\tilde{\epsilon}}$ does depend on the parameters of the Pesin set, but we choose not to write explicitly the dependence since we will often work on a fixed Pesin set.\\
We can now give a definition of a Pesin chart.
\begin{defn}[Pesin charts]
	Let $\hx \in \hat{\Lambda}_{\chi,\epsilon}$ and let $\kappa \leq Q(\chi,\epsilon)(\hx)$. A Pesin chart at the point $\hx$ is a pair $\big(\Psi_{\hx}, \kappa\big)$ which denotes the map $\Psi_{\hx}$ restricted to $\overline{R(0,\kappa)}$, where $\kappa$ is called the size of the Pesin chart.
	\label{def:PesinChart}
\end{defn}
\subsection{Admissible manifolds and graph transform}
\begin{defn}[$u/s$-manifolds]
	Let $\hx \in \hat{\Lambda}_{\chi,\epsilon}$. Let $\kappa\leq Q(\chi,\epsilon)(\hx)$. A $u$-manifold $V^u$ in the chart $\big(\Psi_{\hx},\kappa\big)$ is a set of the form:
	\begin{equation*}
		\Psi_{\hx}\big(\{(F(t),t), \ |t|\leq q\}\big)
	\end{equation*}
	where $q\leq \kappa$ and $F : [-q,q] \rightarrow \bbR$ is a Lipschitz function such that $\norm{F}_{\infty} \leq \kappa$. We call $F$ the representative function of the $u$-manifold $V^u$ and $q$ its size.\\
	Similarly, a $s$-manifold $V^s$ in the chart $\big(\Psi_{\hx},\kappa\big)$ is a set of the form:
	\begin{equation*}
		\Psi_{\hx}\big(\{(t,G(t)), \ |t|\leq q\}\big)
	\end{equation*}
	where $q\leq \kappa$ and $G : [-q,q] \rightarrow \bbR$ is a Lipschitz function such that $\norm{G}_{\infty} \leq \kappa$. We call $G$ the representative function of the $s$-manifold $V^s$ and $q$ its size.
	\label{def:us-manifolds}
\end{defn}
Let us now define the parameters of a $u/s$-manifold.
\begin{defn}[Parameters of a $u/s$-manifold]
	Let $\hx \in \hat{\Lambda}_{\chi,\epsilon}$. Let $\kappa\leq Q(\chi,\epsilon)(\hx)$. Let $V^{u/s}$ be a $u/s$-manifold in $\big(\Psi_{\hx},\kappa\big)$ and let $F^{u/s}$ be its representative function. We define:
	\begin{itemize}[label={--}]
		\item $\varphi(V^{u/s}) = |F^{u/s}(0)|$,
		\item $\gamma(V^{u/s}) = \text{Lip}(F^{u/s})$,
		\item $q(V^{u/s})$ the size of $V^{u/s}$.
	\end{itemize}
	\label{def:ParametersusManifolds}
\end{defn}
In the following, we will have to use different notions of $u/s$-manifolds, more or less restrictive. Let us define them.
\begin{defn}[$\gamma$-admissible $u/s$-manifolds]
	Let $\hx \in \hat{\Lambda}_{\chi,\epsilon}$. Let $\gamma<\epsilon$ and $\kappa \leq Q(\chi,\epsilon)(\hx)$. Let $V^{u/s}$ be a $u/s$-manifold in $\big(\Psi_{\hx},\kappa\big)$. We say that $V^{u/s}$ is $\gamma$-admissible if:
	\begin{itemize}[label={--}]
		\item $\varphi(V^{u/s}) \leq 10^{-3}\kappa$.
		\item $\gamma(V^{u/s}) \leq \gamma$.
		\item $q(V^{u,s}) = \kappa$.
	\end{itemize}
	Write $\mathcal{L}^{u/s}_{\gamma}\big(\Psi_{\hx},\kappa\big)$ for the set of all $\gamma$-admissible $u/s$-manifolds at $\big(\Psi_{\hx},\kappa\big)$.
	\label{def:Admissible$u/s$-manifold}
\end{defn}
We will also need to define admissible manifolds tangent to a cone field.
\begin{defn}[Cone field]
	Let $K \subset M$. A cone field on $K$ is the data of a subset $\mathscr{C} \subset TM_{|K}$, a line bundle $E\subset TM_{|K}$, and a map $\gamma : K \rightarrow \bbR^+$ such that for all $x \in K$, we have:
	\begin{equation*}
		\mathscr{C}_x \subset T_xM = \big\{v \in T_xM, \ \measuredangle\big(v,E_x\big) \leq \gamma(x)\big\}
	\end{equation*}
	We call $E$ the direction of the cone field and $\gamma$ the opening map of the cone field.
	
	We say that the cone field $\mathscr{C}$ is continuous if $E$ and $\gamma$ are continuous.
	\label{def:ConeFiled}
\end{defn}
We define in the same way a cone field on a set of $\bbR^2$. Let us now define the notion of admissible $u$-manifold tangent to a cone field.

\begin{defn}[$(\mathscr{C},u)$-manifold]
	Let $\hx \in \hat{\Lambda}_{\chi,\epsilon}$. Let $\kappa\leq Q(\chi,\epsilon)(\hx)$. Let $V$ be a $u$-manifold in $\big(\Psi_{\hx},\kappa\big)$ and let $F$ be its representative function. Finally, let $\mathscr{C}$ be a continuous cone field on $\overline{R\big(0,\kappa\big)}$. We say that $V$ is a $(\mathscr{C},u)$-manifold if:
	\begin{itemize}[label={--}]
		\item $|F(0)| < 10^{-3}\kappa$.
		\item $(F^{'}(t),1) \in \text{Int}\big(\mathscr{C}_{(F(t),t)}\big)$ for all $t$ where $F$ is differentiable.
		\item $q(V) \geq 10^{-2}\kappa$.
	\end{itemize}
	
	We denote by $\mathcal{L}^{(\mathscr{C},u)}\big(\Psi_{\hx},\kappa\big)$ the set of all $(\mathscr{C},u)$-manifolds in $\big(\Psi_{\hx},\kappa\big)$.
	\label{def:(C,u)AdmissibleManifold}
\end{defn}
Remark that a $\gamma$-admissible $u$-manifold is a $(\mathscr{C},u)$-manifold for a constant cone field $\mathscr{C}$.\\
We then define a natural way to compare the distance of two $u/s$-manifolds.
\begin{defn}[Pesin distance]
	Let $\hx \in \hat{\Lambda}_{\chi,\epsilon}$. Let $\kappa\leq Q(\chi,\epsilon)(\hx)$. Let $V_1^{u/s},V_2^{u/s}$ be two $u/s$-manifolds in $\big(\Psi_{\hx},\kappa\big)$. Let $F_1^{u/s},F_2^{u/s}$ be their representative functions and let $q_i^{u/s} = q(V_i^{u/s})$ for $i=1,2$. We denote by $q^{u/s} := \min(q^{u/s}_1,q_2^{u/s})$. We define the Pesin distance between $V_1^{u/s}$ and $V_2^{u/s}$ to be:
	\begin{equation*}
		d_{\hx}(V_1^{u/s},V_2^{u/s}) = \norm{F_1^{u/s}-F_2^{u/s}}_{\cC^0([-q^{u/s},q^{u/s}])}.
	\end{equation*}
	\label{def:PesinDistance}
\end{defn}
The next lemma aims to show that, given a continuous cone field, any Lipschitz graph tangent to it has a uniformly bounded Lipschitz constant.
\begin{lemma}
	Let $r_1>0$ and let $\mathscr{C}$ be a continuous cone field on $\overline{R(0,r_1)}$. The following is true for $0<r_2\leq r_1$ small enough. There exists a constant $L := L(\mathscr{C},r_1,r_2)$ with the following property. Any Lipschitz graph $\big\{\big(F(t),t), \ |t|\leq r_2\big\}$, such that for almost every $t$ we have $(F^{'}(t),1) \in \text{Int}\big(\mathscr{C}_{(F(t),t)}\big)$, satisfies that $\text{Lip}(F)\leq L$.
	\label{lem:LipschitzCstAdmissibleCuManifold}
\end{lemma}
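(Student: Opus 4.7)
The plan is to combine the uniform continuity of the cone field on the compact set $\overline{R(0,r_1)}$ with a simple angular triangle inequality.

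First I fix notation: let $E$ and $\gamma$ denote the direction and opening of $\mathscr{C}$, and set $\varphi_p := \measuredangle(E_p, e_2)$, the angle between the cone's direction at $p$ and the vertical axis. Since $E$ and $\gamma$ are continuous on the compact set $\overline{R(0,r_1)}$, they are uniformly continuous. The key geometric observation is that the existence of a Lipschitz graph $(F(t),t)$ tangent to $\mathrm{Int}(\mathscr{C})$ in a neighborhood of the origin forces $\varphi_{(0,0)} + \gamma((0,0)) < \pi/2$, i.e., $\mathscr{C}_{(0,0)}$ does not reach the horizontal direction. In the applications of this lemma, the cone field will be close to the unstable direction (which is vertical in Lyapunov coordinates), so this condition will be automatic.

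Next I use uniform continuity to propagate this to a neighborhood of the origin: I choose $r_2 > 0$ small enough that $\varphi_p + \gamma(p) \leq \pi/2 - \delta$ uniformly for some $\delta > 0$ over all $p \in \overline{R(0,r_2)}$. Then, at any differentiability point $t$ of $F$, the tangent vector $(F'(t), 1)$ lies in $\mathrm{Int}(\mathscr{C}_{(F(t),t)})$, so by the triangle inequality for angles between lines in $\mathbb{R}P^1$,
\begin{equation*}
	\measuredangle\big((F'(t), 1), e_2\big) \leq \varphi_{(F(t),t)} + \gamma(F(t),t) \leq \pi/2 - \delta.
\end{equation*}
Since $\measuredangle((F'(t), 1), e_2) = \arctan|F'(t)|$, we immediately obtain $|F'(t)| \leq \tan(\pi/2 - \delta) = \cot \delta$, and hence $\mathrm{Lip}(F) \leq L := \cot \delta$. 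The constant $\delta$, and therefore $L$, depends only on the moduli of continuity of $E$ and $\gamma$ on $\overline{R(0,r_1)}$ and on the choice of $r_2$, as required.

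The only delicate point is the geometric hypothesis $\varphi_{(0,0)} + \gamma((0,0)) < \pi/2$, from which everything else flows: this is what ensures that the cones stay strictly away from horizontal, so that vectors of the form $(F'(t),1)$ inside them have uniformly bounded first coordinate. Once this is secured, the rest of the argument is a routine compactness/continuity bookkeeping combined with a single angular triangle inequality.
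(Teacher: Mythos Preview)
Your claim that the existence of a tangent Lipschitz graph forces $\varphi_{(0,0)} + \gamma((0,0)) < \pi/2$ is false. Take the constant cone with axis at angle $\pi/4$ from vertical and opening $\pi/3$: it contains both the vertical direction (so $F\equiv 0$ is a tangent Lipschitz graph) and the horizontal direction, and $F(t)=Ct$ is tangent for arbitrarily large $C$, so no uniform Lipschitz bound exists. The condition that $\mathscr{C}_{(0,0)}$ excludes the horizontal direction is a genuine missing hypothesis in the lemma as stated; the paper's own proof carries exactly the same implicit assumption (its final line, ``this gives us the desired property,'' only yields a bound if the enveloping constant cone $\mathscr{C}^\star$ excludes horizontal, which is never checked). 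You correctly note that this is automatic in all the paper's applications, where the canonical unstable cone in Pesin charts contains the vertical direction and not the horizontal; just drop the incorrect justification and treat it as an extra hypothesis.

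With that hypothesis in place, your approach is correct and more direct than the paper's. The paper first encloses the variable cone field in a constant cone $\mathscr{C}^\star$, then runs a local-to-global covering argument (intervals $I_t$ on which the graph lies in the translated cone, patched together using translation-invariance of $\mathscr{C}^\star$) to conclude that the entire graph sits inside $\tilde{\mathscr{C}}^\star_{(F(t),t)}$ from any base point, which bounds all secant slopes. Your angular triangle inequality bounds $|F'(t)|$ pointwise and immediately, bypassing the constant-cone detour entirely. One minor slip: you apply $\varphi_p + \gamma(p) \le \pi/2 - \delta$ at $p=(F(t),t)$ but only established it on $\overline{R(0,r_2)}$, which would require $|F(t)|\le r_2$; either add that bound on $F$ as a hypothesis (it holds in applications, where $\|F\|_\infty\le\kappa$) or take the uniform estimate over $[-r_1,r_1]\times[-r_2,r_2]$ instead.
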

\begin{proof}
	Let $F$ be a Lipschitz function. Recall that since $F$ is Lipschitz, it is differentiable almost everywhere. Suppose that for any point $t \in [-r_2,r_2]$ where $F$ is differentiable at $t$, we have $(F^{'}(t),1) \in \text{Int}\big(\mathscr{C}_{(F(t),t)}\big)$.\\
	Up to take $r_2$ small enough, we can suppose by continuity of the cone field and compactness of $[-r_2,r_2]$, there exists a constant cone field $\mathscr{C}^{\star}$ (which means that its direction and opening coefficient are constant maps) such that for all $t \in [-r_2,r_2]$, we have:
	\begin{equation*}
		\mathscr{C}_{(F(t),t)} \subset \mathscr{C}^{\star}_{(F(t),t)}.
	\end{equation*}
	For $v \in \overline{R(0,r_1)}$, let $p_v : T_v\bbR^2 \rightarrow \bbR^2$ be the natural identification such that $p(0) = v$. Let $\tilde{\mathscr{C}}_{(F(t),t)} = p_{(F(t),t)}\big(\mathscr{C}^{\star}_{(F(t),t)}\big)$. Fix $t \in [-r_2,r_2]$ such that $F$ is differentiable at $t$. Since the map $t \rightarrow \tilde{\mathscr{C}}_{(F(t),t)}$ is continuous and $(F^{'}(t),1) \in \text{Int}(\mathscr{C}_{(F(t),t)})$, there exists $I_t$, a neighborhood of $t$, such that:
	\begin{equation*}
		\big\{(F(s),s), \ s \in I_t\big\} \subset \tilde{\mathscr{C}}_{(F(t),t)}.
	\end{equation*}
	Now, because $F$ is differentiable almost everywhere, the union of the intervals $I_t$ covers $[-r_2,r_2]$.
	Moreover, $p_{(F(t),t)}$ is an isometry and the cone $\tilde{\mathscr{C}}_{(F(t),t)}$ has the same constant direction and opening coefficient as $\mathscr{C}^{\star}$ at any $t$. These two properties imply that for any $t \in [-r_2,r_2]$, we have:
	\begin{equation*}
		\big\{\big(F(s),s\big), \ s \in [-r_2,r_2]\big\} \subset \tilde{\mathscr{C}}_{(F(t),t)}.
	\end{equation*}
	This gives us the desired property.
\end{proof}
Remark that the previous lemma implies that, given $\hx \in \hat{\Lambda}_{\chi,\epsilon}$ a continuous cone field $\mathscr{C}$ on $\overline{R\big(0,Q(\chi,\epsilon)\big)}$ and $0<\kappa\leq Q(\chi,\epsilon)$ small enough, any $\big(\mathscr{C},u\big)$-manifold in the chart $\big(\Psi_{\hx},\kappa\big)$ has a Lipschitz constant uniformly bounded by some constant depending only on $\mathscr{C}$ and $\kappa$.\\
Let us now give some classical results in Pesin theory concerning admissible manifolds.
\begin{prop}
	Let $\mathscr{C}$ be a continuous cone field on $\bbR^2$. The following holds for $\gamma := \gamma(\mathscr{C},\epsilon,\chi)$ sufficiently small.
	
	Let $\hx \in \hat{\Lambda}_{\chi,\epsilon}$. Let $\kappa\leq Q(\chi,\epsilon)(\hx)$. Let $V^u,V^s$ be respectively a $(\mathscr{C},u)$-manifold and a $\gamma$-admissible $s$-manifold in $\big(\Psi_{\hx},\kappa\big)$. Let $L$ be the constant given by Lemma~\ref{lem:LipschitzCstAdmissibleCuManifold} for the cone field $\mathscr{C}$ defined on $\overline{R(0,\chi/2)}$. Suppose that $\varphi(V^s)< \min\big(10^{-3}\kappa,\frac{10^{-3}}{L}\kappa\big)$. Then the following properties hold:
	\begin{enumerate}
		\item $V^u$ and $V^s$ intersect in a unique point that we denote by $[V^u,V^s] = \Psi_{\hx}(v,w)$.
		\item $|v|,|w|<2\times10^{-2}\kappa$.
	\end{enumerate}
	\label{prop:IntersectionAdmissiblemanifolds}
\end{prop}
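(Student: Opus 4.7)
The plan is to reformulate $V^u \cap V^s$ as the solution set of a fixed-point equation in Pesin coordinates. Let $F:[-q^u,q^u]\to\bbR$ be the representative function of $V^u$ and $G:[-\kappa,\kappa]\to\bbR$ that of $V^s$, with $q^u := q(V^u) \geq 10^{-2}\kappa$. Since $\Psi_{\hx}$ is a diffeomorphism onto its image, the point $\Psi_{\hx}(v,w)$ lies in $V^u \cap V^s$ if and only if $v=F(w)$ and $w=G(v)$, equivalently $v$ is a fixed point of $H:=F\circ G$ (with $w:=G(v)$ then automatic). So my plan is to apply Banach's fixed-point theorem to $H$ on a small interval around $0$.

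First I would use Lemma~\ref{lem:LipschitzCstAdmissibleCuManifold} to bound $\text{Lip}(F)\leq L$, and the $\gamma$-admissibility of $V^s$ to bound $\text{Lip}(G)\leq\gamma$, so that $\text{Lip}(H)\leq L\gamma$. Choosing $\gamma$ sufficiently small (depending on $\mathscr{C}$ through $L$, and on $\epsilon,\chi$ through the Pesin setup) makes $L\gamma<1/2$ and yields a contraction. Next I would check that $H$ carries $I:=[-2\times 10^{-2}\kappa,\,2\times 10^{-2}\kappa]$ into itself. Using $|F(0)|<10^{-3}\kappa$ (from the definition of a $(\mathscr{C},u)$-manifold), the strengthened hypothesis $|G(0)|<10^{-3}\kappa/L$, and the Lipschitz bounds, for $v\in I$ one estimates $|G(v)|\leq 10^{-3}\kappa/L+2\times 10^{-2}\gamma\kappa\leq 10^{-2}\kappa\leq q^u$, so $G(v)$ lies in the domain of $F$, and then $|H(v)|\leq 2\times 10^{-3}\kappa+2\times 10^{-2}L\gamma\kappa<2\times 10^{-2}\kappa$. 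Banach's theorem then yields a unique fixed point $v\in I$; setting $w:=G(v)$ gives the intersection point with the announced bounds $|v|,|w|<2\times 10^{-2}\kappa$.

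The main subtlety I expect to wrestle with is reconciling the various size constants. The Lipschitz constant $L$ is dictated by the cone field $\mathscr{C}$ via Lemma~\ref{lem:LipschitzCstAdmissibleCuManifold} and can be large, whereas the $u$-manifold's length is only guaranteed to satisfy $q^u\geq 10^{-2}\kappa$; this is precisely why the hypothesis on $\varphi(V^s)$ must carry the $1/L$ factor, ensuring that $G(v)$ stays inside $[-q^u,q^u]$ so that $F\circ G$ is well-defined. For the global uniqueness of $[V^u,V^s]$ (rather than just uniqueness within $I$), I would rerun the same size estimate on the full rectangle $[-\kappa,\kappa]\times[-q^u,q^u]$ to show that any putative intersection point must already lie in $I$, thereby reducing to the uniqueness provided by Banach.
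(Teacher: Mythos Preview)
Your approach is correct and is essentially the paper's argument: reduce to Banach's fixed-point theorem for the composition of the two representative functions. The paper composes in the opposite order, working with $F^s\circ F^u$ on $[-q(V^u),q(V^u)]$; that composition is automatically well-defined on the whole interval (since $\|F^u\|_\infty\leq\kappa=q(V^s)$), which delivers global uniqueness immediately and removes the need for your last paragraph. One small fix: your bound $|G(v)|\leq 10^{-3}\kappa/L+2\times 10^{-2}\gamma\kappa\leq 10^{-2}\kappa$ can fail when $L<1/10$; use the other half of the hypothesis, $|G(0)|<10^{-3}\kappa$, to get $|G(v)|\leq 10^{-3}\kappa+2\times 10^{-2}\gamma\kappa\leq 10^{-2}\kappa\leq q^u$ for all $L$.
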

\begin{proof}
	This type of argument is classical in Pesin theory.
	
	Let $V^u = \Psi_{\hx}\big(\big\{(F^u(t),t), \ t \in [-q(V^u), q(V^u)]\big\}\big)$ be a $(\mathscr{C},u)$-manifold and let $V^s=\Psi_{\hx}\big(\big\{(t,F^s(t)), \ t \in [-q(V^s),q(V^s)]\big\}\big)$ be a $\gamma$-admissible $s$-manifold, both in the Pesin chart $\big(\Psi_{\hx},\kappa\big)$, where $\kappa\leq Q(\chi,\epsilon)(\hx)$. By our hypothesis, we have $|F^s(0)|<\min\big(10^{-3}\kappa,\frac{10^{-3}}{L}\kappa\big)$. Consider the map $G=F^s\circ F^u$, which is well defined in $[-q(V^u),q(V^u)]$ since $q(V^s) = \kappa$.\\
	Note that for all $\epsilon<\chi/2$ and for all $\hx \in \hat{\Lambda}_{\chi,\epsilon}$, we have:
	\begin{equation*}
		\kappa \leq Q(\chi,\epsilon)(\hx) < \epsilon < \chi/2.
	\end{equation*}
	This implies in particular that $R(0,\kappa) \subset R(0,\chi/2)$. Let $L$ be the Lipschitz constant from Lemma~\ref{lem:LipschitzCstAdmissibleCuManifold} for the cone field $\mathscr{C}$ on $\overline{R(0,\chi/2)}$ and the constant $Q(\chi,\epsilon)(\hx)$ taken small enough, independently on $\hx$. Note that $L$ depends only on $\mathscr{C}$, $\epsilon$ and $\chi$.
	
	We will first show that $G([-q(V^u),q(V^u)]) \subset \big(-\frac{1}{\max(L,1)}10^{-2}\kappa,\frac{1}{\max(L,1)}10^{-2}\kappa\big)$. Indeed, take any $t \in [-q(V^u),q(V^u)]$, we have:
	\begin{equation*}
		\begin{aligned}
			|G(t)| \leq &|G(t)-G(0)| + |G(0)| \leq \gamma L|t| + |F^s\circ F^u(0)|\\
			&\leq \epsilon Lq(V^u) + |F^s\circ F^u(0) - F^s(0)| + |F^s(0)|\\
			&\leq \gamma Lq(V^u) + \gamma |F^u(0)| + |F^s(0)|\\
			&\leq \kappa \big(\gamma L + \gamma 10^{-3} + \min(10^{-3},10^{-3}/L)\big) < \kappa\min(10^{-2},10^{-2}/L)
		\end{aligned}
	\end{equation*}
	where the last inequality holds true for $\gamma$ small enough, depending only on $L$. Note that this also shows that the image of $G$ is contained in $[-q(V^u),q(V^u)]$ since $q(V^u)\geq 10^{-2}\kappa$.
	
	Secondly, let us prove that $V^s$ and $V^u$ intersect in a unique point. Remark that $G$ is $\gamma L$-Lipschitz and then a contraction of $[-q(V^u),q(V^u)]$ for $\gamma$ sufficiently small, again depending only on $L$. By the Banach-Picard fixed point theorem, $G$ has a unique fixed point $t^*$. Now, notice that $s \in [-q(V^u),q(V^u)]$ is a fixed point for $G$ if and only if $(F^u(s),s)$ is an intersection point of the two graphs $\big\{(F^u(t),t), \ t\in [-q(V^u),q(V^u)]\big\}$ and $\big\{(t,F^s(t)), \ t\in [-q(V^u),q(V^u)]\big\}$. We then have that $V^u$ and $V^s$ intersect in a unique point $\Psi_{\hx}(F^u(t^*),t^*)$ that we denote $[V^u,V^s]$. This finishes the proof of the first point.\\
	Finally, since $G([-q(V^u),q(V^u)]) \subset \big(-\frac{1}{\max(L,1)}10^{-2}\kappa,\frac{1}{\max(L,1)}10^{-2}\kappa\big)$, we have that $G(t^{\star}) = t^{\star} < \frac{1}{\max(L,1)}10^{-2}\kappa$. And then:
	\begin{equation*}
		|F^u(t^{\star})| \leq |F^u(t^{\star})-F^u(0)| + |F^u(0)| < 2\times10^{-2}\kappa.
	\end{equation*}
	which proves the second point and concludes the proof.
\end{proof}
The next proposition is the classical graph transform in our setting.
\begin{prop}[Graph transform]
	The following holds for all $\epsilon$ sufficiently small.
	
	Let $\mathscr{C}$ and $\tilde{\mathscr{C}}$ be two continuous cone fields on $\bbR^2$.
	The following holds for $\gamma:=\gamma(\chi,\epsilon)$ and $\tilde{\epsilon} := \tilde{\epsilon}(\chi,\mathscr{C},\gamma)$ sufficiently small.
	
	Let $\hx \in \hat{\Lambda}_{\chi,\epsilon}$. Let $\kappa\leq q_{\tilde{\epsilon}}(\hx)$ and $\tilde{\kappa}\leq q_{\tilde{\epsilon}}(\hf(\hx))$ such that $e^{-\tilde{\epsilon}}\leq \tilde{\kappa}/\kappa\leq e^{\tilde{\epsilon}}$. Suppose that $df_{\hx} \mathscr{C} \subset \text{Int}(\tilde{\mathscr{C}})$ on $\overline{R(0,q_{\tilde{\epsilon}}(\hx))}$. Then we have:
	\begin{enumerate}
		\item Let $V^u$ be a $(\mathscr{C},u)$-manifold in $\big(\Psi_{\hx},\kappa\big)$. Then $f(V^u)$ contains a unique $(\tilde{\mathscr{C}},u)$-manifold $\tilde{V}^u$ in $\big(\Psi_{\hf(\hx)}, \tilde{\kappa}\big)$ such that $q(\tilde{V}^u) = \max\big(q(V^u)e^{\chi/2 - \sqrt{\tilde{\epsilon}}}, \tilde{\kappa}\big)$.
		\item Let $V^{s}$ be a $\gamma$-admissible $s$-manifold in $\big(\Psi_{\hx},\kappa\big)$. Then $f\inv(V^s)$ contains a unique $\gamma$-admissible $s$-manifold in $\big(\Psi_{\hf\inv(\hx)}, \tilde{\kappa}\big)$.
		\item The symmetric statement holds for admissible $u$-manifolds.
	\end{enumerate}
	\label{prop:GraphTranform}
\end{prop}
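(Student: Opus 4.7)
The plan is to work entirely in Pesin coordinates, exploiting the fact that $f_{\hx} = \Psi_{\hf(\hx)}^{-1}\circ f \circ \Psi_{\hx}$ is $\tilde{\epsilon}$-close in $\cC^1$-norm to the diagonal hyperbolic linear map $d_0 f_{\hx} = \mathrm{diag}(A(\hx), B(\hx))$ on $R(0, \tilde{\epsilon}\,2Q(\chi,\epsilon)(\hx))$, with $A(\hx) \leq e^{-\chi/2}$ and $B(\hx) \geq e^{\chi/2}$ (Theorem~\ref{thm:finPesinChart}). The heart of the argument is an explicit computation of a new representative function for $f(V^u)$, using the fact that the "vertical" coordinate is expanded.

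For (1), write $V^u = \Psi_{\hx}\bigl(\{(F(t),t) : |t|\leq q(V^u)\}\bigr)$ with $(F'(t),1)\in\mathrm{Int}(\mathscr{C}_{(F(t),t)})$ a.e. Pushing forward through $f_{\hx}$ we get the curve $t \mapsto \bigl(f_{\hx,1}(F(t),t),\, f_{\hx,2}(F(t),t)\bigr)$ in the target chart. The key step is to invert the second coordinate: since $\partial_t f_{\hx,2}(0,0) = B(\hx)\geq e^{\chi/2}$ and $f_{\hx}$ is $\tilde{\epsilon}$-$\cC^1$-close to its linearization, Lemma~\ref{lem:LipschitzCstAdmissibleCuManifold} (applied to $\mathscr{C}$) together with the bound on $\mathrm{Lip}(F)$ shows that $t\mapsto f_{\hx,2}(F(t),t)$ is a bi-Lipschitz expansion with derivative at least $e^{\chi/2}-C\tilde{\epsilon} \geq e^{\chi/2 - \sqrt{\tilde{\epsilon}}}$ for $\tilde{\epsilon}$ small. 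Inverting gives a function $\tilde{F}$ defined on an interval of length at least $q(V^u) e^{\chi/2-\sqrt{\tilde{\epsilon}}}$, which we then truncate to fit the chart of size $\tilde{\kappa}$, obtaining a $u$-manifold of size $\min(q(V^u) e^{\chi/2-\sqrt{\tilde{\epsilon}}},\tilde{\kappa})$. To verify the $(\tilde{\mathscr{C}},u)$-admissibility, the cone-invariance hypothesis $df_{\hx}\mathscr{C}\subset\mathrm{Int}(\tilde{\mathscr{C}})$ on $\overline{R(0, q_{\tilde{\epsilon}}(\hx))}$ directly yields $(\tilde{F}'(s),1) \in \mathrm{Int}(\tilde{\mathscr{C}})$ almost everywhere. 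For the centering condition, from $f_{\hx}(0)=0$ and the $\tilde{\epsilon}$-$\cC^1$-closeness one obtains $|\tilde{F}(0)|\leq C\tilde{\epsilon}\cdot 10^{-3}\kappa$, which is $<10^{-3}\tilde{\kappa}$ once $\tilde{\epsilon}$ is chosen small using $e^{-\tilde{\epsilon}} \leq \tilde{\kappa}/\kappa$. Uniqueness follows because any other $(\tilde{\mathscr{C}},u)$-manifold inside $f(V^u)\cap\Psi_{\hf(\hx)}(R(0,\tilde{\kappa}))$ must be parametrized by the same graph $\tilde{F}$.

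For (2), one runs essentially the symmetric argument for $f^{-1}_{\hx}$, now noting that under $f^{-1}$ the roles are reversed: the first coordinate is expanded by $A(\hx)^{-1} \geq e^{\chi/2}$ and the second is contracted. Writing $V^s = \Psi_{\hx}(\{(t,G(t))\})$, one shows that $s\mapsto f_{\hf^{-1}(\hx),1}^{-1}(s,G(s))$ is bi-Lipschitz expanding, produces the new representative function $\tilde{G}$ by inversion, verifies $|\tilde{G}(0)|\leq 10^{-3}\tilde{\kappa}$ by the same estimate, and bounds $\mathrm{Lip}(\tilde{G})\leq\gamma$ by using the graph transform contraction on Lipschitz constants: a direct computation using the $\tilde{\epsilon}$-$\cC^1$ closeness yields $\mathrm{Lip}(\tilde{G}) \leq A(\hx)B(\hx)^{-1}\mathrm{Lip}(G) + C\tilde{\epsilon} \leq e^{-\chi}\gamma + C\tilde{\epsilon}$, which is $\leq\gamma$ for $\gamma$ and $\tilde{\epsilon}$ small enough (depending only on $\chi$). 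The size truncation is handled as in (1). Statement (3) is proved by transposing the roles in part (1).

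The main technical obstacle is bookkeeping: ensuring that the quantitative losses in the expansion/contraction rates coming from the $\cC^1$-closeness of $f_{\hx}$ to its linearization (controlled by $\tilde{\epsilon}$), from the non-constancy of $Q(\chi,\epsilon)$ along orbits (controlled by the ratio $\tilde{\kappa}/\kappa$), and from the non-zero Lipschitz slope $\gamma$ of the admissible graphs, can be simultaneously absorbed by choosing $\gamma=\gamma(\chi,\epsilon)$ and then $\tilde{\epsilon}=\tilde{\epsilon}(\chi,\mathscr{C},\gamma)$ sufficiently small, while still preserving all three admissibility conditions (centering, Lipschitz bound, minimum size). Establishing the expansion rate $e^{\chi/2-\sqrt{\tilde{\epsilon}}}$ (rather than the naive $e^{\chi/2}-C\tilde{\epsilon}$) is where the particular choice of $\sqrt{\tilde{\epsilon}}$ as a buffer term is crucial, and this will force the selection of $\tilde{\epsilon}$ at the very end.
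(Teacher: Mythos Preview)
Your proposal is correct and follows essentially the same route as the paper's proof: work in the Pesin chart, write $f_{\hx}(u,v)=(Au+h_1,\,Bv+h_2)$, invert the expanding second-coordinate map $t\mapsto Bt+h_2(F(t),t)$ to produce the new representative function, then verify cone-tangency via the hypothesis $df_{\hx}\mathscr{C}\subset\mathrm{Int}(\tilde{\mathscr{C}})$ and the centering bound $|\tilde F(0)|<10^{-3}\tilde\kappa$. Your observation that the resulting size is $\min\bigl(q(V^u)e^{\chi/2-\sqrt{\tilde\epsilon}},\tilde\kappa\bigr)$ is in fact what the paper's proof yields as well (the ``$\max$'' in the statement appears to be a typo).
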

\begin{proof}
	We prove the proposition in the case of $(\mathscr{C},u)$-manifolds. The case of $\gamma$-admissible $u/s$-manifolds is classical.
	
	Let $V^u = \Psi_{\hx}\big(\big\{(F^u(t),t), \ t \in [-q(V^u), q(V^u)]\big\}\big)$ be a $(\mathscr{C},u)$-manifold in $\big(\Psi_{\hx},\kappa\big)$ and let $L$ be the constant given by Lemma \ref{lem:LipschitzCstAdmissibleCuManifold} for the cone field $\mathscr{C}$ on $\overline{R(0,\chi/2)}$ and the constant $Q(\chi,\epsilon)(\hx)$ taken small enough uniformly on $\hx$. We first prove that $f(V^u)$ contains a Lipschitz graph in $\big(\Psi_{\hf(\hx)},\tilde{\kappa}\big)$.
	
	Let $f_{\hx}$ be the map defined in Theorem \ref{thm:finPesinChart}. Recall that:
	\begin{equation}
		f_{\hx} (u,v) = \big(Au + h_1(u,v), Bv + h_2(u,v)\big)
		\label{eq:finCharts}
	\end{equation}
	where $|A|,|B\inv| \leq e^{-\chi / 2}$ and $\norm{h_i}_{\cC^1} < \tilde{\epsilon}$ on $R\big(0,\kappa\big)$.
	
	For $t \in [-q(V^u),q(V^u)]$, define:
	\begin{equation}
		\tau(t) = Bt + h_2(F^u(t),t).
		\label{eq:MapTau}
	\end{equation}
	We want to show that $\tau$ is invertible, on a domain which we will evaluate, because it will imply that $f_{\hx}\big(\{(F^u(t),t), t \in [-q(V^u),q(V^u)]\}\big)$ is a graph on such a domain. Then, for $t \in [-q(V^u),q(V^u)]$ and $\tilde{\tau} \in \tau([-q(V^u),q(V^u)])$, define the map:
	\begin{equation*}
		h(t) = B\inv\big(\tilde{\tau}-h_2(F^u(t),t)\big).
	\end{equation*}
	Note that $h$ has a unique fixed point if and only if $\tau(t) = \tilde{\tau}$ has a unique solution.
	
	Let $t_1,t_2 \in [-q(V^u),q(V^u)]$, we then have:
	\begin{equation*}
		|h(t_1)-h(t_2)| = |B\inv \big(h_2(F^u(t_1),t_1) - h_2(F^u(t_2),t_2)\big)| < e^{-\chi/2}\tilde{\epsilon}(1+L)|t_1-t_2| < |t_1-t_2|.
	\end{equation*}
	By the contraction mapping theorem, if $\tilde{\epsilon}$ is small enough, $h$ has a unique fixed point for all $\tilde{\tau} \in \tau([-q(V^u),q(V^u)])$ and then $\tau\inv$ is well defined on the same domain.
	
	Let us now evaluate this domain. Let $t \in [-q(V^u),q(V^u)]$ and remark that:
	\begin{equation*}
		|\tau(t)| > e^{\chi/2}t - \tilde{\epsilon}(1+L)t.
	\end{equation*}
	this implies that $\tau([-q(V^u),q(V^u)]) \supset \big(-q(V^u)(e^{\chi /2} - \tilde{\epsilon}(1+L)),q(V^u) (e^{\chi /2} - \tilde{\epsilon}(1+L))\big) \supset [-q(V^u)e^{\chi/2 - \sqrt{\tilde{\epsilon}}}, q(V^u)e^{\chi/2 - \sqrt{\tilde{\epsilon}}}]$ provided $\tilde{\epsilon}$ is small enough. Note that $q(V^u)e^{\chi/2 - \sqrt{\tilde{\epsilon}}}\geq 10^{-2}\tilde{\kappa}$ for $\tilde{\epsilon}$ small enough since $\tilde{\kappa}/\kappa\geq e^{-\tilde{\epsilon}}$.
	
	We just showed that:
	\begin{equation*}
		f_{\hx}\big(\{(F^u(t),t), t \in [-q(V^u),q(V^u)]\}\big) \supset \{(G^u(t),t), \ t \in [-q(V^u)e^{\chi/2 - \sqrt{\tilde{\epsilon}}}, q(V^u)e^{\chi/2 - \sqrt{\tilde{\epsilon}}}]\}
	\end{equation*}
	for a map $G^u : [-q(V^u)e^{\chi/2 - \sqrt{\tilde{\epsilon}}}, q(V^u)e^{\chi/2 - \sqrt{\tilde{\epsilon}}}] \rightarrow \bbR$.\\
	We will now show that $G^u$ is Lipschitz and tangent to the cone field $\tilde{\mathscr{C}}$ for almost every point.
	
	First, let $t_1,t_2 \in [-q(V^u)e^{\chi/2 - \sqrt{\tilde{\epsilon}}}, q(V^u)e^{\chi/2 - \sqrt{\tilde{\epsilon}}}]$. We have:
	\begin{equation*}
		\begin{aligned}
			|G^u(\tau(t_1)) - G^u(\tau(t_2))| &= |A\big(F^u(t_1)-F^u(t_2)\big) + h_1\big(F^u(t_1),t_1\big) - h_1\big(F^u(t_2),t_2\big)|\\
			&\leq e^{-\chi/2} L|t_1-t_2| + \tilde{\epsilon}(1+L)|t_1-t_2|.
		\end{aligned}
	\end{equation*}
	On the other hand, we have:
	\begin{equation*}
		\begin{aligned}
			|\tau(t_1)-\tau(t_2)| &= |B(t_1-t_2) + h_2(F^u(t_1),t_1) - h_2(F^u(t_2),t_2)|\\
			&\geq e^{\chi /2}|t_1-t_2| - \tilde{\epsilon}(1+L)|t_1-t_2|.
		\end{aligned}
	\end{equation*}
	Note that this implies that $\tau\inv$ is $\big(e^{\chi/2}-\tilde{\epsilon}(1+L)\big)\inv$-Lipschitz on its domain.
	
	Finally, we then have:
	\begin{equation*}
		|G^u(\tau(t_1)) - G^u(\tau(t_2))| \leq \frac{Le^{-\chi /2} + \tilde{\epsilon}(1+L)}{e^{\chi/2} - \tilde{\epsilon}(1+L)} |\tau(t_1)-\tau(t_2)|.
	\end{equation*}
	Taking $\tilde{\epsilon}$ small enough ensures that $G^u$ is $L$-Lipschitz on $[-q(V^u)e^{\chi/2 - \sqrt{\tilde{\epsilon}}}, q(V^u)e^{\chi/2 - \sqrt{\tilde{\epsilon}}}]$. It remains to show that $G^u$ is tangent to $\tilde{\mathscr{C}}$ at almost every point. Let $t \in [-q(V^u),q(V^u)]$ such that $F^u$ is differentiable at $t$. Note that this implies that $G^u$ is differentiable at $\tau(t)$. Remark that:
	\begin{equation*}
		d_{(F^u(t),t)}f_{\hx} \big((F^u)^{'}(t),1\big) = \alpha\big((G^u)^{'}(\tau(t)), 1\big) \in d_{(F^u(t),t)}f_{\hx} \mathscr{C}_{(F^u(t),t)} \subset \text{Int}\big(\tilde{\mathscr{C}}_{(G^u(\tau(t)),\tau(t))}\big)
	\end{equation*}
	for a real number $\alpha$.
	
	To conclude the proof, we just have to estimate $G(0)$:
	\begin{equation*}
		|G(0)| = |AF^u(\tau\inv(0)) + h_1(F^u(\tau\inv(0)),0)| < (e^{-\chi/2}+\tilde{\epsilon} L)|\tau\inv(0)|.
	\end{equation*}
	On the other hand:
	\begin{equation*}
		\begin{aligned}
			|\tau\inv(0)|& = |\tau\inv(0) - \tau\inv(\tau(0))| \leq \big(e^{\chi/2}-\tilde{\epsilon}(1+L)\big)\inv |h_2(F^u(0),0)|\\
			&\leq \big(e^{\chi/2}-\tilde{\epsilon}(1+L)\big)\inv \tilde{\epsilon} |F^u(0)|.
		\end{aligned}
	\end{equation*}
	
	We then have:
	\begin{equation*}
		|G(0)| < (e^{-\chi/2}+\tilde{\epsilon} L) \big(e^{\chi/2}-\tilde{\epsilon}(1+L)\big)\inv \tilde{\epsilon} 10^{-3}q(V^u) < 10^{-3}|F^u(0)| e^{\chi - \sqrt{\tilde{\epsilon}}}<10^{-3}\tilde{\kappa}
	\end{equation*}
	for $\tilde{\epsilon}$ small enough, since $\tilde{\kappa}/\kappa\leq e^{\tilde{\epsilon}}$. Note that the $\tilde{\epsilon}$ that we have to take to make everything work only depends on $L$ and $\chi$. This then concludes the proof.
\end{proof}
This allows us to define the graph transform operator, which acts on all $u/s$-admissible manifolds.
\begin{defn}[Graph transform operator]
	Let $\mathscr{C}$, $\tilde{\mathscr{C}}$ be two continuous cone fields on $\bbR^2$. Let $\tilde{\epsilon}$ and $\gamma$ be small enough so that Proposition \ref{prop:GraphTranform} holds. Let $\hx \in \hat{\Lambda}_{\chi,\epsilon}$. Let $\kappa\leq q_{\tilde{\epsilon}}(\hx)$ and $\tilde{\kappa}\leq q_{\tilde{\epsilon}}(\hf(\hx))$ such that $e^{-\tilde{\epsilon}}\leq \tilde{\kappa}/\kappa\leq e^{\tilde{\epsilon}}$. Suppose that $df_{\hx} \mathscr{C} \subset \text{Int}(\tilde{\mathscr{C}})$ on $\overline{R(0,q_{\tilde{\epsilon}}(\hx))}$. Let $V^u$ be a $(\mathscr{C},u)$-manifold in $\big(\Psi_{\hx},\kappa\big)$. The graph transform operator at $\hx$ is the map:
	\begin{equation*}
		\mathcal{F}^u_{\hx} : \mathcal{L}^{(\mathscr{C},u)}\big(\Psi_{\hx},\kappa\big) \rightarrow \mathcal{L}^{(\tilde{\mathscr{C}},u)}\big(\Psi_{\hf(\hx)},\tilde{\kappa}\big)
	\end{equation*}
	such that $\mathcal{F}^u_{\hx}(V^u)$ is the unique $(\tilde{\mathscr{C}},u)$-manifold in $\big(\Psi_{\hf(\hx)},\tilde{\kappa}\big)$ contained in $f(V^u)$ with $q(\mathcal{F}^u_{\hx}(V^u)) = \max\big(q(V^u)e^{\chi/2 - \sqrt{\tilde{\epsilon}}},\tilde{\kappa}\big)$. We choose not to mark the dependence on the cone fields and on the size of the Pesin charts to lighten the notation. We will then explicitly specify the cone fields and the size we choose in each use of the graph transform.
	
	The operator $\mathcal{F}^u_{\hx}$ also acts naturally on $\mathcal{L}^u_{\gamma}\big(\Psi_{\hx},\kappa\big)$. We similarly define $\mathcal{F}^s_{\hx}$.
	\label{def:GraphTransformOperator}
\end{defn}
\begin{prop}[The graph transform is a contraction]
	Let $\mathscr{C}$ and $\tilde{\mathscr{C}}$ be two continuous cone fields on $\bbR^2$. The following holds for $\epsilon$ and $\tilde{\epsilon}:=\tilde{\epsilon}(\chi,\mathscr{C})$ and $\gamma:=\gamma(\chi,\tilde{\epsilon})$ small enough.
	
	Let $\hx \in \hat{\Lambda}_{\chi,\epsilon}$. Let $\kappa\leq q_{\tilde{\epsilon}}(\hx)$ and $\tilde{\kappa}\leq q_{\tilde{\epsilon}}(\hf(\hx))$ such that $e^{-\tilde{\epsilon}}\leq \tilde{\kappa}/\kappa\leq e^{\tilde{\epsilon}}$. Suppose that $df_{\hx} \mathscr{C} \subset \text{Int}(\tilde{\mathscr{C}})$ on $\overline{R(0,q_{\tilde{\epsilon}}(\hx))}$. Let $V^u_1,V^u_2$ be two $(\mathscr{C},u)$-manifolds in $\big(\Psi_{\hx},\kappa\big)$. Then we have:
	\begin{equation*}
		d_{\hf(\hx)}\big(\mathcal{F}^{u}_{\hx}(V^{u}_1),\mathcal{F}^{u}_{\hx}(V^{u}_2)\big) \leq e^{-\chi/4}d_{\hx}(V^{u}_1,V^{u}_2).
	\end{equation*}
	
	The same statement also holds for graph transforms of $\gamma$-admissible $u/s$-manifolds.
	
	The same statement also holds for any sub-curve of the $(\mathscr{C},u)$-manifolds $V^u_i$.
	\label{prop:ContractionTransGraph}
\end{prop}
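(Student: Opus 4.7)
The plan is to show contraction by working in the Pesin chart at $\hf(\hx)$, comparing the representative functions $G^u_1, G^u_2$ of the transformed manifolds $\mathcal{F}^u_{\hx}(V^u_i)$ and reducing everything to the uniform hyperbolicity encoded in the normal form of $f_{\hx}$ from Theorem~\ref{thm:finPesinChart}. Recall from the proof of Proposition~\ref{prop:GraphTranform} that $f_{\hx}(u,v)=(Au+h_1(u,v),Bv+h_2(u,v))$ with $|A|\le e^{-\chi/2}$, $|B^{-1}|\le e^{-\chi/2}$, $\norm{h_i}_{\cC^1}<\tilde{\epsilon}$, and that if $V^u_i$ has representative $F^u_i$ then $G^u_i$ is obtained via the reparametrization $\tau_i(t)=Bt+h_2(F^u_i(t),t)$ by
\begin{equation*}
	G^u_i(\tau_i(t))=AF^u_i(t)+h_1(F^u_i(t),t).
\end{equation*}
I would also fix, once and for all, the common uniform Lipschitz constant $L$ for $(\mathscr{C},u)$-manifolds given by Lemma~\ref{lem:LipschitzCstAdmissibleCuManifold}.

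The main computation. Fix $s$ in the common domain of $G^u_1$ and $G^u_2$ and let $t_i=\tau_i^{-1}(s)$, so $\tau_1(t_1)=\tau_2(t_2)=s$. First I would control $|t_1-t_2|$: subtracting the two equalities gives $B(t_1-t_2)=h_2(F^u_2(t_2),t_2)-h_2(F^u_1(t_1),t_1)$, whence by the $\cC^1$-bound on $h_2$ and $|B^{-1}|\le e^{-\chi/2}$,
\begin{equation*}
	|t_1-t_2|\le e^{-\chi/2}\tilde{\epsilon}\bigl(|F^u_1(t_1)-F^u_2(t_2)|+|t_1-t_2|\bigr),
\end{equation*}
which for $\tilde{\epsilon}$ small enough yields $|t_1-t_2|\le 2e^{-\chi/2}\tilde{\epsilon}\,|F^u_1(t_1)-F^u_2(t_2)|$. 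Next, using the triangle inequality and the Lipschitz bound $L$ on $F^u_1$,
\begin{equation*}
	|F^u_1(t_1)-F^u_2(t_2)|\le L|t_1-t_2|+\norm{F^u_1-F^u_2}_{\infty},
\end{equation*}
and substituting the previous estimate and absorbing gives $|F^u_1(t_1)-F^u_2(t_2)|\le 2\,d_{\hx}(V^u_1,V^u_2)$ for $\tilde{\epsilon}$ small enough (depending on $L$, hence on $\mathscr{C}$ and $\chi$).

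Now I plug into the expression for $G^u_i$:
\begin{equation*}
	|G^u_1(s)-G^u_2(s)|\le |A|\,|F^u_1(t_1)-F^u_2(t_2)|+\tilde{\epsilon}\bigl(|F^u_1(t_1)-F^u_2(t_2)|+|t_1-t_2|\bigr).
\end{equation*}
Using $|A|\le e^{-\chi/2}$ and the bounds above, this is dominated by $(e^{-\chi/2}+C\tilde{\epsilon})\,d_{\hx}(V^u_1,V^u_2)$ for some constant $C=C(L,\chi)$. Since $s$ was arbitrary, taking the supremum yields $d_{\hf(\hx)}(\mathcal{F}^u_{\hx}(V^u_1),\mathcal{F}^u_{\hx}(V^u_2))\le (e^{-\chi/2}+C\tilde{\epsilon})\,d_{\hx}(V^u_1,V^u_2)$, and choosing $\tilde{\epsilon}$ small enough (depending only on $\chi$ and $\mathscr{C}$ through $L$) gives the desired factor $e^{-\chi/4}$.

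The main obstacle is the reparametrization: the graphs $G^u_i$ are naturally parametrized by $\tau_i^{-1}$, which differs for the two curves, so the equation $\tau_1(t_1)=\tau_2(t_2)=s$ introduces a mismatch $|t_1-t_2|$ that must itself be controlled by the Pesin distance. This is exactly what the hyperbolic expansion $|B^{-1}|\le e^{-\chi/2}$ and the smallness of $h_2$ provide, together with the a priori uniform Lipschitz bound on $(\mathscr{C},u)$-manifolds. For the $\gamma$-admissible $u/s$-manifolds the argument is identical (with $L$ replaced by $\gamma$, which is already $<\epsilon$), and for the restriction to a sub-curve of $V^u_i$ one simply observes that the estimates above hold pointwise in $s$, so they transfer to any sub-interval of the common domain.
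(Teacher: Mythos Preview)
Your approach is essentially the same as the paper's: work in the chart, use the normal form $f_{\hx}=(Au+h_1,Bv+h_2)$, set $\tau_1(t_1)=\tau_2(t_2)=s$, bound $|t_1-t_2|$ via the expansion of $B$, and conclude. There is, however, one bookkeeping slip that breaks the final constant as written.

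After the absorption step you record $|F^u_1(t_1)-F^u_2(t_2)|\le 2\,d_{\hx}(V^u_1,V^u_2)$, and then plug this into $|A|\,|F^u_1(t_1)-F^u_2(t_2)|$ to claim a leading term $e^{-\chi/2}$. But with the crude factor $2$ the leading term is actually $2e^{-\chi/2}$, and $2e^{-\chi/2}\le e^{-\chi/4}$ is false for small $\chi$ (it requires $\chi>4\log 2$). The fix is immediate: your absorption inequality in fact gives
\[
|F^u_1(t_1)-F^u_2(t_2)|\le \bigl(1-2Le^{-\chi/2}\tilde{\epsilon}\bigr)^{-1}d_{\hx}(V^u_1,V^u_2)=(1+O_L(\tilde{\epsilon}))\,d_{\hx}(V^u_1,V^u_2),
\]
and with this the leading coefficient really is $e^{-\chi/2}$, so $(e^{-\chi/2}+C\tilde{\epsilon})\le e^{-\chi/4}$ for $\tilde{\epsilon}$ small enough. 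The paper avoids the issue by organising the computation slightly differently: it inserts the splitting $|F^u_1(t_1)-F^u_2(t_2)|\le \|F^u_1-F^u_2\|_{\infty}+L|t_1-t_2|$ directly inside the estimate for $|G^u_1-G^u_2|$ and only then substitutes $|t_1-t_2|\le \tilde{\epsilon}\,\|F^u_1-F^u_2\|_{\infty}$, so the coefficient in front of $\|F^u_1-F^u_2\|_{\infty}$ is $e^{-\chi/2}+\tilde{\epsilon}$ from the start and never picks up a spurious factor.
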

\begin{proof}
	We prove the proposition in the case of $(\mathscr{C},u)$-manifolds. The case of $\gamma$-admissible $u/s$-manifolds is classical. Take $\tilde{\epsilon}$ such that the graph transform (Definition \ref{def:GraphTransformOperator} and Proposition \ref{prop:GraphTranform}) is well defined.
	
	Let $V^u_i = \Psi_{\hx}\big(\big\{(F^u_i(t),t), \ t \in [-q(V^u_i), q(V^u_i)]\big\}\big)$ be $(\mathscr{C},u)$-manifolds in $\big(\Psi_{\hx},\kappa\big)$. Denote by $q = \min(q(V^u_1),q(V^u_2))$. Denote by $G^u_1$ and $G^u_2$ the representative functions of $\mathcal{F}^u_{\hx}(V^u_1)$ and $\mathcal{F}^u_{\hx}(V^u_2)$. Recall that by Proposition \ref{prop:GraphTranform}, $q(\mathcal{F}^u_{\hx}(V^u_i)) = \max\big(q(V^u_i)e^{\chi/2 - \sqrt{\tilde{\epsilon}}},\tilde{\kappa}\big)$. Denote by $\tilde{q} = \min\big(q(\mathcal{F}^u_{\hx}(V^u_1)),q(\mathcal{F}^u_{\hx}(V^u_2))\big)$. Recall the definition of the map $f_{\hx}$ from \eqref{eq:finCharts} and the definition of the maps $\tau_i$ from \eqref{eq:MapTau}. Finally, let $L$ be the constant given by Lemma \ref{lem:LipschitzCstAdmissibleCuManifold} for the cone field $\mathscr{C}$ and the constant $Q(\chi,\epsilon)(\hx)$ taken small enough, independently from $\hx$.
	
	Let $t_1,t_2\in [-q,q]$ such that $\tau_1(t_1) = \tau_2(t_2) \in [-\tilde{q},\tilde{q}]$. We have:
	\begin{equation*}
		\begin{aligned}
			|G^u_1(\tau_1(t_1)) - G^u_2(\tau_2(t_2))| &= |A\big(F^u_1(t_1) - F^u_2(t_2)\big) + h_1(F^u_1(t_1),t_1) - h_1(F^u_2(t_2),t_2)|\\
			&\leq e^{-\chi/2} |F^u_1(t_1) - F^u_2(t_2)| + \tilde{\epsilon}(|F^u_1(t_1) - F^u_2(t_2)| + |t_1-t_1|)\\
			&\leq (e^{-\chi/2}+\tilde{\epsilon})(|F^u_1(t_1)-F^u_2(t_1)| + |F^u_2(t_1)-F^u_2(t_2)|) + \tilde{\epsilon} |t_1-t_2|\\
			&\leq(e^{-\chi/2}+\tilde{\epsilon})\norm{F^u_1-F^u_2}_{\cC^0([-q,q])} + \big(L(e^{-\chi/2}+\tilde{\epsilon}) + \tilde{\epsilon}\big)|t_1-t_2|.
		\end{aligned}
	\end{equation*}
	
	On the other hand, we have:
	\begin{equation*}
		\begin{aligned}
			|B||t_1-t_2| &= |h_2(F^u_1(t_1),t_1) - h_2(F^u_2(t_2),t_2)|\\
			&\leq \tilde{\epsilon}\big( |F^u_1(t_1) - F^u_2(t_2)| + |t_1-t_2|\big)\\
			&\leq \tilde{\epsilon} \norm{F^u_1-F^u_2}_{\cC^0([-q,q])} + (L\tilde{\epsilon} + \tilde{\epsilon})|t_1-t_2|.
		\end{aligned}
	\end{equation*}
	This implies that:
	\begin{equation*}
		\big(e^{\chi/2}-(1+L)\tilde{\epsilon}\big)|t_1-t_2| \leq \tilde{\epsilon}\norm{F^u_1-F^u_2}_{\cC^0([-q,q])}
	\end{equation*}
	and then, if $\tilde{\epsilon}$ is small enough:
	\begin{equation*}
		|t_1-t_2| \leq \tilde{\epsilon} \norm{F^u_1-F^u_2}_{\cC^0([-q,q])}.
	\end{equation*}
	
	Putting it all together, we obtain:
	\begin{equation*}
		\begin{aligned}
			|G^u_1(\tau_1(t_1)) - G^u_2(\tau_2(t_2))| &\leq \Big(e^{-\chi/2} + \tilde{\epsilon} + \tilde{\epsilon}\big(L(e^{-\chi/2}+\tilde{\epsilon}) + \tilde{\epsilon}\big)\Big)\norm{F^u_1-F^u_2}_{\cC^0([-q,q])}\\
			&\leq e^{-\chi/4}\norm{F^u_1-F^u_2}_{\cC^0([-q,q])}
		\end{aligned}
	\end{equation*}
	for $\tilde{\epsilon}$ small enough, depending only on $L$ and $\chi$.
	
	Remark that the proof does not use the fact that $q(V^u_i)\geq \kappa$. The same statement then holds for any sub-curve of the $V^u_i$ and this concludes the proof.
\end{proof}
We are now able to define our local stable and unstable manifolds for points in $\hat{\Lambda}_{\chi,\epsilon}$. Let $\tilde{\mathcal{L}}_{\gamma}^{u/s}\big(\Psi_{\hx},\kappa\big)$ be the set of $\gamma$-admissible $u/s$-manifolds $V^{u/s}$ in $\big(\Psi_{\hx},\kappa\big)$ such that $\varphi(V^{u/s}) = 0$. We endow this space with the Pesin distance. The proof of the following theorem is classical using graph transform.
\begin{thm}[Pesin invariant manifold]
	The following holds for any $\epsilon$, $\gamma$ and $\tilde{\epsilon}:=\tilde{\epsilon}(\chi,\gamma)$ sufficiently small.
	
	Let $\hx \in \hat{\Lambda}_{\chi,\epsilon}$. There exist two unique families of $\gamma$-admissible respectively $u$- and $s$-manifolds $(V^u_m)_{m\in \bbZ}$ and $(V^s_m)_{m \in \bbZ}$ with the following properties.
	\begin{enumerate}
		\item For each $m \in \bbZ$, $V^u_m \in \tilde{\mathcal{L}}_{\gamma}^u\big(\Psi_{\hf^m(\hx)},q_{\tilde{\epsilon}}(\hf^m(\hx))\big)$ and $V^s_m \in \tilde{\mathcal{L}}_{\gamma}^s\big(\Psi_{\hf^m(\hx)},q_{\tilde{\epsilon}}(\hf^m(\hx))\big)$.
		\item For each $m\in \bbZ$, we have $\mathcal{F}^u_{\hf^m(\hx)}(V^u_m) = V^u_{m+1}$ and $\mathcal{F}^s_{\hf^m(\hx)}(V^s_m) = V^s_{m-1}$.
	\end{enumerate}
	\label{thm:Localu/sManifolds}
\end{thm}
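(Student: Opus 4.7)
The plan is to realize each $V^u_m$ (resp.\ $V^s_m$) as the limit of iterated graph transforms along the past (resp.\ future) of $\hx$, exploiting the $e^{-\chi/4}$-contraction established in Proposition~\ref{prop:ContractionTransGraph}. I would first fix $\epsilon$, $\gamma$, $\tilde{\epsilon}$ small enough (in that order) so that Propositions~\ref{prop:GraphTranform} and~\ref{prop:ContractionTransGraph} apply for the constant cone fields $\mathscr{C}^u, \mathscr{C}^s$ of opening $\gamma$ around the $e^{u/s}$-directions in each Pesin chart. The requisite cone inclusion $df_{\hy}\mathscr{C}^{u} \subset \mathrm{Int}(\mathscr{C}^{u})$ follows from the diagonal hyperbolic form of $d_0 f_{\hy}$ given by Theorem~\ref{thm:finPesinChart}.(2) once $\gamma$ and $\tilde{\epsilon}$ are small, and the size-ratio condition $e^{-\tilde{\epsilon}} \le q_{\tilde{\epsilon}}(\hf(\hy))/q_{\tilde{\epsilon}}(\hy) \le e^{\tilde{\epsilon}}$ is provided along every orbit by Theorem~\ref{thm:finPesinChart}.(1) as soon as $\epsilon \le \tilde{\epsilon}$. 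I would also require $\epsilon < \chi/4$.

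Fix $m \in \bbZ$. For each $n \ge 0$, let $V^{u,n}$ be the $\gamma$-admissible $u$-manifold in $(\Psi_{\hf^{m-n}(\hx)}, q_{\tilde{\epsilon}}(\hf^{m-n}(\hx)))$ whose representative function is identically zero, and set
\begin{equation*}
W^u_{m,n} := \cF^u_{\hf^{m-1}(\hx)} \circ \cdots \circ \cF^u_{\hf^{m-n}(\hx)}\bigl(V^{u,n}\bigr) \in \cL^u_\gamma\bigl(\Psi_{\hf^m(\hx)}, q_{\tilde{\epsilon}}(\hf^m(\hx))\bigr),
\end{equation*}
with the convention $W^u_{m,0} = V^{u,0}$. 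For $n' \ge n$, both $W^u_{m,n}$ and $W^u_{m,n'}$ are obtained by applying the same $n$ graph-transform steps (from $\hf^{m-n}(\hx)$ to $\hf^m(\hx)$) to two $\gamma$-admissible $u$-manifolds in the chart at $\hf^{m-n}(\hx)$; their Pesin distance is bounded by $2 q_{\tilde{\epsilon}}(\hf^{m-n}(\hx))$. Iterating the contraction of Proposition~\ref{prop:ContractionTransGraph} $n$ times yields
\begin{equation*}
d_{\hf^m(\hx)}\bigl(W^u_{m,n}, W^u_{m,n'}\bigr) \le 2 e^{-n\chi/4} q_{\tilde{\epsilon}}(\hf^{m-n}(\hx)) \le 2 e^{n(\epsilon - \chi/4)} q_{\tilde{\epsilon}}(\hf^m(\hx)),
\end{equation*}
so $(W^u_{m,n})_{n \ge 0}$ is Cauchy. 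Define $V^u_m := \lim_n W^u_{m,n}$. Each iterate has representative function vanishing at $0$ (because $f_\hy(0) = 0$ forces the graph transform of a curve through the base point to be a curve through the base point), Lipschitz bound $\gamma$, and size equal to the chart size; these three properties pass to the uniform limit, so $V^u_m \in \tilde{\cL}^u_\gamma(\Psi_{\hf^m(\hx)}, q_{\tilde{\epsilon}}(\hf^m(\hx)))$. Continuity of $\cF^u_{\hf^m(\hx)}$ (itself a contraction) gives $\cF^u_{\hf^m(\hx)}(V^u_m) = V^u_{m+1}$.

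The family $(V^s_m)_m$ is built symmetrically, as the limit of $\cF^s_{\hf^{m+1}(\hx)} \circ \cdots \circ \cF^s_{\hf^{m+n}(\hx)}$ applied to flat $s$-manifolds at $\hf^{m+n}(\hx)$. For uniqueness, if $(\tilde{V}^u_m)_m$ is another such family, the compatibility relation lets me write $V^u_m$ and $\tilde{V}^u_m$ as the same $n$ graph transforms applied respectively to $V^u_{m-n}$ and $\tilde{V}^u_{m-n}$, so the contraction estimate above gives $d_{\hf^m(\hx)}(V^u_m, \tilde{V}^u_m) \le 2 e^{n(\epsilon-\chi/4)} q_{\tilde{\epsilon}}(\hf^m(\hx)) \to 0$ as $n \to \infty$. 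I do not expect any serious obstacle: the only delicate step is the uniform verification that the cone-inclusion and size-ratio hypotheses of Propositions~\ref{prop:GraphTranform} and~\ref{prop:ContractionTransGraph} hold along the entire orbit of $\hx$, which is already encoded in the Pesin structure of Theorem~\ref{thm:finPesinChart}. The rest is the classical Hadamard--Perron argument transplanted to the natural-extension setting.
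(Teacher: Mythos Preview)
Your proposal is correct and is precisely the classical Hadamard--Perron argument the paper has in mind; the paper itself does not spell out a proof, simply noting that ``the proof of the following theorem is classical using graph transform.'' Your handling of the size-ratio hypothesis via Theorem~\ref{thm:finPesinChart}(1), the contraction iteration from Proposition~\ref{prop:ContractionTransGraph}, and the observation that $f_{\hy}(0)=0$ preserves the condition $\varphi=0$ are all the right ingredients.
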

\begin{defn}[Local invariant manifolds]
	We fix some choices of $\epsilon,\gamma,\tilde{\epsilon}>0$ such that Theorem \ref{thm:Localu/sManifolds} holds. Let $\hx \in \Lambda_{\chi,\epsilon}$. We define the local unstable manifold at $\hf^m(\hx)$ by $\Wu_{loc}(\hf^m(\hx)) = V^u_m$. These are $\cC^{r}$ embedded curves. We define  local stable manifolds similarly.
	\label{def:LocalInvManifolds}
\end{defn}
Note that these curves are $f$-invariant in the following way:
\begin{equation*}
	f\big(\Ws_{loc}(\hx)\big) \subset \Ws_{loc}(\hf(\hx)) \ \text{and} \ \Wu_{loc}(\hf(\hx)) \subset f\big(\Wu_{loc}(\hx)\big).
\end{equation*}
Note also that the map $\hx \rightarrow W^{u/s}_{loc}(\hx)$ is continuous for the $\cC^1$ topology on Pesin blocks. Since the stable bundle depend only on the points in the manifold, the map $x \mapsto \Ws_{loc}(x)$ is continuous on each Pesin block and for any point $x$ belonging to some Pesin block and any $\hx_1,\hx_2 \in \pi\inv(x)$ we have $\Ws_{loc}(\hx_1)=\Ws_{loc}(\hx_2)$.

Finally, we want to compare $\gamma$-admissible manifolds defined in Pesin charts of points in the same Pesin blocks which are close enough. This is done by the next lemma.
\begin{lemma}
	The following holds for $\gamma$ and $\tilde{\epsilon} := \tilde{\epsilon}(\gamma)$ sufficiently small.
	
	Let $l\geq 1$. There exists a positive constant $\delta_1 := \delta_1(\hNUH)$ with the following property. For each $\hx,\hy \in \hNUH$ such that $d(\hx,\hy)<\delta_1$ and for each $\gamma$-admissible $u/s$-manifold $V^{u/s}_{\hx}$ in $\big(\Psi_{\hx}, q_{\tilde{\epsilon}}(\hx)\big)$, there exists a $u/s$-manifold $V^{u/s}_{\hy}$ in $\big(\Psi_{\hy}, q_{\tilde{\epsilon}}(\hy)\big)$ such that:
	\begin{itemize}[label={--}]
		\item $V^{u/s}_{\hx} = V^{u/s}_{\hy}$ on $\Psi_{\hy}\big(R(e^{-4\tilde{\epsilon}}q_{\tilde{\epsilon}}(\hy))\big)$.
		\item $\varphi(V^{u/s}_{\hy}) < e^{\tilde{\epsilon}} \varphi(V^{u/s}_{\hx})$.
		\item $\gamma(V^{u/s}_{\hy}) \leq e^{2\tilde{\epsilon}} \gamma(V^{u/s}_{\hx})$.
	\end{itemize}
	\label{lem:AdmissibleManifoldinCloseCharts}
\end{lemma}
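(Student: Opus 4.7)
The plan is to exploit the continuity of the Lyapunov change of coordinates $C$ on $\hNUH$ (the lemma just before Theorem~\ref{thm:finPesinChart}) to show that the transition map $T := \Psi_{\hy}^{-1} \circ \Psi_{\hx}$ is $\cC^1$-close to the identity when $d(\hx,\hy)$ is small. Once this is established, the admissibility estimates for the representative function of $V^{u/s}_{\hy}$ follow from elementary computations, and the case of $s$-manifolds is symmetric to that of $u$-manifolds.

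First I would observe that since $Q(\chi,\epsilon)$ is continuous on the compact set $\hNUH$, the quantity $q_{\tilde\epsilon}(\hx)$ is uniformly bounded below on $\hNUH$, and the ratio $q_{\tilde\epsilon}(\hy)/q_{\tilde\epsilon}(\hx)$ is as close to $1$ as we wish when $d(\hx,\hy)$ is small. I then decompose
\begin{equation*}
	T \;=\; C(\hy)^{-1} \circ \bigl(\exp_{y_0}^{-1} \circ \exp_{x_0}\bigr) \circ C(\hx).
\end{equation*}
The map $\exp_{y_0}^{-1} \circ \exp_{x_0}$ is $\cC^1$-close to the parallel transport $P_{x_0,y_0}$ when $x_0$ and $y_0$ are close, a standard fact on a smooth Riemannian manifold. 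Combined with the estimate $\|C(\hx) - P_{y_0,x_0} C(\hy)\| < \delta'$ from the continuity lemma, this yields: for every $\eta>0$, there is $\delta_1 := \delta_1(\hNUH,\eta) > 0$ such that if $d(\hx,\hy)<\delta_1$, then $\|T - \operatorname{id}\|_{\cC^1(R(0,q_{\tilde\epsilon}(\hx)))} < \eta$.

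Now fix a $\gamma$-admissible $u$-manifold $V^u_{\hx} = \Psi_{\hx}(\{(F(t),t) : |t|\leq q_{\tilde\epsilon}(\hx)\})$ and write $T(u,v) = (u + \alpha(u,v),\; v + \beta(u,v))$ with $\|\alpha\|_{\cC^1}, \|\beta\|_{\cC^1} < \eta$. Define $\tau(t) := t + \beta(F(t),t)$. For $\eta$ small enough (depending on $\gamma$ and $\tilde\epsilon$), the map $\tau$ is a bi-Lipschitz homeomorphism onto its image, which contains the interval $[-e^{-4\tilde\epsilon}q_{\tilde\epsilon}(\hy),\,e^{-4\tilde\epsilon}q_{\tilde\epsilon}(\hy)]$ by the ratio estimate. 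Setting $G(s) := F(\tau^{-1}(s)) + \alpha(F(\tau^{-1}(s)),\tau^{-1}(s))$ produces a Lipschitz representative of the curve $T(V^u_{\hx})$ on that sub-interval, and direct quotient-rule estimates yield $\text{Lip}(G) \leq e^{2\tilde\epsilon}\gamma$ and $|G(0)| \leq e^{\tilde\epsilon}|F(0)|$. It remains to extend $G$ Lipschitz-continuously to the full interval $[-q_{\tilde\epsilon}(\hy),q_{\tilde\epsilon}(\hy)]$ without increasing $|G(0)|$ or $\text{Lip}(G)$ (for instance by prolonging $G$ affinely with slope at most $e^{2\tilde\epsilon}\gamma$ and then clipping to remain inside the chart), which defines the required $V^{u}_{\hy}$.

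The main technical point is the quantitative $\cC^1$-proximity of $T$ to the identity: the continuity of $C$ on $\hNUH$ is already given in the excerpt, and the remaining input is the standard $\cC^1$-comparison between exponential charts at nearby points of a compact Riemannian manifold. Everything downstream is a routine verification that small $\cC^1$-perturbations of the identity preserve the three admissibility parameters $\varphi$, $\gamma$, and $q$ up to a multiplicative factor close to $1$.
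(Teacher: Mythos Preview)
Your proposal is correct and follows essentially the same approach as the paper: both establish that the transition map $\Psi_{\hy}^{-1}\circ\Psi_{\hx}$ is $\cC^1$-close to the identity on the Pesin block, then run the same reparametrization-and-estimate argument (your $\tau$ is exactly the paper's map $h=\id+\tilde h$) to carry over the graph structure and bound $\varphi$ and the Lipschitz constant. Two minor differences: the paper asserts $\|\Psi_{\hy}^{-1}\circ\Psi_{\hx}-\id\|_{\cC^1}<\tilde\epsilon^2$ directly from continuity of the Pesin charts rather than decomposing through $C$ and the exponential map as you do, and the paper stops with $V^{u/s}_{\hy}$ of size $e^{-4\tilde\epsilon}q_{\tilde\epsilon}(\hy)$ rather than extending to the full interval (the extension is unnecessary since the definition of a $u/s$-manifold allows $q\le\kappa$).
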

\begin{proof}
	We give the details in the case of $s$-manifolds; the case of $u$-manifolds is completely analogous.
	
	Recall the fact from Theorem \ref{thm:finPesinChart} that the maps $q_{\tilde{\epsilon}}$ and $\hx \mapsto \Psi_{\hx}$ are uniformly continuous on $\hNUH$ (the latter is continuous for the $\cC^1$ topology). Then we can choose $\delta>0$ such that for any $\hx,\hy \in \hNUH$ satisfying $d(\hx,\hy)<\delta$, we have:
	\begin{itemize}[label={--}]
		\item $e^{-\tilde{\epsilon}}\leq q_{\tilde{\epsilon}}(\hx)/q_{\tilde{\epsilon}}(\hy)\leq e^{\tilde{\epsilon}}$,
		\item $\Psi_{\hx}\big(R(0,e^{-2\tilde{\epsilon}}q_{\tilde{\epsilon}}(\hx))\big) \subset \Psi_{\hy}\inv\big(R(0,q_{\tilde{\epsilon}}(\hy)\big)$ and $\Psi_{\hy}\big(R(0,e^{-2\tilde{\epsilon}}q_{\tilde{\epsilon}}(\hy))\big) \subset \Psi_{\hx}\inv\big(R(0,q_{\tilde{\epsilon}}(\hx)\big)$,
		\item $\norm{\Psi_{\hy}\inv\circ\Psi_{\hx}-\id}_{\cC^1}<\tilde{\epsilon}$ on $R(0,e^{-2\tilde{\epsilon}}q_{\tilde{\epsilon}}(\hx))$ and $\norm{\Psi_{\hx}\inv\circ\Psi_{\hy}-\id}_{\cC^1}<\tilde{\epsilon}^2$ on $R(0,e^{-2\tilde{\epsilon}}q_{\tilde{\epsilon}}(\hy))$.
	\end{itemize}
	Let $V^s_{\hx} = \Psi_{\hx}\big(\{(t,F^s(t)), |t|\leq q_{\tilde{\epsilon}}(\hx)\}\big)$ be a $s$-manifold in $\big(\Psi_{\hx},q_{\tilde{\epsilon}}(\hx)\big)$. Let $p_1,p_2 : \bbR^2 \rightarrow \bbR$ be the projections on the first and second coordinate. We will first show that $\Psi_{\hy}\inv(V^s_{\hx})$ is a graph, where it makes sense.\\
	Define the map $h : t\mapsto p_1\circ\Psi_{\hy}\inv\circ\Psi_{\hx}(t,F^s(t))$, which is well defined on $[-e^{-2\tilde{\epsilon}}q_{\tilde{\epsilon}}(\hx),e^{-2\tilde{\epsilon}}q_{\tilde{\epsilon}}(\hx)]$ since $q(V^s_{\hx}) = q_{\tilde{\epsilon}}(\hx)$. As in the proof of Proposition \ref{prop:GraphTranform}, if $h$ is one-to-one, then $\Psi_{\hy}\inv(V^s_{\hx})$ is a graph on the image of $h$. Write $h = \id + \tilde{h}$. It is sufficient to show that $\tilde{h}$ is a contraction of $[-e^{-2\tilde{\epsilon}}q_{\tilde{\epsilon}}(\hx),e^{-2\tilde{\epsilon}}q_{\tilde{\epsilon}}(\hx)]$. Let $t_1,t_2 \in [-e^{-2\tilde{\epsilon}}q_{\tilde{\epsilon}}(\hx),e^{-2\tilde{\epsilon}}q_{\tilde{\epsilon}}(\hx)]$, we have:
	\begin{equation*}
		\begin{aligned}
			|\tilde{h}(t_1)-\tilde{h}(t_2)| &= |t_1-\big(p_1\circ\Psi_{\hy}\inv\circ\Psi_{\hx}\big)(t_1,F^s(t_1)) - t_2 + \big(p_1\circ\Psi_{\hy}\inv\circ\Psi_{\hx}\big)(t_2,F^s(t_2))|\\
			&=|p_1\circ\big(\id - \Psi_{\hy}\inv\circ\Psi_{\hx}\big)(t_1,F^s(t_1)) - p_1\circ\big(\id - \Psi_{\hy}\inv\circ\Psi_{\hx}\big)(t_2,F^s(t_2))|\\
			&\leq |\big(\id - \Psi_{\hy}\inv\circ\Psi_{\hx}\big)(t_1,F^s(t_1)) - \big(\id - \Psi_{\hy}\inv\circ\Psi_{\hx}\big)(t_2,F^s(t_2))|\\
			&\leq\sup_{v \in R(0,e^{-2\tilde{\epsilon}}q_{\tilde{\epsilon}}(\hx))}\norm{d_{v}\big(\id - \Psi_{\hy}\inv\circ\Psi_{\hx}\big)} \norm{(t_1,F^s(t_1)) - (t_2,F^s(t_2))}\\
			&<\tilde{\epsilon}^2\big(|t_1-t_2|+|F^s(t_1)-F^s(t_2)|\big)\\
			&<\tilde{\epsilon}^2(1+\gamma)|t_1-t_2|.
		\end{aligned}
	\end{equation*}
	This then proves that $\tilde{h}$ is a contraction for a good choice of $\tilde{\epsilon}$. We now want to estimate the size of the image by $h$ of $[-e^{-2\tilde{\epsilon}}q_{\tilde{\epsilon}}(\hx),e^{-2\tilde{\epsilon}}q_{\tilde{\epsilon}}(\hx)]$. Let $t \in [-e^{-2\tilde{\epsilon}}q_{\tilde{\epsilon}}(\hx),e^{-2\tilde{\epsilon}}q_{\tilde{\epsilon}}(\hx)]$, we have:
	\begin{equation*}
		\begin{aligned}
			|h(t)| &= |t+\tilde{h}(t)|\geq |t| - |\tilde{h}(t)|\\
			&\geq |t| - \big(|\tilde{h}(t)-\tilde{h}(0)| + |\tilde{h}(0)|\big)\\
			&> |t| - \big(\tilde{\epsilon}^2(1+\gamma)|t| + \norm{(\id-\Psi_{\hy}\inv\circ\Psi_{\hx})(0,F^s(0))}\big)\\
			&>|t|-\big(\tilde{\epsilon}^2(1+\gamma)|t| + \tilde{\epsilon}^2\big)\\
			&> |t|\big(1-\tilde{\epsilon}^2(1+\gamma)\big) - \tilde{\epsilon}^2\\
			&>|t|(1-2\tilde{\epsilon}^2)-\tilde{\epsilon}^2
		\end{aligned}
	\end{equation*}
	for $\gamma$ sufficiently small. This shows that $h([-e^{-2\tilde{\epsilon}}q_{\tilde{\epsilon}}(\hx),e^{-2\tilde{\epsilon}}q_{\tilde{\epsilon}}(\hx)]) \supset [-(1-2\tilde{\epsilon})e^{-2\tilde{\epsilon}}q_{\tilde{\epsilon}}(\hx) + \tilde{\epsilon}, (1-2\tilde{\epsilon})e^{-2\tilde{\epsilon}}q_{\tilde{\epsilon}}(\hx) - \tilde{\epsilon}]\supset [-e^{-4\tilde{\epsilon}}q_{\tilde{\epsilon}}(\hy),e^{-4\tilde{\epsilon}}q_{\tilde{\epsilon}}(\hy)]$ for $\tilde{\epsilon}$ small enough.
	We just showed that $\Psi_{\hy}\inv(V^s_{\hx}) \cap R(0,e^{-4\tilde{\epsilon}}q_{\tilde{\epsilon}}(\hy)) = \big\{(t,G^s(t)), |t|\leq e^{-4\tilde{\epsilon}}q_{\tilde{\epsilon}}(\hy)\big\}$ for some map $G^s:[-e^{-4\tilde{\epsilon}}q_{\tilde{\epsilon}}(\hy),e^{-4\tilde{\epsilon}}q_{\tilde{\epsilon}}(\hy)] \rightarrow \bbR$.\\
	We will now show that $G^s$ is Lipschitz and estimate its constant. Let $\tau_1,\tau_2 \in [-e^{-4\tilde{\epsilon}}q_{\tilde{\epsilon}}(\hy),e^{-4\tilde{\epsilon}}q_{\tilde{\epsilon}}(\hy)]$. Let also $t_1,t_2 \in [-e^{-2\tilde{\epsilon}}q_{\tilde{\epsilon}}(\hx),e^{-2\tilde{\epsilon}}q_{\tilde{\epsilon}}(\hx)]$ such that $h(t_i)=\tau_i$. We have:
	\begin{equation*}
		\begin{aligned}
			|G^s(\tau_1)-G^s(\tau_2)|&= |p_2\circ\big(\id + (\Psi_{\hy}\inv\circ\Psi_{\hx} - \id)\big)(t_1,F^s(t_1))\\
			& - p_2\circ\big(\id + (\Psi_{\hy}\inv\circ\Psi_{\hx} - \id)\big)(t_2,F^s(t_2))|\\
			&\leq |F(t_1)-F(t_2)| + |(\Psi_{\hy}\inv\circ\Psi_{\hx} - \id)(t_1,F^s(t_1)) - (\Psi_{\hy}\inv\circ\Psi_{\hx} - \id)(t_2,F^s(t_2))|\\
			&< \text{Lip}(F^s)|t_1-t_2| + \tilde{\epsilon}^2\big(1+\text{Lip}(F^s)\big)|t_1-t_2|\\
			&\leq\big(\text{Lip}(F^s)+\tilde{\epsilon}^2(1+\text{Lip}(F^s)\big)|t_1-t_2|.
		\end{aligned}
	\end{equation*}
	On the other hand, we have:
	\begin{equation*}
		\begin{aligned}
			|\tau_1-\tau_2| &\geq |t_1-t_2| - |\big(\id-\Psi_{\hy}\inv\Psi_{\hx}\big)(t_1,F^s(t_1))-\big(\id-\Psi_{\hy}\inv\Psi_{\hx}\big)(t_2,F^s(t_2))|\\
			&> \big(1-\tilde{\epsilon}^2(1+\gamma)\big)|t_1-t_2|.
		\end{aligned}
	\end{equation*}
	Putting it all together, we finally have:
	\begin{equation*}
		|G^s(\tau_1)-G^s(\tau_2)|\leq \frac{\text{Lip}(F^s)+\tilde{\epsilon}^2\big(1+\text{Lip}(F^s)\big)}{1-\tilde{\epsilon}^2\big(1+\gamma\big)}|\tau_1-\tau_2|\leq e^{2\tilde{\epsilon}}\text{Lip}(F^s)|\tau_1-\tau_2|
	\end{equation*}
	the last inequality being true for $\tilde{\epsilon}$ small enough. This gives us the desired $s$-manifold $V^s_{\hy}$ from the statement.\\
	We still have to estimate $\varphi(V^s_{\hy})$. We have:
	\begin{equation*}
		|G^s(0)| = |p_2\circ\big(\id + (\Psi_{\hy}\inv\circ\Psi_{\hx} - \id)\big)(0,F^s(0))| < (1+\tilde{\epsilon}^2)|F^s(0)|<e^{\tilde{\epsilon}}|F^s(0)|.
	\end{equation*}
	for $\tilde{\epsilon}$ small enough. This then concludes the proof.
\end{proof}

\section{Two families of $us$-rectangles having a Markov-type property}\label{sec:Rectangles}

In this section, we fix $\mu \in \bbP(f)$, a saddle hyperbolic measure, possibly non-ergodic. Write $\hmu = \pi_*\inv \mu$. The goal of the present section is to construct two families of rectangles with a Markov-type property. Let us fix a Pesin set $\hat{\Lambda}_{\chi,\epsilon}$. We take $\epsilon$ such that all the results from Section \ref{section:PesinTheory} hold. Fix then a Pesin block $\hNUH \subset \hat{\Lambda}_{\chi,\epsilon}$ that we will simply write as $\hat{\Lambda}$ to simplify the notation. Let also $\tilde{\epsilon}<\epsilon$.

\subsection{Natural unstable cone fields}
In this paragraph, we will construct a natural unstable cone field $\mathscr{C}^u$ on $\Lambda := \pi(\hat{\Lambda})$, which does not depend on the fibers of the bundle $M_f$. We will then study the properties of admissible manifolds tangent to $\mathscr{C}^u$.

\begin{prop}[Canonical unstable cone field]
	There exists a continuous cone field $\mathscr{C}^u$ on $\Lambda$ with the following properties:
	\begin{enumerate}
		\item For every $\hx \in \hat{\Lambda}$, $\Eu(\hx)\subset \text{Int}\big(\mathscr{C}^u_{x}\big)$, where $x=\pi(\hx)$.
		\item There exists a neighborhood $V$ of $\Lambda$ such that $\mathscr{C}^u$ can be extended to a continuous cone field $\mathscr{C}^{u,ext}$ on $V$.
		\item There exists an integer $N \geq 1$ such that for all $n \geq N$ and all $x \in \pi\big(\hat{\Lambda} \cap \hf^{-n}(\hat{\Lambda})\big)$, we have:
		\begin{equation*}
			df^n \mathscr{C}^u_x \subset \text{Int}\big(\mathscr{C}^u_{f^n(x)}\big).
		\end{equation*}
	\end{enumerate}
	\label{prop:UnstableConeField}
\end{prop}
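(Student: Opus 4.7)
The plan is to build $\mathscr{C}^u$ as a cone around the orthogonal complement of the stable direction, exploiting the fact (recorded just before Ruelle's inequality in the excerpt) that $E^s(\hat{x})$ depends only on $x = \pi(\hat{x})$, not on the past. Thus $\hat{x}\mapsto E^s(\hat{x})$ factors through $\pi$ to a well-defined map $x \mapsto E^s(x)$ on $\Lambda$, whose continuity follows from compactness of $\hat{\Lambda}$: for $x_n \to x$ in $\Lambda$, any choice of lifts $\hat{x}_n \in \pi^{-1}(x_n) \cap \hat{\Lambda}$ has a convergent subsequence in $\hat{\Lambda}$, and uniqueness of the limit $E^s(x)$ forces $E^s(x_n) \to E^s(x)$ along the full sequence.

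I would then fix $\alpha_0 > 0$ strictly smaller than $\arcsin(l^{-1})$ (permitted by (PS2) at $m = 0$) and define
$$\mathscr{C}^u_x := \{v \in T_x M : \measuredangle(v, E^s(x)) \geq \alpha_0\}$$
for $x \in \Lambda$, a cone with direction $E^s(x)^\perp$ and opening $\pi/2 - \alpha_0$. Property 1 is then immediate from (PS2) and the choice of $\alpha_0$. For property 2, I would cover $\Lambda$ by finitely many open charts trivializing $TM$, encode $E^s$ locally as the orthogonal projection onto itself (a continuous map valued in the Euclidean space of rank-one self-adjoint endomorphisms), extend by Tietze into each chart, glue by a partition of unity and shrink so that the resulting endomorphism field still has rank one. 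This produces a continuous line subbundle $E^{s,\text{ext}}$ on a neighborhood $V$ of $\Lambda$, and the same formula defines $\mathscr{C}^{u,\text{ext}}$ on $V$.

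For property 3, fix $x \in \pi(\hat{\Lambda} \cap \hat{f}^{-n}(\hat{\Lambda}))$, pick $\hat{x} \in \pi^{-1}(x) \cap \hat{\Lambda}$ with $\hat{f}^n(\hat{x}) \in \hat{\Lambda}$, and decompose any $v \in \mathscr{C}^u_x$ as $v = v^s + v^u$ in the splitting at $\hat{x}$. The hypothesis $\measuredangle(v, E^s(\hat{x})) \geq \alpha_0$ together with the uniform lower bound on $\measuredangle(E^s, E^u)$ on $\hat{\Lambda}$ yields $|v^u| \geq c(\alpha_0, l)|v|$. Condition (PS1) gives $|df^n(v^s)| \leq l e^{-\chi n}|v|$, and the analogous expansion estimate on $E^u$ provided by the Pesin block gives $|df^n(v^u)| \geq l^{-1} e^{\chi n}|v^u|$. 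Hence $df^n(v)$ aligns asymptotically with $E^u(\hat{f}^n(\hat{x}))$, so
$$\measuredangle\bigl(df^n(v), E^s(f^n(x))\bigr) \ \longrightarrow \ \measuredangle\bigl(E^u(\hat{f}^n(\hat{x})), E^s(\hat{f}^n(\hat{x}))\bigr) \geq \arcsin(l^{-1}) > \alpha_0$$
as $n \to \infty$, with the convergence uniform on $\hat{\Lambda}$. By compactness, a single $N$ works so that $df^n\,\mathscr{C}^u_x \subset \text{Int}(\mathscr{C}^u_{f^n(x)})$ for every $n \geq N$.

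The main subtle point, in my view, is property 2: extending a line field continuously across a closed set of possibly complicated topology. Working with the orthogonal projection rather than with the line itself avoids the non-triviality of $\mathbb{RP}^1$, but one must verify that the extended endomorphism field keeps rank one in a sufficiently small neighborhood of $\Lambda$. The other two points reduce to a direct application of the uniform Pesin estimates on the block together with a routine angle computation.
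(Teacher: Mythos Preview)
Your proposal is correct and follows the paper's construction for properties 1 and 2: the paper also defines $\mathscr{C}^u_x$ as the complement of a small cone $\mathscr{C}^s_x$ around the well-defined stable direction $E^s(x)$, and extends to a neighborhood via a Tietze-type argument on the line bundle $E^s$.

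For property 3 you take a genuinely different, and more elementary, route. The paper passes through the Lyapunov change of coordinates $C(\hx)$: it pulls the cones back to $\bbR^2$, exploits that in these coordinates $d_0f^n_{\hx}$ is the diagonal matrix $\text{diag}(A^n,B^n)$ with $|A|,|B|^{-1}\le e^{-\chi/2}$, shows that horizontal cones are contracted exponentially, and then transfers back using the uniform bound on $\|C(\hx)\|$ over the compact block $\hat\Lambda$. You instead work directly in $T_xM$ with the raw Pesin estimates (PS1)--(PS2), decomposing $v=v^s+v^u$ and tracking the exponential decay of $df^n v^s$ against the exponential growth of $df^n v^u$; the uniform angle bound (PS2) on both $\hat\Lambda$ and its image guarantees the limiting direction stays inside the cone. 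Your argument is shorter and avoids the Lyapunov-coordinate machinery; the paper's route has the advantage of being phrased in the same framework it uses for the later graph-transform arguments, but for this particular statement your direct approach is cleaner.
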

\begin{proof}
	Recall that for every $x \in \Lambda$ and every $\hx,\hy \in \pi\inv(x)$, we have $\Es(\hx) = \Es(\hy) = \Es(x)$. In particular, this implies that the map $x \rightarrow \Es(x)$ is continuous on $\Lambda$. By the definition of the Pesin block \ref{def:PesinBlock}, we know that there exists a constant $\gamma_s >0$ such that for all $x \in \Lambda$, the cone:
	\begin{equation*}
		\mathscr{C}^s_x = \{v = v_s+v_c \in \Es(x) \oplus (\Es(x))^{\perp}, \ \norm{v_c}\leq\gamma_s\norm{v_s}\}
	\end{equation*}
	does not intersect any of the unstable directions $\Eu(\hx)$ for $\hx \in \hat{\Lambda}\cap \pi\inv(x)$.
	
	Define then $\mathscr{C}^u_x$ to be the complement of the cone $\mathscr{C}^s_x$ in $T_xM$, removing the zero vector. Since the continuity of $x\mapsto \mathscr{C}^s_x$ is implied by the continuity of $x \mapsto \Es(x)$, the map $x \mapsto \mathscr{C}^u_x$ is also continuous. This proves the first item.
	
	To prove the second item, consider the map $x \mapsto \Es(x)$ from $\Lambda$ to $PTM$, the projective tangent bundle of $M$. It is standard that, given a vector bundle over a compact subset, one can extend it to a neighborhood. For completeness, let us sketch the argument. Cover $\Lambda$ by a finite number of charts $U_i$ such that $E^s$ is homeomorphic to $(U_i\cap\Lambda)\times \bbR$. Extend $E^s$ trivially on each chart. Now, up to reducing the $U_i$, one can extend the transition functions to $U_i$ by Tietze extension Theorem. This allows us to build a vector bundle via the usual way on $V:=\cup U_i$ which extends $E^s$.
	
	It remains to prove the last point. Take $\hx \in \hat{\Lambda}$ and consider the Pesin chart $\big(\Psi_{\hx},q_{\tilde{\epsilon}}(\hx)\big)$. Recall that by Theorem \ref{thm:finPesinChart}, for all $n \geq 1$, the map $f^n_{\hx} = \Psi_{\hf^n(\hx)}\inv\circ f^n \circ \Psi_{\hx}$ is a well-defined diffeomorphism from $R(0,q_{\tilde{\epsilon}}(\hx))$ onto its image. Recall that $d_0f^n_{\hx}$ is the following hyperbolic matrix:
	\begin{equation*}
		\begin{pmatrix*} A^n & 0\\ 0 & B^n\end{pmatrix*},
	\end{equation*}
	where $|A|,|B|\inv \leq e^{-\chi/2}$.
	
	Let $\gamma>0$ and define the $\gamma$-horizontal cone:
	\begin{equation*}
		\mathscr{C}^H_{\gamma} = \{(u_1,u_2) \in \bbR^2, \ |u_2|\leq\gamma|u_1|\}.
	\end{equation*}
	We are going to show that for $n\geq 1$, the map $(d_0f^n_{\hx})\inv$ contracts exponentially the $\gamma$-horizontal cone. Let $u = u_1 + u_2 \in \mathscr{C}^H_{\gamma}$. We then have:
	\begin{equation*}
		\begin{aligned}
			|(d_0f^n_{\hx})\inv u_2| &= |B^n|\inv|u_2| \leq e^{-n\chi/2}|u_2|\\
			&\leq \gamma e^{-n\chi/2}|u_1|\\
			&\leq \gamma e^{-n\chi/2} e^{-n\chi/2}|(d_0f^n_{\hx})\inv u_1|.
		\end{aligned}
	\end{equation*}
	This proves that $(d_0f^n_{\hx})\inv \mathscr{C}^H_{\gamma} \subset \mathscr{C}^H_{\gamma e^{-n\chi}}$.
	
	Let now $n\geq 1$. Recall the definition of the Lyapunov change of coordinates $C(\hx)$ from Definition \ref{def:LyapunovChangeCoordinates}. Take $\gamma_0$ such that $C(\hx)\inv \mathscr{C}^s_x \supset \mathscr{C}^H_{\gamma_0}$, which exists by continuity of $\hx \mapsto C(\hx)$ on the compact set $\hat{\Lambda}$. Define $\gamma_n = e^{n\chi}\gamma_0$.
	
	Take $N\geq 1$ such that for all $n\geq N$ and for all $\hx \in \hat{\Lambda}\cap\hf^{-n}(\Lambda)$, we have:
	\begin{equation*}
		C(\hf^n(\hx))\inv \mathscr{C}^s_{f^n(\pi(x))} \subset \text{Int}\big(\mathscr{C}^H_{\gamma_n}\big).
	\end{equation*}
	Such an $N$ exists because since $\hf^n(\hx) \in \hat{\Lambda}$, the norm of the map $C(\hf^n(\hx))$ and the opening coefficient of the cone $\mathscr{C}^s_{f^n(x_0)}$ are bounded by constants depending only on $\hat{\Lambda}$.
	
	Take then $x \in \pi\big(\hat{\Lambda} \cap \hf^{-n}(\hat{\Lambda})\big)$ and take any $\hx \in \hat{\Lambda} \cap \hf^{-n}(\hat{\Lambda})\cap\pi\inv(x)$.
	
	Now, we have:
	\begin{equation*}
		\begin{aligned}
			(d_0f^n_{\hx})\inv \mathscr{C}^H_{\gamma_n} &= C(\hx)\inv \circ (d_xf^n)\inv \circ C(\hf^n(\hx)) \mathscr{C}^H_{\gamma_n} \\
			&\subset \mathscr{C}^H_{\gamma_1} \subset C(\hx)\inv \mathscr{C}^s_x.
		\end{aligned}
	\end{equation*}
	But then this implies:
	\begin{equation*}
		\text{Int}\big(d_xf^n \mathscr{C}^s_x\big) \supset C(\hf^n(\hx)) \text{Int}\big(\mathscr{C}^H_{\gamma_n}\big) \supset \mathscr{C}^s_{f^n(x)}.
	\end{equation*}
	
	This concludes the proof since $\mathscr{C}^u_x = (\mathscr{C}^s_x \cup \{0\})^c$.
\end{proof}

At some point, we may have to use thinner cones to have better control over geometry.

\begin{defn}[$\alpha$-stable/unstable cone field]
	Let $\alpha>0$. Let $\mathscr{C}^{u,ext},V$ be the cone field and the neighborhood constructed in Proposition \ref{prop:UnstableConeField}. Let $\gamma_u, \gamma_s : V \rightarrow \bbR$ and $F_u,F_s : V \rightarrow TV$ be the opening map and the direction of the cone fields $\mathscr{C}^{u,ext}$ and respectively $(\mathscr{C}^{u,ext})^c$. We then define $\mathscr{C}^{u,ext,\alpha}$ and $\mathscr{C}^{s,ext,\alpha}$, the $\alpha$-unstable/stable cone fields, to be the ones with direction $F_u$, respectively $F_s$, and opening map $\alpha\gamma_u$, respectively $\alpha\gamma_s$.
	\label{def:alphaStableConeField}
\end{defn}

We are now able to define a natural cone field in the Pesin charts of points in $\hat{\Lambda}$.

\begin{defn}[Canonical unstable cone field in the charts]
	Let $\mathscr{C}^u, \mathscr{C}^{u,ext}$ and $V$ be the two cone fields and the neighborhood constructed in Proposition \ref{prop:UnstableConeField}. Let $x \in \Lambda$ and let $\hx \in \hat{\Lambda}_{\chi,\epsilon} \cap \pi\inv(x)$. Up to reducing the size of the chart, we can ensure that $\Psi_{\hx}\big(R(0,q_{\tilde{\epsilon}}(\hx))\big) \subset V$. We then define the canonical unstable cone field $\mathscr{C}^{u,\hx}$ in the chart $\big(\Psi_{\hx},q_{\tilde{\epsilon}}(\hx)\big)$ to be:
	\begin{equation}
		\mathscr{C}^{u,\hx} = d\Psi_{\hx}\inv\big(\mathscr{C}^{u,ext}\big).
		\label{eq:UnstableConeFieldPesinChart}
	\end{equation}
	\label{def:UnstableConeFieldPesinChart}
\end{defn}

\begin{lemma}
	There exists $q^* := q^*(\hat{\Lambda})$ with the following property. For all $\hx \in \hat{\Lambda}$, the cone field $\mathscr{C}^{u,\hx}_{|R(0,q^*)}$ contains the vertical direction and does not contain the horizontal one.
	\label{lem:UnstableConeFieldInChartIsVertical}
\end{lemma}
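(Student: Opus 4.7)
The plan is to first verify the two statements at the origin of each Pesin chart by a direct computation using the definition of the Lyapunov change of coordinates, and then extend them to a uniform neighborhood of the origin by equicontinuity on the compact Pesin block $\hat{\Lambda}$.

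At the origin, by construction of the chart we have $d_0\Psi_{\hx} = C(\hx)$, so Definition~\ref{def:LyapunovChangeCoordinates} gives $d_0\Psi_{\hx}(e_1) = s(\hx)^{-1} e^s(\hx) \in \Es(x)$ and $d_0\Psi_{\hx}(e_2) = u(\hx)^{-1} e^u(\hx) \in \Eu(\hx)$, where $x = \pi(\hx)$. By item (1) of Proposition~\ref{prop:UnstableConeField}, $\Eu(\hx) \subset \text{Int}(\mathscr{C}^u_x) \subset \text{Int}(\mathscr{C}^{u,ext}_x)$, so the vertical direction lies in the interior of $\mathscr{C}^{u,\hx}_0$. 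Conversely, by the construction of $\mathscr{C}^u$ in Proposition~\ref{prop:UnstableConeField} as the complement in $T_xM$ of a stable cone $\mathscr{C}^s_x$ which contains $\Es(x)$ (with strictly positive opening), the line $\Es(x)$ lies in the complement of $\mathscr{C}^u_x$. Since $\mathscr{C}^{u,ext}$ coincides with $\mathscr{C}^u$ on $\Lambda$, the horizontal direction does not belong to $\mathscr{C}^{u,\hx}_0$.

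To extend these two properties to a small box $R(0, q^*)$ with $q^*$ independent of $\hx$, I would use the following uniform continuity inputs, all of which hold on the compact block $\hat{\Lambda}$:
\begin{itemize}[label={--}]
    \item The map $\hx \mapsto \Psi_{\hx}$ is continuous for the $\cC^1$ topology (Theorem~\ref{thm:finPesinChart}), hence $(\hx, v) \mapsto d_v\Psi_{\hx}$ is uniformly continuous in $v \in R(0, q_{\tilde{\epsilon}}(\hx))$.
    \item The opening and direction of $\mathscr{C}^{u,ext}$ are continuous on the compact neighborhood $V$ of $\Lambda$, and the opening of $\mathscr{C}^u$ (resp.\ of its complementary stable cone $\mathscr{C}^s$) is uniformly bounded below by some $\gamma_0 > 0$ on $\Lambda$.
    \item The function $q_{\tilde{\epsilon}}$ is continuous and strictly positive on $\hat{\Lambda}$, so bounded below by some constant.
\end{itemize}
At $v=0$ the vertical vector sits at an angle at least $\gamma_0/2$ inside $\mathscr{C}^{u,ext}_{\Psi_{\hx}(0)}$, and the horizontal vector sits at angle at least $\gamma_0/2$ outside of it, with the constant $\gamma_0/2$ uniform in $\hx$. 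The equicontinuity in $v$ of $d_v\Psi_{\hx}$ and the uniform continuity of $\mathscr{C}^{u,ext}$ along the curve $v \mapsto \Psi_{\hx}(v)$ then produce a single $q^* > 0$ (independent of $\hx$) such that, for every $v \in R(0, q^*)$ and every $\hx \in \hat{\Lambda}$, both properties are preserved.

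The only delicate point is ensuring that $q^*$ is genuinely uniform in $\hx$, and this is exactly where compactness of $\hat{\Lambda}$ is used; no further idea beyond the equicontinuity argument is required. Standard choices of $\epsilon, \tilde{\epsilon}$ (already fixed earlier in the paper) make sure that $q^* \leq q_{\tilde{\epsilon}}(\hx)$ for all $\hx \in \hat{\Lambda}$, so the statement is meaningful.
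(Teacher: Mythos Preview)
Your proposal is correct and follows essentially the same approach as the paper: both arguments verify the two conditions at the origin using the fact that $C(\hx)^{-1}$ sends $E^u(\hx)$ to the vertical and $E^s(x)$ to the horizontal, and then extend to a uniform $q^*$ by continuity of the cone field $\mathscr{C}^{u,ext}$, continuity of the Pesin charts, and compactness of $\hat{\Lambda}$. Your write-up is somewhat more explicit about the equicontinuity inputs, but the underlying idea is identical.
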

\begin{proof}
	We keep the notation of the proof of Proposition \ref{prop:UnstableConeField}.
	
	Take $x \in \Lambda$ and any $\hx \in \hat{\Lambda}\cap\pi\inv(x)$. Recall that $\mathscr{C}^{u,\hx}_0 = C(\hx)\inv \mathscr{C}^u_x$. By Proposition \ref{prop:UnstableConeField}, the unstable direction $\Eu(\hx)$ belongs to $\text{Int}\big(\mathscr{C}^u_x\big)$ and since $C(\hx)\inv \Eu(\hx) = \text{span}\{e_2\}$, we deduce that $\mathscr{C}^{u,\hx}_0$ contains the vertical direction in its interior. A similar argument shows that $\mathscr{C}^{u,\hx}_0$ does not contain the horizontal direction $\text{span}\{e_1\}$. By continuity of the cone field $\mathscr{C}^{u,ext}$, of the Pesin charts, and compactness of $\hat{\Lambda}$, we can find $q^*$ such that $\mathscr{C}^{u,\hx}_{|R(0,q^*)}$ contains the vertical direction and does not contain the horizontal one.
\end{proof}

\begin{lemma}
	For all $n\geq 1$ large enough, there exists an open neighborhood $\hat{U}:= \hat{U}(n)$ of $\hat{\Lambda}$ and constants $\delta_1,\delta_2 := \delta_1(n,\hat{U}), \delta_2(n,\hat{U})$ such that:
	\begin{enumerate}
		\item $\pi(\hat{U}) \subset V$, where $V$ is the neighborhood from Proposition \ref{prop:UnstableConeField}.
		\item For all $\hx \in \hat{\Lambda}$, if there exists $\hz \in \hat{U}\cap\hf^{-n}(\hat{U})$ such that $d(\pi(\hz),\pi(\hx)) < \delta_1$, then $\mathscr{C}^{u,ext}_{f^n(y)}$ is well defined and we have:
		\begin{equation*}
			d_yf^n\mathscr{C}^{u,ext}_y \subset \text{Int}(\mathscr{C}^{u,ext}_{f^n(y)})
		\end{equation*}
		for all $y \in B(x,\delta_2)$.
	\end{enumerate}
	\label{lem:ContractionExtendedUnstableConeField}
\end{lemma}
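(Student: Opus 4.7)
The plan is to propagate the strict cone expansion of Proposition~\ref{prop:UnstableConeField}(3), which currently holds only on the Pesin set $\hat{\Lambda}$, to an open neighborhood in $M$ by a standard compactness--continuity argument.

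First, I fix $n \geq N$ as given by Proposition~\ref{prop:UnstableConeField}(3), and introduce the compact set $K := \pi\big(\hat{\Lambda} \cap \hf^{-n}(\hat{\Lambda})\big) \subset \Lambda \subset V$, which satisfies $f^n(K) \subset \Lambda \subset V$ as well. Since $\mathscr{C}^{u,ext}$ agrees with $\mathscr{C}^u$ on $\Lambda$, Proposition~\ref{prop:UnstableConeField}(3) provides the strict inclusion $d_xf^n\mathscr{C}^{u,ext}_x \subset \text{Int}(\mathscr{C}^{u,ext}_{f^n(x)})$ for every $x \in K$. The main step is to upgrade this pointwise inclusion to a uniform one on a neighborhood $W$ of $K$: because $\mathscr{C}^{u,ext}$ is continuous on $V$, because $f^n$ and $df^n$ are continuous, and because strict cone containment is an open condition, each $x \in K$ admits an open ball $B(x,r_x) \subset V$ with $f^n(B(x,r_x)) \subset V$ on which the strict inclusion persists; finite covering by compactness of $K$ yields an open set $W \supset K$ with $W \subset V$, $f^n(W) \subset V$, and
\[d_yf^n\mathscr{C}^{u,ext}_y \subset \text{Int}\big(\mathscr{C}^{u,ext}_{f^n(y)}\big) \quad \text{for all } y \in W.\]
I then pick $\delta_2 > 0$ so that $B(K, 2\delta_2) \subset W$.

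It remains to select $\hat{U}$ and $\delta_1$. Since $\hat{\Lambda}$ is closed in the compact space $M_f$ and $\hf$ is a homeomorphism, for any decreasing family of open neighborhoods $\hat{U}_k \downarrow \hat{\Lambda}$ one has $\bigcap_k \big(\hat{U}_k \cap \hf^{-n}(\hat{U}_k)\big) = \hat{\Lambda} \cap \hf^{-n}(\hat{\Lambda})$. The standard compact exhaustion argument then shows that for $\hat{U}$ a sufficiently small open neighborhood of $\hat{\Lambda}$ in $M_f$, both $\pi(\hat{U}) \subset V$ (condition (1)) and $\pi\big(\hat{U} \cap \hf^{-n}(\hat{U})\big) \subset B(K, \delta_2)$ hold. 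Setting $\delta_1 := \delta_2$, if $\hx \in \hat{\Lambda}$ and $\hz \in \hat{U}\cap \hf^{-n}(\hat{U})$ satisfy $d(\pi(\hz), \pi(\hx)) < \delta_1$, then $\pi(\hz) \in B(K, \delta_2)$ forces $x := \pi(\hx) \in B(K, 2\delta_2) \subset W$, hence $B(x, \delta_2) \subset W$, and the uniform inclusion on $W$ delivers the conclusion.

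The only substantive point is the propagation from pointwise to uniform strict cone containment on a neighborhood of $K$; this is immediate from continuity of $\mathscr{C}^{u,ext}$ combined with compactness, so no ideas beyond those already developed in Section~\ref{section:PesinTheory} are required.
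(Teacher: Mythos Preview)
Your argument is correct and follows essentially the same compactness--continuity route as the paper: extend the strict cone contraction from $K = \pi\big(\hat{\Lambda}\cap\hf^{-n}(\hat{\Lambda})\big)$ to an open neighborhood $W$, then shrink $\hat{U}$ so that $\pi\big(\hat{U}\cap\hf^{-n}(\hat{U})\big)$ lies close to $K$. The only slip is arithmetic: from $x \in B(K,2\delta_2)$ you get $B(x,\delta_2) \subset B(K,3\delta_2)$, not $B(K,2\delta_2)$, so you should have chosen $\delta_2$ with $B(K,3\delta_2) \subset W$; this is a trivial adjustment.
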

\begin{proof}
	First of all, take $n\geq 1$ large enough so that for all $y \in \pi\big(\hat{\Lambda}\cap \hf^{-n}(\hat{\Lambda})\big)$, we have:
	\begin{equation*}
		d_yf^n \mathscr{C}^u_y \subset \text{Int}\big(\mathscr{C}^u_{f^n(y)}\big).
	\end{equation*}
	This can be done by Proposition \ref{prop:UnstableConeField}. From now on, we fix such an $n$ large enough. By continuity of the maps $x \mapsto d_xf^n$ and $z \mapsto \mathscr{C}^{u,ext}_z$, and compactness of $\pi\big(\hat{\Lambda}\cap\hf^{-n}(\hat{\Lambda})\big)$, there exists a constant $\tilde{\delta}$ such that for all $y \in \pi\big(\hat{\Lambda}\cap\hf^{-n}(\hat{\Lambda})\big)$ and all $z \in B(y,\tilde{\delta})$, we have:
	\begin{equation*}
		d_zf^n \mathscr{C}^{u,ext}_z \subset \text{Int}\big(\mathscr{C}^{u,ext}_{f^n(z)}\big).
	\end{equation*}
	Note that taking $\tilde{\delta}$ small enough, depending on $n$, guarantees that $z,f^n(z) \in V$.
	
	Now, by compactness of $\hat{\Lambda}\cap\hf^{-n}(\hat{\Lambda})$, we can find a neighborhood $\hat{U}$ of $\hat{\Lambda}$, depending on $n$, such that for all $\hz \in \hat{U} \cap \hf^{-n}(\hat{U})$, there exists $\hy \in \hat{\Lambda}\cap\hf^{-n}(\hat{\Lambda})$ such that $d(\pi(\hz),\pi(\hy)) < \tilde{\delta}/2$. Take $\delta_1$ such that for any $\hx \in \hat{\Lambda}$, if there exists $\hz \in \hat{U}\cap\hf^{-n}(\hat{U})$ with $d(\pi(\hx),\pi(\hz))<\delta_1$, then there exists $\hy \in \hat{\Lambda}\cap\hf^{-n}(\hat{\Lambda})$ with $d(\pi(\hx),\pi(\hy))<\tilde{\delta}$.
	
	Again, by compactness of $\hat{\Lambda}$ and continuity of $x \mapsto d_xf^n$ and $z \mapsto \mathscr{C}^{u,ext}_z$, there exists $\delta_2$ such that for all $\hx \in \hat{\Lambda}$ such that there exists $\hz \in \hat{U}\cap\hf^{-n}(\hat{U})$ with $d(\pi(\hx),\pi(\hz))<\delta_1$, the derivative $d_yf^n$ contracts $\mathscr{C}^{u,ext}$ for all $y \in B(\pi(\hx),\delta_2)$. Taking $\delta_2$ small enough guarantees that $y, f^n(y) \in V$ for all $y \in B(\pi(\hx),\delta_2)$, and this concludes the proof.
\end{proof}

We will work with foliations tangent to these canonical unstable cone fields.

\begin{lemma}
	Let $\mathscr{C}^u, \mathscr{C}^{u,ext}$ and $V$ be the two cone fields and the neighborhood constructed in Proposition \ref{prop:UnstableConeField}. Let $\alpha>0$. Let $\mathscr{F}$ be a smooth foliation of an open set $U\subset V$ tangent to $\mathscr{C}^{u,ext,\alpha}$. The following holds for $\tilde{\epsilon} := \tilde{\epsilon}(\mathscr{F},\hat{\Lambda})$ and $\alpha := \alpha(\hat{\Lambda})$ sufficiently small. There exists $q \leq \min_{\hat{\Lambda}} q_{\tilde{\epsilon}}(\hx)$ with the following property. Let $x \in \Lambda$ and $\hx \in \hat{\Lambda}\cap\pi\inv(x)$. For any $y \in \Psi_{\hx}\big(R(0,q)\big) \cap U$, the leaf $\mathscr{F}(y)$ contains a $(\mathscr{C}^{u,\hx},u)$-manifold through $y$ in $\big(\Psi_{\hx},q_{\tilde{\epsilon}}(\hx)\big)$ of size $q_{\tilde{\epsilon}}(\hx)$.
	\label{lem:FoliationTangentCone}
\end{lemma}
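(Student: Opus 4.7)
The plan is to pull back the leaf $\mathscr{F}(y)$ through $\Psi_{\hx}^{-1}$ to the Pesin chart and then verify the three clauses of Definition~\ref{def:(C,u)AdmissibleManifold} for the resulting curve. By the very definition \eqref{eq:UnstableConeFieldPesinChart} one has $\mathscr{C}^{u,\hx} = d\Psi_{\hx}^{-1}(\mathscr{C}^{u,ext})$, so tangency will be automatic: the leaf is tangent to $\mathscr{C}^{u,ext,\alpha}$, which is strictly inside $\mathscr{C}^{u,ext}$, hence its pullback sits strictly inside $\mathscr{C}^{u,\hx}$. The real geometric content is to force the pulled-back leaf to be (a) a graph over the full vertical interval $[-q_{\tilde{\epsilon}}(\hx), q_{\tilde{\epsilon}}(\hx)]$ and (b) close enough to the vertical axis that $|F(0)| < 10^{-3} q_{\tilde{\epsilon}}(\hx)$.

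First, using Lemma~\ref{lem:UnstableConeFieldInChartIsVertical} together with compactness of $\hat{\Lambda}$ and continuity of $\hx \mapsto \Psi_{\hx}$, I will fix a size $q^{\star} > 0$ and a uniform Lipschitz constant $L_0$ such that any curve in $R(0, q^{\star})$ tangent to $\mathscr{C}^{u,\hx}$ is a graph $t \mapsto (F(t), t)$ over the vertical axis with $\mathrm{Lip}(F) \le L_0$, uniformly in $\hx \in \hat{\Lambda}$. Since shrinking $\alpha$ narrows $\mathscr{C}^{u,ext,\alpha}$, it also shrinks $L_0$, and I pick $\alpha := \alpha(\hat{\Lambda})$ so that $L_0 \le 1/4$. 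This disposes simultaneously of the tangency and of the graph form.

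Second, I will extract from smoothness of $\mathscr{F}$ and compactness of $\Lambda \subset U$ a uniform lower bound $\ell := \ell(\mathscr{F}, \Lambda) > 0$ on the length of the connected component of $\mathscr{F}(z) \cap U$ containing $z$ for every $z \in \Lambda$. Using the uniform bound on $\|d\Psi_{\hx}^{\pm 1}\|$ on $\hat{\Lambda}$ and the Lipschitz graph form from step one, this length bound transports to a uniform lower bound $\ell^{\star} := \ell^{\star}(\mathscr{F}, \hat{\Lambda}) > 0$ on the vertical extent of the pulled-back leaf inside the chart. I then take $\tilde{\epsilon} := \tilde{\epsilon}(\mathscr{F}, \hat{\Lambda})$ so small that $\max_{\hx \in \hat{\Lambda}} q_{\tilde{\epsilon}}(\hx) \le \ell^{\star}/4$; this is legal because Theorem~\ref{thm:finPesinChart} gives $q_{\tilde{\epsilon}} < \tilde{\epsilon}/2$ on $\hat{\Lambda}$. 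With these choices, the pulled-back leaf passing through any $v \in R(0, q^{\star}/2) \cap \Psi_{\hx}^{-1}(U)$ contains a graph defined on $[-q_{\tilde{\epsilon}}(\hx), q_{\tilde{\epsilon}}(\hx)]$, so the size clause $q(V) = q_{\tilde{\epsilon}}(\hx)$ is met, and the Lipschitz bound $L_0 \le 1/4$ together with $q_{\tilde{\epsilon}}(\hx)$ small keeps the graph inside $R(0, q_{\tilde{\epsilon}}(\hx))$.

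Finally, set $q := \tfrac{10^{-3}}{1+L_0} \min_{\hx \in \hat{\Lambda}} q_{\tilde{\epsilon}}(\hx)$. For $y = \Psi_{\hx}(v)$ with $v = (v_1, v_2) \in R(0, q)$, the point $v$ lies on the graph, so $F(v_2) = v_1$; the Lipschitz estimate then yields
\[
|F(0)| \le |F(0) - F(v_2)| + |v_1| \le L_0 |v_2| + |v_1| \le (1 + L_0) q < 10^{-3} q_{\tilde{\epsilon}}(\hx),
\]
completing the offset condition. The main obstacle, and the only genuinely non-automatic step, is the uniform leaf-length bound $\ell$ in step two: the foliation is given only on the open set $U$, and one must check that leaves through points of the compact $\Lambda \subset U$ extend a uniformly bounded amount. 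This is a soft compactness argument but is precisely what forces the dependence $\tilde{\epsilon} = \tilde{\epsilon}(\mathscr{F}, \hat{\Lambda})$ announced in the statement.
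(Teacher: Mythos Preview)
Your argument is correct and mirrors the paper's: pull the leaf back to the chart, use tangency to $\mathscr{C}^{u,\hx}$ to get a vertical Lipschitz graph, shrink $\alpha$ and $\tilde\epsilon$ (uniformly over the compact $\hat\Lambda$) so the graph spans the full interval and stays inside $R(0,q_{\tilde\epsilon}(\hx))$, and finally pick $q$ small for the offset clause $|F(0)|<10^{-3}q_{\tilde\epsilon}(\hx)$. The only difference is cosmetic: you secure full vertical extent by bounding leaf length in $M$ from below and shrinking the chart beneath it, whereas the paper argues directly that for $\tilde\epsilon$ small each leaf is $\cC^1$-close to its tangent line at the base point and hence a full graph in the chart.
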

\begin{proof}
	First, for each $\hx \in \hat{\Lambda}$, we can take $\tilde{\epsilon}$ small enough so that every leaf of $\mathscr{F}$ intersecting $\Psi_{\hx}\big(R(0,q_{\tilde{\epsilon}}(\hx))\big)$ is the graph of a Lipschitz function in the chart. This can be done since the cone field $\mathscr{C}^{u,ext,\alpha}$ is continuous, so reducing $\tilde{\epsilon}$ ensures that $\mathscr{C}^{u,\hx}$ is almost constant. By continuity of the foliation and the Pesin charts on $\hat{\Lambda}$, we can take the same $\tilde{\epsilon}$ for all $\hx \in \hat{\Lambda}$. For each $\hx \in \hat{\Lambda}$, we can take $\tilde{\epsilon}$, $q$, and $\alpha$ small enough so that for each $y \in \Psi_{\hx}\big(R(0,q)\big) \cap U$, the representing function $G_y$ of the leaf $\mathscr{F}(y)$ in the chart $\big(\Psi_{\hx},q_{\tilde{\epsilon}}(\hx)\big)$ is well defined on $[-q_{\tilde{\epsilon}}(\hx),q_{\tilde{\epsilon}}(\hx)]$. Indeed, since each leaf is smooth, reducing $q$ ensures that each $G_y$ is close to the line of slope $|G_x^{'}(0)|$ passing through $0$, where $x:=\pi(\hx)$. Taking $\alpha$ small enough guarantees that $|G_x^{'}(0)|<1$, and taking $\tilde{\epsilon}$ small enough will guarantee that $G_x$ is close enough to the straight line of slope $|G_x^{'}(0)|$ passing through $0$, so all the $G_y$ for $y$ close will be Lipschitz graphs defined on the whole chart. Still by a compactness argument, we can choose $\tilde{\epsilon}$, $q$, and $\alpha$ uniformly. Choosing $q$ such that for any $y \in \Psi_{\hx}\big(R(0,q)\big) \cap U$, we have $|G_y(0)|<10^{-3}q_{\tilde{\epsilon}}(\hx)$ concludes the proof. Again, it is easy to see that for any $\hx \in \hat{\Lambda}$, such a $q$ exists, then a compactness argument gives a uniform $q$.
\end{proof}

\subsection{$us$-rectangles}
Recall the cone fields $\mathscr{C}^u, \mathscr{C}^{u,ext}$ and the neighborhood $V$ constructed in Proposition~\ref{prop:UnstableConeField}, for the chosen Pesin block $\hat{\Lambda}$. Let $\tilde{\epsilon}>0$ such that $\Psi_{\hx}\big(R(0,q_{\tilde{\epsilon}}(\hx))\big) \subset V$ for any $\hx\in \hat{\Lambda}$.

Let us first clarify what we mean by rectangles.

\begin{defn}[$us$-rectangle]
	A $us$-rectangle $R\subset M$ is an open set such that there exists a Pesin chart $\big(\Psi_{\hx},q_{\tilde{\epsilon}}(\hx)\big)$, with $\hx \in \hat{\Lambda}$, for which the following properties hold.
	\begin{enumerate}
		\item[(R1)] The boundary of $R$ is a Jordan curve which decomposes as the union of four compact simple curves that we denote by: $\partial R = \partial^{s,1} R \cup \partial^{u,1} R \cup \partial^{s,2} R \cup \partial^{u,2} R$.
		
		We call $\partial^{s} R = \partial^{s,1} R\cup \partial^{s,2} R$ the stable boundary. We define similarly $\partial^u R$ as the unstable boundary.
		
		\item[(R2)] Each of the curves of the stable boundary is contained in $s$-manifolds in the chart $\big(\Psi_{\hx},q_{\tilde{\epsilon}}(\hx))\big)$.
		
		\item[(R3)] Each of the curves of the unstable boundary is contained in $(\mathscr{C}^{u,\hx},u)$-manifolds in the chart $\big(\Psi_{\hx},q_{\tilde{\epsilon}}(\hx))\big)$.
		
		We call these manifolds the representative $s$-, respectively $u$-, manifolds of the stable, respectively unstable, boundaries.
	\end{enumerate}
	We say that the Pesin chart $\big(\Psi_{\hx},q_{\tilde{\epsilon}}(\hx)\big)$ is adapted for the $us$-rectangle $R$.
\end{defn}

We are going to define some notions to control the geometry of rectangles. For some cone field $\mathscr{C}$ defined on an open set of $M$, let $\mathcal{T}(\mathscr{C})$, respectively $\mathcal{T}^1(\mathscr{C})$, be the set of all the piecewise $\cC^1$ curves, respectively all the $\cC^1$ curves, parameterized by $[0,1]$, tangent to $\mathscr{C}$ on every piece, respectively everywhere.

\begin{defn}[Parameters of $us$-rectangles]
	Let $R$ be a $us$-rectangle. We define the following parameters of $R$:
	\begin{itemize}[label={--},itemsep=0.5cm]
		\item $\sigma^s_{\max}(R) = \sup\big\{l(\gamma), \ \gamma \in \mathcal{T}(\mathscr{C}^{u,ext}), \ \gamma([0,1]) \subset R, \ \gamma(0)\in \partial^{s,1}R, \ \gamma(1) \in \partial^{s,2}R\big\}$.
		\item $\sigma^u_{\min}(R) = \inf\big\{l(\gamma), \ \gamma\in \cC^1([0,1],M), \ \gamma(0) \in \partial^{u,1}R, \ \gamma(1) \in \partial^{u,2}R \big\}$.
		\item $\sigma^{u,\alpha}_{\max}(R) = \sup\big\{l(\gamma), \ \gamma \in \mathcal{T}^1(\mathscr{C}^{s,ext,\alpha}), \ \gamma([0,1]) \subset R, \ \gamma(0)\in \partial^{u,1}R, \ \gamma(1) \in \partial^{u,2}R\big\}$.
	\end{itemize}
	\label{def:ParametersRectangles}
\end{defn}

The previous definition extends trivially to $u/s$-manifolds. Let $V^u_1,V^u_2$ be two $u$-manifolds in a chart $\big(\Psi_{\hx},\kappa\big)$. We define:
\begin{equation*}
	\sigma^{u,\alpha}_{\max}(V^u_1,V^u_2) = \sup\big\{l(\gamma), \ \gamma \in \mathcal{T}^1(\mathscr{C}^{s,ext,\alpha}), \ \gamma(0)\in V^u_1, \ \gamma(1) \in V^u_2\big\}.
\end{equation*}

The following lemma asserts that controlling the parameters of a $us$-rectangle allows us to control its diameter.

\begin{lemma}
	Let $R$ be a $us$-rectangle. Let $\alpha,\sigma_1,\sigma_2 >0$. Suppose that $\partial^{s,i}R \in \mathcal{T}(\mathscr{C}^{s,ext,\alpha})$ for $i \in \{1,2\}$. Suppose also that we have the following estimates:
	\begin{equation*}
		\sigma^{u,\alpha}_{\max}(R) \leq \sigma_1 \ \text{and} \ \sigma^{s}_{\max}(R) \leq \sigma_2.
	\end{equation*}
	Then we have the following control on the diameter:
	\begin{equation*}
		\text{diam}(R) \leq \sigma_1+\sigma_2.
	\end{equation*}
	\label{lem:DiameterRectangle}
\end{lemma}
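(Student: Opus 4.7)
The plan is to connect any two points $x,y \in R$ by a concatenation of two sub-arcs lying in $\bar R$: one tangent to $\mathscr{C}^{u,ext}$ with length at most $\sigma^s_{\max}(R) \leq \sigma_2$, and the other tangent to $\mathscr{C}^{s,ext,\alpha}$ with length at most $\sigma^{u,\alpha}_{\max}(R) \leq \sigma_1$. The triangle inequality then yields $d(x,y) \leq \sigma_1 + \sigma_2$, whence $\diam(R) \leq \sigma_1 + \sigma_2$.

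To produce the two sub-arcs, I would work in an adapted Pesin chart $\big(\Psi_{\hx_0}, q_{\tilde{\epsilon}}(\hx_0)\big)$ for $R$. By Lemma~\ref{lem:UnstableConeFieldInChartIsVertical}, the vertical coordinate direction lies in $\mathscr{C}^{u,\hx_0}$ everywhere in the chart, so the vertical line segments restricted to $\bar R$ form a foliation of $\bar R$ by curves tangent to $\mathscr{C}^{u,ext}$, each with endpoints on $\partial^{s,1}R$ and $\partial^{s,2}R$. Symmetrically, integrating the direction field $F_s$ of $(\mathscr{C}^{u,ext})^c$ on the ambient neighborhood $V$ (shrunk if necessary) yields a transverse $1$-dimensional foliation of $\bar R$ by curves tangent to $\mathscr{C}^{s,ext,\alpha}$, each joining $\partial^{u,1}R$ to $\partial^{u,2}R$, where the tangency of $\partial^{s,i}R$ to $\mathscr{C}^{s,ext,\alpha}$ is used to prevent leaves from escaping through a stable boundary. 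Denote by $V_x$ the leaf of the first foliation through $x$ and $H_y$ the leaf of the second through $y$; by the very definitions of the parameters of $R$ one has $\length(V_x) \leq \sigma^s_{\max}(R)$ and $\length(H_y) \leq \sigma^{u,\alpha}_{\max}(R)$.

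The crucial topological step is to show $V_x \cap H_y \neq \emptyset$. Since $\partial R$ is a Jordan curve decomposing as four boundary arcs meeting at four corners, the simple arc $V_x$ has its endpoints on the two opposite pieces $\partial^{s,1}R, \partial^{s,2}R$, and therefore, by the Jordan arc theorem, separates the closed topological disk $\bar R$ into two sub-disks, one containing the arc $\partial^{u,1}R$ and the other containing $\partial^{u,2}R$. As $H_y$ has its endpoints on these two opposite unstable boundary arcs, it must cross $V_x$ at some point $z$. Concatenating the portion of $V_x$ from $x$ to $z$ with the portion of $H_y$ from $z$ to $y$ produces the desired path in $\bar R$ of total length at most $\sigma_1 + \sigma_2$.

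The main obstacle is the rigorous construction of the transverse $\mathscr{C}^{s,ext,\alpha}$-foliation of $\bar R$ whose leaves genuinely run from $\partial^{u,1}R$ to $\partial^{u,2}R$: the direction $F_s$ is only continuous (not necessarily smooth), so one needs a topological integrability argument, and one must exclude the possibility that an integral curve exits through a stable boundary arc, using the tangency of $\partial^{s,i}R$ to $\mathscr{C}^{s,ext,\alpha}$ together with the transversality of the two cone fields. A secondary point is to verify that the Jordan-curve separation argument applies correctly to the specific boundary decomposition of a $us$-rectangle, which reduces to basic planar topology once the two foliations are in place.
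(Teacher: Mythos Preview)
Your approach is essentially the same as the paper's, but you have made it harder than necessary by insisting on building two full foliations of $\bar R$. The paper's proof simply picks, in the adapted Pesin chart, a single Lipschitz graph $V^s = \Psi_{\hx}(\{(t,F(t))\})$ tangent to $\mathscr{C}^{s,ext,\alpha}$ through the first point and a single Lipschitz graph $V^u = \Psi_{\hx}(\{(G(t),t)\})$ tangent to $\mathscr{C}^{u,ext}$ through the second point. The hypothesis $\partial^{s,i}R \in \mathcal{T}(\mathscr{C}^{s,ext,\alpha})$ lets one choose $F$ so that $V^s$ does not meet the stable boundaries, hence the sub-arc $c^s \subset V^s$ lying in $\bar R$ joins the two unstable boundaries and so has length at most $\sigma^{u,\alpha}_{\max}(R)$; similarly for $c^u$. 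Since $V^s$ and $V^u$ are Lipschitz graphs over the two transverse coordinate axes, their intersection is a single point by the same fixed-point argument as in Proposition~\ref{prop:IntersectionAdmissiblemanifolds}, and this point lies in $c^s \cap c^u$. This completely bypasses the obstacle you flag: there is no need to integrate a continuous direction field or to produce a foliation, and the intersection follows from elementary graph considerations rather than from Jordan-arc separation. Your argument would also go through, but the foliation construction and the planar-topology step are both avoidable detours.
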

\begin{proof}
	Let any $y,z \in R$. Let $\hx \in \hat{\Lambda}$ and $(\Psi_{\hx},q_{\tilde{\epsilon}})$ be an adapted chart for $R$. Take two Lipschitz maps $F,G$ which satisfy the following properties:
	\begin{itemize}[label={--}]
		\item $\Psi_{\hx}\big(\{(t,F(t)), \ |t|\leq q_{\tilde{\epsilon}}(\hx)\}\big) \in \mathcal{T}(\mathscr{C}^{s,ext,\alpha})$ and $\Psi_{\hx}\big(\{(G(t),t), \ |t|\leq q_{\tilde{\epsilon}}(\hx)\}\big) \in \mathcal{T}(\mathscr{C}^{u,ext})$.
		\item $y \in \Psi_{\hx}\big(\{(t,F(t)), \ |t|\leq q_{\tilde{\epsilon}}(\hx)\}\big)$.
		\item $z \in \Psi_{\hx}\big(\{(G(t),t), \ |t|\leq q_{\tilde{\epsilon}}(\hx)\}\big)$.
	\end{itemize}
	The map $F$ represents then a $s$-manifold in $(\Psi_{\hx},q_{\tilde{\epsilon}})$ tangent to $\mathscr{C}^{s,ext,\alpha}$ that we denote by $V^s$. Similarly, $G$ represents a $(\mathscr{C}^{u,\hx},u)$-manifold in $(\Psi_{\hx},q_{\tilde{\epsilon}})$ that we denote by $V^u$. Since $\partial^{s,i}R$ are tangent to $\mathscr{C}^{s,ext,\alpha}$, $F$ can be taken so that $V^s$ does not intersect any of $\partial^{s,i}R$. Then, $V^s$ intersects both $\partial^{u,1}R$ and $\partial^{u,2}R$. Similarly, we can take $G$ so that $V^u$ intersects both $\partial^{s,1}R$ and $\partial^{s,2}R$. Denote by $c^s \subset V^s$ the curve going from $\partial^{u,1}R$ to $\partial^{u,2}R$. Denote also $c^u \subset V^u$ the curve going from $\partial^{s,1}R$ to $\partial^{s,2}R$. Note that $y \in c^s$ and $z \in c^u$. We then have:
	\begin{equation*}
		d(y,z) \leq l(c^s) + l(c^u) \leq \sigma_1 + \sigma_2
	\end{equation*}
	which concludes the proof.
\end{proof}

The following lemma discusses the geometry of the rectangles in charts and in the manifold. In particular, it allows us to compare the distances in the Pesin charts and in the manifold.

\begin{lemma}
	Let $\alpha < 1$. The following holds for $\tilde{\epsilon} := \tilde{\epsilon}(\alpha, \hat{\Lambda})$ small enough.
	
	There exists a constant $C_1:=C_1(\hat{\Lambda})$ with the following properties. Let $\hx \in \hat{\Lambda}$. Let $\kappa \leq q_{\tilde{\epsilon}}(\hx)$. Let $V^u_1,V^u_2$ be two $u$-manifolds in $\big(\Psi_{\hx},\kappa\big)$ which are almost everywhere tangent to the cone field $\mathscr{C}^{u,\hx}$. Then we have:
	\begin{equation*}
		\frac{1}{C_1(\Lambda)} \sigma^{u,\alpha}_{\max}(V^u_1,V^u_2) \leq d_{\hx}(V^u_1,V^u_2) \leq C_1(\Lambda) \sigma^{u,\alpha}_{\max}(V^u_1,V^u_2).
	\end{equation*}
	\label{lem:DistanceChartsManifold}
\end{lemma}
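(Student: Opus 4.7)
The plan is to transfer both quantities to Euclidean quantities inside the Pesin chart, where they become manifestly comparable; all uniformity will come from compactness of $\hat{\Lambda}$.

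\textbf{Setup in the chart.} By Lemma~\ref{lem:LCCisContraction} and continuity of $\hx \mapsto s(\hx), u(\hx)$ on the compact Pesin block, the norms $\norm{C(\hx)}$ and $\norm{C(\hx)\inv}$ are bounded by a constant depending only on $\hat{\Lambda}$. Combined with the uniform bi-Lipschitz control on $\exp$ enforced by the choice of $\rho(M)$, this yields that $\Psi_{\hx}$ is bi-Lipschitz on $R(0, q_{\tilde\epsilon}(\hx))$ with a constant $C_0 = C_0(\hat{\Lambda})$. Next, at the origin $d_0 \Psi_{\hx} = C(\hx)$ sends the horizontal direction to $s(\hx)\inv e^s(\pi(\hx))$, which is the direction of $\mathscr{C}^{s,ext,\alpha}_{\pi(\hx)}$, so the horizontal direction lies in the interior of the pulled-back cone at $0$. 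By uniform continuity on $\hat{\Lambda}$ of $\Psi_{\hx}$ and of the data defining $\mathscr{C}^{s,ext,\alpha}$, one can choose $\tilde\epsilon = \tilde\epsilon(\alpha, \hat{\Lambda})$ small enough so that at every $(u,v) \in R(0, q_{\tilde\epsilon}(\hx))$ the cone $\Psi_{\hx}\inv(\mathscr{C}^{s,ext,\alpha})_{(u,v)}$ still contains the horizontal direction and is contained in a cone of opening $\alpha' = \alpha'(\alpha, \hat{\Lambda})$ around horizontal. Since $\alpha < 1$, the cone $\mathscr{C}^{s,ext,\alpha}$ sits strictly inside the maximal stable cone, which is uniformly transverse to $\mathscr{C}^{u,ext}$; after pull-back this gives $\alpha' + \arctan(L) < \pi/2$, equivalently $L\tan(\alpha') < 1$, where $L = L(\hat{\Lambda})$ is the Lipschitz bound from Lemma~\ref{lem:LipschitzCstAdmissibleCuManifold} (so that $F_1^u, F_2^u$ are $L$-Lipschitz).

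\textbf{Lower bound $\sigma^{u,\alpha}_{\max} \geq C_0\inv d_{\hx}$.} Let $q = \min(q(V_1^u), q(V_2^u))$ and pick $t^* \in [-q, q]$ with $|F_1^u(t^*) - F_2^u(t^*)| = d_{\hx}(V_1^u, V_2^u)$. The horizontal segment $\eta(s) = \big((1-s)F_1^u(t^*) + sF_2^u(t^*),\, t^*\big)$ lies in the convex set $R(0,\kappa)$, has Euclidean length $d_{\hx}$, endpoints on $V_1^u$ and $V_2^u$, and horizontal tangent, hence tangent to the pulled-back cone. Its image $\Psi_{\hx}\circ \eta$ is then a $\cC^1$ curve tangent to $\mathscr{C}^{s,ext,\alpha}$ joining $V_1^u$ to $V_2^u$, of length at least $C_0\inv d_{\hx}$, giving the lower bound.

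\textbf{Upper bound $\sigma^{u,\alpha}_{\max} \leq C_1 d_{\hx}$.} Let $\gamma$ be any competitor and set $\tilde\gamma = \Psi_{\hx}\inv(\gamma)$, whose length differs from that of $\gamma$ by a factor at most $C_0$. Its tangent lies in a horizontal cone of opening $\alpha' < \pi/2$, so $\tilde\gamma$ is a Lipschitz graph $x \mapsto h(x)$ with $|h'| \leq \tan(\alpha')$; write its endpoints as $(F_1^u(t_1), t_1)$ and $(F_2^u(t_2), t_2)$. Then
\begin{equation*}
	l(\tilde\gamma) \leq \sec(\alpha')\,|F_2^u(t_2) - F_1^u(t_1)|, \qquad |t_2 - t_1| \leq \tan(\alpha')\,|F_2^u(t_2) - F_1^u(t_1)|,
\end{equation*}
and using $\text{Lip}(F_2^u) \leq L$ together with the definition of $d_{\hx}$,
\begin{equation*}
	|F_2^u(t_2) - F_1^u(t_1)| \leq |F_2^u(t_1) - F_1^u(t_1)| + L|t_1 - t_2| \leq d_{\hx} + L\tan(\alpha')\,|F_2^u(t_2) - F_1^u(t_1)|,
\end{equation*}
so $|F_2^u(t_2) - F_1^u(t_1)| \leq d_{\hx}/(1 - L\tan(\alpha'))$. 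Hence $l(\gamma) \leq C_0 \sec(\alpha')/(1 - L\tan(\alpha'))\, d_{\hx}$, and taking the supremum over $\gamma$ while enlarging $C_1$ to absorb these constants concludes.

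The main subtlety is conceptual: $\mathscr{C}^{s,ext,\alpha}$ is a two-sided cone field, so a priori $\cC^1$ curves tangent to it might oscillate and produce arbitrarily large length. The cone-transversality $L\tan(\alpha') < 1$ is exactly what forces every competitor to be a graph over the stable direction in the chart, making the supremum finite and comparable to the Pesin distance, and it is also why $\tilde\epsilon$ must depend on $\alpha$.
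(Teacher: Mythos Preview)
Your proof is correct and follows essentially the same route as the paper: both argue the lower bound by pushing forward a horizontal segment in the chart, and the upper bound by pulling back a competitor curve, writing it as a Lipschitz graph over the horizontal axis, and solving the same self-referential inequality $|F^u_2(t_2)-F^u_1(t_1)|\leq d_{\hx}+L\cdot\tan(\alpha')\,|F^u_2(t_2)-F^u_1(t_1)|$. The paper phrases the key transversality condition as $L^u(\hx)L^{s,\alpha}(\hx)<1$ (coming from the pulled-back cones $\mathscr{C}^{u,\hx}$ and $\mathscr{C}^{s,\hx,\alpha}$ lying in disjoint constant cones for $\tilde\epsilon$ small) rather than via the angle identity $\alpha'+\arctan(L)<\pi/2$, which is only literally correct when the cones are exactly centered at horizontal and vertical---but this is cosmetic, as the product form is what you actually use.
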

\begin{proof}
	Let $V$ be the neighborhood defined in Proposition \ref{prop:UnstableConeField}. Let $V^u_1,V^u_2$ be two $(\mathscr{C}^{u,\hx},u)$-manifolds in $(\Psi_{\hx},\kappa)$ and write $F^u_1,F^u_2$ for their representative maps. For any $\hy \in \hat{\Lambda}$, let $L^u(\hy)$ be the constant from Lemma \ref{lem:LipschitzCstAdmissibleCuManifold} for the cone field $\mathscr{C}^{u,\hy}$ defined on $\overline{R(0,\sup_{\hat{\Lambda}}q_{\tilde{\epsilon}})}$. Recall that $F^u_1$ and $F^u_2$ are $L^u(\hx)$-Lipschitz by Lemma \ref{lem:LipschitzCstAdmissibleCuManifold}. Write also $q = \min(q(V^u_1),q(V^u_2))$. Define also, as for the unstable one, the $\alpha$-stable cone field in charts $\mathscr{C}^{s,\alpha,\hx} = d\Psi_{\hx}\inv \mathscr{C}^{s,ext,\alpha}$. As for the unstable cone fields, define $L^{s,\alpha}(\hy)$ for any $y \in \hat{\Lambda}$. Note that $\alpha<1$ implies, taking $\tilde{\epsilon}$ small enough, that for every $\hy \in \hat{\Lambda}$, the two cone fields $\mathscr{C}^{u,\hy}$ and $\mathscr{C}^{s,\hy,\alpha}$ are contained in disjoint constant cone fields. In particular, this implies $L^u(\hy)L^{s,\alpha}(\hy)<1$. Up to reducing $\tilde{\epsilon}$ again, we can suppose that for all $\hy \in \hat{\Lambda}$, the cone field $\mathscr{C}^{s,\hy}$ contains the horizontal direction.\\
	Let us first prove that there exists a constant $K_1$, depending only on $\hat{\Lambda}$, such that:
	\begin{equation*}
		d_{\hx}(V^u_1,V^u_2) \leq K_1\sigma^{u,\alpha}_{\max}(V^u_1,V^u_2).
	\end{equation*}
	Let $t \in [-q,q]$. Let $\gamma_t$ be the straight line joining $F^u_1$ and $F^u_2$ at $t$. Write $\tilde{\gamma}_t := \Psi_{\hx}\circ\gamma_t$.
	First, write $q = \min(q(V^u_1),q(V^u_2))$. Remark that, since $\mathscr{C}^{s,\hx}$ contains the horizontal direction, the curve $\tilde{\gamma}_t \in \mathcal{T}^1(\mathscr{C}^{s,ext})$ and verifies, up to reparameterizing, $\tilde{\gamma}_t(0) \in V^u_1$ and $\tilde{\gamma}_t(1) \in V^u_2$. We then have:
	\begin{equation*}
		l(\gamma_t) \leq \sup_{\hy \in \hat{\Lambda}} \norm{d\Psi\inv_{\hy}} l(\tilde{\gamma}_t) \leq \sup_{\hy \in \hat{\Lambda}} \norm{d\Psi\inv_{\hy}} \sigma^u_{\max}(V^u_1,V^u_2).
	\end{equation*}
	We then have the desired inequality by taking the supremum over $t$.\\
	We now want to prove the other inequality, showing that there exists a constant $K_2$, depending only on $\hat{\Lambda}$, such that:
	\begin{equation*}
		\sigma^{u,\alpha}_{\max}(V^u_1,V^u_2) \leq K_2d_{\hx}(V^u_1,V^u_2).
	\end{equation*}
	Let $\tilde{\gamma} \in \mathcal{T}(\mathscr{C}^{s,ext,\alpha})$ such that $\tilde{\gamma}(0) \in V^u_1$ and $\tilde{\gamma}(1) \in V^u_2$. Write $\gamma := \Psi_{\hx}\inv \circ \tilde{\gamma}$. Let $t_1 \in [-q(V^u_1),q(V^u_1)]$ and $t_2 \in [-q(V^u_2),q(V^u_2)]$ such that $\gamma(0) =(F^u_1(t_1),t_1)$ and $\gamma(1) = (F^u_2(t_2),t_2)$. Note that for almost every $t \in [0,1]$, we have that $\gamma^{'}(t) \in \mathscr{C}^{s,\hx,\alpha}$. Recall that $\{e_1,e_2\}$ denote the canonical basis of $\bbR^2$. By our assumption on $\tilde{\epsilon}$, we have that $e_2 \notin \mathscr{C}^{s,\hx,\alpha}_{\tilde{\gamma}(t)}$ for all $t \in [0,1]$. This implies that there exists a map $G : [F^u_1(t_1),F^u_2(t_2)] \rightarrow \bbR$, which is almost everywhere differentiable, such that $\gamma([0,1]) = \{(t,G(t)), t \in [F^u_1(t_1),F^u_2(t_2)]\}$.
	
	Note that:
	\begin{equation*}
		l(\tilde{\gamma}) \leq \sup_{\hy \in \hat{\Lambda}}\norm{d\Psi_{\hy}} l(\gamma).
	\end{equation*}
	
	Since $\gamma$ is tangent to $\mathscr{C}^{s,\hx,\alpha}$ almost everywhere, there exists a constant $C(\hx)$ such that $|G^{'}(t)|\leq C(\hx)\alpha$ for almost every $t \in [F^u_1(t_1),F^u_2(t_2)]$. We can then compute $l(\gamma)$:
	\begin{equation*}
		\begin{aligned}
			l(\gamma) &= \int_{F^u_1(t_1)}^{F^u_2(t_2)} \norm{(1,G^{'}(s))}ds \leq |F^u_1(t_1)-F^u_2(t_2)|(1+\alpha C(\hx))\\
			&\leq (1+C(\hx))\big(d_{\hx}(V^u_1,V^u_2) + L^u(\hx)|t_1-t_2|\big).
		\end{aligned}
	\end{equation*}
	But note also that since $L^{s,\alpha}(\hx)L^u(\hx) <1$, we get:
	\begin{equation*}
		\begin{aligned}
			|t_1-t_2| &= |G(F^u_2(t_2))-G(F^u_1(t_1))| \leq L^{s,\alpha}(\hx)|F^u_1(t_1)-F^u_2(t_2)|\\
			&\leq L^{s,\alpha}(\hx) d_{\hx}(V^u_1,V^u_2) + L^{s,\alpha}(\hx)L^u(\hx)|t_1-t_2|.
		\end{aligned}
	\end{equation*}
	And hence:
	\begin{equation*}
		|t_1-t_2|\leq \frac{L^{s,\alpha}(\hx)}{1-L^u(\hx)L^{s,\alpha}(\hx)} d_{\hx}(V^u_1,V^u_2).
	\end{equation*}
	So finally we get:
	\begin{equation*}
		l(\gamma) \leq \Big(\big(\frac{L^{s,\alpha}(\hx)}{1-L^{s,\alpha}(\hx)L^u(\hx)}\big)L^u(\hx)(1+C(\hx)) + 1+C(\hx)\Big)d_{\hx}(V^u_1,V^u_2).
	\end{equation*}
	By compactness, continuity of the Pesin charts and the cone fields, there exists $K_2>0$ depending only on $\hat{\Lambda}$ such that:
	\begin{equation*}
		\Big(\big(\frac{L^{s,\alpha}(\hx)}{1-L^{s,\alpha}(\hx)L^u(\hx)}\big)L^u(\hx)(1+C(\hx)) + 1+C(\hx)\Big) \leq K_2.
	\end{equation*}
	This concludes the proof.
\end{proof}

\subsection{The two families of rectangles}
The boundaries of the rectangles we are going to construct will be contained in stable manifolds of well-chosen points. Let us define the set of points in which we are interested.

Let $\hat{Y}^{\#}\subset M_f$ be the set of points $\hx$ such that the following holds.
\begin{enumerate}
	\item There exist two sequences of positive integers $(n_k)_k$ and $(m_k)_k$ converging to $+\infty$ such that $\hf^{n_k}(\hx)$ and $\hf^{-m_k}(\hx)$ belong to the same Pesin block and converge to $\hx$.
	\item There exists a set of points $\hat{\Lambda}_{\hx}$ with well-defined one-dimensional stable manifolds, which accumulate $\hx$ by points projecting on both sides of $\Wu_{loc}(\hx)$ and such that the union of local stable manifolds of the points in $\hat{\Lambda}_{\hx}$ forms a continuous lamination with Lipschitz holonomies and with $\cC^r$ leaves. For more details and precise definitions, see sub-Section \ref{subsec:Lamination}.
	\item For any point $\hy \in \hat{\Lambda}_{\hx}$, for any open neighborhood $\hat{U}$ of $\hy$, there exist two sequences of integers $(n_k)_k$ and $(m_k)_k$ with positive density, i.e., $\lim_k n_k/k>0$, such that $\hf^{n_k}(\hy)$ and $\hf^{-m_k}(\hy)$ belong to the intersection of $\hat{U}$ and some common Pesin block, independent of $\hy$.
\end{enumerate}
The first item forms a set of full measure for any ergodic measure by the Birkhoff Theorem. By Theorem \ref{thm:LipschitzHolonomiesLamination}, any ergodic measure which is hyperbolic of saddle type and such that almost every of its ergodic components have positive entropy gives full measure to the set of points satisfying the second and the third items. Remark that the last item guarantees that, given any ergodic, hyperbolic, $\hf$-invariant measure $\hmu$ of saddle type, for $\hmu$-almost every $\hy$, the set $\hat{\Lambda}_{\hx}$ is included in the support of $\hmu$ restricted to some Pesin block.

The goal of this section is to prove the following proposition.

\begin{prop}[Existence of family of $us$-rectangle with a Markov-type property]
	Let $\nu$ be a $f$-invariant measure, possibly non-ergodic. Let $\hnu := \pi\inv_{\star}\nu$. Suppose that $\hnu$ satisfies the following. 
	\begin{itemize}[label={--}]
		\item For $\hnu$-almost every $\hx$, we have $\lambda^u(\hx)>0>\lambda^s(\hx)$, where $\lambda^{u/s}(\hx)$ are the two Lyapunov exponents at $\hx$.
		\item $\hnu$-almost every ergodic component of $\hnu$ has positive entropy.
	\end{itemize}
	Let $K>8$ and let $\eta \in (0,1)$. There exists a Pesin block $\hNUH := \hNUH(\eta)$ such that the following holds for an integer $N := N(\hNUH,K)$ sufficiently large and $\alpha^{\prime}$, $\sigma:=\sigma(\hNUH,n)$ sufficiently small.
	
	There exist two families of $us$-rectangles $\mathcal{R} = \big(R_1,...,R_L\big)$ and $\tilde{\mathcal{R}} = \big(\tilde{R}_1,...,\tilde{R}_L\big)$, and a foliation $\mathscr{F}^u$ of an open set $V\supset \NUH$ such that:
	\begin{enumerate}[label=(MR\arabic*)]
		\item\label{property:MR1} For any $i \in \{1,...,L\}$, we have that $R_i$ and $\tilde{R}_i$ are two $us$-rectangles such that:
		\begin{itemize}[label={--}]
			\item $R_i \subset \tilde{R}_i$;
			\item there exists $\hx_i \in \hNUH$ such that $\big(\Psi_{\hx_i},q_{\tilde{\epsilon}}(\hx_i)\big)$ is an adapted chart for $R_i$ and $\tilde{R}_i$;
			\item $\partial^{s,j} R_i \subset \partial^{s,j} \tilde{R}_i \subset \Ws_{loc}(\hy_i^j)$ where $\hy_i^j \in \hNUH\cap \hat{Y}^{\#}$ for $j \in \{1,2\}$.
		\end{itemize}
		\item\label{property:MR2} Let $\hat{U}$ be the neighborhood of $\hNUH$ constructed in Lemma \ref{lem:ContractionExtendedUnstableConeField}.
		\begin{equation*}
			\hnu\Big(\hat{U}\Big) > 1-\eta \ \text{and} \ \pi(\hat{U}) \subset \bigcup_{i=1}^L R_i
		\end{equation*}
		\item\label{property:MR3} For any $i \in \{1,...,L\}$, each unstable boundary of $R_i$ is contained in a leaf of $\mathscr{F}^u$. Moreover, they are not the same leaf: $\partial^{u,l} R_i \cap \partial^{u,k} R_j = \emptyset$ for all $k,l\in \{1,2\}$ whenever $i\ne j$ or $i=j$ and $k\ne l$.
		\item\label{property:MR4} For any $i \in \{1,...,L\}$, we have the following:
		\begin{equation*}
			\frac{1}{2}\sigma \leq \sigma_{\min}^u(R_i) \leq \sigma^{u,\alpha^{\prime}}_{\max}(R_i) \leq \sigma; \quad \sigma^s_{\max}(R_i) \leq \sigma; \quad \sigma^s_{\max}(\tilde{R}_i),\sigma^{u,\alpha^{\prime}}_{\max}(\tilde{R}_i) \leq K\sigma.
		\end{equation*}
		\item\label{property:MR5} For any $i,j \in \{1,...,L\}$ such that $f^N(R_i) \cap R_j \ne \emptyset$, if there exists $\hz \in \hat{U} \cap \hf^{-N}(\hat{U})$ such that $\pi(z) \in R_i$, then:
		\begin{equation*}
			f^N(\tilde{R}_i) \cap \partial^u\tilde{R}_j = \emptyset.
		\end{equation*}
	\end{enumerate}
	\label{prop:MarkovRectangles}
\end{prop}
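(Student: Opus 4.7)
My plan is to build the smaller family $\mathcal R$ first via a covering-and-intersection argument centered on points of $\hNUH \cap \hat Y^{\#}$, and then obtain $\tilde{\mathcal R}$ from $\mathcal R$ by an inductive enlargement procedure in which each step uses the exponential stable contraction of $f^N$ to absorb the overshoots that would violate~\ref{property:MR5}. Parameters are chosen in the order $\hNUH$, then $\alpha'$, then $N$, then $\sigma$.

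First I would fix a Pesin block $\hNUH$ with $\hnu(\hNUH) > 1 - \eta/2$ and let $\hat U$ be the associated neighborhood from Lemma~\ref{lem:ContractionExtendedUnstableConeField}; shrinking $\hat U$ slightly gives $\hnu(\hat U) > 1 - \eta$. To produce the foliation $\mathscr{F}^u$, I would smooth a measurable selection of the direction $\Eu$ inside the interior of $\mathscr{C}^{u,ext,\alpha'}$ via a partition of unity: in dimension two any smooth non-vanishing line field is integrable, which yields a smooth foliation on an open neighborhood of $\pi(\hat U)$ whose leaves are tangent to $\mathscr{C}^{u,ext,\alpha'}$. Lemma~\ref{lem:FoliationTangentCone} then ensures that, for $\alpha'$ and $\tilde\epsilon$ small enough, every leaf meeting a Pesin chart $(\Psi_{\hx}, q_{\tilde\epsilon}(\hx))$ of a point $\hx \in \hNUH$ contains a full-size $(\mathscr{C}^{u,\hx}, u)$-manifold.

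To construct $\mathcal R$, I would use that $\hNUH \cap \hat Y^{\#}$ has full $\hnu|_{\hNUH}$-measure to select a finite net $\hx_1, \dots, \hx_L \in \hNUH$ whose Pesin charts cover $\pi(\hat U)$, and for each $i$ pick two points $\hy_i^1, \hy_i^2 \in \hNUH \cap \hat Y^{\#}$ whose projections under $\Psi_{\hx_i}^{-1}$ lie on opposite sides of $\Wu_{loc}(\hx_i)$ at horizontal distance from $0$ of order $\sigma/4$ (possible by the two-sided accumulation property built into the definition of $\hat Y^{\#}$). Then I would pick two leaves of $\mathscr{F}^u$ on either side of $\hx_i$ as unstable boundaries of $R_i$, spaced so that $\sigma^u_{\min}(R_i) \geq \sigma/2$ and $\sigma^{u,\alpha'}_{\max}(R_i) \leq \sigma$. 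Proposition~\ref{prop:IntersectionAdmissiblemanifolds} produces the four corners inside the chart, Lemma~\ref{lem:DiameterRectangle} controls the diameter, and \ref{property:MR1}--\ref{property:MR4} follow provided $\sigma$ is small relative to $\min_{\hNUH} q_{\tilde\epsilon}$.

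For $\tilde{\mathcal R}$, let $\tilde R_i^{(0)} := R_i$ and let $\mathcal I \subset \{1,\dots,L\}^2$ be the finite set of pairs $(i,j)$ satisfying the hypothesis of~\ref{property:MR5}. Inductively, if some $(i,j) \in \mathcal I$ has $f^N(\tilde R_i^{(n)}) \cap \partial^u \tilde R_j^{(n)} \neq \emptyset$, I replace each offending unstable boundary of $\tilde R_j^{(n)}$ by another leaf of $\mathscr{F}^u$ pushed outward in the stable direction just far enough to absorb the overshoot, and extend the stable boundaries accordingly by taking longer pieces of $\Ws_{loc}(\hy_j^l)$, which is legitimate since these stable manifolds have size bounded below on $\hNUH$. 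The quantitative heart of the argument is that Theorem~\ref{thm:finPesinChart} and Proposition~\ref{prop:GraphTranform} imply that $f^N$ contracts the stable width of each rectangle by $e^{-N\chi/2}$, so the overshoot at step $n$ is bounded by $C L^n e^{-nN\chi/2}\, \sigma$, where the factor $L^n$ accounts for the cascade of secondary enlargements. The main obstacle is precisely this cascade: enlarging $\tilde R_j$ enlarges $f^N(\tilde R_j)$, which may force enlargement of $\tilde R_k$ for $(j,k) \in \mathcal I$, and so on. Choosing $N$ large enough so that $L e^{-N\chi/2} \ll 1$ makes the geometric series converge and yields
\[
  \sigma^{u,\alpha'}_{\max}(\tilde R_i) \;\leq\; \sigma + \sum_{n \geq 1} C L^n e^{-nN\chi/2}\, \sigma \;\leq\; K\sigma,
\]
while $\sigma^s_{\max}(\tilde R_i) \leq K\sigma$ follows from the same estimate combined with the bounded-below size of $\Ws_{loc}(\hy_j^l)$, completing the construction.
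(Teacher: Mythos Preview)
Your overall architecture matches the paper's: build $\mathcal R$ by covering $\pi(\hNUH)$ with rectangles whose stable boundaries lie in local stable manifolds of points of $\hNUH\cap\hat Y^{\#}$ and whose unstable boundaries are leaves of a smooth foliation $\mathscr F^u$ tangent to $\mathscr C^{u,ext,\alpha'}$, and then produce $\tilde{\mathcal R}$ by an inductive enlargement that absorbs the overshoots of $f^N(\tilde R_i)$ across $\partial^u\tilde R_j$. The construction of $\mathscr F^u$ and of $\mathcal R$ is essentially what the paper does.

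There is, however, a genuine gap in the quantitative part of your enlargement argument. You write that the overshoot at step $n$ is bounded by $C L^n e^{-nN\chi/2}\sigma$ and then choose $N$ so that $Le^{-N\chi/2}\ll 1$. But $L$ is the number of rectangles in a cover at scale $\sigma$, so $L\to\infty$ as $\sigma\to 0$; since you (correctly) declare that $\sigma$ is chosen \emph{after} $N$, the condition $Le^{-N\chi/2}\ll 1$ cannot be enforced. This circularity is not cosmetic: it is exactly the difficulty the paper's key lemma (Lemma~\ref{lem:KeyStepRectangles}) is designed to remove. That lemma takes a pair $R_1,R_2$ with $\sigma^{u,\alpha}_{\max}(R_1)\le K\sigma$, $\sigma^{u,\alpha}_{\max}(R_2)\le\sigma$ and produces $\tilde R_2\supset R_2$ with $f^N(R_1)\cap\partial^u\tilde R_2=\emptyset$ and $\sigma^{u,\alpha}_{\max}(\tilde R_2)\le K\sigma$; the bound is \emph{uniform} in $L$ because the new unstable boundary is built as the pointwise min/max of the representing functions of $\partial^u R_2$ and of the graph transforms $\mathcal F^u(\partial^u R_1)$, not by pushing to a leaf of $\mathscr F^u$. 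In the successive approximation the paper always applies this lemma with the \emph{original} $R_i$ in the role of $R_2$ and with constant $K/2$, so each piece $\tilde R^{k+1}_{i,j,v}$ has size $\le K\sigma/2$ and the union-of-two argument gives $\sigma^{u,\alpha}_{\max}(\tilde R^{k+1}_i)\le K\sigma$ at every step, with no reference to $L$.

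Your route can be salvaged, but not with the bookkeeping you wrote. The overshoots coming from the various $i$ with $(i,j)\in\mathcal I$ are not added: they all lie within the contracted width of the newest strip, so the enlargement needed at step $n+1$ is bounded by $C\,e^{-N\chi/2}\Delta_n$ where $\Delta_n=\max_j\delta_n^{(j)}$, giving $\Delta_n\le (Ce^{-N\chi/2})^n\sigma$ with no $L$. With this correction the geometric series argument closes; but you should also note that the paper does \emph{not} keep the unstable boundaries of $\tilde R_i$ inside $\mathscr F^u$ (property~\ref{property:MR3} is stated only for $R_i$), precisely because the min/max construction is what makes the uniform estimate in Lemma~\ref{lem:KeyStepRectangles} transparent.
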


The last property is a type of Markov property. We ask that the rectangles intersect in a nice way, such that the image by $f^n$ of some rectangle does not intersect the unstable boundary of the intersected rectangle. One can compare it to the classical Markov property on Figure \ref{fig:Examples_Intersection_rectangles}.

\begin{figure}[h]
	\centering
	\includegraphics{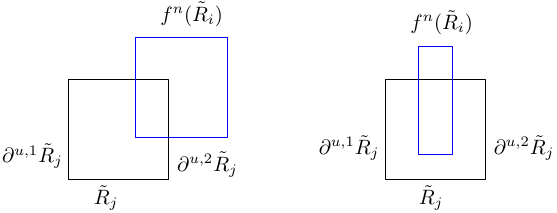}
	\caption{On the left, an example of an intersection between $\tilde{R}_j$ and $f^n(\tilde{R}_i)$ which does not satisfy Property \ref{property:MR5}. On the right, an example of an intersection satisfying \ref{property:MR5}.}
	\label{fig:Examples_Intersection_rectangles}
\end{figure}

Before going into the technical details, let us explain the main steps and ideas of the proof.

\parbreak\textbf{Step 1: Construction of a foliation tangent to $\mathscr{C}^{u,ext}$.} First of all, we fix a Pesin block $\hat{\Lambda}$ such that $\hnu(\hat{\Lambda}) > 1-\eta$. We construct a smooth foliation $\mathscr{F}^u$ tangent to the cone field $\mathscr{C}^{u,ext}$. The plaques will be $(\mathscr{C}^{u,\hx},u)$-manifolds in the Pesin charts.

\parbreak\textbf{Step 2: Construction of the family $\mathcal{R}$.} We take $x \in \Lambda$ and $\hx \in \hat{Y}^{\#}\cap \hat{\Lambda}\cap\pi\inv(\hx)$. We can then suppose that there exists points of $\hat{\Lambda}$ on each side of the local unstable set of $\hx$. These points have Pesin charts close to the Pesin chart of $\hx$. Looking at the stable manifolds of these points, they will intersect each plaque of the foliation $\mathscr{F}^u$ on a uniform neighborhood. Taking two plaques at a good scale, this will form a rectangle. Covering $\Lambda$ by finitely many of these, we obtain the first family of $us$-rectangles $\mathcal{R}$.

\parbreak\textbf{Step 3: Obtaining $\tilde{\mathcal{R}}$ by applying successive approximations to $\mathcal{R}$.} The idea is similar to Sinai's construction of Markov partitions for Axiom A diffeomorphisms. We start with the rectangles of the first family $\mathcal{R}$ and look at the rectangles $R_i, R_j$ whose iterates intersect in a "bad" way: $f^n(R_i) \cap \partial^uR_j \ne \emptyset$, but such that there exists a "good" Pesin chart: there exists $\hz \in \hat{U}\cap\hf^{-n}(\hat{U})$ with $\pi(\hz) \in R_i$. We then enlarge the rectangle $R_j$ such that the intersection with $f^n(R_i)$ becomes nice. Doing this for all the $R_i$ which intersect $R_j$ in a bad way but with a good chart, we obtain $\tilde{R}_j^1$. We then repeat this construction, but just looking at the bad intersections of rectangles $\tilde{R}^1_i, \tilde{R}^1_j$ such that their "base" rectangles $R_i,R_j$ intersect. Note that at step $k$, the portions we add are delimited by $(\mathscr{C}^{u,\hx},u)$-manifolds which had been contracted $k$ times by the graph transform operator. By the choice of $n$, large enough ensuring that the graph transform is sufficiently contracting, we can then control the size of the portion we add.

\subsection{The key lemma.}
We are first going to prove a lemma which will be the key step of the proof of Proposition \ref{prop:MarkovRectangles}. This lemma allows us to iterate the successive approximations we explained.

\begin{lemma}
	Let $\hNUH$ be a Pesin block. For any $K>4$, the following holds for $N := N(\hNUH,K)$ sufficiently large and $\alpha$, $\sigma := \sigma(\hNUH,n)$ and $\tilde{\epsilon}:=\tilde{\epsilon}(\hNUH,n,\alpha)$ sufficiently small. Let $R_1$ and $R_2$ be two $us$-rectangles such that $\partial^{s,i}R_l \subset \Ws_{loc}(\hy_{l,i})$ for some $\hy_{l,i} \in \hNUH \cap \partial^{s,i}R_l$. Suppose that they satisfy the following:
	\begin{itemize}[label={--}]
		\item There exists an adapted chart $\big(\Psi_{\hx_i},q_{\tilde{\epsilon}}(\hx_i)\big)$ to the rectangle $R_i$ such that $\hx_i \in \hNUH$.
		\item Let $\hat{U}:=\hat{U}(N)$ be the neighborhood of $\hNUH$ constructed in Lemma \ref{lem:ContractionExtendedUnstableConeField}. There exists $\hz \in \hat{U}\cap\hf^{-N}(\hat{U})$ such that $\pi(\hz) \in R_1$.
		\item $\sigma^{u,\alpha}_{\max}(R_1)\leq K\sigma, \ \sigma^{u,\alpha}_{\max}(R_2) \leq \sigma$ and $\sigma^s_{\max}(\Ws_{loc}(\hy_{i,1}),\Ws_{loc}(\hy_{i,2})) \leq \sigma$.
		\item $f^N(R_1) \cap \partial^uR_2 \ne \emptyset$.
	\end{itemize}
	Then there exists a $us$-rectangle $\tilde{R}_2$ with the following properties:
	\begin{itemize}[label={--}]
		\item $R_2\subset \tilde{R}_2$ and $\partial^{s,i}R_2\subset \partial^{s,i}\tilde{R}_2\subset \Ws_{loc}(\hy_i)$.
		\item $\sigma^{u,\alpha}_{\max}(\tilde{R}_2) \leq K\sigma$.
		\item $f^N(R_1) \cap \partial^{u}\tilde{R}_2 = \emptyset$.
	\end{itemize}
	\label{lem:KeyStepRectangles}
\end{lemma}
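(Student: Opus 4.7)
The strategy is to extend the unstable boundary of $R_2$ along the two stable manifolds $W^s_{loc}(\hat y_{2,1}), W^s_{loc}(\hat y_{2,2})$ by an amount slightly larger than the stable-direction width of $f^N(R_1)$; since that width decays exponentially with $N$, this extension will be a tiny fraction of $(K-1)\sigma$. First I would analyze the geometry of $f^N(R_1)$. The hypothesis gives a point $\hat z \in \hat U \cap \hat f^{-N}(\hat U)$ with $\pi(\hat z)\in R_1$, so Lemma~\ref{lem:ContractionExtendedUnstableConeField} applies: for $\sigma$ small enough so that $R_1 \subset B(\pi(\hat z), \delta_2)$, $d_y f^N$ sends $\mathscr C^{u,ext}_y$ into the interior of $\mathscr C^{u,ext}_{f^N(y)}$ for every $y\in R_1$. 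Hence $f^N(\partial^u R_1)$ consists of two curves tangent to $\mathscr C^{u,ext}$, while $f^N(\partial^s R_1) \subset W^s_{loc}(\hat f^N(\hat y_{1,i}))$ remains tangent to the stable direction, so to $\mathscr C^{s,ext,\alpha}$. Thus $f^N(R_1)$ is a strip bounded by two $\mathscr C^{u,ext}$-tangent curves and two stable curves.

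Next I would bound $\sigma^{u,\alpha}_{\max}(f^N(R_1))$. The two stable boundaries $\partial^{s,i}R_1$ have length bounded by $\sigma^{u,\alpha}_{\max}(R_1)\leq K\sigma$, and since $\hat y_{1,i} \in \hNUH$, property (PS1) of Definition~\ref{def:PesinBlock} gives $\|df^N_{|TW^s(\hat f^k(\hat y_{1,i}))}\|\leq le^{-N\chi}$, so $|f^N(\partial^s R_1)| \leq lKe^{-N\chi}\sigma$. Applying Lemma~\ref{lem:DistanceChartsManifold} in the chart $(\Psi_{\hx_2}, q_{\tilde\epsilon}(\hx_2))$ (equivalently, iterating Proposition~\ref{prop:ContractionTransGraph} along the orbit and transferring back) yields $\sigma^{u,\alpha}_{\max}(f^N(R_1)) \leq C(\hNUH)\,K\sigma\,e^{-N\chi/2}$. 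For $N$ large depending on $\hNUH$ and $K$, this quantity is smaller than $(K-1)\sigma/4$.

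Finally I would construct $\tilde R_2$ by extending the unstable boundaries of $R_2$ outward. Assume WLOG that $f^N(R_1) \cap \partial^{u,1}R_2 \neq \emptyset$; inside the chart of $\hx_2$, $f^N(R_1)$ then crosses the $(\mathscr C^{u,\hx_2}, u)$-manifold $U_1 = \partial^{u,1} R_2$. I would pick a leaf of the foliation $\mathscr F^u$ passing through the horizontal projection of the far side of $f^N(R_1)$ in the chart, translated outward (away from $R_2$) by an additional margin $\leq C(\hNUH)Ke^{-N\chi/2}\sigma$. By Lemma~\ref{lem:FoliationTangentCone} this leaf restricts in the chart to a $(\mathscr C^{u,\hx_2}, u)$-manifold $U_1'$ of size $q_{\tilde\epsilon}(\hx_2)$, which (since $q_{\tilde\epsilon}(\hx_2)$ is much larger than $K\sigma$) intersects both of the $s$-manifolds $W^s_{loc}(\hat y_{2,1}), W^s_{loc}(\hat y_{2,2})$. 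Do the same on the other side if $f^N(R_1)$ also meets $\partial^{u,2}R_2$, and declare $\tilde R_2$ to be the open rectangle bounded by $U_1', U_2'$ and the corresponding extensions of the stable boundaries along $W^s_{loc}(\hat y_{2,i})$. By construction $R_2\subset\tilde R_2$, $\partial^{s,i}R_2\subset\partial^{s,i}\tilde R_2\subset W^s_{loc}(\hat y_{2,i})$, $f^N(R_1)\cap\partial^u\tilde R_2 = \emptyset$, and $\sigma^{u,\alpha}_{\max}(\tilde R_2)\leq \sigma^{u,\alpha}_{\max}(R_2) + 2C(\hNUH)Ke^{-N\chi/2}\sigma \leq \sigma + (K-1)\sigma/2 \leq K\sigma$.

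\textbf{Main obstacle.} The delicate point is to organize the choice of $N, \sigma, \alpha, \tilde\epsilon$ in the correct order so that: (i) Lemma~\ref{lem:ContractionExtendedUnstableConeField} applies (fixes $N$, then $\sigma$), (ii) the width bound $\sigma^{u,\alpha}_{\max}(f^N(R_1))\lesssim Ke^{-N\chi/2}\sigma$ holds with a constant independent of $\sigma$ (uses only $\hNUH$, via Lemma~\ref{lem:DistanceChartsManifold}), and (iii) the leaf of $\mathscr F^u$ chosen for $U_1'$ remains a $(\mathscr C^{u,\hx_2},u)$-manifold of the correct size inside the chart (uses Lemma~\ref{lem:FoliationTangentCone}, which constrains $\alpha$ and $\tilde\epsilon$). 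A minor secondary difficulty is ensuring the extensions on both sides do not overlap or leave the domain of the chart, but this is comfortably satisfied because the total push is $O(Ke^{-N\chi/2}\sigma) \ll q_{\tilde\epsilon}(\hx_2)$.
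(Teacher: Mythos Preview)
Your width bound $\sigma^{u,\alpha}_{\max}(f^N(R_1))\lesssim K\sigma e^{-N\chi/4}$ is correct and is exactly the paper's equation~(3.1)--(3.2), obtained by combining Proposition~\ref{prop:ContractionTransGraph} with Lemma~\ref{lem:DistanceChartsManifold}. The problem is the next step: your construction of the new unstable boundary $U_1'$ does not give $f^N(R_1)\cap U_1'=\emptyset$.

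In the chart $(\Psi_{\hat x_2},q_{\tilde\epsilon}(\hat x_2))$, both a leaf of $\mathscr F^u$ and the image curve $f^N(\partial^{u,1}R_1)$ are $(\mathscr C^{u,\hat x_2},u)$-manifolds, i.e.\ Lipschitz vertical graphs with Lipschitz constant bounded by the constant $L$ of Lemma~\ref{lem:LipschitzCstAdmissibleCuManifold}, which depends only on the Pesin block. Two such graphs can cross: over the vertical range between $W^s_{loc}(\hat y_{2,1})$ and $W^s_{loc}(\hat y_{2,2})$, which has size of order $\sigma$, their difference can vary by $2L\sigma$. A margin of order $Ke^{-N\chi/2}\sigma$ is far too small to prevent this, so your leaf $U_1'$ may re-enter $f^N(R_1)$. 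Enlarging the margin to $\sim L\sigma$ would fix the disjointness but then the additive estimate $\sigma^{u,\alpha}_{\max}(\tilde R_2)\le \sigma+2(\text{margin})$ gives a bound $\sim(1+2L)\sigma$, which exceeds $K\sigma$ whenever $K$ is close to $4$ and $L$ is large.

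The paper avoids this by \emph{not} using a foliation leaf: it sets the new unstable boundaries to be the pointwise $\min$ and $\max$, in the chart, of the four representative functions of $\partial^{u,l}R_2$ and $f^N(\partial^{u,l}R_1)$ (Claims~\ref{claim:Rectangles1}--\ref{claim:Rectangles2}). This automatically yields $f^N(R_1)\cap\partial^u\tilde R_2=\emptyset$. The price is that the size estimate is no longer additive; it requires the geodesic trick of Claim~\ref{claim:Rectangles3} followed by a case analysis (Cases~1,~2a,~2b and Figures~\ref{fig:RectanglesEstimationCase1}--\ref{fig:RectanglesEstimationCase2}), bounding the geodesic length by paths that run \emph{along} the new unstable boundaries (each of length $\le\sigma$ by the hypothesis $\sigma^s_{\max}(W^s_{loc}(\hat y_{2,1}),W^s_{loc}(\hat y_{2,2}))\le\sigma$) plus one \emph{across} segment of length $\le\lambda K\sigma$. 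This produces the uniform bound $4C^2\sigma<K\sigma$ for any $K>4$, independent of $L$.
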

\begin{proof}
	Let us fix a Pesin block $\hNUH$ and a constant $K>4$. Let $\mathscr{C}^{u,ext}$ and $V$ be the cone field and the neighborhood of $\NUH$ constructed in Proposition \ref{prop:UnstableConeField}. Take any $\hx \in \hNUH$ and $V^u_1,V^u_2$ two $u$-manifolds almost everywhere tangent to $\mathscr{C}^{u,\hx}$ in the chart $\big(\Psi_{\hx},q_{\tilde{\epsilon}}(\hx)\big)$. For any $\alpha<1$ and $n\geq1$, combining Proposition \ref{prop:ContractionTransGraph} and Lemma \ref{lem:DistanceChartsManifold} whenever it makes sense, we have:
	\begin{equation}
		\sigma^{u,\alpha}_{\max}\big((\mathcal{F}^u_{\hx})^n(V^u_1),(\mathcal{F}^u_{\hx})^n(V^u_2)\big) \leq C(\hNUH)^2e^{-n\chi/4} \sigma^{u,\alpha}_{\max}(V^u_1,V^u_2)
		\label{eq:keylem:ContractionUManifolds}
	\end{equation}
	where $(\mathcal{F}^u_{\hx})^n$ is the graph transform operator (Definition \ref{def:GraphTransformOperator}) between the charts $\big(\Psi_{\hx},q_{\tilde{\epsilon}}(\hx)\big)$ and $\big(\Psi_{\hf^n(\hx)},q_{\tilde{\epsilon}}(\hf^n(\hx))\big)$ for the map $f^n$ (it can also be seen as the composition $n$ times of the graph transform operator for $f$), and $C(\hNUH)$ is the constant given by Lemma \ref{lem:DistanceChartsManifold}. Choose $\lambda\in (0,1)$ such that $\lambda K <1$ and take $n$ large enough so that:
	\begin{equation}
		C(\hNUH)^2e^{-n\chi/4}<\lambda.
		\label{eq:RectangleLambda}
	\end{equation}
	From now on, we fix such an $n$ and we will work with the map $f^n$. Let $\hat{U}$ be the neighborhood of $\hNUH$ and $\delta_1,\delta_2$ be the constants given by Lemma \ref{lem:ContractionExtendedUnstableConeField}. Take $\gamma := \gamma(\hNUH,\alpha)$ and $\tilde{\epsilon} := \tilde{\epsilon}(\hNUH,n,\delta_2,\gamma)$ small enough so that:
	\begin{itemize}[label = {--}]
		\item For any $\hx \in \hNUH$, we have $\Psi_{\hx}\big(R(0,q_{\tilde{\epsilon}}(\hx))\big) \subset B(\pi(\hx),\delta_2)$.
		\item Propositions \ref{prop:GraphTranform} and \ref{prop:ContractionTransGraph} hold for any $\hx \in \hNUH$ such that there exists $\hz \in \hat{U}\cap\hf^{-n}(\hat{U})$ satisfying $d(\pi(\hx),\pi(\hz)) < \delta_1$, with the cone field $\mathscr{C}^{u,\hx}$ and the map $f^n$.
		\item Proposition \ref{prop:IntersectionAdmissiblemanifolds} holds for any $\hx \in \hNUH$, with the cone field $\mathscr{C}^{u,\hx}$.
		\item For any $\hx \in \hNUH$, the local stable manifold $\Ws_{loc}(\hx)$ is a $\gamma/2$-admissible $s$-manifold in $\big(\Psi_{\hx},q_{\tilde{\epsilon}}(\hx)\big)$ and is tangent to $\text{Int}(\mathscr{C}^{s,ext,\alpha})$.
	\end{itemize}
	Indeed, for all $\hx \in \hNUH$, such a $\gamma$ exists. Now, by the continuity of the Pesin charts and the cone field $\mathscr{C}^{u,ext}$, and the compactness of $\hNUH$, it can be taken uniformly in $\hx$. The same argument applies to $\tilde{\epsilon}$, with $\gamma$ fixed. Note that the second condition holds since, under the mentioned hypothesis, the cone filed $\mathscr{C}^{u,\hx}$ is contracted by $f^n$ in the right Pesin charts.
	
	Take $\big(\Psi_{\hx_1},q_{\tilde{\epsilon}}(\hx_1)\big)$ and $\big(\Psi_{\hx_2},q_{\tilde{\epsilon}}(\hx_2)\big)$ two adapted charts for $R_1$ and $R_2$ such that $\hx_1,\hx_2 \in \hNUH$. The following construction will hold for any $\sigma := \sigma(n,\hNUH)$ sufficiently small. We assume that $f^n(R_1) \cap \partial^uR_2 \ne \emptyset$. Denote by $V^u_{i,j}$ the $\big(\mathscr{C}^{u,\hx_i},u\big)$-manifold in $\big(\Psi_{\hx_i},q_{\tilde{\epsilon}}(\hx_i)\big)$ containing $\partial^{u,j}R_i$, for $j \in \{1,2\}$. By definition, $R_i \subset \Psi_{\hx_i}\big(R(0,q_{\tilde{\epsilon}}(\hx_i))\big)$. By Lemma \ref{lem:ContractionExtendedUnstableConeField}, we can then take the graph transform between the cone fields $\mathscr{C}^{u,\hx_1}$ and $\mathscr{C}^{u,\hf^n(\hx_1)}$. We get: $W^u_{1,i} := (\mathcal{F}^u_{\hx_1})^n(V^u_{1,i})$. By Proposition \ref{prop:GraphTranform}, the curves $W^u_{1,i}$ are $\big(\mathscr{C}^{u,\hf^n(\hx_1)},u)$-manifolds in $\big(\Psi_{\hf^n(\hx_1)},q_{\tilde{\epsilon}}(\hf^n(\hx_1))\big)$.
	
	\begin{claim} The curves $f^n(\partial^{u,i}R_1)$ are contained in $\big(\mathscr{C}^{u,\hx_2},u\big)$-manifolds in $\big(\Psi_{\hx_2},q_{\tilde{\epsilon}}(\hx_2)\big)$ that we denote by $\tilde{W}^u_{1,i}$.
	\label{claim:Rectangles1}
	\end{claim}
	\begin{proof}
		By definition, each $W^u_{1,i}$ is almost everywhere tangent to $\mathscr{C}^{u,ext}$. This then implies that the curve $w^u_{1,i}:=\Psi_{\hx_2}\inv\Big(W^u_{1,i}\cap \Psi_{\hx_2}\big(R(0,q_{\tilde{\epsilon}}(\hx_2))\big)\Big) \subset \bbR^2$ is almost everywhere tangent to $\mathscr{C}^{u,\hx_2}$. Note that taking $\sigma$ small enough guarantees that $f^n(\partial^{u,i}R_1) \subset (W^u_{1,i}\cap \Psi_{\hx_2}\big(R(0,q_{\tilde{\epsilon}}(\hx_2))\big)$. Since $\mathscr{C}^{u,\hx_2}$ does not contain the horizontal direction, $w^u_{1,i}$ is a well-defined Lipschitz graph. Call its representative function $F_i$. Since by \eqref{eq:keylem:ContractionUManifolds} and \eqref{eq:RectangleLambda}, we have:
		\begin{equation*}
			\sigma^{u,\alpha}_{\max}(f^n(R_1)) \leq \lambda\sigma^{u,\alpha}_{\max}(R_1) \leq \lambda K\sigma
			\label{eq:keylem:EstimationIterateUnstable}
		\end{equation*}
		and since $f^n(R_1) \cap R_2 \ne \emptyset$, it follows that taking $\sigma$ small enough guarantees that $|F_i(0)| < 10^{-3}q_{\tilde{\epsilon}}(\hx_2)$.
		
		Now, if $q\Big(W^u_{1,i}\cap \Psi_{\hx_2}\big(R(0,q_{\tilde{\epsilon}}(\hx_2))\big)\Big)<10^{-2}q_{\tilde{\epsilon}}(\hx_2)$ (in the chart $\big(\Psi_{\hx_2},q_{\tilde{\epsilon}}(\hx_2)\big)$), then we can just extend $w^u_{1,i}$, in an arbitrary way, to a $\big(\mathscr{C}^{u,\hx},u\big)$-manifold in the chart $\big(\Psi_{\hx_2},q_{\tilde{\epsilon}}(\hx_2)\big)$, which proves the claim.
	\end{proof}
	Taking $\sigma$ small enough and up to extending it, by continuity of local stable manifolds on $\NUH$, $\Ws_{loc}(\hy_{2,j})$ are $\gamma$-admissible $s$-manifolds in $\big(\Psi_{\hx_2},q_{\tilde{\epsilon}}(\hx_2)\big)$, tangent to $\mathscr{C}^{s,ext,\alpha}$ for $j=1,2$. Write $F^2_i$ for the representative function of $V^u_{2,i}$ and $F^1_i$ the representative function of $\tilde{W}^u_{1,i}$. Denote by $U^u_1$ the $\big(\mathscr{C}^{u,\hx_2},u\big)$-manifold defined by the representative function $U^u_1 = \min(F^j_i)$, for $i,j \in \{1,2\}$, where the min is taken on $[-\min\big(q(V^u_{i,j}), q(\tilde{W}^u_{i,j})\big), \min\big(q(V^u_{i,j}),q(\tilde{W}^u_{i,j})\big)]$. Define similarly $U^u_2$ by taking the max of these four functions on the same domain. Then, by Proposition \ref{prop:IntersectionAdmissiblemanifolds}, for each $j \in \{1,2\}$, the curve $\Ws_{loc}(\hy_{2,j})$ intersects in a unique point each $U^u_i$. Note that Proposition \ref{prop:IntersectionAdmissiblemanifolds} guarantees that this intersection point belongs to $\Ws_{loc}(\hy_{2,j})$, taking $\sigma$ small enough. This defines naturally a Jordan curve, which bounds a topological disk $\tilde{R_2}$. Figure \ref{fig:Rectangles_Construction} shows how the construction works. We sum up the properties of $\tilde{R}_2$ in the following claim.
	
	\begin{claim} $\tilde{R}_2$ is a topological disk with the following properties:
	\begin{itemize}[label={--}]
		\item $\tilde{R}_2$ is a $us$-rectangle in the chart $\big(\Psi_{\hx_2},q_{\tilde{\epsilon}}(\hx_2)\big)$.
		\item $R_2 \subset \tilde{R}_2$ and $\partial^{s,i}R_2 \subset \partial^{s,i}\tilde{R}_2 \subset \Ws_{loc}(\hy_{2,i})$.
		\item $f^n(R_1) \cap \partial^u\tilde{R}_2 = \emptyset$.
	\end{itemize}
	\label{claim:Rectangles2}
	\end{claim}
	\begin{proof}
		The first two points follow directly from the construction. For the last point, suppose that $f^n(R_1) \cap \partial^u\tilde{R}_2 \ne \emptyset$. It then implies, without loss of generality, that there is a point $z \in \partial^{u,1}\tilde{R}_2$ such that $z \in f^n(R_1)$. Recall that $F^1_i$ is the representative function of $\tilde{W}^u_{i,1}$ and $U^u_1$ the one of $\partial^{u,1}\tilde{R}_2$. Write $\Psi_{\hx_2}\inv(z) = (z_1,z_2)$. We then have $F^1_1(z_2)<z_1<F^1_2(z_2)$, which is absurd since $z_1 = U^u_1(z_2)$ and $U^u_1$ is the min (or the max) of all the $F^j_i$.
	\end{proof}
	
	\begin{figure}[h]
		\centering
		\begin{subfigure}{.5\textwidth}
			\centering
			\includegraphics[width=0.9\linewidth]{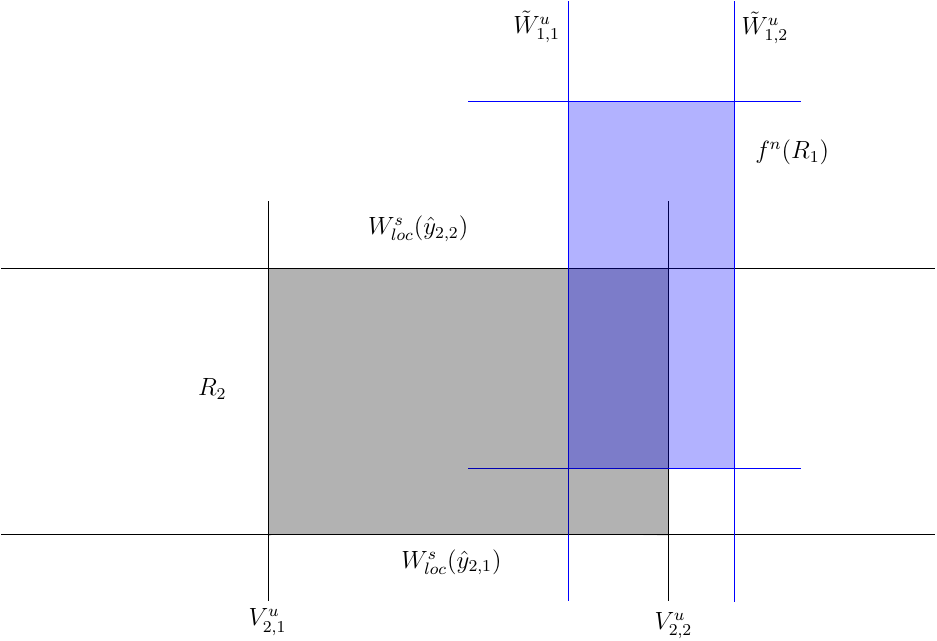}
			\caption{The intersection between $R_2$ (in gray) and $f^n(R_1)$ (in blue).}
			\label{subfig:RectanglesConstruction1}
		\end{subfigure}%
		\begin{subfigure}{.5\textwidth}
			\centering
			\includegraphics[width=0.9\linewidth]{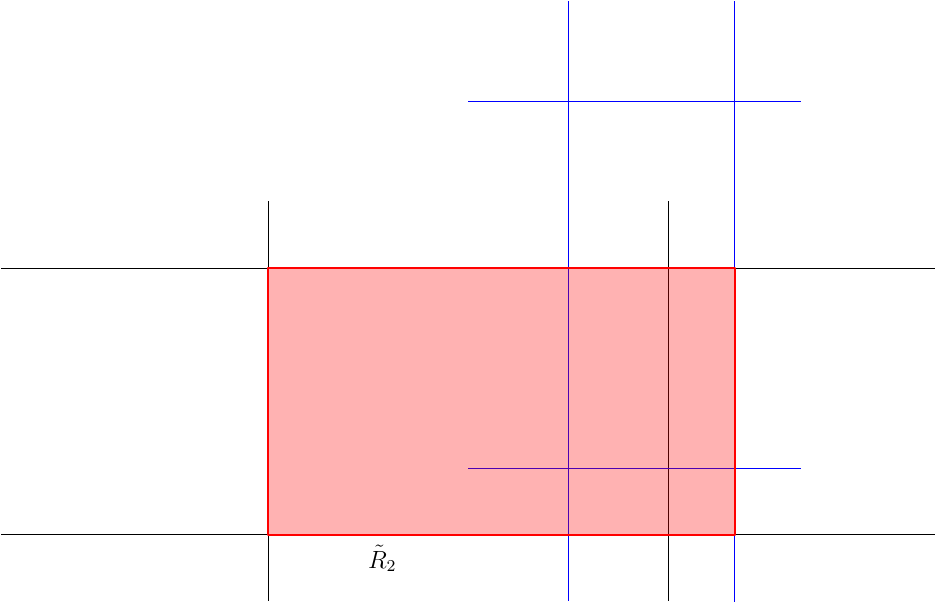}
			\caption{The $us$-rectangle $\tilde{R}_2$ in red.}
			\label{subfig:RectanglesConstruction2}
		\end{subfigure}
		\caption{The construction of the $us$-rectangle $\tilde{R}_2$.}
		\label{fig:Rectangles_Construction}
	\end{figure}
	
	To complete the proof, we still have to estimate the unstable size of $\tilde{R}_2$. For that, we first prove that we can approach the unstable size $\sigma_{\max}^{u,\alpha}(\tilde{R}_2)$ by a geodesic.
	
	\begin{claim} The following holds for $\tilde{\epsilon} := \tilde{\epsilon}(\hNUH)$ sufficiently small. For any constant $C>1$, there exists $\alpha:=\alpha(C)$ and a geodesic curve $c := c(C)$ going from $\partial^{u,1}\tilde{R}_2$ to $\partial^{u,2}\tilde{R}_2$ such that:
	\begin{equation*}
		\sigma_{\max}^{u,\alpha}(\tilde{R})\leq C^2l(c).
	\end{equation*}
	\label{claim:Rectangles3}
	\end{claim}
	\begin{proof}
		Let $C>1$. We will first show that there exists $\alpha>0$ such that for any two curves $c_1,c_2$ tangent to $\mathscr{C}^{s,ext,\alpha}$ such that $c_1(0)=c_2(0) \in \partial^{u,1}\tilde{R}_2$ and $c_1(1)=c_2(1) \in \partial^{u,2}\tilde{R}_2$, we have:
		\begin{equation*}
			l(c_1) \leq Cl(c_2).
		\end{equation*}
		Taking exponential charts and composing with a homothety, we can suppose that $c_1$ and $c_2$ are graphs of $\cC^1$ maps $F_1$ and $F_2$ defined on $[0,1]$, tangent to $\mathscr{C}^{s,ext,\alpha}$. Since $\tilde{R}_2 \subset \Psi_{\hx_2}\big(R(0,q_{\tilde{\epsilon}}(\hx_2))\big)$, taking $\tilde{\epsilon}$ small enough allows us to suppose that $\mathscr{C}^{s,ext,\alpha}$ is contained in a constant cone field $\tilde{\mathscr{C}}$, of horizontal direction and with opening coefficient $2\alpha$. We can then compute:
		\begin{equation*}
			l\big(\text{Graph}(F_1)\big) = \int_0^1 \norm{(1,F_1^{'}(t))}dt \leq 1 + \int_0^1 |F^{'}_1(t)|dt \leq 1 + 2\alpha.
		\end{equation*}
		Moreover, we trivially have that $l\big(\text{Graph}(F_2)\big) \geq 1$. Going back to the manifold, there exists a constant $K$ depending on $\tilde{\epsilon}$ and on the exponential map such that:
		\begin{equation*}
			\frac{l(c_1)}{l(c_2)} \leq K\frac{l\big(\text{Graph}(F_1)\big)}{l\big(\text{Graph}(F_2)\big)} \leq K(1+2\alpha).
		\end{equation*}
		Since $K$ is given essentially by the differential of the exponential map, taking $\tilde{\epsilon}$ small enough allows us to take $K$ as close to $1$ as we want. We then obtain:
		\begin{equation*}
			\frac{l(c_1)}{l(c_2)}\leq C
		\end{equation*}
		for $\alpha$ taken small enough. From now on, we fix such an $\alpha$.
		
		Fix a curve $\gamma \in \mathcal{T}^1(\mathscr{C}^{s,ext,\alpha})$ such that $\gamma(0) \in \partial^{u,1} \tilde{R}_2$ and $\gamma(1) \in \partial^{u,2}\tilde{R}_2$ which satisfies:
		\begin{equation*}
			\sigma^{u,\alpha}_{\max}(\tilde{R}_2)\leq C l(\gamma).
		\end{equation*}
		Let $c$ be the geodesic between $\gamma(0)$ and $\gamma(1)$. Parameterize it such that $c(0) = \gamma(0)$ and $c(1) = \gamma(1)$. Up to taking $\tilde{\epsilon}$ small enough, we can lift $\tilde{R}_2$, the curve $\gamma$, and the cone field $\mathscr{C}^{s,ext,\alpha}$ on $\Psi_{\hx_2}\big(R(0,q_{\tilde{\epsilon}}))\big)$ to $T_{\gamma(0)}M$ by the exponential map. Denote the lift of $\mathscr{C}^{s,ext,\alpha}$ by $\mathscr{C}^{\star}$. Again, taking $\tilde{\epsilon}$ small enough allows us to suppose that $\mathscr{C}^{\star}$ is constant. There exists some $v^{\star} \in T_{\gamma(0)}M$ and some $r^{\star}>0$ such that $c([0,1]) = \exp_{\gamma(0)}(\{tv^{\star}, \ t \in [0,r^{\star}]\})$. If $v^{\star} \notin \mathscr{C}^{\star}$, since the straight line $t \mapsto tv^{\star}$ is the only one joining the lift of $\gamma(0)$ to the lift of $\gamma(1)$, and since $\mathscr{C}^{\star}$ is constant, then it is impossible that there exists a curve tangent to $\mathscr{C}^{\star}$ joining these two points. We then conclude that $v^{\star} \in \mathscr{C}^{\star}$ and then that $c$ is tangent to $\mathscr{C}^{s,ext,\alpha}$.
		
		We finally have:
		\begin{equation*}
			\sigma_{\max}^{u,\alpha}(\tilde{R}_2) \leq C l(\gamma) \leq C^2 l(c)
		\end{equation*}
		which concludes the proof of the claim.
	\end{proof}
	
	Now, we are going to investigate two cases to estimate $\sigma^{u,\alpha}_{\max}(\tilde{R}_2)$.
	
	\textbf{Case 1:} $\tilde{W}^u_{1,i} \cap R_2 = \emptyset$ for $i \in \{1,2\}$.
	
	\begin{figure}[h]
		\centering
		\includegraphics[scale=1]{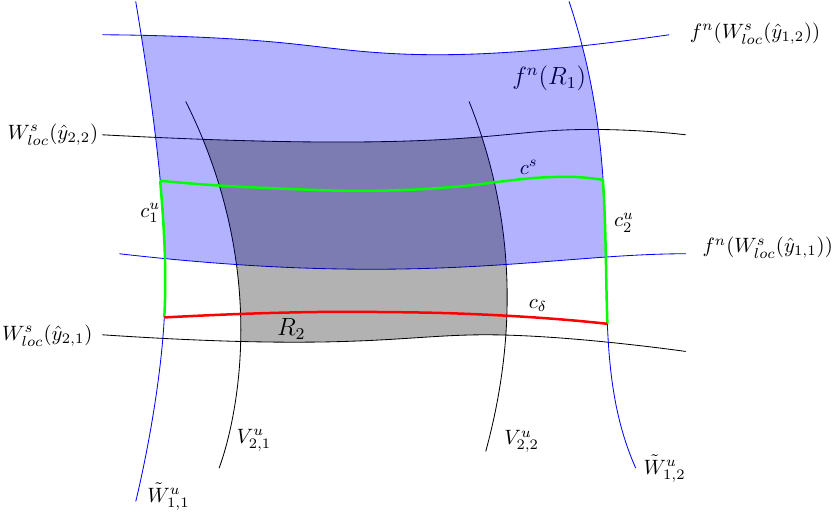}
		\caption{The estimation of $\sigma^{u,\alpha}_{\max}(\tilde{R}_2)$ in Case 1. In blue, the rectangle $f^n(R_1)$. In gray, the rectangle $R_2$. In red, the geodesic curve $c_{\delta}$. In green, the curves $c^u_1,c^u_2,c^s$ used to estimate the length of $c_{\delta}$.}
		\label{fig:RectanglesEstimationCase1}
	\end{figure}
	
	See Figure \ref{fig:RectanglesEstimationCase1} for a visual interpretation of the proof in Case 1. Remark that this implies that $\partial^{u,i}\tilde{R}_2 \subset \tilde{W}^u_{1,i}$ (up to changing the numeration). Indeed, since $f^n(R_1)\cap R_2 \ne \emptyset$, if there exists $z \in \partial^{u,1} \tilde{R}_2$ such that $z \notin \tilde{W}^u_{1,1}$, then we must have $z\in \partial^{u,1}R_2$. By a similar argument as before, this must imply that $|F^2_1(v)|>|F^1_1(v)|$ for some point $v$. But this means that $\tilde{W}^u_{1,1} \cap R_2 \ne \emptyset$.
	
	Let $C>1$ such that $4C^2<K$. Let $c$ be the geodesic given by Claim \ref{claim:Rectangles3}. Parameterize it such that $c(0) \in \partial^{u,1}\tilde{R}_2$ and $c(1)\in \partial^{u,2}\tilde{R}_2$. Take any curve $c^s$ going from $f^n(\partial^{u,1}R_1)$ to $f^n(\partial^{u,2}R_1)$ tangent to $\mathscr{C}^{s,ext,\alpha}$. Such a curve exists because, since we fixed $n$, the curves $f^n\big(\Ws_{loc}(\hy_{1,i})\big)$ are close enough to $\Ws_{loc}(\hx_1)$ and then are tangent to $\mathscr{C}^{s,ext,\alpha}$, taking $\tilde{\epsilon}$ small enough. Denote by $c^u_1$ the subcurve of $\tilde{W}^u_{1,1}$ going from $c(0)$ to the intersection between $c^s$ and $\tilde{W}^u_{1,1}$. Define similarly $c_2^u$ to be the subcurve of $\tilde{W}^u_{1,2}$ going from $c(1)$ to the intersection between $c^s$ and $\tilde{W}^u_{1,2}$. Now we have:
	\begin{equation*}
		\sigma^{u,\alpha}_{\max}(\tilde{R}_2) \leq C^2 l(c) \leq C^2 (l(c^u_1)+l(c^s)+l(c_2^u)) \leq C^2(\sigma + \lambda\sigma K + \sigma).
	\end{equation*}
	Indeed, $c^u_1$ and $c_u^2$ are tangent to $\mathscr{C}^{u,ext}$ and go from $\Ws_{loc}(\hy_{2,1})$ to $\Ws_{loc}(\hy_{2,2})$, then $l(c_1^u),l(c_2^u)\leq\sigma$ by hypothesis. Moreover, by \eqref{eq:keylem:ContractionUManifolds} and \eqref{eq:RectangleLambda}, we obtain $l(c^s)\leq \sigma^{u,\alpha}_{\max}\big(f^n(\partial^{u,1}R_1),f^n(\partial^{u,2}R_1)\big)\leq\lambda\sigma K$.\\
	
	\textbf{Case 2:} $\tilde{W}^u_{1,i}\cap R_2 \ne \emptyset$ for some $i\in \{1,2\}$.
	
	\begin{figure}[h]
		\centering
		\begin{subfigure}{.5\textwidth}
			\centering
			\includegraphics[width=1.1\linewidth]{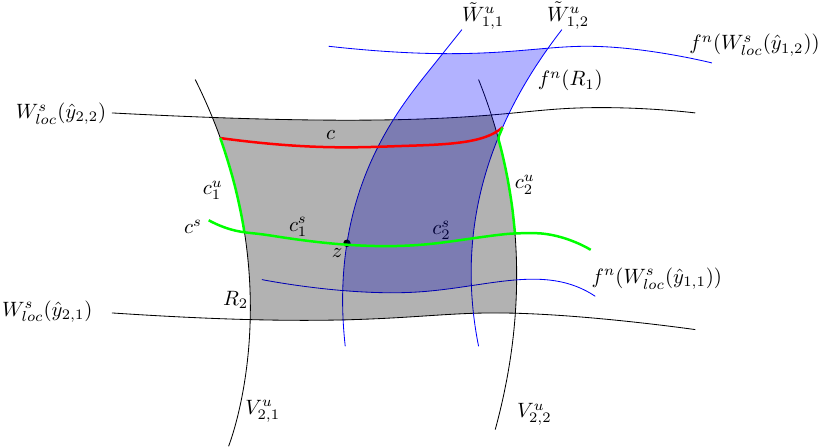}
			\caption{Case 2a.}
			\label{subfig:RectanglesEstimationCase2a}
		\end{subfigure}%
		\begin{subfigure}{.5\textwidth}
			\centering
			\includegraphics[width=1.1\linewidth]{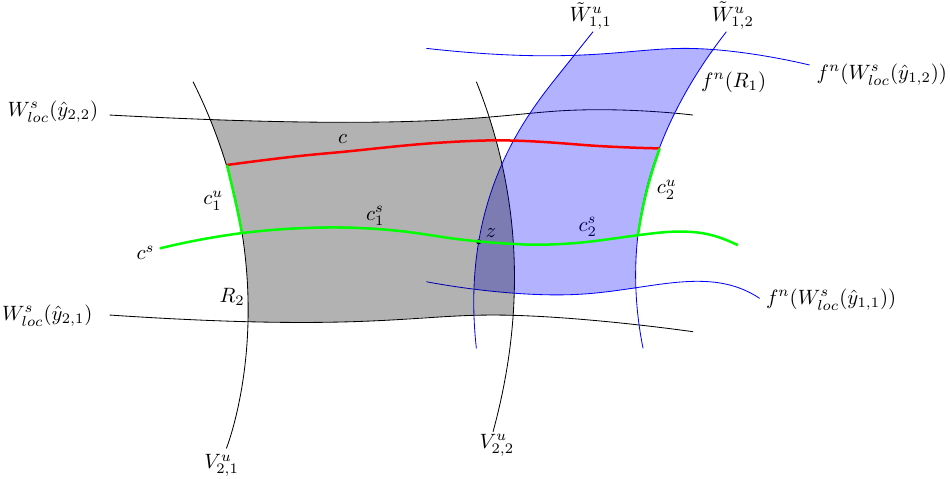}
			\caption{Case 2b.}
			\label{subfig:RectanglesEstimationCase2b}
		\end{subfigure}
		\caption{The estimation of $\sigma^{u,\alpha}_{\max}(\tilde{R}_2)$ in Case 2. In blue, the rectangle $f^n(R_1)$. In gray, the rectangle $R_2$. In red, the geodesic curve $c$. In green, the curves $c^u_1,c^u_2,c^s_1,c^s_2$ used to estimate the length of $c$. $c^s$ is the whole horizontal green curve while $c^s_1$ and $c^s_2$ are the subcurves going from $V^u_{2,1}$ to $z$ and $z$ to $V^u_{2,2}$ in Case 2a, or $\tilde{W}^u_{1,2}$ in Case 2b.}
		\label{fig:RectanglesEstimationCase2}
	\end{figure}
	
	See Figure \ref{fig:RectanglesEstimationCase2} for a visual interpretation of the proof. Suppose without loss of generality that $\tilde{W}^u_{1,1}\cap R_2 \ne \emptyset$. Let us first notice that there exists $z \in f^n(\partial^{u,1}R_1) \cap R_2$. Indeed, suppose that it was not the case. Since $\tilde{W}^u_{1,1}$ crosses $\partial^sR_2$, we can assume without loss of generality that $f^n(\partial^{u,1}R_1)$ is above $\Ws_{loc}(\hy_{2,1})$ (which has a sense in the Pesin chart), which contains the upper boundary of $R_2$. Since $f^n(\partial^sR_1)$ is contained in some local stable manifold, it cannot cross transversally $\partial^sR_2$. This implies that $f^n(\partial^sR_1)$ is above $\partial^sR_2$ and then $f^n(R_1) \cap R_2 = \emptyset$.
	
	Consider then such a $z \in f^n(\partial^{u,1}R_1) \cap R_2$. Take a curve $c^s$ tangent to $\mathscr{C}^{s,ext,\alpha}$, passing through $z$ which has the following properties:
	\begin{itemize}[label={--}]
		\item $c^s$ stays between $\Ws_{loc}(\hy_{2,1})$ and $\Ws_{loc}(\hy_{2,2})$,
		\item $c^s$ does not cross $f^n(\partial^sR_1)$,
		\item $c^s$ intersects both of $\partial^{u,k} \tilde{R}_2$ and $f^n(\partial^{u,k}R_1)$ uniquely.
	\end{itemize}
	To do this, notice that $z$ can be taken as close as we want from the intersection of $f^n(\partial^{u,1}R_1)$ with a local stable manifold ($\Ws_{loc}(\hy_{2,k})$ or with the local stable manifolds containing $f^n(\partial^sR_1)$). In the Pesin chart, translate this local stable manifold as a curve passing through $z$ and extend it so that the size of this curve, seen as a graph, is the size of the chart. This gives us the desired curve.
	
	Again, take $C>1$ such that $4C^2<K$ and let $c$ be the geodesic given by Claim 3. Parameterize it such that $c(0) \in \partial^{u,1}\tilde{R}_2$ and $c(1) \in \partial^{u,2}\tilde{R}_2$. Consider again $c^u_1$ and $c^u_2$ the curves contained in $\partial^{u,k}\tilde{R}_2$ going from $c(0)$, and respectively $c(1)$, to the intersection with $c^s$. Denote by $c^s_1$ the piece of curve contained in $c^s$ going from the intersection with $\partial^{u,1}\tilde{R}_2$ to $z$ (this is non-empty since $z \in R_2$). Denote by $c_2^s$ the piece of curve contained in $c^s$ going from $z$ to the intersection of $c^s$ and $\partial^{u,2}\tilde{R}_2$. We then distinguish two subcases.\\
	\textbf{Case 2a:} $c^s\cap \partial^{u,2}\tilde{R}_2 \in \partial^uR_2$.
	
	Then we have the following estimate:
	\begin{equation*}
		\begin{aligned}
			\sigma^{u,\alpha}_{\max}(\tilde{R}_2) &\leq C^2l(c)\\
			&\leq C^2 \big(l(c^u_1) + l(c^s_1 \cup c^s_2) + l(c^u_2)\big)\\
			&\leq C^2 3\sigma.
		\end{aligned}
	\end{equation*}
	Indeed, by the same argument as previously, $l(c^u_1),l(c^u_2) \leq \sigma$. Now notice that $c^s_1\cup c^s_2$ is contained in $c^s$ and is then tangent to $\mathscr{C}^{s,ext,\alpha}$, and goes from $\partial^{u,1}R_2$ to $\partial^{u,2}R_2$.\\
	\textbf{Case 2b:} $c^s\cap \partial^{u,2}\tilde{R}_2 \in f^n(\partial^{u,2}R_1)$.
	
	Then we have the following estimate:
	\begin{equation*}
		\begin{aligned}
			\sigma^{u,\alpha}_{\max}(\tilde{R}_2) &\leq C^2 l(c)\\
			&\leq C^2 \big(l(c^u_1) + l(c^s_1)+  l(c^s_2) + l(c^u_2)\big)\\
			&\leq C^2 (3\sigma + \lambda\sigma K).
		\end{aligned}
	\end{equation*}
	Indeed, here the estimates are the same as before for $c^u_1$ and $c^u_2$. The curve $c^s_1$ is contained in a curve tangent to $\mathscr{C}^{s,ext,\alpha}$ going from $\partial^{u,1}R_2$ to $\partial^{u,2}R_2$. The curve $c^s_2$ goes from $z \in f^n(\partial^{u,1}R_1)$ to $f^n(\partial^{u,2}R_1)$.\\
	Finally, taking into account all the cases, since $\lambda K < 1$, we have:
	\begin{equation*}
		\sigma^{u,\alpha}_{\max}(\tilde{R}_2) \leq C^2 4\sigma \leq K\sigma
	\end{equation*}
	by the choice of $C$. This concludes the proof of Lemma \ref{lem:KeyStepRectangles}.
\end{proof}

\subsection{Proof of Proposition \ref{prop:MarkovRectangles}.}
\subsubsection{Choice of constants.}
The proof involves many constants that we have to choose carefully. The order of the choice and the dependency between the constants plays a very important role. The goal of this paragraph is to try to give a clear description of the role and the dependency of each of these constants.

We first fix $\eta \in (0,1)$ and $K>8$. We start by choosing a Pesin block $\hNUH$ such that $\hmu(\hNUH) > 1-\eta$. For the sake of simplicity, denote it by $\hat{\Lambda}$. Let also $\Lambda = \pi(\hat{\Lambda})$. We denote by $\mathscr{C}^{u,ext}$ the canonical extended unstable cone field on $V$, a neighborhood of $\Lambda$ (see Proposition \ref{prop:UnstableConeField}). Choose $n := n(\hNUH,K)$ large enough and $\alpha$, $\sigma:=\sigma(\hNUH,n)$, and $\tilde{\epsilon} := \tilde{\epsilon}(\hNUH,n,\alpha)$ so that Lemma \ref{lem:KeyStepRectangles} holds. Let $\hat{U} := \hat{U}(n)$ be the neighborhood of $\hat{\Lambda}$ constructed in Lemma \ref{lem:ContractionExtendedUnstableConeField}. From now on, we will work with the map $f^n$.

\subsubsection{Construction of a foliation tangent to $\mathscr{C}^{u,ext}$.}
We first construct a foliation tangent to the cone field $\mathscr{C}^{u,ext}$ on a neighborhood of $\Lambda$. For each $x \in \Lambda$, define as before $e^s(x)$ the unit vector such that $\Es(x) = \text{span}\{e^s(x)\}$. Note that we have $e^s(x)^{\perp} \in \text{Int}\big(\mathscr{C}^{u,ext,\alpha}_x\big)$.

As in the proof of Proposition \ref{prop:UnstableConeField}, one can extend the direction $\Es(x)$ continuously on the neighborhood $V$. This gives us a continuous sub-bundle on an open neighborhood $V$ of $\Lambda$. Up to taking an orientable cover, we can assume that there exists a continuous vector field $X : V \rightarrow TV$ which generates the line field $(\Es)^{\perp}$. Approximate $X$ by a $\cC^{\infty}$ vector field $Y$ such that we still have $Y(y) \in \text{Int}\big(\mathscr{C}^{u,ext,\alpha}\big)$. This gives a $\cC^{\infty}$ line field on $V$. Integrating this line field gives then a $\cC^{\infty}$ foliation $\mathscr{F}^u$ on $V$ such that for all $y \in V$, we have:
\begin{equation*}
	T_y\mathscr{F}^u(y) = \bbR.Y(y) \in \text{Int}\big(\mathscr{C}^{u,ext,\alpha}\big).
\end{equation*}
This will be the foliation tangent to the unstable cone field with which we will work.

\subsubsection{Construction of the family $\mathcal{R}$}
In this paragraph, we construct the first family of $us$-rectangles $\mathcal{R}$. Take any $x \in \Lambda$. Let $q$ be the constant given by Lemma \ref{lem:FoliationTangentCone} applied for the open set $V$ and the foliation $\mathscr{F}^{u}$. Up to reducing $\sigma$, we can assume that for all $\hy \in \hat{\Lambda}$, we have: $B(y,\sigma) \subset \Psi_{\hy}(R(0,q))$. Recall the definition of $\hat{Y}^{\#}$ before the statement of Proposition \ref{prop:MarkovRectangles}. The set of such $\hx$ which are accumulated by points of $\hat{\Lambda}$ projecting on $\Wu(\hx)$, on both sides of their unstable manifold, have the same $\hnu$-measure has $\hat{\Lambda}$. Indeed, we supposed that $\hnu$-almost every ergodic component of $\hnu$ has positive entropy and is hyperbolic of saddle type. We will the suppose that for any $x\in \Lambda$, there exists $\hx \in \hat{\Lambda}$ displaying this property.

Applying a similar argument as in Lemma \ref{lem:KeyStepRectangles}, reducing $\tilde{\epsilon}$ and $\sigma$ gives that for any $y \in B(x,\sigma)\cap \Lambda$, the local stable manifold $\Ws_{loc}(y)$ is a $\gamma$-admissible $s$-manifold in $\big(\Psi_{\hx},q_{\tilde{\epsilon}}(\hx)\big)$. By Lemma \ref{lem:FoliationTangentCone}, for any $z \in B(x,\sigma)$, the leaf $\mathscr{F}^{u}(z)$ contains a $(\mathscr{C}^{u,\hx},u)$-manifold $V^u_z$ of size $q_{\tilde{\epsilon}}(\hx)$. Then, by Proposition \ref{prop:IntersectionAdmissiblemanifolds}, $V^u_z$ intersects $\Ws_{loc}(y)$ in a unique point.

Now, pick two points $z^+,z^- \in \Ws_{loc}(x)$ such that $d_{\Ws_{loc}(\hx)}(z^+,z^-) = \frac{3}{4}\sigma$ and $\Psi_{\hx}\inv(z^{+/-}) \in \bbR^{+/-}\times \bbR$. Note that this implies $z^+,z^- \in B(x,\sigma)$.

\begin{claim} The following holds for $\alpha$ and $\sigma$ sufficiently small:
\begin{equation*}
	\begin{aligned}
		&\sigma^u_{\min}\big(\mathscr{F}^u(z^{-})\cap B(x,2\sigma), \mathscr{F}^u(z^{+})\cap B(x,2\sigma)\big) \geq \sigma/2;\\
		&\sigma^{u,\alpha}_{\max}\big(\mathscr{F}^u(z^{-})\cap B(x,2\sigma), \mathscr{F}^u(z^{+})\cap B(x,2\sigma)\big) \leq \sigma.
	\end{aligned}
\end{equation*}
\label{claim:RectangleProofMain1}
\end{claim}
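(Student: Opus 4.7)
For the upper bound, my plan is to exhibit the sub-arc $c \subset \Ws_{loc}(x)$ joining $z^-$ to $z^+$ as an explicit competitor. Since $\Ws_{loc}(x)$ is a $\gamma$-admissible $s$-manifold and $\gamma$ is chosen small enough that the tangent direction of $\Ws_{loc}(x)$ everywhere lies in the thin cone field $\mathscr{C}^{s,ext,\alpha}$ (this forces $\gamma \leq \alpha \gamma_s$, which is compatible with the other smallness constraints imposed on $\gamma$), we have $c \in \mathcal{T}^1(\mathscr{C}^{s,ext,\alpha})$. Its length equals $d_{\Ws_{loc}(\hx)}(z^+,z^-) = 3\sigma/4$, and since $d(x,z^\pm) \leq d_{\Ws_{loc}(\hx)}(x,z^\pm) \leq 3\sigma/8$ the curve stays in $B(x,2\sigma)$. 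Since $z^\pm \in \mathscr{F}^u(z^\pm) \cap B(x,2\sigma)$, this directly yields $\sigma^{u,\alpha}_{\max}(\mathscr{F}^u(z^-)\cap B(x,2\sigma), \mathscr{F}^u(z^+)\cap B(x,2\sigma)) \leq 3\sigma/4 \leq \sigma$.

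\textbf{Lower bound.} For the lower bound, the plan is to work in the Pesin chart $(\Psi_{\hx}, q_{\tilde\epsilon}(\hx))$ and exploit uniform transversality between leaves of $\mathscr{F}^u$ and the stable direction. By Lemma~\ref{lem:FoliationTangentCone}, each leaf $\mathscr{F}^u(z^\pm)$ contains in the chart a $(\mathscr{C}^{u,\hx}, u)$-manifold, which by Lemma~\ref{lem:LipschitzCstAdmissibleCuManifold} is representable as a graph $t \mapsto (G^\pm(t), t)$ with Lipschitz constant $L(\alpha)$ tending to $0$ with $\alpha$, uniformly in $\hx \in \hat\Lambda$. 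The stable manifold $\Ws_{loc}(\hx)$ itself is the graph $t \mapsto (t, F^s(t))$ with $\text{Lip}(F^s) \leq \gamma$, and $\Psi_{\hx}^{-1}(z^\pm) = (t^\pm, F^s(t^\pm))$. Since $\norm{d\Psi_{\hx}}$ and $\norm{d\Psi_{\hx}^{-1}}$ are uniformly bounded on $\hat\Lambda$ (Lemma~\ref{lem:LCCisContraction} and compactness), the identity $d_{\Ws_{loc}(\hx)}(z^+,z^-) = 3\sigma/4$ forces $|t^+ - t^-| \geq C_1\sigma$ for some uniform $C_1 > 0$.

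Given any $\cC^1$ curve $c$ from $\mathscr{F}^u(z^-)\cap B(x,2\sigma)$ to $\mathscr{F}^u(z^+)\cap B(x,2\sigma)$, I would split into cases. If $c$ exits $B(x,2\sigma)$, then $l(c) \geq 2(2\sigma - 3\sigma/8) > \sigma/2$. Otherwise $\tilde c := \Psi_{\hx}^{-1}\circ c$ remains in $R(0, q_{\tilde\epsilon}(\hx))$ and has endpoints with first coordinates $G^-(s^-)$ and $G^+(s^+)$ for some heights $s^\pm$; its chart-length is thus at least
\[
|G^+(s^+) - G^-(s^-)| \geq |t^+ - t^-| - L(\alpha)\bigl(|s^+ - F^s(t^+)| + |s^- - F^s(t^-)|\bigr) \geq C_1 \sigma - 2L(\alpha) q_{\tilde\epsilon}(\hx),
\]
which exceeds $C_1\sigma/2$ once $\alpha$ is small enough. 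Transferring back to $M$ via the uniform bound on $\norm{d\Psi_{\hx}^{-1}}$ and further reducing $\sigma$ to absorb constants yields $l(c) \geq \sigma/2$.

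The delicate point will be the joint calibration of constants: $\gamma$ must be small enough (in particular $\gamma \leq \alpha\gamma_s$) for $\Ws_{loc}$ to remain tangent to $\mathscr{C}^{s,ext,\alpha}$; $\alpha$ must then be taken small so that leaves of $\mathscr{F}^u$ are uniformly nearly-vertical in every Pesin chart over $\hat\Lambda$; and $\sigma$ must be small compared to both, so that linearization errors in the exponential map and in the Lyapunov change of coordinates are negligible. Compactness of $\hat\Lambda$ and the uniform continuity statements of Section~\ref{section:PesinTheory} make this calibration possible with constants depending only on $\hat\Lambda$.
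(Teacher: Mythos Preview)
Your upper-bound argument proves the wrong inequality. By the definition immediately following Definition~\ref{def:ParametersRectangles}, the quantity $\sigma^{u,\alpha}_{\max}(V^u_1,V^u_2)$ is the \emph{supremum} of lengths of curves in $\mathcal{T}^1(\mathscr{C}^{s,ext,\alpha})$ joining $V^u_1$ to $V^u_2$; exhibiting the single competitor $c\subset \Ws_{loc}(x)$ of length $3\sigma/4$ only shows $\sigma^{u,\alpha}_{\max}\geq 3\sigma/4$. To obtain the stated upper bound you must control \emph{every} such curve. The paper does this by lifting through $\exp_x$: as $\alpha\to 0$ the leaf pieces $\mathscr{F}^u(z^\pm)\cap B(x,2\sigma)$ become uniformly close to straight lines, and any curve tangent to the thin cone $\mathscr{C}^{s,ext,\alpha}$ is nearly a graph over the stable direction; its length between the two near-parallel leaves is therefore close to the length of $c^s$, namely $3\sigma/4$.

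Your lower-bound argument also has gaps. In case~(1) the endpoints of $c$ lie on $\mathscr{F}^u(z^\pm)\cap B(x,2\sigma)$, not at $z^\pm$, so they may sit arbitrarily close to $\partial B(x,2\sigma)$; the bound $l(c)\geq 2(2\sigma-3\sigma/8)$ is therefore unjustified. In case~(2) you bound $|s^\pm-F^s(t^\pm)|$ by the fixed chart size $q_{\tilde\epsilon}(\hx)$; the resulting inequality $C_1\sigma-2L(\alpha)q_{\tilde\epsilon}(\hx)\geq C_1\sigma/2$ would force $L(\alpha)\leq C_1\sigma/(4q_{\tilde\epsilon}(\hx))$, hence $\alpha$ depending on $\sigma$, which is not the intended order of quantifiers. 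The fix is to observe that the endpoints of $c$ are always in $B(x,2\sigma)$, so their chart heights satisfy $|s^\pm|=O(\sigma)$ and one gets $C_1\sigma-L(\alpha)\cdot O(\sigma)$, which is $\geq C_1\sigma/2$ for $\alpha$ small independently of $\sigma$; case~(1) then becomes unnecessary. The paper sidesteps both issues via the same near-straight-line picture used for the upper bound: once the leaves are close to parallel lines, the infimum over all $\cC^1$ curves joining them is close to the perpendicular distance $\approx l(c^s)=3\sigma/4$.
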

\begin{proof}
	The argument is very similar to arguments in the proof of Lemma \ref{lem:KeyStepRectangles}. By lifting $B(x,2\sigma)$ to $T_xM$ by the exponential map, and since taking $\sigma\rightarrow0$ implies $d_y\exp_x \rightarrow Id$ for any $y \in B(x,2\sigma)$, it is enough to show these inequalities for a foliation $\mathscr{F}^u$ of a ball around $0$ in $\bbR^2$. Each leaf of the foliation $\mathscr{F}^u$ on $B(0,2\sigma)$ converges uniformly to a straight line taking $\alpha\rightarrow 0$. Taking $\sigma$ small enough guarantees also that the curve $c^s \subset \Ws_{loc}(\hx)$ going from $z^-$ to $z^+$ is close to the geodesic between $z^-$ and $z^+$. Since the foliation $\mathscr{F}^u$ is as close as we want to straight lines, the length of $c^s$ approaches $\sigma^u_{\min}\big(\mathscr{F}^u(z^{-})\cap B(0,2\sigma), \mathscr{F}^u(z^{+})\cap B(0,2\sigma)\big)$. This proves the first inequality. For the second, taking $\alpha$ small enough guarantees that the length of any curve going from $\mathscr{F}^u(z^{-})\cap B(0,2\sigma)$ to $\mathscr{F}^u(z^{+})\cap B(0,2\sigma)$ tangent to $\mathscr{C}^{s,ext,\alpha}$ is very close to the length of $c^s$. This gives the second inequality. Note that all these choices of constants can be done uniformly by compactness of $\Lambda$ and continuity of $\mathscr{F}^u$.
\end{proof}

\begin{claim} The following holds for $\tilde{\epsilon}$ and $\sigma$ sufficiently small. There exist two points $y^+,y^- \in \Wu_{loc}(\hx)$ such that:
\begin{equation*}
	\sigma^s_{\max}\big(\Ws_{loc}(y^-),\Ws_{loc}(y^+)\big) \leq \sigma.
\end{equation*}
\label{claim:RectangleproofMain2}
\end{claim}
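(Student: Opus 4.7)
The plan is to pick $y^\pm$ as intersections of $\Wu_{loc}(\hx)$ with two nearby stable leaves drawn from the continuous lamination $\mathscr{L}$ provided by condition~(2) in the definition of $\hat{Y}^{\#}$, chosen sufficiently close to $\Ws_{loc}(\hx)$ so that the resulting stable boundaries are close both to $\Ws_{loc}(\hx)$ and to each other.

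First I would fix $\tilde{\epsilon}$ small enough that $\Ws_{loc}(\hx)$ is a $\gamma/2$-admissible $s$-manifold in the chart $\bigl(\Psi_{\hx}, q_{\tilde{\epsilon}}(\hx)\bigr)$ and so that, by the $\cC^1$-continuity of $\mathscr{L}$, any leaf $\Ws_{loc}(\hy)$ with $\hy \in \hat{\Lambda}_{\hx}$ sufficiently close to $\hx$ is a $\gamma$-admissible $s$-manifold in that same chart (an argument in the spirit of Lemma~\ref{lem:AdmissibleManifoldinCloseCharts}, but using the continuity of the lamination in place of membership in a common Pesin block, since a priori the $\hy$'s need not lie in $\hNUH$). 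Proposition~\ref{prop:IntersectionAdmissiblemanifolds}, applied to the $(\mathscr{C}^{u,\hx},u)$-manifold $\Wu_{loc}(\hx)$ and each such $\Ws_{loc}(\hy)$, then produces a unique intersection point $y(\hy) \in \Wu_{loc}(\hx)$, noting that $\varphi\bigl(\Ws_{loc}(\hy)\bigr) \to 0$ as $\hy \to \hx$ so that the smallness hypothesis of the proposition is met. I would next choose $\hy^+, \hy^- \in \hat{\Lambda}_{\hx}$ on two distinct leaves of $\mathscr{L}$, both converging to $\hx$, and set $y^{\pm} := y(\hy^{\pm})$; distinctness of the leaves forces $y^+ \neq y^-$.

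For the estimate, as $\hy^{\pm} \to \hx$ the representative functions of $\Ws_{loc}(\hy^{\pm})$ in the chart of $\hx$ converge uniformly to that of $\Ws_{loc}(\hx)$, so $d_{\hx}\bigl(\Ws_{loc}(\hy^+), \Ws_{loc}(\hy^-)\bigr) \to 0$. The stable analogue of Lemma~\ref{lem:DistanceChartsManifold} (swapping the roles of the stable and unstable cone fields in its proof) then bounds $\sigma^s_{\max}\bigl(\Ws_{loc}(y^-), \Ws_{loc}(y^+)\bigr)$ by a uniform multiple of this $\cC^0$-distance, which can be made at most $\sigma$ by taking $\hy^{\pm}$ close enough to $\hx$. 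The delicate point is establishing the existence of two distinct leaves accumulating $\Ws_{loc}(\hx)$: this uses the nontriviality of the lamination, implicit in the hypothesis that the union of leaves forms a continuous lamination with Lipschitz holonomies transverse to $\Wu_{loc}(\hx)$, and which in turn rests on the positive-entropy assumption on $\nu$ ensuring that the transverse structure of $\mathscr{L}$ near $\Ws_{loc}(\hx)$ is genuinely non-atomic.
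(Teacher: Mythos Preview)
Your approach proves the claim as stated, but it differs from the paper's in a way that matters downstream. The paper does not invoke the lamination set $\hat{\Lambda}_{\hx}$ from condition~(2) of $\hat{Y}^{\#}$; instead, the paragraph immediately preceding the claim records that $\hx$ is accumulated by points of the Pesin block $\hat{\Lambda}=\hNUH$ itself whose projections lie on $\Wu_{loc}(\hx)$, on both sides of $x$. The paper then picks $\hy^\pm \in \hat{\Lambda}\cap\pi^{-1}(\Wu_{loc}(\hx))$ with $d_{\Wu_{loc}(\hx)}(y^+,y^-)\le \sigma/(2K)$ and sets $y^\pm=\pi(\hy^\pm)$ directly, so no intersection argument via Proposition~\ref{prop:IntersectionAdmissiblemanifolds} is needed. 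The bound on $\sigma^s_{\max}$ is obtained not through a stable analogue of Lemma~\ref{lem:DistanceChartsManifold} but by adapting Claim~\ref{claim:Rectangles3} (the geodesic comparison) to $s$-manifolds: one produces a geodesic $c$ between $\Ws_{loc}(y^-)$ and $\Ws_{loc}(y^+)$ with $\sigma^s_{\max}\le K\,l(c)$, and then argues as in Claim~\ref{claim:RectangleProofMain1} that $l(c)$ is comparable to $d_{\Wu_{loc}(\hx)}(y^+,y^-)$.

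The practical difference is that the paper's $\hy^\pm$ lie in $\hNUH$ by construction, and this is exactly what Property~\ref{property:MR1} requires when the rectangles $R_i$ are assembled (it asks that $\partial^{s,j}R_i\subset \Ws_{loc}(\hy_i^j)$ with $\hy_i^j\in\hNUH$). Your $\hy^\pm\in\hat{\Lambda}_{\hx}$ need not lie in that specific Pesin block, so while your argument establishes the bare inequality of the claim, it would not feed directly into the verification of \ref{property:MR1} that follows. As for the estimate itself, your stable analogue of Lemma~\ref{lem:DistanceChartsManifold} and the paper's geodesic route are both legitimate ways to control $\sigma^s_{\max}$; neither has a real advantage over the other here.
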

\begin{proof}
	Repeating the arguments of the proof of Claim \ref{claim:Rectangles3} in the proof of Lemma \ref{lem:KeyStepRectangles}, we can show that taking $\tilde{\epsilon}$ small enough guarantees that there exists a constant $K>1$ depending only on $\mathscr{C}^{u,ext}$ with the following property. For any $s$-manifolds $V^s_1,V^s_2$ in $\big(\Psi_{\hx},q_{\tilde{\epsilon}}(\hx)\big)$, there exists a geodesic curve $c$ going from $V^s_1$ to $V^s_2$ such that:
	\begin{equation*}
		\sigma^s_{\max}(V^s_1,V^s_2) \leq Kl(c).
	\end{equation*}
	Since $\hx$ is accumulate by points of $\hat{\Lambda}$ projecting on each sides of $\Wu_{loc}(\hx)$, we can find two points $\hy^+,\hy^- \in \hat{\Lambda}\cap\pi\inv(\Wu_{loc}(\hx))$ such that $d_{\Wu_{loc}(\hx)}(y^+,y^-) \leq \sigma/2K$ and $y^+,y^- \in B(x,\sigma)$. Recall that taking $\tilde{\epsilon}$ and $\sigma$ sufficiently small guarantees $\Ws_{loc}(y^{+/-})$ is a $\gamma$-admissible $s$-manifold in $\big(\Psi_{\hx},q_{\tilde{\epsilon}}(\hx)\big)$. Let $c$ be the geodesic curve going from $\Ws_{loc}(y^-)$ to $\Ws_{loc}(y^+)$ such that:
	\begin{equation*}
		\sigma_{\max}^s\big(\Ws_{loc}(y^-),\Ws_{loc}(y^+)\big) \leq Kl(c).
	\end{equation*}
	Now, arguments very similar to the proof of Claim \ref{claim:RectangleProofMain1} give us that the length of $c$ is close to the distance between $y^-$ and $y^+$ in $\Wu_{loc}(\hx)$. We then conclude that:
	\begin{equation*}
		\sigma_{\max}^s\big(\Ws_{loc}(y^-),\Ws_{loc}(y^+)\big) \leq Kl(c) \leq 2Kd_{\Wu_{loc}(\hx)}(y^-,y^+)\leq \sigma.
	\end{equation*}
	Again, note that all the constants can be taken uniformly in $x$, which concludes the proof of the claim.
\end{proof}
Until now, we fix $\alpha$, $\tilde{\epsilon}$, and $\sigma$ such that the two claims hold. By the Jordan theorem, the union of the four curves given by:
\begin{itemize}[label={--}]
	\item The subcurve of $\Ws_{loc}(y^+)$ going from the intersection of $\Ws_{loc}(y^+)$ with $V^u_{z^-}$ to the intersection of $\Ws_{loc}(y^+)$ with $V^u_{z^+}$.
	\item The subcurve of $\Ws_{loc}(y^-)$ going from the intersection of $\Ws_{loc}(y^-)$ with $V^u_{z^-}$ to the intersection of $\Ws_{loc}(y^+)$ with $V^u_{z^+}$.
	\item The subcurve of $\mathscr{F}^u(z^+)$ going from the intersection of $V^u_{z^+}$ with $\Ws_{loc}(y^+)$ to the intersection of $V^u_{z^+}$ with $\Ws_{loc}(y^-)$.
	\item The subcurve of $\mathscr{F}^u(z^-)$ going from the intersection of $V^u_{z^-}$ with $\Ws_{loc}(y^-)$ to the intersection of $V^u_{z^-}$ with $\Ws_{loc}(y^+)$,
\end{itemize}
is a closed simple curve that bounds a topological disk that we call $R_x$. See Figure \ref{fig:RectanglesFamily1}. The first two curves are the stable boundaries of $R_x$ and we denote them by $\partial^{s,l}R_x$. The last two curves are the unstable boundaries of $R_x$ and we denote them by $\partial^{u,l}R_x$. Note that we trivially have $\Lambda\subset \bigcup_x R_x$. We can then extract $R_1,...,R_L$ such that $\Lambda \subset \bigcup R_i$.\\
Let us now check that the family $\{R_i\}_{i \in \{1,...,L\}}$ satisfies Properties \ref{property:MR1}, \ref{property:MR2}, \ref{property:MR3}, and \ref{property:MR4} from Proposition \ref{prop:MarkovRectangles}. Fix some $i \in \{1,...,L\}$. To simplify the notation, we still denote by $x,\hx,z^-,z^+$ the points we used to construct the rectangle.
\begin{itemize}[label={--}]
	\item The curves $\partial^{s,l}R_i$ are contained in some $\Ws_{loc}(\hy_{i,l})$ for some $\hy_{i,l}\in \hat{\Lambda}$. The curves $\partial^{u,l}R_i$ are contained in some $(\mathscr{C}^{u,\hx},u)$-manifold in the chart $\big(\Psi_{\hx},q_{\tilde{\epsilon}}(\hx)\big)$ for $\hx \in \hat{\Lambda}$. Then $R_i$ is a $us$-rectangle and $\big(\Psi_{\hx},q_{\tilde{\epsilon}}(\hx)\big)$ is an adapted chart. This proves the part of Property \ref{property:MR1} concerning the first family $\mathcal{R} = (R_1,...,R_L)$.
	\item By definition, we have $\Lambda \subset \bigcup R_i$, which proves Property \ref{property:MR2}.
	\item Property \ref{property:MR3} follows from the fact that $\mathscr{F}^u$ is a foliation on $V$ and since the number of $us$-rectangles is finite, up to taking very close leaves, we obtain that all their unstable boundaries are contained in different leaves.
	\item Claim \ref{claim:RectangleproofMain2} gives that $\sigma^s_{\max}(R) \leq \sigma$. Pick any point $y \in \partial^{u,1}R_i$. Let $c_1$ be the curve contained in $\partial^{u,1}R_i$ going from $y$ to $z^-$ (or $z^+$ depending on the numeration). By Claim \ref{claim:RectangleproofMain2}, $l(c_1)\leq \sigma$. Let $c_2$ be the curve contained in $\Ws_{loc}(x)$ going from $z^-$ to $x$. By construction, $l(c_2)< \sigma$. We have:
	\begin{equation*}
		d(x,y) \leq l(c_1)+l(c_2) < 2\sigma.
	\end{equation*}
	We conclude that $\partial^{u,1}R_i \subset \mathscr{F}^u(z^-)\cap B(x,2\sigma)$ and the same for $\partial^{u,2}R_i$. Claim \ref{claim:RectangleProofMain1} then gives $\sigma^u_{\min}(R_i) \geq \sigma/2$ and $\sigma^{u,\alpha}_{\max}(R_i) \leq \sigma$. This proves Property \ref{property:MR4}.
\end{itemize}
See Figure \ref{fig:RectanglesFamily1} for a representation of the $us$-rectangle $R_x$. This then finishes the construction of the first family of $us$-rectangles $\mathcal{R}$.

\begin{figure}
	\centering
	\includegraphics{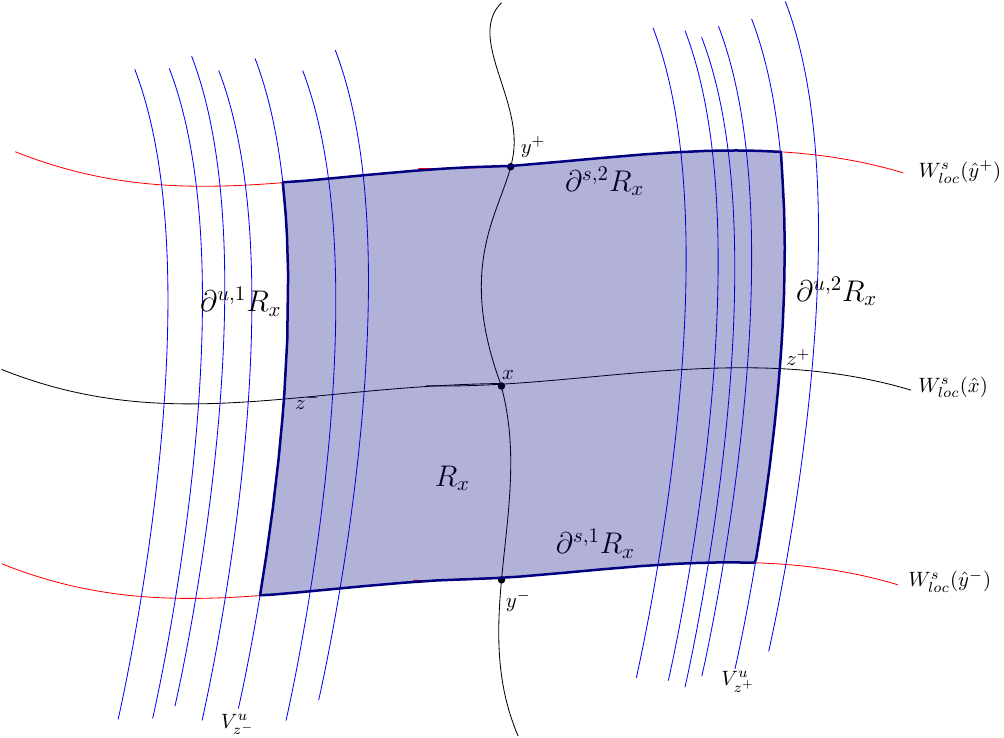}
	\caption{The construction of the first family of rectangles: in light blue, the foliation $\mathscr{F}^u$; in dark blue, the $us$-rectangle.}
	\label{fig:RectanglesFamily1}
\end{figure}

\subsubsection{Obtaining $\tilde{\mathcal{R}}$ by applying successive approximations to $\mathcal{R}$.}
This step goes by successive approximations. At each step, we will construct a family of $us$-rectangles $\tilde{\mathcal{R}}^l$ displaying Property \ref{property:MR5} with the rectangles of the family $\tilde{R}^{l-1}$. Letting $l$ go to infinity will give us the desired family.\\
The construction goes by induction. Fix $i \in \{1,...,L\}$. Define $I(i) \subset \{1,...,L\}$ to be the set of $j \in \{1,...,L\}$ such that there exists $\hz \in \hat{U}\cap \hf^{-n}(\hat{U})$, with $\pi(\hz) \in R_j$, and $f^n(R_j)\cap R_i \ne \emptyset$.

\parbreak\textbf{Step 1.} Let then $i \in \{1,...,L\}$, we will define a new $us$-rectangle $\tilde{R}_i^1$. Denote by $\big(\Psi_{\hx_i},q_{\tilde{\epsilon}}(\hx_i)\big)$ the adapted chart to $R_i$ constructed in the previous paragraph. If for all $j \in I(i)$, we have $f^n(R_j)\cap \partial^uR_i = \emptyset$, then define $\tilde{R}^1_i= R_i$. Suppose now that there exists $j \in I(i)$ such that $f^n(R_j)\cap \partial^uR_i \ne \emptyset$. Recall that $R_i$ and $R_j$ are two $us$-rectangles such that for each $l \in \{1,2\}$, we have $\partial^{s,l}R_k \subset \Ws_{loc}(\hy_{k,l})$ for some $\hy_{k,l} \in \hat{\Lambda} \cap \pi\inv(\partial^{s,l}R_k)$. Recall also that we have the following estimate:
\begin{equation*}
	\sigma^{u,\alpha}_{\max}(R_j) \leq K\sigma/2, \ \sigma^{u,\alpha}_{\max}(R_i) \leq \sigma \ \text{and} \ \sigma^s_{\max}(\Ws_{loc}(\hy_{k,1}),\Ws_{loc}(\hy_{k,2})) \leq \sigma.
\end{equation*}
We can then apply Lemma \ref{lem:KeyStepRectangles} with the constant $K/2>4$ and with the rectangles $R_j$ and $R_i$, where $R_j$ plays the role of $R_1$ and $R_i$ of $R_2$, following the notation of Lemma \ref{lem:KeyStepRectangles}. This gives us a new $us$-rectangle $\tilde{R}_{i,j}^1$ with the following properties:
\begin{itemize}[label={--}]
	\item $R_i \subset \tilde{R}_{i,j}^1$ and $\partial^{s,l}R_i \subset \partial^{s,l}\tilde{R}_{i,j}^1 \subset \Ws_{loc}(\hy_{i,l})$.
	\item $\sigma^{u,\alpha}_{\max}(\tilde{R}^1_{i,j}) \leq K\sigma/2$.
	\item $f^n(R_j)\cap \partial^u\tilde{R}_{i,j}^1=\emptyset$.
\end{itemize}
Do the same construction for each $j \in I(i)$ and define:
\begin{equation*}
	\tilde{R}_i^1 = \bigcup_{j\in I(i)} \tilde{R}_{i,j}^1.
\end{equation*}

\begin{claim} The open set $\tilde{R}_i^1$ has the following properties:
\begin{itemize}[label={--}]
	\item $\tilde{R}_i^1$ is a $us$-rectangle and $\big(\Psi_{\hx_i},q_{\tilde{\epsilon}}(\hx_i)\big)$ is an adapted chart.
	\item $R_i \subset \tilde{R}_i^1$ and $\partial^{s,l}R_i\subset \partial^{s,l} \tilde{R}_i^1\subset \Ws_{loc}(\hy_{i,l})$.
	\item $\sigma/2 \leq \sigma^u_{\min}(\tilde{R}_i^1)\leq \sigma^{u,\alpha}_{\max}(\tilde{R}_i^1)\leq K\sigma$ and $\sigma^s_{\max}(\tilde{R}_i^1) \leq \sigma$.
	\item For all $j \in \{1,...,L\}$ such that $f^n(R_j)\cap R_i \ne \emptyset$, if there exists $\hz \in \hat{U}\cap\hf^{-n}(\hat{U})$ such that $\pi(\hz) \in R_j$, then we have:
	\begin{equation*}
		f^n(R_j) \cap \partial^u\tilde{R}^1_i = \emptyset.
	\end{equation*}
\end{itemize}
\label{claim:RectangleProofMain3}
\end{claim}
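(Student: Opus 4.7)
The plan is to describe $\tilde{R}_i^1$ as the outer envelope of the rectangles $\tilde{R}_{i,j}^1$, $j\in I(i)$, in the common adapted chart $\big(\Psi_{\hx_i},q_{\tilde{\epsilon}}(\hx_i)\big)$, and then verify each asserted property. In this chart, let $v^L,v^R$ be the representative functions of $\partial^{u,1}R_i$ and $\partial^{u,2}R_i$, and for each $j\in I(i)$ let $u_{i,j}^L,u_{i,j}^R$ be those of $\partial^{u,1}\tilde{R}_{i,j}^1$ and $\partial^{u,2}\tilde{R}_{i,j}^1$. Set $U^L:=\min_{j\in I(i)} u_{i,j}^L$ and $U^R:=\max_{j\in I(i)} u_{i,j}^R$. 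Each $u_{i,j}^L$ is a Lipschitz graph tangent almost everywhere to the convex cone field $\mathscr{C}^{u,\hx_i}$, and the same holds for $U^L,U^R$ as pointwise minima and maxima of finitely many such graphs. Since $R_i\subset \tilde{R}_{i,j}^1$ for every $j$, one has $u_{i,j}^L\leq v^L\leq v^R\leq u_{i,j}^R$ pointwise, so for each $t$ the union $\bigcup_{j\in I(i)}[u_{i,j}^L(t),u_{i,j}^R(t)]$ is the single interval $[U^L(t),U^R(t)]$. Hence the graphs of $U^L,U^R$ together with appropriate subcurves of $\Ws_{loc}(\hy_{i,1})$ and $\Ws_{loc}(\hy_{i,2})$ bound a topological disk which equals $\tilde{R}_i^1$; this identifies $\tilde{R}_i^1$ as a $us$-rectangle with adapted chart $\big(\Psi_{\hx_i},q_{\tilde{\epsilon}}(\hx_i)\big)$, contains $R_i$, and has stable boundaries contained in $\Ws_{loc}(\hy_{i,l})$, which gives the first two bullets.

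For the geometric bounds, $\sigma^u_{\min}(\tilde{R}_i^1)\geq \sigma/2$ is immediate: any $\cC^1$ curve joining $\partial^{u,1}\tilde{R}_i^1$ to $\partial^{u,2}\tilde{R}_i^1$ must also cross the pair $\partial^{u,1}R_i,\partial^{u,2}R_i$ lying strictly between them, hence has length at least $\sigma^u_{\min}(R_i)\geq \sigma/2$. The bound $\sigma^s_{\max}(\tilde{R}_i^1)\leq \sigma$ follows because the two stable boundaries of $\tilde{R}_i^1$ lie on the same local stable manifolds $\Ws_{loc}(\hy_{i,l})$ as those of $R_i$, which are $\gamma$-admissible and hence nearly horizontal in the chart, so the supremum length of a curve tangent to $\mathscr{C}^{u,ext}$ joining them is essentially the vertical distance between these two manifolds, independent of horizontal position. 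For the upper bound $\sigma^{u,\alpha}_{\max}(\tilde{R}_i^1)\leq K\sigma$, I invoke the geodesic comparison of Claim~\ref{claim:Rectangles3} inside the proof of Lemma~\ref{lem:KeyStepRectangles}: for $C>1$ close enough to $1$ and $\alpha$ small enough, there is a geodesic $c$ from $\partial^{u,1}\tilde{R}_i^1$ to $\partial^{u,2}\tilde{R}_i^1$ with $\sigma^{u,\alpha}_{\max}(\tilde{R}_i^1)\leq C^2 l(c)$. Decomposing $c$ through $R_i$ into three sub-arcs lying in some $\tilde{R}_{i,j_1}^1$, in $R_i$, and in some $\tilde{R}_{i,j_2}^1$, the lengths of the three pieces are bounded, via Lemma~\ref{lem:KeyStepRectangles} applied with constant $K/2>4$ (legitimate since $K>8$) and Lemma~\ref{lem:DistanceChartsManifold}, by $K\sigma/2$, $\sigma$, and $K\sigma/2$ respectively. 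Taking $K$ large enough and $C$ close to $1$ absorbs the $C^2$ factor and the middle $\sigma$, yielding $\sigma^{u,\alpha}_{\max}(\tilde{R}_i^1)\leq K\sigma$.

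Finally, the Markov-type property is a direct extension of the last step of the proof of Lemma~\ref{lem:KeyStepRectangles}. Fix $j\in I(i)$ and suppose by contradiction that $z\in f^n(R_j)\cap \partial^u\tilde{R}_i^1$. Writing $\Psi_{\hx_i}\inv(z)=(z_1,z_2)$ and assuming without loss of generality $z_1=U^L(z_2)$, one has $U^L(z_2)\leq u_{i,j}^L(z_2)$ by definition of $U^L$. By the construction of $\tilde{R}_{i,j}^1$ in Lemma~\ref{lem:KeyStepRectangles}, $u_{i,j}^L$ is the pointwise minimum of $v^L$ and the representative function $w_j^L$ of the left unstable boundary of $f^n(R_j)$ in the chart, so $u_{i,j}^L(z_2)\leq w_j^L(z_2)$. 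Since $z$ lies in the open set $f^n(R_j)$, one has $w_j^L(z_2)<z_1$, so that $z_1=U^L(z_2)\leq w_j^L(z_2)<z_1$, a contradiction. The case $z_1=U^R(z_2)$ is symmetric.

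The main obstacle is the bookkeeping for $\sigma^{u,\alpha}_{\max}(\tilde{R}_i^1)\leq K\sigma$: the envelope construction, the comparison constant from Lemma~\ref{lem:DistanceChartsManifold}, and the geodesic factor from Claim~\ref{claim:Rectangles3} each introduce multiplicative losses that must be absorbed simultaneously by fixing $K>8$ and the chain of parameters $\alpha,\tilde{\epsilon},\sigma$ in the prescribed order; the three other bullets of the claim are essentially formal consequences of the envelope description.
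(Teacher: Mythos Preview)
Your envelope description of $\tilde{R}_i^1$ via $U^L=\min_j u_{i,j}^L$ and $U^R=\max_j u_{i,j}^R$, the first two bullets, and the Markov-type argument for the fourth bullet are correct and match the paper's approach. The bounds $\sigma^u_{\min}(\tilde{R}_i^1)\geq\sigma/2$ and $\sigma^s_{\max}(\tilde{R}_i^1)\leq\sigma$ are also fine.

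The gap is in your bound $\sigma^{u,\alpha}_{\max}(\tilde{R}_i^1)\leq K\sigma$. Your three-piece decomposition via the geodesic comparison yields at best $C^2(K\sigma/2+\sigma+K\sigma/2)=C^2(K+1)\sigma$, and you propose to absorb the loss by ``taking $K$ large enough''. But $K>8$ is a \emph{given} parameter in Proposition~\ref{prop:MarkovRectangles}, not a free one, and since $C>1$ the inequality $C^2(K+1)\leq K$ is impossible for any $K$. So the argument as written cannot close.

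The paper avoids both the geodesic factor and the middle $\sigma$ by working directly with an arbitrary $\gamma\in\mathcal{T}^1(\mathscr{C}^{s,ext,\alpha})$ joining $\partial^{u,1}\tilde{R}_i^1$ to $\partial^{u,2}\tilde{R}_i^1$ and exploiting an \emph{overlap}. Since $\gamma(0)\in\partial^{u,1}\tilde{R}_{i,j_1}^1$ and $\gamma(1)\in\partial^{u,2}\tilde{R}_{i,j_2}^1$, let $t_1,t_2$ be times with $\gamma(t_1)\in\partial^{u,2}\tilde{R}_{i,j_1}^1$ and $\gamma(t_2)\in\partial^{u,1}\tilde{R}_{i,j_2}^1$. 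Because $R_i\subset\tilde{R}_{i,j_1}^1\cap\tilde{R}_{i,j_2}^1$, one has $u_{i,j_2}^L\leq v^L<v^R\leq u_{i,j_1}^R$ pointwise, forcing $t_2<t_1$. Hence $[0,1]=[0,t_1]\cup[t_2,1]$, and
\[
l(\gamma)\leq l(\gamma_{|[0,t_1]})+l(\gamma_{|[t_2,1]})\leq \sigma^{u,\alpha}_{\max}(\tilde{R}_{i,j_1}^1)+\sigma^{u,\alpha}_{\max}(\tilde{R}_{i,j_2}^1)\leq \tfrac{K\sigma}{2}+\tfrac{K\sigma}{2}=K\sigma,
\]
with no extra constants. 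Your ``middle arc in $R_i$'' is already contained in this overlap and should not be counted separately; recognising this is precisely what makes the bookkeeping close for every $K>8$.
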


\begin{proof}
	For each $j \in I(i)$, write $F^l_{i,j}$ for the representative function of $V^l_{i,j}$, the $\big(\mathscr{C}^{u,\hx_i},u\big)$-manifold in $\big(\Psi_{\hx_i},q_{\tilde{\epsilon}}(\hx_i)\big)$ containing $\partial^{u,l}\tilde{R}^1_{i,j}$. Let $q_i = \min_{j,l} q(V^l_{i,j})$. Define the following functions on the interval $[-q_i,q_i]$:
	\begin{equation*}
		G^1 = \min_{j,l} F^l_{i,j} \ \text{and} \ G^2 = \max_{j,l} F^l_{i,j}.
	\end{equation*}
	Remark that $G^1$ and $G^2$ are representing functions of $\big(\mathscr{C}^{u,\hx_i},u\big)$-manifolds in $\big(\Psi_{\hx},q_{\tilde{\epsilon}}(\hx)\big)$ that we denote by $W^1$ and $W^2$. Indeed, since $G^1$ is the minimum of a finite number of functions tangent almost everywhere to $\mathscr{C}^{u,\hx_i}$, it satisfies also this property. We also have $q(W^1) = \min_{j,l} q(V^l_{i,j}) \geq 10^{-2}q_{\tilde{\epsilon}}(\hx)$ and $\varphi(W^l) \leq \max_{j,l} \varphi(V^l_{i,j}) < 10^{-3}q_{\tilde{\epsilon}}(\hx)$ (recall the definition \ref{def:ParametersusManifolds} of the parameter $\varphi$ of a $u$-manifold). By Proposition \ref{prop:IntersectionAdmissiblemanifolds}, each $W^l$ intersects in a unique point each $\Ws_{loc}(\hy_{i,k})$. This defines a $us$-rectangle $Q$ for which $\big(\Psi_{\hx_i},q_{\tilde{\epsilon}}(\hx_i)\big)$ is an adapted chart. We are going to show that $Q = \tilde{R}^1_i$.
	
	First, we have by construction that $\tilde{R}^1_{i,j}\subset Q$ for each $j \in I(i)$, which implies that $\tilde{R}^1_i \subset Q$. Now suppose that there exists $z \in Q$ such that $z \notin \tilde{R}^1_i$. Write $z = \Psi_{\hx}(z_1,z_2)$. Since $z \in Q$, we have $G^1(z_2) < z_1 < G^2(z_2)$. But since $z \notin \tilde{R}^1_i$, then for all $j$, we have $z \notin \tilde{R}^1_{i,j}$ and then $z_1 > F^l_{i,j}$ for all $l \in \{1,2\}$ and $j \in I(i)$. But this is absurd by the definition of $G^1$ and $G^2$. This proves the first two points of the claim by construction of $Q$.
	
	Since $R_i \subset \tilde{R}_i^1$ and $\partial^{s,l}\tilde{R}^1_i \subset \Ws_{loc}(\hy_{i,l})$, we trivially have:
	\begin{equation*}
		\sigma^u_{\min}(\tilde{R}^1_i)\geq \sigma/2 \ \text{and} \ \sigma^s_{\max}(\tilde{R}_i^1)\leq \sigma.
	\end{equation*}
	Let now $\gamma \in \mathcal{T}^1(\mathscr{C}^{s,ext,\alpha})$ such that $\gamma([0,1]) \subset \tilde{R}_i^1$, $\gamma(0) \in \partial^{u,1}\tilde{R}^1_i$ and $\gamma(1) \in \partial^{u,2}\tilde{R}^1_i$. There exist $j_1,j_2 \in I(i)$ such that $\gamma(0) \in \partial^{u,1}\tilde{R}^1_{i,j_1}$ and $\gamma(1) \in \partial^{u,2} \tilde{R}^1_{i,j_2}$. If $j_1=j_2$, then we have:
	\begin{equation*}
		l(\gamma)\leq \sigma^{u,\alpha}_{\max}(\tilde{R}^1_{i,j_1}) \leq K\sigma/2.
	\end{equation*}
	Suppose then $j_1\ne j_2$. Using the Pesin chart $\big(\Psi_{\hx_i},q_{\tilde{\epsilon}}(\hx_i)\big)$, we can order any $us$-rectangle $R$ in the chart such that $\partial^{u,1}R$ is on the left of $\partial^{u,2}R$ as graphs. Define $t_1,t_2 \in [0,1]$ such that $\gamma(t_1) \in \partial^{u,2}\tilde{R}^1_{i,j_1}$ and $\gamma(t_2) \in \partial^{u,1}\tilde{R}^1_{i,j_2}$. Note that $t_1,t_2$ must exist. Indeed, since for any $j$ and $k,l \in \{1,2\}$, we have $|F_{i,j}^l|\leq |G_k|$, the curve $\gamma$ has to cross the graph of $F^l_{i,j_1}$ and $F^l_{i,j_2}$ for $l\in \{1,2\}$. Since the rectangles $\tilde{R}^1_{i,j_1}$ and $\tilde{R}^1_{i,j_2}$ share the same stable boundary and $\gamma([0,1]) \subset \tilde{R}_i^1$, then $t_1$ and $t_2$ are well defined. Observe also that we must have $t_1>t_2$. Indeed, $\partial^{u,1}\tilde{R}_{i,j_2}$ must be on the left of $\partial^{u,2}\tilde{R}_{i,j_1}$, otherwise we are in the situation where $j_1=j_2$. We then have:
	\begin{equation*}
		l(\gamma) \leq l(\gamma_{|[0,t_1]}) + l(\gamma_{|[t_2,1]}) \leq \sigma^{u,\alpha}_{\max}(\tilde{R}^1_{i,j_1}) + \sigma^{u,\alpha}_{\max}(\tilde{R}^1_{i,j_2}) \leq K\sigma.
	\end{equation*}
	This proves the third point of the claim.
	
	For the last point, let $j \in I(i)$. We know that $f^n(R_j) \cap \partial^u\tilde{R}^1_{i,j} = \emptyset$ and $\tilde{R}^1_{i,j}\subset \tilde{R}^1_i$. In the Pesin chart $\big(\Psi_{\hx_i},q_{\tilde{\epsilon}}(\hx_i)\big)$, the two curves $\Ws_{loc}(\hy_{i,l})$, for $l \in \{1,2\}$, are $\gamma$-admissible $s$-manifolds of size $q_{\tilde{\epsilon}}(\hx_i)/2$, up to reducing $\sigma$. These curves are horizontal graphs which do not intersect and we can suppose without loss of generality that $\Ws_{loc}(\hy_{i_1})$ is below $\Ws_{loc}(\hy_{i,2})$ in $(\Psi_{\hx_i},q_{\tilde{\epsilon}}(\hx_i))$. Let $P$ be the Jordan domain delimited by the $\Ws_{loc}(\hy_{i,1})$ and the vertical bands $\{\pm q_{\tilde{\epsilon}}(\hx_i)\} \times [-q_{\tilde{\epsilon}}(\hx_i),q_{\tilde{\epsilon}}(\hx_i)]$. Note that $P$ contains both $\tilde{R}^1_{i,j}$ and $\tilde{R}^1_i$. Up to reduce $\sigma$ we can ensure that $f^n(R_j) \subset \Psi_{\hx_i}(R(0,q_{\tilde{\epsilon}}(\hx_i)))$. The curves $f^n(\partial^{u,l}R_j)$ are then contained in $(\mathscr{C}^{u,\hx_i},u)$-manifolds $V^{u,l}$ in $(\Psi_{\hx_i},q_{\tilde{\epsilon}}(\hx_i))$ for $l=1,2$. By Proposition \ref{prop:IntersectionAdmissiblemanifolds}, they intersect each $\Ws_{loc}(\hy_{i,l})$ in a unique point. The intersection between each $f^n(\partial^{u,k}R_j)$ and $P$ is then composed by a single connected curve $c_k$. Now, since $f^n(R_j)\cap \partial^{u,l}\tilde{R}^1_{i,j}=\emptyset$, the curve $c_k$ is contained in $\tilde{R}^1_{i,j}$ for $k=1,2$, and then in $\tilde{R}^1_i$. This implies that $f^n(R_j)\cap\partial^{u,l}\tilde{R}^1_i=\emptyset$ for $l=1,2$.
\end{proof}

Repeating this construction for all $i \in \{1,...,L\}$ gives the family $\mathcal{\tilde{R}}^1 = (\tilde{R}_1^1,...,\tilde{R}^1_L)$.

\parbreak\textbf{Step k.} We now have to iterate this construction. Suppose that we have families $\mathcal{\tilde{R}}^1,...,\mathcal{\tilde{R}}^k$ of exactly $L$ $us$-rectangles which satisfy the following properties:
\begin{itemize}[label={--}]
	\item For each $i \in \{1,...,L\}$ and each $l \in \{1,...,k\}$, $\tilde{R}^l_i$ is a $us$-rectangle and $\big(\Psi_{\hx_i},q_{\tilde{\epsilon}}(\hx_i)\big)$ is an adapted chart for $\tilde{R}^l_i$ for some $\hx_i \in \hat{\Lambda}$.
	\item For each $i \in \{1,...,L\}$, all the rectangles are nested:
	\begin{equation*}
		R_i\subset \tilde{R}^1_i\subset...\subset \tilde{R}^k_i \ \text{and} \ \partial^{s,l}R_i\subset \partial^{s,l}\tilde{R}^1_i\subset...\subset \partial^{s,l}\tilde{R}^k_i\subset \Ws_{loc}(\hy_{i,l})
	\end{equation*}
	for some $\hy_{i,l} \in \hat{\Lambda}$ and for $l \in \{1,2\}$.
	\item We have the following control of the geometry for each $i,l$:
	\begin{equation*}
		\sigma^{u,\alpha}_{\max}(\tilde{R}_i^l) \leq K\sigma \ \text{and} \ \sigma^s_{\max}(\Ws_{loc}(\hy_{i,1}),\Ws_{loc}(\hy_{i,2})) \leq \sigma.
	\end{equation*}
	\item For each $i,l$, there exists a finite set $J^l(i) \subset\{1,...,L\}$ such that:
	\begin{equation*}
		\tilde{R}^l_i = \bigcup_{j \in J^l(i)} \tilde{R}^{l}_{i,j}
	\end{equation*}
	where $\tilde{R}^{l}_{i,j}$ is a $us$-rectangle satisfying the conclusion of Lemma \ref{lem:KeyStepRectangles}. In particular, we have $\sigma^{u,\alpha}_{\max}(\tilde{R}^l_{i,j}) \leq K\sigma/2$.
	\item For each $l_1 \in \{1,...,k\}$, for each $i,j \in \{1,...,L\}$, and for each $l_2< l_1$, if $f^n(R_j)\cap R_i \ne \emptyset$ and if there exists $\hz \in \hat{U}\cap\hf^{-n}(\hat{U})$ such that $\pi(\hz) \in R_j$, then we have:
	\begin{equation*}
		f^n(\tilde{R}^{l_2}_j) \cap \partial^u\tilde{R}^{l_1}_i = \emptyset.
	\end{equation*}
\end{itemize}
Note that the families $\tilde{\mathcal{R}}^0:=\mathcal{R}$ and $\mathcal{\tilde{R}}^1$ satisfy these properties. The goal of this step is to construct a family $\mathcal{\tilde{R}}^{k+1}$ such that the families $\mathcal{\tilde{R}}^1,...,\mathcal{\tilde{R}}^{k+1}$ will satisfy these previous properties.

Let then $i \in \{1,...,L\}$. If for all $j \in I(i)$, we have $f^n(\tilde{R}^k_j) \cap \partial^u\tilde{R}^k_i = \emptyset$, then define $\tilde{R}^{k+1}_i = \tilde{R}^k_i$. Suppose now that there exists $j \in I(i)$ such that $f^n(\tilde{R}^k_j) \cap \partial^u\tilde{R}^k_i \ne \emptyset$. Let then $v \in J^k(j)$ such that $f^n(\tilde{R}^k_{j,v}) \cap \partial^u\tilde{R}^k_i \ne \emptyset$. Denote by $J(j,i)$ the set of such $v$. Apply then Lemma \ref{lem:KeyStepRectangles} with the rectangles $\tilde{R}^k_{j,v}$ and $R_i$, and with the constant $K/2$. It gives us a $us$-rectangle $\tilde{R}^{k+1}_{i,j,v}$. Define:
\begin{equation*}
	\tilde{R}^{k+1}_{i,j} = \bigcup_{v \in J(j,i)} \tilde{R}^{k+1}_{i,j,v} \ \text{and} \ \tilde{R}^{k+1}_i = \bigcup_{j \in I(i)} \tilde{R}^{k+1}_{i,j}.
\end{equation*}

\begin{claim} The family $\mathcal{\tilde{R}}^{k+1} = (\tilde{R}^{k+1}_1,...,\tilde{R}^{k+1}_L)$ has the desired properties.
\end{claim}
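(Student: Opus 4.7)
The plan is to verify the five induction properties for $\tilde{\mathcal{R}}^{k+1}$ by essentially retracing the argument of Step 1 (in particular Claim~\ref{claim:RectangleProofMain3}), using the inductive hypothesis on $\tilde{\mathcal{R}}^1,\ldots,\tilde{\mathcal{R}}^k$ together with Lemma~\ref{lem:KeyStepRectangles}. First, observe that at each application of the Key Lemma in the construction we played the role of $R_1$ with $\tilde{R}^k_{j,v}$ (which, by the fourth inductive property, is a $us$-rectangle with $\sigma^{u,\alpha}_{\max}\leq K\sigma/2$) and the role of $R_2$ with $R_i$ (whose stable boundary lies in $\Ws_{loc}(\hy_{i,l})$ and which has $\sigma^{u,\alpha}_{\max}(R_i)\leq \sigma$). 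The hypothesis on the existence of $\hz \in \hat{U}\cap\hf^{-n}(\hat{U})$ with $\pi(\hz)\in \tilde{R}^k_{j,v}$ is supplied by the fact that $v\in J^k(j)$ forces $\pi(\hz)\in R_j\subset \tilde{R}^k_{j,v}$ for some such $\hz$. So Lemma~\ref{lem:KeyStepRectangles} produces each $\tilde{R}^{k+1}_{i,j,v}$ as a $us$-rectangle nested above $R_i$, with $\sigma^{u,\alpha}_{\max}(\tilde{R}^{k+1}_{i,j,v})\leq K\sigma/2$, sharing the adapted chart $(\Psi_{\hx_i},q_{\tilde\epsilon}(\hx_i))$, and with the single-pair Markov disjointness $f^n(\tilde{R}^k_{j,v})\cap\partial^u\tilde{R}^{k+1}_{i,j,v}=\emptyset$. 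This immediately yields Property~4 for the new level.

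Properties~1, 2 and~3 are then obtained exactly as in Claim~\ref{claim:RectangleProofMain3}. Working in the common adapted chart $(\Psi_{\hx_i},q_{\tilde\epsilon}(\hx_i))$, one takes the representative functions $F^l_{i,j,v}$ of the unstable boundary $\big(\mathscr{C}^{u,\hx_i},u\big)$-manifolds of the $\tilde{R}^{k+1}_{i,j,v}$, and defines the min/max
\begin{equation*}
G^1=\min_{j,v,l}F^l_{i,j,v},\qquad G^2=\max_{j,v,l}F^l_{i,j,v}
\end{equation*}
on the common subdomain. The same verification as in Claim~\ref{claim:RectangleProofMain3} shows $G^1, G^2$ represent $(\mathscr{C}^{u,\hx_i},u)$-manifolds and that the Jordan region bounded by their graphs and $\Ws_{loc}(\hy_{i,1})\cup \Ws_{loc}(\hy_{i,2})$ is exactly $\tilde{R}^{k+1}_i$. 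This gives Property~1 (it is a $us$-rectangle in the same chart) and Property~2 ($R_i\subset \tilde{R}^k_i\subset \tilde{R}^{k+1}_i$, with stable boundaries nested in $\Ws_{loc}(\hy_{i,l})$). The bound $\sigma^{u,\alpha}_{\max}(\tilde{R}^{k+1}_i)\leq K\sigma$ in Property~3 follows by the same two-piece decomposition argument of Claim~\ref{claim:RectangleProofMain3}: any $\gamma\in\mathcal{T}^1(\mathscr{C}^{s,ext,\alpha})$ running across $\tilde{R}^{k+1}_i$ either stays in a single $\tilde{R}^{k+1}_{i,j,v}$, contributing at most $K\sigma/2$, or it crosses at most two such sub-rectangles (thanks to the common stable boundary) and is bounded by $2\cdot K\sigma/2=K\sigma$. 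The stable-size control is unchanged since the stable boundaries themselves are not modified at this step.

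The only genuinely new content is Property~5, where we must now check disjointness at \emph{every} previous level: for every $l_2\in\{0,\ldots,k\}$ and every $(i,j)$ satisfying the hypothesis, $f^n(\tilde{R}^{l_2}_j)\cap\partial^u\tilde{R}^{k+1}_i=\emptyset$. The key observation is that by the nesting in Property~2, $\tilde{R}^{l_2}_j\subset\tilde{R}^k_j=\bigcup_{v\in J^k(j)}\tilde{R}^k_{j,v}$, so it suffices to show $f^n(\tilde{R}^k_{j,v})\cap\partial^u\tilde{R}^{k+1}_i=\emptyset$ for every $v\in J^k(j)$. If $v\in J(j,i)$, the Key Lemma already gives $f^n(\tilde{R}^k_{j,v})\cap\partial^u\tilde{R}^{k+1}_{i,j,v}=\emptyset$, and one then runs the same chart-level argument as in the last paragraph of Claim~\ref{claim:RectangleProofMain3}: bracketing $\tilde{R}^{k+1}_i$ between the two local stable manifolds $\Ws_{loc}(\hy_{i,l})$, the images $f^n(\partial^{u,k}\tilde{R}^k_{j,v})$ form a single connected $(\mathscr{C}^{u,\hx_i},u)$-arc inside the stable strip, and since these arcs do not cross $\partial^u\tilde{R}^{k+1}_{i,j,v}$, they cannot cross the outer envelope $\partial^u\tilde{R}^{k+1}_i=\operatorname{Graph}(G^1)\cup\operatorname{Graph}(G^2)$ either. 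If $v\notin J(j,i)$ then $f^n(\tilde{R}^k_{j,v})\cap\partial^u\tilde{R}^k_i=\emptyset$ by definition, and since $\partial^u\tilde{R}^{k+1}_i$ lies strictly outside $\partial^u\tilde{R}^k_i$ relative to $R_i$, the same disjointness is inherited.

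The main obstacle to watch out for is the propagation of Property~5 across \emph{all} previous indices $l_2\leq k$ rather than just $l_2=0$: one cannot simply appeal to the Key Lemma, since the Lemma was invoked only on $\tilde{R}^k_{j,v}$ and $R_i$. This is precisely why the inductive structure splitting each $\tilde{R}^l$ into the $\tilde{R}^l_{j,v}$ is essential, and why we must carry the sub-rectangle decomposition (Property~4) through the induction. Once the argument is set up this way, the min/max envelope construction and the topological bracket used in Claim~\ref{claim:RectangleProofMain3} transport verbatim to the new level, closing the induction.
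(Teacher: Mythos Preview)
Your proposal is correct and follows essentially the same approach as the paper: reduce Properties~1--3 to the min/max envelope argument of Claim~\ref{claim:RectangleProofMain3}, obtain Property~4 directly from the sub-rectangle decomposition $\tilde{R}^{k+1}_i=\bigcup_{j,v}\tilde{R}^{k+1}_{i,j,v}$, and for Property~5 use the topological bracket argument from the last paragraph of Claim~\ref{claim:RectangleProofMain3}. You are actually slightly more careful than the paper in two places: you make explicit the reduction from all levels $l_2\le k$ to level $l_2=k$ via nesting, and you separate the cases $v\in J(j,i)$ and $v\notin J(j,i)$, whereas the paper's parenthetical ``the rectangle $\tilde{R}^{k+1}_{i,s}$ is $\tilde{R}^{k+1}_{i,j,v}$ constructed earlier'' only literally covers the first case. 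Two minor wording issues: the existence of $\hz$ with $\pi(\hz)\in R_j\subset\tilde{R}^k_{j,v}$ comes from $j\in I(i)$ rather than from $v\in J^k(j)$; and in the $v\notin J(j,i)$ case, ``$\partial^u\tilde{R}^{k+1}_i$ lies strictly outside $\partial^u\tilde{R}^k_i$'' should be ``outside or coincides with'', and this relies on the nesting $\tilde{R}^k_i\subset\tilde{R}^{k+1}_i$ already established in Property~2.
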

\begin{proof}
	The proof of the first two desired properties is the same as in Claim \ref{claim:RectangleProofMain3}. Note that we can write:
	\begin{equation*}
		\tilde{R}^{k+1}_i = \bigcup_{j\in I(i)}\bigcup_{v \in J(j,i)} \tilde{R}^{k+1}_{i,j,v}.
	\end{equation*}
	This shows that $\tilde{R}^{k+1}_i$ is the union of a finite number of $us$-rectangles satisfying the conclusion of Lemma \ref{lem:KeyStepRectangles}, and in particular $\sigma^{u,\alpha}_{\max}(\tilde{R}^{k+1}_{i,j,v}) \leq K\sigma/2$. Let us denote this finite set by $J^{k+1}(i)$. This gives us the fourth property. Once we can write:
	\begin{equation*}
		\tilde{R}^{k+1}_i = \bigcup_{j \in J^{k+1}(i)} \tilde{R}^{k+1}_{i,j}
	\end{equation*}
	then the proof of the third property is the same as in Claim \ref{claim:RectangleProofMain3}. For the last property, take any $j \in I(i)$ and any $v \in J^k(j)$ such that $f^n(\tilde{R}^k_{j,v}) \cap \tilde{R}^{k+1}_i \ne \emptyset$. By construction, there exists $s \in J^{k+1}(i)$ such that $f^n(\tilde{R}^k_{j,v}) \cap \partial^u\tilde{R}^{k+1}_{i,s} = \emptyset$ (the rectangle $\tilde{R}^{k+1}_{i,s}$ is $\tilde{R}^{k+1}_{i,j,v}$ constructed earlier). By applying the same argument as the proof of the last point in Claim \ref{claim:RectangleProofMain3}, we obtain that $f^n(\tilde{R}^k_{j,v}) \cap \partial^u\tilde{R}^{k+1}_i = \emptyset$. Since this is true for any $v \in J^k(j)$, this concludes the proof of the last property.
\end{proof}
This concludes the construction of the family $\tilde{\mathcal{R}}^{k+1}$. Combining Step 1 and Step k, we are then able to build a family $\tilde{\mathcal{R}}^k$ satisfying the properties listed above for any $k \in \bbN$.

\parbreak\textbf{Conclusion.} Let us define the following set:
\begin{equation*}
	\tilde{R}_i = \bigcup_{l\geq1}\tilde{R}_i^l.
\end{equation*}
We claim that this set has the following properties:
\begin{itemize}[label = {--}]
	\item For each $i \in \{1,...,L\}$, $\tilde{R}_i$ is a $us$-rectangle and $\big(\Psi_{\hx_i},q_{\tilde{\epsilon}}(\hx_i)\big)$ is an adapted chart.
	\item For each $i\in\{1,...,L\}$, we have the nested property: $R_i \subset \tilde{R}_i$ and $\partial^{s,k}(R_i) \subset \partial^{s,k}\tilde{R}_i\subset \Ws_{loc}(\hy_{i,k})$ for $k \in \{1,2\}$.
	\item $\sigma/2 \leq \sigma^u_{\min}(\tilde{R}_i)\leq \sigma^u_{\max}(\tilde{R}_i) \leq K\sigma$ and $\sigma^s_{\max}(\tilde{R}_i) \leq \sigma$.
	\item For each $i,j \in \{1,...,L\}$, if $f^n(R_j)\cap R_i \ne \emptyset$ and if there exists $\hz \in \hat{U} \cap \hf^{-n}\hat{U}$ such that $\pi(\hz) \in R_j$, then we have:
	\begin{equation*}
		f^n(\tilde{R}_j) \cap \partial^u\tilde{R}_i = \emptyset.
	\end{equation*}
\end{itemize}

Let us denote by $V^l_{i,k}$ the $\big(\mathscr{C}^{u,\hx},u\big)$-manifold in $\big(\Psi_{\hx_i},q_{\tilde{\epsilon}}\big)$ containing $\partial^{u,k} \tilde{R}^l_i$. Denote by $F^l_{i,k}$ their representative function. Let us fix $i$ and $k$. Let $q = \inf_{l} q(V^l_{i,k})$. Note that by definition, $q\geq 10^{-2}q_{\tilde{\epsilon}}(\hx_i)$. Since we have $\sigma^{u,\alpha}_{\max} (\tilde{R}^l_i) \leq K\sigma$ for all $l\geq1$, Lemma \ref{lem:DistanceChartsManifold} implies that the distance between $F^{l_1}_{i,k}$ and $F^{l_2}_{i,k}$ on $[-q,q]$ is uniformly bounded. Using the Arzéla-Ascoli theorem, up to taking a subsequence, $F^l_{i,k}$ converges then to a continuous map $F_{i,k}$ on $[-q,q]$. Note that $F^l_{i,k}$ is still almost everywhere tangent to $\mathscr{C}^{u,\hx}$. Denote by $V_{i,k}$ the $\big(\mathscr{C}^{u,\hx},u\big)$-manifold represented by $F_{i,k}$. Each $V_{i,k}$ intersects each $\Ws_{loc}(\hy_{i,j})$ in a unique point. This defines a $us$-rectangle. A similar argument as in Claim \ref{claim:RectangleProofMain3} shows that this $us$-rectangle is exactly $\tilde{R}_i$. This proves the first point.

The second and the third items are direct from the construction. Let us prove the last item by contradiction. Assume that there exists $j \in I(i)$ such that $f^n(R_j)\cap R_i \ne \emptyset$ but $f^n(\tilde{R}^j)\cap \partial^u\tilde{R}_i \ne \emptyset$. Since the $\tilde{R}^l_j$ are nested open sets, there exists then some $l\geq 1$ such that $f^n(\tilde{R}^l_j) \cap \partial^u\tilde{R}_i \ne \emptyset$. Now, using a similar argument as in the proof of Claim \ref{claim:RectangleProofMain3} shows that this cannot be possible. This concludes the proof of Proposition \ref{prop:MarkovRectangles}.

\section{Curves intersecting families of $us$-rectangles}\label{sec:CurvesIntersectingFamiliesRectangles}
In this section, we study the intersections of a curve $\gamma$ with a family of $us$-rectangles $\mathcal{R} = \big(R_1,\dots,R_L\big)$ that satisfy the properties of the first family in Proposition~\ref{prop:MarkovRectangles}. More precisely, if we assume that $\gamma$ does not intersect any stable boundary of the rectangles, we will show that the number of rectangles needed to cover the intersection between the curve $\gamma$ and the union of the rectangles is bounded, and the bound does not depend on the number of rectangles.

\subsection{Statement of the curve-covering Proposition}
Throughout this section, we consider $f:M\to M$ a $\cC^r$ local diffeomorphism, with $r>1$, and $\mu$ a saddle hyperbolic measure, possibly non-ergodic. We write $\hmu = \pi_{\star}^{-1}\mu$. We assume that $\mu$ satisfies the hypothesis of Proposition~\ref{prop:MarkovRectangles}. Let us fix $\mathcal{R} = (R_1,\dots,R_L)$, a family of $us$-rectangles satisfying the same properties as the first family given by Proposition~\ref{prop:MarkovRectangles}. We recall here their properties. There exists a Pesin block $\hNUH$ and two parameters $\sigma, \alpha>0$ such that:
\begin{itemize}[label={--}]
	\item For any $i \in \{1,\dots,L\}$, there exists $\hx_i \in \hNUH$ such that $\big(\Psi_{\hx_i},q_{\tilde{\epsilon}}(\hx_i)\big)$ is an adapted chart for $R_i$.
	\item $\partial^{s,l}R_i \subset \Ws_{loc}(\hy_i^l)$ where $\hy_i^l \in \hNUH$.
	\item There exists a foliation $\mathscr{F}^u$ of an open set $V\supset \pi(\hNUH)$ tangent to $\mathscr{C}^{u,ext,\alpha}$ such that for any $i \in \{1,\dots,L\}$ the unstable boundary $\partial^{u,l} R_i$ is contained in some leaf of $\mathscr{F}^u$. Moreover, $\partial^{u,l}R_i \ne \partial^{u,k}R_j$ whenever $i \ne j$ or $i=j$ and $k \ne l$.
	\item $\sigma/2 \leq \sigma^u_{\min}(R_i) \leq \sigma^{u,\alpha}_{\max}(R_i) \leq \sigma$.
\end{itemize}
Note that the family $\mathcal{R}$ depends on the parameters $\sigma$ and $\alpha$. Let us first define which curves we are considering. If $N$ is a compact manifold and $\varphi : N \to M$ is a $\cC^r$ map, we define the $\cC^r$-size of $\varphi$ to be the maximum of the norms of all the $k$-derivatives of $\varphi$ in any chart, for $k=1,\dots,r$. We emphasize that the maximum is taken over all the charts of some fixed finite atlas on $M$ and $N$. Note that the $\cC^r$-size depends on the choice of atlases, but by compactness, changing atlases induces equivalent $\cC^r$-sizes. In the present paper, $N$ will often be $[0,1]$ and the atlas we choose will be the trivial one.

\begin{defn}[$(r,\epsilon)$-curves]
	Let $r>1$ and $\epsilon>0$. Let $\gamma : [0,1] \to M$ be a $\cC^r$ curve. We say that $\gamma$ is a $(r,\epsilon)$-curve if $\norm{\gamma}_{\cC^r} < \epsilon$.
	\label{def:Curve}
\end{defn}

In this section, we fix a $(r,C\sigma)$-curve $\gamma$. We will often denote by $\gamma$ the image of $[0,1]$ under the map $\gamma$. We assume that $\gamma$ intersects the rectangles:
\begin{equation*}
	\gamma \cap \big(\bigcup_i R_i\big) \ne \emptyset.
\end{equation*}
To study the intersection between the curve $\gamma$ and the rectangles, we will introduce two different types of intersecting intervals. See Figure~\ref{fig:CrossTurnInterval}.

\begin{defn}[Crossing/folding intervals]
	Let $[s,t] \subset [0,1]$ and let $i \in \{1,\dots,L\}$.
	
	We say that $[s,t]$ is a crossing $(\gamma,R_i)$-interval if:
	\begin{itemize}[label={--}]
		\item $\gamma(s) \in \partial^{u,1} R_i$ and $\gamma(t) \in \partial^{u,2}R_i$ or vice versa.
		\item $\gamma((s,t)) \subset R_i$.
	\end{itemize}
	We say that $[s,t]$ is a crossing $(\gamma,\mathcal{R})$-interval if there exists some $i$ such that $[s,t]$ is a crossing $(\gamma,R_i)$-interval.\\
	We say that $[s,t]$ is a folding $(\gamma,R_i)$-interval if:
	\begin{itemize}[label={--}]
		\item $\gamma(s), \gamma(t) \in \partial^{u,l} R_i$ for some $l \in \{1,2\}$.
		\item $\gamma((s,t)) \subset R_i$.
	\end{itemize}
	We say that $[s,t]$ is a folding $(\gamma,\mathcal{R})$-interval if there exists some $i$ such that $[s,t]$ is a folding $(\gamma,R_i)$-interval.\\
	We denote by $\text{Cross}(\gamma,R_i)$ the collection of all crossing $(\gamma,R_i)$-intervals and $\text{Cross}(\gamma,\mathcal{R})$ the collection of all crossing $(\gamma,\mathcal{R})$-intervals. We denote by $\text{Fold}(\gamma,R_i)$ the collection of all folding $(\gamma,R_i)$-intervals and $\text{Fold}(\gamma,\mathcal{R})$ the collection of all folding $(\gamma,\mathcal{R})$-intervals.
	
	When there is no ambiguity about the family of $us$-rectangles $\mathcal{R}$ we consider, we will often omit the dependence on $\mathcal{R}$.
	\label{def:Crossing/TurningIntervals}
\end{defn}

\begin{figure}[h]
	\centering
	\begin{subfigure}{.5\textwidth}
		\centering
		\includegraphics[width=0.9\linewidth]{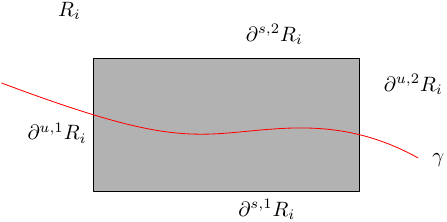}
		\caption{A crossing $\gamma$-interval.}
		\label{subfig:CrossingInterval}
	\end{subfigure}%
	\begin{subfigure}{.5\textwidth}
		\centering
		\includegraphics[width=0.9\linewidth]{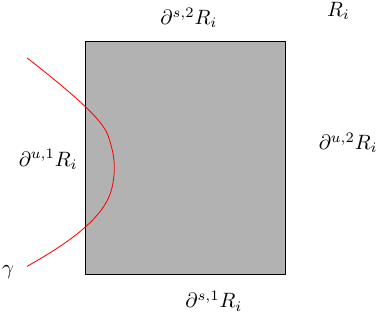}
		\caption{A folding $\gamma$-interval.}
		\label{subfig:TurninInterval}
	\end{subfigure}
	\caption{}
	\label{fig:CrossTurnInterval}
\end{figure}

The goal of this section is to prove the following proposition.

\begin{prop}
	Fix $r>1$. Let $\sigma,\alpha>0$. Let $\mathcal{R} := (R_1,\dots,R_L)$ be a family of $us$-rectangles satisfying the setting given before Definition~\ref{def:Curve}, i.e., they satisfy the properties of the first family in Proposition~\ref{prop:MarkovRectangles}. Fix $\gamma:[0,1]\to M$ a $(r,C\sigma)$-curve such that $\gamma \cap \cup_i R_i \ne \emptyset$. Suppose the following:
	\begin{equation}
		\gamma \cap \Ws_{loc}(\hy_i^l) = \emptyset
		\label{eq:HypothesisSecCurveInterRectangles}
	\end{equation}
	where $\partial^{s,l}R_i \subset \Ws_{loc}(\hy_i^l)$ for $\hy_i^l \in \hNUH$. Then, the following holds for $\sigma := \sigma(\hNUH,C)$ sufficiently small.
	
	There exist two collections $\mathcal{C}_1 \subset \text{Cross}(\gamma)$ and $\mathcal{C}_2 \subset \{1,\dots,L\}$ with the following properties:
	\begin{enumerate}
		\item $\text{Cross}(\gamma) = \bigcup_{[s,t] \in \mathcal{C}_1} [s,t]$. Here we make the slight abuse of notation by considering $\text{Cross}(\gamma)$ as a union of intervals.
		\item $|\mathcal{C}_1| \leq 4C$.
		\item $\gamma\big(\text{Fold}(\gamma)\big) = \gamma\big(\text{Fold}(\gamma)\big) \cap \bigcup_{i \in \mathcal{C}_2} R_i$. Here we also make the slight abuse of notation by considering $\text{Fold}(\gamma)$ as a union of intervals.
		\item $|\mathcal{C}_2| \leq 8C$.
		\item There exist $s_1,s_2 \in [0,1]$ such that:
		\begin{equation*}
			\gamma([0,1]) \cap \bigcup_i R_i = \gamma\big([0,s_1]\big) \cup \gamma\big(\text{Fold}(\gamma)\big) \cup \gamma\big(\text{Cross}(\gamma)\big) \cup \gamma\big([s_2,1]\big)
		\end{equation*}
		where there exist $i,j \in \{1,\dots,L\}$ such that $\gamma([0,s_1)) \subset R_i$ and $\gamma((s_2,1]) \subset R_j$. If $s_1 = 0$ or $s_2 = 1$, we consider $[0,s_1]$ or $[s_2,1]$ to be empty.
	\end{enumerate}
	\label{prop:IntersectionCurveRectangle}
\end{prop}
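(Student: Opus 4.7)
The first step will be to establish a length estimate: every crossing interval $[s,t]$ satisfies $|t-s|\geq 1/(2C)$. This is immediate since $\gamma|_{[s,t]}$ is a $\cC^1$ curve joining the two unstable boundaries of some $R_i$, so its length is at least $\sigma^u_{\min}(R_i)\geq\sigma/2$, while $\|\gamma'\|_{\infty} < C\sigma$. This estimate forces the finiteness of $\text{Cross}(\gamma)$ (a maximal pairwise disjoint subcollection has cardinality at most $2C$, and the finiteness of $L$ controls non-disjoint overlaps). Writing $\text{Cross}(\gamma) = \{[s_i,t_i]\}_{i=1}^m$ with $s_1\leq\dots\leq s_m$ and defining $j(i)$ as the smallest index with $s_{j(i)}>t_i$, one verifies $[s_i,t_{j(i)-1}]\subset [s_i,t_i]\cup [s_{j(i)-1},t_{j(i)-1}]$, and a greedy induction then produces the collection $\mathcal{C}_1$ of cardinality at most $2\cdot 2C=4C$, whose selected intervals with odd index are pairwise disjoint.

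\textbf{Step 2 (trapping chart).} The core geometric input for the folding step will be a chart $\Psi:(0,1)^2\to M$ containing $\gamma$ in which, for every $R\in\mathcal{R}$ meeting $\gamma$, each $\partial^{u,l}R$ lifts to a vertical Lipschitz graph separating $(0,1)^2$ into a left and a right open component, while no $\Ws_{loc}(\hy_i^l)$ meets $\Psi((0,1)^2)$. To construct it, fix an adapted Pesin chart $(\Psi_{\hx_i},q_{\tilde\epsilon}(\hx_i))$ containing $\gamma$ (possible for $\sigma$ small), and observe that by \eqref{eq:HypothesisSecCurveInterRectangles}, $\Psi_{\hx_i}^{-1}(\gamma)$ must lie in the strip between the two nearest local stable manifolds bounding the rectangles it meets. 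Cutting the chart by these horizontal Lipschitz graphs yields $\Psi$. A key consequence by connectedness: if $\gamma(s)$ lies left of $\partial^{u,1}R$ and $\gamma(t)$ lies right of $\partial^{u,2}R$, then $[s,t]$ contains a crossing $(\gamma,R)$-interval.

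\textbf{Step 3 (folding intervals).} The main work is to show that all folding intervals are covered by the images of at most $8C$ rectangles. The plan is an induction over the arcs determined by the (at most $2C$) pairwise disjoint crossing intervals selected in Step 1. For a region lying between two consecutive such crossing intervals, define $s_1'$ as the first starting parameter of a folding interval there, with associated rectangle $R_1'$; define $s_2'<s_3'$ as the subsequent first starting parameters of folding intervals whose associated rectangles $R_2',R_3'$ are disjoint, respectively, from $R_1'$ and from $R_1'\cup R_2'$; and let $t_c$ be the ending parameter of the next crossing interval, bounded above by $s_3'$. Using the horizontal ordering of unstable boundaries from Step 2, one argues that every folding interval starting in $[s_1',t_c]$ must have image contained in $R_1'\cup R_2'$ together with at most two rectangles inherited from the previous region: otherwise $\gamma$ would pass from left of some $\partial^{u,1}R$ to right of $\partial^{u,2}R$ inside $[s_1',t_c]$, forcing a crossing interval ending strictly before $t_c$, contradicting its choice. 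This yields the bound $|\mathcal{C}_2|\leq 4\cdot 2C = 8C$.

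\textbf{Step 4 (endpoint segments and main obstacle).} The initial and final arcs are handled by defining $s_1$ as the first exit time of $\gamma$ from the rectangle (if any) containing $\gamma(0)$, and symmetrically $s_2$ for $\gamma(1)$; hypothesis \eqref{eq:HypothesisSecCurveInterRectangles} then forces every intersection point of $\gamma$ with $\cup_i R_i$ outside $[0,s_1]\cup[s_2,1]$ to lie in a crossing or folding interval, since $\gamma$ cannot leave a rectangle through a stable boundary. The main obstacle is clearly Step 3: folding intervals may accumulate and $L$ can be very large, so a global count is hopeless. The argument must leverage the rigidity coming from the fact that distinct unstable boundaries lie in distinct leaves of $\mathscr{F}^u$ (hence are totally ordered in the chart), combined with the topological constraint from Step 2 that any extraneous rectangle would force a spurious crossing. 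Carefully partitioning each inter-crossing arc into at most four ``slots'' is what makes the bound uniform in $L$.
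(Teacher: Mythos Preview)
Your proposal is correct and follows essentially the same approach as the paper: the length estimate $|t-s|\geq 1/(2C)$ (the paper's Lemma~\ref{lem:EstimationSizeCrossing}), the trapping chart (Lemma~\ref{lem:GoodChartCurveRectangle}), the greedy covering for $\mathcal{C}_1$ via the index $j(i)$, and the inductive sweep over the at most $2C$ inter-crossing arcs selecting four rectangles per arc to cover the folding intervals (the paper's Lemma~\ref{claim:OrderRectangleFolding} and Claims~\ref{claim:Folding:RankBefore}--\ref{claim:Folding:ThreeDisjoint}). One minor refinement in the paper worth noting for your Step~3: the two ``new'' rectangles per arc are not literally $R_1',R_2'$ but a carefully chosen $R_1^{\star}$ with leftmost left boundary among rectangles intersecting $R_1'$ (and its right counterpart), which is what makes the containment argument go through cleanly; your contradiction via a spurious crossing is exactly the mechanism used there.
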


Let us make some comments on the previous statement. The idea is that we can choose a small number of crossing $\gamma$-intervals to cover the whole set of such intervals by controlling their lengths. In the case of folding intervals, this is no longer possible because we are not able to control their lengths. Nevertheless, we can pick a small number of $us$-rectangles from the family $\mathcal{R}$ to cover the whole set of folding $\gamma$-intervals. In both cases, it is very important that the bound is uniform, in the sense that it depends only on the constant $C$. The rest of this section is devoted to the proof of Proposition~\ref{prop:IntersectionCurveRectangle}.

\subsection{Key lemmas for treating crossing and folding intervals}
Throughout this paragraph, we assume that we are in the same setting as Proposition~\ref{prop:IntersectionCurveRectangle}. We begin by estimating the size of a crossing $\gamma$-interval.

\begin{lemma}
	Let $[s,t] \in \text{Cross}(\gamma)$. Then we have:
	\begin{equation*}
		|t-s| \geq \frac{1}{2C}.
	\end{equation*}
	\label{lem:EstimationSizeCrossing}
\end{lemma}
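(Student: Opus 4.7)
The plan is a direct length comparison using the two bounds available on $\gamma|_{[s,t]}$: a lower bound coming from the geometry of $R_i$ (the curve has to span across the rectangle) and an upper bound coming from the $\cC^r$-smallness of $\gamma$.

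First, I would fix $i \in \{1,\dots,L\}$ such that $[s,t] \in \text{Cross}(\gamma,R_i)$. By the definition of a crossing interval, $\gamma(s)$ and $\gamma(t)$ lie on opposite unstable boundaries of $R_i$, so the restriction $\gamma|_{[s,t]}$ (after an affine reparametrisation onto $[0,1]$) is a $\cC^1$ curve joining $\partial^{u,1}R_i$ to $\partial^{u,2}R_i$. The definition of $\sigma^u_{\min}(R_i)$ (see Definition~\ref{def:ParametersRectangles}) gives
\begin{equation*}
	\int_s^t \norm{\gamma'(x)}\,dx \;=\; \text{length}(\gamma|_{[s,t]}) \;\geq\; \sigma^u_{\min}(R_i) \;\geq\; \sigma/2,
\end{equation*}
where the last inequality is one of the standing hypotheses on the family $\mathcal{R}$.

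Next, the assumption that $\gamma$ is a $(r,C\sigma)$-curve means in particular $\norm{\gamma'}_{\infty} < C\sigma$, so
\begin{equation*}
	\int_s^t \norm{\gamma'(x)}\,dx \;<\; |t-s|\,C\sigma.
\end{equation*}
Combining the two displays and cancelling $\sigma$ yields $|t-s| > \tfrac{1}{2C}$, which is exactly the bound claimed.

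I do not expect any obstacle here: the estimate is essentially tautological given the definitions. The only thing to double-check is that the hypothesis $\sigma^u_{\min}(R_i) \geq \sigma/2$ is indeed among the properties of the family $\mathcal{R}$ listed at the start of the section (it is, via property~\ref{property:MR4} of Proposition~\ref{prop:MarkovRectangles}), and that $\norm{\gamma}_{\cC^r}<C\sigma$ bounds $\norm{\gamma'}_\infty$, which holds since $r>1$.
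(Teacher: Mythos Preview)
Your proof is correct and follows exactly the same approach as the paper's: both bound the length of $\gamma|_{[s,t]}$ from below by $\sigma^u_{\min}(R_i)\geq\sigma/2$ and from above by $|t-s|\,C\sigma$ using the $(r,C\sigma)$-curve hypothesis, then combine. The paper even introduces the same affine reparametrisation you mention, so there is no meaningful difference.
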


\begin{proof}
	Let $i \in \{1,\dots,L\}$ such that $[s,t] \in \text{Cross}(\gamma,R_i)$. Recall that we then have:
	\begin{itemize}[label={--}]
		\item $\gamma(s) \in \partial^{u,1}R_i$,
		\item $\gamma(t) \in \partial^{u,2}R_i$,
		\item $\gamma((s,t)) \subset R_i$.
	\end{itemize}
	Let $\tilde{\gamma} : [0,1] \to M$ be the curve defined by:
	\begin{equation*}
		\tilde{\gamma}(x) = \gamma\big((t-s)x + s\big).
	\end{equation*}
	Then, the curve $\tilde{\gamma}$ is a $\cC^1$ curve such that $\tilde{\gamma}(0) \in \partial^{u,1}R_i$ and $\tilde{\gamma}(1) \in \partial^{u,2}R_i$. We then have:
	\begin{equation*}
		l(\tilde{\gamma}) \geq \sigma^u_{\min}(R_i) \geq \sigma/2.
	\end{equation*}
	On the other hand, since $\gamma$ is a $(r,C\sigma)$-curve, we have:
	\begin{equation*}
		l(\tilde{\gamma}) = \int_0^1 \norm{\tilde{\gamma}'(x)} dx = \int_s^t \norm{\gamma'(x)} dx < |t-s|C\sigma.
	\end{equation*}
	Then, putting everything together leads to:
	\begin{equation*}
		|t-s| > \frac{1}{2C}.
	\end{equation*}
\end{proof}

Let us continue with a useful lemma on the geometry of the intersection between the curve $\gamma$ and the family of $us$-rectangles $\mathcal{R}$. The idea is that, using~\eqref{eq:HypothesisSecCurveInterRectangles}, the curve $\gamma$ must be contained in a domain delimited by two stable manifolds containing the stable boundaries of rectangles intersecting $\gamma$. See Figure~\ref{fig:ProofLemmaGoodChartCurveRectangle}.

\begin{lemma}
	The following holds for $\sigma := \sigma(\hNUH)$ sufficiently small.
	
	There exists a chart $\Psi : (0,1)^2 \to M$ that displays the following properties.
	\begin{itemize}[label={--}]
		\item $\gamma([0,1]) \subset \Psi((0,1)^2)$.
		\item For any $R \in \mathcal{R}$ such that $R$ intersects $\gamma$, the image under $\Psi^{-1}$ of each unstable boundary of $R$ is a vertical Lipschitz graph that separates $(0,1)^2$ into two disjoint open connected components. This gives a natural meaning to being on the left (or the right) of such a curve.
	\end{itemize}
	In particular, for any $R \in \mathcal{R}$, if there exist $s,t \in [0,1]$ such that $\gamma(s)$ is on the left of the left unstable boundary of $R$ and $\gamma(t)$ is on the right of the right unstable boundary of $R$, then the interval $[s,t]$ contains a crossing $\gamma$-interval.
	\label{lem:GoodChartCurveRectangle}
\end{lemma}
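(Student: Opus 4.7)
The plan is to construct $\Psi$ by first finding a single Pesin chart that contains $\gamma$, then cutting it down to a horizontal strip bounded by two well-chosen stable leaves that trap $\gamma$, and finally reparameterizing this strip to $(0,1)^2$.

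\textbf{Step 1: one ambient chart.} If $\gamma$ meets some rectangle, pick any $R\in\mathcal{R}$ that intersects $\gamma$ and let $\hx\in\hNUH$, $(\Psi_{\hx},q_{\tilde\epsilon}(\hx))$ be an adapted chart for $R$. Since $\gamma$ is a $(r,C\sigma)$-curve and $\gamma\cap R\neq\emptyset$, taking $\sigma:=\sigma(\hNUH,C)$ small enough (uniformly in $\hx\in\hNUH$ by compactness of $\hNUH$) ensures by the mean value theorem that $\gamma([0,1])\subset\Psi_{\hx}\bigl(R(0,q_{\tilde\epsilon}(\hx))\bigr)$. By Lemma~\ref{lem:AdmissibleManifoldinCloseCharts} and the uniform continuity of Pesin charts on the block $\hNUH$, for every $R'\in\mathcal{R}$ whose stable boundaries lie in $\Ws_{loc}(\hy^{\prime,l})$ with $\hy^{\prime,l}\in\hNUH$, the curves $\Psi_{\hx}^{-1}(\Ws_{loc}(\hy^{\prime,l}))$ are (up to shrinking the chart by a uniform factor) $\gamma$-admissible $s$-manifolds in $(\Psi_{\hx},q_{\tilde\epsilon}(\hx))$, hence horizontal Lipschitz graphs of full horizontal size. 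Similarly, $\Psi_{\hx}^{-1}(\partial^{u,l}R')$ are vertical Lipschitz graphs tangent (almost everywhere) to the unstable cone $\mathscr{C}^{u,\hx}$.

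\textbf{Step 2: trapping strip.} By the hypothesis \eqref{eq:HypothesisSecCurveInterRectangles}, the curve $\Psi_{\hx}^{-1}(\gamma)$ does not meet any of these horizontal stable graphs. Since any two distinct local stable manifolds in the chart are either equal or disjoint (they are horizontal Lipschitz graphs of full size tangent to $\mathscr{C}^{s,ext,\alpha}$, which is narrow for $\tilde\epsilon$ small), they are totally ordered by ``above/below''. Let $\Ws_{loc}(\hy^+)$ be the lowest one lying above $\Psi_{\hx}^{-1}(\gamma)$ and $\Ws_{loc}(\hy^-)$ the highest one lying below, among all stable manifolds carrying a stable boundary of some $R'\in\mathcal{R}$ intersecting $\gamma$ (both exist by finiteness of $\mathcal{R}$). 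Define $Q\subset R(0,q_{\tilde\epsilon}(\hx))$ to be the Jordan domain bounded by the images of $\Ws_{loc}(\hy^\pm)$ and by the two vertical sides $\{\pm q_{\tilde\epsilon}(\hx)\}\times[-q_{\tilde\epsilon}(\hx),q_{\tilde\epsilon}(\hx)]$. By construction $\Psi_{\hx}^{-1}(\gamma)\subset Q$, and for every rectangle $R'$ intersecting $\gamma$, no stable manifold carrying $\partial^s R'$ meets $Q$; in particular $Q$ is contained in the strip between the two horizontal graphs $\Psi_{\hx}^{-1}(\Ws_{loc}(\hy^{\prime,1}))$ and $\Psi_{\hx}^{-1}(\Ws_{loc}(\hy^{\prime,2}))$. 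Within that latter strip each $\Psi_{\hx}^{-1}(\partial^{u,l}R')$ is a vertical graph joining the top and bottom sides, hence separates the strip into two open components; restricting to $Q\subset$ strip, it still separates $Q$ into two open connected components.

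\textbf{Step 3: reparameterization.} Choose any diffeomorphism $\varphi:[0,1]^2\to\overline{Q}$ sending horizontal (resp.\ vertical) sides to horizontal (resp.\ vertical) sides of $Q$, and set $\Psi=\Psi_{\hx}\circ\varphi$. Since $\varphi$ is a diffeomorphism preserving the horizontal/vertical orientation, the images $\Psi^{-1}(\partial^{u,l}R')$ remain vertical Lipschitz graphs separating $(0,1)^2$, and $\gamma([0,1])\subset\Psi((0,1)^2)$. This gives the first two bullets. The final ``in particular'' statement is then immediate: if $\gamma(s)$ and $\gamma(t)$ lie on opposite sides of both vertical graphs bounding $\Psi^{-1}(R)$, then by connectedness $\gamma([s,t])$ must cross both boundaries, producing a subinterval whose image starts on $\partial^{u,1}R$, ends on $\partial^{u,2}R$ and remains in $R$ in between, i.e.\ a crossing $(\gamma,R)$-interval.

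The main technical obstacle is Step 2: ensuring simultaneously that the chart $(\Psi_{\hx},q_{\tilde\epsilon}(\hx))$ is wide enough to contain all of $\gamma$ and all relevant pieces of stable/unstable boundaries, and that inside this chart the stable manifolds $\Ws_{loc}(\hy^{\prime,l})$ of points coming from potentially distant rectangles are still $\gamma$-admissible horizontal graphs. This is exactly where one uses $\sigma$ small (so $\gamma$ has diameter $\ll q_{\tilde\epsilon}$) together with uniform continuity of Pesin charts and of local stable manifolds on the fixed block $\hNUH$, applied via Lemma~\ref{lem:AdmissibleManifoldinCloseCharts}.
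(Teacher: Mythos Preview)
Your proof is correct and follows essentially the same approach as the paper: pick an adapted Pesin chart for one rectangle meeting $\gamma$, use smallness of $\sigma$ to fit $\gamma$ inside, trap $\gamma$ in the horizontal strip $Q$ between the two nearest stable leaves, observe that each unstable boundary separates $Q$, and reparameterize $Q$ to $(0,1)^2$. The only cosmetic difference is that you invoke Lemma~\ref{lem:AdmissibleManifoldinCloseCharts} explicitly to justify that the stable boundaries of nearby rectangles are full-width horizontal graphs in the chosen chart, whereas the paper simply appeals to uniform continuity on the Pesin block.
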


\begin{figure}[h]
	\centering
	\includegraphics[scale=0.8]{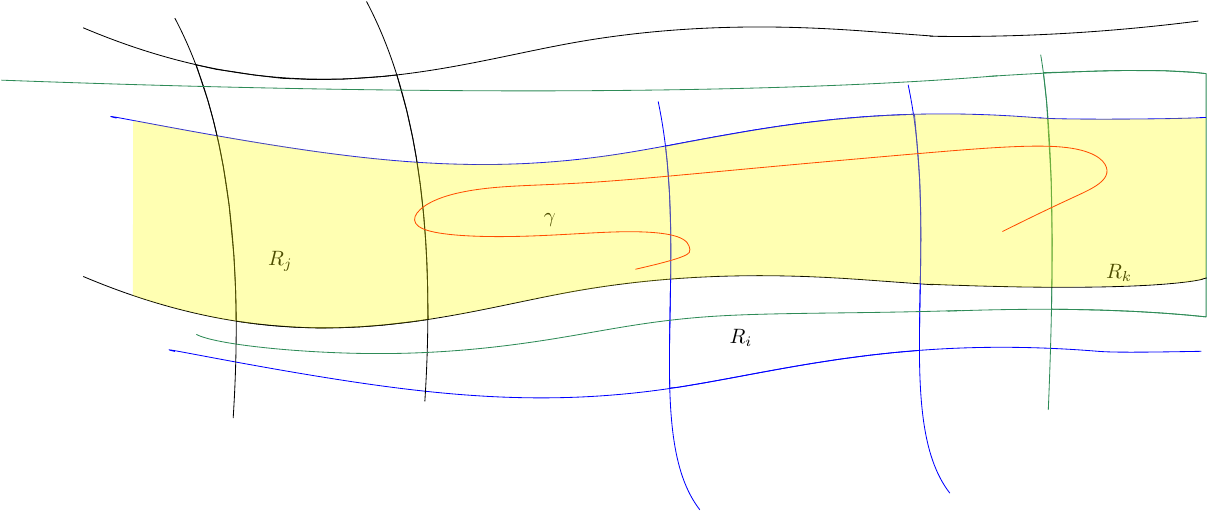}
	\caption{The proof of Lemma~\ref{lem:GoodChartCurveRectangle} in the case where $\gamma$ intersects three rectangles $R_i, R_j$ and $R_k$. Horizontals represent the stable direction. The chart $\Psi$ is given by the colored domain.}
	\label{fig:ProofLemmaGoodChartCurveRectangle}
\end{figure}

\begin{proof}
	If $\gamma$ does not intersect any $R \in \mathcal{R}$, there is nothing to prove. Let $R \in \mathcal{R}$ such that $\gamma([0,1]) \cap R \ne \emptyset$. Let $\hx \in \hNUH$ and let $\big(\Psi_{\hx},q_{\tilde{\epsilon}}(\hx)\big)$ be an adapted chart for $R$. Recall that since $\gamma$ is a $(r,C\sigma)$-curve, taking $\sigma$ uniformly small enough implies that $\gamma([0,1]) \subset \Psi_{\hx}\big(R(0,q_{\tilde{\epsilon}}(\hx))\big)$. Recall that for any $R' \in \mathcal{R}$, each of the stable boundaries of $R'$ is contained in the local stable manifold of some point of $\hNUH$. Let $R' \in \mathcal{R}$ be a $us$-rectangle intersecting $\gamma$, let $l\in \{1,2\}$ and let $\hy \in \hNUH$ such that $\partial^{s,l}R' \subset \Ws_{loc}(\hy)$. Up to reducing the size of the chart $\big(\Psi_{\hx},q_{\tilde{\epsilon}}(\hx)\big)$ by a uniform factor, the curve $\Psi_{\hx}^{-1}(\Ws_{loc}(\hy))$ is a horizontal Lipschitz graphs of size $q_{\tilde{\epsilon}}(\hx)$. Such Lipschits graphs separates the chart $\big(\Psi_{\hx},q_{\tilde{\epsilon}}(\hx)\big)$ into two disjoints open connected component and \eqref{eq:HypothesisSecCurveInterRectangles} implies that $\Psi_{\hx}\inv(\gamma)$ is contained in one of these two connected components. This implies that there exist $\hy^+,\hy^- \in \hNUH$ and a Jordan domain $Q$ displaying the following properties.
	\begin{itemize}[label={--}]
		\item The curve $\Psi_{\hx}^{-1}(\gamma)$ is contained in $Q$.
		\item Its boundary is the union of the four following curves.
		\begin{itemize}[label=$\bullet$]
			\item The two Lipschitz horizontal graphs of size $q_{\tilde{\epsilon}}(\hx)$ that are the images of $\Ws_{loc}(\hy^+)$ and $\Ws_{loc}(\hy^-)$ under $\Psi_{\hx}^{-1}$.
			\item The left and right vertical boundaries of the chart $\big(\Psi_{\hx},q_{\tilde{\epsilon}}(\hx)\big)$, i.e., $\{\pm q_{\tilde{\epsilon}}(\hx)\} \times [-q_{\tilde{\epsilon}}(\hx),q_{\tilde{\epsilon}}(\hx)]$.
		\end{itemize}
		\item For any $R' \in \mathcal{R}$ and any $\hy$ such that $\partial^{s,l}R' \subset \Ws_{loc}(\hy)$, the curve $\Ws_{loc}(\hy)$ does not intersect the domain $Q$. Note that $\Ws_{loc}(\hy)$ has to be above $\Ws_{loc}(\hy^+)$ or below $\Ws_{loc}(\hy^-)$ (up to changing the notation) in the chart $(\Psi_{\hx},q_{\tilde{\epsilon}}(\hx))$.
	\end{itemize}
	Note that for any rectangle $R' \in \mathcal{R}$ that intersects $\gamma$, we have that $\partial^{u,l}R'$ separates $Q$ into two disjoint open connected components. Indeed,~\eqref{eq:HypothesisSecCurveInterRectangles} implies that $\Psi_{\hx}^{-1}(\gamma)$ must be contained in the Jordan domain formed by the left and right vertical boundaries of the chart and the two horizontal graphs defined by the local stable manifolds containing $\partial^sR'$. Remark that this domain is separated into two disjoint open connected components by each of the $\partial^{u,l}R'$. Since local stable manifolds are either equal or disjoint, we also obtain that $Q$ must be contained inside this domain. We conclude that $Q$ is also separated into two disjoint open connected components by each of the $\partial^{u,l}R'$.
	
	Take now a diffeomorphism $\varphi : [0,1]^2 \to \overline{Q}$ that sends each horizontal, respectively vertical, boundary of $(0,1)^2$ to each horizontal, respectively vertical, boundary of $Q$. Since $\varphi$ is a diffeomorphism, it preserves the properties of $Q$ discussed above. Taking $\Psi = \Psi_{\hx} \circ \varphi$ then concludes the proof, since the last statement is obvious by connectedness of $\gamma$.
\end{proof}

\subsection{Proof of the curve-covering Proposition~\ref{prop:IntersectionCurveRectangle}}
We split the proof into two distinct parts. In the first part, we treat the crossing intervals using Lemma~\ref{lem:EstimationSizeCrossing}. In the second part, we study the folding intervals. Before going into the details, let us quickly explain the idea of the proof. First, in the case of crossing intervals, Lemma~\ref{lem:EstimationSizeCrossing} tells us that crossing $\gamma$-intervals are long. Then, using the fact that the unstable boundaries of the $us$-rectangles of the family $\mathcal{R}$ are disjoint, we conclude that if some crossing $\gamma$-intervals intersect, then taking the first and the last intervals of the intersection is enough to cover all the intersecting intervals. The case of folding intervals is more delicate. We use the fact that $\gamma$ cannot cross the stable manifolds containing the stable boundaries of the rectangles to conclude that $\gamma$ is trapped in a rectangular domain where each unstable boundary of each rectangle of $\mathcal{R}$ separates it into two disjoint connected components. We use this fact to select a uniformly small number of rectangles containing the image of all the folding $\gamma$-intervals between two crossing $\gamma$-intervals.

\parbreak\textbf{Step 1: A bound on the number of crossing $\gamma$-intervals.}
Let us start with the crossing intervals. Since the number of crossing $\gamma$-intervals is at most countable (they cannot accumulate since they have uniform length by Lemma \ref{lem:EstimationSizeCrossing}), we can order the starting parameter $s$ of all crossing $\gamma$-intervals $[s,t]$. Denote by $m$ the cardinality of $\text{Cross}(\gamma)$. Note that Lemma~\ref{lem:EstimationSizeCrossing} implies that there exists at most a finite number of disjoint crossing $\gamma$-intervals. Combining this with the fact that $\mathcal{R}$ has finite cardinality, we obtain that $m$ must be finite. We can then write:
\begin{equation*}
	\text{Cross}(\gamma) = \big\{[s_i,t_i], \ i \in \{1,\dots,m\}\big\},
\end{equation*}
where $s_1 \leq \dots \leq s_m$.

For any $i \in \{1,\dots,m\}$, define $j(i) \in \{1,\dots,m\}$ to be the integer such that:
\begin{equation*}
	s_{j(i)} = \inf\{s_j, \ j \in \{1,\dots,m\} \ \text{such that} \ s_j > t_i\}.
\end{equation*}
If such an integer does not exist, set $j(i)=m+1$ with $s_{m+1}=t_{m+1}=1$. The integer $s_{j(i)}$ is just the starting parameter of the next crossing $\gamma$-interval disjoint from $[s_i,t_i]$.

We are going to build the collection $\mathcal{C}_1$ by induction. The idea is to start with a crossing $\gamma$-interval $[s_i,t_i]$ that we add to $\mathcal{C}_1$. Then we can cover all the crossing $\gamma$-intervals that are not disjoint from $[s_i,t_i]$ by the union of $[s_i,t_i]$ and the last crossing $\gamma$-interval intersecting $[s_i,t_i]$. We also add this one to $\mathcal{C}_1$ and move to the next crossing interval disjoint from $[s_i,t_i]$ to repeat the argument. Lemma~\ref{lem:EstimationSizeCrossing} then gives a bound on $\mathcal{C}_1$. This is contained in the following claim. See Figure~\ref{fig:ProofCrossingIntervals}.

\begin{claim}
	Let $i \in \{1,\dots,m\}$. The following is true:
	\begin{equation*}
		[s_i,t_{j(i)-1}] \subset [s_i,t_i] \cup [s_{j(i)-1},t_{j(i)-1}].
	\end{equation*}
	\label{claim:CrossingIntervals}
\end{claim}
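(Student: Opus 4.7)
The claim is a purely combinatorial/ordering statement about intervals once they have been enumerated by increasing left endpoints; no geometry of $\gamma$ or of the rectangles is needed. The plan is simply to pick a point $s \in [s_i, t_{j(i)-1}]$ and split on whether it lies before or after $t_i$.

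First I would check that the right-hand side makes sense, namely that $j(i) - 1 \geq i$ so that $[s_{j(i)-1}, t_{j(i)-1}]$ actually appears among our enumeration. Since $[s_i,t_i]$ is a genuine interval we have $s_i \leq t_i$, so index $i$ itself does \emph{not} satisfy $s_j > t_i$; by minimality of $j(i)$ this forces $j(i) \geq i+1$, hence $j(i) - 1 \geq i$. In particular every $l$ with $i \leq l \leq j(i)-1$ satisfies $s_l \leq t_i$ (again by minimality of $j(i)$).

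Now fix $s \in [s_i, t_{j(i)-1}]$ and distinguish two cases. If $s \leq t_i$, then $s \in [s_i, t_i]$ and we are done. Otherwise $s > t_i$; applying the observation above to $l = j(i) - 1$ yields $s_{j(i)-1} \leq t_i < s$, and combined with the assumption $s \leq t_{j(i)-1}$ this gives $s \in [s_{j(i)-1}, t_{j(i)-1}]$. Either way $s$ lies in the union, proving the inclusion.

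There is no serious obstacle here: the claim is really a bookkeeping lemma whose only content is the definition of $j(i)$ as the first index whose left endpoint strictly exceeds $t_i$. The ``main difficulty'', such as it is, is simply to remember that the $[s_l,t_l]$ are only ordered by their left endpoints, so one should not try to exploit any ordering of the $t_l$'s --- the argument above only uses the location of a single $s$ relative to $t_i$ and relative to $s_{j(i)-1}$, and the elementary fact $s_{j(i)-1} \leq t_i$ coming from the definition of $j(i)$.
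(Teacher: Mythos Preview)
Your proof is correct and follows essentially the same approach as the paper: both pick $s\in[s_i,t_{j(i)-1}]$, use that $s_{j(i)-1}\le t_i$ (from the definition of $j(i)$), and split on whether $s\le t_i$ or $s>t_i$. Your write-up is in fact cleaner, as you make explicit the case split and the fact that $j(i)-1\ge i$.
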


\begin{proof}
	Let $s \in [s_i,t_{j(i)-1}]$. For any $l \in \{i,\dots,j(i)-1\}$, the interval $[s_l,t_l]$ intersects the interval $[s_i,t_i]$ since, by definition of $j(i)$, we have $s_l \leq t_i$. Then, either we have $s_l \leq s_{j(i)-1}$ and $s \in [s_i,t_i]$, or $s_l > s_{j(i)-1}$ and $s \in [s_{j(i)-1},t_{j(i)-1}]$.
\end{proof}

\begin{figure}[h]
	\centering
	\includegraphics[scale=0.8]{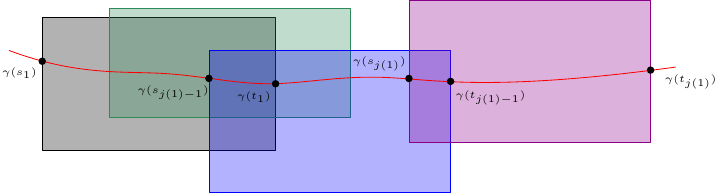}
	\caption{Proof of Claim~\ref{claim:CrossingIntervals}.}
	\label{fig:ProofCrossingIntervals}
\end{figure}

We now explain the induction. We start by letting $\mathcal{C}_{1,1} = \{[s_1,t_1],[s_{j(1)},t_{j(1)}]\}$. Note that by Claim~\ref{claim:CrossingIntervals} we have:
\begin{equation*}
	\bigcup_{i=1}^{j(1)-1} [s_i,t_i] \subset \bigcup_{[s,t] \in \mathcal{C}_{1,1}} [s,t].
\end{equation*}
Suppose that there exists a collection of crossing $\gamma$-intervals $\mathcal{C}_{1,n} = \{[\tilde{s}_1,\tilde{t}_1],\dots,[\tilde{s}_{2n},\tilde{t}_{2n}]\}$, where $\tilde{s}_{2n-1} = s_{j^{n-1}(1)}$ and $\tilde{s}_{2n} = s_{j^n(1)-1}$, and such that:
\begin{equation*}
	\bigcup_{i=1}^{j^n(1)-1}[s_i,t_i] \subset \bigcup_{i=1}^{2n} [\tilde{s}_i,\tilde{t}_i].
\end{equation*}
where we define $j^n(1) := j(j^{n-1}(1))$ by induction, with $j(m+1) = m+1$. Suppose also that every intervals with odd indices are pairwise disjoints. Let us explain how we build $\mathcal{C}_{1,n+1}$.

Let $k = j^n(1)$. Let $\mathcal{C}_{1,n+1} = \mathcal{C}_{1,n} \cup \{[s_k,t_k],[s_{j(k)-1},t_{j(k)-1}]\}$. Combining the induction hypothesis and Claim~\ref{claim:CrossingIntervals} we have:
\begin{equation*}
	\begin{aligned}
		\bigcup_{i=1}^{j^{n+1}(1)-1}[s_i,t_i] &\subset \bigcup_{i=1}^{2n} [\tilde{s}_i,\tilde{t}_i] \cup [s_k,t_k] \cup [s_{j(k)-1},t_{j(k)-1}].
	\end{aligned}
\end{equation*}
The disjointness property is also satisfied since $k = j(j^{n-1}(1))$ and $s_{2n-1} = s_{j^{n-1}(1)}$. We stop the induction when we reach $n_{\star}$ such that $j^{n_{\star}}(1) = m+1$. We let $\mathcal{C}_1 = \mathcal{C}_{1,n_{\star}}$. Note that we have by construction:
\begin{equation*}
	\bigcup_{i=1}^m [s_i,t_i] \subset \bigcup_{[s,t] \in \mathcal{C}_1} [s,t].
\end{equation*}
Note that the disjointness property combined with Lemma~\ref{lem:EstimationSizeCrossing} implies that $|\mathcal{C}_1| \leq 4C$ since there are at most $2C$ pairwise disjoint crossing $\gamma$-intervals. This proves the first two points of Proposition~\ref{prop:IntersectionCurveRectangle}.

\parbreak\textbf{Step 2: A bound on the number of rectangles needed to cover the folding $\gamma$-intervals.}
Contrary to Step 1, the number of folding $\gamma$-intervals can be infinite. Moreover, it may be possible to have a sequence of $(\gamma,R_i)$-folding intervals accumulating to a point, for some $i \in \{1,\dots,L\}$, and then the set $\text{Fold}(\gamma)$ may not be orderable as in Step 1. Nevertheless, since the family $\mathcal{R}$ is finite and their unstable boundaries are contained in different leaves of $\mathscr{F}^u$, there exists a constant $C'$ such that:
\begin{equation}
	\max_{i_1 \ne i_2 \in \{1,\dots,L\}} \max_{l_1,l_2 \in \{1,2\}} d\big(\partial^{u,l_1}R_{i_1},\partial^{u,l_2}R_{i_2}\big) \geq C'.
	\label{eq:SeparationRectangles}
\end{equation}
We construct inductively a family $\mathcal{R}^{\star}$ of rectangles which cover the curve $\gamma$ and such that the following holds. Let $t \in (0,1)$ be the ending parameter of a crossing $\gamma$-interval and let $s>t$ be the starting parameter of the first crossing $\gamma$-interval after $t$. Then the number of rectangle of $\mathcal{R}^{\star}$ needed to cover $\gamma([t,s])$ is uniformly bounded.
\begin{lemma}
	There exists a finite collection of $us$-rectangles $\mathcal{R}^{\star} := \{R_{i_1},\dots,R_{i_m}\} \subset \mathcal{R}$ and a finite collection of parameters $t_0 = 0 < t_1 < \dots < t_{m+1} = 1$ that have the following properties.
	\begin{itemize}[label={--}]
		\item For any $k = 1,\dots,m$, the parameter $t_k$ is the ending parameter of a crossing $\gamma$-interval $I_k$.
		\item The intervals $(I_k)_{k=1,\dots,m}$ are pairwise disjoint.
		\item $i_m \leq 4m$.
		\item We have the following covering property:
		\begin{equation*}
			\gamma\big(\text{Fold}(\gamma)\big) = \gamma\big(\text{Fold}(\gamma)\big) \cap \big(\bigcup_{i=1}^m R_{i_m}\big).
		\end{equation*}
	\end{itemize}
	\label{claim:OrderRectangleFolding}
\end{lemma}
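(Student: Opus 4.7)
The plan is to prove the lemma by induction, building the rectangles $R_{i_k}$ and parameters $t_k$ iteratively. At each step $n\geq 1$, assuming we have already constructed a collection $\mathcal{R}^{\star}_{n-1} = \{R_{i_1},\dots,R_{i_{n-1}}\}$ with parameters $0=t_0<t_1<\dots<t_{n-1}$ satisfying the stated properties, with in addition the covering property that every folding $\gamma$-interval contained in $[0,t_{n-1}]$ is mapped by $\gamma$ into $\bigcup_{k\leq n-1} R_{i_k}$, the goal is to add at most four new rectangles and one new parameter $t_n>t_{n-1}$ that is the ending parameter of a crossing $\gamma$-interval disjoint from the previous $I_1,\dots,I_{n-1}$. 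Since any family of pairwise disjoint crossing intervals has cardinality at most $2C$ by Lemma~\ref{lem:EstimationSizeCrossing}, the induction terminates after finitely many steps with $t_m=1$, and the final bound $|\mathcal{R}^{\star}|\leq 4m$ follows.

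The inductive step splits into two substeps. First, we must cover the folding intervals that straddle $t_{n-1}$ (i.e.\ $[s,t]\in\text{Fold}(\gamma)$ with $s<t_{n-1}<t$ or $s<t_{n-1}=t$). Using the chart $\Psi:(0,1)^2\to M$ from Lemma~\ref{lem:GoodChartCurveRectangle} and separating such intervals into \emph{left} and \emph{right} folding intervals according to which unstable boundary of the associated rectangle contains $\gamma(s)$, the key claim is that all left straddling folding intervals can be covered by a single rectangle $R_{i_n}$ and all right straddling ones by a single rectangle $R_{i_{n+1}}$. The argument: order such left folding intervals $[s'_1,t'_1],\dots,[s'_l,t'_l]$ by $s'_1<\dots<s'_l$, show that their associated rectangles $R'_1,\dots,R'_l$ have left unstable boundaries ordered from left to right in the chart (else $\gamma$ would produce an unaccounted-for crossing interval before $t_{n-1}$), deduce the reverse order $t'_l<\dots<t'_1$, and conclude that the outermost interval contains all the others so $\gamma$ maps each into $R'_1$.

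Second, if $t_{n-1}<1$, let $s'_1 = \inf\{s\geq t_{n-1}\mid \exists t>s,\ [s,t]\in\text{Fold}(\gamma)\}$ (stopping the induction by setting $t_n=1$ if no such $s'_1$ exists); pick any $[s'_1,t'_1]\in\text{Fold}(\gamma,R'_1)$. Define $s'_2$ as the first starting parameter after $s'_1$ of a folding interval associated with a rectangle disjoint from $R'_1$ (with rectangle $R'_2$), and $s'_3$ similarly for a rectangle disjoint from both $R'_1$ and $R'_2$. Set $t_n=t_c$ where $t_c\in(s'_1,s'_3]$ is the ending parameter of the first crossing $\gamma$-interval ending in that window (and $t_c=1$ if none exists). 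Then take $R_{i_{n+2}}:=R'_1$ and $R_{i_{n+3}}:=R'_2$. The claim is that every folding interval $[s,t]\subset[s'_1,t_c]$ is mapped by $\gamma$ into $R_{i_n}\cup R_{i_{n+1}}\cup R_{i_{n+2}}\cup R_{i_{n+3}}$; combined with the previous substep this shows the covering property is restored up to $t_n$.

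The main obstacle will be the topological nesting argument in the second substep. Given $[s,t]\subset[s'_1,t_c]$ associated to some rectangle $R$, one analyzes cases according to whether the left unstable boundary of $R$ lies to the left or right of the left unstable boundary of $R'_1$ in the chart, and whether $s<s'_2$ or $s\in[s'_2,s'_3)$. In each case, if $\gamma([s,t])$ were to escape all four chosen rectangles, one constructs, via Lemma~\ref{lem:GoodChartCurveRectangle}, a crossing $\gamma$-interval ending inside $(s'_1,t_c)$, contradicting minimality of $t_c$, or a folding interval contradicting minimality of $s'_1,s'_2,s'_3$. The delicate point is bookkeeping all the cases while carefully exploiting the fact that~\eqref{eq:HypothesisSecCurveInterRectangles} forbids $\gamma$ from crossing the stable manifolds containing the stable boundaries of the rectangles, which is precisely what makes the nesting of folding intervals rigid.
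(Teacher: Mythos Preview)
Your proposal is correct and follows essentially the same inductive scheme as the paper: build up parameters $t_k$ as endpoints of pairwise disjoint crossing intervals, and at each step add at most four rectangles --- two for the folding intervals straddling $t_{n-1}$ and two for those contained in $[s'_1,t_c]$ --- with termination guaranteed by Lemma~\ref{lem:EstimationSizeCrossing}. The one minor difference is in the choice of $R_{i_{n+2}}$: rather than taking $R'_1$ itself, the paper selects a rectangle $R_1^{\star}$ with \emph{extremal} (leftmost) left unstable boundary among those intersecting $R'_1$ and carrying a left folding interval starting after $t_{n-1}$, which streamlines the case analysis (the folding intervals in $[s'_1,t_c]$ then land in $R_{i_{n+2}}\cup R_{i_{n+3}}$ alone, without having to fall back on the straddling rectangles $R_{i_n},R_{i_{n+1}}$). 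Your variant, which does allow the straddling rectangles to absorb the awkward case where the left boundary of $R$ lies to the left of that of $R'_1$, also works and gives the same bound $|\mathcal{R}^{\star}|\leq 4m$.
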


\begin{proof}
	The proof goes by induction. Let $t_0 = 0$ and $\mathcal{R}^{\star}_0 = \emptyset$. Let $n \geq 1$. Suppose the following. There exists a collection of $us$-rectangles $\mathcal{R}^{\star}_{n-1} := \{R_{i_1},\dots,R_{i_{n-1}}\} \subset \mathcal{R}$ and a sequence of parameters $t_0 = 0 < \dots < t_{n-1}$ with the following properties.
	\begin{itemize}[label={--}]
		\item For any $k = 1,\dots,n-1$, the parameter $t_k$ is the ending parameter of a crossing $\gamma$-interval $I_k$.
		\item The intervals $(I_k)_{k=1,\dots,n-1}$ are pairwise disjoint.
		\item $i_{n-1} \leq 4(n-1)$.
		\item We have the following covering property. For any folding $\gamma$-interval $[s,t] \subset [0,t_{n-1}]$ we have:
		\begin{equation*}
			[s,t] \subset \bigcup_{j=1}^{i_{n-1}} R_j.
		\end{equation*}
	\end{itemize}
	If $t_{n-1} = 1$, the induction stops and the claim is proven. Suppose then that $t_{n-1} < 1$. We are now going to explain how to construct the family $\mathcal{R}^{\star}_{n}$ and the parameter $t_n$ such that all the previous listed properties are still satisfied at rank $n$.
	
	By Lemma~\ref{lem:GoodChartCurveRectangle}, we can suppose that the curve $\gamma$ and its intersection with any $us$-rectangle $R \in \mathcal{R}$ is contained in $(0,1)^2$. Moreover, for any $l = 1,2$, the curve $\partial^{u,l}R$ separates $(0,1)^2$ into two disjoint open connected components. Recall that this property gives a meaning to being on the left (or the right) of $\partial^{u,l}R$. Let $R,R' \in \mathcal{R}$ such that both $R$ and $R'$ intersect $\gamma$. We say that $R$ is on the left (or the right) of $R'$ if the left (or the right) unstable boundary of $R$ is on the left (or the right) of the left (or the right) unstable boundary of $R'$.
	
	We say that a folding $\gamma$-interval $[s,t]$ is a left (or right) folding $\gamma$-interval if $\gamma(s)$ belongs to the left (or the right) unstable boundary of its associated rectangle.
	
	Let us consider the next end of a crossing $\gamma$-interval starting after $t_{n-1}$. Define:
	\begin{equation*}
		t_c = \inf\{t \in (t_{n-1},1] \ \exists s>t_{n-1}, \ [s,t] \in \text{Cross}(\gamma)\}.
	\end{equation*}
	If such a crossing interval does not exist, we let $t_c = 1$. Let $s_c \in (s'_1,t_c)$ be the starting parameter of the crossing $\gamma$-interval ending at $t_c$ and let $R_c \in \mathcal{R}$ be the $us$-rectangle associated to $[s_c,t_c]$. Remark that since $s_c > t_{n-1}$, the interval $[s_c,t_c]$ is disjoint from all the $I_k$ for $k = 1,\dots,n-1$.
	
	From now on, and until the end of the proof of Lemma \ref{claim:OrderRectangleFolding}, we suppose that $t_{n-1}$ belongs to the right boundary of the rectangle associated to the crossing $\gamma$-interval $I_{n-1}$. The case where $t_{n-1}$ belongs to the left boundary is exactly symmetric.
	
	\parbreak\textbf{Step 2.a: Select the rectangles to cover the folding intervals starting before $t_{n-1}$ and ending before $t_c$.}
	Note that the induction allows us to cover the folding $\gamma$-intervals included in $[0,t_{n-1}]$. We begin by covering such intervals that intersect $[0,t_{n-1}]$ but are not contained in it.
	
	\begin{claim}
		There exist two $us$-rectangles $R_{i_n},R_{i_{n+1}} \in \mathcal{R}$ that display the following property. For any folding $\gamma$-interval $[s,t]$ satisfying:
		\begin{equation*}
			s<t_{n-1} \quad \text{and} \quad t_{n-1}<t<t_c,
		\end{equation*}
		then $\gamma([s,t])$ is contained in either $R_{i_n}$ or $R_{i_{n-1}}$.
		\label{claim:Folding:RankBefore}
	\end{claim}
	
	\begin{proof}
		We are first going to prove that there exists a rectangle $R_{i_n} \in \mathcal{R}$ that contains all the left folding $\gamma$-intervals intersecting $[0,t_{n-1}]$, which are not contained in it. Note that by~\eqref{eq:SeparationRectangles}, this set is finite. Denote it as $\{[s'_1,t'_1],\dots,[s'_l,t'_l]\}$ such that $s'_1 < \dots < s'_l$. We are going to prove that the following order holds:
		\begin{equation*}
			s'_1 < \dots < s'_l \leq t_{n-1} < t'_l < \dots < t'_1.
		\end{equation*}
		For any $k \in \{1,\dots,l\}$, denote the $us$-rectangle associated with $[s'_k,t'_k]$ by $R'_k$. Using the chart given by Lemma~\ref{lem:GoodChartCurveRectangle} and since $s'_1 < \dots < s'_l$, the following property holds. For any $k \in \{1,\dots,l-1\}$ the left unstable boundary of $R'_k$ is on the left of the left unstable boundary of $R'_{k+1}$. Indeed, suppose by contradiction that there exists $k \in \{1,\dots,l-1\}$ such that the left unstable boundary of $R'_k$ is on the right of the left unstable boundary of $R'_{k+1}$. Since $s'_k < s'_{k+1}$, the curve $\gamma$ crosses first the left unstable boundary of $R'_k$ and then the left unstable boundary of $R'_{k+1}$. But $\gamma$ must then cross again the left unstable boundary of $R'_k$ before $\gamma(s'_{k+1})$. It implies that $t'_k < s'_{k+1}$, which is a contradiction.
		
		Since any $s'_k \leq t_{n-1} < t'_1$ for $k = 1,\dots,l$, we know that $\gamma$ must cross first all the left unstable boundaries of $R'_k$ for $k = 2,\dots,l$ before crossing again the left unstable boundary of $R'_1$. This property combined with the order on the left unstable boundaries of the $R'_k$ implies the desired order $s'_1 < \dots < s'_l < t'_l < \dots < t'_1$.
		
		We then proved that for any $k = 2,\dots,l$ we have $[s'_k,t'_k] \subset [s'_1,t'_1]$. But since $\gamma([s'_1,t'_1]) \subset R'_1$, we obtain that any left folding $\gamma$-interval $[s,t]$ intersecting $[0,t_{n-1}]$, which is not contained in $[0,t_{n-1}]$, satisfies $\gamma([s,t]) \subset R'_1$. We then define $R_{i_n} := R'_1$.
		
		The argument is the same for the right folding $\gamma$-intervals intersecting $[0,t_{n-1}]$, but not contained in it. Denote the rectangle containing them by $R_{i_{n+1}}$. This concludes the proof of Claim~\ref{claim:Folding:RankBefore}.
	\end{proof}
	
	\parbreak\textbf{Step 2.b: Select the rectangles to cover the intervals starting after $t_{n-1}$ and find the next crossing $\gamma$-interval.}
	We now want to select rectangles to cover the folding $\gamma$-intervals included in $(t_{n-1},t_c)$. We are first going to define three time parameters that will be crucial to choose these rectangles.
	
	Define first $s'_1 \in [t_{n-1},1]$ to be the first starting parameter of a folding $\gamma$-interval after $t_{n-1}$.
	\begin{equation*}
		s'_1 = \inf\{s \in [t_{n-1},1], \ \exists \ t > s, \ [s,t] \in \text{Fold}(\gamma)\}.
	\end{equation*}
	If such a parameter does not exist, there is no folding $\gamma$-interval left after $t_{n-1}$ and we set $t_n = 1$ and $\mathcal{R}^{\star}_{n} = \{R_{i_1},\dots,R_{i_{n+1}}\}$, which stops the induction and proves Lemma~\ref{claim:OrderRectangleFolding}. Suppose then that such a parameter exists and let $t'_1 \in [0,1]$ and $R'_1 \in \mathcal{R}$ such that $[s'_1,t'_1] \in \text{Fold}(\gamma,R'_1)$. Without loss of generality, we can suppose that $[s'_1,t'_1]$ is a left folding $\gamma$-interval.
	
	Secondly, define $s'_2 \in [s'_1,1)$ to be the next starting parameter of a folding $\gamma$-interval associated to a rectangle disjoint from $R'_1$. Equation~\eqref{eq:SeparationRectangles} guarantees that $s'_2 > s'_1$.
	\begin{equation*}
		s'_2 = \inf\{s \in (s'_1,1), \ \exists \ R \in \mathcal{R}, \ R \cap R'_1 = \emptyset, \ \exists \ t > s, \ [s,t] \in \text{Fold}(\gamma,R)\}.
	\end{equation*}
	Let $t'_2 > s'_2$ and $R'_2 \in \mathcal{R}$ such that $[s'_2,t'_2] \in \text{Fold}(\gamma,R'_2)$. If such an interval does not exist, we set $s'_2 = t'_2 = 1$ and $R'_2 = \emptyset$.
	
	Finally, define $s'_3 > s'_2$ to be the starting parameter of the first folding $\gamma$-interval associated to a rectangle disjoint from $R'_1$ and $R'_2$. If $s'_2 = 1$, set $s'_3 = 1$. Otherwise, set:
	\begin{equation*}
		s'_3 = \inf\{s \in (s'_2,1), \ \exists \ R \in \mathcal{R}, \ R \cap R'_1 = \emptyset \ \text{and} \ R \cap R'_2 = \emptyset, \ \exists \ t > s, \ [s,t] \in \text{Fold}(\gamma,R)\}.
	\end{equation*}
	Let $t'_3 > s'_3$ and $R'_3 \in \mathcal{R}$ such that $[s'_3,t'_3] \in \text{Fold}(\gamma,R'_3)$. If such an interval does not exist, we set $s'_3 = t'_3 = 1$ and $R'_3 = \emptyset$. Remark the following important fact: $t_c<s_3^{\prime}$. Indeed, since the three rectangles $R_1^{\prime},R_2^{\prime}$ and $R_3^{\prime}$ are pairwise disjoints, we can suppose without loss of generality that $R_1^{\prime}$ is the leftmost, $R_2^{\prime}$ is in the middle and $R_{3}^{\prime}$ is the rightmost. Then $\gamma(s_1^{\prime})$ is on the left of the left unstable boundary of $R_2^{\prime}$ and $\gamma(s_3^{\prime})$ is on the right of the right unstable boundary of $R_2^{\prime}$. By Lemma \ref{lem:GoodChartCurveRectangle}, it implies the existence of a crossing $\gamma$-interval ending before $s_3^{\prime}$.
	
	\begin{claim}
		There exist two $us$-rectangles $R_{i_{n+2}},R_{i_{n+3}} \in \mathcal{R}$ that display the following property. For any folding $\gamma$-interval $[s,t] \subset [s'_1,t_c]$, we have $\gamma([s,t])\subset R_{i_{n+2}} \cup R_{i_{n+3}}$.
		\label{claim:Folding:ThreeDisjoint}
	\end{claim}
	
	\begin{proof}
		Let us first prove that for any left folding $\gamma$-interval $[s,t] \subset [s'_1,t_c]$, we have $\gamma([s,t]) \subset R'_1 \cup R_{i_n} \cup R_{i_{n+1}}$. Let $[s,t]$ be such an interval.
		
		Suppose first that $s'_1 < s < s'_2$. This implies that $[s,t]$ is associated to a $us$-rectangle $R \in \mathcal{R}$ that intersects $R'_1$. Define $R_1^{\star}$ to be the rectangle intersecting $R_1^{\prime}$ displaying the following properties.
		\begin{itemize}[label={--}]
			\item There exists a left folding $\gamma$-interval $[s_{\star},t_{\star}]$ associated to $R_1^{\star}$ such that $s_{\star}>t_{n-1}$.
			\item Let any rectangle $R$ intersecting $R_1^{\star}$ and such that there exists a left folding $\gamma$-interval, starting after $t_{n-1}$, associated to $R$. Then the left unstable boundary of $R$ is on the right of the left unstable boundary of $R_1^{\star}$.
		\end{itemize}  
		Let us prove that $\gamma([s,t])\subset R_1^{\star}$. Indeed, since the left unstable boundary of $R$ is on the right of the left unstable boundary of $R_1^{\star}$, then $\gamma([s,t])$ intersects $R_1^{\star}$. Suppose now that $\gamma([s,t])$ is not contained in $R_1^{\star}$. Then there must exists a parameter $u\in(s,t)$ such that $\gamma(u)$ is on the right of the right unstable boundary of $R_1^{\star}$. But since $\gamma(s_{\star})$ is on the left of the left unstable boundary of $R_1^{\star}$ and $s_{\star}<t_c$, there exists a crossing $\gamma$-interval associated to $R_1^{\star}$ ending before $t_c$, which is a contradiction. 
		
		Letting $R_{i_{n+2}}:=R_1^{\star}$, we then proved that the image by $\gamma$ of any folding $\gamma$-interval $[s,t] \subset [s'_1,t_c]$, such that $s < s'_2$, is contained in $R_{i_{n+2}}$.
		
		Suppose now that $s'_2 \leq s < s'_3$. This implies that $[s,t]$ is associated to a $us$-rectangle $R \in \mathcal{R}$ that intersects either $R'_1$ or $R'_2$. If $R$ intersects $R'_1$, then by the same argument as before we must have $\gamma([s,t]) \subset R_{i_{n+2}}$. Suppose then that $R$ intersects $R'_2$. Note that $R'_2$ must be on the left of $R'_1$, otherwise Lemma~\ref{lem:GoodChartCurveRectangle} would give the existence of a crossing $(\gamma,R'_1)$-interval that ends before $s'_2$, but we have $s'_2 \leq s \leq t_c$, which is a contradiction. A similar argument shows that $[s'_2,t'_2]$ must be a right folding $\gamma$-interval. So if $s > s'_2$ is the starting point of a left folding $\gamma$-interval, then, again, there must exist a crossing $(\gamma,R)$-interval ending before $s$, by Lemma~\ref{lem:GoodChartCurveRectangle}, which is a contradiction. The case $s'_2 \leq s < s'_3$ is then impossible.
		
		To sum up, since $t_c < s'_3$, any left folding $\gamma$-interval $[s,t]$ such that $[s,t] \subset [s'_1,t_c]$ satisfies $\gamma([s,t]) \subset R_{i_{n+2}}$. By symmetric arguments, there exists a rectangle $R_{i_{n+3}}\in \mathcal{R}$ displaying the following property. Any right folding $\gamma$-interval $[s,t]$ such that $[s,t] \subset [s'_1,t_c]$ satisfies $\gamma([s,t]) \subset R_{i_{n+3}}$. This proves Claim~\ref{claim:Folding:ThreeDisjoint}.
	\end{proof}
	
	We then define $t_n := t_c$ and $\mathcal{R}^{\star}_{n} = \{R_{i_1},\dots,R_{i_{n+3}}\}$. Claim~\ref{claim:Folding:RankBefore} and Claim~\ref{claim:Folding:ThreeDisjoint} allow us to complete the induction at rank $n$. We stop when we reach some $N$ such that $t_N = 1$, which has to happen since the number of crossing $\gamma$-intervals is finite.
	
	\parbreak\textbf{Step 2.d: The covering family $\mathcal{R}^{\star}$ and a bound on its cardinality.}
	By Lemma~\ref{claim:OrderRectangleFolding}, the family $\mathcal{R}^{\star} = \{R_{i_1},\dots,R_{i_m}\}$ contains the image under $\gamma$ of all the folding $\gamma$-intervals. Let us now give a bound on its cardinality. We know by Lemma~\ref{lem:EstimationSizeCrossing} that the number of disjoint crossing $\gamma$-intervals is smaller than $2C$. We then have that the number of intervals $(I_k)_{k=1,\dots,m}$ from Lemma~\ref{claim:OrderRectangleFolding} is smaller than $2C$ and then:
	\begin{equation*}
		|\mathcal{R}^{\star}| \leq 4m \leq 8C.
	\end{equation*}
	This proves the third and fourth points of Proposition~\ref{prop:IntersectionCurveRectangle}. This concludes the proof of Lemma~\ref{claim:OrderRectangleFolding}.
\end{proof}

\parbreak\textbf{Step 3: The remaining part of $\gamma$.}
In this step, we conclude the proof. The remaining part of the intersection between $\gamma$ and the union of the rectangles of $\mathcal{R}$ may be composed of the beginning and the end of the curve.

If $\gamma(0) \in M \setminus \cup_i R_i$, we define $s_1 = 0$. If $\gamma(0) \in R_i$ for some $i \in \{1,\dots,L\}$, we define $s_1 \in [0,1]$ to be the first $s \in [0,1]$ such that $\gamma(s) \in \partial^u R_i$. If such an $s$ does not exist, we let $s_1 = 1$. Note that if $s_1 = 1$, Proposition~\ref{prop:IntersectionCurveRectangle} holds trivially.

If $\gamma(1) \in M \setminus \cup_i R_i$, we define $s_2 = 1$. If $\gamma(1) \in R_j$ for some $j \in \{1,\dots,L\}$, we define $s_2$ to be the last $s \in [0,1]$ such that $\gamma(s) \in \partial^u R_j$. If such an $s$ does not exist, we let $s_2 = 0$. Again, note that in this case Proposition~\ref{prop:IntersectionCurveRectangle} holds trivially.

Finally, by~\eqref{eq:HypothesisSecCurveInterRectangles}, if $y \in \gamma([0,1]) \cap \cup_i R_i$ and $y \notin \gamma([0,s_1]) \cup \gamma([s_2,1])$, then there exists an interval $[s,t] \subset [0,1]$ with $y \in \gamma([s,t])$ such that $\gamma(s),\gamma(t) \in \partial^u R_i$ and $\gamma((s,t)) \in R_i$. Thus $y$ is contained either in a crossing or a folding $\gamma$-interval. This concludes the proof of Proposition~\ref{prop:IntersectionCurveRectangle}.

\section{Homoclinic classes for endomorphisms}\label{sec:HomClasses}
Throughout this section, $f:M\rightarrow M$ is a $\cC^r$ local diffeomorphism, for $r>1$.

\subsection{Homoclinic classes of hyperbolic periodic orbits and hyperbolic measures of saddle type}
Let $\hO \subset M_f$ be a periodic orbit of period $p$. We say that $\hO$ is hyperbolic if $d_{\hx}\hf^p$ is a hyperbolic matrix for some $\hx \in \hO$. By Theorem \ref{thm:Localu/sManifolds}, any $\hx \in \hO$ admits local stable and unstable manifolds $\Ws_{loc}(\hx),\Wu_{loc}(\hx) \subset M$ such that:
\begin{equation*}
	f(\Ws_{loc}(\hx)) \subset \Ws_{loc}(\hf(\hx)) \ \text{and} \ \Wu_{loc}(\hx) \subset f(\Wu_{loc}(\hf\inv(\hx))).
\end{equation*}
We can define the global stable and unstable manifolds:
\begin{equation*}
	\Ws(\hx) = \bigcup_{n \geq 0} f^{-n}\big(\Ws_{loc}(\hf^n(\hx))\big) \ \text{and} \ \Wu(\hx) = \bigcup_{n\geq 0} f^n\big(\Wu_{loc}(\hf^{-n}(\hx))\big).
\end{equation*}
We also write:
\begin{equation*}
	\Ws(\hO) = \bigcup_{i=1}^p \Ws(\hf^i(\hx)) \ \text{and} \ \Wu(\hO) = \bigcup_{i=1}^p \Wu(\hf^i(\hx))
\end{equation*}
for the stable and unstable manifolds of the whole orbit. Note that for any periodic orbit $\cO \subset M$, there exists a unique periodic orbit $\hO \subset M_f$ that lifts $\cO$. This justifies the notation $W^{s/u}(\cO)$.\\
As we saw in Section \ref{section:PesinTheory}, we can also define stable and unstable manifolds given an ergodic hyperbolic saddle $f$-invariant measure $\mu$. Let $\hmu := \pi\inv_{\star}\mu$. For $\hmu$-almost every $\hx$, there also exist local stable and unstable manifolds $\Ws_{loc}(\hx),\Wu_{loc}(\hx) \subset M$ that are invariant. Recall that $\hat{Y}^{\#}$ is contained in the set of $\hx \subset M_f$ that admits sequences of forward iterates $\hf^{n_k}(\hx)$ and backward iterates $\hf^{-m_k}(\hx)$ in a common Pesin block, both converging to $\hx$. Recall also that $\hmu(\hat{Y}^{\#}) = 1$. We can then define global stable and unstable manifolds, as for periodic orbits. Let $\hx \in \hat{Y}^{\#}$ and let $\hf^{n_k}(\hx), \hf^{-m_k}(\hx)$ be the sequences of forward and backward iterates in a common Pesin block converging to $\hx$. We then define:
\begin{equation*}
	\Ws(\hx) = \bigcup_{k\geq0}f^{-n_k}\big(\Ws_{loc}(\hf^{n_k}(\hx))\big) \ \text{and} \ \Wu(\hx) = \bigcup_{k\geq0}f^{m_k}\big(\Wu_{loc}(\hf^{-m_k}(\hx))\big).
\end{equation*}

We also want to define the analogous of these sets in the natural extension. First, recall that for any $\hx = (x_n)_{n \in \bbZ} \in M_f$, since $f$ is a local diffeomorphism, we can define its inverse on branches: $f\inv_{x_n}$ is the unique map such that $f\inv_{x_n}\circ f = \id$ in a neighborhood of $x_{n-1}$ for $n\leq0$.

\begin{defn}[Invariant sets in $M_f$.]
	The local stable set of $\hx \in \hat{Y}^{\#}$ is defined by:
	\begin{equation*}
		V^s_{loc}(\hx) = \{\hy, \ \pi(\hy) \in \Ws_{loc}(\hx)\}
	\end{equation*}
	and its local unstable set is defined by:
	\begin{equation*}
		V^u_{loc}(\hx) = \{\hy = (y_n)_{n\in \bbZ}, \ y_0 \in \Wu_{loc}(\hx), \ y_{n} = f\inv_{x_n}(y_{n+1}) \ \forall n<0\} = \{\hy, \ y_n \in \Wu_{loc}(\hf^{-n}(\hy)) \ \forall n<0\}.
	\end{equation*}
	As for the classical invariant manifolds, we can then define the global invariant manifolds:
	\begin{equation*}
		V^s(\hx) = \bigcup_{k\geq 0} \hf^{-n_k}\big(V^s_{loc}(\hf^{n_k}(\hx))\big) \ \text{and} \ V^u(\hx) = \bigcup_{k\geq 0} \hf^{m_k}\big(V^u_{loc}(\hf^{-m_k}(\hx))\big)
	\end{equation*}
	where $\hf^{n_k}(\hx),\hf^{-m_k}(\hx)$ are sequences of forward and backward iterates in the same Pesin block, both converging to $\hx$. We define it in the same way for hyperbolic periodic orbits.
	\label{def:LocalInvariantSet}
\end{defn}

Recall that if $U,V \subset M$ are two submanifolds, we denote their transverse intersection by $U\pitchfork V$. We are now able to define the homoclinic relation.

\begin{defn}[Homoclinic relation]
	Let $\hmu_1,\hmu_2$ be two ergodic hyperbolic saddle $\hf$-invariant measures. We write $\hmu_1\preceq\hmu_2$ if there exist $A_1,A_2 \subset M_f$ such that:
	\begin{itemize}[label={--}]
		\item $A_i$ is $\hmu_i$-measurable and $\hmu_i(A_i)>0$ for $i \in \{1,2\}$.
		\item For any $(\hx,\hy) \in A_1\times A_2$, we have $V^u(\hx)\pitchfork V^s(\hy) \ne \emptyset$.
	\end{itemize}
	We write $\hmu_1\simh\hmu_2$ if $\hmu_1\preceq \hmu_2$ and $\hmu_2 \preceq \hmu_1$. In this case, we say that $\hmu_1$ and $\hmu_2$ are homoclinically related.
	
	If $\hO\subset M_f$ is a hyperbolic periodic orbit, we write $\hmu_1 \simh \hO$ if $\hmu_1\simh\hmu_{\hat{\cO}}$, where $\hmu_{\hat{\cO}}$ is the Dirac measure on the orbit $\hat{\cO}$. This is equivalent to requiring that:
	\begin{equation*}
		V^u(\hx) \pitchfork V^s(\hO) \ne \emptyset \ \text{and} \ V^s(\hx)\pitchfork V^u(\hO)\ne \emptyset
	\end{equation*}
	for $\hmu_1$-almost every $\hx$. We still say that $\hmu_1$ is homoclinically related to $\hat{\cO}$.
	
	If $\hO_1,\hO_2 \subset M_f$ are two hyperbolic periodic orbits, we write $\hO_1\simh\hO_2$ if $\hmu_{\hat{\cO}_1}\simh\hmu_{\hat{\cO}_2}$. This is equivalent to requiring that:
	\begin{equation*}
		\Wu(\hO_1) \pitchfork \Ws(\hO_2) \ne \emptyset \ \text{and} \ \Ws(\hO_1) \pitchfork \Wu(\hO_2).
	\end{equation*}
	In this case, we also say that $\hO_1$ and $\hO_2$ are homoclinically related.
\end{defn}
Note that by graph transform arguments, which are local, the classical Katok Theorem holds also for hyperbolic measures of saddle type which are invariant by a local diffeomorphism.
\begin{prop}
	The homoclinic relation is an equivalence relation on the set of ergodic hyperbolic saddle $\hf$-invariant measures. In particular, if $\hmu_1,\hmu_2$ are two ergodic hyperbolic saddle $\hf$-invariant measures and $\hO$ is a hyperbolic saddle periodic orbit such that $\hmu_1 \simh \hO$ and $\hmu_2\simh \hO$, then $\hmu_1\simh \hmu_2$. The homoclinic class of a measure $\hmu$ is then defined as the set of all measures homoclinically related to $\hmu$. Moreover, for any hyperbolic $\hf$-invariant measure $\hmu$ of saddle type, there exists a hyperbolic saddle periodic orbit $\hat{\cO}$ such that $\hmu(\{\hx\in M_f, \ \hx\simh\hat{\cO}\})=1$.
	\label{prop:HomRelation}
\end{prop}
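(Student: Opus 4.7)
The plan is to verify the three axioms of an equivalence relation and then establish the Katok-type existence statement; all four steps follow the diffeomorphism template, since the required ingredients — graph transform, local invariant manifolds with uniform geometry on Pesin blocks, and the intersection Lemma~\ref{prop:IntersectionAdmissiblemanifolds} — are local and have been adapted to local diffeomorphisms in Section~\ref{section:PesinTheory}. Symmetry of $\simh$ is tautological. For reflexivity, I would fix a Pesin block $\hNUH$ with $\hmu(\hNUH)>0$ and apply Poincaré recurrence to $\hf$ on $M_f$, obtaining a $\hmu$-full subset $\hat{A} \subset \hNUH$ such that for every $\hx,\hy \in \hat{A}$ there exist arbitrarily large $m,n$ with $\hf^{-m}(\hx), \hf^n(\hy)$ inside $\hNUH$ at arbitrarily small distance. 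Continuity of Pesin charts and local invariant manifolds on $\hNUH$ together with Proposition~\ref{prop:IntersectionAdmissiblemanifolds} then produces a transverse intersection of $\Wu_{loc}(\hf^{-m}(\hx))$ with $\Ws_{loc}(\hf^n(\hy))$; iterating forward by $f$ gives $\Wu(\hx) \pitchfork \Ws(\hy)$, hence a transverse intersection of $V^u(\hx)$ with $V^s(\hy)$ at the prescribed lift, so $\hmu \preceq \hmu$.

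For transitivity, suppose $\hmu_1 \preceq \hmu_2$ via $\hat{A}_1, \hat{A}_2$ and $\hmu_2 \preceq \hmu_3$ via $\hat{B}_2, \hat{B}_3$. Ergodicity of $\hmu_2$ guarantees that some forward iterate $\hf^k(\hat{A}_2)$ meets $\hat{B}_2$ in positive $\hmu_2$-measure, and since transverse intersections with $V^s$ are preserved when the first argument is iterated forward (by invariance of the unstable sets), I may replace $\hat{A}_2$ with this iterate and pick $\hy \in \hat{A}_2 \cap \hat{B}_2$. Given $\hx \in \hat{A}_1$ and $\hz \in \hat{B}_3$, let $w$ and $w'$ be transverse intersection points of $\Wu(\hx) \cap \Ws(\hy)$ and $\Wu(\hy) \cap \Ws(\hz)$ respectively. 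Iterating a small $\cC^1$ piece of $\Wu(\hx)$ through $w$ forward by $f$, the Inclination Lemma — a direct consequence of the graph transform contraction of Proposition~\ref{prop:ContractionTransGraph} — yields $\cC^1$-accumulation on a compact piece of $\Wu(\hy)$ around $w'$. Openness of transverse intersections then produces transverse intersections with $\Ws(\hz)$ for large iterates, and $f$-invariance of $\Ws(\hz)$ pulls them back to $\Wu(\hx) \pitchfork \Ws(\hz)$, giving $\hmu_1 \preceq \hmu_3$. Together with symmetry this shows $\simh$ is an equivalence relation; the ``in particular'' clause follows by applying transitivity to the Dirac measure on the common periodic orbit.

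For the existence of $\hat{\cO}$ with $\hmu(\{\hx : \hx \simh \hat{\cO}\})=1$, the plan is to apply the Anosov closing lemma in the Pesin-chart formulation. I would fix $\hNUH$ with $\hmu(\hNUH)>0$ and, by Poincaré recurrence, a $\hmu$-typical $\hx \in \hNUH$ with $\hf^n(\hx)$ returning arbitrarily close to $\hx$ inside $\hNUH$ at arbitrarily large times $n$. Composing the graph transform operator from Definition~\ref{def:GraphTransformOperator} along the consecutive Pesin charts at $\hx, \hf(\hx), \ldots, \hf^{n-1}(\hx)$ gives an operator acting on pairs of $u$- and $s$-admissible manifolds near $\hx$; iterated application of Proposition~\ref{prop:ContractionTransGraph} makes this composed operator a hyperbolic contraction, and Banach's fixed-point theorem produces a periodic point of $f$ of period $n$. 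Its unique $\hf$-periodic lift $\hat{\cO} \subset M_f$ is hyperbolic of saddle type and shadows the orbit of $\hx$ inside $\hNUH$, with Lyapunov exponents uniformly close to those of $\hmu$ and local invariant manifolds $\cC^1$-close to those of $\hx$. Applying the reflexivity argument of the first paragraph to $\hat{\cO}$ and any $\hmu$-typical $\hy$ then produces $\Wu(\hy) \pitchfork \Ws(\hat{\cO})$ and $\Ws(\hy) \pitchfork \Wu(\hat{\cO})$ on a $\hmu$-full set. The main technical obstacle is the closing-lemma step itself: one must verify hyperbolicity of the composed graph transform on the joint $u$- and $s$-manifold space along the near-closing segment, which depends on the uniform contraction estimate of Proposition~\ref{prop:ContractionTransGraph} and on the closeness of the Pesin charts at $\hx$ and $\hf^n(\hx)$; the lift to $M_f$ then comes for free, since an $f$-periodic orbit has a unique $\hf$-periodic lift determined by its forward dynamics.
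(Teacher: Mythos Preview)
Your proposal is correct and follows the standard route. The paper itself does not give a proof of this proposition: it simply refers to \cite[Proposition~4.3]{lima2024measures} for the endomorphism case, noting that the argument is adapted from \cite[Proposition~2.11]{buzzi2022measures}. Your sketch is precisely that standard argument --- reflexivity via recurrence to a Pesin block and Proposition~\ref{prop:IntersectionAdmissiblemanifolds}, transitivity via the $\lambda$-lemma (which the paper records as Lemma~\ref{lem:LambdaLemma}), and the Katok closing lemma via iterated graph transform --- so there is nothing to compare.
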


The proof of this proposition can be found in \cite[Proposition 4.3]{lima2024measures}. It is very similar to the case of diffeomorphisms, which can be found in \cite[Proposition 2.11]{buzzi2022measures}.
The following lemma is a version for endomorphisms of the well-known $\lambda$-lemma.

\begin{lemma}
	Let $\hy \in \hat{Y}^{\#}$ and let $\Delta \subset M$ be a disc of the same dimension as $\Ws(\hy)$. Let also $D\subset \Ws(\hy)$ be any disc. If $\Delta$ intersects transversely $\Wu(\hy)$, then there exists a sequence of discs $\Delta_k \subset \Delta$ and times $n_k \rightarrow +\infty$ such that $f^{-n_k}(\Delta_k)$ are discs that converge to $D$ for the $\cC^1$ topology.
	
	A similar statement holds for forward iterates $f^{n_k}(\Delta_k)$ of discs transverse to $\Ws(\hy)$.
	\label{lem:LambdaLemma}
\end{lemma}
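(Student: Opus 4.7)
The plan is to adapt the classical inclination lemma argument by carefully tracking a branch of $f^{-1}$ determined by the past of $\hy$, and then invoking the contraction of the graph transform of $s$-admissible manifolds (Proposition~\ref{prop:ContractionTransGraph}) along that branch. First I would reduce to the case $D \subset \Ws_{loc}(\hy)$: since $D\subset \Ws(\hy)=\bigcup_{N\geq 0}f^{-N}(\Ws_{loc}(\hf^N(\hy)))$ and $D$ is a compact connected disc, there exists $N$ such that $f^N(D)$ is contained in the single connected piece $\Ws_{loc}(\hf^N(\hy))$, and $\hf^N(\hy)\in\hat Y^{\#}$ because the set is $\hf$-invariant. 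Once the statement is proven for $\hf^N(\hy)$ and the disc $f^N(D)$, applying the branch of $f^{-N}$ which sends $f^N(D)$ back to $D$ (well-defined in a neighborhood of $\Ws_{loc}(\hf^N(\hy))$ because $f$ is a local diffeomorphism) yields the claim for $\hy$. So assume $D\subset\Ws_{loc}(\hy)$.

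Next I would set up the pullback. Since $\Delta\pitchfork \Wu(\hy)$ at some $p$, there exists $m\geq 0$ and $\tilde p\in\Wu_{loc}(\hy_{-m})$ with $f^m(\tilde p)=p$. As $f$ is a local diffeomorphism, the branch $g_m$ of $f^{-m}$ at $p$ sending $p$ to $\tilde p$ is defined on a small neighborhood; let $V_0\subset\Delta$ be a small disc around $p$ and $W_0:=g_m(V_0)$. By the chain rule, $W_0$ is transverse to $\Wu_{loc}(\hy_{-m})$ at $\tilde p$. Next, the past of $\hy_{-m}$ in $M_f$ designates at each $y_{-m-k}$ a unique local branch of $f^{-1}$; since $\tilde p\in\Wu_{loc}(\hy_{-m})$, iterating these branches defines orbits $(\tilde p_k)_{k\geq 0}$ with $\tilde p_0=\tilde p$ and $\tilde p_k\to 0$ in the Pesin chart at $\hy_{-m-k}$. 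For $k$ large enough, the iterated pullback $W_k$ of $W_0$ through these branches is a well-defined disc through $\tilde p_k$. In the Pesin chart $(\Psi_{\hy_{-m-k}},q_{\tilde\epsilon}(\hy_{-m-k}))$, transversality of $W_0$ to $\Wu_{loc}$ at $\tilde p$ implies (after one or two preliminary pull-backs, using the hyperbolicity of $d_0f_{\hy_{-m}}^{-1}$ on the vertical direction) that $W_k$ contains a $\gamma$-admissible $s$-manifold $V_k^s$ for some $\gamma>0$ and for all $k$ large. Applying Proposition~\ref{prop:ContractionTransGraph} (the graph transform is a contraction on $\gamma$-admissible $s$-manifolds under $f^{-1}$), the Pesin distance $d_{\hy_{-m-k}}(V_k^s,\Ws_{loc}(\hy_{-m-k}))$ decays like $e^{-\chi k/4}$.

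Finally, I would use the property $\hy\in\hat Y^{\#}$: there exists a sequence $m_k\to+\infty$ such that $\hy_{-m_k}$ belongs to a common Pesin block $\hNUH$ and converges to $\hy$ in $M_f$. Along this subsequence, Lemma~\ref{lem:AdmissibleManifoldinCloseCharts} allows us to transport the $s$-manifold $V_{m_k-m}^s$ from the chart at $\hy_{-m_k}$ to a $\cC^1$-close $s$-manifold in the chart at $\hy$. Combined with the continuity of local stable manifolds on $\hNUH$ and with the graph-transform contraction, this produces $\cC^1$ convergence of $V_{m_k-m}^s$ to $\Ws_{loc}(\hy)$, hence of the full pulled-back disc to a disc containing $\Ws_{loc}(\hy)$ in $\cC^1$. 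Setting $n_k:=m+(m_k-m)=m_k$ and choosing $\Delta_k\subset V_0\subset\Delta$ to be the sub-disc whose image under the composed branch of $f^{-n_k}$ corresponds (via the graph-transform convergence) to $D\subset\Ws_{loc}(\hy)$, we obtain $f^{-n_k}(\Delta_k)\to D$ in $\cC^1$.

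The main technical obstacle will be the book-keeping needed to extract an $s$-admissible manifold from $W_0$ (or rather from some $W_k$ with $k$ bounded) in the Pesin chart framework: $W_0$ is only required to be $\cC^1$ and transverse to $\Wu_{loc}(\hy_{-m})$, not a Lipschitz graph with small slope, so one must first iterate $f^{-1}$ a few times through the past branch of $\hy_{-m}$ to flatten $W_0$ into the unstable cone's complement and ensure it becomes $\gamma$-admissible; this uses the domination of the contraction over the cone opening, together with the continuous dependence of the Pesin charts in $\hNUH$. The second, more delicate point is ensuring that the sub-discs $\Delta_k\subset V_0\subset\Delta$ can be chosen so that $f^{-n_k}(\Delta_k)$ converges to the specific disc $D$ (rather than merely to some sub-disc of $\Ws_{loc}(\hy)$), which requires using the $\cC^1$ closeness of $V^s_{m_k-m}$ to $\Ws_{loc}(\hy)$ in the Pesin chart at $\hy$ to track preimages precisely.
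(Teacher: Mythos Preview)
Your proposal is correct and follows essentially the same approach as the paper: the paper does not give a detailed proof, simply remarking that since the statement is local, once a point in the natural extension is chosen the argument proceeds by graph transform in the Pesin charts exactly as for diffeomorphisms, and refers to \cite[Appendix]{lima2024measures} for complete details. Your sketch (pull back along the branch of $f^{-1}$ dictated by the past of $\hy$, flatten into a $\gamma$-admissible $s$-manifold, apply the graph-transform contraction, and use the recurrence property of $\hat{Y}^{\#}$ to a common Pesin block) is precisely this standard route, in fact spelled out in more detail than the paper itself provides.
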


Since this is a local statement, once a point in the natural extension is chosen, the proof proceeds by graph transform in the Pesin charts, and is thus the same as in the case of diffeomorphisms. Complete proofs can be found in \cite[Appendix]{lima2024measures} in the case of endomorphisms.

\subsection{Measures of maximal entropy in the same homoclinic class}
Using coding techniques that go back to Sarig \cite{sarig2013symbolic} and Buzzi-Crovisier-Sarig \cite{buzzi2022measures}, Lima, Obata, and Poletti proved in \cite[Theorem 4.1]{lima2024measures} that each homoclinic class carries at most one measure of maximal entropy. We will use this later to show the finiteness of ergodic measures of maximal entropy by showing the finiteness of homoclinic classes.

\begin{thm}[\cite{lima2024measures}]
	Let $\hmu_1,\hmu_2$ be two ergodic hyperbolic saddle $\hf$-invariant measures of maximal entropy. If $\hmu_1\simh\hmu_2$, then $\hmu_1=\hmu_2$.
	\label{thm:CodingMme}
\end{thm}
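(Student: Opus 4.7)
The plan is to apply the coding machinery of Sarig, extended by Lima--Obata--Poletti to endomorphisms, to lift both measures to a common countable Markov shift and then invoke a Buzzi--Sarig type uniqueness result for the m.m.e.\ on each irreducible component. I will work throughout with the natural extension $(M_f,\hf)$, where both measures are hyperbolic saddle and have positive entropy by hypothesis.

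First I would fix $\chi>0$ smaller than the Lyapunov exponents of $\hmu_1,\hmu_2$ and invoke the symbolic extension built in \cite{araujo2024symbolic,lima2024measures} for $\cC^r$ local diffeomorphisms: there exists a countable Markov shift $(\Sigma,\sigma)$ together with a Hölder factor map $\hpi:\Sigma\to M_f$, commuting with the dynamics, such that every ergodic $\hf$-invariant probability measure with Lyapunov exponents bounded away from zero by $\chi$ lifts to an ergodic $\sigma$-invariant measure on $\Sigma$ with the same entropy. Since the $\hmu_i$ are ergodic, hyperbolic of saddle type, and have maximal entropy (hence in particular positive), we obtain lifts $\widetilde\mu_i$ on $\Sigma$ with $h(\sigma,\widetilde\mu_i)=h(f,\mu_i)=h_{top}(f)$.

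Next I would show that the homoclinic relation is respected by the coding. In the diffeomorphism setting this is \cite[Proposition 2.11]{buzzi2022measures}, and the analogue for endomorphisms is proved in \cite[Proposition 4.3]{lima2024measures}: one uses Proposition~\ref{prop:HomRelation} together with Katok's closing lemma (valid in our setting by local graph transform arguments) to produce a hyperbolic saddle periodic orbit $\hO$ homoclinically related to both $\hmu_1$ and $\hmu_2$. Since Pesin manifolds of $\hO$ have countably many symbolic representatives, and their transverse intersections with $V^{u/s}(\hx)$ for typical $\hx$ translate into symbolic identifications, one deduces that $\widetilde\mu_1$ and $\widetilde\mu_2$ are carried by the same irreducible component of $\Sigma$.

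At this point the main step is the uniqueness of measures of maximal entropy on an irreducible countable Markov shift with finite Gurevich entropy. This is the Buzzi--Sarig theorem (cf.\ the application in \cite[Section 4]{lima2024measures}): on a single irreducible component, there is at most one shift-invariant measure of maximal entropy. Applied to our two lifts $\widetilde\mu_1,\widetilde\mu_2$, which live on the same component and achieve the maximal entropy $h_{top}(f)$, it forces $\widetilde\mu_1=\widetilde\mu_2$. Projecting by $\hpi_*$ and then by $\pi_*$ gives $\hmu_1=\hmu_2$, completing the proof.

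The main obstacle, and the reason this theorem is genuinely an input rather than a short argument, is in the middle step: verifying that the symbolic coding of \cite{lima2024measures} actually sends the measure-theoretic homoclinic relation on $M_f$ into the combinatorial equivalence of vertices in the irreducible decomposition of $\Sigma$. This requires matching transverse intersections of stable/unstable sets in $M_f$ (where $V^s(\hx)$ has a Cantor structure along fibers, as emphasized in Section~\ref{sub-sec:Difficulties}) with overlaps of symbolic cylinders. Everything else—lifting the measures, identifying entropies, concluding uniqueness on the component—is a direct application of already established results.
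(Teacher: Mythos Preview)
The paper does not prove this theorem at all: it is quoted as \cite[Theorem 4.1]{lima2024measures} and used as a black box (see the sentence just before the statement). Your sketch correctly outlines the strategy of that reference---lift both measures to a common countable Markov shift via the Araujo--Lima--Poletti/Lima--Obata--Poletti coding, use the homoclinic relation to place the lifts on the same irreducible component, and conclude by Buzzi--Sarig uniqueness---so there is nothing to compare on the paper's side; your plan is consistent with how the cited result is actually proved.
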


\section{Entropy, Yomdin theory and dynamical Sard lemma}\label{sec:EntropyYomdin}
Throughout this section, $f:M\rightarrow M$ is a $\cC^r$ local diffeomorphism, for $r>1$.

\subsection{Generalities on entropy}
We recall some well-known facts about measured and topological entropy. In this paragraph, $g$ will be either $f$ or $\hf$, or some iterate of these maps, and $X$ will be either $M$ or $M_f$. Given an integer $n$ and $\epsilon >0$, the $(g,n,\epsilon)$-Bowen ball at $x\in X$, or $(n,\epsilon)$-Bowen ball when the choice of $g$ is unambiguous, is the following set:
\begin{equation*}
	B_g(x,n,\epsilon) = \{y \in X, \ d\big(g^i(x),g^i(y)\big) < \epsilon, \ \forall \ 0\leq i\leq n-1\}.
\end{equation*}
Consider $Y\subset X$. A set $F\subset X$ is called $(g,n,\epsilon)$-spanning if:
\begin{equation*}
	Y\subset \bigcup_{x \in F} B_g(x,n,\epsilon).
\end{equation*}
Define $r_g(n,\epsilon,Y)$ to be the minimal cardinality of a $(g,n,\epsilon)$-spanning set of $Y$. When $Y=X$, we just write $r_g(n,\epsilon)$.

\parbreak\textbf{Topological entropy of a subset.} The topological entropy of $g$ on a set $Y\subset X$ is defined by:
\begin{equation}
	h_{top}(g,Y) = \lim_{\epsilon\rightarrow 0}h_{top}(g,Y,\epsilon) \ \text{where} \ h_{top}(g,Y,\epsilon) = \limsup_{n\rightarrow + \infty} \frac{1}{n}\log r_g(n,\epsilon,Y).
	\label{eq:TopologicalEntropy}
\end{equation}

\parbreak\textbf{Measured entropy and Katok formula.} Let $\nu$ be an ergodic $g$-invariant measure. For $0<\lambda<1$, an integer $n$ and $\epsilon>0$, let:
\begin{equation*}
	r_g(n,\epsilon,\nu,\lambda) = \inf\{r_g(n,\epsilon,Y), \ Y\subset X \ \text{measurable s.t.} \ \nu(Y)>\lambda\}.
\end{equation*}
For any $0<\lambda<1$, the measured entropy of $\nu$ satisfies:
\begin{equation}
	h(g,\nu) = \lim_{\epsilon \rightarrow 0} h(g,\nu,\epsilon) \ \text{where} \ h(g,\nu,\epsilon) = \limsup_{n\rightarrow + \infty} \frac{1}{n}\log r_g(n,\epsilon,\nu,\lambda).
	\label{eq:KatokFormula}
\end{equation}
In the case where $\nu$ is not ergodic, we proceed as follows. Let $\nu:=\int_M \nu_x \ d\nu(x)$ be an ergodic decomposition of $\nu$. We define $h(g,\nu)=\int_M h(g,\nu_x) \ d\nu(x)$.
 
\parbreak\textbf{Tail entropy.}
The tail entropy of $g$ is defined by:
\begin{equation}
	h^{\star}(f) = \lim_{\epsilon\rightarrow 0} h^{\star}(g,\epsilon) \ \text{where} \ h^{\star}(g,\epsilon) = \sup_{x\in X} h_{top}(g,\{y\in X, \ \forall n\geq 0 \ d\big(f^n(x),f^n(y)\big) < \epsilon\}).
	\label{eq:TailEntropy}
\end{equation}

The relevance of this concept lies in the following estimate. Let $\nu$ be an ergodic $g$-invariant measure. We have:
\begin{equation}
	h(g,\nu) \leq h(g,\nu,\epsilon) + h^{\star}(f,\epsilon).
	\label{eq:TailEntropyEstimate1}
\end{equation}
Secondly, if $(\nu_n)_n$ is a sequence of ergodic $g$-invariant measures weak-$\star$ converging to $\nu$, we have the following upper semicontinuity property:
\begin{equation}
	\limsup_{n\rightarrow + \infty} h(g,\nu_n) \leq h(g,\nu) + h^{\star}(g).
	\label{eq:TailEntropyEstimate2}
\end{equation}

Recall that $\lambda(g) := \lim_{n\rightarrow +\infty}\frac{1}{n}\log\norm{dg^n}$. The following control on the tail entropy holds:
\begin{equation}
	h^{\star}(g)\leq \frac{\lambda(g)}{r}.
	\label{eq:TailEntropyEstimate3}
\end{equation}
In the special case where $g\in\cC^{\infty}$, we then have $h^{\star}(g)=0$.

\subsection{Unstable entropy}\label{subsec:UnstableEntropy}
Let $\mu$ be an ergodic hyperbolic saddle $f$-invariant measure. Let $\hmu := \pi\inv_{\star}\mu$. We recall that this means that:
\begin{equation*}
	\lambda^s(f,\mu) < 0 < \lambda^u(f,\mu)
\end{equation*}
where $\lambda^{s/u}(f,\mu)$ are the two Lyapunov exponents of the measure $\mu$. The Pesin unstable manifold theorem (Theorem \ref{thm:Localu/sManifolds}) asserts that $\hmu$-almost every point $\hx$ belongs to an unstable set $V^u(\hx)$ (see Section \ref{sec:HomClasses} for a definition). This set is contained in the connected component of $M_f$ containing $\hx$ and is an injectively immersed $\cC^{r}$ curve characterized by:
\begin{equation*}
	V^u(\hx) = \{\hy \in M_f, \ \limsup_{n\rightarrow + \infty}\log d\big(\hf^{-n}(\hx),\hf^{-n}(\hy)\big)< 0\}.
\end{equation*}
A measurable partition $\xi$ is subordinated to the unstable lamination $V^u$ if for $\hmu$-almost every $\hx$, the atom $\xi(\hx)$ is a neighborhood of $\hx$ inside $V^u(\hx)$ and $\xi$ is increasing: every atom of $\hf(\xi)$ is a union of atoms of $\xi$. Qian and Zhu proved the existence of such partitions for endomorphisms in \cite[Proposition 3.2]{qian2002srb}. Their work was inspired by the famous Ledrappier-Young theory \cite{ledrappier1985metric1,ledrappier1985metric2}.

Since $\xi$ is measurable, Rokhlin's disintegration theorem applies and then, for $\hmu$-almost every $\hx$, there exists a probability measure $\hmu_{\hx}^u$ supported on $\xi(\hx)$ such that:
\begin{equation}
	\hmu = \int \hmu_{\hx}^u \ d\hmu(\hx).
	\label{eq:ConditionalMeasures}
\end{equation}
Such a family $\{\hmu_{\hx}^u\}_{\hx}$ is not unique but, given $\xi$, any two families displaying these properties are equal outside a set of $\hx$ of $\hmu$-measure zero. We then call $\hmu^u_{\hx}$ the conditional measure on $\xi(\hx)$.

One of the main consequences of the Ledrappier-Young theory is that the entropy along these unstable conditional measures is equal to the entropy of the measure. In the case of endomorphisms, this was proved by Qian, Xie, and Zhu in \cite[Proposition IX 2.14]{qian2009PesinTheoryEndo}.

\begin{thm}
	Let $\hmu$ be an ergodic hyperbolic saddle $\hf$-invariant measure. Let $\{\hmu^u_{\hx}\}_{\hx}$ be a system of conditional measures on local unstable sets. Then, for $\hmu$-almost every $\hx$, we have:
	\begin{equation*}
		h(\hf,\hmu) = \inf_{\lambda>0}\lim_{\epsilon \rightarrow 0}\liminf_{n\rightarrow +\infty}\frac{1}{n}\log r_{\hf}(n,\epsilon,\hmu^u_{\hx},\lambda).
	\end{equation*}
	\label{thm:UnstableEntropy}
\end{thm}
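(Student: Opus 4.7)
My plan is to follow the Ledrappier--Young strategy, working in the natural extension where $\hf$ is invertible. Fix a measurable partition $\xi$ subordinated to the unstable lamination $V^u$ as provided by Qian--Zhu, so that each atom $\xi(\hx)$ is a relatively open neighborhood of $\hx$ inside $V^u(\hx)$, the partition is increasing under $\hf$, and the join $\bigvee_{n\geq 0}\hf^{-n}\xi$ separates points modulo $\hmu$. Rokhlin disintegration then gives the conditionals $\hmu^u_{\hx}$ of \eqref{eq:ConditionalMeasures}. Two classical ingredients will then do the work: (a) the identity $h(\hf,\hmu) = H_{\hmu}(\hf^{-1}\xi \mid \xi)$, which relies on the generating property of $\xi$; and (b) a leafwise Shannon--McMillan--Breiman theorem identifying this conditional entropy with the exponential decay rate of $\hmu^u_{\hx}$ on dynamical atoms.

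For the upper bound, the plan is direct. Fix $\lambda,\eta\in(0,1)$ and take a subsequence realizing $h(\hf,\hmu,\epsilon)$ in Katok's formula \eqref{eq:KatokFormula}; along it, pick an $(n,\epsilon)$-spanning set $F_n$ of a set $Y_n$ with $\hmu(Y_n)>1-\eta^2$ and $|F_n|\leq e^{n(h(\hf,\hmu,\epsilon)+o(1))}$. Disintegration together with a Chebyshev-type argument produce a set of $\hx$ of $\hmu$-measure at least $1-\eta$ on which $\hmu^u_{\hx}(Y_n\cap\xi(\hx))>1-\eta$; since $F_n$ automatically spans $Y_n\cap\xi(\hx)$, one deduces $r_{\hf}(n,\epsilon,\hmu^u_{\hx},1-\eta)\leq|F_n|$. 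Letting $n\to\infty$ along the subsequence, then $\epsilon\to 0$, and finally taking $\inf_\lambda$ yields $\leq h(\hf,\hmu)$ for $\hmu$-a.e.~$\hx$.

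For the lower bound, the key step is to establish the conditional Brin--Katok identity
\begin{equation*}
	\lim_{\epsilon\to 0}\liminf_{n\to+\infty}-\frac{1}{n}\log\hmu^u_{\hx}\bigl(B_{\hf}(\hx,n,\epsilon)\cap\xi(\hx)\bigr)=h(\hf,\hmu)
\end{equation*}
for $\hmu$-a.e.~$\hx$. Granted this, a standard covering argument converts it into the desired lower bound on spanning sets: any subset of $\xi(\hx)$ with $\hmu^u_{\hx}$-measure at least $\lambda$ requires at least $\lambda\,e^{n(h(\hf,\hmu)-o(1))}$ leafwise Bowen balls to be covered, hence that many ambient $(n,\epsilon)$-balls. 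The identity itself reduces to the entropy formula in (a) combined with the observation that, inside $\xi(\hx)$, the atom of $\bigvee_{i=0}^{n-1}\hf^{-i}\xi$ containing $\hx$ is squeezed between two leafwise Bowen balls whose radii are comparable for small $\epsilon$, so that their $\hmu^u_{\hx}$-masses share the same exponential rate.

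The main obstacle, specific to the endomorphism setting, is precisely this comparison between leafwise atoms and leafwise Bowen balls. In the diffeomorphism case it is a direct consequence of uniform transversality between stable and unstable manifolds along Pesin charts. Here the ambient space $M_f$ carries a Cantor symbolic direction on top of the usual stable direction of $M$, and the local stable set $V^s(\hx)\subset M_f$ is not a manifold. Since $V^u(\hx)$ remains a one-dimensional curve and the metric $d_{M_f}$ decays geometrically in the past coordinates, the comparison still holds for $\epsilon$ small and $n$ large, but its verification requires carefully tracking the symbolic contribution to $d_{M_f}$ against the non-uniform hyperbolicity estimates from Section~\ref{section:PesinTheory}; this is the only genuinely new technical point beyond the invertible case.
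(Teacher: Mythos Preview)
The paper does not actually prove this theorem: it is stated as a known result and attributed to Qian, Xie, and Zhu \cite[Proposition IX 2.14]{qian2009PesinTheoryEndo}, so there is no ``paper's own proof'' to compare against. Your outline is essentially the standard Ledrappier--Young strategy adapted to the natural extension, which is indeed the approach of the cited reference, and your identification of the main technical point --- comparing leafwise Bowen balls in $M_f$ with atoms of $\bigvee_{i=0}^{n-1}\hf^{-i}\xi$ when the ambient space has a Cantor transversal --- is accurate.

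One small gap worth flagging in the upper bound: the set of $\hx$ on which $\hmu^u_{\hx}(Y_n\cap\xi(\hx))>1-\eta$ depends on $n$, so the Chebyshev argument as written only gives the bound on a varying set of measure $\geq 1-\eta$. To conclude for $\hmu$-a.e.\ $\hx$ you need either a Borel--Cantelli step along a summable sequence of $\eta$'s, or to fix a positive-measure set on which the estimate holds for infinitely many $n$ and then invoke ergodicity. This is routine but should be made explicit.
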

Note that the definition of $r_{\hf}$, mentioned in Theorem \ref{thm:UnstableEntropy}, extend to non-invariant measures.

The next lemma states that we can also compute the unstable entropy in the manifold, by looking at the projection of $\hmu_{\hx}^u$ which is supported on the local unstable manifold of $\hx$. It is an easy corollary once we know that $\pi:V^u_{loc}(\hx) \rightarrow \Wu_{loc}(\hx)$ is a true conjugacy between $\hf$ and $f$. This last statement is straightforward from the definition of the local unstable sets \ref{def:LocalInvariantSet}.

\begin{lemma}
	Let $\hmu$ be an ergodic hyperbolic saddle $\hf$-invariant measure. Let $\{\hmu^u_{\hx}\}_{\hx}$ be a system of conditional measures on local unstable sets. For any $\hx$, define $\mu_{\hx}^u := \pi_{\star}\hmu^u_{\hx}$. For $\hmu$-almost every $\hx$, we then have:
	\begin{equation*}
		h(\hf,\hmu) = \inf_{\lambda>0}\lim_{\epsilon\rightarrow 0}\liminf_{n\rightarrow +\infty} \frac{1}{n}\log r_{f}(n,\epsilon,\mu^u_{\hx},\lambda).
	\end{equation*}
	\label{lem:UnstableEntropy}
\end{lemma}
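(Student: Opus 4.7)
The plan is to reduce the equality to Theorem~\ref{thm:UnstableEntropy} by showing that the projection $\pi$ is essentially a conjugacy, up to a uniform multiplicative factor, between the restricted dynamics $(\hf,d_{M_f})$ on $V^u_{loc}(\hx)$ and $(f,d)$ on $W^u_{loc}(\hx)$. First I would observe that by Definition~\ref{def:LocalInvariantSet}, any $\hy\in V^u_{loc}(\hx)$ has past coordinates $y_n=f^{-1}_{x_n}(y_{n+1})$ for $n<0$ uniquely determined by $y_0=\pi(\hy)$ and the past of $\hx$. Hence $\pi|_{V^u_{loc}(\hx)}$ is a bijection onto $W^u_{loc}(\hx)$ conjugating $\hf$ to $f$. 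Fixing a Pesin block $\hNUH$ and a subordinated partition $\xi$ whose atoms can be taken arbitrarily small, one reduces to a subset of $\hNUH$ of $\hmu$-measure as large as desired on which $\xi(\hx)\subset V^u_{loc}(\hx)$; the conditional $\hmu^u_{\hx}$ is then supported in $V^u_{loc}(\hx)$, and $\mu^u_{\hx}$ is its image under $\pi$ in $W^u_{loc}(\hx)$.

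The core step is a Bowen-ball comparison. Take $\hz,\hy\in V^u_{loc}(\hx)$ with $z_0=\pi(\hz)$, $y_0=\pi(\hy)$. For every $k\ge 0$,
\begin{equation*}
d_{M_f}(\hf^k\hz,\hf^k\hy)=\sum_{i=0}^{k}\frac{d(f^{k-i}z_0,f^{k-i}y_0)}{2^i}+\sum_{j\ge 1}\frac{d(z_{-j},y_{-j})}{2^{k+j}}.
\end{equation*}
Since $\hz,\hy$ follow the same backward branches through the past of $\hx$, the iterated graph-transform contraction (Proposition~\ref{prop:ContractionTransGraph}) yields $d(z_{-j},y_{-j})\le C(\hNUH)e^{-j\chi/2}d(z_0,y_0)$, so the second series is dominated by $C'(\hNUH)d(z_0,y_0)/2^k$, while the first series is bounded by $2\max_{0\le j\le k}d(f^jz_0,f^jy_0)$. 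In the reverse direction, the $i=0$ term alone gives $d(f^kz_0,f^ky_0)\le d_{M_f}(\hf^k\hz,\hf^k\hy)$. Consequently there exists $C=C(\hNUH)$ such that for every $n\ge 1$, $\epsilon>0$ and $\hz\in V^u_{loc}(\hx)$,
\begin{equation*}
\pi\bigl(B_{\hf}(\hz,n,\epsilon)\cap V^u_{loc}(\hx)\bigr)\subset B_f(z_0,n,\epsilon)\cap W^u_{loc}(\hx)\subset\pi\bigl(B_{\hf}(\hz,n,C\epsilon)\cap V^u_{loc}(\hx)\bigr).
\end{equation*}

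From the first inclusion, the image under $\pi$ of any $(\hf,n,\epsilon)$-spanning set for $\hmu^u_{\hx}$ of mass $\ge\lambda$ is an $(f,n,\epsilon)$-spanning set for $\mu^u_{\hx}$ of mass $\ge\lambda$, giving $r_f(n,\epsilon,\mu^u_{\hx},\lambda)\le r_{\hf}(n,\epsilon,\hmu^u_{\hx},\lambda)$. From the second inclusion, lifting any $(f,n,\epsilon)$-spanning set via $(\pi|_{V^u_{loc}(\hx)})^{-1}$ produces an $(\hf,n,C\epsilon)$-spanning set for $\hmu^u_{\hx}$ of at least the same mass, yielding $r_{\hf}(n,C\epsilon,\hmu^u_{\hx},\lambda)\le r_f(n,\epsilon,\mu^u_{\hx},\lambda)$. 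Letting $n\to\infty$, then $\epsilon\to 0$, and finally $\lambda\to 0$, and applying Theorem~\ref{thm:UnstableEntropy}, gives the claimed equality. The only genuinely delicate point is controlling the tail of past coordinates in $d_{M_f}$, which is precisely what the Pesin backward contraction on $V^u_{loc}$ provides; restricting to a subset of a Pesin block of large measure makes this bound uniform.
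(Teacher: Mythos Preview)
Your approach is exactly the paper's: reduce to Theorem~\ref{thm:UnstableEntropy} via the fact that $\pi$ restricted to $V^u_{loc}(\hx)$ is a conjugacy between $\hf$ and $f$. The paper is extremely terse here---it simply declares this an ``easy corollary'' of Theorem~\ref{thm:UnstableEntropy} once one knows $\pi:V^u_{loc}(\hx)\to W^u_{loc}(\hx)$ is a genuine conjugacy, and notes that this follows directly from Definition~\ref{def:LocalInvariantSet}. Your write-up supplies the details the paper skips, in particular the Bowen-ball comparison between $d_{M_f}$ and $d$, which is correct and is the honest content behind the word ``easy''.

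One small remark: your citation of Proposition~\ref{prop:ContractionTransGraph} for the bound $d(z_{-j},y_{-j})\le Ce^{-j\chi/2}d(z_0,y_0)$ is not quite the right pointer---that proposition concerns the $C^0$ contraction of the graph-transform operator on admissible $u$-manifolds, not directly the backward contraction of points on a single unstable leaf. The estimate you need is the standard Pesin backward contraction along $W^u_{loc}$ (implicit in Theorem~\ref{thm:finPesinChart} and the construction of Theorem~\ref{thm:Localu/sManifolds}), which on a fixed Pesin block does give a uniform exponential bound. This is a citation issue, not a mathematical gap.
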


Recall also the following version of the Brin-Katok theorem, proved in \cite{ledrappier1985metric2} in the case of diffeomorphisms and adapted in \cite{qian2009PesinTheoryEndo} for endomorphisms.

\begin{prop}
	Let $\hmu$ be an ergodic hyperbolic saddle $\hf$-invariant measure. Let $\{\hmu^u_{\hx}\}_{\hx}$ be a system of conditional measures on local unstable sets. For any $\hx$, define $\mu_{\hx}^u := \pi_{\star}\hmu^u_{\hx}$. Then, for $\hmu$-almost every $\hx$, we have:
	\begin{equation*}
		h(\hf,\hmu) = \lim_{\epsilon \rightarrow 0} \liminf_{n\rightarrow +\infty} -\frac{1}{n}\log \mu^u_{\hx}\big(B_f(\pi(\hx),n,\epsilon)\big).
	\end{equation*}
	\label{prop:LocalBrinKatok}
\end{prop}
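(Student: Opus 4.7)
The plan is to mimic the classical Brin--Katok local entropy argument, in the form refined by Ledrappier--Young for conditional measures along unstable laminations, with the additional step of transferring the statement between $M$ and $M_f$ via the projection $\pi$.

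First I would reduce the problem to the natural extension. Because points of $V^u_{loc}(\hx)$ share an exponentially contracted past with $\hx$, the $M_f$-distance and the $M$-distance are Lipschitz-comparable on $V^u_{loc}(\hx)$ after restricting to a Pesin block of large $\hmu$-measure. Since $\pi$ conjugates the forward dynamics of $\hf$ and $f$ on $V^u_{loc}(\hx)$, for $\hmu$-a.e.\ $\hx$ and small $\epsilon>0$ we have
\begin{equation*}
\hmu^u_{\hx}\bigl(B_{\hf}(\hx,n,\epsilon/C)\bigr) \;\leq\; \mu^u_{\hx}\bigl(B_f(\pi(\hx),n,\epsilon)\bigr) \;\leq\; \hmu^u_{\hx}\bigl(B_{\hf}(\hx,n,C\epsilon)\bigr)
\end{equation*}
for some constant $C$, so it suffices to prove the formula for $\hf$ and $\hmu^u_{\hx}$ inside $M_f$.

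For the inequality $h(\hf,\hmu) \geq \lim_{\epsilon\to 0}\liminf_n -\tfrac{1}{n}\log \hmu^u_{\hx}(B_{\hf}(\hx,n,\epsilon))$, I would combine Theorem~\ref{thm:UnstableEntropy} with a pigeonhole argument. If the liminf above is at least $a$, then applying Fubini to $\hmu = \int \hmu^u_{\hx}\,d\hmu(\hx)$ shows that the same lower bound holds for $\hmu^u_{\hx}$-a.e.\ $\hy$ in the atom $\xi(\hx)$ of a partition $\xi$ subordinated to the unstable lamination (so $\hmu^u_{\hy}=\hmu^u_{\hx}$ on $\xi(\hx)$). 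Egorov's theorem then yields a subset $A \subset \xi(\hx)$ with $\hmu^u_{\hx}(A) \geq 1-\lambda$ on which $\hmu^u_{\hx}(B_{\hf}(\hy,n,\epsilon)) \leq e^{-(a-\delta)n}$ uniformly for $\hy \in A$ and $n \geq n_0$. Any $(n,\epsilon/2)$-spanning set $F$ of $A$ satisfies $A \subset \bigcup_{x \in F} B_{\hf}(x,n,\epsilon/2)$, and each ball in the cover meeting $A$ sits inside $B_{\hf}(\hy,n,\epsilon)$ for some $\hy \in A$, giving $|F| \geq (1-\lambda)e^{(a-\delta)n}$. Lemma~\ref{lem:UnstableEntropy} then forces $h(\hf,\hmu) \geq a-\delta$.

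The reverse inequality is the main obstacle and relies on the Ledrappier--Young strategy. I would fix an increasing measurable partition $\xi$ subordinated to the unstable lamination, as constructed in~\cite{qian2002srb,qian2009PesinTheoryEndo}, with uniformly small atoms and $\hmu$-negligible boundary; its conditionals along $V^u$ coincide with $\hmu^u_{\hx}$. Writing $\xi_n(\hx)$ for the atom of $\bigvee_{k=0}^{n-1}\hf^{-k}\xi$ containing $\hx$, a standard computation shows that $\xi_n(\hx) \subset B_{\hf}(\hx,n,\epsilon) \cap V^u(\hx)$ for some $\epsilon$ comparable to $\operatorname{diam}(\xi)$, whence
\begin{equation*}
\hmu^u_{\hx}\bigl(\xi_n(\hx)\bigr) \;\leq\; \hmu^u_{\hx}\bigl(B_{\hf}(\hx,n,\epsilon)\bigr).
\end{equation*}
The Shannon--McMillan--Breiman theorem for conditionals of increasing partitions yields $-\tfrac{1}{n}\log \hmu^u_{\hx}(\xi_n(\hx)) \to h_{\hmu}(\hf,\xi)$ $\hmu$-almost surely, and the Rokhlin--Ledrappier formula for endomorphisms (see~\cite{qian2009PesinTheoryEndo}) gives $h_{\hmu}(\hf,\xi) = h(\hf,\hmu)$ since $\xi$ is subordinated to the full unstable lamination. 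Letting $\epsilon \to 0$ finishes the proof. The delicate point is producing the partition $\xi$ with the correct subordination and boundary control in the endomorphism setting; this is precisely the content of~\cite[Proposition 3.2]{qian2002srb} and the surrounding framework of~\cite{qian2009PesinTheoryEndo}, which one invokes essentially as a black box.
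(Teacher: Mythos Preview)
The paper does not give its own proof of this proposition; it is stated as a citation to Ledrappier--Young (diffeomorphism case) and Qian--Xie--Zhu (endomorphism case), so there is no in-paper argument to compare against. Your reconstruction is in the right spirit, but there is a genuine direction error in the SMB step.

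From the inclusion $\xi_n(\hx) \subset B_{\hf}(\hx,n,\epsilon)$ you correctly deduce $\hmu^u_{\hx}(\xi_n(\hx)) \leq \hmu^u_{\hx}(B_{\hf}(\hx,n,\epsilon))$, hence
\[
-\tfrac{1}{n}\log \hmu^u_{\hx}(\xi_n(\hx)) \;\geq\; -\tfrac{1}{n}\log \hmu^u_{\hx}(B_{\hf}(\hx,n,\epsilon)).
\]
Shannon--McMillan--Breiman makes the left-hand side converge to $h(\hf,\hmu)$, which yields $\limsup_n -\tfrac{1}{n}\log \hmu^u_{\hx}(B_{\hf}(\hx,n,\epsilon)) \leq h(\hf,\hmu)$. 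This is the \emph{same} inequality as your spanning-set paragraph, not its reverse. Both of your arguments therefore establish $h(\hf,\hmu) \geq \lim_{\epsilon}\liminf_n(\cdots)$; neither establishes $h(\hf,\hmu) \leq \lim_{\epsilon}\liminf_n(\cdots)$.

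The missing direction is the substantive one in Ledrappier--Young: one must show that $\hmu^u_{\hx}(B_{\hf}(\hx,n,\epsilon))$ decays at rate at least $h(\hf,\hmu)$. The standard route exploits that the unstable manifold is one-dimensional, so $B_{\hf}(\hx,n,\epsilon)\cap V^u(\hx)$ is an interval covered by a \emph{bounded} number of atoms of $\xi_n$ adjacent to $\xi_n(\hx)$; bounded distortion along the unstable makes those atoms comparable in measure to $\xi_n(\hx)$, giving $\hmu^u_{\hx}(B_{\hf}(\hx,n,\epsilon)) \leq C\,\hmu^u_{\hx}(\xi_n(\hx))$ and hence $\liminf_n -\tfrac{1}{n}\log \hmu^u_{\hx}(B_{\hf}(\hx,n,\epsilon)) \geq h(\hf,\hmu)$. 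Alternatively one argues by contradiction via a Vitali-type covering: if the Bowen balls were too fat on a set of positive $\hmu^u_{\hx}$-measure, one could span that set with too few balls, contradicting Theorem~\ref{thm:UnstableEntropy}. A secondary issue: in your spanning argument, the Fubini/Egorov step must produce a set of $\hmu^u_{\hx}$-measure close to $1$ (not merely positive) in order to bound $r_{\hf}(n,\epsilon,\hmu^u_{\hx},\lambda)$ from below for \emph{all} small $\lambda$, as required by the $\inf_{\lambda}$ in Theorem~\ref{thm:UnstableEntropy}.
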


\subsection{Yomdin theory and reparametrizations}
Let $r>1$ and $\epsilon >0$. Recall the definition of an $(r,\epsilon)$-curve from Definition \ref{def:Curve}. Suppose that $\gamma:[0,1] \rightarrow M$ is a curve. If we cut $[0,1]$ into small intervals $[a_i,b_i]$ and reparametrize $\gamma$ by $\gamma\circ\psi_i$ where $\psi_i : [a_i,b_i] \rightarrow [0,1]$ is affine, then $\norm{\gamma\circ\psi_i}_{\cC^r} < \kappa\norm{\gamma}_{\cC^r}$ where $\kappa = |a_i-b_i|$. Thus, by cutting into sufficiently small pieces, we can obtain a covering by pieces with affine reparametrizations of $\cC^r$ size as small as we want. Yomdin measured the complexity of a curve by counting how many reparametrized pieces with $\cC^r$ size less than $\epsilon$ are needed to cover it.

\begin{defn}[Reparametrization]
	Let $\gamma : [0,1]\rightarrow M$ be a $\cC^r$ curve. A reparametrization of $\gamma$ is a non-constant affine map $\psi:[0,1] \rightarrow [0,1]$.
	
	A family of reparametrizations of $\gamma$ over a set $T\subset [0,1]$ is a collection $\mathscr{R}$ of reparametrizations such that:
	\begin{equation*}
		T \subset \bigcup_{\psi \in \mathscr{R}} \psi([0,1]).
	\end{equation*}
	\label{def:Reparametrization}
\end{defn}

\begin{defn}[Admissible reparametrizations]
	Let $\epsilon>0$, $N\geq 1$ and $T\subset[0,1]$. Let $\gamma:[0,1] \rightarrow M$ be a $\cC^r$ curve. A reparametrization $\psi$ of $\gamma$ is $(f,r,N,\epsilon)$-admissible up to time $n$ if there exists an increasing sequence of integers $(n_0,n_1,...,n_l)$ such that:
	\begin{itemize}[label={--}]
		\item $n_0=0$, $n_l=n$ and $n_j-n_{j-1} \leq N$ for $j\in \{1,...,l\}$,
		\item the curve $f^{n_j} \circ \gamma \circ \psi$ is an $(r,\epsilon)$-curve for each $j \in \{0,...,l\}$.
	\end{itemize}
	We call the integers $n_j$ the admissible times.
	
	A family of reparametrizations $\mathscr{R}$ of $\gamma$ over $T$ is $(f,r,N,\epsilon)$-admissible up to time $n$ if each $\psi \in \mathscr{R}$ is $(f,r,N,\epsilon)$-admissible up to time $n$.
	\label{def:AdmissibleReparametrization}
\end{defn}

Let us now cite Yomdin's theorem. It asserts that covers by Bowen balls generate reparametrizations with cardinality of the same order of magnitude.

\begin{thm}[Yomdin]
	Given a real number $2\leq r< \infty$, there exist $\Upsilon := \Upsilon(r)$ and $\epsilon_{\Upsilon} := \epsilon_{\Upsilon}(r)$ with the following properties. For any:
	\begin{itemize}[label={--}]
		\item regular $(r,\epsilon)$-curve $\gamma$ with $0<\epsilon<\epsilon_{\Upsilon}$,
		\item $x \in \gamma([0,1])$ and $T=\{t \in [0,1], \ f\circ\gamma(t) \in B(f(x),\epsilon)\}$,
	\end{itemize}
	there exists a family of reparametrizations $\mathscr{R}$ of $\gamma$ over $T$ such that:
	\begin{enumerate}
		\item for any $\psi \in \mathscr{R}$, $f\circ\gamma\circ\psi$ is a $(r,\epsilon)$-curve.
		\item $|\mathscr{R}|\leq \Upsilon\norm{df}^{1/r}$.
	\end{enumerate}
	\label{thm:Yomdin}
\end{thm}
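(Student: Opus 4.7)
The proof is the classical Yomdin argument, combined with Gromov's algebraic lemma, specialized to one-dimensional source and two-dimensional target. First I would cover $M$ by finitely many charts of uniformly bounded distortion and work in a single chart centered at $f(x)$, identifying $B(f(x),\epsilon)$ with a Euclidean ball in $\bbR^2$. Writing $g := f \circ \gamma : [0,1] \to \bbR^2$ in this chart, the goal becomes to cover the set $T = \{t \in [0,1] : g(t) \in B(0,\epsilon)\}$ by affine reparametrizations $\psi_1,\ldots,\psi_m$ such that each $g \circ \psi_j$ is an $(r,\epsilon)$-curve, with $m$ controlled by $\Upsilon(r) \norm{df}^{1/r}$.

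By Faà di Bruno and the hypothesis that $\gamma$ is an $(r,\epsilon)$-curve, each derivative $g^{(k)}$ for $1 \leq k \leq r$ is bounded by a constant times $\norm{df}\epsilon$ plus a remainder of higher order in $\epsilon$, negligible for $\epsilon < \epsilon_{\Upsilon}$ small. The naive strategy of partitioning $T$ into equal subintervals and affine-reparametrizing scales the $k$-th derivative of $g \circ \psi$ by the $k$-th power of the subinterval length $\ell$; this forces $\ell \lesssim \norm{df}^{-1}$ in order to bring the first derivative below $\epsilon$, giving $m \sim \norm{df}$ pieces, which is too weak by a factor $\norm{df}^{(r-1)/r}$.

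The improvement to the exponent $1/r$ comes from exploiting the full regularity of $g$. I would partition $[0,1]$ into subintervals $I$ of length $\ell \sim \norm{df}^{-1/r}$. On each such $I$, the degree-$(r-1)$ Taylor polynomial $P_I$ of $g$ approximates $g$ with $\norm{g - P_I}_{\cC^k(I)} \leq C \, \ell^{r-k} \, \norm{g^{(r)}}_{\cC^0(I)}$ for $0 \leq k \leq r$; the choice of $\ell$ ensures these errors all stay below $\epsilon$ after rescaling the reparametrization to $[0,1]$. The preimage $P_I^{-1}(B(0,\epsilon))$ is a semi-algebraic subset of $I$ of complexity depending only on $r$, and Gromov's algebraic lemma produces a uniformly bounded number (depending only on $r$) of polynomial reparametrizations of $[0,1]$ into $I$ on which $P_I$, and therefore $g$, is an $(r,\epsilon)$-curve. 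Multiplying the number of intervals $\ell^{-1} \sim \norm{df}^{1/r}$ by the universal bound from the algebraic lemma yields the announced estimate.

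The main obstacle is the simultaneous balancing of all $\cC^k$ Taylor remainders, $0 \leq k \leq r$, which dictates the optimal scale $\ell \sim \norm{df}^{-1/r}$, together with Gromov's algebraic lemma itself whose proof is delicate and is the technical heart of Yomdin theory. Both are by now well documented in the literature, and I would follow the presentations tailored to smooth surface dynamics to handle the details, using only that the target is two-dimensional to keep the constant $\Upsilon(r)$ uniform.
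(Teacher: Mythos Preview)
The paper does not prove this theorem: it is explicitly introduced with ``Let us now cite Yomdin's theorem'' and stated without proof, as a known result from Yomdin theory (the paper refers the reader implicitly to the literature, and the surrounding discussion cites \cite{buzzi2022continuity} for related corollaries). So there is no proof in the paper to compare your proposal against.

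Your sketch is the standard Yomdin--Gromov argument and is correct in outline. One point to watch: the paper's Definition~\ref{def:Reparametrization} requires reparametrizations to be \emph{affine} maps $[0,1]\to[0,1]$, whereas Gromov's algebraic lemma, as you invoke it, naturally produces \emph{polynomial} reparametrizations. You would need an extra step---further subdividing each polynomial reparametrization into a bounded number of affine pieces on which the $\cC^r$-size control persists---to match the paper's convention. This is routine (and done for instance in Burguet's refinements of the algebraic lemma), but should be mentioned since otherwise the output of your argument does not literally fit the definition used here.
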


\begin{cor}
	Let $2\leq r < \infty$ and let $\Upsilon := \Upsilon(r)$ and $\epsilon_{\Upsilon}:=\epsilon_{\Upsilon}(r)$ be the constants given by Theorem \ref{thm:Yomdin}. Suppose that:
	\begin{itemize}[label={--}]
		\item $\gamma : [0,1] \rightarrow M$ is a regular $(r,\epsilon)$-curve with $0<\epsilon<\epsilon_{\Upsilon}$,
		\item $N,n\geq 1$ and $T\subset [0,1]$.
	\end{itemize}
	Then there exists a family $\mathscr{R}_n$ of reparametrizations over the set $T$ which is $(f,r,N,\epsilon)$-admissible up to time $n$ such that:
	\begin{equation*}
		|\mathscr{R}_n|\leq \Upsilon^{\lceil n/N \rceil}\norm{df}^{n/r}\norm{df}^{N/r}r_f\big(n,\epsilon,f\circ\gamma(T)\big).
	\end{equation*}
	\label{cor:AdmissibleReparam}
\end{cor}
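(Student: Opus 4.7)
The strategy is to iterate Yomdin's theorem (Theorem~\ref{thm:Yomdin}) once per block of at most $N$ consecutive iterates, distributing the combinatorial work along a $(f,n,\epsilon)$-spanning set of $f\circ\gamma(T)$. I would first fix a minimal $(f,n,\epsilon)$-spanning set $F\subset M$ of $f\circ\gamma(T)$, so that $|F|=r_f(n,\epsilon,f\circ\gamma(T))$ and $T=\bigcup_{x\in F} T_x$, where $T_x:=\{t\in T : f\circ\gamma(t)\in B_f(x,n,\epsilon)\}$. I would then fix admissible times $0=n_0<n_1<\dots<n_l=n$ with $n_{j+1}-n_j\le N$ and $l=\lceil n/N\rceil$ (for instance $n_j=\min(jN,n)$).

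For each $x\in F$ I would construct inductively a family $\mathscr{R}_x^{(j)}$ of reparametrizations of $\gamma$ that is $(f,r,N,\epsilon)$-admissible up to time $n_j$ and whose images cover $T_x$. The base step $\mathscr{R}_x^{(0)}=\{\id\}$ is valid, since $\gamma$ is itself an $(r,\epsilon)$-curve. At step $j\to j+1$, each $\psi\in\mathscr{R}_x^{(j)}$ yields by induction an $(r,\epsilon)$-curve $f^{n_j}\circ\gamma\circ\psi$, to which I apply Yomdin's theorem for the map $g:=f^{n_{j+1}-n_j}$ with a center chosen on this curve so as to cover $\psi^{-1}(T_x)$. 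Since $\norm{dg}\le\norm{df}^{n_{j+1}-n_j}\le\norm{df}^N$, Yomdin produces at most $\Upsilon\norm{df}^{N/r}$ reparametrizations $\tilde\psi$ for which $f^{n_{j+1}}\circ\gamma\circ(\psi\circ\tilde\psi)$ is still an $(r,\epsilon)$-curve; the compositions $\psi\circ\tilde\psi$ form $\mathscr{R}_x^{(j+1)}$.

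After $l$ inductive steps, $|\mathscr{R}_x^{(l)}|\le(\Upsilon\norm{df}^{N/r})^l$. Setting $\mathscr{R}_n:=\bigcup_{x\in F}\mathscr{R}_x^{(l)}$ yields a family of $(f,r,N,\epsilon)$-admissible reparametrizations covering $T$. Using $lN=\lceil n/N\rceil\, N\le n+N$, one obtains
\begin{equation*}
|\mathscr{R}_n|\le |F|\,\Upsilon^{\lceil n/N\rceil}\norm{df}^{(n+N)/r}=\Upsilon^{\lceil n/N\rceil}\,\norm{df}^{n/r}\,\norm{df}^{N/r}\,r_f(n,\epsilon,f\circ\gamma(T)),
\end{equation*}
which is the announced bound.

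The main technical hurdle is the choice of Yomdin's center at each inductive step: Theorem~\ref{thm:Yomdin} covers only the preimage of a single $\epsilon$-ball $B(g(z),\epsilon)$ for a center $z$ lying on the current curve, whereas $\psi^{-1}(T_x)$ is specified through a Bowen ball centered at $x\in M$, which need not lie on $f^{n_j}\circ\gamma\circ\psi([0,1])$. I would resolve this by picking $z:=f^{n_j}\circ\gamma\circ\psi(s_0)$ for any $s_0\in\psi^{-1}(T_x)$, and discarding $\psi$ from the induction if $\psi^{-1}(T_x)$ is empty (since it then contributes nothing to the cover). The resulting factor-$2$ slack in the Bowen radius can be absorbed either by starting from a $(f,n,\epsilon/2)$-spanning set in $f\circ\gamma(T)$, which only modifies a constant prefactor, or by allowing at each step a bounded number of additional Yomdin centers to complete the cover, multiplying the per-step count by a universal constant that is absorbed into $\Upsilon$.
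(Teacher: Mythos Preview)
The paper does not actually prove this corollary; it is stated without proof as a standard consequence of Theorem~\ref{thm:Yomdin}. Your argument --- partitioning $T$ along a minimal $(f,n,\epsilon)$-spanning set of $f\circ\gamma(T)$ and iterating Yomdin once per block of length at most $N$ --- is precisely the standard derivation, and the counting is correct.

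One point to fix: your first proposed resolution of the centering issue is wrong. Passing to an $(f,n,\epsilon/2)$-spanning set does \emph{not} ``only modify a constant prefactor'': the ratio $r_f(n,\epsilon/2,Y)/r_f(n,\epsilon,Y)$ is in general unbounded in $n$, so you would obtain the stated inequality with $r_f(n,\epsilon/2,\cdot)$ on the right, which is a different (weaker) statement. Your second resolution is the right one and can be made precise as follows. At step $j$ you know $g\circ\sigma(\psi^{-1}(T_x))\subset B(f^{n_{j+1}-1}(x),\epsilon)$; cover this ball by a universal number $C=C(\dim M)$ of balls of radius $\epsilon/2$, and for each such ball $B_k$ meeting $g\circ\sigma([0,1])$ choose $s_k$ with $g\circ\sigma(s_k)\in B_k$, so that $B_k\subset B(g\circ\sigma(s_k),\epsilon)$ and Theorem~\ref{thm:Yomdin} applies with center $\sigma(s_k)$. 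This covers $\psi^{-1}(T_x)$ and multiplies the per-step count by at most $C$, yielding $(C\Upsilon)^{\lceil n/N\rceil}$ in place of $\Upsilon^{\lceil n/N\rceil}$. Since $\Upsilon(r)$ is a non-canonical constant, this is harmless for all the applications in the paper and amounts to redefining $\Upsilon$.
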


Yomdin theory combined with the characterization of entropy along unstable manifolds, Lemma \ref{lem:UnstableEntropy}, allows us to compute the entropy in terms of reparametrizations. Again, for a more precise proof in the case of diffeomorphisms, see \cite[Corollary 4.16]{buzzi2022continuity}.

\begin{prop}
	Let $\mu$ be an ergodic $f$-invariant hyperbolic saddle measure and let $\hmu := \pi_{\star}\inv\mu$. Let $\{\hmu^u_{\hx}\}$ be any system of conditional measures along unstable manifolds and let $\mu^u_{\hx} = \pi_{\star}\hmu^u_{\hx}$. The following holds for any $\hat{X}\subset M_f$ of positive $\hmu$-measure, for $\hmu$-almost every $\hx \in \hat{X}$ and for any choice of:
	\begin{itemize}[label={--}]
		\item $\gamma : [0,1] \rightarrow \Wu_{loc}(\hx)$ a $\cC^r$ curve with $r>2$,
		\item $T\subset [0,1]$ such that $\mu_{\hx}^u\big(\gamma(T)\big)>0$.
	\end{itemize}
	If $(\mathscr{R}_n)$ is a family of reparametrizations of $\gamma$ which are $(f,N,\epsilon)$-admissible up to time $n$ over the set $T$ for some $\epsilon>0$ and $N\geq1$ independent of $n$, then:
	\begin{equation*}
		h(f,\mu,\epsilon) \leq \lim_{n\rightarrow +\infty}\frac{1}{n}\log\#\mathscr{R}_n.
	\end{equation*}
	\label{prop:EntropyReparametrizations}
\end{prop}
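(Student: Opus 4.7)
The plan is to translate the admissible reparametrizations into a Bowen-ball cover of the positive-measure subset $\gamma(T)\subset\Wu_{loc}(\hx)$, and then conclude via the unstable characterization of the measured entropy provided by Lemma~\ref{lem:UnstableEntropy}.

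First I would show that for each $\psi\in\mathscr{R}_n$, the image $\gamma(\psi([0,1]))$ is contained in an $(n,\epsilon')$-Bowen ball, where $\epsilon':=\norm{df}^N\epsilon$. At each admissible time $n_j$ the curve $f^{n_j}\circ\gamma\circ\psi$ is an $(r,\epsilon)$-curve, hence its $\cC^1$-norm, and in particular its diameter, is below $\epsilon$. At an intermediate iterate $k\in[n_{j-1},n_j]$, writing $f^k\circ\gamma\circ\psi=f^{k-n_{j-1}}\circ(f^{n_{j-1}}\circ\gamma\circ\psi)$ and using $k-n_{j-1}\leq N$, the diameter is controlled by $\norm{df}^N\epsilon=\epsilon'$. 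Picking any reference point $x_\psi\in\gamma(\psi([0,1]))$ yields $\gamma(\psi([0,1]))\subset B_f(x_\psi,n,\epsilon')$.

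Second, the covering property of $\mathscr{R}_n$ over $T$ gives $\gamma(T)\subset\bigcup_{\psi\in\mathscr{R}_n}B_f(x_\psi,n,\epsilon')$. Since $\mu^u_{\hx}(\gamma(T))>0$, fixing $0<\lambda<\mu^u_{\hx}(\gamma(T))$ and taking $Y:=\gamma(T)$ as a competitor in the infimum defining $r_f(n,\epsilon',\mu^u_{\hx},\lambda)$ yields
$$r_f(n,\epsilon',\mu^u_{\hx},\lambda)\leq\#\mathscr{R}_n \quad \text{for all } n.$$

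Combined with $h(f,\mu,\epsilon)\leq h(f,\mu)=h(\hf,\hmu)$ and Lemma~\ref{lem:UnstableEntropy}, this almost closes the argument. The main obstacle I expect is the scale mismatch: our estimate controls $r_f(n,\epsilon',\cdot)$ only at the fixed inflated scale $\epsilon'=\norm{df}^N\epsilon$, while the unstable-entropy formula involves a limit $\epsilon''\to 0$. To bridge this gap I would refine each $\psi\in\mathscr{R}_n$ via Yomdin's Corollary~\ref{cor:AdmissibleReparam} to smaller scales $\epsilon''<\epsilon'$ at the multiplicative cost $\Upsilon^{\lceil n/N\rceil}\norm{df}^{n/r}r_f(n,\epsilon'',f\circ\gamma(T))$, producing families $\mathscr{R}_n^{(\epsilon'')}$ whose exponential growth rate exceeds that of $\mathscr{R}_n$ by at most $\tfrac{1}{N}\log\Upsilon+\tfrac{1}{r}\log\norm{df}$. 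Since this additive error can be made arbitrarily small by taking $r$ large (as is allowed in the $\cC^\infty$ applications of this proposition), letting $\epsilon''\to 0$ yields the claimed bound on $h(f,\mu,\epsilon)$.
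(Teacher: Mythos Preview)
Your first two steps are correct and match the paper's one-line sketch (``Yomdin theory combined with Lemma~\ref{lem:UnstableEntropy}''): each $\psi\in\mathscr{R}_n$ confines $\gamma(\psi([0,1]))$ to an $(n,\epsilon')$-Bowen ball with $\epsilon'=\norm{df}^N\epsilon$, whence $r_f(n,\epsilon',\mu^u_{\hx},\lambda)\le\#\mathscr{R}_n$ for $0<\lambda<\mu^u_{\hx}(\gamma(T))$. You also correctly isolate the obstacle: Lemma~\ref{lem:UnstableEntropy} carries a limit $\epsilon''\to 0$, while you only control the unstable spanning number at the single inflated scale $\epsilon'$.

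Your proposed fix, however, does not work. First, the Yomdin refinement via Corollary~\ref{cor:AdmissibleReparam} is circular: applying that corollary to each $\gamma\circ\psi$ bounds the size of the refined family by a factor containing $r_f(n,\epsilon'',f\circ\gamma(T))$, which is (up to one iterate) exactly the spanning number you are trying to estimate, so nothing has been gained. Second, and independently, the residual error $\tfrac{\log\Upsilon}{N}+\tfrac{\log\norm{df}}{r}$ cannot be made to vanish by ``taking $r$ large'': the regularity $r>2$ is \emph{fixed} in the hypothesis of the proposition, and invoking the $\cC^\infty$ applications changes the statement being proved.

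The correct resolution uses the one-dimensional expanding structure of $\Wu_{loc}(\hx)$ rather than further Yomdin refinement. For $y,z\in\Wu_{loc}(\hx)$, forward iteration along $\Wu$ is monotone expanding, so $d(f^n(y),f^n(z))<\tilde\epsilon$ already forces $d(f^k(y),f^k(z))<\tilde\epsilon$ for all $0\le k\le n$; hence $B_f(y,n,\tilde\epsilon)\cap\Wu_{loc}(\hx)$ is the $f^n$-preimage of an arc of length $\tilde\epsilon$ in $f^n(\Wu_{loc}(\hx))$. Changing $\tilde\epsilon$ therefore changes the unstable covering number by a multiplicative constant \emph{independent of $n$}, so the growth rate $\liminf_n\tfrac{1}{n}\log r_f(n,\tilde\epsilon,\mu^u_{\hx},\lambda)$ is the same for every small $\tilde\epsilon$. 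Lemma~\ref{lem:UnstableEntropy} (or Proposition~\ref{prop:LocalBrinKatok}) then already gives $h(f,\mu)\le\liminf_n\tfrac{1}{n}\log r_f(n,\epsilon',\mu^u_{\hx},\lambda)$ at the inflated scale, and your covering bound finishes the proof via $h(f,\mu,\epsilon)\le h(f,\mu)\le\lim_n\tfrac{1}{n}\log\#\mathscr{R}_n$. The paper does not spell this out and refers to \cite[Corollary~4.16]{buzzi2022continuity}.
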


\subsection{Dynamical laminations and Sard lemma}\label{subsec:Lamination}
\subsubsection{Generalities about laminations}
\textbf{Laminations.} Fix some $r>1$. Let us first define what we mean by a continuous lamination with $\cC^r$ leaves. Let $K\subset M$. A $\cC^r$ lamination $\mathscr{L}$ of $K$ is an atlas of coordinates such that for any $x \in K$, there exists $U\cap K$ a neighborhood of $x$ in $M$ homeomorphic to $\bbR \times T$, where $T$ is some given topological space. The image of any $\bbR\times\{t\}$ is an embedded $\cC^r$ curve which is called a plaque of the lamination. The neighborhood $U$ is called a lamination neighborhood of $x$. We also require that the composition of two intersecting change of coordinates is $\cC^r$ and that it preserves the plaques. The set of the maximal union of intersecting plaques forms a partition of $K$ such that the atoms are injectively immersed curves. For $x\in K$, we denote by $\mathscr{L}(x)$ the atom of the lamination containing $x$ and we call each of these atoms a leaf of the lamination.

\parbreak\textbf{Transversals.} A transversal to a continuous lamination with $\cC^r$ leaves $\mathscr{L}$ of a set $K$ is a $\cC^1$ embedded one-dimensional submanifold $\tau\subset M$ that intersects $K$ such that for every $x \in \tau \cap K$ and every plaque $L$ through $x$, $\tau$ is transverse to $L$. Denote by $\text{dim}_H$ the Hausdorff dimension.

\begin{defn}[Transverse dimension of a lamination]
	The transverse dimension of a continuous lamination $\mathscr{L}$ of a set $K$ is defined by:
	\begin{equation*}
		\text{\dj}(\mathscr{L}) = \sup \{\text{dim}_H(\tau\cap K), \ \tau \ \text{is a transversal to} \ \mathscr{L}\}.
	\end{equation*}
	\label{def:TransverseDimLamination}
\end{defn}

\parbreak\textbf{Holonomies.} Consider a continuous lamination $\mathscr{L}$ of a set $K$. Consider a point $x_0$, a lamination neighborhood $U_0$ of $x_0$ and a transversal $\tau_0$ containing $x_0$. Up to reducing $\tau_0$, the plaque $L_y$ at every $y$ sufficiently close to $x_0$ intersects $\tau_0$ in exactly one point. There exists then an open set $V$ containing $x_0$ such that for any transversal $\tau$ sufficiently uniformly $\cC^1$ close to $\tau_0$, the map $H_{\tau} : V\cap K \rightarrow \tau$ is well-defined as the unique intersection between a plaque through $y$ and $\tau$.

We say that $\mathscr{L}$ has Lipschitz holonomies if, given $x_0,U_0$ and $\tau_0$, there exists a constant $L>0$, a uniformly $\cC^1$ neighborhood $\mathcal{T}$ of $\tau_0$ and a neighborhood $V$ of $x_0$ with the following property. For any transversals $\tau_1,\tau_2 \in \mathcal{T}$, the holonomy projection map $H_{\tau_1\rightarrow \tau_2} : V\cap K\cap \tau_1 \rightarrow \tau_2$ has Lipschitz constant less than or equal to $L$. Here $H_{\tau_1\rightarrow \tau_2} : V\cap K\cap \tau_1 \rightarrow \tau_2$ denotes the restriction of $H_{\tau_2}$ to $\tau_1$.

\subsubsection{Dynamical laminations}
The following theorem gives a bound on the transverse dimension of the stable lamination of an ergodic hyperbolic saddle measure, which depends on the entropy and its positive exponent. For an ergodic hyperbolic $f$-invariant measure $\mu$, recall that $\lambda^u(\mu)$ denotes its largest Lyapunov exponent, which is positive.

\begin{thm}
	Let $\mu$ be an ergodic, hyperbolic saddle $f$-invariant measure. Let $\hmu := \pi\inv_{\star}\mu$. The following holds for $\hmu$-almost every $\hx$. Fix any $\delta>0$. There exists a measurable set $\hat{\Lambda}_{\hx} := \hat{\Lambda}_{\hx}(\delta)$ displaying following properties.
	\begin{enumerate}
		\item $\Ws_{loc}(\hat{\Lambda}_{\hx}):=\bigcup_{\hy\in\hat{\Lambda}_{\hx}} \Ws_{loc}(\hy)$ defines a continuous lamination with $\cC^{r}$ leaves which has Lipschitz holonomies.
		\item The following dimension estimate holds:
		\begin{equation*}
			\text{dim}_H\big(\Wu_{loc}(\hx)\cap\Lambda_{\hx}\big) \geq h(f,\mu)/\lambda^u(\mu) - \delta
		\end{equation*}
		where $\Lambda_{\hx} := \pi(\hat{\Lambda}_{\hx})$.
		\item In the case where $\text{dim}_H\big(\Wu_{loc}(\hx)\cap\Lambda_{\hx}\big)>0$, the point $\pi(\hx)$ is accumulated by points of $\Lambda_{\hx}$ on both sides of $\Wu_{loc}(\hx)$ belonging to the support of $\hmu$ restricted to some common Pesin block.
	\end{enumerate}
	\label{thm:LipschitzHolonomiesLamination}
\end{thm}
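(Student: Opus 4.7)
The plan is to build $\hat{\Lambda}_{\hx}$ as a subset of a well-chosen Pesin block. Fix a small $\delta>0$ and choose $\chi,\epsilon,l$ so that the Pesin block $\hNUH$ satisfies $\hmu(\hNUH)>1-\eta$ for a very small $\eta:=\eta(\delta)$. On $\hNUH$ the map $\hy\mapsto W^s_{loc}(\hy)$ is continuous in the $\cC^r$ topology (Theorem~\ref{thm:Localu/sManifolds} and Definition~\ref{def:LocalInvManifolds}) and the stable direction $E^s(\pi(\hy))$ depends only on the base point. This last fact, together with the uniform transversality and contraction on $\hNUH$, directly yields that $\{W^s_{loc}(\hy)\}_{\hy\in\hNUH}$ is a continuous lamination with $\cC^r$ leaves. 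The Lipschitz holonomy property is the classical Pesin argument: for two $\cC^1$ transversals $\tau_1,\tau_2$ uniformly close to an unstable reference curve, one compares the length of a segment of $\tau_1$ between two stable leaves to the length of its holonomy image along $\tau_2$ by iterating forward; on $\hNUH$ the sequence of adapted charts keeps $f^n(\tau_i)$ tangent to a fixed unstable cone, and the uniform contraction in the stable direction gives a telescoping estimate whose sum is bounded by a constant times the initial length. This produces the Lipschitz constant $L=L(\hNUH)$. I would then set $\hat{\Lambda}_{\hx}$ to be the intersection of $\hNUH$ with $\pi^{-1}(W^u_{loc}(\hx))$, further restricted to the support of $\hmu|_{\hNUH}$ and to the set of points whose orbits visit some common sub-block with positive frequency (this last restriction is needed in~\ref{property:MR2} of the paper, and is forced by Birkhoff's theorem).

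For the dimension estimate I would use Ledrappier--Young theory for endomorphisms in the form established by Qian--Xie--Zhu. Let $\{\hmu^u_{\hx}\}$ be a system of conditional measures on local unstable sets (sub-Section~\ref{subsec:UnstableEntropy}) and $\mu^u_{\hx}=\pi_*\hmu^u_{\hx}$, viewed as a measure on the $\cC^r$ arc $W^u_{loc}(\hx)$. The Ledrappier--Young formula, combined with $\dim E^u=1$, gives that the pointwise dimension of $\mu^u_{\hx}$ equals $h(f,\mu)/\lambda^u(\mu)$ for $\hmu$-a.e.\ $\hx$ and $\mu^u_{\hx}$-a.e.\ point: more precisely
\[
\lim_{r\to 0}\frac{\log \mu^u_{\hx}(B(y,r)\cap W^u_{loc}(\hx))}{\log r}=\frac{h(f,\mu)}{\lambda^u(\mu)},
\]
which is a direct consequence of Proposition~\ref{prop:LocalBrinKatok} combined with the expansion rate $\lambda^u(\mu)$ along $W^u$ (this is where $r>1$ and the non-flat unstable direction are used). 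Choosing $\eta=\eta(\delta)$ small enough so that $\mu^u_{\hx}(\pi(\hNUH))>1-\delta/2$ for most $\hx$, and restricting to the support, the mass distribution principle (Frostman) then yields
\[
\dim_H\bigl(W^u_{loc}(\hx)\cap\Lambda_{\hx}\bigr)\;\geq\; \frac{h(f,\mu)}{\lambda^u(\mu)}-\delta.
\]

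For the last item: once the dimension of $W^u_{loc}(\hx)\cap\Lambda_{\hx}$ is positive, the conditional $\mu^u_{\hx}$ has no atoms (indeed a positive pointwise dimension precludes atoms). Since $\hx$ is a $\mu^u_{\hx}$-typical point (by definition of the partition subordinate to $V^u$), one sees that $\pi(\hx)$ is not isolated in $\Lambda_{\hx}\cap W^u_{loc}(\hx)$. To get accumulation on \emph{both} sides I would use the $\hf$-invariance of the family $\{\hmu^u_{\hx}\}$: if $\pi(\hx)$ were accumulated only on one side, pulling back by $\hf^{-n}$ and using the bounded distortion of $f$ along $W^u_{loc}$ would yield the same property along most of the orbit of $\hx$, contradicting the Lebesgue density theorem for $\mu^u_{\hx}$. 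The main obstacle is the dimension estimate: the Ledrappier--Young machinery was written for diffeomorphisms, and checking that the relevant entropy formula passes through the natural extension — in particular that the entropy computed along $V^u(\hx)$ via $\hmu^u_{\hx}$ really projects to the pointwise dimension of $\mu^u_{\hx}$ on the $\cC^r$ arc $W^u_{loc}(\hx)\subset M$ — is the technical heart of the proof. Once this is in place, the lamination and the accumulation statements are comparatively routine applications of the standard Pesin picture on $\hNUH$.
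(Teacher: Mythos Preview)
Your approach is essentially the same as the paper's: both build $\hat{\Lambda}_{\hx}$ inside a Pesin block, derive the pointwise dimension lower bound from the local Brin--Katok theorem (Proposition~\ref{prop:LocalBrinKatok}) together with the inclusion $B(y,C\rho_0 e^{-n\chi})\subset B_f(y,n,\rho_0)$ coming from the Pesin charts, apply the mass distribution principle (the paper cites \cite[Proposition~2.1]{young1982dimension}), and handle the Lipschitz holonomy by the standard Pesin argument (the paper simply cites \cite[Appendix~A]{buzzi2022measures}). For item~(3) the paper's argument is simpler than your invariance/density scheme: the points of $\Lambda_{\hx}\cap W^u_{loc}(\hx)$ that are not accumulated on both sides are at most countable (endpoints of the complementary gaps in an interval), hence have $\mu^u_{\hx}$-measure zero once the conditional is non-atomic, so one can take $\hx$ in the full-measure complement.
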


We emphasize that $\Ws_{loc}(\hx)$ and $\Wu_{loc}(\hx)$ are the $\cC^r$ embedded curves defined by Definition \ref{def:LocalInvManifolds}. This theorem is a combination of \cite{young1982dimension} and \cite[Theorem 4.1]{buzzi2022measures}. Let us give the main lines of the proof for the sake of completeness.

\begin{proof}
	Recall the definition of the local invariant sets from Definition \ref{def:LocalInvariantSet}. Recall also the definition of the conditional measures \eqref{eq:ConditionalMeasures} and let $\{\hmu_{\hx}^u\}_{\hx}$ be a system of conditional measures subordinate to the unstable lamination of $\hmu$. Let us choose $\hx$ such that Oseledets' theorem and Proposition \ref{prop:LocalBrinKatok} are satisfied. Note that this is a set of full $\hmu$-measure. Fix $\delta>0$. We are first going to construct a set $\hat{\Lambda}_{\hx}\subset M_f$ of positive $\hmu_{\hx}^u$-measure, contained in a Pesin block, such that for any $\delta,\epsilon>0$ small enough and any $\hy \in \hat{\Lambda}_{\hx}$ we have:
	\begin{equation*}
		\liminf_{\rho\rightarrow 0} \frac{\log\mu^u_{\hx}\big(B(y,\rho)\big)}{\log\rho} \geq h(f,\mu)/\lambda^u(\mu) - \delta
	\end{equation*}
	where $y := \pi(\hy)$ and $\mu_{\hx}:=\pi_{\star}\hmu^u_{\hx}$. This follows the same lines as \cite[Section 3]{young1982dimension}.
	
	Choose $\hat{\Lambda}_{\hx}$ such that for any $\hy \in \hat{\Lambda}_{\hx}$ it satisfies the following properties.
	\begin{itemize}[label={--}]
		\item There exists a positive constant $\rho_0$ such that $\lim_{n\rightarrow +\infty}-\frac{1}{n} \log \mu^u_{\hx}\Big(B_f(y,n,\rho_0)\Big) \geq h(f,\mu) - \delta_1$, for $\delta_1>0$ small and where $y := \pi(\hy)$.
		\item $\hy$ is in the support of $\hmu$ restricted to some fixed Pesin block of positive $\hmu$-measure.
		\item The map $\hy\mapsto\Ws_{loc}(\hy)$ is continuous for the $\cC^1$ topology.
	\end{itemize}
	The first item is possible using Proposition \ref{prop:LocalBrinKatok} and taking $\hy$ in a set of positive $\hmu_{\hx}^u$-measure. The second item is possible by restricting to a Pesin block of positive $\hmu^u_{\hx}$-measure. Now using the classical (and more precise, see \cite[Chapter S]{katok1995introduction}) version of Theorem \ref{thm:finPesinChart}, it is easy to see that for any $\delta_2>0$ small, there exists $\chi>\lambda^u(\mu)-\delta_2$ such that for all integer $n\geq 0$ and for all $\hy \in \hat{\Lambda}_{\hx}$ we have:
	\begin{equation*}
		B(y, C(\hy)\rho_0 e^{-n\chi}) \subset B_f(y,n,\rho_0)
	\end{equation*}
	for some constant $C(\hy)$. In other words, for any $\rho<<\rho_0$, there exists an integer $n(\rho)$ such that:
	\begin{itemize}[label={--}]
		\item $B(y,\rho)\subset B_f(y,n(\rho),\rho_0)$,
		\item $\lim_{\rho\rightarrow 0}\frac{\log\rho}{n(\rho)}\geq-\chi$.
	\end{itemize}
	We can now compute the following:
	\begin{equation*}
		\begin{aligned}
			\frac{\log\mu^u_{\hx}\big(B(y,\rho)\big)}{\log \rho} &\geq \frac{\log\mu^u_{\hx}\big(B_f(y,n(\rho),\rho_0)\big)}{\log\rho}\\
			&\geq -\frac{1}{n(\rho)}\log\mu^u_{\hx}\big(B_f(y,n(\rho),\rho_0)\big) \times -\frac{n(\rho)}{\log\rho}\\
			&\geq\big(h(f,\mu) - \delta_1\big)/\big(\lambda^u(\mu)-\delta_2\big).
		\end{aligned}
	\end{equation*}
	Taking the $\liminf$ as $\rho\rightarrow 0$ and taking $\delta_1,\delta_2$ small enough gives us the desired result. Now, using \cite[Proposition 2.1]{young1982dimension}, which is a purely topological result, we get that:
	\begin{equation*}
		\text{dim}_H\big(\Wu_{loc}(\hx)\cap \Lambda_{\hx}\big) \geq \frac{h(f,\mu)}{\lambda^u(\mu)}-\delta.
	\end{equation*}
	By construction, we can suppose that $\hat{\Lambda}_{\hx}$ is contained in a Pesin block. Moreover, since $f$ is $\cC^{r}$, the same proof as in \cite[Appendix A]{buzzi2022measures} shows that $\Ws_{loc}(\hat{\Lambda}_{\hx})$ defines a continuous lamination with $\cC^{r}$ leaves which have Lipschitz holonomies (their proof works for diffeomorphisms but is local, so the same exact arguments apply in our case). For the last statement, note that the points in $\Wu_{loc}(\hx)$ that are not accumulated on both sides of $\Wu_{loc}(\hx)$ constitute a set of $\hmu_{\hx}^u$-measure zero. This proves the last statement and thus concludes the proof.
\end{proof}

\subsubsection{Dynamical Sard lemma}
Fix a real number $r>1$ and let $\mathscr{L}$ be a continuous lamination with $\cC^r$ leaves on a set $K$. Let $\gamma:[0,1] \rightarrow M$ be a $\cC^r$ curve. Define the following lamination:
\begin{equation*}
	\mathscr{N}_{\gamma}(\mathscr{L}) := \big\{\mathscr{L}(\gamma(t)), \ t\in[0,1], \ \gamma(t) \cap L \ne \emptyset \ \text{for some leaf} \ L \ \text{and} \ \gamma^{\prime}(t) \in T_{\gamma(t)}L\big\}.
\end{equation*}
This is the lamination made with all leaves such that there exists at least one tangent intersection with $\gamma$. The following theorem asserts that the higher the smoothness of the leaves of $\mathcal{L}$, the lower the transverse dimension of the lamination $\mathscr{N}_{\gamma}(\mathscr{L})$. This is a topological statement which is proved in \cite[Theorem 4.2]{buzzi2022measures}.

\begin{thm}
	Let $r>1$ and let $\mathscr{L}$ be a continuous lamination with $\cC^r$ leaves. For any $\cC^r$ curve $\gamma$, we have:
	\begin{equation*}
		\text{\dj}\big(\mathscr{N}_{\gamma}(\mathscr{L})\big) \leq \frac{1}{r}.
	\end{equation*}
	\label{thm:Sard}
\end{thm}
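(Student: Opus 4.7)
The plan is to follow the strategy of \cite[Theorem 4.2]{buzzi2022measures}, combining a Yomdin-style polynomial approximation of $\gamma$ with a quantitative analysis of tangent intersections between a polynomial curve and a continuous family of $\cC^r$ graphs. Since the statement is purely topological, I will first reduce to working in a single laminated chart in which the plaques are identified with $\cC^r$ graphs $\{y=\phi_t(x)\}$ of uniformly bounded $\cC^r$ norm (with $t\mapsto\phi_t$ only continuous), and in which a chosen transversal $\tau$ is identified with the transverse parameter $t$. The curve then reads $\gamma(s)=(x(s),y(s))$ with $x,y\in\cC^r([0,1])$, and a tangent intersection at $(s_0,t_0)$ amounts to the simultaneous vanishing at $s_0$ of the $\cC^r$ function $F_{t_0}(s):=y(s)-\phi_{t_0}(x(s))$ and of its derivative.

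The goal will be to show $\dim_H T \leq 1/r$, where $T\subset\tau$ is the set of tangent parameters. The next step is to fix $\delta>0$ and subdivide $[0,1]$ into $\lfloor 1/\delta\rfloor$ intervals $I_j$ of length $\delta$; letting $T_j$ be the set of tangent parameters arising from $I_j$, the central claim will be that $T_j$ is covered by at most $N(r)$ intervals of length at most $C\delta^r$, where $N(r)$ and $C$ depend only on $r$, $\|\gamma\|_{\cC^r}$, and the uniform $\cC^r$ norm of the plaques. Granting this claim, $T=\bigcup_j T_j$ is covered by $\lesssim N(r)/\delta$ intervals of length $C\delta^r$, so for every $s>1/r$ the $s$-dimensional Hausdorff content of $T$ tends to $0$ as $\delta\rightarrow 0$; this yields $\dim_H T\leq 1/r$, and hence, taking the supremum over $\tau$, the theorem.

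To prove the central claim, the idea is to replace $\gamma|_{I_j}$ by its Taylor polynomial $P_j$ of degree $\lfloor r\rfloor$ at the midpoint of $I_j$: standard estimates give $\|\gamma-P_j\|_{\cC^0(I_j)}\lesssim \delta^r$ and $\|\gamma-P_j\|_{\cC^1(I_j)}\lesssim \delta^{r-1}$. Any tangent parameter $t\in T_j$ then corresponds to a point $s\in I_j$ at which the function $s\mapsto P_j^y(s)-\phi_t(P_j^x(s))$ and its derivative vanish up to errors of order $\delta^r$ and $\delta^{r-1}$ respectively. The hard part will be analysing this ``almost-tangency'' condition for the polynomial curve $P_j$ against the $\cC^r$ family $\{\phi_t\}$: concretely, a Morse-type decomposition around the non-degenerate critical points of the polynomial expressions involved (whose degree is bounded in terms of $\lfloor r\rfloor$), combined with a quantitative inversion of the resulting tangency equations, should show that the admissible $t$'s are trapped in at most $N(r)$ intervals of length $C\delta^r$. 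This is precisely the Yomdin--Gromov style complexity estimate that I expect to be the main obstacle, because $\phi_t$ depends only continuously on $t$: no smooth implicit function theorem is available in the transverse parameter, and the complexity bound must be extracted purely from the $\cC^r$ structure along each individual plaque together with the fixed algebraic complexity of $P_j$.
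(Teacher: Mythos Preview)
The paper does not give its own proof of this statement; it simply cites \cite[Theorem~4.2]{buzzi2022measures}, which is exactly the reference you propose to follow. In that sense your approach is aligned with the paper's.

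As a proof, however, your plan stops precisely at the point you yourself flag: the ``central claim'' that the tangent parameters arising from each $\delta$-subinterval $I_j$ are covered by $N(r)$ intervals of length $C\delta^r$ is the entire content of the theorem, and you have not established it. You have correctly identified the difficulty---the transverse parameter $t$ carries no smoothness, so no implicit-function or transversality argument is available in $t$---but identifying the obstacle is not overcoming it. In particular, replacing only $\gamma$ by its Taylor polynomial $P_j$ does not obviously help: the plaque $\phi_t$ remains a non-polynomial $\cC^r$ function, and the ``almost-tangency'' system you write down still involves $\phi_t$ essentially, so the bounded algebraic complexity of $P_j$ does not directly bound the number of admissible $t$'s. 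If you want to present this as a self-contained proof you must actually carry out the Yomdin-type estimate; otherwise, citing \cite[Theorem~4.2]{buzzi2022measures} as the paper does is the honest route.
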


\section{Unstable with stable intersections for saddle ergodic measures}\label{sec:UnstableStableIntersection}
The goal of this section is to prove the following propositions. Proposition \ref{prop:IntersectionsUnstableMaxEntropy} is a corollary of Proposition \ref{prop:UnstableStableIntersection}. Recall that $\lambda(f) = \lim_{n\rightarrow +\infty} \frac{1}{n}\log\norm{df^n}$. Recall the definition of the set $\hat{Y}^{\#}$ before Proposition \ref{prop:MarkovRectangles}. Recall that $\hat{Y}^{\#}$ is contained in the set of points satisfying the conclusion of Theorem \ref{thm:LipschitzHolonomiesLamination}. Recall that such an $\hx$ is accumulated by points on both sides of $\Wu(\hx)$ and the local stable manifolds of these points form a continuous lamination with $\cC^r$ leaves having Lipschitz holonomies.

\subsection{Statement of the main Propositions}
\begin{prop}[Unstable Intersections]
	Let $f:M\rightarrow M$ be a $\cC^r$, $r>1$, local diffeomorphism. Let $(\mu_k)_{k \in \bbN}$ be a sequence of ergodic hyperbolic $f$-invariant measures of saddle type such that:
	\begin{equation*}
		\mu_k \underset{k\rightarrow+\infty}{\longrightarrow} \nu
	\end{equation*}
	for the weak-$\star$ topology. Suppose that the following holds.
	\begin{itemize}[label={--}]
		\item There exists $\alpha \in (0,1]$ and two $f$-invariant measures $\nu_1,\nu_0$, possibly non-ergodic, such that $\nu = \alpha\nu_1 + (1-\alpha)\nu_0$.
		\item $\nu$-almost every ergodic component of $\nu_1$ is hyperbolic of saddle type and has positive entropy.
		\item If $\alpha<1$, we also suppose that $\nu$-almost every ergodic component $\nu_0^{\prime}$ of $\nu_0$ satisfies:
		\begin{equation*}
			h(f,\nu_0^{\prime}) < \frac{1}{1-\alpha}\big(\liminf_{k\rightarrow +\infty} h(f,\mu_k) - 2\lambda(f)/r\big).
		\end{equation*}
	\end{itemize}
	Then, there exists a finite number of points $\hy_1,\dots,\hy_L \in \hat{Y}^{\#}$ displaying the following properties. For $k$ sufficiently large, there exists a measurable set $\hat{X}_k\subset M_f$ and an integer $i(k)\in \{1,\dots,L\}$ such that the following holds.
	\begin{enumerate}[label=(UI\arabic*)]
		\item \label{property:UI1} $\hmu_k(\hat{X}_k) >0$, where $\hmu_k := \pi_{\star}^{-1} \mu_k$.
		\item For any $\hx \in \hat{X}_k$, the following intersection holds:
		\begin{equation*}
			\Wu(\hx) \cap \Ws_{loc}(\hy_{i(k)}) \ne \emptyset.
		\end{equation*}
		\label{property:IntersectionUnstable}
	\end{enumerate}
	\label{prop:UnstableStableIntersection}
\end{prop}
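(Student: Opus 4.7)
The strategy follows the heuristic overview of Part~2 in the introduction, contraposing a lack of intersections against the unstable-entropy formula (Lemma~\ref{lem:UnstableEntropy}) and the Yomdin reparametrization bound (Proposition~\ref{prop:EntropyReparametrizations}).

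\textbf{Step 1: Setup the rectangles for $\nu_1$.} First I apply Proposition~\ref{prop:MarkovRectangles} to the measure $\nu_1$, which by hypothesis is saddle-hyperbolic with $\nu$-almost every ergodic component of positive entropy, choosing the parameter $\eta$ as small as convenient. This produces two families of $us$-rectangles $\mathcal{R}=(R_1,\dots,R_L)$ and $\tilde{\mathcal{R}}=(\tilde R_1,\dots,\tilde R_L)$, a Pesin block $\hNUH$, a foliation $\mathscr{F}^u$, and parameters $\sigma,\alpha',N$. By property \ref{property:MR1}, each stable boundary of $R_i$ sits inside some $\Ws_{loc}(\hy_i^j)$ with $\hy_i^j\in\hNUH\cap\hat Y^{\#}$. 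The finite collection $\{\hy_i^j\}_{i,j}$ will serve as the points $\hy_1,\dots,\hy_L$ in the conclusion.

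\textbf{Step 2: Reduction to an entropy estimate.} Argue by contradiction: suppose that along a subsequence of $k$, for every set $\hat X_k$ of positive $\hmu_k$-measure we can find $\hx\in\hat X_k$ whose unstable manifold $\Wu(\hx)$ is disjoint from every $\Ws_{loc}(\hy_i^j)$. Fix a Pesin block $\hLam_k$ carrying most of $\hmu_k$, pick a typical $\hx$ in it, and work with the local unstable conditional measure $\mu^u_{\hx}=\pi_*\hmu^u_{\hx}$. By Lemma~\ref{lem:UnstableEntropy} and Proposition~\ref{prop:EntropyReparametrizations}, it suffices to produce, on a $\mu^u_{\hx}$-positive subset $T\subset[0,1]$ of a local unstable curve $\gamma$, an $(f,r,N,\epsilon)$-admissible family of reparametrizations $\mathscr{R}_n$ whose cardinality grows exponentially at a rate strictly below $\liminf_k h(f,\mu_k)$.

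\textbf{Step 3: Orbit decomposition into rectangle and wild segments.} For each admissible time $n_j$ produced by Yomdin (Corollary~\ref{cor:AdmissibleReparam}), call $n_j$ a \emph{rectangle time} if the relevant orbit point lies in $\hat U\cap\hf^{-N}(\hat U)$, with $\hat U$ the neighborhood of property~\ref{property:MR2}, and in particular $f^{n_j}(\gamma\circ\psi)(0)\in\cup R_i$; otherwise call it \emph{wild}. Using $\hnu_1(\hat U)>1-\eta$, the weak-$*$ convergence $\mu_k\to\nu=\alpha\nu_1+(1-\alpha)\nu_0$, and Birkhoff applied at the level of the natural extension on a $\hmu_k$-full set $\hat X_k$, the density of wild admissible times can be made as small as we wish uniformly in $k$ large enough. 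The Yomdin bound on wild times contributes at most $\lambda(f)/r$ per unit density, and the entropy contribution coming from $\nu_0$-behavior is bounded by $(1-\alpha)\,h(f,\nu_0)$ up to the $\lambda(f)/r$ slack, which by the third bulleted hypothesis stays strictly below $\liminf_k h(f,\mu_k)-\lambda(f)/r$.

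\textbf{Step 4: Killing the growth during rectangle segments.} This is the main original input and the place where Proposition~\ref{prop:IntersectionCurveRectangle} and property~\ref{property:MR5} combine. At a rectangle time $n_j$ the piece $\psi_j:=f^{n_j}\circ\gamma\circ\psi$ is an $(r,\epsilon)$-curve with $\epsilon\sim C\sigma$, and by the contradiction hypothesis it misses every $\Ws_{loc}(\hy_i^j)$; so Proposition~\ref{prop:IntersectionCurveRectangle} gives a uniform constant $L_0=L_0(C)$ bounding both the number of crossing subintervals needed to cover $\mathrm{Cross}(\psi_j)$ and the number of rectangles covering $\gamma(\mathrm{Fold}(\psi_j))$. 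I now claim that each crossing or folding subcurve extends forward inside a single $\tilde R_{j'}$ at every subsequent rectangle time until the next wild time. For crossing subcurves: the crossing sits inside some $\tilde R_i$ by construction; since the next rectangle iterate intersects some $R_{j'}$, property~\ref{property:MR5} forbids $f^N(\tilde R_i)$ from crossing the unstable boundaries of $\tilde R_{j'}$, so the crossing is trapped inside $\tilde R_{j'}$. For folding subcurves sitting in $\tilde R_i$, the fact that $\psi_j$ cannot cross the long stable manifolds $\Ws_{loc}(\hy_{j'}^l)$ constraining $\tilde R_{j'}$ yields the same trapping by connectedness. Therefore, in each rectangle segment of length $m$, the number of admissible reparametrizations coming from a single initial piece is bounded by $L_0$, independent of $m$: the exponential rate contributed by rectangle segments is $0$.

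\textbf{Step 5: Contradiction.} Summing Steps 3 and 4, the total growth of $|\mathscr{R}_n|$ is at most
\[
\exp\bigl(n[(1-\alpha')\,h(f,\nu_0)+2\lambda(f)/r+o(1)]\bigr),
\]
where $\alpha'$ can be taken as close to $\alpha$ as we wish by shrinking $\eta$ and taking $k$ large. By the third hypothesis this rate is strictly less than $\liminf_k h(f,\mu_k)$, contradicting Proposition~\ref{prop:EntropyReparametrizations}. Consequently the contradiction hypothesis fails, and for $k$ large there is a set $\hat X_k$ of positive $\hmu_k$-measure and some $i(k)$ with $\Wu(\hx)\cap\Ws_{loc}(\hy_{i(k)})\neq\emptyset$ for every $\hx\in\hat X_k$, establishing \ref{property:UI1}.

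\textbf{Main obstacle.} The delicate point is Step~4: verifying that the Markov property~\ref{property:MR5} really forces both crossing and folding subcurves to be trapped in a single rectangle throughout each rectangle segment, uniformly in $k$. This requires careful book-keeping of how the chart-covers given by Proposition~\ref{prop:IntersectionCurveRectangle} interact with the two families $(R_i)$ and $(\tilde R_i)$ under iteration, using that stable boundaries of $\tilde R_j$ are long $\Ws_{loc}(\hy_j^l)$ which the curves cannot cross. The second obstacle is to make the density of wild admissible times uniformly small in $k$, which requires a joint application of Birkhoff in the natural extension and of the convergence $\hmu_k\to\hnu$ on the set $\hat U\cap\hf^{-N}(\hat U)$.
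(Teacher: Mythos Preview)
Your proposal follows the same overall strategy as the paper and the architecture is sound. Two technical points where the paper's argument is sharper than your sketch:

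\textbf{Three-way orbit decomposition.} The paper splits typical $\mu_k$-orbits into \emph{three} segment types: rectangle segments (starting in $\hat U_1$), small-entropy segments (starting in a carefully built open set $\hat U_0$ of large $\hnu_0$-measure covered by $\exp(n_0 h(1+\delta))$ Bowen balls), and wild segments (outside both). Wild segments are controlled by the crude derivative bound; treating the $\nu_0$-part this way would be fatal. Your Step~3 blurs this distinction, though your $(1-\alpha)h(f,\nu_0)$ term shows you have the right intuition. There is also a further sub-decomposition \emph{inside} each rectangle segment (times $\tau_k,\tau_k'$) to handle iterates that temporarily leave $\hat U_1$ without entering $\hat U_0$.

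\textbf{Folding trapping needs periodic reparametrization.} Your Step~4 asserts that during a rectangle segment of length $m$ the number of admissible reparametrizations from a single piece is bounded by $L_0$ independent of $m$. For folding intervals this is not literally true. The folding set is disconnected; the trapping argument (Claim~\ref{claim:Step8CaseFolding} in the paper) uses connectedness and smallness of the \emph{whole} reparametrized curve $g^j\circ\tilde\gamma\circ\varphi$, which fits inside one Pesin chart only for $j\leq N_\star$. One must therefore apply Yomdin every $N_\star$ steps and restart, producing $\Upsilon^{\lceil m/N_\star\rceil}\|dg\|^{m/r}$ reparametrizations over the segment. This contributes rate $\log\Upsilon/N_\star+\lambda(f)/r$, which is absorbed into your $2\lambda(f)/r$ slack once $N_\star$ is chosen large, but is not a constant in $m$.
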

Note that in the case where $f$ is $\cC^{\infty}$, Proposition \ref{prop:UnstableStableIntersection} can be applied whenever $\nu$-almost every ergodic component $\nu_0^{\prime}$ of $\nu_0$ satisfies $h(f,\nu_0^{\prime}) < \frac{1}{1-\alpha}\liminf_{k\rightarrow +\infty} h(f,\mu_k)$. We emphasize that if $\alpha=1$, we have $\nu=\nu_1$ so the hypothesis on the entropy of the ergodic components of $\nu_0$ will always be satisfied. We also emphasize that the entropy hypothesis implies that $\liminf_{k\rightarrow +\infty} h(f,\mu_k) - 2\lambda(f)/r>0$. Finally, note that the integer $k$ and the points $\hy_1,\dots,\hy_L$ depend on the sequence $(\mu_k)_k$ and on the decomposition of the measure $\nu$.

The following proposition is a corollary of Proposition \ref{prop:UnstableStableIntersection}. Its principal application holds for sequences of measures such that their entropy converges to the topological entropy, and in particular for ergodic m.m.e.'s.
\begin{prop}[Homoclinic Unstable Intersections]
	Let $f:M\rightarrow M$ be a $\cC^r$, $r>1$, local diffeomorphism. Let $(\mu_k)_{k \in \bbN}$ be a sequence of ergodic hyperbolic saddle $f$-invariant measures such that:
	\begin{equation*}
		\mu_k \underset{k\rightarrow +\infty}{\longrightarrow} \nu
	\end{equation*}
	for the weak-$\star$ topology. Fix $\beta\in(0,1)$. Suppose that the following holds.
	\begin{itemize}[label={--}]
		\item There exists $\alpha \in (0,1]$ and two different $f$-invariant measures $\nu_1,\nu_0$, possibly non-ergodic, such that $\nu = \alpha\nu_1 + (1-\alpha)\nu_0$.
		\item $\nu$-almost every ergodic component of $\nu_1$ is hyperbolic of saddle type and has entropy larger than $\lambda(f)/r$. We denote by $\hnu_1:=\pi_{\star}^{-1}\nu_1$.
		\item For $\nu$-almost every ergodic component $\nu^{\prime}$ of $\nu$, we have:
		\begin{equation*}
			h(f,\nu^{\prime}) < \frac{1}{1-\alpha\beta}\big(\liminf_{k\rightarrow +\infty} h(f,\mu_k) - 2\frac{\lambda(f)}{r}\big).
		\end{equation*}
	\end{itemize}
	Then for any hyperbolic periodic saddle orbit $\hat{\cO}$ satisfying $\hnu_1(\{\hx\in M_f, \ \hx\simh\hat{\cO}\})>\beta$, the following property holds for $k$ sufficiently large. For $\hmu_k$-almost every $\hx$, the following intersection holds:
	\begin{equation*}
		\Wu(\hx) \pitchfork \Ws(\hat{\cO}) \ne \emptyset.
	\end{equation*}
	\label{prop:IntersectionsUnstableMaxEntropy}
\end{prop}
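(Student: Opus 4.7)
The plan is to derive Proposition~\ref{prop:IntersectionsUnstableMaxEntropy} from Proposition~\ref{prop:UnstableStableIntersection} by exhibiting a decomposition of $\nu$ whose saddle part is entirely homoclinically related to $\hat{\cO}$, then converting the resulting topological intersection into a transverse intersection with $\Ws(\hat{\cO})$ via the $\lambda$-lemma, and finally upgrading from positive measure to $\hmu_k$-almost everywhere by ergodicity.

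First, I will set $\hat{A}:=\{\hx\in M_f : \hx\simh\hat{\cO}\}$, $\tilde\beta := \hnu_1(\hat{A})>\beta$, and let $\hnu_1'$ be the normalized restriction of $\hnu_1$ to $\hat{A}$ (and $\hnu_1''$ the normalization of $\hnu_1$ on the complement, when $\tilde\beta<1$). Writing $\alpha':=\alpha\tilde\beta$, the decomposition
$$\nu = \alpha'\,\nu_1' + (1-\alpha')\,\tilde\nu_0, \qquad \tilde\nu_0 := \tfrac{1}{1-\alpha'}\bigl[\alpha(1-\tilde\beta)\nu_1'' + (1-\alpha)\nu_0\bigr],$$
is of the form required by Proposition~\ref{prop:UnstableStableIntersection}. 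The ergodic components of $\nu_1'$ are exactly those ergodic components of $\nu_1$ supported in $\hat{A}$, hence hyperbolic of saddle type with entropy $>\lambda(f)/r>0$. Each ergodic component of $\tilde\nu_0$ is an ergodic component of $\nu$, so the given hypothesis, combined with $\tfrac{1}{1-\alpha'}\ge\tfrac{1}{1-\alpha\beta}$, yields the entropy bound needed in Proposition~\ref{prop:UnstableStableIntersection}. Applying that proposition produces points $\hy_1,\dots,\hy_L\in\hat{Y}^{\#}$ and, for $k$ large, an index $i(k)$ and a set $\hat{X}_k\subset M_f$ with $\hmu_k(\hat{X}_k)>0$ such that $\Wu(\hx)\cap\Ws_{loc}(\hy_{i(k)})\ne\emptyset$ for every $\hx\in\hat{X}_k$. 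Since those points are selected from the stable boundaries of the rectangles built by Proposition~\ref{prop:MarkovRectangles} applied to the saddle component $\nu_1'$, I may arrange that $\hy_j\in\hat{A}$ for every $j$, and in particular $\hy_{i(k)}\simh\hat{\cO}$.

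Next, I will promote the topological intersection $\Wu(\hx)\cap\Ws_{loc}(\hy_{i(k)})\ne\emptyset$ to a transverse intersection with $\Ws(\hat{\cO})$. Inside the proof of Proposition~\ref{prop:UnstableStableIntersection}, the application of Theorem~\ref{thm:Sard} to the stable lamination of Theorem~\ref{thm:LipschitzHolonomiesLamination} ensures that the intersection is in fact transverse at some point $p$. Since $\hy_{i(k)}\simh\hat{\cO}$, there is a $\cC^1$ disc $\Delta\subset\Ws(\hat{\cO})$ transverse to $\Wu(\hy_{i(k)})$; Lemma~\ref{lem:LambdaLemma} yields sub-discs $\Delta_m\subset\Delta$ and times $n_m\to+\infty$ with $f^{-n_m}(\Delta_m)\subset\Ws(\hat{\cO})$ converging in the $\cC^1$ topology to a prescribed sub-arc of $\Ws_{loc}(\hy_{i(k)})$ through $p$. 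For $m$ large, $\cC^1$-closeness forces $f^{-n_m}(\Delta_m)$ to cross $\Wu(\hx)$ transversely near $p$, giving $\Wu(\hx)\pitchfork\Ws(\hat{\cO})\ne\emptyset$ for every $\hx\in\hat{X}_k$.

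Finally, the set $\hat{B}:=\{\hx\in M_f : \Wu(\hx)\pitchfork\Ws(\hat{\cO})\ne\emptyset\}$ is $\hf$-invariant, because $f$ is a local diffeomorphism that preserves transverse intersections and both $\Wu(\cdot)$ and $\Ws(\hat{\cO})$ transform equivariantly. From $\hat{B}\supset\hat{X}_k$, ergodicity of $\hmu_k$ gives $\hmu_k(\hat{B})=1$, which is the required conclusion. The main obstacle is the third paragraph: verifying that the intersection obtained from Proposition~\ref{prop:UnstableStableIntersection} is truly transverse rather than merely topological. This relies on the fact that, under the entropy assumption $h(f,\nu')>\lambda(f)/r$ on the components of $\nu_1'$, the stable lamination of Theorem~\ref{thm:LipschitzHolonomiesLamination} has transverse Hausdorff dimension strictly greater than $1/r$, while by Theorem~\ref{thm:Sard} the tangencies with a $\cC^r$ unstable curve occur only on a set of transverse dimension at most $1/r$; care is needed to confirm that $\hy_{i(k)}$ can be chosen outside this negligible locus while still remaining in $\hat{A}$.
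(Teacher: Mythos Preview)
Your strategy is exactly the paper's: refine the decomposition so the saddle part $\nu_1'$ lies entirely in the homoclinic class of $\hat{\cO}$, apply Proposition~\ref{prop:UnstableStableIntersection} to this refined decomposition, upgrade to transversality via Theorem~\ref{thm:Sard}, transfer to $\Ws(\hat{\cO})$ by the $\lambda$-lemma, and conclude by ergodicity.

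One correction and one imprecision. Theorem~\ref{thm:Sard} is \emph{not} invoked inside the proof of Proposition~\ref{prop:UnstableStableIntersection}; that proof is purely entropic and its output is only the topological intersection $\Wu(\hx)\cap\Ws_{loc}(\hy_{i(k)})\ne\emptyset$. The Sard step is precisely what must be carried out in the present proof, and your final paragraph sketches it correctly---this is the content of the paper's Claim~\ref{claim:TopologicalToTransverse}. Relatedly, you cannot re-select $\hy_{i(k)}$ ``outside the tangent locus'': the points $\hy_j$ are fixed once the rectangles are built. What actually happens is that the Sard argument, applied to the stable lamination $\Ws_{loc}(\Lambda_{\hy_{i(k)}})$ of Theorem~\ref{thm:LipschitzHolonomiesLamination} (whose transverse dimension exceeds $1/r$ thanks to the entropy hypothesis on the components of $\nu_1'$), produces a \emph{nearby} point $\hz$ in the same Pesin block with $f^m(\Wu(\hx))\pitchfork\Ws_{loc}(\hz)\ne\emptyset$. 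The $\lambda$-lemma is then applied after checking $\hy_{i(k)}\simh\hat{\cO}$; the paper obtains this not by directly arranging $\hy_j\in\hat{A}$, but via $\cC^1$-continuity of local invariant manifolds on the Pesin block together with the fact that $\hy_{i(k)}$ lies in the support of an ergodic component of $\hnu_1^{\star}$, hence is approximated by points already homoclinically related to $\hat{\cO}$.
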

Note that in the case where $f\in \cC^{\infty}$, Proposition \ref{prop:IntersectionsUnstableMaxEntropy} can be applied whenever $\nu$-almost every ergodic component $\nu^{\prime}$ of $\nu$ satisfies $h(f,\nu^{\prime}) < \frac{1}{1-\alpha\beta}\liminf_{k\rightarrow +\infty} h(f,\mu_k)$. We emphasize that the hypothesis on the entropy of the ergodic components of $\nu$ implies that $\liminf_{k\rightarrow +\infty} h(f,\mu_k) - 2\lambda(f)/r>0$. Finally, note that the integer $k$ depends on the sequence of measures $(\mu_k)_k$, on the number $\beta$ and on the periodic orbit $\hat{\cO}$.

The rest of the section is devoted to the proof of Proposition \ref{prop:UnstableStableIntersection} and Proposition \ref{prop:IntersectionsUnstableMaxEntropy}.

\subsection{Proof of Proposition \ref{prop:UnstableStableIntersection}}
Before going into the details, let us explain first the ideas of the proof of Proposition \ref{prop:UnstableStableIntersection}. The main point is to analyze the entropic behavior of the typical orbits of the measures $\mu_k$. In Step \stepref{step:U1}, we settle the entropy and reparametrization parameters. In Step \stepref{step:U2}, we use Proposition \ref{prop:MarkovRectangles} to build the two families $\mathcal{R}$ and $\tilde{\mathcal{R}}$ of $us$-rectangles covering the measure $\nu_1$, and the neighborhood $\hat{U}_1$ of points having a good Pesin-behavior (see Section \ref{sec:Rectangles}), which has very large $\nu_1$-measure. This measure has to be seen as the saddle high-entropy part of $\nu$. Step~\stepref{step:U3} deals with $\nu_0$. It has to be seen as a small entropy part of the measure $\nu$, which can contain in its ergodic decomposition non-hyperbolic measures, zero entropy measures, expanding type measures etc. We construct $\hat{U}_0$, a neighborhood of its support, of large $\nu_0$-measure, which can be covered by a small number of Bowen-balls.

From Step \stepref{step:U4} to Step \stepref{step:U7}, we introduce the decomposition type of typical $\mu_k$-orbits. The idea is that for $k$ large enough, typical orbits are, provided they are long enough, made by three types of segments.
\begin{itemize}[label={--}]
	\item Long segments starting in $\hat{U}_1$, which do not intersect $\hat{U}_0$: the \textit{rectangle segments}. These segments have to be the ones where entropy is created.
	\item Long segments starting in $\hat{U}_0$, which do not intersect $\hat{U}_1$: the \textit{small entropy segments}. The orbit separation along these segments is small compared with the entropy.
	\item Segments of any size which do not intersect $\hat{U}_1$ nor $\hat{U}_0$: the \textit{wild segments}. These segments can see entropy creation, but their total length will be very small.
\end{itemize}
Step \stepref{step:U8} is the core of the proof. We compute the entropy of the $\mu_k$ by building families of admissible reparametrizations of the unstable local manifold of typical $\mu_k$-points. We suppose that the iterates of these curves do not cross the stable manifolds containing the stable boundaries of the $\nu_1$-rectangles. We build the reparametrizations by induction, depending on the class of the segment of the typical $\mu_k$-orbits we consider. Our goal will be to control the number of reparametrizations to show that the non-crossing hypothesis between $\mu_k$-unstable and $\nu_1$-stable manifolds prevents entropy creation. Let us explain our strategy in each class of segment.

\parbreak\textbf{Small entropy segments.} These segments are made of iterates staying close to the measure $\nu_0$ and have then a cover by a small number of Bowen-balls. We use Yomdin theorem to ensure that the number of reparametrizations is small inside each Bowen-ball.

\parbreak\textbf{Wild segments.} We control the number of reparametrizations by the norm of the derivative. It can be big, but since these segments are very short in total length, they do not allow entropy creation.

\parbreak\textbf{Rectangle segments.} Here, the control is more delicate. Let us develop the strategy, which is the core of the present work. At the beginning of a rectangle segment, any reparametrization of the unstable manifold we study intersects the $\nu_1$-rectangles. Pick one of these reparametrizations and let us denote it by $\gamma$. We apply Proposition \ref{prop:IntersectionCurveRectangle} to the curve $\gamma$, and treat separately the crossing and folding $\gamma$-intervals.

\parbreak\textbf{Rectangle segments: case of a crossing interval.} We treat each crossing $\gamma$-interval apart. Proposition \ref{prop:IntersectionCurveRectangle} bounds the number of such intervals. Since their image by $\gamma$ is connected, we use then Property \ref{property:MR5} and the fact that iterates of $\gamma$ do not cross the stable boundaries of the rectangles to trap the iterates of the unstable manifolds inside the rectangles. Indeed, during a rectangle segment, the iterates of any crossing interval is contained in a rectangle of the family $\tilde{R}$. See figure \ref{fig:ProofTopologicalIntersection}.

\begin{figure}[h]
	\centering
	\includegraphics[scale=1]{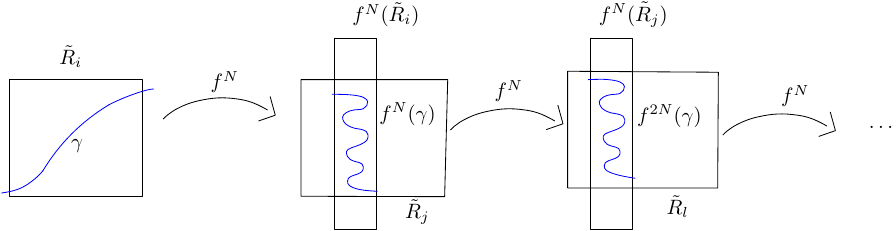}
	\caption{The proof in the case of a crossing $\gamma$-interval. The horizontal direction represents the stable boundaries of the rectangles.}
	\label{fig:ProofTopologicalIntersection}
\end{figure}

\parbreak\textbf{Rectangle segments: case of folding intervals.} Unlike in the case of crossing $\gamma$-intervals, we do not have a bound on the number of such intervals. Nevertheless, Proposition \ref{prop:IntersectionCurveRectangle} gives a uniform bound on the number of rectangles needed to contain all the images of the folding $\gamma$-intervals. We treat these rectangles separately. Pick such a rectangle $R_i$. The set of folding $\gamma$-intervals whose images are contained in $R_i$ is not connected, so we cannot directly apply the same argument as for crossing intervals. Nevertheless, the curve $\gamma$ is connected and is small, combining this with the fact that no iterate of $\gamma$ can cross the stable manifolds containing the stable boundaries of the rectangles of the family $\tilde{\mathcal{R}}$, Property \ref{property:MR5} still implies that the image by $f$ of all the folding $\gamma$-intervals contained in $R_i$ are still contained in a rectangle $\tilde{R}_j$. To apply this argument, we need to control the size of the iterates $f^n(\gamma)$. For this reason, we apply Yomdin Theorem every $N_{\star}$ iterates for some $N$ fixed and well-chosen. Then, we are able to repeat this argument starting with a new reparametrization, which will be contained in some rectangle $\tilde{R}_l$.

We conclude in Step \stepref{step:U9} and Step \stepref{step:U10} by giving a bound on the number of reparametrizations to conclude that the iterates of the unstable manifolds of typical $\mu_k$-points have to cross the stable manifolds containing the stable boundaries of the $\nu_1$-rectangles. See Figure \ref{fig:ProofTopologicalIntersectionFolding}.

\begin{figure}[h]
	\centering
	\includegraphics[scale=1]{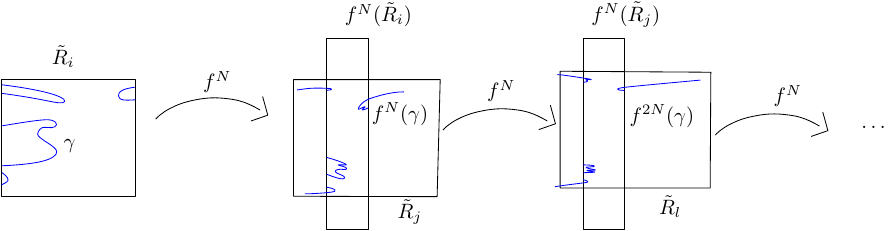}
	\caption{The proof in the case of a rectangle $R_i$ containing folding $\gamma$-intervals. The horizontal direction represents the stable boundaries of the rectangles. Note that the set of folding $\gamma$-intervals contained in $\tilde{R}_i$ is disconnected, whereas $\gamma$ is connected. The curve gets bigger as we iterate and we have to reparametrize to continue to apply the argument, using the fact that iterates of $\gamma$ do not cross the stable local manifolds containing the stable boundaries of the rectangles.}
	\label{fig:ProofTopologicalIntersectionFolding}
\end{figure}

\parbreak\textbf{Step U.I.0.} Let $(\mu_k)_{k\in\bbN}$ be a sequence of ergodic hyperbolic $f$-invariant measures of saddle type. Write $\hmu_k := \pi_{\star}^{-1}\mu_k$. We suppose that we have the following convergence:
\begin{equation*}
	\mu_k \underset{k \rightarrow +\infty}{\longrightarrow} \nu
\end{equation*}
in the weak-$\star$ topology for some $f$-invariant measure $\nu$, which may not be ergodic. Again, denote by $\hnu := \pi_{\star}^{-1}\nu$. By our hypothesis, there exist two $\hf$-invariant probability measures $\hnu_0,\hnu_1$ and a real number $\alpha \in (0,1]$ such that:
\begin{equation*}
	\hnu = \alpha \hnu_1 + (1-\alpha)\hnu_0.
\end{equation*}

Write the ergodic decomposition of $\hnu_0$ as $\hnu_0 = \int \hnu_{0,\hx}\ d\hnu_0(\hx)$. Take a positive constant $h>0$ such that:
\begin{equation}
	\frac{1}{1-\alpha}\big(\liminf_{k\rightarrow +\infty} h(f,\mu_k) - 2\lambda(f)/r\big)> h > \text{ess-sup}_{\hx} h(f,\hnu_{0,\hx}).
	\label{eq:Choiceofh}
\end{equation}

The two measures $\hnu_0$ and $\hnu_1$ satisfy the following. For $\hnu$-almost every ergodic component $\hnu_0^{\prime}$ of $\hnu_0$, we have $h(\hf,\hnu_0^{\prime})\leq h$. We also have that $\hnu$-almost every ergodic component of $\hnu_1$ is hyperbolic of saddle type with entropy larger than $\lambda(f)/r$. Note that we can then apply Proposition \ref{prop:MarkovRectangles} to the measure $\nu_1$.

\parbreak\UStep[Choice of the entropy and reparametrization parameters.]\label{step:U1}
Let us first choose the parameters of entropy. Pick first a constant $\kappa>0$ such that:
\begin{equation}
	\kappa > \frac{8(\log\norm{df}/r)-2}{1-\alpha}.
	\label{eq:ChoiceOfKappa}
\end{equation}
We then consider another arbitrary parameter $\delta>0$ which satisfies:
\begin{equation}
	\delta < \min\Big(\frac{1}{8h(1-\alpha)},\frac{\alpha }{10(1-\alpha)},1/2\Big).
	\label{eq:ChoiceOfDelta}
\end{equation}
Choose also $\delta$ small enough such that it satisfies also:
\begin{equation}
	h+\kappa\delta<\frac{1}{1-\alpha}\big(\liminf_{k\rightarrow +\infty} h(f,\mu_k) - 2\lambda(f)/r\big).
	\label{eq:ChoiceOfDelta2}
\end{equation}
Up to taking an iterate large enough, which will not change the rest of the proof since our analysis relies on entropy, we can suppose that:
\begin{equation}
	\lambda(f)(1-\delta)\leq \log\norm{df}\leq \lambda(f)(1+\delta).
	\label{eq:GreaterExponent}
\end{equation}
Then, we select the parameter $\epsilon>0$, which will be the scale at which we are going to compute the entropy. Recall that by \eqref{eq:TailEntropyEstimate1} and \eqref{eq:TailEntropyEstimate3}, we can take $\epsilon>0$ such that for any $f$-invariant measure $m$:
\begin{equation}
	h(f,m,\epsilon) \geq h(f,m)-\frac{\lambda(f)}{r}(1+\delta).
	\label{eq:MinorationEntropy}
\end{equation}
We also require that $\epsilon<\epsilon_{\Upsilon}(r)$, where $\epsilon_{\Upsilon}(r)$ is the scale given by Yomdin Theorem \ref{thm:Yomdin}.

We also fix the integer parameter $N_{\star}$, which will be the frequency at which we will reparametrize. We choose it such that:
\begin{equation}
	N_{\star}>\frac{10\log\Upsilon}{\alpha h}
	\label{eq:ChoiceOfNStar}
\end{equation}
where $\Upsilon := \Upsilon(r)$ is the constant from Yomdin Theorem \ref{thm:Yomdin}.

\parbreak\UStep[The $\nu_1$-rectangles.]\label{step:U2}
Take any $K>8$. Define the map $H(t) = -t\log t -(1-t)\log(1-t)$. Choose also a parameter $\eta \in (0,1)$ small such that:
\begin{equation}
	\eta < \min\Big(\frac{\delta r}{32N_{\star}\log\norm{df}},\frac{\delta}{40\log\big(24(K+1)+2\big)},\frac{\delta}{56\log\norm{2df}},\frac{\delta}{16h(1+\delta^2)}\Big) \ \text{and} \ H(5\eta)<\frac{\delta }{16}.
	\label{eq:ChoiceOfEta}
\end{equation}
We apply Proposition \ref{prop:MarkovRectangles} to the measure $\nu_1$ with these choices of $K$ and $\eta$. It gives us a Pesin block $\hat{\Lambda}$, an integer $N\geq 1$, a foliation $\mathscr{F}^u$ of an open set $V\supset \Lambda := \pi(\hat{\Lambda})$ and two families of $us$-rectangles $\mathcal{R} = (R_1,\dots,R_L)$ and $\tilde{\mathcal{R}} = (\tilde{R}_1,\dots,\tilde{R}_L)$ such that Properties \ref{property:MR1} to \ref{property:MR5} hold for some parameters $\alpha^{\prime},\sigma>0$ sufficiently small. Choose $\alpha^{\prime}$ and $\sigma$ such that all the results from Sections \ref{sec:Rectangles} and \ref{sec:CurvesIntersectingFamiliesRectangles} hold.

Up to reducing $\epsilon$, we also choose $\sigma$ such that:
\begin{equation}
	\sigma(1+K)<\epsilon<2\sigma(1+K).
	\label{eq:ChoiceOfSigma}
\end{equation}
Since $\epsilon$ and $\sigma$ are the last parameters we have chosen, we can reduce them as much as we want. We will do it in what follows, often to work with rectangles small enough compared to the size of the Pesin charts, which can be chosen uniformly since we work with charts of the Pesin block $\hat{\Lambda}$.

Let $\hat{U}_1$ be the neighborhood of $\hat{\Lambda}$ constructed in Lemma \ref{lem:ContractionExtendedUnstableConeField} and used in Property \ref{property:MR5} for the integer $N$. Recall that we have $\pi(\hat{U}_1) \subset \cup_iR_i$. Recall also that it satisfies:
\begin{equation*}
	\hnu_1(\hat{U}_1) > 1-\eta.
\end{equation*}
Since the most important property of Proposition \ref{prop:MarkovRectangles}, Property \ref{property:MR5}, holds for the iterate $f^N$, we will, from now on, fix $g=f^N$.

\parbreak\UStep[The small entropy part.]\label{step:U3}
Let us deal with the small entropy part of the measure $\hnu$, namely the measure $\hnu_0$. We are going to show that we can find open sets with large $\hnu_0$-measure which admit covers by Bowen balls of small cardinality.

\begin{claim}
	There exists an integer $N_0$ with the following property. For any $n\geq N_0$, there exists an open set $\hat{U}_0 := \hat{U}_0(n)$ and a finite set $\mathcal{C}_0 := \mathcal{C}_0(n)$ such that:
	\begin{itemize}[label={--}]
		\item $\hnu_0(\hat{U}_0) > 1-\eta$ and $\hnu_0(\partial\hat{U}_0)=0$.
		\item $\hat{U}_0 \subset \bigcup_{\hx \in \cC_0} B_{g}\big(\hx,n,\epsilon\big)$.
		\item $|\cC_0| \leq \exp\big(Nh(1+\delta^2)n+o(n)\big)$.
	\end{itemize}
	\label{claim:Step3}
\end{claim}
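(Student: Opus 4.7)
The plan is to use the ergodic decomposition of $\hnu_0$ together with Katok's entropy formula \eqref{eq:KatokFormula} applied to each ergodic component. Write $\hnu_0 = \int \hnu_{0,\hat{x}}\, d\hnu_0(\hat{x})$. By the choice of $h$ in \eqref{eq:Choiceofh}, we have $h(\hf, \hnu_{0,\hat{x}}) \leq h$ for $\hnu_0$-almost every $\hat{x}$, hence $h(g, \hnu_{0,\hat{x}}) = N \cdot h(\hf, \hnu_{0,\hat{x}}) \leq Nh$ since $g = f^N$. This is the entropy input that will give the exponent $Nh(1+\delta^2)$.

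For each ergodic component, Katok's formula provides an integer $N_0(\hat{x})$ and a measurable set $K_{\hat{x}} \subset M_f$ with $\hnu_{0,\hat{x}}(K_{\hat{x}}) > 1 - \eta/4$ admitting a cover by at most $\exp\bigl(Nh(1+\delta^2/2)\,n\bigr)$ Bowen balls of $g$-radius $\epsilon/2$, valid for all $n \geq N_0(\hat{x})$. The map $\hat{x} \mapsto N_0(\hat{x})$ is measurable and $\hnu_0$-a.e. finite, so Egorov's theorem supplies $N_0$ and a measurable set $B$ with $\hnu_0(B) > 1 - \eta/4$ on which $N_0(\hat{x}) \leq N_0$. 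Since $K_{\hat{x}}$ depends only on the ergodic class of $\hat{x}$, setting $Y := \{\hat{x} \in B : \hat{x} \in K_{\hat{x}}\}$ yields by a Fubini-type computation through the ergodic decomposition $\hnu_0(Y) \geq (1-\eta/4)^2 > 1 - \eta/2$.

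To assemble a single cover, take a maximal $(g,n,\epsilon/2)$-separated subset $\mathcal{C}_0 \subset Y$; the Bowen balls $B_g(\hat{x}, n, \epsilon)$ for $\hat{x} \in \mathcal{C}_0$ cover $Y$, while the balls $B_g(\hat{x}, n, \epsilon/4)$ are pairwise disjoint. Two points of $\mathcal{C}_0$ in the same ergodic class are $(g,n,\epsilon/2)$-separated and each lies in the Katok cover of that class, whose cardinality is at most $\exp\bigl(Nh(1+\delta^2/2)n\bigr)$. A measurable selection argument merging the component-wise covers over the ergodic classes meeting $Y$ then yields $|\mathcal{C}_0| \leq \exp\bigl(Nh(1+\delta^2)n + o(n)\bigr)$ for all $n$ larger than some uniform $N_0$. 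Finally, define $\hat{U}_0$ as the union of the open Bowen balls $B_g(\hat{x}, n, \epsilon)$ for $\hat{x} \in \mathcal{C}_0$: it is open, contains $Y$, hence has $\hnu_0$-measure larger than $1 - \eta$. The boundary $\partial \hat{U}_0$ lies in a finite union of level sets $\{d(g^i \hat{x}, g^i \cdot) = \epsilon\}$; since the monotone function $r \mapsto \hnu_0(\{d \leq r\})$ has at most countably many discontinuities for each center and time, replacing $\epsilon$ by an arbitrarily close value avoids them and ensures $\hnu_0(\partial \hat{U}_0) = 0$ without affecting any of the bounds.

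The main obstacle is handling the possible non-ergodicity of $\hnu_0$: the Brin--Katok and Katok formulas quoted in the paper are stated for ergodic measures, and controlling $\hnu_0(B_g(\hat{x}, n, \epsilon))$ directly (rather than $\hnu_{0,\hat{x}}(B_g(\hat{x}, n, \epsilon))$) is delicate because ergodic components can be mutually singular with total weight distributed continuously. The measurable gluing over ergodic classes sketched above circumvents this, using only the ergodic version of Katok's formula component by component together with Egorov-type uniformity.
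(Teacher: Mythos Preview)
Your argument has a genuine gap at the step you label ``a measurable selection argument merging the component-wise covers over the ergodic classes''. Up to that point you have shown that for each ergodic class, the number of points of the separated set $\mathcal{C}_0$ lying in that class is at most $\exp\bigl(Nh(1+\delta^2/2)n\bigr)$. But $\hnu_0$ may have uncountably many ergodic components, and nothing in your argument bounds how many distinct classes the set $\mathcal{C}_0$ can meet. Summing the per-class bound over an uncontrolled family of classes yields no bound on $|\mathcal{C}_0|$ at all. The phrase ``measurable selection argument'' does not address this: measurable selection would let you pick one cover per class in a measurable way, but it gives no cardinality control when the index set of classes is uncountable.

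This is exactly the difficulty the paper's proof is designed to overcome, and it does so differently. Rather than working with a separated set inside $Y$, the paper applies a Lusin-type argument to the (measurable) map $\hat{x}\mapsto\hnu_{0,\hat{x}}$: on a compact set $\hat{K}$ of $\hnu_0$-measure larger than $(1-\eta)^{1/2}$, this map is continuous, so that for $\hat{x},\hat{y}\in\hat{K}$ sufficiently close one has $\hnu_{0,\hat{y}}(\hat{A}'_n(\hat{x}))>(1-\eta)^{1/2}$, where $\hat{A}'_n(\hat{x})$ is the open union of the Bowen balls from Katok's formula for the component $\hnu_{0,\hat{x}}$. A finite cover of $\hat{K}$ by such neighbourhoods then yields finitely many representative points $\hat{x}_1,\dots,\hat{x}_l$ whose sets $\hat{A}'_n(\hat{x}_i)$ together capture mass $>1-\eta$ for $\hnu_0$; the total cover is the union of the $l$ component covers, and since $l$ is independent of $n$ it is absorbed into the $o(n)$ term. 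To repair your approach you would need a mechanism of this kind --- something that reduces the uncountable family of ergodic components to finitely many whose Katok covers collectively see most of $\hnu_0$.
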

\begin{proof}
	Since the measure $\hnu_0$ may not be ergodic, let us consider an ergodic decomposition:
	\begin{equation*}
		\hnu_0 = \int_{M_f} \hnu_{0,\hx} \ d\hnu_0(\hx).
	\end{equation*}
	By our hypothesis, $\hnu_0$-almost every $\hnu_{0,\hx}$ satisfies $h(\hf,\hnu_{0,\hx}) \leq h$, which implies that $h(\hat{g},\hnu_{0,\hx}) \leq hN$. Since the $\hnu_{0,\hx}$ are ergodic, using \eqref{eq:KatokFormula}, for $\hnu_0$-almost every $\hx$ we have:
	\begin{equation*}
		\limsup_{n\rightarrow +\infty} \frac{1}{n} \inf_{\hnu_{0,\hx}(\hat{A})>(1-\eta)^{1/2}} \log r_{\hat{g}}\big(n,\epsilon,\hat{A}\big) \leq Nh.
	\end{equation*}
	This implies the existence of an integer $n(\hx)\geq 1$ such that for any $n\geq n(\hx)$, there exists a measurable set $\hat{A}_n(\hx)$, satisfying $\hnu_{0,\hx}(\hat{A}_n(\hx))>(1-\eta)^{1/2}$ and:
	\begin{equation*}
		r_{\hat{g}}\big(n,\epsilon,\hat{A}_n(\hx)\big)\leq \exp\big(Nh(1+\delta^2/10)n\big).
	\end{equation*}
	Let $\mathcal{C}_n(\hx) \subset M_f$ be a set of points such that $\#\mathcal{C}_n(\hx)\leq \exp\big(Nh(1+\delta^2/10)n\big)$ and:
	\begin{equation*}
		\hat{A}_n(\hx) \subset \bigcup_{\hy \in \mathcal{C}_n(\hx)} B_{\hat{g}}\big(\hy,n,\epsilon\big) := \hat{A}^{\prime}_n(\hx).
	\end{equation*}
	
	Take now a compact set $\hat{K}$ and a constant $r>0$ such that $\hnu_0(\hat{K})>(1-\eta)^{1/2}$ and the following properties hold.
	\begin{itemize}[label={--}]
		\item There exists an integer $N_0\geq 1$ such that for any $\hx \in \hat{K}$ and any $n\geq N_0$ we have:
		\begin{equation*}
			r_{\hat{g}}\big(n,\epsilon,\hat{A}^{\prime}_n(\hx)\big)\leq \exp\big(Nh(1+\delta^2/10)n\big).
		\end{equation*}
		\item For two points $\hx,\hy \in \hat{K}$ such that $d(\hx,\hy)<r$, we still have $\hnu_{0,\hy}(\hat{A}^{\prime}_n(\hx)) > (1-\eta)^{1/2}$.
	\end{itemize}
	
	Take a finite cover of $\hat{K}$ by balls of radius $r$ around $\hx_1,\dots,\hx_l$. For $n\geq N_0$, consider the set:
	\begin{equation*}
		\hat{U}_0 = \bigcup_{i=1}^l \hat{A}^{\prime}_n(\hx_i).
	\end{equation*}
	Note that:
	\begin{equation*}
		\hnu_0(\hat{U}_0) \geq \int_{\hat{K}} \hnu_{0,\hx}(\hat{U}_0) \ d\hnu_0(\hx) > 1-\eta.
	\end{equation*}
	Up to reducing $\hat{U}_0$, but keeping the property that $\hnu_0(\hat{U}_0) > 1-\eta$, we obtain that $\hnu_0(\partial \hat{U}_0)=0$.
	
	By construction, $\hat{U}_0$ can be covered by a number of $\big(n,\epsilon,g\big)$-Bowen balls smaller than $\exp\big(Nh(1+\delta^2)n+o(n)\big)$ for any $n\geq N_0$.
\end{proof}

\parbreak\UStep[The typical $\mu_k$-orbits do not jump often between $\nu_0$ and $\nu_1$.]\label{step:U4}
The aim of this step is to show that for $n$ large enough, the typical orbits of $\hmu_k$ do not jump often between the open sets $\hat{U}_0$ and $\hat{U}_1$. More precisely, we mean that these orbits have to spend long periods of time in some $\hat{U}_i$ before intersecting the other one.

Let us choose integers $n_1,n_0\geq 0$, which will be the time parameters of the orbits, and which satisfy the following:
\begin{equation}
	n_1 > 1/\eta \quad \text{and} \quad n_0 > \max(1/\eta, N_0).
	\label{eq:ChoiceOfTime}
\end{equation}

\begin{claim}
	There exist two open sets $\hat{U}_i^{\prime} \subset \hat{U}_i$, for $i=0,1$, satisfying the following.
	\begin{itemize}[label={--}]
		\item $\hnu_i(\hat{U}_i^{\prime}) > 1-\eta$ and $\hnu_i(\partial\hat{U}_i^{\prime}) = 0$.
		\item For any $0\leq j\leq n_1$ we have:
		\begin{equation*}
			\hat{g}^j\big(\text{Closure}(\hat{U}_1^{\prime})\big) \cap \text{Closure}(\hat{U}_0^{\prime}) = \emptyset.
		\end{equation*}
		\item The same property holds exchanging the roles of $\hat{U}_1^{\prime}$ and $n_1$ with $\hat{U}_0^{\prime}$ and $n_0$.
		\item $\hnu_0(\hat{U}_1^{\prime}) < \eta$ and $\hnu_1(\hat{U}_0^{\prime}) < \eta$.
	\end{itemize}
	\label{claim:Step4}
\end{claim}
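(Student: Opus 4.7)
The plan is to combine Borel regularity on the compact metric space $M_f$ with the $\hat{g}$-invariance of $\hnu_0$ and $\hnu_1$, after reducing to the case in which these two measures are mutually singular. The key conceptual step is that one may assume $\hnu_0 \perp \hnu_1$ from the start: using the ergodic decomposition of $\hnu$ and the threshold $h$ fixed in Step U.I.1, I would canonically refine the splitting so that $\hnu_1$ gathers the ergodic components of entropy strictly greater than $h$ and $\hnu_0$ the remaining ones. All hypotheses of Proposition~\ref{prop:UnstableStableIntersection} and of Step U.I.0 are preserved by this refinement, and mutual singularity is gained. I expect this reduction to be the main subtlety, since without it the required bound $\hnu_0(\hat{U}_1') < \eta$ cannot hold in general (take $\hnu_0 = \hnu_1$).

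Granted $\hnu_0 \perp \hnu_1$, Borel regularity on $M_f$ yields disjoint compact sets $\hat{K}_i \subset \hat{U}_i$ with $\hnu_i(\hat{K}_i) > 1 - \eta/4$ and $\hnu_{1-i}(\hat{K}_i) = 0$. Outer regularity then furnishes open neighborhoods nested as $\hat{W}_i \subset \overline{\hat{W}_i} \subset \hat{W}_i^+ \subset \overline{\hat{W}_i^+} \subset \hat{U}_i$ with $\hnu_i(\hat{W}_i) > 1 - \eta/4$ and $\hnu_i$-null boundaries (a generic choice within a one-parameter family of thickenings), with $\overline{\hat{W}_0^+}$ and $\overline{\hat{W}_1^+}$ disjoint, and with the cross-bound $\hnu_{1-i}(\overline{\hat{W}_i^+}) < \eta/(2(n_0+n_1+2))$. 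I would then set
\begin{equation*}
\hat{U}_1' := \hat{W}_1 \cap \bigcap_{j=0}^{n_1} \hat{g}^{-j}\big(M_f \setminus \overline{\hat{W}_0^+}\big), \qquad \hat{U}_0' := \hat{W}_0 \cap \bigcap_{j=0}^{n_0} \hat{g}^{-j}\big(M_f \setminus \overline{\hat{W}_1^+}\big),
\end{equation*}
both finite intersections of open sets, hence open, with $\hat{U}_i' \subset \hat{W}_i \subset \hat{U}_i$.

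The four required properties are then a matter of bookkeeping. The $\hat{g}$-invariance of $\hnu_i$ and subadditivity give $\hnu_i(\hat{U}_i') \geq \hnu_i(\hat{W}_i) - (n_i+1)\hnu_i(\overline{\hat{W}_{1-i}^+}) > 1 - \eta$; the boundary $\partial \hat{U}_i'$ sits inside a finite union of $\hnu_i$-null sets, namely the $\hat{g}$-pullbacks of $\partial \hat{W}_{1-i}^+$ together with $\partial \hat{W}_i$; and the cross-measure bound $\hnu_{1-i}(\hat{U}_i') \leq \hnu_{1-i}(\overline{\hat{W}_i^+}) < \eta$ is immediate. For the non-intersection of iterates, the strict nesting $\overline{\hat{W}_{1-i}} \subset \hat{W}_{1-i}^+$ is essential: using the continuity of $\hat{g}$ together with $\hat{g}^j(\overline{A}) \subset \overline{\hat{g}^j(A)}$, one obtains
\begin{equation*}
\hat{g}^j\big(\overline{\hat{U}_1'}\big) \;\subset\; \overline{M_f \setminus \overline{\hat{W}_0^+}} \;=\; M_f \setminus \operatorname{int}\big(\overline{\hat{W}_0^+}\big) \;\subset\; M_f \setminus \hat{W}_0^+
\end{equation*}
for every $0 \leq j \leq n_1$, which is disjoint from $\overline{\hat{U}_0'} \subset \overline{\hat{W}_0} \subset \hat{W}_0^+$; the symmetric argument handles $\hat{U}_0'$ with $n_0$.
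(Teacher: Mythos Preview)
Your construction is correct and your identification of mutual singularity as the crux is well taken; the paper in fact simply asserts it (``Since $\hnu=\alpha\hnu_1+(1-\alpha)\hnu_0$, the measures $\hnu_0$ and $\hnu_1$ are mutually singular\ldots'') and this is justified only because in every application of Proposition~\ref{prop:UnstableStableIntersection} the decomposition is built by partitioning ergodic components. Your proposed refinement via the entropy threshold $h$ is not fully watertight, however: moving components of $\hnu_1$ with entropy $\le h$ into $\hnu_0$ decreases $\alpha$, and the bound $h<\frac{1}{1-\alpha}(\liminf_k h(f,\mu_k)-2\lambda(f)/r)$ from \eqref{eq:Choiceofh} can fail for the new, smaller $\alpha$. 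This does not affect the present claim (which only needs singularity), but you should not claim that ``all hypotheses of Step~U.I.0 are preserved'' without further care.

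The main methodological difference is this. Granted $\hnu_0\perp\hnu_1$, the paper exploits that for \emph{invariant} measures the singular supports may be taken $\hat g$-invariant: there exist disjoint invariant Borel sets $\hat X_0,\hat X_1$ with $\hnu_i(\hat X_i)=1$. Compact subsets $\hat K_i\subset\hat U_i\cap\hat X_i$ then satisfy $\hat g^j(\hat K_{i_1})\cap\hat K_{i_2}=\emptyset$ for \emph{every} $j$ automatically, and a single compactness thickening finishes the job. Your route instead never invokes invariant carriers: you enforce the finite disjointness condition by hand via $\bigcap_{j=0}^{n_i}\hat g^{-j}(\cdot)$ and pay for it through the invariance of the \emph{measures} $\hnu_i$. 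Both work; the paper's is shorter, while yours makes explicit what the dependence on $n_0,n_1$ is. One small fix: you need the boundaries of $\hat W_i,\hat W_i^+$ to be $\hnu$-null (hence null for both $\hnu_0$ and $\hnu_1$), not merely $\hnu_i$-null, since both $\partial\hat U_i'\subset\partial\hat W_i\cup\bigcup_j\hat g^{-j}(\partial\hat W_{1-i}^+)$ and the bound $\hnu_{1-i}(\overline{\hat W_i^+})<\eta/(2(n_0+n_1+2))$ require control by the ``wrong'' measure. This is still a generic choice of radius and costs nothing.
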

To keep the notations as simple as possible, we still denote $\hat{U}_i^{\prime}$ by $\hat{U}_i$.

\begin{proof}
	Recall that we have the following ergodic decomposition for $i =1,2$:
	\begin{equation*}
		\hnu_i = \int_{M_f} \hnu_{i,\hx} \ d\hnu_i(\hx).
	\end{equation*}
	Since $\hnu=\alpha\hnu_1+(1-\alpha)\hnu_0$, the measures $\hnu_0$ and $\hnu_1$ are mutually singular in the following sense. For $\hnu_0\times\hnu_1$-almost every $(\hx,\hy)$, the measures $\hnu_{0,\hx}$ and $\hnu_{1,\hy}$ are singular. There exist then two disjoint $g$-invariant sets $\hat{X}_0$ and $\hat{X}_1$ such that $\hnu_{i_1}(\hat{X_{i_2}})$ equals one if and only if $i_1=i_2$. Choose now a compact subset $\hat{K}_i\subset\hat{U}_i$ of $\hat{X}_i$ such that $\hnu_i(\hat{K}_i)>1-\eta$. Necessarily, for all integer $j$ we have:
	\begin{equation*}
		\hat{g}^j(\hat{K}_{i_1}) \cap \hat{K}_{i_2} \subset \hat{X}_{i_1} \cap \hat{X}_{i_2} = \emptyset
	\end{equation*}
	for $i_1\ne i_2$. Replacing $\hat{K}_i$ with small enough neighborhoods $\hat{U}_{i}^{\prime} \subset \hat{U}_i$, we can then still guarantee that for $0\leq j\leq n_{i_1}$ we have:
	\begin{equation*}
		\hat{g}^j\big(\text{Closure}(\hat{U}_{i_1}^{\prime})\big) \cap \text{Closure}(\hat{U}_{i_2}^{\prime}) = \emptyset
	\end{equation*}
	for $i_1\ne i_2$. Up to reducing these open sets we can assume that $\hnu_i(\partial \hat{U}^{\prime}_i) = 0$. Note that if $i_1\ne i_2$, we have $\hat{U}_{i_1}^{\prime} \cap \hat{U}_{i_2}^{\prime} = \emptyset$ so $\hnu_{i_1}(\hat{U}_{i_2}^{\prime}) \leq \hnu_{i_1}(M_f\backslash\hat{U}_{i_1}^{\prime}) < \eta$.
\end{proof}

\parbreak\UStep[Decomposition of orbits into segments.]\label{step:U5}
We call a $\hat{g}$-orbit segment with length $t$ and initial point $\hx$ the string $(\hx,\dots,\hat{g}^{t-1}(\hx))$. Let us define the following three classes of $\hat{g}$-orbit segments $(\hat{g}^s(\hx),\dots,\hat{g}^{t+s-1}(\hx))$ of a $\hat{g}$-orbit $(\hx,\dots,\hat{g}^{n-1}(\hx))$.
\begin{itemize}[label={--}]
	\item \textbf{Rectangle segments:} $\hat{g}^s(\hx) \in \hat{U}_1$ and $t=n_1$.
	\item \textbf{Small entropy segments:} $\hat{g}^s(\hx) \in \hat{U}_0$ and $t=n_0$.
	\item \textbf{Wild segments:} one of the following situations happens.
	\begin{enumerate}[label=(\alph*)]
		\item $\hat{g}^{s+j}(\hx) \notin \hat{U}_0\cup\hat{U}_1$ for $j=0,\dots,t-1$ and $\hat{g}^{t+s}(\hx) \in \hat{U}_0\cup\hat{U}_1$.
		\item $\hat{g}^s(\hx) \in \hat{U}_0$ and $s+n_0>n-1$, in that case $t+s=n$.
		\item $\hat{g}^s(\hx) \in \hat{U}_1$ and $s+n_1>n-1$, in that case $t+s=n$.
	\end{enumerate}
\end{itemize}
Note that the rectangle segments and the small entropy segments are disjoint since $\hat{U}_0\cap \hat{U}_1=\emptyset$ and since by Claim \ref{claim:Step4} if $\hx \in \hat{U}_{i_1}$, then $\hat{g}^j(\hx) \notin \hat{U}_{i_2}$ for $j=1,\dots,n_{i_1}$ for $i_1\ne i_2$.

\begin{claim}
	There exists an integer $k_{\star}$ such that for any $k\geq k_{\star}$ and for $\hmu_k$-almost every $\hx$, there exists an integer $n_k(\hx)$ displaying the following property. For any $n\geq n_k(\hx)$, any orbit $(\hx,\dots,\hat{g}^{n-1}(\hx))$ can be decomposed as follows. There exists an integer $l$ and a string:
	\begin{equation*}
		\theta^{\prime}(\hx,n) := \big((t_0,b_0),\dots,(t_l,b_l)\big) \in (\bbZ\times\{1,2,3\})^{l+1}
	\end{equation*}
	such that $t_0=-1$, $t_l=n-1$, $b_l=1$ and for any $j\in\{0,\dots,l-1\}$ the orbit segment $(\hat{g}^{t_j+1}(\hx),\dots,\hat{g}^{t_{j+1}}(\hx))$ is: a rectangle segment if $b_j=1$, a small entropy segment if $b_j=2$ and a wild segment if $b_j=3$. Moreover, we have the following estimates:
	\begin{enumerate}[label=(\alph*)]
		\item\label{propMarkovRectangles:CaseA} rectangle segments have total length at most $\alpha n + 2\eta n$,
		\item\label{propMarkovRectangles:CaseB} small entropy segments have total length at most $(1-\alpha)n + 2\eta n$,
		\item\label{propMarkovRectangles:CaseC} wild segments have total length at most $2\eta n$.
	\end{enumerate}
	\label{claim:Step5}
\end{claim}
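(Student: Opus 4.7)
The plan is to convert the weak-$\star$ convergence $\hmu_k \to \hnu$ into quantitative control on the frequency of visits of typical $\hmu_k$-orbits to $\hat{U}_0$, $\hat{U}_1$ and their complement, and then to extract the decomposition from a greedy left-to-right scan in which the no-jump property of Claim~\ref{claim:Step4} keeps the three segment types internally consistent.

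First I would transfer measures. Since $\hnu = \alpha\hnu_1 + (1-\alpha)\hnu_0$, Claim~\ref{claim:Step4} immediately gives $\hnu(\hat{U}_1) \in [\alpha-\eta, \alpha+\eta]$ and symmetrically $\hnu(\hat{U}_0) \in [1-\alpha-\eta,1-\alpha+\eta]$. After a small shrinking of $\hat{U}_0$ and $\hat{U}_1$ (a standard approximation argument in the compact metric space $M_f$, compatible with all properties already imposed) I may assume $\hnu(\partial\hat{U}_i)=0$, so that Portmanteau applied to $\hmu_k\to\hnu$ yields $\hmu_k(\hat{U}_i)\to\hnu(\hat{U}_i)$. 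Choosing $k_\star$ so that for every $k \geq k_\star$ we have $\hmu_k(\hat{U}_1)\leq \alpha+\eta/4$ and $\hmu_k(\hat{U}_0)\leq 1-\alpha+\eta/4$ (hence $\hmu_k(M_f\setminus(\hat{U}_0\cup\hat{U}_1))\leq 3\eta/4$), ergodicity of $\hmu_k$ together with Birkhoff's theorem then provide, for $\hmu_k$-almost every $\hx$, an integer $n_k(\hx)$ such that for every $n\geq n_k(\hx)$ the visit counts
\[
V_i(n) := \#\{0\leq s < n : \hat{g}^s(\hx)\in\hat{U}_i\}, \qquad W_\circ(n) := n - V_0(n) - V_1(n),
\]
satisfy $V_1(n)\leq(\alpha+\eta/2)n$, $V_0(n)\leq(1-\alpha+\eta/2)n$ and $W_\circ(n)\leq\eta n$.

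Given such $\hx$ and $n$, I would build $\theta'(\hx,n)$ greedily. Set $t_0=-1$, and, having chosen $t_j$, inspect $\hat{g}^{t_j+1}(\hx)$ and form: a rectangle segment of length $n_1$ (set $b_j=1$) if this point lies in $\hat{U}_1$ and $t_j+1+n_1\leq n$; a small entropy segment of length $n_0$ (set $b_j=2$) if it lies in $\hat{U}_0$ and $t_j+1+n_0\leq n$; otherwise a wild segment (set $b_j=3$) stretching until the next visit to $\hat{U}_0\cup\hat{U}_1$, or else to index $n-1$. The no-jump property of Claim~\ref{claim:Step4} guarantees that no iterate inside a rectangle segment lies in $\hat{U}_0$ and no iterate inside a small entropy segment lies in $\hat{U}_1$, which matches the three definitions; appending a dummy pair $(n-1,1)$ secures the formal requirement $b_l=1$.

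The length bounds then follow from a single counting identity: every iterate of a rectangle segment lies in $\hat{U}_1$ or in $M_f\setminus(\hat{U}_0\cup\hat{U}_1)$, so if $R,S$ denote the numbers of rectangle and small entropy segments,
\[
R\cdot n_1 \;\leq\; V_1(n) + W_\circ(n) \;\leq\; \bigl(\alpha+\tfrac{3\eta}{2}\bigr)n \;<\; (\alpha+2\eta)n,
\]
and symmetrically $S\cdot n_0 \leq (1-\alpha+2\eta)n$, while the wild segments have total length at most $W_\circ(n)\leq\eta n<2\eta n$, yielding \ref{propMarkovRectangles:CaseA}--\ref{propMarkovRectangles:CaseC}. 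The chief technical point I expect is precisely the upper bound on $\hmu_k(\hat{U}_i)$: weak-$\star$ convergence alone gives only a lower bound on open sets, which is why the preliminary reduction to open sets with $\hnu$-null boundary is genuinely needed. Everything else is elementary bookkeeping with the Birkhoff estimates.
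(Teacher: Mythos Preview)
Your approach is the same as the paper's: pass from $\hnu$ to $\hmu_k$ via Portmanteau (the boundary-null reduction you flag is indeed needed and is arranged in the paper when constructing $\hat{U}_0,\hat{U}_1$), then from $\hmu_k$ to typical orbits via Birkhoff, then build $\theta'$ greedily and read off the length bounds from the visit counts together with the no-jump property of Claim~\ref{claim:Step4}.

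There is, however, one small gap in your bound~\ref{propMarkovRectangles:CaseC}. You assert that the wild segments have total length at most $W_\circ(n)$, but this ignores the end-of-orbit wild segment, namely the one produced when $\hat g^{t_j+1}(\hx)\in\hat U_i$ yet $t_j+1+n_i>n$. Such a terminal stub can lie entirely inside $\hat U_0\cup\hat U_1$ and therefore contributes nothing to $W_\circ(n)$; its length is at most $\max(n_0,n_1)$, which is a fixed constant independent of $n$. The paper closes this by additionally enlarging $n_k(\hx)$ so that $\max(n_0,n_1)<\eta\,n_k(\hx)$, after which the total wild length is bounded by $W_\circ(n)+\max(n_0,n_1)\leq \eta n+\eta n=2\eta n$. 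With this one-line fix your argument is complete and matches the paper's.
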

\begin{proof}
	By Step 1 of the proof and by Claim \ref{claim:Step4}, we have the following estimates:
	\begin{itemize}[label={--}]
		\item $\hnu(\hat{U}_0 \cup \hat{U}_1) > 1-\eta$,
		\item $\hnu(\hat{U}_0) = \alpha\hnu_1(\hat{U}_0) + (1-\alpha) \hnu_0(\hat{U}_0) < \eta\alpha + (1-\alpha) < \eta + (1-\alpha)$,
		\item $\hnu(\hat{U}_1) = \alpha\hnu_1(\hat{U}_1) + (1-\alpha) \hnu_0(\hat{U}_1) < \alpha + (1-\alpha)\eta < \alpha + \eta$.
	\end{itemize}
	Since the sequence $(\hmu_n)$ converges weak-$\star$ to $\hnu$, there exists an integer $k_{\star}\geq 1$ such that for any $k \geq k_{\star}$ we have:
	\begin{itemize}[label={--}]
		\item $\hmu_k(\hat{U}_0 \cup \hat{U}_1) > 1-\eta$,
		\item $\hmu_k(\hat{U}_0) < (1-\alpha) + \eta$,
		\item $\hmu_k(\hat{U}_1) < \alpha + \eta$.
	\end{itemize}
	Fix now some $k\geq k_{\star}$. For $\hmu_k$-almost every $\hx$, there exists an integer $n_k(\hx)$ such that for $n\geq n_k(\hx)$ the following is true by Birkhoff theorem:
	\begin{itemize}[label={--}]
		\item $\frac{1}{n}\sum_{i=0}^{n-1} \mathds{1}_{\hat{U}_0\cup\hat{U}_1}(\hat{g}^i(\hx)) > 1-\eta$,
		\item $\frac{1}{n}\sum_{i=0}^{n-1} \mathds{1}_{\hat{U}_0}(\hat{g}^i(\hx)) < (1-\alpha) + \eta$,
		\item $\frac{1}{n}\sum_{i=0}^{n-1} \mathds{1}_{\hat{U}_1}(\hat{g}^i(\hx)) < \alpha + \eta$.
	\end{itemize}
	Fix such an $\hx$. Up to increasing $n_k(\hx)$, we can suppose that $\max(n_0,n_1) < \eta n_k(\hx)$.
	
	For $k\geq k_{\star}$ and for $n\geq n_k(\hx)$, this allows us to define a decomposition of the $\hat{g}$-orbit $(\hx,\dots,\hat{g}^{n-1}(\hx))$. Indeed, we define the following sequence of times by induction, which depends on $\hx$. Let $t_0 = -1$. Assume that $t_{i}$ is defined. We are going to define $t_{i+1}$ in the following way.
	\begin{itemize}[label={--}]
		\item If $\hat{g}^{t_i+1} \in \hat{U}_0$ and $t_i + n_0\leq n-1$, we define $t_{i+1} = t_i + n_0$. The orbit segment $(\hat{g}^{t_i+1}(\hx),\dots,\hat{g}^{t_{i+1}}(\hx))$ is then a small entropy segment.
		\item If $\hat{g}^{t_i+1} \in \hat{U}_1$ and $t_i + n_1\leq n-1$, we define $t_{i+1}=t_i+n_1$. The orbit segment $(\hat{g}^{t_i+1}(\hx),\dots,\hat{g}^{t_{i+1}}(\hx))$ is then a rectangle segment.
		\item If $\hat{g}^{t_i+1}\in \hat{U}_0$ but $t_i+n_0>n-1$, or if $\hat{g}^{t_i+1}\in \hat{U}_1$ but $t_i+n_1>n-1$, we define $t_{i+1}=n-1$. The orbit segment $(\hat{g}^{t_i+1}(\hx),\dots,\hat{g}^{n-1}(\hx))$ is then a wild segment.
		\item If $\hat{g}^{t_i+1} \in \big(\hat{U}_0\cup\hat{U}_1\big)^c$, then we define:
		\begin{equation*}
			t_{i+1} = \inf\{j>t_i, \ \hat{g}^{j+1}(\hx) \in \hat{U}_0\cup\hat{U}_1\},
		\end{equation*}
		or $t_{i+1} = n-1$ if the $\inf$ does not exist. The orbit segment $(\hat{g}^{t_i+1}(\hx),\dots,\hat{g}^{t_{i+1}}(\hx))$ is then a wild segment.
	\end{itemize}
	We repeat the process until we reach some $l$ such that $t_l = n-1$.
	
	For $k\geq k_{\star}$, for $\hmu_k$-almost every $\hx$ and $n\geq n_k(\hx)$ we can then define the following desired data:
	\begin{equation*}
		\theta^{\prime}(\hx,n):=\big((t_0,b_0),\dots,(t_l,b_l)\big),
	\end{equation*}
	where $b_i \in \{1,2,3\}$, defining the class of the segment $(\hat{g}^{t_i+1}(\hx),\dots,\hat{g}^{t_{i+1}}(\hx))$. We introduce the convention that $b_i=1$ means that $\vartheta_i$ is a rectangle segment, $b_i=2$ means that $\vartheta_i$ is a small entropy segment and $b_i=3$ means that $\vartheta_i$ is a wild segment. Since $t_l=n-1$, we pose arbitrarily $b_l=1$. This gives the desired decomposition. We are now going to estimate the size of each class of segment inside this decomposition.
	
	Fix such an $\hx$ and consider the $\hat{g}$-orbit segment $(\hx,\dots,\hat{g}^{n-1}(\hx))$. Consider the orbit segments which do not intersect $\hat{U}_0$ nor $\hat{U}_1$. Their total length is at most $\eta n$. The wild segments are segments which have this property, or last segment of an orbit, with starting point in $\hat{U}_0$ or $\hat{U}_1$, which have length smaller than $\max(n_0,n_1)<\eta n$. This then proves the last point \ref{propMarkovRectangles:CaseC}.
	
	It remains to prove cases \ref{propMarkovRectangles:CaseA} and \ref{propMarkovRectangles:CaseB}. Set $\hat{W} := \big(\hat{U}_0 \cup \hat{U}_1\big)^c$. Denote by $R(\hx)$ the union of all points of the $\hx$-orbit belonging to a rectangle segment of the decomposition defined earlier. Recall that Claim \ref{claim:Step4} implies that a rectangle segment does not intersect $\hat{U}_0$. We then have the following:
	\begin{equation*}
		|R(\hx)| = |R(\hx)\cap \hat{U}_1| + |R(\hx)\cap\hat{W}| \leq |\hat{U}_1| + |\hat{W}| < \alpha n +\eta n + \eta n < \alpha n + 2\eta n.
	\end{equation*}
	The argument to prove the estimate of case \ref{propMarkovRectangles:CaseB} is the same.
\end{proof}
For $k\geq k_{\star}$, for $\hmu_k$-almost every $\hx$ and for $n\geq n_k(\hx)$, Claim \ref{claim:Step5} allows us to construct the decomposition string $\theta^{\prime}(\hx,n) \in (\bbZ\times\{1,2,3\})^{l+1}$ for some integer $l$. For a given orbit $(\hx,\dots,\hat{g}^{n-1}(\hx))$, we will also need to keep track of which iterates are inside $\hat{U}_0\cup\hat{U}_1$ and which are not. For that, we introduce the word $v(\hx,n) \in \{0,1\}^n$ which satisfies $v(\hx,n)_i = 1$ if $\hat{g}^i \in \hat{U}_0\cup\hat{U}_1$ and $v(\hx,n)_i = 0$ otherwise.

We define the \textit{decomposition type} of an orbit $(\hx,\dots,\hat{g}^{n-1}(\hx))$ as the following data:
\begin{equation*}
	\theta(\hx,n) := \Big[v(\hx,n), \theta^{\prime}(\hx,n)\Big] \in \{0,1\}^{n} \times \big(\bbN \times \{1,2,3\}\big)^{l+1}.
\end{equation*}

\parbreak\UStep[A bound on the number of decomposition types.]\label{step:U6}
Recall that we denote by $H$ the map $H(t) = -t\log t -(1-t)\log(1-t)$. The following Claim asserts that the number of decomposition types is asymptotically small compared to the entropy of the $\mu_k$'s.

\begin{claim}
	There exists an integer $n_H := n_H(\eta)$ such that the number of decomposition types of any $\hat{g}$-orbit as in Claim \ref{claim:Step5} with length $n>n_H$ is at most $\exp\big(n(5\eta + 2H(5\eta))\big)$.
	\label{claim:Step6}
\end{claim}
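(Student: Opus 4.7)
The plan is to count the decomposition types $\theta(\hx,n)=[v,\theta']$ combinatorially, exploiting the observation that once $v$ is fixed, the entire string $\theta'$ is determined by one extra bit of information at each segment-start. Indeed, the decomposition procedure of Step~\stepref{step:U5} is completely driven by: (i) the word $v$, which tells us at every iterate whether we are in $\hat{U}_0\cup\hat{U}_1$ or not; and (ii) at each segment-start $t_i+1$ with $v_{t_i+1}=1$, the additional information of whether $\hat{g}^{t_i+1}(\hx)\in\hat{U}_0$ or $\hat{U}_1$. When $v_{t_i+1}=0$ the segment is forced to be wild of type (a) and its length is read off from $v$; when $v_{t_i+1}=1$, Claim~\ref{claim:Step4} guarantees that a rectangle (resp.\ small-entropy) segment has length exactly $n_1$ (resp.\ $n_0$), with a possible truncation at the end of the orbit detectable from $n$. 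Hence $\theta'$ is determined by $v$ together with at most one binary label per segment-start.

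First I would bound the total number of segments. Using the length estimates of Claim~\ref{claim:Step5} together with $n_0,n_1>1/\eta$, rectangle plus small-entropy segments contribute at most $(1+4\eta)\eta n$ segments, wild segments of type (a) contribute at most $\eta n$ (each has length at least $1$ and total length at most $\eta n$), and types (b)-(c) give at most one final segment; for $\eta$ small this yields $l+1\leq 5\eta n$. Second, the number of binary words $v\in\{0,1\}^n$ with at most $\eta n$ zeros is, by the standard binomial estimate,
\begin{equation*}
	\sum_{k=0}^{\lfloor \eta n\rfloor}\binom{n}{k}\leq (n+1)\binom{n}{\lfloor \eta n\rfloor}\leq \exp\!\big(nH(\eta)+o(n)\big),
\end{equation*}
where the $o(n)$ correction is logarithmic in $n$. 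Third, given $v$, the number of admissible $\hat{U}_0/\hat{U}_1$ labelings at segment-starts is bounded by $2^{l+1}\leq 2^{5\eta n}$.

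Multiplying the three bounds, the number of decomposition types of an orbit of length $n$ is at most
\begin{equation*}
	\exp\!\big(n(H(\eta)+5\eta\log 2)+o(n)\big).
\end{equation*}
Since $\eta<5\eta<1/2$ by \eqref{eq:ChoiceOfEta} and $H$ is increasing on $[0,1/2]$, we have $H(\eta)\leq H(5\eta)$; together with $\log 2<1$ this gives $H(\eta)+5\eta\log 2\leq H(5\eta)+5\eta\leq 2H(5\eta)+5\eta$. Choosing $n_H=n_H(\eta)$ large enough so that the sub-exponential correction is absorbed by the positive term $H(5\eta)$, the bound becomes $\exp\!\big(n(5\eta+2H(5\eta))\big)$, as required. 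The only subtle point in writing the details is to verify rigorously that the segmentation algorithm is indeed driven by $v$ and the binary labels alone, which is immediate from the invariance property of Claim~\ref{claim:Step4} forbidding jumps between $\hat{U}_0$ and $\hat{U}_1$ on the relevant timescale.
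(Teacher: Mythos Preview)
Your proof is correct and takes a genuinely different route from the paper's. The paper counts the two pieces $v$ and $\theta'$ of a decomposition type independently: it bounds the number of strings $\theta'=\big((t_0,b_0),\dots,(t_l,b_l)\big)$ with $l\leq 4\eta n$ by choosing the times $t_i$ among $n$ positions and assigning one of three classes $b_i\in\{1,2,3\}$ to each, then multiplies by the number of admissible words $v$. This yields the intermediate bound $\exp\big(4\eta n\log 3 + nH(4\eta)+nH(\eta)+o(n)\big)$, which is then relaxed to the stated one using $4\log 3<5$ and monotonicity of $H$ on $[0,1/2]$.

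Your argument is sharper: you exploit that the segmentation algorithm of Step~\stepref{step:U5} is deterministic once $v$ and the $\hat{U}_0/\hat{U}_1$ label at each segment-start are known, so $\theta'$ carries no information beyond at most $5\eta n$ extra bits on top of $v$. This gives the tighter intermediate bound $\exp\big(n(H(\eta)+5\eta\log 2)+o(n)\big)$ before the final relaxation. The gain is purely aesthetic here since both bounds are relaxed to the same target, but your observation that $(v,\theta')\mapsto (v,\text{labels})$ is injective avoids the double-counting inherent in the paper's independent enumeration of $\theta'$ and $v$. One small point to make explicit when writing details: since the number of segments $l+1$ depends on the labeling, the cleanest way to justify the bound $2^{5\eta n}$ is to note that the valid labelings embed into $\{0,1\}^{\lceil 5\eta n\rceil}$ by padding, injectivity being preserved because the algorithm run on $(v,\text{labels})$ recovers where it terminates.
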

\begin{proof}
	By Claim \ref{claim:Step5}, a decomposition of an orbit segment of length $n$ large enough has:
	\begin{itemize}[label={--}]
		\item at most $\eta n$ rectangle segments (because they have length $n_1 > 1/\eta$ by \eqref{eq:ChoiceOfTime}),
		\item at most $\eta n$ small entropy segments by the same reason,
		\item at most $2\eta n$ wild segments (because their total length is at most $2\eta n$).
	\end{itemize}
	This gives a total of at most $\lfloor 4\eta n\rfloor$ segments. Any type $\theta = \Big[\big(v_0,\dots,v_{n-1}\big),\big((t_0,b_0),\dots,(t_l,b_l)\big)\Big]$ satisfies $l< \lfloor 4\eta n\rfloor+1$. A decomposition type consists of times $t_i$, classes $b_i$ and a word $v$ of $n$ letters with at most $\eta n$ letter $0$. There can be at most $\sum_{i=1}^{\lfloor 4\eta n\rfloor}\binom{n}{i-1}3^i$ different strings of couples of times and classes $\big((t_0,b_0),\dots,(t_l,b_l)\big)$. There also can be at most $\sum_{i=1}^{\lfloor \eta n\rfloor}\binom{n}{i-1}$ such words $v$. So the total number of decomposition types possible is bounded by $\sum_{i=1}^{\lfloor 4\eta n\rfloor}\binom{n}{i-1}3^i\times\sum_{i=1}^{\lfloor \eta n\rfloor}\binom{n}{i-1}$. Since $4\eta<1/2$, the first sum is bounded by $4\eta n\binom{n}{\lfloor 4\eta n\rfloor}\times 3^{\lfloor 4\eta n\rfloor}$ and the second by $\eta n\binom{n}{\lfloor \eta n\rfloor}$. By the De Moivre-Laplace approximation, this is less than $\exp\big(4\eta n\log 3 + nH(4\eta) +nH(\eta) + o(n)\big)$ as $n\rightarrow +\infty$. The claim follows because $5\eta < 1/2$ and $4\log3<5$.
\end{proof}

\parbreak\UStep[Conditional measures.]\label{step:U7}
Let $k\geq k_{\star}$ from Claim \ref{claim:Step5}. As explained in Section \ref{subsec:UnstableEntropy}, let $\xi_k$ be a measurable partition subordinated to the unstable lamination of $\hmu_k$. Denote by $\{\hmu_k^{u,\hx}\}_{\hx}$ the family of conditional measures associated to $\xi_k$. Recall that $\hmu_k^{u,\hx}$ is supported on $V^u_{loc}(\hx)$. Denote by $\mu_k^{u,\hx} := \pi_{\star}\hmu_k^{u,\hx}$. This measure is then supported on $\Wu_{loc}(\hx)$. We fix an integer $N_k$ and a measurable set $\hat{F}_k$ with $\hmu_k(\hat{F}_k)>1/2$ such that every point $\hx \in F_k$ satisfies the following:
\begin{itemize}[label={--}]
	\item $\hx$ has a well defined unstable manifold $\Wu(\hx) \subset M$,
	\item $\mu_k^{u,\hx}$ is well defined and $x$ belongs to the support of the measure $\mu_k^{u,\hx}$ restricted to $F_k$, where $x := \pi(\hx)$ and $F_k := \pi(\hat{F}_k)$,
	\item $\hx$ satisfies Claim \ref{claim:Step5} with $n_k(\hx)\leq N_k$. In particular, for each $n\geq N_k$ the orbit $(\hx,\dots,\hat{g}^{n-1}(\hx))$ has decomposition as in Claim \ref{claim:Step5}. Write $\theta(\hx,n)$ for its decomposition type.
\end{itemize}

\parbreak\UStep[Construction of the reparametrizations.]\label{step:U8}
Pick any $\hx \in \hat{F}_k$ which satisfies Proposition \ref{prop:EntropyReparametrizations} for the map $g$. Recall Definition \ref{def:LocalInvariantSet} of the local unstable set at $\hx$, denoted by $V^u_{loc}(\hx) \subset M_f$. Let $\hat{\gamma} : [0,1] \rightarrow V^u_{loc}(\hx)$ be a $\cC^r$ curve of $\cC^r$ size $\epsilon$, with the choice of $r>2$ defined in Step \stepref{step:U1}, and such that $\hx \in \hat{\gamma}((0,1))$. Let $\gamma :[0,1] \rightarrow \Wu_{loc}(\hx)$ be its projection. Note that $\gamma$ is an $(r,\epsilon)$-curve. The properties of the set $\hat{F}_k$ guarantee that $\mu_k^{u,\hx}\big(F_k\cap\gamma([0,1])\big) >0$. Define $\hat{T}:=\hat{\gamma}^{-1}(\hat{F}_k)$. Define also $T:=\gamma^{-1}(F_k)$. Fix $n\geq \max(N_k,n_H)$ and fix also $N_{\star}, \epsilon$ and $\sigma$ as in Step \stepref{step:U1} and Step \stepref{step:U2}.

Recall the definition of the set $\hat{Y}^{\#}$ before Proposition \ref{prop:MarkovRectangles}. Recall the definition of the family of rectangles $\mathcal{R}$ from Step \stepref{step:U1}. Recall also that for any $i \in \{1,\dots,L\}$, we have $\partial^{s,l} R_i \subset \Ws_{loc}(\hy_i^l)$ where $\hy_i^l \in \hat{\Lambda} \cap \hat{Y}^{\#}$ for $l=1,2$. Our main goal is to show that some positive iterate of $\Wu_{loc}(\hx)$ intersects some $\Ws_{loc}(\hy_i^l)$, which will complete the proof of Proposition \ref{prop:UnstableStableIntersection}. We are going to proceed by contradiction. From now on, we assume the following non-intersection hypothesis.

\begin{hyp}
	For any $m\geq 0$, for any $i\in \{1,\dots,L\}$ and any $l\in\{1,2\}$ we have:
	\begin{equation*}
		f^m(\Wu_{loc}(\hx))\cap \Ws_{loc}(\hy^l_i)=\emptyset.
	\end{equation*}
	\label{hypothesis:Unstable}
\end{hyp}
Assuming Hypothesis \ref{hypothesis:Unstable}, our goal from now on is to construct a family of reparametrization $\mathscr{R}_n$ over $T$ which is $(g,N_{\star},\epsilon)$-admissible up to time $n$. To simplify the notation and since we will always work with the map $g$, we just write $(N_{\star},\epsilon)$-admissible. After constructing the family of reparametrization, we will use Proposition \ref{prop:EntropyReparametrizations} to get an upper bound on the entropy of $\mu_k$.

We begin by fixing a decomposition type $\theta = \big[\big(v_0,\dots,v_{n-1}\big),\big((t_0,b_0),\dots,(t_l,b_l)\big)\big]$ with $t_0=-1$ and $t_l=n-1$. Recall that for any $y \in \gamma(T)$, there exists a unique $\hy \in \hat{\gamma}(\hat{T})$ which lifts $y$. For what follows, we will always denote this special lift by $\hy$. We will first construct a family of reparametrization $\mathscr{R}_n^{\theta}$ which is $(N_{\star},\epsilon)$-admissible up to time $n$ over the set:
\begin{equation*}
	T_{\theta} := \gamma^{-1}\big(\{y \in \gamma(T) \ \text{such that for the unique lift} \ \hy \in \hat{\gamma}(\hat{T}), \ (\hy,\dots,\hat{g}^{n-1}(y)) \ \text{has type} \ \theta\}\big).
\end{equation*}
We will then take the union over all possible decomposition types and obtain the desired family $\mathscr{R}_n$. Let us sum up what we have just explained in the following Claim. The proof will be contained in the present Step and the next one.

\begin{claim}
	For any $k\geq k_{\star}$, for any $\hx\in \hat{F}_k$ and any $n$ large enough, there exists a family of reparametrization $\mathscr{R}_n$ of the curve $\gamma$ over the set $\gamma^{-1}(F_k)$ which is $(N_{\star},\epsilon)$-admissible up to time $n$ and such that:
	\begin{equation*}
		\mathscr{R}_n = \bigcup_{\theta} \mathscr{R}_n^{\theta}.
	\end{equation*}
	The union is taken over all decomposition types $\theta$ and $\mathscr{R}^{\theta}_n$ are families of reparametrization of $\gamma$ over the set of parameters of $\gamma^{-1}(F_k)$ such that their image by $\gamma$ has decomposition type $\theta$. The family $\mathscr{R}^{\theta}_n$ is $(N_{\star},\epsilon)$-admissible up to time $n$ and is obtained by induction, constructing families of reparametrization for all the segments composing $\theta$. Moreover, the families $\mathscr{R}_n$ and $\mathscr{R}^{\theta}_n$ have bounded cardinality, where a bound is given in \eqref{eq:BoundReparamTheta} and \eqref{eq:BoundMainReparam}.
\end{claim}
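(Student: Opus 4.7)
The plan is to build each $\mathscr{R}_n^{\theta}$ by induction on the segments appearing in $\theta$, maintaining at each stage $j \in \{0,\dots,l\}$ a family $\mathcal{A}_j$ of reparametrizations that are $(N_{\star},\epsilon)$-admissible up to time $t_j+1$ and whose image under $\gamma$ has the correct partial decomposition. At stage $0$, start from the trivial reparametrization (or its subdivision, so that $\gamma\circ\psi$ is an $(r,\epsilon)$-curve). At stage $j+1$, for each $\psi\in\mathcal{A}_j$, replace it by finitely many sub-reparametrizations $\psi\circ\phi$ whose iterates by $g^i\circ\gamma$ remain $(r,\epsilon)$-curves at each of the required admissible times in the segment $[t_j+1,t_{j+1}+1]$, with the particular procedure dictated by $b_j\in\{1,2,3\}$. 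At the end, set $\mathscr{R}_n^{\theta}=\mathcal{A}_l$ and $\mathscr{R}_n=\bigcup_\theta \mathscr{R}_n^\theta$.

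For wild segments ($b_j=3$), apply Corollary~\ref{cor:AdmissibleReparam} with the trivial cover, producing at most $\Upsilon^{\lceil(t_{j+1}-t_j)/N_\star\rceil}\norm{dg}^{(t_{j+1}-t_j+N_\star)/r}$ new reparametrizations per element of $\mathcal{A}_j$. Since the total length of wild segments is $\leq 2\eta n$, their cumulative contribution is at most $\exp\bigl(2\eta n(\log\Upsilon/N_\star+\log\norm{dg}/r)+o(n)\bigr)$, which is negligible by our choice of $\eta$ in \eqref{eq:ChoiceOfEta}. For small entropy segments ($b_j=2$), the point $\hat{g}^{t_j+1}(\hx)$ lies in $\hat{U}_0$, so its forward orbit of length $n_0$ is covered by at most $\exp(Nh(1+\delta^2)n_0+o(n_0))$ Bowen balls of radius $\epsilon$ by Claim~\ref{claim:Step3}. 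Applying Corollary~\ref{cor:AdmissibleReparam} once for each Bowen ball yields an extension of each $\psi\in\mathcal{A}_j$ with multiplicative cost at most $\exp(Nh(1+\delta^2)n_0+o(n_0))\cdot\Upsilon^{\lceil n_0/N_\star\rceil}\norm{dg}^{(n_0+N_\star)/r}$.

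The heart of the argument is the rectangle segment ($b_j=1$), where the non-intersection Hypothesis~\ref{hypothesis:Unstable} allows us to apply the curve-covering Proposition~\ref{prop:IntersectionCurveRectangle} to the $(r,\epsilon)$-curve $\gamma_j:=g^{t_j+1}\circ\gamma\circ\psi$. This decomposes $\gamma_j^{-1}(\bigcup_i R_i)$ into at most $4C$ crossing intervals (with $C$ bounded by \eqref{eq:ChoiceOfSigma}) and a union of folding intervals contained in at most $8C$ rectangles $R_{i_1},\dots,R_{i_m}$. For a crossing interval, the image is a connected sub-curve of $\gamma_j$ that traverses some $R_i\subset\tilde R_i$; by Property \ref{property:MR5} combined with the non-crossing hypothesis and a trapping argument as sketched in Figure~\ref{fig:ProofTopologicalIntersection}, its $g$-iterates stay in a single rectangle $\tilde R_{k}$ throughout the rectangle segment, so by Lemma~\ref{lem:DiameterRectangle} remain inside a ball of radius $\sigma(K+1)<\epsilon$ and require only one additional reparametrization. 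For folding intervals, one reparametrizes every $N_\star$ iterations inside the current trapping rectangle: by the same Markov trapping (now for the disconnected set of folding sub-curves inside a fixed $R_i$, see Figure~\ref{fig:ProofTopologicalIntersectionFolding}) the image of all folding sub-curves inside $R_i$ lies in a single rectangle $\tilde R_{j}$ at the next step, so each reparametrization step in the rectangle segment multiplies the cardinality by at most $8C$ (for the folding rectangles) plus $4C$ (for the crossings), times the Yomdin factor $\Upsilon\norm{dg}^{N_\star/r}$ for the localization inside a ball of radius $\epsilon$.

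The main obstacle, and the new ingredient compared with existing approaches, is precisely the folding piece of the rectangle-segment analysis, because the set of folding parameters is disconnected in $[0,1]$ while the curve itself is connected; the crucial point that makes it work is that the $8C$ covering rectangles given by Proposition~\ref{prop:IntersectionCurveRectangle} together with the Markov Property~\ref{property:MR5} ensure that the dynamics cannot separate folding pieces contained in a common rectangle. Once this is in place, multiplying the contributions from the three segment types, using Claim~\ref{claim:Step5} on the total lengths and the choice of $N_\star$ in \eqref{eq:ChoiceOfNStar}, one obtains the explicit bound
\begin{equation}
|\mathscr{R}_n^\theta|\leq \exp\Bigl(\bigl((1-\alpha)Nh(1+\delta^2)+\alpha\tfrac{\log(8C)+\log(4C)}{N_\star}+O(\eta)\bigr)n+o(n)\Bigr),
\label{eq:BoundReparamTheta}
\end{equation}
which after summing over the $\exp(n(5\eta+2H(5\eta)))$ decomposition types from Claim~\ref{claim:Step6} yields the final bound
\begin{equation}
|\mathscr{R}_n|\leq \exp\Bigl(\bigl((1-\alpha)Nh+\tfrac{\delta}{2}\bigr)n+o(n)\Bigr),
\label{eq:BoundMainReparam}
\end{equation}
which, plugged into Proposition~\ref{prop:EntropyReparametrizations}, will contradict \eqref{eq:Choiceofh}–\eqref{eq:ChoiceOfDelta2} and therefore falsify Hypothesis~\ref{hypothesis:Unstable}.
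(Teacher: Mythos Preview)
Your overall strategy matches the paper's: build $\mathscr{R}_n^\theta$ inductively over the segments of $\theta$, handle wild and small-entropy segments via Corollary~\ref{cor:AdmissibleReparam} with the trivial or Bowen-ball cover respectively, and in rectangle segments invoke Proposition~\ref{prop:IntersectionCurveRectangle} together with Property~\ref{property:MR5} and Hypothesis~\ref{hypothesis:Unstable} to trap iterates inside rectangles of $\tilde{\mathcal{R}}$. There is, however, a genuine gap in your treatment of rectangle segments.

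A rectangle segment only guarantees that its \emph{first} iterate lies in $\hat{U}_1$; subsequent iterates may fall outside $\hat{U}_0\cup\hat{U}_1$ (though never in $\hat{U}_0$, by Claim~\ref{claim:Step4}). Property~\ref{property:MR5} requires a point of $\hat{U}_1\cap\hat{g}^{-N}(\hat{U}_1)$ projecting into the relevant rectangle, so your assertion that ``its $g$-iterates stay in a single rectangle $\tilde R_k$ throughout the rectangle segment'' fails at those excursions. The paper addresses this by a further sub-decomposition of each rectangle segment into alternating sub-segments inside $\hat{U}_1$ (times $[\tau'_j+1,\tau_{j+1}]$) and wild sub-segments outside (times $[\tau_j+1,\tau'_j]$), tracked precisely by the word $v$ carried in the decomposition type. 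Proposition~\ref{prop:IntersectionCurveRectangle} is applied once at the beginning of each ``in $\hat{U}_1$'' sub-segment, and the excursions outside are treated as in your wild case. The number of such sub-segments is bounded by $c_i^0$, the count of $0$-letters of $v$ inside the $i$-th rectangle segment, and $\sum_i c_i^0\le\eta n$.

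This also explains why your cardinality bookkeeping diverges from the paper's. The factor $|\cC_1|+|\cC_2|+2$ appears once per ``in $\hat{U}_1$'' sub-segment---hence $O(\eta n)$ times in total---not every $N_\star$ iterations as your term $\alpha\tfrac{\log(8C)+\log(4C)}{N_\star}$ implies. In the paper's bound~\eqref{eq:BoundReparamTheta} the rectangle contribution is $(|\cC_1|+|\cC_2|+2)^{m+\sum_i c_i^0}\le(24(K+1)+2)^{5\eta n}$, which is made small by choosing $\eta$ after $K$ via~\eqref{eq:ChoiceOfEta}; your scheme would instead force $N_\star$ to be chosen after $K$, reversing the paper's order of constants. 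Finally, your displayed bounds drop the global Yomdin factor $\norm{dg}^{n/r}$, which contributes $\log\norm{df}/r\approx\lambda(f)/r$ to the entropy estimate and is certainly not $o(n)$: this is exactly the term responsible for the $2\lambda(f)/r$ in the entropy hypothesis of Proposition~\ref{prop:UnstableStableIntersection}.
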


The family $\mathscr{R}_n^{\theta}$ will be obtained inductively by defining families of reparametrization $\mathscr{R}^{\theta}_{t_i}$ which will be $(N_{\star},\epsilon)$-admissible over the set $T_{\theta}$ up to time $t_i$. The first family of the induction is $\mathscr{R}^{\theta}_0 := \{id\}$. It is $(N_{\star},\epsilon)$-admissible over $T_{\theta}$ up to time $0$ because $\gamma$ is an $(r,\epsilon)$-curve. Assume now that $\mathscr{R}^{\theta}_{t_{i-1}}$ is well defined. We proceed by concatenation. We will set:
\begin{equation*}
	\mathscr{R}^{\theta}_{t_i} = \{\psi\circ\phi, \ \psi \in \mathscr{R}_{t_{i-1}}, \ \phi \in \mathscr{R}(\psi,t_i)\}
\end{equation*}
where $\mathscr{R}(\psi,t_i)$ will be a well-chosen family of reparametrization of the curve $g^{t_{i-1}} \circ \gamma \circ \psi$ and which will be $(N_{\star},\epsilon)$-admissible up to time $t_i-t_{i-1}$ over the set $\psi^{-1}(T_{\theta})$.

We fix then $\psi \in \mathscr{R}^{\theta}_{t_{i-1}}$. We let $\gamma^{\prime} := g^{t_{i-1}}\circ\gamma\circ\psi$. Recall that $\gamma^{\prime}$ is an $(r,\epsilon)$-curve. We let also $T^{\prime}_{\theta} := \psi^{-1}(T_{\theta})$. By definition of $T_{\theta}$, for any $y \in \gamma(T_{\theta})$ there exists a unique lift $\hat{y} \in \hat{\gamma}(\hat{T})$ such that the orbit $(\hy,\dots,\hat{g}^{n-1}(\hy))$ has decomposition type $\theta$. We will then construct $\mathscr{R}(\psi,t_i)$ distinguishing three cases for the different values of $b_{i-1}$. We will start with the two easy cases where $b_{i-1}$ takes the value $2$ or $3$, which means that the orbit segment $(\hat{g}^{t_{i-1}+1}(\hy),\dots,\hat{g}^{t_i}(\hy))$ is respectively a small entropy segment or a wild segment. The case of rectangle segment is the main difficulty of the proof.

\parbreak\textbf{Case A: small entropy segment.} We first treat the case $b_{i-1}=2$. For $y \in \gamma^{\prime}(T^{\prime}_{\theta})$, recall that $\hy$ is the unique lift of $y$ such that $\hy \in \hat{g}\circ\hat{\gamma}\circ\psi(\hat{T})$. Since $b_{i-1}=2$, we know that every $y \in \gamma^{\prime}(T^{\prime}_{\theta})$ satisfies $\hy \in \hat{U}_0$ and $t_i-t_{i-1} = n_0$. By Claim \ref{claim:Step3} and since $n_0\geq N_0$, there exists a set $\cC_0$ such that $|\cC_0|\leq \exp\big(Nh(1+\delta^2)n_0\big)$ and:
\begin{equation*}
	\gamma^{\prime}(T^{\prime}_{\theta}) \subset \bigcup_{\hx \in \cC_0} B(\pi(\hx),n_0,\epsilon)
\end{equation*}
We are now able to apply Corollary \ref{cor:AdmissibleReparam} with the regularity $r$, at the scale $\epsilon$, with times $N_{\star}$ and $n_0$ to the curve $\gamma^{\prime}$ and over the set $T^{\prime}_{\theta}$. It gives us the family $\mathcal{R}(\psi,t_i)$ of reparametrization of $\gamma^{\prime}$ over $T^{\prime}_{\theta}$, which is $(N_{\star},\epsilon)$-admissible up to time $t_i-t_{i-1}$ and with the following cardinality:
\begin{equation*}
	\begin{aligned}
		|\mathscr{R}(\psi,t_i)|&\leq \Upsilon^{\lceil n_0/N_{\star} \rceil}\norm{dg}^{n_0/r}\norm{dg}^{N_{\star}/r}r_g\big(n,\epsilon,\gamma^{\prime}(T_{\theta})\big)\\
		&\leq \Upsilon^{\lceil n_0/N_{\star} \rceil}\norm{dg}^{n_0/r}\norm{dg}^{N_{\star}/r}|\cC_0|\\
		&\leq \Upsilon^{\lceil n_0/N_{\star} \rceil}\norm{dg}^{n_0/r}\norm{dg}^{N_{\star}/r}\exp\big(Nh(1+\delta^2)n_0\big).
	\end{aligned}
\end{equation*}
Doing this for every $\psi \in \mathscr{R}^{\theta}_{t_{i-1}}$, we obtain the desired family $\mathscr{R}_{t_i}^{\theta}$ which then satisfies:
\begin{equation}
	|\mathscr{R}_{t_i}^{\theta}| \leq \Upsilon^{\lceil (t_i-t_{i-1})/N_{\star} \rceil}\norm{dg}^{(t_i-t_{i-1})/r}\norm{dg}^{N_{\star}/r}\exp\big(Nh(1+\delta^2)(t_i-t_{i-1})\big)|\mathscr{R}^{\theta}_{t_{i-1}}|.
	\label{eq:CardinalReparamCaseB}
\end{equation}

\parbreak\textbf{Case B: wild segment.} We continue with the case $b_{i-1}=3$. Since $\gamma^{\prime}$ is a $(r,\epsilon)$-curve, for any $j=1,\dots,t_i-t_{i-1}$ we have the following estimate:
\begin{equation*}
	\norm{d(g^{j}\circ\gamma^{\prime})} \leq \epsilon\norm{dg^j} \leq \epsilon \norm{dg^{t_i-t_{i-1}}}.
\end{equation*}
Let $t_1,t_2 \in T^{\prime}_{\theta}$ such that $|t_1-t_2|\leq 1/\norm{dg^{(t_i-t_{i-1})}}.$ Write $x_1 = \gamma^{\prime}(t_1)$ and $x_2 = \gamma^{\prime}(t_1)$. For all $j=0,\dots,t_i-t_{i-1}$ we then have:
\begin{equation*}
	d\big(g^j(x_1),g^j(x_2)\big) \leq \epsilon.
\end{equation*}
Construct $\mathcal{D}\subset [0,1]$ a $1/\norm{dg^{t_i-t_{i-1}}}$-dense set such that $\#\mathcal{D}\leq2\norm{dg^{t_i-t_{i-1}}}.$ We then obtain that:
\begin{equation*}
	\gamma^{\prime}(T^{\prime}_{\theta}) \subset \bigcup_{t\in \mathcal{D}} B_{g}\big(\gamma^{\prime}(t),t_i-t_{i-1},\epsilon\big).
\end{equation*}
As in Case A, we apply Corollary \ref{cor:AdmissibleReparam} to obtain the family $\mathscr{R}(\psi,t_i)$. It satisfies the following estimate:
\begin{equation*}
	|\mathscr{R}(\psi,t_i)|\leq \Upsilon^{\lceil (t_i-t_{i-1})/N_{\star} \rceil}\norm{dg}^{(t_i-t_{i-1})/r}\norm{dg}^{N_{\star}/r}2\norm{dg}^{t_i-t_{i-1}}.
\end{equation*}
Again, doing this for any $\psi \in \mathscr{R}^{\theta}_{t_{i-1}}$, we obtain the desired family $\mathscr{R}^{\theta}_{t_i}$ which satisfies:
\begin{equation}
	|\mathscr{R}^{\theta}_{t_i}|\leq \Upsilon^{\lceil (t_i-t_{i-1})/N_{\star} \rceil}\norm{dg}^{(t_i-t_{i-1})/r}\norm{dg}^{N_{\star}/r}2\norm{dg}^{t_i-t_{i-1}}|\mathscr{R}^{\theta}_{t_{i-1}}|.
	\label{eq:CardinalReparamCaseC}
\end{equation}

\parbreak\textbf{Case C: rectangle segment.} We now move on to the more technical and difficult case. We set $b_{i-1}=1$. Since, for any $y \in \gamma(T_{\theta})$, the segment $\vartheta^i(\hy) := (\hat{g}^{(t_{i-1}+1)}(\hy),\dots,\hat{g}^{t_i}(\hy))$ is a rectangle segment, we have $v_{t_{i-1}+1}=1$ (here again $\hy$ is the unique lift of $y$ belonging to $\hat{\gamma}(\hat{T})$). Moreover, since $t_i-t_{i-1}=n_1$, we know that $\hat{g}^j(\hy) \notin \hat{U}_0$ for any $j=t_{i-1}+1,\dots,t_i$. From this, we conclude that if, for such a $j$, the letter $v_j$ is equal to $1$, it means that $\hat{g}^j(\hy) \in \hat{U}_1$.

Set $\tau_0=0$, $\tau_0^{\prime}=0$ and define the following sequence of times $\tau_0=\tau^{\prime}_0\leq\tau_1\leq\tau_1^{\prime}\leq\dots\leq\tau_M\leq\tau_M^{\prime}$ for some integer $M\geq1$. Assume that $\tau_j$ and $\tau_j^{\prime}$ are well-defined. Let:
\begin{equation*}
	\tau_{j+1} := \inf\{d>\tau_k^{\prime}, \ v_{t_{i-1}+d+1}=0\} \quad \text{and} \ \quad \tau_{j+1}^{\prime} := \inf\{d>\tau_{k+1}, \  v_{t_{i-1}+d+1}=1\}
\end{equation*}
with the convention that $\inf \emptyset =n-1$. In the case where there exists $j$ such that $\tau_j\geq n_1-1$ but $\tau^{\prime}_{j-1}<n_1-1$, we let $\tau^{\prime}_M=\tau_M=n-1$. In the case where there exists $j$ such that $\tau_j^{\prime}\geq n_1-1$ but $\tau_{j}<n_1-1$, we pose $\tau_M=\tau_{j}<\tau_M^{\prime}=n-1$. Note that $\tau_1$ is well defined since $v_{t_{i-1}+1}=1$.

To sum up, the sub-segments $\big(\hat{g}^{t_{i-1}+1+\tau_j^{\prime}+1}(\hy),\dots,\hat{g}^{t_{i-1}+1+\tau_{j+1}}(\hy)\big)$ satisfy that each point is contained in $\hat{U}_1$, while the sub-segments $\big(\hat{g}^{t_{i-1}+1+\tau_j+1}(\hy),\dots,\hat{g}^{t_{i-1}+1+\tau_j^{\prime}}(\hy)\big)$ are the ones such that each point is outside $\hat{U}_1\cup\hat{U}_0$. Note that this last type of sub-segments are wild segments inside the rectangle segment $\big(\hat{g}^{t_{i-1}+1}(\hy),\dots,\hat{g}^{t_i}(\hy)\big)$. Let us introduce the following useful quantity:
\begin{equation*}
	c^0_i = \#\{1\leq j\leq n_1-1, \ v_{t_{i-1}+j} = 0\}.
\end{equation*}
This quantity represents the number of iterates in the segment $\big(\hat{g}^{t_{i-1}+1}(\hy),\dots,\hat{g}^{t_i}(\hy)\big)$ outside of $\hat{U}_1$.

We will construct the family $\mathscr{R}(\psi,t_i)$ by induction, again by concatenations of family of reparametrizations, as explained before. We start by letting $\mathscr{R}_{\tau_0} = \mathscr{R}_{\tau^{\prime}_0} = \mathscr{R}^{\theta}_{t_{i-1}}$.

Suppose first that we dispose of a family of reparametrizations $\mathscr{R}_{\tau_k}$ of the curve $\gamma^{\prime}$, which is $(N_{\star},\epsilon)$-admissible up to time $\tau_j$ over the set $\psi^{-1}(T_{\theta})$. We will set:
\begin{equation*}
	\mathscr{R}_{\tau^{\prime}_j} := \{\varphi\circ\tilde{\varphi}, \ \varphi \in \mathscr{R}_{\tau_j}, \ \tilde{\varphi} \in \mathscr{R}(\varphi,\tau_j^{\prime})\}
\end{equation*}
where $\mathscr{R}(\varphi,\tau^{\prime}_j)$ is a family of reparametrization of the curve $g^{\tau_j}\circ\gamma^{\prime}\circ\varphi$ which is $(N_{\star},\epsilon)$-admissible up to time $\tau_j^{\prime}-\tau_j+1$ over $\varphi^{-1}(\psi^{-1}(T_{\theta}))$. The resulting family $\mathscr{R}_{\tau^{\prime}_j}$ of reparametrizations of the curve $\gamma^{\prime}$ over the set $\psi^{-1}(T_{\theta})$ will then be $(N_{\star},\epsilon)$-admissible up to time $\tau_j^{\prime}+1$. To construct this family, we do the same thing as in the Case B of wild segments. Indeed, the sub-segment $\big(\hat{g}^{t_{i-1}+1+\tau_j+1}(\hy),\dots,\hat{g}^{t_{i-1}+1+\tau_j^{\prime}}(\hy)\big)$ is a wild segment inside the rectangle segment. We then obtain the following estimate:
\begin{equation}
	|\mathscr{R}_{\tau^{\prime}_j}| \leq \Upsilon^{\lceil (\tau_j^{\prime}-\tau_j+1)/N_{\star}\rceil} \norm{dg}^{(\tau_j^{\prime}-\tau_j+1)/r}\norm{dg}^{N_{\star}/r}2\norm{dg}^{\tau_j^{\prime}-\tau_j+1}|\mathscr{R}_{\tau_j}|.
	\label{eq:CardinalReparamCaseA1}
\end{equation}

Let us now move onto the last case. Suppose that we dispose of a family of reparametrizations $\mathscr{R}_{\tau^{\prime}_j}$ of the curve $\gamma^{\prime}$, which is $(N_{\star},\epsilon)$-admissible up to time $\tau_j^{\prime}+1$ over the set $\psi^{-1}(T_{\theta})$. We will set:
\begin{equation*}
	\mathscr{R}_{\tau_{j+1}} := \{\varphi\circ\tilde{\varphi}, \ \varphi \in \mathscr{R}_{\tau^{\prime}_j}, \ \tilde{\varphi} \in \mathscr{R}(\varphi,\tau_{j+1})\}
\end{equation*}
where $\mathscr{R}(\varphi,\tau_{j+1})$ is a family of reparametrization of the curve $g^{\tau^{\prime}_j+1}\circ\gamma^{\prime}\circ\varphi$ which is $(N_{\star},\epsilon)$-admissible up to time $\tau_{j+1}-\tau^{\prime}_j-1$ over $\varphi^{-1}(\psi^{-1}(T_{\theta}))$. The resulting family $\mathscr{R}_{\tau_{j+1}}$ of reparametrizations of the curve $\gamma^{\prime}$ over the set $\psi^{-1}(T_{\theta})$ will then be $(N_{\star},\epsilon)$-admissible up to time $\tau_{j+1}$.

Fix then $\varphi \in \mathscr{R}_{\tau^{\prime}_j}$. Let $\tilde{\gamma} := g^{\tau_j^{\prime}+1} \circ \gamma^{\prime}\circ \varphi$ and let $\tilde{T}_{\theta} :=\varphi^{-1}(\psi^{-1}(T_{\theta}))$. Recall that $\pi(\hat{U}_1)\subset \cup R_i$, the family of rectangles obtained by applying Proposition \ref{prop:MarkovRectangles} in Step \stepref{step:U1}. Recall also that for any $y \in \gamma(T_{\theta})$, the sub-segment $(\hat{g}^{t_{i-1}+1+\tau_j^{\prime}+1}(\hy),\dots,\hat{g}^{t_{i-1}+1+\tau_{j+1}}(\hy))$ is contained in $\hat{U}_1$ (here $\hy$ is again the unique lift of $y$ belonging to $\hat{\gamma}(\hat{T})$). We then obtain that $\tilde{\gamma}$ is an $(r,\epsilon)$-curve and that $\tilde{\gamma}(\tilde{T}_{\theta})\subset \cup R_i$. Since $\epsilon<2\sigma(1+K)$ by \eqref{eq:ChoiceOfSigma}, we can apply Proposition \ref{prop:IntersectionCurveRectangle} to the curve $\tilde{\gamma}$ with the constant $C:=2(1+K)$. Recall the definition of $\text{Cross}(\tilde{\gamma})$ and $\text{Fold}(\tilde{\gamma})$ from Definition \ref{def:Crossing/TurningIntervals}, the set of crossing, respectively folding $\tilde{\gamma}$-intervals. Let $\mathcal{C}_1 \subset \text{Cross}(\tilde{\gamma})$ and $\mathcal{C}_2 \subset \{1,\dots,L\}$ be the two collections given by Proposition \ref{prop:IntersectionCurveRectangle}.

We start by dealing with the case of crossing $\tilde{\gamma}$-intervals. Let $I = [s,t] \in \mathcal{C}_1$ such that $I\cap\tilde{T}_{\theta} \ne \emptyset$. Denote by $\tilde{\gamma}_I$ the curve $\tilde{\gamma}$ restricted to the interval $I$. It defines then a map $\tilde{\gamma}_{I} : I=[s,t] \subset [0,1] \rightarrow M$. Let $i_0 \in \{1,\dots,L\}$ be the integer such that $\tilde{\gamma}_I \subset R_{i_0}$. Pick any $y\in \tilde{\gamma}_{I}(\tilde{T}_{\theta})$ and let $\hy$ be its lift belonging to $\hat{U}_1$. Note that for any $m=0,\dots,\tau_{j+1}-\tau^{\prime}_j-1$, we have $\hat{g}^{m}(\hy) \in \hat{U}_1$, and then $g^m(y) \in \cup_i R_i$. Let then $i_m \in \{1,\dots,L\}$ be an integer such that $g^m(y) \in R_{i_m}$ (which may not be unique, since the rectangles may intersect). See Figure \ref{fig:ProofTopologicalIntersection} for a visual proof of the following Claim.

\begin{claim}
	For any $m=0,\dots,\tau_{j+1}-\tau^{\prime}_j-1$, the following holds:
	\begin{equation*}
		g^m\big(\text{Int}(\tilde{\gamma}_{I})\big) \subset \tilde{R}_{i_m}.
	\end{equation*}
	\label{claim:Step8CaseCrossing}
\end{claim}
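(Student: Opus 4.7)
I would prove the claim by induction on $m$. The base case $m=0$ is immediate: by definition of a crossing $\tilde{\gamma}$-interval, $\tilde{\gamma}_I \subset R_{i_0} \subset \tilde{R}_{i_0}$, and since $\text{Int}(\tilde\gamma_I)\subset R_{i_0}\subset\tilde R_{i_0}$ by Property \ref{property:MR1}, the conclusion holds.

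For the inductive step, assume $g^m\big(\text{Int}(\tilde{\gamma}_I)\big) \subset \tilde{R}_{i_m}$; I aim to show $g^{m+1}\big(\text{Int}(\tilde{\gamma}_I)\big) \subset \tilde{R}_{i_{m+1}}$. First, I would verify that we are allowed to invoke the Markov-type Property \ref{property:MR5} for the pair $(R_{i_m}, R_{i_{m+1}})$: since $g^m(y) \in R_{i_m}$ and $g^{m+1}(y) \in R_{i_{m+1}}$, we have $f^N(R_{i_m}) \cap R_{i_{m+1}} \neq \emptyset$; moreover the lift $\hat g^m(\hy) \in \hat{U}_1$ and $\hat g^{m+1}(\hy) = \hat f^N(\hat g^m(\hy)) \in \hat{U}_1$ provide the required good Pesin point in $\hat U_1\cap\hf^{-N}(\hat U_1)$ with projection in $R_{i_m}$. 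Property \ref{property:MR5} therefore yields
\begin{equation*}
f^N(\tilde{R}_{i_m}) \cap \partial^u \tilde{R}_{i_{m+1}} = \emptyset.
\end{equation*}

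Applying $g = f^N$ to the induction hypothesis, $g^{m+1}\big(\text{Int}(\tilde{\gamma}_I)\big) \subset f^N(\tilde{R}_{i_m})$, so this connected curve cannot meet $\partial^u \tilde{R}_{i_{m+1}}$. On the other hand, $\partial^{s,l}\tilde{R}_{i_{m+1}} \subset \Ws_{loc}(\hy^{l}_{i_{m+1}})$ by Property \ref{property:MR1}, and since $\tilde{\gamma}_I$ is contained in an iterate of $\Wu_{loc}(\hx)$, the curve $g^{m+1}\big(\text{Int}(\tilde\gamma_I)\big)$ is itself a subset of some forward iterate of $\Wu_{loc}(\hx)$. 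Hypothesis \ref{hypothesis:Unstable} then forces
\begin{equation*}
g^{m+1}\big(\text{Int}(\tilde\gamma_I)\big) \cap \partial^{s}\tilde{R}_{i_{m+1}} = \emptyset.
\end{equation*}

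To conclude, I would use a topological trapping argument: $g^{m+1}\big(\text{Int}(\tilde\gamma_I)\big)$ is connected, passes through $g^{m+1}(y) \in R_{i_{m+1}} \subset \tilde{R}_{i_{m+1}}$, and disjoint from $\partial \tilde{R}_{i_{m+1}} = \partial^u \tilde{R}_{i_{m+1}} \cup \partial^s \tilde{R}_{i_{m+1}}$; since $\tilde R_{i_{m+1}}$ is an open topological disk in $M$, the curve must be entirely contained in $\tilde{R}_{i_{m+1}}$, completing the induction. The only subtle point is checking applicability of Property \ref{property:MR5} at each step, which the definition of a rectangle segment guarantees uniformly over $m=0,\dots,\tau_{j+1}-\tau'_j-1$.
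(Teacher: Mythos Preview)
Your proof is correct and follows essentially the same approach as the paper: induction on $m$, with the inductive step combining Property~\ref{property:MR5} (to avoid the unstable boundary of $\tilde R_{i_{m+1}}$) and Hypothesis~\ref{hypothesis:Unstable} (to avoid the stable boundary), then concluding by connectedness that the curve is trapped inside $\tilde R_{i_{m+1}}$. Your version is in fact slightly more careful than the paper's in explicitly verifying the hypothesis $\hat g^m(\hy)\in\hat U_1\cap\hf^{-N}(\hat U_1)$ needed to invoke Property~\ref{property:MR5}.
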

\begin{proof}
	The proof goes by induction. We already have that $\tilde{\gamma}_{I}\subset R_{i_0}$. Suppose that $g^{m-1}(\tilde{\gamma}_{I})\subset R_{i_{m-1}}$ for some $0\leq j<\tau_{j+1}-\tau^{\prime}_j-1$. Since $\hat{g}^{m-1}(\hy)\in \hat{U}_1$ and since $g(R_{i_{m-1}})\cap R_{i_m}\ne \emptyset$, the $us$-rectangles $\tilde{R}_{i_{m-1}}$ and $\tilde{R}_{i_m}$ satisfy Property \ref{property:MR5}:
	\begin{equation*}
		g(\tilde{R}_{i_{m-1}}) \cap \partial^u\tilde{R}_{i_m} = \emptyset.
	\end{equation*}
	It implies that the curve $g^m(\tilde{\gamma}_I)$, which is included in $g(\tilde{R}_{i_{m-1}})$ cannot cross $\partial^u\tilde{R}_{i_m}$. Hypothesis \ref{hypothesis:Unstable} also implies that $g^m(\tilde{\gamma}_I)$ cannot cross $\partial^{s,l}\tilde{R}_{i_m} \subset \Ws_{loc}(\hy_{i_m}^l)$ for $l=1,2$ neither. Since $g^m(\tilde{\gamma}_{I})$ meets $\tilde{R}_{i_m}$ but does not cross any of its boundaries, $g^m(\tilde{\gamma}_{I})$ has to be contained in $\tilde{R}_{i_m}$. This concludes the induction. We emphasize that the important property that displays the curve $\tilde{\gamma}_{I}$ is its connectedness.
\end{proof}

Combining Property \ref{property:MR4} and Lemma \ref{lem:DiameterRectangle}, for any $i \in \{1,\dots,L\}$ we have $\text{diam}(\tilde{R}_i)\leq \epsilon$, by \eqref{eq:ChoiceOfSigma}. This implies in particular that for any $I \in \mathcal{C}_1$, the set $\tilde{\gamma}_I(\tilde{T}_{\theta})$ is covered by a single one $(g,\tau_{j+1}-\tau^{\prime}_j-1,\epsilon)$-Bowen ball. We can now apply Corollary \ref{cor:AdmissibleReparam} to conclude that there exists a family of reparametrization $\mathscr{R}(\varphi,\tau_{j+1})^{\text{Cross}}$ of the curve $\tilde{\gamma}$ over the set $\tilde{T}_{\theta}\cap\text{Cross}(\tilde{\gamma})$, which is $(N_{\star},\epsilon)$-admissible up to time $\tau_{j+1}-\tau_j^{\prime}-1$ such that:
\begin{equation*}
	|\mathscr{R}(\varphi,\tau_{j+1})^{\text{Cross}}|\leq |\cC_1|\Upsilon^{\lceil (\tau_{j+1}-\tau_j^{\prime}-1)/N_{\star}\rceil}\norm{dg}^{(\tau_{j+1}-\tau_j^{\prime}-1)/r}\norm{dg}^{N_{\star}/r}.
\end{equation*}
For the intervals $[0,s_1]$ and $[s_2,1]$ given by Proposition \ref{prop:IntersectionCurveRectangle}, since they are contained in some $us$-rectangle and are connected, we can deal with them as for $\tilde{\gamma}$-crossing intervals. This gives us $\mathscr{R}(\varphi,\tau_{j+1})^{\text{Extrem}}$, a family of reparametrizations of $\tilde{\gamma}$ over the set $\tilde{T}_{\theta}\cap\big([0,s_1]\cup[s_2,1]\big)$, which is $(N_{\star},\epsilon)$-admissible up to time $\tau_{j+1}-\tau_{j}^{\prime}-1$ and which satisfies the estimate:
\begin{equation*}
	|\mathscr{R}(\varphi,\tau_{j+1})^{\text{Extrem}}|\leq 2\Upsilon^{\lceil (\tau_{j+1}-\tau_j^{\prime}-1)/N_{\star}\rceil}\norm{dg}^{(\tau_{j+1}-\tau_j^{\prime}-1)/r}\norm{dg}^{N_{\star}/r}.
\end{equation*}

We conclude Case C by dealing with the case of folding $\tilde{\gamma}$-intervals. This part of the argument contains a crucial idea, which is a bit subtle. Let us then explain it before going into the details. The important property we use in the case of crossing $\tilde{\gamma}$-intervals was their connectedness. Here, since we do not have a bound on the number of folding $\tilde{\gamma}$-intervals, we are forced to work with several intervals, but such that their images by $\tilde{\gamma}$ are contained in some fixed rectangle. The set of such intervals becomes then disconnected. However the iterates by $g^m$ of the curve $\tilde{\gamma}$ for $m=0,\dots,N_{\star}$ are curves which are still small compared to the size of the local stable manifolds $\Ws_{loc}(\hy_i^l)$ for $i\in \{1,\dots,L\}$ and $l=1,2$. We are then able to use again Hypothesis \ref{hypothesis:Unstable} combined with Property \ref{property:MR5}, indeed, if $g^m(\tilde{\gamma})$ intersects two different rectangles of the family $\tilde{\mathcal{R}}$, since it is small, $g^m(\tilde{\gamma})$ has to intersect some $\Ws_{loc}(\hy_i^l)$. For $m\geq N_{\star}$ iterates, we have a bad control on the size of the curve $g^m(\tilde{\gamma})$ and the previous argument cannot be applied. To bypass that problem, we use Yomdin Theorem after $N_{\star}$ iterates to be able to work again with small curves intersecting some rectangle of $\mathcal{R}$, and repeat the previous argument.

Let $i \in \cC_2$ such that the set $\tilde{\gamma}(\tilde{T}_{\theta})\cap R_i$ is non-empty. Let $q_1,q_2\geq 0$ be two integers such that $\tau_{k+1}-\tau_k^{\prime}-1 = q_1N_{\star} + q_2$, with $q_2<N_{\star}$. For $q=0,\dots,q_1 +1$, we are going to build by induction families of reparametrizations $\mathscr{I}_q$ (which depends on $i$) of $\tilde{\gamma}$ over the set $\tilde{T}_{\theta}\cap \tilde{\gamma}^{-1}(R_i)$, which will be $(N_{\star},\epsilon)$-admissible up to time $qN_{\star}$, with the following property. For each $\varphi^{\prime} \in \mathscr{I}_q$, the set $g^m\circ\tilde{\gamma}\circ\varphi^{\prime}\big((\varphi^{\prime})^{-1}(\tilde{\gamma}^{-1}(R_i))\cap\tilde{T}_{\theta}\big)$ will be contained in a single one rectangle of the family $\tilde{\mathcal{R}}$ for any $m=0,\dots,qN_{\star}$ (the rectangle depends on the iterate $m$). Taking the union of the families $\mathscr{I}_{q_1+1}$ for each $i \in \cC_2$ will give us $\mathscr{R}(\varphi,\tau_{j+1})^{\text{Fold}}$, a family of reparametrization of the curve $\tilde{\gamma}$ over the set $\tilde{T}_{\theta}\cap\text{Fold}(\tilde{\gamma})$ which is $(N_{\star},\epsilon)$-admissible up to time $\tau_{j+1}-\tau_j-1$.

Set $\mathscr{I}_0 = \{\text{Id}\}$. Suppose that such a family $\mathscr{I}_{q-1}$ exists for some $q \in \{1,\dots,q_1\}$. We are going to build now the family $\mathscr{I}_q$. As explained previously, we proceed by concatenation. Let $\varphi^{\prime} \in \mathscr{I}_{q-1}$. Denote $\kappa := g^{(q-1)N_{\star}}\circ\tilde{\gamma}\circ\varphi^{\prime}$. By the hypothesis of induction, $\kappa$ is an $(r,\epsilon)$-curve. Let $G_i := (\varphi^{\prime})^{-1}\big(\tilde{T}_{\theta}\cap \tilde{\gamma}^{-1}(R_i)\big)$, the set over which we are going to reparametrize. By the induction hypothesis, there exists $i_0 \in \{1,\dots,L\}$ such that $\kappa(G_i) \subset \tilde{R}_{i_0}$. Let $y \in \kappa(G_i)$ and let $\hy$ be its lift to $\hat{U}_1$. Recall that $\hat{g}^m(\hy) \in \hat{U}_1$ for $m=0,\dots,N_{\star}$, so there exists an integer $i_m \in \{1,\dots,L\}$ such that $g^m(y) \in R_{i_m}$ for those times $m$ (again, $i_m$ may not be unique).

\begin{claim}
	For any $m=0,\dots,N_{\star}$, the following holds:
	\begin{equation*}
		g^m\circ\kappa(G_i) \subset \tilde{R}_{i_m}.
	\end{equation*}
	\label{claim:Step8CaseFolding}
\end{claim}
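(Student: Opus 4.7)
I would prove the claim by induction on $m$, following the structure of Claim~\ref{claim:Step8CaseCrossing} but adapting the topological argument to handle the fact that $G_i$ may be disconnected. The base case $m=0$ is the induction hypothesis on $q$: at $q=1$ it holds because $\kappa=\tilde{\gamma}$ and $\kappa(G_i)\subset R_i\subset\tilde{R}_i$ (setting $i_0:=i$), while for $q>1$ it is precisely the conclusion of the outer induction at $q-1$.

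For the inductive step, assume $g^{m-1}\circ\kappa(G_i)\subset\tilde{R}_{i_{m-1}}$ with $1\leq m\leq N_\star$. Since $\hat{g}^{m-1}(\hy)\in\hat{U}_1\cap\hat{g}^{-1}(\hat{U}_1)$ and $g(R_{i_{m-1}})\cap R_{i_m}\neq\emptyset$ (both contain the corresponding iterate of the reference point $y$), Property~\ref{property:MR5} yields $g(\tilde{R}_{i_{m-1}})\cap\partial^u\tilde{R}_{i_m}=\emptyset$. Moreover, by Hypothesis~\ref{hypothesis:Unstable}, the connected curve $g^m\circ\kappa$ --- being a sub-arc of an iterate of $\Wu_{loc}(\hx)$ --- does not meet $\Ws_{loc}(\hy_{i_m}^l)\supset\partial^{s,l}\tilde{R}_{i_m}$ for $l=1,2$.

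The key step, absent from the crossing case, is a chart argument that I would carry out in the adapted Pesin chart $(\Psi_{\hx_{i_m}},q_{\tilde{\epsilon}}(\hx_{i_m}))$ of $\tilde{R}_{i_m}$. The length of $g^m\circ\kappa$ is bounded by $\epsilon\norm{dg}^{N_\star}$, while $g(\tilde{R}_{i_{m-1}})$ has diameter at most $(1+K)\sigma\norm{dg}$ by Lemma~\ref{lem:DiameterRectangle}. Since $N_\star$, $K$ and $\hat{\Lambda}$ are all fixed before $\sigma$, one takes $\sigma$ small enough so that both quantities are much smaller than the uniform lower bound $\inf_{\hat{\Lambda}}q_{\tilde{\epsilon}}$; because each of these sets contains a point of $R_{i_m}$ (namely $g^m(y)$), both are then entirely contained in the chart. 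There, $\Ws_{loc}(\hy_{i_m}^l)$ are Lipschitz horizontal graphs bounding a horizontal strip $S$, and $\partial^{u,l}\tilde{R}_{i_m}$ are $(\mathscr{C}^{u,\hx_{i_m}},u)$-manifold graphs bounding a vertical region $V$, with $S\cap V=\tilde{R}_{i_m}$.

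The conclusion then follows from two applications of connectedness. The curve $g^m\circ\kappa$ is connected, meets $S$ at $g^m(y)$ and does not cross its horizontal boundary, so it lies entirely in $S$. The set $g(\tilde{R}_{i_{m-1}})$, being the continuous image of the connected disk $\tilde{R}_{i_{m-1}}$, meets $V$ at $g^m(y)$ and does not cross its vertical boundary, so it lies entirely in $V$. Consequently, for every $z\in G_i$,
\begin{equation*}
	g^m(\kappa(z))\in (g^m\circ\kappa)\cap g(\tilde{R}_{i_{m-1}})\subset S\cap V=\tilde{R}_{i_m},
\end{equation*}
closing the induction. The main obstacle, compared to the crossing case, is exactly the disconnectedness of $G_i$: it prevents the direct Jordan-domain argument and forces one to apply the two non-crossing properties to two distinct connected ambient objects --- the full curve $g^m\circ\kappa$ and the full set $g(\tilde{R}_{i_{m-1}})$ --- a strategy made possible only by fixing the reparametrization frequency $N_\star$ in advance of $\sigma$, so that for $m\leq N_\star$ both $g^m\circ\kappa$ and $g(\tilde{R}_{i_{m-1}})$ remain small enough to be confined to a single Pesin chart.
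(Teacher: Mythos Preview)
Your strategy --- induction on $m$, Property~\ref{property:MR5}, Hypothesis~\ref{hypothesis:Unstable}, connectedness of the full curve, and the size estimate confining everything to a single Pesin chart --- matches the paper's, and the step trapping the curve $g^m\circ\kappa$ in the horizontal strip $S$ via Hypothesis~\ref{hypothesis:Unstable} is exactly right. Your order-of-constants remark (fix $N_\star$, $K$, $N$ first; choose $\sigma$ last) is also correct and is precisely what makes the chart-confinement work.

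There is, however, a genuine gap in your treatment of $g(\tilde R_{i_{m-1}})$. Property~\ref{property:MR5} only gives $g(\tilde R_{i_{m-1}})\cap\partial^u\tilde R_{i_m}=\emptyset$, and $\partial^u\tilde R_{i_m}$ is exactly $\partial V\cap S$: the pieces of the two $(\mathscr{C}^{u},u)$-manifold graphs that lie between the stable manifolds $\Ws_{loc}(\hy_{i_m}^l)$. Nothing in MR5 forbids $g(\tilde R_{i_{m-1}})$ from meeting those $u$-manifolds \emph{outside} $S$; and since $g$ expands the unstable direction, $g(\tilde R_{i_{m-1}})$ typically protrudes well beyond $S$ vertically, so this can happen. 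Hence ``connected, meets $V$, does not cross $\partial V$, therefore $\subset V$'' is not justified. The paper argues instead by contradiction: assuming some $z\in g^m\circ\kappa(G_i)\setminus\tilde R_{i_m}$, it deduces that $z$ must lie outside the horizontal strip $Q\ (=S)$; then the connected curve $g^m\circ\kappa$, which you have already confined to $S$, would have to reach $z\notin S$, forcing it to cross a $V^{s,l}$ and contradicting Hypothesis~\ref{hypothesis:Unstable}. The implicit reason $z\notin Q$ is that $g(\tilde R_{i_{m-1}})\cap Q$ is connected --- the stable boundaries $g(\partial^s\tilde R_{i_{m-1}})$ lie in local stable manifolds and hence cannot cross $V^{s,l}$ --- avoids $\partial^u\tilde R_{i_m}$ by MR5, and contains $g^m(y)\in\tilde R_{i_m}$, whence $g(\tilde R_{i_{m-1}})\cap Q\subset\tilde R_{i_m}$. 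Your direct argument is easily repaired by proving this weaker containment $g(\tilde R_{i_{m-1}})\cap S\subset\tilde R_{i_m}$ in place of the unjustified $g(\tilde R_{i_{m-1}})\subset V$.
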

\begin{proof}
	Let $\big(\Psi_{\hz_m},q_{\tilde{\epsilon}}(\hz_m)\big)$ be an adapted chart to the rectangle $R_{i_m}$. First, remark that the curve $g^m\circ\kappa$ is contained in the domain $\Psi_{\hz_m}\big(R(q_{\tilde{\epsilon}}(\hz_m))\big)$. Indeed, $g^m\circ\kappa$ intersects $R_{i_m}$ and for any $t_1,t_2 \in [0,1]$ we have:
	\begin{equation*}
		d\big(g^m\circ\kappa(t_1), g^m\circ\kappa(t_1)\big)\leq \norm{dg}^{N_{\star}}d(\kappa(t_1),\kappa(t_2)) <\norm{dg}^{N_{\star}}\epsilon.
	\end{equation*}
	This ensures that $g^m\circ\kappa([0,1])\subset\Psi_{\hz_j}\big(R(q_{\tilde{\epsilon}}(\hz_j))\big)$, for $m=0,\dots,N_{\star}$, by the choices of $\epsilon$, and then $\sigma$, in Step \stepref{step:U1} and Step \stepref{step:U2}.
	
	We proceed now by induction again. We already have that $\kappa(G_i)\subset R_{i_0}\subset \tilde{R}_{i_0}$. Suppose that $g^{m-1}\circ\kappa(G_i)\subset \tilde{R}_{i_{m-1}}$ for some $0\leq j<N_{\star}$. Note that we cannot apply directly the same argument as in Claim \ref{claim:Step8CaseCrossing} since the set $g^m\circ\kappa(G_i)$ is no longer connected.
	
	The proof goes by contradiction. We know that $g^m\circ\kappa(G_i)\subset \cup_i R_i$. Suppose then that there exists a point $z \in g^m\circ\kappa(G_i)$ such that $z \notin \tilde{R}_{i_m}$. Recall that the stable boundaries $\partial^{s,l}R_{i_j}$ are contained inside local stable manifolds $\Ws_{loc}(\hy_{i_m}^l)$. These curves are Lipschitz graphs in the chart $\big(\Psi_{\hz_{i_m}},q_{\tilde{\epsilon}}(\hz_{i_m})\big)$ of size $q_{\tilde{\epsilon}}(\hz_{i_m})$. Denote them by $V^{s,l}$. Consider the open set $Q\subset \bbR^2$ delimited by the Jordan curve composed by the union of the two graphs $V^{s,1}$ and $V^{s,2}$ and the vertical boundaries of the chart $\{\pm q_{\tilde{\epsilon}}(\hz_{i_m})\} \times [-q_{\tilde{\epsilon}}(\hz_{i_m}),q_{\tilde{\epsilon}}(\hz_{i_m})]$. Since $g^m\circ\kappa(G_i)\cap R_{i_m}\ne \emptyset$, the image in the chart $\big(\Psi_{\hz_{i_m}},q_{\tilde{\epsilon}}(\hz_{i_m})\big)$ of the curve $g^m\circ\kappa$ has to intersect $Q$. Since $\hat{g}^{m-1}(\hy)\in \hat{U}_1$ and since $g(R_{i_{m-1}})\cap R_{i_m}\ne \emptyset$, the $us$-rectangles $\tilde{R}_{i_{m-1}}$ and $\tilde{R}_{i_m}$ satisfy Property \ref{property:MR5}:
	\begin{equation*}
		g(\tilde{R}_{i_{m-1}}) \cap \partial^u\tilde{R}_{i_m} = \emptyset.
	\end{equation*}
	Combining this and the fact that $z \notin \tilde{R}_{i_m}$, we then obtain that $g^m\circ\kappa$ intersects also $R\big(q_{\tilde{\epsilon}}(\hz_{i_m})\big)\backslash\overline{Q}$ in the chart $\big(\Psi_{\hz_{i_m}},q_{\tilde{\epsilon}}(\hz_{i_m})\big)$. It implies that $g^m\circ\kappa$ has to cross one of the $V^{s,l}$, which contradicts Hypothesis \ref{hypothesis:Unstable}. Indeed, we use the connectedness of $g^m\circ\kappa$ and the fact that it has small size to bypass the fact that $g^m\circ\kappa(G_i)$ is not connected.
\end{proof}

Combining Claim \ref{claim:Step8CaseFolding} and the fact that $\diam(\tilde{R}_i)<\epsilon$ for any $i \in \{1,\dots,L\}$, we can apply Yomdin Theorem \ref{thm:Yomdin} with the map $g^{N_{\star}}$ and the curve $\kappa$. It gives us a family of reparametrization of $(r,\epsilon)$-curves over the set of $t \in [0,1]$ such that $g^{N_{\star}}\circ\kappa(t) \in \tilde{R}_{i_{N_{\star}}}$ with cardinality smaller than $\Upsilon\norm{dg}^{N_{\star}/r}$. Repeating this for any $\varphi^{\prime} \in \mathscr{I}_{q-1}$ gives us the family $\mathscr{I}_q$ by concatenation. As explained, taking the union of the families $\mathscr{I}_{q_1+1}$ constructed for each $i \in \cC_2$ gives the family of reparametrizations $\mathscr{R}(\varphi,\tau_{k+1})^{\text{Fold}}$ of the curve $\tilde{\gamma}$ over the set $\tilde{T}_{\theta}\cap\text{Fold}(\tilde{\gamma})$, which is $(N_{\star},\epsilon)$-admissible up to time $\tau_{k+1}-\tau_k^{\prime}-1$. We then have the following bound on its cardinality:
\begin{equation*}
	|\mathscr{R}(\varphi,\tau_{k+1})^{\text{Fold}}|\leq |\cC_2|\Upsilon^{\lceil (\tau_{k+1}-\tau_k^{\prime}-1)/N_{\star}\rceil}\norm{dg}^{(\tau_{k+1}-\tau_k^{\prime}-1)/r}\norm{dg}^{N_{\star}/r}.
\end{equation*}

To conclude with Case C, our goal was to construct $\mathscr{R}(\varphi,\tau_{k+1})$, where $\varphi \in \mathscr{R}_{\tau_k^{\prime}}$, a family of reparametrization of the curve $\gamma^{\prime} := g^{t_{i-1}}\circ\gamma\circ\psi$, which is $(N_{\star},\epsilon)$-admissible up to time $\tau_j^{\prime}+1$ over the set $\psi^{-1}(T_{\theta})$. We have built three families of reparametrizations of $\tilde{\gamma}$, all $(N_{\star},\epsilon)$-admissible up to time $\tau_{j+1}-\tau_j^{\prime}-1$. Let us describe which set they reparameterize and what are their cardinalities.
\begin{itemize}[label={--}]
	\item $\mathscr{R}(\varphi,\tau_{j+1})^{\text{Cross}}$ reparameterizes over the set $\tilde{T}_{\theta}\cap\text{Cross}(\tilde{\gamma})$ and has cardinality:
	\begin{equation*}
		|\mathscr{R}(\varphi,\tau_{j+1})^{\text{Cross}}|\leq |\cC_1|\Upsilon^{\lceil (\tau_{j+1}-\tau_j^{\prime}-1)/N_{\star}\rceil}\norm{dg}^{(\tau_{j+1}-\tau_j^{\prime}-1)/r}\norm{dg}^{N_{\star}/r}.
	\end{equation*}
	\item $\mathscr{R}(\varphi,\tau_{j+1})^{\text{Extrem}}$ reparameterizes over the set $\tilde{T}_{\theta}\cap\big([0,s_1]\cup[s_2,1]\big)$ and has cardinality:
	\begin{equation*}
		|\mathscr{R}(\varphi,\tau_{j+1})^{\text{Extrem}}|\leq 2\Upsilon^{\lceil (\tau_{j+1}-\tau_j^{\prime}-1)/N_{\star}\rceil}\norm{dg}^{(\tau_{j+1}-\tau_j^{\prime}-1)/r}\norm{dg}^{N_{\star}/r}.
	\end{equation*}
	\item $\mathscr{R}(\varphi,\tau_{k+1})^{\text{Fold}}$ reparameterizes over the set $\tilde{T}_{\theta}\cap\text{Fold}(\tilde{\gamma})$ and has cardinality:
	\begin{equation*}
		|\mathscr{R}(\varphi,\tau_{j+1})^{\text{Fold}}|\leq |\cC_2|\Upsilon^{\lceil (\tau_{j+1}-\tau_j^{\prime}-1)/N_{\star}\rceil}\norm{dg}^{(\tau_{j+1}-\tau_j^{\prime}-1)/r}\norm{dg}^{N_{\star}/r}.
	\end{equation*}
\end{itemize}
By Proposition \ref{prop:IntersectionCurveRectangle}, we know that:
\begin{equation*}
	\tilde{\gamma}^{-1}\Big(\tilde{\gamma}\bigcap\big(\cup_i R_i\big)\Big) \subset \text{Cross}(\tilde{\gamma})\cup \text{Fold}(\tilde{\gamma}) \cup [0,s_1] \cup [s_2,1].
\end{equation*}
This implies that taking the union of these three families of reparametrization gives us the desired family $\mathscr{R}(\varphi,\tau_{j+1})$. This concludes then the construction of $\mathscr{R}_{\tau_{j+1}}$, which has the following bound on its cardinality:
\begin{equation}
	|\mathscr{R}_{\tau_{j+1}}| \leq \big(|\cC_1| + 2 + |\cC_2|\big) \Upsilon^{\lceil (\tau_{j+1}-\tau_j^{\prime}-1)/N_{\star}\rceil}\norm{dg}^{(\tau_{j+1}-\tau_j^{\prime}-1)/r}\norm{dg}^{N_{\star}/r}|\mathscr{R}_{\tau_j^{\prime}}|.
	\label{eq:CardinalReparamCaseA2}
\end{equation}

By the induction process, we then have defined $\mathscr{R}(\psi,t_i)$, for $\psi \in \mathscr{R}^{\theta}_{t_{i-1}}$. We recall that it is a family of reparametrization of the curve $g^{t_i-1}\circ\gamma\circ\psi$ over the set $\psi^{-1}(T_{\theta})$, which is $(N_{\star},\epsilon)$-admissible up to time $t_i-t_{i-1}$. By \eqref{eq:CardinalReparamCaseA1} and \eqref{eq:CardinalReparamCaseA2}, it satisfies:
\begin{equation*}
	|\mathscr{R}(\psi,t_i)| \leq 2^{c_i^0}\norm{dg}^{2c_i^0}\big(|\cC_1| + 2 + |\cC_2|\big)^{c_i^0} \Upsilon^{\lceil (t_i-t_{i-1})/N_{\star}\rceil}\norm{dg}^{(t_i-t_{i-1})/r}\norm{dg}^{N_{\star}/r}.
\end{equation*}
Doing this for every $\psi \in \mathscr{R}^{\theta}_{t_{i-1}}$, we obtain the desired family $\mathscr{R}^{\theta}_{t_i}$ which satisfies:
\begin{equation}
	|\mathscr{R}^{\theta}_{t_i}| \leq 2^{c_i^0}\norm{dg}^{2c_i^0}\big(|\cC_1| + 2 + |\cC_2|\big)^{c_i^0} \Upsilon^{\lceil (t_i-t_{i-1})/N_{\star}\rceil}\norm{dg}^{(t_i-t_{i-1})/r}\norm{dg}^{2N_{\star}/r} |\mathscr{R}^{\theta}_{t_{i-1}}|.
	\label{eq:CardinalReparamCaseA}
\end{equation}
This then concludes Case C and Step \stepref{step:U8}.

\parbreak\UStep[A bound on the cardinality of the reparametrizations.]\label{step:U9}
For each $n\geq N_k$ and each decomposition type $\theta := \big[\big(v_0,\dots,v_{n-1}\big),\big((t_0,b_0),\dots,(t_m,b_m)\big)\big]$, Step \stepref{step:U8} gave us families of reparametrizations $\mathscr{R}^{\theta}_{t_i}$ of the curve $\gamma$ over $T_{\theta}$, for each time $t_i$. These families are $(N_{\star},\epsilon)$-admissible up to time $t_i$. Their cardinality depends on the class of the orbit segment $t_i-t_{i-1}$ and the length of each class of segment is given by Claim \ref{claim:Step5}. Combining this with \eqref{eq:CardinalReparamCaseA}, \eqref{eq:CardinalReparamCaseB}, \eqref{eq:CardinalReparamCaseC}, the estimates from Proposition \ref{prop:IntersectionCurveRectangle} ($\gamma$ is an $(r,\epsilon)$-curve with $\epsilon<2\sigma(K+1)$) and the fact that $m$ is bigger than the number of rectangle segments, we obtain the following bound on the cardinality of the family $\mathscr{R}_n^{\theta}$:
\begin{equation}
	\begin{aligned}
		|\mathscr{R}_n^{\theta}|\leq& \exp\Big(Nh(1+\delta^2)\big((1-\alpha)n + 2\eta n\big)\Big)\\
		&\times (2\norm{dg})^{2\eta n}\\
		&\times \big(2\norm{dg}(24(K+1)+2)\big)^{m+\sum_ic_i^0}\\
		&\times \Upsilon^{n/N_{\star}}\norm{dg}^{n/r}\norm{dg}^{mN_{\star}/r}.
	\end{aligned}
	\label{eq:BoundReparamTheta}
\end{equation}
The first term comes from the small entropy segments, the second from the wild segments, the third from the rectangle segments and the last one from Yomdin Theorem to bound the number of reparametrizations. We obtain the family $\mathscr{R}_n$ by taking the union of the $\mathscr{R}_n^{\theta}$ over all the decomposition types $\theta$. Note now that $\sum_i c_i^0$ is the number of letter $0$ in the word $v$, so it is smaller than $\eta n$. Note also that $m$ is the total number of different segments in the decomposition type and is smaller than $4\eta n$ (see the proof of Claim \ref{claim:Step6}). Combining this with Claim \ref{claim:Step6}, we obtain the following bound:
\begin{equation}
	\begin{aligned}
		|\mathscr{R}_n| \leq& \exp\big(n(5\eta + 2H(5\eta))\big)\\
		&\times \exp\Big(Nh(1+\delta^2)\big((1-\alpha)n + 2\eta n\big)\Big)\\
		&\times (2\norm{dg})^{2\eta n}\\
		&\times \big(2\norm{dg}(24(K+1)+2)\big)^{5\eta n}\\
		&\times \Upsilon^{n/N_{\star}}\norm{dg}^{n/r}\norm{dg}^{4\eta nN_{\star}/r}.
	\end{aligned}
	\label{eq:BoundMainReparam}
\end{equation}
Using now Proposition \ref{prop:EntropyReparametrizations}, we get the following bound on the entropy of the $\mu_k$:
\begin{equation*}
	\begin{aligned}
		h(f,\mu_k,\epsilon) = \frac{1}{N}h(g,\mu_k,\epsilon) &\leq \lim_{n\rightarrow +\infty}\frac{1}{Nn} \log |\mathscr{R}_n|\\
		&\leq 5\eta + 2H(5\eta)\\
		&+ h(1+\delta^2)\big((1-\alpha) +2\eta\big)\\
		&+2\eta \log(2\norm{df})\\
		&+5\eta \log(2\norm{df}) + 5\eta\log\big(24(K+1)+2\big)\\
		&+\frac{\log \Upsilon}{N_{\star}} + \frac{\log\norm{df}}{r} + 4\eta N_{\star}/r \log\norm{df}.
	\end{aligned}
\end{equation*}

\parbreak\UStep[End of the proof.]\label{step:U10}
We now analyze the bound we obtained in Step 9. By \eqref{eq:ChoiceOfNStar}, we have:
\begin{equation*}
	\frac{\log\Upsilon}{N_{\star}} < \delta/8.
\end{equation*}
Since we obviously have $\log\norm{2df}>1$ we have that $5\eta<7\eta\log\norm{2df}$ and then by \eqref{eq:ChoiceOfEta} we have:
\begin{equation*}
	\begin{array}{ccc}
		5\eta<\delta/8, & 2H(5\eta)<\delta/8, & (2+5)\eta\log\norm{2df}<\delta/8, \\
		4\eta N_{\star}/r \log\norm{df}< \delta/8, & \text{and}  & 5\eta\log\big(24(K+1)+2\big)<\delta/8.
	\end{array}
\end{equation*}
Finally, by \eqref{eq:ChoiceOfDelta} and \eqref{eq:ChoiceOfEta}, we have:
\begin{equation*}
	h(1+\delta^2)\big((1-\alpha) + 2\eta\big) = h(1-\alpha) + h\delta^2(1-\alpha) + h(1+\delta^2)2\eta < h(1-\alpha) + \delta/8 + \delta/8.
\end{equation*}
Gathering everything together, we obtain the following bound on the entropy of the $\mu_k$ for $k\geq k_{\star}$:
\begin{equation*}
	h(f,\mu_k,\epsilon) < \frac{\log\norm{df}}{r} + h(1-\alpha) + \delta.
\end{equation*}
On the other hand, by combining \eqref{eq:MinorationEntropy}, \eqref{eq:ChoiceOfDelta2} and \eqref{eq:GreaterExponent}, we also have for $k\geq0$ large enough:
\begin{equation*}
	\begin{aligned}
		h(f,\mu_k,\epsilon) &> h(f,\mu_k) - \frac{\lambda(f)}{r} (1+\delta)\\
		&>\liminf_{k\rightarrow+\infty} h(f,\mu_k)-\delta-\frac{\log\norm{df}}{r}\frac{(1+\delta)}{(1-\delta)}\\
		&>(h+\kappa\delta)(1-\alpha)+2\lambda(f)/r -\delta-\frac{\log\norm{df}}{r}\frac{(1+\delta)}{(1-\delta)}\\
		&>(h+\kappa\delta)(1-\alpha) + \frac{\log\norm{df}}{r}\Big(\frac{2}{1+\delta}-\frac{(1+\delta)}{(1-\delta)}\Big) - \delta\\
		&>(h+\kappa\delta)(1-\alpha)+\frac{\log\norm{df}}{r}\big(1-8\delta\big)-\delta
	\end{aligned}
\end{equation*}
where the last inequality is a simple computation true whenever $0<\delta<1/2$. Finally, by \eqref{eq:ChoiceOfKappa}, we also know that:
\begin{equation*}
	\kappa>\frac{(8\log\norm{df}/r)-2}{1-\alpha} \iff \kappa(1-\alpha) - 8\frac{\log\norm{df}}{r}-2>0.
\end{equation*}
This then implies that:
\begin{equation*}
	(h+\kappa\delta)(1-\alpha)+\frac{\log\norm{df}}{r}\big(1-8\delta\big)-\delta > \frac{\log\norm{df}}{r} + h(1-\alpha) +\delta
\end{equation*}
which is impossible.

We then conclude by contradiction that Hypothesis \ref{hypothesis:Unstable} does not hold true. For any $k$ large enough and for any $\hx \in \hat{F}_k$, there must then exist an $m\geq0$, $i \in \{1,\dots,L\}$ and $l \in \{1,2\}$ such that:
\begin{equation*}
	f^m(\Wu_{loc}(\hx)) \cap \Ws_{loc}(\hy_i^l) \ne \emptyset.
\end{equation*}
Since $\hmu_k(\hat{F}_k)>0$ and the number of rectangles in the family $\mathcal{R}$ is finite, there exists $i_{\star} \in \{1,\dots,L\}$ and $l_{\star}\in\{1,2\}$ such that $\hmu_k(\hat{Y}_k)>0$ where:
\begin{equation*}
	\hat{Y}_k := \{\hx \in \hat{F}_k, \ \exists m\geq 0, \ f^m(\Wu_{loc}(\hx))\cap\Ws_{loc}(\hy_{i_{\star}}^{l_{\star}}) \ne \emptyset\}.
\end{equation*}
Recall that $V^u(\hx)\subset M_f$ is the unstable set of $\hx$ from Definition \ref{def:LocalInvariantSet}. Recall also that $\{\hmu_k^{u,\hx}\}_{\hx}$ is the family of conditional measures defined in Step \stepref{step:U7}. Up to passing to a set of the same $\hmu_k$ measure as $\hat{Y}_k$, for any $\hx \in \hat{Y}_k$ we have that:
\begin{equation*}
	\hmu_k^{u,\hx}\Big(\{\hy \in V^u(\hf^m(\hx)), \ \hy \ \text{has a well-defined unstable manifold}\}\Big)>0.
\end{equation*}
We then obtain that the set:
\begin{equation*}
	\hat{X}_k := \{\hx\in M_f, \ \Wu(\hx)\cap\Ws_{loc}(\hy_{i_{\star}}^{l_{\star}})\ne \emptyset\}
\end{equation*}
has positive $\hmu_k$-measure.

The family of $us$-rectangles $\mathcal{R}$ and the set $\hat{X}_k$, for $k\geq k_{\star}$ are the desired objects from Proposition \ref{prop:UnstableStableIntersection}. This concludes then the proof of Proposition \ref{prop:UnstableStableIntersection}.

\subsection{Proof of Proposition \ref{prop:IntersectionsUnstableMaxEntropy}}
We move on to the proof of Proposition \ref{prop:IntersectionsUnstableMaxEntropy}. It is a Corollary of Proposition \ref{prop:UnstableStableIntersection}. Before moving into the details, let us explain quickly the main points. We are going first to further decompose the measure $\hnu_1$ into two parts, the first one, $\hnu_1^{\star}$, satisfies that almost every measure in its ergodic decomposition is homoclinically related to $\hat{\cO}$, and the second part is the rest. This gives the decomposition of the measure $\hnu$ into $\hnu_1^{\star}$ and $\hnu_0^{\star}:=1-\hnu_1^{\star}$. The idea is then to apply Proposition \ref{prop:UnstableStableIntersection} to this decomposition. We conclude that unstable manifolds of typical $\mu_k$-points cross some stable manifold containing the stable boundaries of the rectangles associated to $\hnu_1^{\star}$.

Then, since $\hnu_1^{\star}$ is hyperbolic of saddle type with entropy large enough, we can use Theorem \ref{thm:LipschitzHolonomiesLamination} and Theorem \ref{thm:Sard} to find transverse intersections between  $\mu_k$-unstable manifold and the stable manifolds of a set of positive measure of some ergodic measure which is part of the ergodic decomposition of $\hnu_1^{\star}$. We conclude since the measure $\hnu_1^{\star}$ has almost all its ergodic components homoclinically related to $\hat{\cO}$.

\parbreak\EStep[Obtaining topological intersections between $\mu_k$-unstable manifolds and the $\hnu_1^{\star}$-rectangles.]\label{step:HUI1}
Recall that we have the following convergence:
\begin{equation*}
	\mu_k \rightarrow \nu
\end{equation*}
and that there exists $\alpha \in (0,1]$ such that $\hnu = \alpha\hnu_1 + (1-\alpha)\hnu_0$. Recall that $\hnu$-almost every ergodic component of $\hnu_1$ is hyperbolic of saddle type with entropy larger than $\lambda(f)/r$. We denote the ergodic decomposition of $\hnu_1$ by $\hnu_1 = \int \hnu_{1,\hx} \ d\hnu_1(\hx)$. Define the following set:
\begin{equation*}
	\hat{X} := \{\hx \in M_f, \ \hnu_{1,\hx}\simh \hat{\cO}\} = \{\hx \in \supp \hnu_1, \ \hx\simh\hat{\cO}\}.
\end{equation*}
Let $\alpha^{\prime} := \hnu(\hat{X})$. By the hypothesis on $\hnu_1$ we have $\hnu_1(\hx \in \supp\hnu_1, \ \hx\simh\hat{\cO})>\beta>0$. It implies that $\hnu(\hx \in \supp\hnu_1, \ \hx\simh\hat{\cO}) >\alpha\beta > 0$ and then $\alpha^{\prime}>\alpha\beta$. Define the following probability measure:
\begin{equation*}
	\hnu_1^{\star} := \frac{1}{\alpha^{\prime}}\int_{\hat{X}} \hnu_{1,\hx} \ d\hnu(\hx).
\end{equation*}
Note that this measure satisfies that $\hnu$-almost every of its ergodic components are hyperbolic of saddle type, with entropy larger than $\lambda(f)/r$, and are homoclinically related to $\hat{\cO}$. Denote the ergodic decomposition of $\hnu$ by $\hnu = \int \hnu_{\hx} \ d\hnu(\hx)$. Define also the probability measure:
\begin{equation*}
	\hnu_0^{\star} = \frac{1}{1-\alpha^{\prime}}\int_{\hat{X}^c} \hnu_{\hx} \ d\hnu(\hx).
\end{equation*}
These measures satisfy that:
\begin{equation*}
	\hnu = \alpha^{\prime}\hnu_1^{\star}+(1-\alpha^{\prime})\hnu^{\star}_0.
\end{equation*}
Moreover, combining the fact that $\alpha^{\prime}>\alpha\beta$ and the hypothesis on the entropy of ergodic components of $\hnu$, we get that $\hnu$-almost every ergodic component $\hnu^{\star}_{0,\hx}$ of $\hnu_0^{\star}$ satisfies the following bound:
\begin{equation*}
	h(f,\hnu^{\star}_{0,\hx}) < \frac{1}{1-\alpha\beta}\big(\liminf_{k\rightarrow +\infty} h(f,\mu_k) - 2\frac{\lambda(f)}{r}\big) < \frac{1}{1-\alpha^{\prime}}\big(\liminf_{k\rightarrow +\infty} h(f,\mu_k) - 2\frac{\lambda(f)}{r}\big).
\end{equation*}

We can then apply Proposition \ref{prop:UnstableStableIntersection} to this decomposition. We recall the definition of $Y^{\#}$ before Proposition \ref{prop:MarkovRectangles}. We obtain that there exists a finite family of points $\hy_1,\dots,\hy_L\in\supp\hnu_1^{\star}\cap\hat{Y}^{\#}$ and an integer $k_{\star}$ with the following properties. For $k\geq k_{\star}$, there exists a measurable set $\hat{X}_k$ of positive $\hmu_k$-measure and an integer $i(k) \in \{1,\dots,L\}$ such that for any $\hx \in \hat{X}_k$ we have:
\begin{equation*}
	\Wu(\hx) \cap \Ws_{loc}(\hy_{i(k)}) \ne \emptyset.
\end{equation*}

\parbreak\EStep[Going from topological to transverse intersections.]\label{step:HUI2}
The previous step gave us topological intersections between the $\hx \in \hat{X}_k$ and some stable manifolds of typical $\hnu_1^{\star}$-points. We are going to show that there exist actually transverse intersections.

\begin{claim}
	Pick any $\hx \in \hat{X}_k$. For any $i \in \{1,\dots,L\}$, there exists $\hat{m}_i$, an ergodic hyperbolic $\hf$-invariant measure of saddle type with positive entropy which satisfies the following. Suppose that $\Wu(\hx) \cap \Ws_{loc}(\hy_i)\ne \emptyset$. Then for any $\epsilon^{\prime}$ small enough, there exists a measurable set $\hat{A} \subset B(\hy_i,\epsilon^{\prime})$ with $\hat{m}_i(\hat{A})>0$ and such that for any $\hz \in \hat{A}$ we have:
	\begin{equation*}
		f^m\big(\Wu(\hx)\big) \pitchfork \Ws_{loc}(\hz)\ne \emptyset.
	\end{equation*}
	for some $m\geq 0$.
	\label{claim:TopologicalToTransverse}
\end{claim}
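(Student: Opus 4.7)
The plan is to exploit Theorem~\ref{thm:LipschitzHolonomiesLamination} together with the Sard-type dimension Theorem~\ref{thm:Sard} to pass from the topological intersection supplied by Step~\stepref{step:HUI1} to a positive-measure set of transverse intersections.

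First, I would build the measure $\hat{m}_i$. Since $\hy_i \in \hat{Y}^{\#} \cap \supp \hnu_1^{\star}$ and since the density property in the definition of $\hat{Y}^{\#}$ guarantees that Birkhoff averages along forward/backward orbits of $\hy_i$ accumulate on hyperbolic measures, one can choose $\hat{m}_i$ to be an ergodic component of $\hnu_1^{\star}$ whose support contains arbitrarily small neighborhoods of $\hy_i$. By the hypothesis on $\hnu_1^{\star}$, the measure $\hat{m}_i$ is ergodic, hyperbolic of saddle type, and satisfies $h(\hf,\hat{m}_i) > \lambda(f)/r \geq \lambda^u(\hat{m}_i)/r$; equivalently, $h(\hf,\hat{m}_i)/\lambda^u(\hat{m}_i) > 1/r$. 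Choose then $\delta>0$ so small that $h(\hf,\hat{m}_i)/\lambda^u(\hat{m}_i) - \delta > 1/r$.

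Next, apply Theorem~\ref{thm:LipschitzHolonomiesLamination} with this $\delta$ to $\hat{m}_i$-almost every $\hat{w}$ taken in $B(\hy_i,\epsilon^{\prime}/2)\cap \hat{Y}^{\#}$. One obtains a set $\hat{\Lambda}_{\hat{w}} \subset B(\hy_i,\epsilon^{\prime})$ such that $\mathscr{L} := \bigcup_{\hz \in \hat{\Lambda}_{\hat{w}}} \Ws_{loc}(\hz)$ is a continuous lamination with $\cC^r$ leaves and Lipschitz holonomies, with transverse dimension at least $h(\hf,\hat{m}_i)/\lambda^u(\hat{m}_i) - \delta > 1/r$. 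Since the points of $\hat{\Lambda}_{\hat{w}}$ accumulate on $\hat{w}$ projecting on both sides of $\Wu_{loc}(\hat{w})$ and belong to the support of $\hat{m}_i$ restricted to a common Pesin block, the intersection $\hat{\Lambda}_{\hat{w}} \cap B(\hy_i,\epsilon^{\prime})$ has positive $\hat{m}_i$-measure.

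Now let $m \geq 0$ be the integer such that $\gamma := f^m(\Wu_{loc}(\hf^{-m}(\hx)))$ intersects $\Ws_{loc}(\hy_i)$ at some point $p$, as supplied by $\hx \in \hat{X}_k$. The curve $\gamma$ is a $\cC^r$ embedded curve, and by $\cC^1$-continuity of the stable manifolds on the Pesin block containing $\hat{\Lambda}_{\hat{w}}$ together with the fact that $\hat{\Lambda}_{\hat{w}}$ accumulates $\hy_i$ from both sides of $\Wu_{loc}(\hy_i)$, the local stable manifold $\Ws_{loc}(\hz)$ is $\cC^1$-close to $\Ws_{loc}(\hy_i)$ for $\hz \in \hat{\Lambda}_{\hat{w}}$ close to $\hy_i$, and therefore meets $\gamma$ topologically.

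Finally, apply Sard's Theorem~\ref{thm:Sard} to the $\cC^r$ curve $\gamma$ and the lamination $\mathscr{L}$: the tangent sub-lamination $\mathscr{N}_\gamma(\mathscr{L})$ satisfies $\text{\dj}(\mathscr{N}_\gamma(\mathscr{L})) \leq 1/r$. Set
\[
\hat{A} := \{\hz \in \hat{\Lambda}_{\hat{w}}\cap B(\hy_i,\epsilon^{\prime}) : \Ws_{loc}(\hz) \pitchfork \gamma \neq \emptyset\}.
\]
The complement $\hat{\Lambda}_{\hat{w}}\cap B(\hy_i,\epsilon^{\prime}) \setminus \hat{A}$ is contained in leaves of $\mathscr{N}_\gamma(\mathscr{L})$, hence meets every transversal to $\mathscr{L}$ in a set of Hausdorff dimension at most $1/r$; by Lipschitz holonomy and the Young dimension bound, this forces the $\hat{m}_i$-measure of the complement inside $B(\hy_i,\epsilon^{\prime})$ to vanish strictly below that of $\hat{\Lambda}_{\hat{w}}\cap B(\hy_i,\epsilon^{\prime})$. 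Thus $\hat{m}_i(\hat{A}) > 0$, which concludes.

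The main obstacle, and the step I expect to require the most care, is the last one: justifying rigorously that the strict inequality between the transverse dimensions ($\leq 1/r$ for $\mathscr{N}_\gamma(\mathscr{L})$ versus $> 1/r$ for the portion of $\mathscr{L}$ carrying $\hat{m}_i$) forces positive $\hat{m}_i$-measure for $\hat{A}$. This uses Lipschitz holonomies to transport the dimension inequality from one transversal (the unstable arc $\Wu_{loc}(\hat{w})$, where the dimension bound of Theorem~\ref{thm:LipschitzHolonomiesLamination} is given) to an arbitrary $\cC^1$ transversal, combined with the fact that a set of positive $\hat{m}_i$-measure must have Hausdorff dimension strictly greater than $1/r$ along the unstable direction on the Pesin block under consideration.
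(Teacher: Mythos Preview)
Your approach is the same as the paper's in its core: combine Theorem~\ref{thm:LipschitzHolonomiesLamination} (lamination with transverse dimension $>1/r$) with the Sard-type Theorem~\ref{thm:Sard} (tangent sub-lamination has transverse dimension $\leq 1/r$). However, you diverge at two points, and both make your life harder than necessary.

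\textbf{The final step.} You try to argue that the set of \emph{tangent} leaves has $\hat{m}_i$-measure strictly smaller than the full set, via a ``dimension~$\leq 1/r$ forces measure to drop'' argument --- and you correctly flag this as the delicate step. The paper sidesteps it entirely: from $\text{\dj}(\mathscr{L}^s)>1/r\geq \text{\dj}(\mathscr{N}_\gamma(\mathscr{L}^s))$ one gets the existence of a \emph{single} leaf $\Ws_{loc}(\hz)$ meeting the curve transversally. Transversality being $\cC^1$-open on the Pesin block, and $\hz$ belonging to the support of $\hat{m}_i$ restricted to that block, a small ball around $\hz$ (intersected with the block) immediately gives the positive-measure set $\hat{A}$. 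No comparison of measure and Hausdorff dimension is needed.

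\textbf{The iteration step you omit.} Before invoking Sard you need to know that the curve actually meets a family of leaves with transverse dimension $>1/r$; otherwise the inequality $\text{\dj}(\mathscr{N}_\gamma(\mathscr{L}))\leq 1/r$ says nothing. The paper handles this by iterating forward and using the recurrence of $\hy_i\in\hat{Y}^{\#}$ in its Pesin block (together with the shrinking of $f^m(\Ws_{loc}(\hy_i))$) to move the intersection point into $B(\pi(\hy_i),\epsilon_\star)$ for arbitrarily small $\epsilon_\star$; then the ``both sides of $\Wu_{loc}(\hy_i)$'' property of $\Lambda_{\hy_i}$ traps a sub-curve so that it meets every leaf of $\mathscr{L}^s$. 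Your sentence ``$\Ws_{loc}(\hz)$ is $\cC^1$-close to $\Ws_{loc}(\hy_i)$\dots and therefore meets $\gamma$ topologically'' is not justified as written: if the original intersection $\gamma\cap\Ws_{loc}(\hy_i)$ is a one-sided tangency, a $\cC^1$-close leaf need not intersect $\gamma$ at all.
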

\begin{proof}
	Let $i \in \{1,\dots,L\}$ such that $\Wu(\hx) \cap \Ws_{loc}(\hy_i) \ne \emptyset$. Let $\gamma :[0,1] \rightarrow \Wu(\hx)$ be a $\cC^{r}$ curve which contains the intersection with $\Ws_{loc}(\hy_i)$ in its interior. Recall that $\hnu_1^{\star}$ can be decomposed as the following:
	\begin{equation*}
		\hnu_1^{\star} = \int \hnu^{\star}_{1,\hx} \ d\hnu_1^{\star}(\hx)
	\end{equation*}
	where each $\hnu^{\star}_{1,\hx}$ is an ergodic hyperbolic of saddle type $\hf$-invariant measure with entropy larger than $\lambda(f)/r$. Take $\hat{m}_i$ some measure in the ergodic decomposition of $\hnu_1^{\star}$ containing $\hy_i$ in its support. Denote by $\lambda^u(\hat{m}_i)$ the positive Lyapunov exponent of the ergodic measure $\hat{m}_i$. We have the following:
	\begin{equation*}
		h(\hf,\hat{m}_i) > \lambda(f)/r \implies h(\hf,\hat{m}_{i,l})/\lambda^u(\hat{m}_i) > 1/r
	\end{equation*}
	since we trivially have $\lambda(f)\geq \lambda^u(\hat{m}_i)$.
	
	Apply Theorem \ref{thm:LipschitzHolonomiesLamination} with the measure $\hat{m}_i$ and $\delta>0$ small enough so that $h(\hf,\hat{m}_i)/\lambda(f)-\delta>1/r$. There exists then some $\Lambda_{\hy_i}$ such that:
	\begin{equation*}
		\dim_H\big(\Wu_{loc}(\hy_i) \cap \Lambda_{\hy_i}\big) >1/r.
	\end{equation*}
	Since $\hy_i\in\hat{Y}^{\#}$ and is in particular recurrent in the same Pesin block, and since the length of $f^m(\Ws_{loc}(\hy_i))$ goes to zero, up to taking a big enough iterate $m\geq0$ we can suppose that:
	\begin{equation*}
		f^m(\Wu(\hx)) \cap \Ws_{loc}(\hy_i) \in B\big(\pi(\hy_i),\epsilon_{\star}\big).
	\end{equation*}
	for $\epsilon_{\star}>0$ as small as we want. Since $\hy_i\in \hat{Y}^{\#}$, it is accumulated on both sides of $\Wu_{loc}(\hy_i)$ by points belonging to $\Lambda_{\hy_i}$. Up to passing to a subset of $\Lambda_{\hy_i}$, we obtain by continuity that there exists some sub-curve of $f^m(\gamma)$ which intersects any leaf of the lamination $\mathscr{L}^s:=\Ws_{loc}\big(\Lambda_{\hy_i}\cap B(\pi(\hy_i),\epsilon_{\star})\cap\Wu_{loc}(\hy_i)\big)$. We still denote this sub-curve of $f^m(\gamma)$ by $\gamma$ to simplify the notations. By Theorem \ref{thm:LipschitzHolonomiesLamination}, $\mathscr{L}^s$ is a continuous lamination with $\cC^r$ leaves and with Lipschitz holonomies. We have the following bound on its transverse dimension:
	\begin{equation*}
		\text{\dj}(\mathscr{L}^s) \geq \dim_H\big(\Wu_{loc}(\hy^i_l)\cap \Lambda_{\hy^l_i}\big)>1/r
	\end{equation*}
	where the first inequality holds since $\Wu_{loc}(\hy_i^l)$ intersects transversely every leaf of $\mathscr{L}^s$. Recall now that $\mathscr{N}_{\gamma}(\mathscr{L}^s)$ is the lamination made of leaves of $\mathscr{L}^s$ which intersect $\gamma$ in a non-transverse way. By Theorem \ref{thm:Sard}, we have:
	\begin{equation*}
		\text{\dj}\big(\mathscr{N}_{\gamma}(\mathscr{L}^s)\big) \leq 1/r.
	\end{equation*}
	This implies that $\mathscr{L}^s$ cannot be contained in $\mathscr{N}_{\gamma}(\mathscr{L}^s)$.
	
	There exists then some $\hz \in \hat{\Lambda}_{\hy_i}\cap B(\hy_i,\epsilon_{\star})$ such that:
	\begin{equation*}
		f^m\big(\Wu(\hx)\big) \pitchfork \Ws_{loc}(\hz) \ne \emptyset.
	\end{equation*}
	By continuity of the local stable manifolds on $\hat{\Lambda}_{\hy_i}$, the same is true in a small neighborhood of $\hy^i$. Since $\hz$ is in the support of $\hat{m}_i$ restricted to some Pesin block, we obtain that there exists some set $\hat{A}$ of positive $\hat{m}_i$-measure and $\epsilon^{\prime}>0$ small enough satisfying the following. For any $\hz \in \hat{A}$, we have:
	\begin{equation*}
		f^m\big(\Wu(\hx)\big) \intert \Ws_{loc}(\hz)\ne \emptyset
	\end{equation*}
	and $\hat{A}\subset B(\hy_i,\epsilon^{\prime})$. This concludes the proof of Claim \ref{claim:TopologicalToTransverse}.
\end{proof}

\parbreak\EStep[End of the proof.]\label{step:HUI3}
We are now able to conclude the proof of Proposition \ref{prop:IntersectionsUnstableMaxEntropy}. Since by construction, $\hat{m}_i$ is an ergodic component of $\hnu_1^{\star}$, it can be chosen homoclinically related to $\hat{\cO}$. Since $\hy_i \in \text{supp}\ \hat{m}_i\cap\hat{Y}^{\#}$, we can find arbitrarily close to $\hy_i$, points belonging to the same Pesin block as $\hy_i$ and which are homoclinically related to $\hat{\cO}$. Continuity of stable and unstable local manifolds on Pesin blocks, in the $\cC^1$ topology, implies that $\hy_i$ is homoclinically related to $\hat{\cO}_i$. Using the $\lambda$-Lemma \ref{lem:LambdaLemma}, we conclude that there are discs of $\Ws(\hat{\cO})$ which accumulate on $\Ws_{loc}(\hy_i)$. Now by Claim \ref{claim:TopologicalToTransverse}, we obtain that for any $\hx \in \hat{X}_k$ we have:
\begin{equation*}
	f^m(\Wu(\hx)) \pitchfork \Ws(\hat{\cO}) \ne \emptyset
\end{equation*}
for some $m\geq0$. But the invariance of $\Ws(\hat{\cO})$ implies that this intersection also holds for $\Wu(\hx)$, and then for a set of full $\hmu_k$-measure by ergodicity. This then concludes the proof of Proposition \ref{prop:IntersectionsUnstableMaxEntropy}.

\section{Stable with unstable intersections for saddle ergodic measures}\label{sec:StableUnstableIntersections}
Throughout this section, $f:M\to M$ is a $\cC^r$, $r>1$ local diffeomorphism.

\subsection{Topological family of $us$-rectangles associated with a hyperbolic saddle measure}
Let us recall some well-known facts about the disintegration of measures. Let $\hnu$ be an $\hf$-invariant, possibly non-ergodic, measure such that for $\hnu$-almost every $\hx$ we have $\lambda^u(\hx)>0>\lambda^s(\hx)$, where $\lambda^{u/s}(\hx)$ are the two Lyapunov exponents at $\hx$. The Pesin stable manifold theorem (Theorem \ref{thm:Localu/sManifolds}) asserts that $\hnu$-almost every $\hx$ belongs to a stable set $V^s(\hx)$ (see Definition \ref{def:LocalInvariantSet}). As in Subsection \ref{subsec:UnstableEntropy}, for a given measurable partition subordinate to the stable lamination, we have the following disintegration:
\begin{equation*}
	\hnu = \int \hnu_{\hx}^s \ d\hnu(\hx).
\end{equation*}
We call $\hnu_{\hx}^s$ the conditional measure on the stable set $V^s(\hx)$. Locally, the stable set $V^s_{loc}(\hx)$ is homeomorphic to the product of a curve and a Cantor set. We can then further disintegrate every $\hnu^s_{\hx}$ on connected components of $V^s_{loc}(\hx)$ as:
\begin{equation*}
	\hnu^s_{\hx} = \int \hnu^s_{\hx,W} \ d\hnu_{V^s_{loc}(\hx)}(W)
\end{equation*}
where $\{\hnu^s_{\hx,W}\}_W$ is a family of conditionals such that each $\hnu^s_{\hx,W}$ is supported on the connected component $W$ of $V^s_{loc}(\hx)$ and $\hnu_{V^s_{loc}(\hx)}$ is a probability measure on the set of connected components of $V^s_{loc}(\hx)$.

The following proposition should be compared to Proposition \ref{prop:MarkovRectangles}. Given a (possibly non-ergodic) $f$-invariant measure $\nu$, we also construct a family of $us$-rectangles with large $\nu$ measure. The main difference lies in the fact that we require the unstable boundaries to be unstable local manifolds, but we do not require the dynamical Markov property \ref{property:MR5}.

\begin{prop}
	Let $\nu$ be a $f$-invariant probability measure, possibly non-ergodic. Suppose it satisfies the following for $\hnu := \pi_{\star}^{-1} \nu$-almost every $\hx$:
	\begin{itemize}[label={--}]
		\item $\lambda^u(\hx)>0>\lambda^s(\hx)$.
		\item $\hnu$-almost every ergodic component of $\hnu$ has positive entropy.
		\item There exists a connected component $W$ of $V^s_{loc}(\hx)$ such that $\hnu^s_{\hx,W}$ is non-atomic on $W$, where $\{\hnu^s_{\hx,W}\}_{W}$ is a system of conditional measures on connected components of $V^s_{loc}(\hx)$.
	\end{itemize}
	Let $\eta \in (0,1)$. There exists a Pesin block $\hNUH := \hNUH(\eta)$ such that the following holds for $\sigma:=\sigma(\hNUH)$ sufficiently small.
	
	There exists a family of $us$-rectangles $\mathcal{T}:=\{T_1,\dots,T_L\}$ such that:
	\begin{enumerate}[label=(R\arabic*)]
		\item\label{property:TR1} For any $i \in \{1,\dots,L\}$, we have that $T_i$ is a $us$-rectangle satisfying:
		\begin{itemize}[label={--}]
			\item there exists $\hx_i \in \hNUH$ such that $\big(\Psi_{\hx_i},q_{\tilde{\epsilon}}(\hx_i)\big)$ is an adapted chart for $T_i$.
			\item $\partial^{s,j}T_i \subset \Ws_{loc}(\hy_i^j)$ and $\partial^{u,j} T_i \subset \Wu_{loc}(\hz^j_i)$ where $\hy^j_i,\hz^j_i \in \hNUH \cap \hat{Y}^{\#}$ for $j\in\{1,2\}$.
		\end{itemize}
		\item\label{property:TR2} $\hnu(\hNUH)>1-\eta$ and $\pi\big(\hNUH\big)\subset \bigcup_{i=1}^L T_i$.
		\item\label{property:TR3} For any $i \in \{1,\dots,L\}$ we have the following:
		\begin{equation*}
			\diam(T_i)<\sigma.
		\end{equation*}
	\end{enumerate}
	\label{prop:TopologicalRectangles}
\end{prop}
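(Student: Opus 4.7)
The plan is to mimic the construction of the first family $\mathcal{R}$ in Proposition~\ref{prop:MarkovRectangles}, but to replace the unstable boundaries (previously pieces of the smooth foliation $\mathscr{F}^u$ tangent to $\mathscr{C}^{u,ext}$) by genuine pieces of Pesin local unstable manifolds of nearby points. The Markov-type property~\ref{property:MR5} is dropped, which is why no dynamical cone-field argument is needed; the gain is that the unstable boundaries belong to the $f$-invariant lamination. Fix $\eta \in (0,1)$ and choose a Pesin block $\hNUH \subset \hat{\Lambda}_{\chi,\epsilon}$ with $\hnu(\hNUH) > 1-\eta$ small enough that the conclusions of Section~\ref{section:PesinTheory} and of Proposition~\ref{prop:IntersectionAdmissiblemanifolds} hold uniformly in any chart $(\Psi_{\hx},q_{\tilde{\epsilon}}(\hx))$ with $\hx \in \hNUH$. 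By restricting further to a subset of the same $\hnu$-measure, we can also suppose that every $\hx \in \hNUH$ belongs to $\hat{Y}^{\#}$, admits bona fide local stable and unstable manifolds, is typical for its ergodic component (which is saddle hyperbolic with positive entropy), and projects to a point $x$ that is a bilateral accumulation point of the non-atomic conditional $\hnu^s_{\hx,W(\hx)}$ along the connected component $W(\hx)$ of $V^s_{loc}(\hx)$ through $\hx$.

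\textbf{Stable boundaries.} Fix $\hx \in \hNUH$ and work in the chart $(\Psi_{\hx},q_{\tilde{\epsilon}}(\hx))$. Exactly as in Claim~\ref{claim:RectangleproofMain2}, the characterization of $\hat{Y}^{\#}$ combined with Theorem~\ref{thm:LipschitzHolonomiesLamination} yields two points $\hy^1,\hy^2 \in \hNUH \cap \hat{Y}^{\#}$ projecting to opposite sides of $\Wu_{loc}(\hx)$ arbitrarily close to $\hx$, and such that $\Ws_{loc}(\hy^j)$ is a $\gamma$-admissible $s$-manifold in the chart with $\sigma^s_{\max}\bigl(\Ws_{loc}(\hy^1),\Ws_{loc}(\hy^2)\bigr) < \sigma/2$. \textbf{Unstable boundaries.} This is the new step. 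Using that $\hnu^s_{\hx,W(\hx)}$ is non-atomic and that $x$ is a bilateral support point along the curve $\pi(W(\hx)) \subset \Ws_{loc}(\hx)$, pick $\hz^1,\hz^2 \in W(\hx)$ with projections $z^1,z^2$ on opposite sides of $x$ in $\Ws_{loc}(\hx)$, at $\Ws_{loc}(\hx)$-distance as small as we wish. Up to a further restriction to a full-$\hnu$-measure set, one ensures $\hz^1,\hz^2 \in \hNUH \cap \hat{Y}^{\#}$, so they admit $\cC^r$ local unstable manifolds $\Wu_{loc}(\hz^j)$ tangent at $z^j$ to $\Eu(\hz^j)$. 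Because $\Eu(\hz^j) \in \operatorname{Int}(\mathscr{C}^{u,ext}_{z^j})$ by Proposition~\ref{prop:UnstableConeField} and the cone field is continuous, after reducing the size of these unstable manifolds we obtain sub-arcs $\tilde{V}^j \subset \Wu_{loc}(\hz^j)$ that are $(\mathscr{C}^{u,\hx},u)$-manifolds in the chart $(\Psi_{\hx},q_{\tilde{\epsilon}}(\hx))$; Lemma~\ref{lem:LipschitzCstAdmissibleCuManifold} controls their Lipschitz constant, and the smallness of the distance from $z^j$ to $x$ in $\Ws_{loc}(\hx)$ guarantees that $\varphi(\tilde{V}^j) < 10^{-3} q_{\tilde{\epsilon}}(\hx)$.

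\textbf{Assembly and covering.} With $\hy^1,\hy^2,\hz^1,\hz^2$ chosen close enough to $\hx$, Proposition~\ref{prop:IntersectionAdmissiblemanifolds} yields, for each pair $(j,k) \in \{1,2\}^2$, a unique transverse intersection point $[\tilde{V}^k,\Ws_{loc}(\hy^j)]$. The four sub-arcs of $\Ws_{loc}(\hy^j)$ and $\tilde{V}^k$ joining these four points form a Jordan curve bounding a topological disc $T(\hx)$. By construction $T(\hx)$ is a $us$-rectangle whose adapted chart is $(\Psi_{\hx},q_{\tilde{\epsilon}}(\hx))$ and whose stable, respectively unstable, boundaries lie in $\Ws_{loc}(\hy^j)$, respectively $\Wu_{loc}(\hz^j)$, with $\hy^j,\hz^j \in \hNUH \cap \hat{Y}^{\#}$, giving \ref{property:TR1}. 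Choosing the four nearby points close enough to $\hx$ makes $\sigma^s_{\max}(T(\hx))$ and $\sigma^{u,\alpha'}_{\max}(T(\hx))$ arbitrarily small, so Lemma~\ref{lem:DiameterRectangle} gives $\diam(T(\hx)) < \sigma$, yielding~\ref{property:TR3}. Since $x \in T(\hx)$ for every $\hx \in \hNUH$, the sets $\{T(\hx)\}_{\hx \in \hNUH}$ form an open cover of the compact set $\pi(\hNUH)$, from which we extract a finite subcover $\mathcal{T} = \{T_1,\dots,T_L\}$; this gives~\ref{property:TR2}.

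\textbf{Main obstacle.} The delicate step is producing the unstable boundaries. Unlike in Proposition~\ref{prop:MarkovRectangles}, where any smooth foliation tangent to $\mathscr{C}^{u,ext}$ would do, here we need honest local unstable manifolds approaching $x$ from both sides in the stable direction. This is exactly what the non-atomicity assumption on $\hnu^s_{\hx,W(\hx)}$ provides: without it, typical stable connected components could be supported on a finite or discrete set, and no family of nearby unstable manifolds would be available for the unstable boundary construction. Verifying measurably that one can restrict to a full-measure set of $\hx$ for which such bilateral accumulation by points of $\hNUH \cap \hat{Y}^{\#}$ holds is the key technical check.
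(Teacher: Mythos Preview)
Your approach is correct and follows essentially the same line as the paper's: use the non-atomicity of the stable conditional to locate two nearby points of $\hNUH$ on the same connected stable leaf whose local unstable manifolds serve as the unstable boundaries, while the stable boundaries come from the $\hat{Y}^{\#}$ property exactly as in the construction of the first family in Proposition~\ref{prop:MarkovRectangles}. The only difference is one of economy: the paper first invokes Proposition~\ref{prop:MarkovRectangles} with parameter $\eta/2$ to obtain the family $\mathcal{R}$, inherits the stable boundaries $\partial^{s,l}R_i\subset\Ws_{loc}(\hy_i^l)$ directly from there, and then only replaces the unstable boundaries by $\Wu_{loc}(\hx^{\pm})$ for two points $\hx^{\pm}\in W\cap\hNUH$; you instead rebuild both pairs of boundaries from scratch. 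Both routes yield the same family $\mathcal{T}$.
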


Before proving Proposition \ref{prop:TopologicalRectangles}, let us briefly explain the main ideas. As previously explained, this proposition is close to Proposition \ref{prop:MarkovRectangles}. The fact that we do not require a Markov property such as Property \ref{property:MR5} greatly simplifies the proof. In fact, we can obtain Proposition \ref{prop:TopologicalRectangles} as a corollary of Proposition \ref{prop:MarkovRectangles}. Since the argument is very close to Step 2 of the proof of Proposition \ref{prop:MarkovRectangles}, we will not provide all the details.

\begin{proof}[Proof of Proposition \ref{prop:TopologicalRectangles}.]
	Let $\eta \in (0,1)$. We begin by applying Proposition \ref{prop:MarkovRectangles} to the measure $\nu$ and with the constant $\eta/2$, with any $K>8$ and with $\sigma$ small enough. This gives us, in particular, a Pesin block $\hNUH$ and a family of $us$-rectangles $\mathcal{R}:=(R_1,\dots,R_L)$ such that Properties \ref{property:MR1}, \ref{property:MR2}, and \ref{property:MR4} hold.
	
	Let $\{\hnu^s_{\hx}\}_{\hx}$ be a system of conditionals on stable local sets. Consider the set of points $\hat{X}\subset M_f$ such that $\hnu_{\hx}^s(\hNUH)>0$. Note that $\hnu(\hat{X}\cap\hNUH)>1-\eta$ by Property \ref{property:MR2}. Take $\hat{K}$ a compact subset of $\hat{X}\cap\hNUH$ such that $\hnu(\hat{K})>1-\eta$. For $\hnu$-almost every $\hx\in\hat{K}$, there exists a connected component $W\subset V^s_{loc}(\hx)$ such that $\hnu_{\hx,W}^s$ is non-atomic on $W$. In particular, there exist two distinct points $\hx^+,\hx^- \in W\cap \hNUH$, such that we can choose them as close as we want from $\hx$. Recall that by Property \ref{property:MR1}, $\partial^{s,l}R_i \subset \Ws_{loc}(\hy_i^l)$ where $\hy_i^l \in\hNUH$, for $i\in\{1,\dots,L\}$ and $l\in\{1,2\}$. Then, Proposition \ref{prop:IntersectionAdmissiblemanifolds} implies that if $\pi(\hx)\in R_i$, the curve $\Wu_{loc}(\hx^{\pm})$ intersects in a unique point $\Ws_{loc}(\hy^i_l)$ for both $l\in \{1,2\}$. The union of these four curves bounds a $us$-rectangle $T_{\hx,i}$ such that its stable, respectively unstable, boundaries are contained in local stable, respectively unstable, manifolds of points of $\hNUH$. Using Lemma \ref{lem:DiameterRectangle}, taking $\hx^+$ and $\hx^-$ close enough guarantees that $\diam(T_{\hx,i})<\sigma$. Now, since the union of all $T_{\hx,i}$ covers $\hNUH$, we can extract a finite cover. This is the desired family $\mathcal{T}$, which concludes the proof.
\end{proof}

\subsection{Stable entropy}
The results contained in this subsection are proved in \cite{liu2008invariant}. Recall that a partition $\xi_1$ is said to be finer than another partition $\xi_2$ if each atom of $\xi_1$ is contained in an atom of $\xi_2$. We denote this by $\xi_2\leq\xi_1$. Recall also that a partition $\xi$ is measurable if there exists a sequence of countable partitions $(\alpha_n)_{n\geq 1}$ such that for any $n\geq 1$ we have $\alpha_n\leq\alpha_{n+1}$ and such that $\xi =\vee_{n\geq 1}\alpha_n$.

Let $\mu$ be an $f$-invariant measure and let $\xi_1,\xi_2$ be two measurable partitions. Recall that $H_{\mu}(\xi_1|\xi_2)$ denotes the classical conditional entropy of $\xi_1$ relative to $\xi_2$:
\begin{equation*}
	H_{\mu}(\xi_1|\xi_2) := \sum_{B\in\xi_2}\sum_{A\in\xi_1} \mu(A\cap B) \log\big(\frac{\mu(A\cap B)}{\mu(B)}\big).
\end{equation*}

\begin{prop}[\cite{liu2008invariant}]
	Let $\mu$ be an ergodic, hyperbolic of saddle type $f$-invariant measure. Then, there exists $\xi^s$, a measurable partition subordinate to $W^s$, which satisfies:
	\begin{equation*}
		h(f,\mu) = h(f,\mu,\xi^s) := H_{\mu}(\xi|f^{-1}\xi) = H_{\hmu}\big(\pi^{-1}\xi|\hf^{-1}(\pi^{-1}\xi)\big).
	\end{equation*}
\end{prop}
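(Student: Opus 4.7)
The plan is to follow the classical Ledrappier--Strelcyn/Ledrappier--Young construction of a measurable partition subordinate to the stable lamination, adapted to the endomorphism setting as in Liu \cite{liu2008invariant}. The cleanest way to proceed is to do everything in the natural extension $(M_f,\hf,\hmu)$, where the dynamics is invertible and the stable sets behave as in the diffeomorphism case, and then pull the construction back to $M$ via $\pi$. We will use throughout the basic identity $h(f,\mu)=h(\hf,\hmu)$ and the fact that $\pi\circ\hf=f\circ\pi$ together with $\pi_*\hmu=\mu$, which immediately gives $H_{\mu}(\xi\mid f^{-1}\xi)=H_{\hmu}(\pi^{-1}\xi\mid\hf^{-1}\pi^{-1}\xi)$ for any measurable partition $\xi$ of $M$.

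First, I would fix a Pesin block $\hLam$ with $\hmu(\hLam)$ close to $1$ and pick a finite measurable partition $\alpha$ of $M$ whose diameter is smaller than some $\delta>0$ chosen small relative to the Pesin charts, and whose boundary has $\mu$-measure zero (a standard Lusin-type approximation). Set $\hat\alpha=\pi^{-1}\alpha$ and define
\begin{equation*}
    \xi^s:=\bigvee_{n\geq 0}f^{-n}\alpha, \qquad \pi^{-1}\xi^s=\bigvee_{n\geq 0}\hf^{-n}\hat\alpha.
\end{equation*}
By construction $f^{-1}\xi^s\leq \xi^s$, so Rokhlin's formula gives $h(f,\mu,\xi^s)=H_\mu(\xi^s\mid f^{-1}\xi^s)$. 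The atom $(\pi^{-1}\xi^s)(\hx)$ consists of points whose forward $\hf$-orbit remains $\delta$-close in the $\hat\alpha$-coding to that of $\hx$; using the Pesin local stable manifold theorem (Theorem \ref{thm:Localu/sManifolds}) and the fact that on a Pesin block stable manifolds shrink exponentially under positive iterates, this atom is contained in $V^s_{loc}(\hx)$ and contains a neighborhood of $\hx$ inside the connected component through $\hx$. This gives the subordination to $W^s$.

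The main content of the statement is the identity $h(f,\mu,\xi^s)=h(f,\mu)$. The argument proceeds in two steps. On one hand, $h(f,\mu,\xi^s)\leq h(f,\mu)$ is automatic since $\xi^s$ is a measurable partition. For the converse inequality, take any finite partition $\beta$ of $M$; we must show $h(f,\mu,\beta)\leq h(f,\mu,\xi^s)$. Working in $M_f$, one compares the forward refinements $\bigvee_{i=0}^{n-1}\hf^{-i}\pi^{-1}\beta$ with $\pi^{-1}\xi^s$ at small scales, and shows that each atom of $\bigvee_{i=0}^{n-1}\hf^{-i}\pi^{-1}\beta$ intersects only subexponentially many atoms of $\pi^{-1}\xi^s\vee\hf^{-n}\pi^{-1}\beta$; the control comes from the exponential contraction along $E^s$ guaranteed by the Pesin block, which forces Bowen balls along the stable direction to be captured by the atoms of $\pi^{-1}\xi^s$ after sufficiently many iterates. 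Combining this with the Brin--Katok formula in the natural extension (Proposition \ref{prop:LocalBrinKatok}, adapted to the stable direction) yields $h(\hf,\hmu,\pi^{-1}\beta)\leq H_{\hmu}(\pi^{-1}\xi^s\mid \hf^{-1}\pi^{-1}\xi^s)$. Taking the sup over $\beta$ gives $h(f,\mu)=h(\hf,\hmu)\leq h(f,\mu,\xi^s)$.

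The main obstacle is this last step: in the endomorphism case one cannot simply invoke the classical diffeomorphism argument because the symbolic ``third'' stable direction of the natural extension (mentioned in Subsection~\ref{sub-sec:Difficulties}) could a priori carry entropy that is invisible to a partition of the form $\pi^{-1}\xi^s$. However, since $\hat\alpha=\pi^{-1}\alpha$ is itself the pullback of a partition on $M$, one checks that $\bigvee_{n\geq 0}\hf^{-n}\hat\alpha$ separates past coordinates modulo the stable equivalence: two points with the same future $\hat\alpha$-coding necessarily have the same past under $\pi$ (up to $\delta$), which kills the symbolic contribution. This is exactly the point where Liu's argument in \cite{liu2008invariant} differs from the diffeomorphism proof, and it is what makes the entropy formula via a stable partition available in our non-invertible setting.
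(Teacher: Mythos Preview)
The paper does not prove this proposition; it is quoted from Liu \cite{liu2008invariant} (the subsection opens with ``The results contained in this subsection are proved in \cite{liu2008invariant}''), so there is no in-paper argument to compare against.

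Your sketch follows the correct overall strategy --- Ledrappier--Strelcyn construction lifted to the natural extension --- but two points need correction. First, for the partition $\xi^s=\bigvee_{n\ge0}f^{-n}\alpha$ built from a fixed finite $\alpha$, the ``contained in $W^s$'' half of subordination can indeed be salvaged via recurrence to a Pesin block on which $\delta$ sits below the chart size, but the ``open neighborhood in the leaf'' half requires the boundary of $\alpha$ to be null for the conditional measures on stable leaves, which you do not address; in the non-uniform setting this is where the genuine work lies, and Liu (following Ledrappier--Strelcyn) in fact builds $\xi^s$ directly from local stable plaques inside a Pesin set rather than by refining a fixed finite partition. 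Second, your closing paragraph is backwards: two points $\hx,\hy\in M_f$ sharing the same future $\hat\alpha$-coding need \emph{not} have the same past --- the partition $\pi^{-1}\xi^s$ is constant along fibers of $\pi$ and separates nothing in the symbolic direction. The reason the symbolic direction causes no trouble is much simpler than you suggest: the identity $H_{\hmu}(\pi^{-1}\xi\mid\hf^{-1}\pi^{-1}\xi)=H_\mu(\xi\mid f^{-1}\xi)$ is tautological (as you yourself note at the outset), and $h(\hf,\hmu)=h(f,\mu)$ is a general fact about natural extensions; so once the entropy formula $h(f,\mu)=H_\mu(\xi^s\mid f^{-1}\xi^s)$ is established on $M$, the lifted version follows for free and no separate treatment of the fiber is needed.
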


Remark that in this case, $\pi^{-1}\xi$ is a measurable partition subordinate to the stable lamination $V^s$. A version of Theorem \ref{thm:UnstableEntropy} applied to $\hf^{-1}$ gives the following proposition.

\begin{prop}
	Let $\hmu$ be an ergodic hyperbolic of saddle type $\hf$-invariant measure. Let $\{\hmu^s_{\hx}\}_{\hx}$ be a system of conditional measures on stable sets. Then for $\hmu$-almost every $\hx$ we have:
	\begin{equation*}
		h(\hf,\hmu) = \inf_{\lambda>0}\lim_{\epsilon \to 0}\liminf_{n\to +\infty}\frac{1}{n}\log r_{\hf}(n,\epsilon,\hmu^s_{\hx},\lambda).
	\end{equation*}
	\label{prop:StableEntropy}
\end{prop}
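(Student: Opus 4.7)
The plan is to reduce the statement to Theorem~\ref{thm:UnstableEntropy} applied to the invertible system $(M_f,\hf\inv,\hmu)$. Since $\hf$ is a homeomorphism of $M_f$, so is $\hf\inv$, and the probability measure $\hmu$ is ergodic and $\hf\inv$-invariant. The cocycle $d\hf\inv$ is obtained by inverting $d\hf$, so at every Oseledets-regular point its Lyapunov exponents are exactly the negatives of those of $d\hf$. In particular $\hmu$ remains hyperbolic of saddle type for $\hf\inv$, with the roles of the positive and negative exponents swapped, and the same Pesin theory developed in Section~\ref{section:PesinTheory} applies symmetrically with past and future exchanged.

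The second step is to match the invariant sets and their conditionals. For $\hmu$-a.e.\ $\hx$, the local unstable set of $\hx$ with respect to $\hf\inv$ is characterized by backward convergence of $\hf\inv$-orbits, which is the same as forward convergence of $\hf$-orbits; hence it coincides with the stable set $V^s(\hx)$ in the sense of Definition~\ref{def:LocalInvariantSet}. Consequently, a measurable partition subordinate to the unstable lamination of $(M_f,\hf\inv,\hmu)$ is subordinate to the stable lamination of $(M_f,\hf,\hmu)$, and the associated family of conditionals coincides, $\hmu$-a.e., with the stable conditionals $\{\hmu^s_{\hx}\}_{\hx}$ appearing in the statement.

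Once these identifications are in place, Theorem~\ref{thm:UnstableEntropy} applied to $\hf\inv$ gives for $\hmu$-a.e. $\hx$
\begin{equation*}
h(\hf\inv,\hmu) = \inf_{\lambda>0}\lim_{\epsilon\to 0}\liminf_{n\to+\infty}\frac{1}{n}\log r_{\hf\inv}(n,\epsilon,\hmu^s_{\hx},\lambda),
\end{equation*}
and the desired formula follows from the standard equality $h(\hf\inv,\hmu)=h(\hf,\hmu)$ for invertible measure-preserving transformations. The main (indeed only) point requiring care is that Theorem~\ref{thm:UnstableEntropy} was originally phrased for the natural extension of the local diffeomorphism $f$, whereas $\hf\inv$ is not of that form. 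However, the proof in \cite{qian2009PesinTheoryEndo} relies only on Pesin theory for the invertible cocycle $d\hf$ together with a Brin--Katok type estimate on unstable leaves; both ingredients are available for $d\hf\inv$ via the symmetric versions of the charts, admissible manifolds, and graph-transform results of Section~\ref{section:PesinTheory}, so that argument transfers essentially verbatim.
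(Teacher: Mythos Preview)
Your approach is correct and matches the paper's own justification, which simply states that the proposition is ``a version of Theorem~\ref{thm:UnstableEntropy} applied to $\hf^{-1}$.'' One remark: your argument naturally yields $r_{\hf^{-1}}$ on the right-hand side rather than the printed $r_{\hf}$, which appears to be a typo in the statement---the proposition is subsequently applied (in the proof of Proposition~\ref{prop:StableUnstableIntersection}) to covers by \emph{backward} Bowen balls, consistent with the formula you derive.
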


\subsection{Stable with unstable intersections for saddle ergodic measures}
Let us now state the main proposition of the present section. It should be seen as the "stable version" of Proposition \ref{prop:UnstableStableIntersection}. Recall that for a local diffeomorphism $f:M\to M$, we denote by $\deg(f)$ the number of pre-images of a point $x \in M$, which is constant.

\begin{prop}
	Let $f : M\to M$ be a $\cC^{\infty}$ local diffeomorphism. Let $(\mu_k)_{k\in \bbN}$ be a sequence of ergodic $f$-invariant measures such that:
	\begin{equation*}
		\mu_k \underset{k\to+\infty}{\longrightarrow} \nu
	\end{equation*}
	for the weak-$\star$ topology. Suppose the following:
	\begin{equation*}
		\liminf_{k\to+\infty} h(f,\mu_k) > \log \deg(f).
	\end{equation*}
	Then, there exists a finite number of hyperbolic saddle periodic orbits $\hat{\cO_1},\dots,\hat{\cO}_L \subset M_f$ satisfying the following properties.
	\begin{enumerate}
		\item For any $k$ sufficiently large, there exists $j(k)\in\{1,\dots,L\}$ such that for $\hmu_k$ almost every $\hx$, the following intersection holds:
		\begin{equation*}
			\Ws(\hx) \pitchfork \Wu(\hat{\cO}_{j(k)}) \ne \emptyset,
		\end{equation*}
		where $\hmu_k :=\pi^{-1}_{\star}\mu_k$.
		\item For any $j \in \{1,\dots,L\}$ the set $\{\hx\in M_f, \ \hx\simh\cO_j\}$ has positive $\hnu$-measure, where $\hnu:=\pi^{-1}_{\star}\nu$.
	\end{enumerate}
	
	\label{prop:StableUnstableIntersection}
\end{prop}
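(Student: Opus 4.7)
My strategy follows Part 3 of the introduction and mirrors Proposition \ref{prop:UnstableStableIntersection}, but must be carried out in the natural extension because stable sets of endomorphisms are locally a product of a curve and a Cantor set. Since $\liminf_k h(f,\mu_k)>\log\deg(f)$, Ruelle's inequality (Proposition \ref{prop:Ruelle}) forces each $\mu_k$ to be hyperbolic of saddle type, and upper semi-continuity of the measured entropy in $\cC^\infty$ ensures that any weak-$\star$ limit $\nu$ has entropy strictly larger than $\log\deg(f)$; in particular a decomposition $\nu = \alpha\nu_1 + (1-\alpha)\nu_0$ can be chosen with $\nu$-almost every ergodic component of $\nu_1$ hyperbolic of saddle type with positive entropy, and with $h(f,\nu_0)$ and $1-\alpha$ small enough so that $\alpha\, h(f,\nu_1) > \log\deg(f)$. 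I then apply Proposition \ref{prop:TopologicalRectangles} to $\nu_1$ to obtain a Pesin block $\hNUH$ of $\hnu_1$-measure greater than $1-\eta$ and a finite family of $us$-rectangles $\cT = \{T_1,\ldots,T_L\}$ of small diameter whose stable (resp.\ unstable) boundaries lie in local stable (resp.\ unstable) manifolds of points $\hy^l_i, \hz^l_i \in \hNUH\cap\hat{Y}^{\#}$.

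Fix a large $k$, pick a $\hmu_k$-generic $\hx$, take a $\cC^r$ curve $\gamma$ inside the connected component of $V^s_{loc}(\hx)$ through $\hx$, and argue by contradiction, assuming
\begin{equation*}
\text{for all } n\geq 0 \text{ and all } i,\ \text{every connected component of } \hf^{-n}(\gamma)\cap \pi^{-1}(T_i) \text{ is contained in } \pi^{-1}(T_i).
\end{equation*}
Mimicking Section \ref{sec:UnstableStableIntersection}, I decompose the backward orbit $(\hx,\hf^{-1}(\hx),\ldots,\hf^{-(n-1)}(\hx))$ into rectangle, small-entropy, and wild segments via analogues of Claims \ref{claim:Step3}--\ref{claim:Step6} applied to $\hf^{-1}$, and count the $(\hf^{-1},n,\epsilon)$-Bowen balls needed to cover $\hf^{-n}(\gamma)$ segment by segment. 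Small-entropy segments contribute at most $\exp(h(1-\alpha)n)$ balls, and wild segments have total length at most $2\eta n$ and contribute subexponentially. The crucial new estimate is that, under the non-crossing assumption, along a rectangle segment of length $m$ each backward iterate of $\gamma$ lifted to $M_f$ is trapped inside a unique preimage $\pi^{-1}(T_i)$; since the $\deg(f)$ preimage sheets of the differentiable disc of $T_i$ are distinguished in $M_f$ only by the symbolic (Cantor) coordinate, the number of Bowen balls required is at most $\deg(f)^m$. Summing over decomposition types (Claim \ref{claim:Step6}) and invoking Proposition \ref{prop:StableEntropy} will yield
\begin{equation*}
h(\hf,\hmu_k) \leq \log\deg(f) + h(1-\alpha) + O(\eta)+O(\delta),
\end{equation*}
which contradicts $h(\hf,\hmu_k)=h(f,\mu_k)>\log\deg(f)$ once the parameters are chosen as in Step \stepref{step:U1} and Step \stepref{step:U10}.

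The failure of the non-crossing assumption provides, for $k$ large, integers $i(k), j(k), n(k)$ with $\hf^{-n(k)}(\gamma)$ topologically meeting $\pi^{-1}(\Wu_{loc}(\hz^{j(k)}_{i(k)}))$. A Sard-type argument analogous to Step \stepref{step:HUI2}, using the continuous unstable lamination with Lipschitz holonomies and transverse dimension greater than $1/r$ produced by Theorem \ref{thm:LipschitzHolonomiesLamination} applied to an ergodic component of $\hnu_1$ accumulating $\hz^{j(k)}_{i(k)}$, upgrades this crossing to a transverse intersection of $\Ws(\hx)$ with $\Wu(\hz)$ for $\hz$ in a set of positive $\hnu_1$-measure, in particular for some $\hz$ homoclinically related to a hyperbolic saddle periodic orbit $\hat{\cO}_{j(k)}$ obtained via Katok's theorem and the $\lambda$-lemma (Lemma \ref{lem:LambdaLemma}) in a neighborhood of $T_{i(k)}$. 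Combining with Proposition \ref{prop:IntersectionsUnstableMaxEntropy} applied to the same sequence yields the positive $\hnu$-mass of each homoclinic class and gives the claimed finite list $\hat{\cO}_1,\ldots,\hat{\cO}_L$. The main obstacle is the rectangle-segment estimate: the absence of a Markov-type property such as \ref{property:MR5} in Proposition \ref{prop:TopologicalRectangles} forces a careful tracking of connected components of $\hf^{-m}(\gamma)$ inside $M_f$, and one must verify that the non-crossing hypothesis combined with the product structure $\pi^{-1}(T_i)\simeq\text{disc}\times\text{Cantor}$ really confines the successive backward lifts to distinct Cantor fibers within a single preimage rectangle, rather than allowing them to spread in the two differentiable directions.
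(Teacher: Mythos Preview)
Your plan matches the paper's proof closely: the decomposition $\nu=\alpha\nu_1+(1-\alpha)\nu_0$, the application of Proposition~\ref{prop:TopologicalRectangles} to $\nu_1$, the backward-orbit decomposition into rectangle/small-entropy/wild segments, the key estimate that rectangle segments contribute at most $\deg(f)^m$ Bowen balls under the non-crossing hypothesis, and the concluding Sard/Katok/$\lambda$-lemma passage are exactly the paper's Steps~\sstepref{step:SI1}--\sstepref{step:SI8}.

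Two points diverge from the paper, and the first is a genuine gap. In your Sard step you propose to apply Theorem~\ref{thm:LipschitzHolonomiesLamination} to an ergodic component of $\hnu_1$ to obtain an \emph{unstable} lamination, and then check that the stable curve $\Ws(\hx)$ crosses it transversely. But Theorem~\ref{thm:LipschitzHolonomiesLamination} produces \emph{stable} laminations; for endomorphisms unstable manifolds depend on the past and can cross in $M$, so there is no unstable lamination in $M$ to which Theorem~\ref{thm:Sard} applies. The paper reverses the roles (Step~\sstepref{step:SI8}): it applies Theorem~\ref{thm:LipschitzHolonomiesLamination} to $\mu_k$ at the point $\hx$, obtaining a stable lamination $\Ws_{loc}(\Lambda_{\hx})$ of positive transverse Hausdorff dimension, and then checks that a small piece $\kappa$ of the \emph{unstable} curve $f^m(\partial^{u,l}T_{i(k)})\subset\Wu(\hy_{i(k)})$ crosses every leaf. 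Since $f\in\cC^\infty$, Theorem~\ref{thm:Sard} gives $\text{\dj}(\mathscr{N}_\kappa)=0$, hence a transverse intersection $\Wu_{loc}(\hy_{i(k)})\pitchfork\Ws(x')$ for $x'$ in a set of positive $\mu_k$-measure.

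Second, item~(2) does not require Proposition~\ref{prop:IntersectionsUnstableMaxEntropy}. The paper argues directly: since each $\hy_i\in\hat{Y}^{\#}$ lies in the support of $\hnu$ restricted to some Pesin block, continuity of local stable and unstable manifolds on Pesin blocks shows that every point of that block in a small ball around $\hy_i$ is homoclinically related to $\hat{\cO}_i$, and this set has positive $\hnu$-measure. (Also, the decomposition in Step~\sstepref{step:SI1} is determined rather than chosen: one takes $\hat X=\{\hx:h(\hf,\hnu_{\hx})>\log\deg(f)\}$, so every ergodic component of $\nu_0$ has entropy at most $\log\deg(f)$, not ``small''.)
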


The rest of this section is devoted to the proof of Proposition \ref{prop:StableUnstableIntersection}. Since the proof is close to the proof of Proposition \ref{prop:UnstableStableIntersection}, but in a simpler setting, we will not provide all the details and will refer to the corresponding parts of the proof of Proposition \ref{prop:UnstableStableIntersection}.

\subsection{Proof of Proposition \ref{prop:StableUnstableIntersection}}
Recall that $f:M\to M$ is a $\cC^{\infty}$ local diffeomorphism and that $(\mu_k)_{k\in \bbN}$ is a sequence of ergodic $f$-invariant measures. Let $\nu$ be the (possibly non-ergodic) $f$-invariant measure such that:
\begin{equation*}
	\mu_k \underset{k\to+\infty}{\longrightarrow} \nu.
\end{equation*}
Recall also that we made the following assumption:
\begin{equation*}
	\liminf_{k\to +\infty} h(f,\mu_k) > \log d(f).
\end{equation*}
Fix then a constant $h>0$ such that:
\begin{equation}
	\liminf_{k\to +\infty} h(f,\mu_k)-\log d(f) >h.
	\label{eq:HypothesisEntropyLogd}
\end{equation}

\parbreak\SStep[Decomposition of the measure $\nu$.]\label{step:SI1}
Let us write the ergodic decomposition of $\hnu$, the lift of $\nu$ to $M_f$ using $\pi$, as follows:
\begin{equation*}
	\hnu = \int_M \hnu_x \ d\nu(x).
\end{equation*}
Let us define the following set:
\begin{equation*}
	\hat{X} = \{\hx \in M_f, \ h(\hf,\hnu_x)>\log d(f)\}.
\end{equation*}
Since $f$ is $\cC^{\infty}$, the entropy is upper-semi continuous and we obtain that $h(f,\nu) > \log d(f)$, which implies that $\hnu(\hat{X})>0$. Recall the following Ruelle type inequality from Proposition \ref{prop:Ruelle}, which holds for any ergodic $f$-invariant measure $\mu$:
\begin{equation*}
	h(f,\mu)\leq \log d(f) + \sum_{\lambda_i(\mu)<0} \lambda_i(\mu)
\end{equation*}
where $\lambda_i(\mu)$ denote the $i$-th Lyapunov exponent of $\mu$. It implies in particular that for $\hnu$-almost any $\hx \in M$, the measure $\hnu_x$ has one positive and one negative Lyapunov exponent.

We can now decompose the measure $\nu$ in the following way. Let us define first the following $f$-invariant measures:
\begin{equation*}
	\nu_1 := \frac{1}{\nu(X)}\int_X \nu_x \ d\nu(x) \quad \text{and} \quad \nu_0 := \frac{1}{\nu(X^c)} \int_{X^c} \nu_x \ d\nu(x).
\end{equation*}
Letting $\alpha = \nu(X)$, we obtain the decomposition:
\begin{equation*}
	\nu=\alpha\nu_1 + (1-\alpha)\nu_0.
\end{equation*}
Let $\hx \in \hat{X}$ and let $\{\hnu_{\hx,\hy}^s\}_y$ be a system of conditional measures of the measure $\hnu_x$ on local stable sets. Let $\{\hnu^s_{\hx,\hy,W}\}_W$ be a system of conditional measures of $\hnu_{\hx,\hy}^s$ on the connected components $W$ of $V^s_{loc}(\hy)$. Note that by Proposition \ref{prop:StableEntropy}, if there exists a set of positive $\hnu_x$-measure of points $\hy \in M_f$ such that $\hnu_{\hx,\hy,W}^s$ is atomic for any connected component $W$ of $V^s_{loc}(\hy)$, then we would have $h(\hf,\hnu_{\hx}) \leq \log d(f)$. In particular, $\hnu_1$ satisfies the hypothesis of Proposition \ref{prop:TopologicalRectangles}.

Remark also that the measure $\nu_0$ satisfies that $\nu_0$-almost every ergodic component has entropy less or equal to $\log d(f)$.

\parbreak\SStep[Choice of the entropy parameter.]\label{step:SI2}
Let $\delta>0$ be an arbitrary parameter which has to be taken small enough, and which depends only on $h$. By \eqref{eq:TailEntropyEstimate1}, we can choose $\epsilon>0$ such that for any $f$-invariant measure $m$ we have:
\begin{equation}
	h(f,m,\epsilon)>h(f,m)-\delta.
	\label{eq:StableScaleEntropy}
\end{equation}
Since the way to choose $\delta$ is very similar to Step \stepref{step:U1} of the proof of Proposition \ref{prop:UnstableStableIntersection}, we do not specify the value of $\delta$, but it will be clear in what follows that it would have been possible to make it explicit.

\parbreak\SStep[The $\nu_1$-rectangles.]\label{step:SI3}
This step can be compared with Step \stepref{step:U2} in the proof of Proposition \ref{prop:UnstableStableIntersection}. Let $\eta \in (0,1)$ be small enough, depending only on $h$ and $\delta$. Again, we choose not to specify the value of $\eta$. Apply now Proposition \ref{prop:TopologicalRectangles} to the measure $\nu_1$ with the previous choice of $\eta$ and with $\sigma$ small. It gives us a Pesin block $\hat{\Lambda}$ and a collection of $us$-rectangles $\mathcal{T} := \{T_1,\dots,T_L\}$ such that properties \ref{property:TR1} to \ref{property:TR3} hold.

Up to reducing $\sigma$ and $\epsilon$, we can suppose that the following items are true.
\begin{itemize}[label={--}]
	\item For any $i \in \{1,\dots,L\}$, there exists a homeomorphism $\varphi_i : (0,1)^2\times K\to \pi^{-1}(T_i)$, where $K$ is the Cantor set. Moreover, for any $t \in K$ the map $\varphi_i$ is a diffeomorphism on $(0,1)^2\times\{t\}$.
	\item For any $i \in \{1,\dots,L\}$, each connected component of $\pi^{-1}(T_i)$ has diameter less than $\epsilon/3$.
\end{itemize}

\parbreak\SStep[The decomposition of typical $\mu_k$-orbits.]\label{step:SI4}
This step can be compared to Steps \stepref{step:U3}, \stepref{step:U4}, \stepref{step:U5}, and \stepref{step:U6} from Proposition \ref{prop:UnstableStableIntersection}. We omit the details since the proofs are the same.

Recall that $\nu=\alpha\nu_1+(1-\alpha)\nu_0$ for $\alpha>0$. Let $\hnu_i := \pi^{-1}_{\star}\nu_i$ for $i=0,1$. The following claim is obtained by combining Claim \ref{claim:Step3} and Claim \ref{claim:Step4}.

\begin{claim}
	There exist two integers $n_0,n_1\geq 1$ and two open sets $\hat{U}_0,\hat{U}_1\subset M_f$ such that the following holds.
	\begin{itemize}[label={--}]
		\item $\hat{U}_0 \subset \bigcup_{\hx \in \cC_0} B_{\hf^{-1}}(\hx,n_0,\epsilon)$ where $\cC_0\subset M_f$ such that $|\cC_0|\leq \exp\big(n_0\log \deg(f)(1+\delta)\big)$.
		\item $\hat{U}_1\subset \pi^{-1}(\cup_i T_i)$,
		\item $\hnu_i(\hat{U}_i)>1-\eta$, $\hnu_i(\partial\hat{U}_i) =0$ for $i=0,1$ and $\hnu_i(\hat{U}_j)<\eta$ for $i\ne j$.
		\item For any $0\leq j\leq n_1$ we have:
		\begin{equation*}
			\hf^{-j}\big(\text{Closure}(\hat{U}_1)\big) \cap \text{Closure}(\hat{U}_0) = \emptyset.
		\end{equation*}
		The symmetric property holds exchanging the roles of $n_1$ and $\hat{U}_1$ with $n_0$ and $\hat{U}_0$.
	\end{itemize}
	\label{claim:StableStep4-1}
\end{claim}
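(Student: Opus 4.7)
The proof will closely follow the templates of Claim \ref{claim:Step3} and Claim \ref{claim:Step4}, with two modifications. First, the cover defining $\hat{U}_0$ concerns the backward dynamics $\hf^{-1}$; the relevant entropy bound is that for $\hnu_0$-almost every ergodic component $\hnu_{0,\hx}$ one has $h(\hf^{-1},\hnu_{0,\hx}) = h(\hf,\hnu_{0,\hx}) \leq \log\deg(f)$, which holds by construction of $\nu_0$ in Step \sstepref{step:SI1} together with the Ruelle inequality of Proposition \ref{prop:Ruelle}. Second, the set $\hat{U}_1$ must be contained in $\pi^{-1}(\bigcup_i T_i)$, which is supplied by Proposition \ref{prop:TopologicalRectangles}.

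For $\hat{U}_1$, Proposition \ref{prop:TopologicalRectangles} (already invoked in Step \sstepref{step:SI3}) produces a Pesin block $\hat{\Lambda}$ with $\hnu_1(\hat{\Lambda}) > 1-\eta/2$ and $\pi(\hat{\Lambda}) \subset \bigcup_i T_i$. Since $\hat{\Lambda}$ is compact inside the open set $\pi^{-1}(\bigcup_i T_i)$, one selects an open $\hat{W}_1$ with $\hat{\Lambda} \subset \hat{W}_1 \subset \pi^{-1}(\bigcup_i T_i)$ and $\hnu_1(\hat{W}_1) > 1-\eta/2$. For $\hat{W}_0$, we repeat the argument of Claim \ref{claim:Step3} applied to $\hf^{-1}$: decompose $\hnu_0 = \int \hnu_{0,\hx}\,d\hnu_0(\hx)$; for $\hnu_0$-a.e. $\hx$ Katok's formula \eqref{eq:KatokFormula} applied to $\hf^{-1}$ yields an integer $n(\hx)$ and, for each $n \geq n(\hx)$, a set $\hat{A}_n(\hx)$ of $\hnu_{0,\hx}$-measure at least $\sqrt{1-\eta/2}$ admitting a cover by at most $\exp(n\log\deg(f)(1+\delta/2))$ $(\hf^{-1},n,\epsilon)$-Bowen balls. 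Taking a compact set on which $n(\hx)$ is bounded by some $N_0$, a finite ball-cover of it, and setting
\begin{equation*}
\hat{W}_0 := \bigcup_{j}\ \bigcup_{\hy \in \mathcal{C}_{n_0}(\hx_j)} B_{\hf^{-1}}(\hy,n_0,\epsilon)
\end{equation*}
for any $n_0 \geq N_0$, yields $\hnu_0(\hat{W}_0) > 1-\eta/2$. After a small further reduction (not affecting the cover cardinality) we arrange $\hnu_0(\partial\hat{W}_0)=0$, and $\hat{W}_0$ is then covered by at most $\exp(n_0\log\deg(f)(1+\delta))$ $(\hf^{-1},n_0,\epsilon)$-Bowen balls.

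The enforcement of mutual exclusion proceeds exactly as in the proof of Claim \ref{claim:Step4}. The measures $\hnu_1$ and $\hnu_0$ are mutually singular, because they are defined by integrating the ergodic decomposition of $\hnu$ over the disjoint measurable sets $\hat{X}$ and its complement in Step \sstepref{step:SI1}; hence there exist disjoint $\hf$-invariant Borel sets $\hat{X}_0,\hat{X}_1 \subset M_f$ with $\hnu_i(\hat{X}_i)=1$. Choosing compact $\hat{K}_i \subset \hat{W}_i \cap \hat{X}_i$ with $\hnu_i(\hat{K}_i) > 1-\eta$, one has $\hf^{j}(\hat{K}_1) \cap \hat{K}_0 = \emptyset$ for every $j \in \bbZ$; by continuity of $\hf^j$ and compactness, for any prescribed $n_0,n_1$ there exist open neighborhoods $\hat{U}_i \subset \hat{W}_i$ of $\hat{K}_i$ such that $\hf^{-j}(\overline{\hat{U}_1}) \cap \overline{\hat{U}_0} = \emptyset$ for $0\leq j\leq n_1$ and the symmetric property holds with $n_0$ and the roles of $\hat{U}_0,\hat{U}_1$ interchanged. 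After a last reduction we may further impose $\hnu_i(\partial \hat{U}_i) = 0$ while preserving $\hnu_i(\hat{U}_i) > 1-\eta$. The final bound $\hnu_i(\hat{U}_j) < \eta$ for $i \ne j$ then follows from $\hat{U}_0 \cap \hat{U}_1 = \emptyset$ (case $j=0$ of the exclusion). The only substantive new input beyond the two earlier claims is the mutual singularity of $\hnu_0$ and $\hnu_1$, which is immediate from their construction; thus no genuine obstacle arises.
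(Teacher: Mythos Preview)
Your proof is correct and follows essentially the same approach as the paper, which simply states that the claim ``is obtained by combining Claim \ref{claim:Step3} and Claim \ref{claim:Step4}'' and omits details. One minor imprecision: the entropy bound $h(\hf,\hnu_{0,\hx}) \leq \log\deg(f)$ for $\hnu_0$-almost every ergodic component follows directly from the definition of $\hat{X}$ in Step \sstepref{step:SI1} (as $\nu_0$ is integrated over $\hat{X}^c$), not from Ruelle's inequality, which is instead used there to show that components of $\hnu_1$ are of saddle type.
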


As in the proof of Proposition \ref{prop:UnstableStableIntersection}, we define the following three classes of a $\hf^{-1}$-orbit segment $(\hf^{-s}(\hx),\dots,\hf^{-(t+s-1)}(\hx))$ of a $\hf^{-1}$-orbit $(\hx,\dots,\hf^{-(n-1)}(\hx))$.

\begin{itemize}[label={--}]
	\item \textbf{Rectangle segment:} $\hf^{-s} \in \hat{U}_1$ and $t=n_1$.
	\item \textbf{Small entropy segment:} $\hf^{-s} \in \hat{U}_0$ and $t=n_0$.
	\item \textbf{Wild segment:} one of the following situations occurs.
	\begin{enumerate}[label=(\alph*)]
		\item $\hat{f}^{-(s+j)}(\hx) \notin \hat{U}_0\cup\hat{U}_1$ for $j=0,\dots,t-1$ and $\hat{f}^{-(t+s)}(\hx) \in \hat{U}_0\cup\hat{U}_1$.
		\item $\hat{f}^{-s}(\hx) \in \hat{U}_0$ and $s+n_0>n-1$, in which case $t+s=n$.
		\item $\hat{f}^{-s}(\hx) \in \hat{U}_1$ and $s+n_1>n-1$, in which case $t+s=n$.
	\end{enumerate}
\end{itemize}

Claim \ref{claim:Step5} gives the following claim.

\begin{claim}
	There exists an integer $k_{\star}$ such that for any $k\geq k_{\star}$ and for $\hmu_k$-almost every $\hx$, there exists an integer $n_k(\hx)$ displaying the following property. For any $n\geq n_k(\hx)$, any orbit $(\hx,\dots,\hat{f}^{-(n-1)}(\hx))$ can be decomposed as follows. There exists an integer $l$ and a string:
	\begin{equation*}
		\theta^{\prime}(\hx,n) := \big((t_0,b_0),\dots,(t_l,b_l)\big) \in (\bbZ\times\{1,2,3\})^{l+1}
	\end{equation*}
	such that $t_0=-1$, $t_l=n-1$, $b_l=1$ and for any $j\in\{0,\dots,l-1\}$ the orbit segment $(\hat{f}^{-(t_j+1)}(\hx),\dots,\hat{f}^{-t_{j+1}})$ is: a rectangle segment if $b_j=1$, a small entropy segment if $b_j=2$, and a wild segment if $b_j=3$. Moreover, we have the following estimates:
	\begin{enumerate}[label=(\alph*)]
		\item rectangle segments have total length at most $\alpha n + 2\eta n$,
		\item small entropy segments have total length at most $(1-\alpha)n + 2\eta n$,
		\item wild segments have total length at most $2\eta n$.
	\end{enumerate}
	\label{claim:StableStep4-2}
\end{claim}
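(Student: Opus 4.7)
The plan is to mirror the proof of Claim \ref{claim:Step5}, replacing the forward dynamics of $g = f^N$ by the backward dynamics of $\hf^{-1}$. The essential input is already packaged in Claim \ref{claim:StableStep4-1}: the non-jumping property for backward iterates between $\hat{U}_0$ and $\hat{U}_1$, together with the upper bounds $\hnu_0(\hat{U}_1), \hnu_1(\hat{U}_0) < \eta$, are exactly the ingredients that made the forward argument work. Because $\hf$ is a homeomorphism, any $\hf$-invariant ergodic measure is also $\hf^{-1}$-invariant and ergodic, so Birkhoff's theorem is available along backward orbits without modification.

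First I would pass the relevant measure estimates from $\hnu$ to $\hmu_k$. Using $\hnu = \alpha \hnu_1 + (1-\alpha)\hnu_0$ and the bounds of Claim \ref{claim:StableStep4-1}, one obtains $\hnu(\hat{U}_0 \cup \hat{U}_1) > 1-\eta$, $\hnu(\hat{U}_0) < (1-\alpha) + \eta$, and $\hnu(\hat{U}_1) < \alpha + \eta$. The property $\hnu_i(\partial \hat{U}_i) = 0$ combined with weak-$\star$ convergence then yields the same inequalities for all $\hmu_k$ once $k \geq k_{\star}$. Birkhoff's theorem applied to $\hf^{-1}$ provides, for $\hmu_k$-almost every $\hx$, an integer $n_k(\hx)$ (which I also require to satisfy $\max(n_0,n_1) < \eta \, n_k(\hx)$) beyond which the three empirical frequencies of $\mathds{1}_{\hat{U}_0 \cup \hat{U}_1}$, $\mathds{1}_{\hat{U}_0}$, $\mathds{1}_{\hat{U}_1}$ along the backward orbit obey the same bounds.

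Fixing such an $\hx$ and $n \geq n_k(\hx)$, I build the string $\theta'(\hx,n)$ inductively: set $t_0 = -1$, and given $t_i$, inspect $\hf^{-(t_i+1)}(\hx)$. If it lies in $\hat{U}_1$ and $t_i + n_1 \leq n-1$, declare a rectangle segment with $t_{i+1} = t_i + n_1$ and $b_i = 1$; if it lies in $\hat{U}_0$ and $t_i + n_0 \leq n-1$, declare a small entropy segment; if it lies in $\hat{U}_0 \cup \hat{U}_1$ but the next index would exceed $n-1$, close the decomposition with a terminal wild segment of type (b) or (c); otherwise follow the orbit up to the next entry into $\hat{U}_0 \cup \hat{U}_1$, giving a wild segment of type (a). The process terminates with $t_l = n-1$, at which point $b_l$ is set to $1$ by convention. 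The non-jumping property of Claim \ref{claim:StableStep4-1} is used here to guarantee that rectangle and small entropy segments are disjoint and consistent with their labels.

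The length estimates are then direct counting arguments. Points of the backward orbit lying outside $\hat{U}_0 \cup \hat{U}_1$ account for at most $\eta n$ iterates, so wild segments of type (a) contribute at most $\eta n$, while types (b) and (c) together contribute at most $\max(n_0,n_1) < \eta n$, totalling $2\eta n$ as required. A rectangle segment consists of the starting point in $\hat{U}_1$ together with later iterates that are either in $\hat{U}_1$ or outside $\hat{U}_0 \cup \hat{U}_1$ (again by Claim \ref{claim:StableStep4-1}), so their total length is bounded by $(\alpha + \eta)n + \eta n = \alpha n + 2\eta n$; the symmetric count gives $(1-\alpha)n + 2\eta n$ for small entropy segments. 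Since the argument is a transcription of the forward proof of Claim \ref{claim:Step5}, there is no substantial new obstacle; the only point requiring care is that the backward version of the non-jumping property is precisely the one built into Claim \ref{claim:StableStep4-1}, and this is what makes the induction consistent.
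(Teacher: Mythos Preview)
Your proposal is correct and matches the paper's approach exactly: the paper does not even write out a separate proof for Claim~\ref{claim:StableStep4-2}, stating only that ``Claim~\ref{claim:Step5} gives the following claim'' and that the details are omitted since the proofs are the same. Your transcription of the forward argument to the backward dynamics of $\hf^{-1}$, using the non-jumping property of Claim~\ref{claim:StableStep4-1} and Birkhoff's theorem for the $\hf^{-1}$-invariant ergodic measures $\hmu_k$, is precisely what the paper intends.
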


For $k\geq k_{\star}$, for $\hmu_k$-almost every $\hx$, and for $n\geq n_k(\hx)$, Claim \ref{claim:Step5} allows us to construct the decomposition string $\theta^{\prime}(\hx,n) \in (\bbZ\times\{1,2,3\})^{l+1}$ for some integer $l$. For a given orbit $(\hx,\dots,\hat{f}^{-(n-1)}(\hx))$, we will also need to keep track of which iterates are inside $\hat{U}_0\cup\hat{U}_1$ and which are not. For that, we introduce the word $v(\hx,n) \in \{0,1\}^n$ which satisfies $v(\hx,n)_i = 1$ if $\hat{g}^i \in \hat{U}_0\cup\hat{U}_1$ and $v(\hx,n)_i = 0$ otherwise.

We define the \textit{decomposition type} of an orbit $(\hx,\dots,\hat{f}^{-(n-1)}(\hx))$ as the following data:
\begin{equation*}
	\theta(\hx,n) := \Big[v(\hx,n), \theta^{\prime}(\hx,n)\Big] \in \{0,1\}^{n} \times \big(\bbN \times \{1,2,3\}\big)^{l+1}.
\end{equation*}

Recall that we denote by $H$ the map $H(t) = -t\log t -(1-t)\log(1-t)$. Claim \ref{claim:Step6} gives us the following bound on the number of decomposition types.

\begin{claim}
	There exists an integer $n_H := n_H(\eta)$ such that the number of decomposition types of any $\hat{f}^{-1}$-orbit as in Claim \ref{claim:StableStep4-2} with length $n>n_H$ is at most $\exp\big(n(5\eta + 2H(5\eta))\big)$.
	\label{claim:StableStep4-3}
\end{claim}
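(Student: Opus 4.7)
\textbf{Proof proposal for Claim \ref{claim:StableStep4-3}.} The statement is the exact backward-time analogue of Claim \ref{claim:Step6}, and since the decomposition type has the same combinatorial structure as in the unstable case (a word $v\in\{0,1\}^n$ together with a string of cut-times and class labels in $\{1,2,3\}$), the proof is identical in spirit. The plan is to reduce the counting to standard binomial estimates, then apply a De Moivre--Laplace/Stirling asymptotic.

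First, I would bound the number of segments in the decomposition using the length constraints from Claim \ref{claim:StableStep4-2}: since rectangle and small entropy segments have respective lengths $n_1, n_0 > 1/\eta$, there are at most $\eta n$ of each, and since wild segments have total length at most $2\eta n$ (so at least one wild segment per unit of total wild length), there are at most $2\eta n$ wild segments. Hence the total number $l+1$ of segments satisfies $l+1 \leq \lfloor 4\eta n\rfloor$.

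Next, for the combinatorial count. The string $\theta'$ requires choosing the $l$ cut-times $t_1<\dots<t_{l-1}$ among $\{0,\dots,n-1\}$ together with a class label in $\{1,2,3\}$ for each of the $l+1$ segments; this contributes at most
\[
\sum_{i=1}^{\lfloor 4\eta n\rfloor}\binom{n}{i-1}3^{i} \;\leq\; 4\eta n \binom{n}{\lfloor 4\eta n\rfloor} 3^{\lfloor 4\eta n\rfloor},
\]
where the inequality uses $4\eta<1/2$ so the binomial is increasing on the relevant range. For the word $v$, since $\frac{1}{n}\sum_i(1-v_i)\leq \eta$ by Claim \ref{claim:StableStep4-1} (applied in the ergodic regime), the number of such words is at most $\sum_{i=1}^{\lfloor \eta n\rfloor}\binom{n}{i-1}\leq \eta n\binom{n}{\lfloor \eta n\rfloor}$. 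Multiplying the two factors and applying the standard estimate $\binom{n}{\lfloor tn\rfloor} = \exp(nH(t) + o(n))$, together with $4\log 3 < 5$, yields an upper bound $\exp(n(5\eta + 2H(5\eta)) + o(n))$, hence the claim for all $n$ larger than some $n_H(\eta)$.

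The main (and only) obstacle is purely bookkeeping: one must verify that the control on the Birkhoff frequencies of $\hat{U}_0\cup\hat{U}_1$ established in Claim \ref{claim:StableStep4-1} (applied along $\hat{f}^{-1}$, which is legitimate since both $\hmu_k$ and $\hnu$ are $\hf$-invariant hence also $\hf^{-1}$-invariant) really forces at most $\eta n$ zeros in $v$. This is a direct consequence of the fact that $\hnu(\hat{U}_0\cup\hat{U}_1)>1-\eta$ together with the weak-$\star$ convergence $\hmu_k\to\hnu$ and Birkhoff's theorem for $\hf^{-1}$. No step is truly difficult; the counting is entirely parallel to the forward-time case, and there is no dynamical input beyond the frequency bounds.
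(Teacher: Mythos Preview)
Your proposal is correct and takes essentially the same approach as the paper. In fact, the paper does not give a separate proof of this claim at all: it explicitly omits the details and refers back to Claim~\ref{claim:Step6}, whose argument your proposal reproduces faithfully (bounding the number of segments by $4\eta n$, the binomial count for the cut-times and class labels, the binomial count for the word $v$, and the De Moivre--Laplace estimate together with $4\log 3<5$). The only minor slip is that the frequency bound on zeros in $v$ should be cited from Claim~\ref{claim:StableStep4-2} (the backward analogue of Claim~\ref{claim:Step5}) rather than Claim~\ref{claim:StableStep4-1}, but you correctly identify the underlying reason (Birkhoff for $\hf^{-1}$).
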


\parbreak\SStep[Conditional measures.]\label{step:SI5}
This step can be compared with Step \stepref{step:U7} in the proof of Proposition \ref{prop:UnstableStableIntersection}. Let $k\geq k_{\star}$ from Claim \ref{claim:StableStep4-2}. Denote by $\{\hmu_k^{s,\hx}\}_{\hx}$ a family of conditional measures of $\hmu_k$ on local stable sets. Recall that the measure $\hmu_k^{s,\hx}$ is supported on $V^s_{loc}(\hx)$. We fix an integer $N_k\geq 0$ and a measurable set $\hat{F}_k$ with $\hmu_k(\hat{F}_k)>1/2$ such that every point $\hx \in \hat{F}_k$ satisfies:
\begin{itemize}[label={--}]
	\item $\hx$ has a well-defined stable manifold $V^s(\hx) \subset M_f$,
	\item $\hmu^{s,\hx}_k$ is well-defined and $\hx$ is in the support of $\hmu_k^{s,\hx}$ restricted to $\hat{F}_k$,
	\item $\hx$ satisfies Claim \ref{claim:StableStep4-2} with $n_k(\hx)\leq N_k$. In particular, for each $n\geq N_k$, the orbit $(\hx,\dots,\hf^{-n}(\hx))$ has a decomposition as in Claim \ref{claim:StableStep4-2}. Write $\theta(\hx,n)$ for its decomposition.
\end{itemize}

\parbreak\SStep[Construction of a cover by Bowen-balls.]\label{step:SI6}
This step can be compared to Step \stepref{step:U8} in the proof of Proposition \ref{prop:UnstableStableIntersection}. The cases of small entropy segments and wild segments are treated in the same way.

Recall that for any $x \in M$, the fiber $\pi^{-1}(x)$ is homeomorphic to the Cantor set $K$. Then, up to reducing the size of $\Ws_{loc}$, the stable set $V^s_{loc}(x)$ is homeomorphic to $[0,1]\times K$. Denote this homeomorphism by $\varphi_x$. On each connected component $[0,1]\times \{t\}$, for $t\in K$, the map $\varphi_x$ is a diffeomorphism and its image is homeomorphic to the curve $\Ws_{loc}(x)$.

Fix any $\hx \in \hat{F}_k$. Let $x=\pi(\hx)$. Fix $n\geq \max(N_k,n_H)$ and fix also $\epsilon,\sigma>0$ as in Step \sstepref{step:SI2}. Let $\mathcal{B}_0 = V^s_{loc}(x) \cap B(\hx,\epsilon) \cap \hat{F}_k$. Note that by our hypothesis on the set $\hat{F}_k$ we have $\hmu^{s,\hx}_k(\mathcal{B}_0)>0$. Recall the definition of the family $\mathcal{T} = \{T_1,\dots,T_L\}$. We are going to show that some $\hf^{-1}$-iterate of $V^s_{loc}(\hx)$ intersects some $\pi\inv(\partial^{u,l}T_i)$ for some $i\in \{1,\dots,L\}$ and $l\in \{1,2\}$. We will proceed by contradiction. From now on, assume the following non-intersection hypothesis.

\begin{hyp}
	For any $m\geq 0$ and for any $i \in \{1,\dots,L\}$ and for any connected component $W\subset \hf^{-m}(V^s_{loc}(\hx))$ such that $W \cap \pi^{-1}(T_i) \ne \emptyset$, we have $W \subset \pi^{-1}(T_i)$.
	\label{eq:HypothesisStableIntersection}
\end{hyp}

Our goal is to construct a cover $\mathcal{B}_n$ of $\mathcal{B}_0$ by $(n,\epsilon)$-Bowen balls of small cardinality. The strategy is similar to the one explained in Step 8 of the proof of Proposition \ref{prop:UnstableStableIntersection}.

We begin by fixing a decomposition type $\theta = \big[(v_0,\dots,v_{n-1}),\big((t_0,b_0),\dots,(t_m,b_m)\big)\big]$ with $t_0=-1$ and $t_m=n-1$. We will construct a cover $\mathcal{B}_n^{\theta}$ of $\mathcal{B}^{\prime} := \mathcal{B}_0 \cap \{\hx, \ \theta(\hx,n)=\theta\}$ by $(n,\epsilon)$-Bowen balls. We will then take the union over all possible decomposition types and obtain the desired cover $\mathcal{B}_n$.

The cover $\mathcal{B}^{\theta}_n$ will be obtained inductively by defining covers $\mathcal{B}_{t_i}^{\theta}$ of $\mathcal{B}_0$ with $(t_i,\epsilon)$-Bowen balls. The first cover is $\mathcal{B}^{\theta}_0 := \mathcal{B}^{\prime}$. Assume now that the cover $\mathcal{B}^{\theta}_{t_{i-1}}$ is well-defined. We proceed by concatenation. We will set:
\begin{equation*}
	\mathcal{B}_{t_i}^{\theta} = \{B\cap \hf^{t_{i-1}}(B^{\prime}), \ B\in \mathcal{B}^{\theta}_{t_{i-1}}, \ B^{\prime}\in \mathcal{B}(B,t_i)\}
\end{equation*}
where $\mathcal{B}(B,t_i)$ is a well-chosen cover of $\hf^{-t_{i-1}}\big(B\cap \mathcal{B}^{\prime}\big)$ by $(t_i-t_{i-1},\epsilon)$-Bowen balls. Note that $B$ is a $(t_{i-1},\epsilon)$-Bowen ball. Let us now construct these covers $\mathcal{B}(B,t_i)$.

We fix then $B \in \mathcal{B}^{\theta}_{t_{i-1}}$. As in the proof of Proposition \ref{prop:UnstableStableIntersection}, the cases $b_{i-1}=2,3$ are easy and are treated in the same way, while the case $b_{i-1}=1$ needs more attention.

\parbreak\textbf{Case A: small entropy segment.} Here we treat the case $b_{i-1}=2$. Recall that we then have $t_i-t_{i-1}=n_0$. By Claim \ref{claim:StableStep4-1}, there exists a set $\cC_0\subset M_f$ such that $|\cC_0|\leq \exp\big(n_0\log \deg(f)(1+\delta)\big)$ and such that:
\begin{equation*}
	\hf^{-t_{i-1}}\big(B\cap \mathcal{B}^{\prime}\big) \subset \bigcup_{\hy \in \cC_0} B_{\hf^{-1}}(\hy,n_0,\epsilon).
\end{equation*}
We then set $\mathcal{B}(B,t_i) = \{B_{\hf^{-1}}(\hy,n_0,\epsilon), \ \hy \in \cC_0\}$. Note that it satisfies the following bound:
\begin{equation*}
	|\mathcal{B}(B,t_i)|\leq \exp\big(n_0\log \deg(f)(1+\delta)\big).
\end{equation*}
Repeating this for any $B\in \mathcal{B}^{\theta}_{t_{i-1}}$ allows us to construct the cover $\mathcal{B}^{\theta}_{t_i}$, which satisfies:
\begin{equation}
	|\mathcal{B}^{\theta}_{t_i}|\leq \exp\big(n_0\log \deg(f)(1+\delta)\big)|\mathcal{B}^{\theta}_{t_{i-1}}|.
	\label{eq:StableBoundSmallEntropy}
\end{equation}

\parbreak\textbf{Case B: wild segment.} We suppose now $b_{i-1}=3$. The idea is the same as in Case B in the proof of Proposition \ref{prop:UnstableStableIntersection}. We bound the number of Bowen-balls by the derivative. Indeed, if two points $\hy,\hz \in \hf^{-t_{i-1}}\big(B\cap \mathcal{B}^{\prime}\big)$ are such that $d(\hy,\hz) < \epsilon/(\norm{d\hf^{-1}}^{t_i-t_{i-1}})$, then they must stay $\epsilon$ close until time $t_i-t_{i-1}$. We construct the cover $\mathcal{B}(B,t_i)$ by taking the union of such balls of radius $\epsilon/(\norm{d\hf^{-1}}^{t_i-t_{i-1}})$. There exists then a constant $C\geq1$, depending on $M$ and $f$, such that:
\begin{equation*}
	|\mathcal{B}(B,t_i)|\leq C\norm{d\hf^{-1}}^{t_i-t_{i-1}}.
\end{equation*}
Repeating this argument for all $B \in \mathcal{B}^{\theta}_{t_{i-1}}$ gives us the desired family $\mathcal{B}^{\theta}_{t_i}$, which satisfies:
\begin{equation}
	|\mathcal{B}^{\theta}_{t_i}| \leq C\norm{d\hf^{-1}}^{t_i-t_{i-1}}|\mathcal{B}^{\theta}_{t_{i-1}}|.
	\label{eq:StableBoundWild}
\end{equation}

\parbreak\textbf{Case C: rectangle segment.} Let us finally treat the last case $b_{i-1}=1$. Recall that we then have that $t_i-t_{i-1}=n_1$ and that for any $\hy \in \hf^{-t_{i-1}+1}(\mathcal{B}^{\prime})$, we have $\hy \in \cup_i T_i$, where the $us$-rectangles $T_i$ are defined in Step \sstepref{step:SI3}. Define the two sequences of times $(\tau_k)_k$ and $(\tau^{\prime}_k)_k$ as in Step 8 of the proof of Proposition \ref{prop:UnstableStableIntersection}. Set $\tau_0=\tau^{\prime}_0 = 0$. Assume that $\tau_k$ and $\tau_k^{\prime}$ are well-defined. Let:
\begin{equation*}
	\tau_{k+1} := \inf\{j>\tau_k^{\prime}, \ v_{t_{i-1}+j+1}=0\} \quad \text{and} \ \quad \tau_{k+1}^{\prime} := \inf\{j>\tau_{k+1}, \  v_{t_{i-1}+j+1}=1\}.
\end{equation*}
The induction stops when some $\tau_k$ or $\tau_k^{\prime}$ is equal to $n_1-1$. In the case where some $\tau_k = n_1-1$, we set $\tau^{\prime}_k = n_1-1$ too. Note that $\tau_1$ is well defined since $v_{t_{i-1}+1}=1$. Recall that by Claim \ref{claim:StableStep4-1} and by the definition of a rectangle segment, if $v_{t_{i-1}+j} = 1$ then $\hf^{-(t_{i-1}+j)}(\hy) \in \hat{U}_1$ for any $\hy \in \mathcal{B}^{\prime}$. Let us introduce the following quantity:
\begin{equation*}
	c^0_i = \#\{0\leq j\leq n_1-1, \ v_{t_{i-1}+j} = 0\}.
\end{equation*}
We are going to proceed again by induction, constructing sequences $\mathcal{B}_{\tau_k}$ and $\mathcal{B}_{\tau_k^{\prime}}$ of covers of the set $\hf^{-t_{i-1}}(B\cap\mathcal{B}^{\prime})$ by Bowen balls. For the integer $l\geq0$ such that $\tau_l=n-1$, we will then let $\mathcal{B}(B,t_i) = \mathcal{B}_{\tau_l}$. Let first $\mathcal{B}_{\tau_0} = \mathcal{B}_{\tau^{\prime}_0} = \hf^{t_{i-1}}(B\cap\mathcal{B}^{\prime})$. Assume that the cover $\mathcal{B}_{\tau_k^{\prime}}$ is well-defined. We are going to construct $\mathcal{B}_{\tau_{k+1}}$. To do this, as before, we fix some $D \in \mathcal{B}_{\tau^{\prime}_k}$ and we want to build $\mathcal{B}(D,\tau_{k+1})$, a cover of $\hf^{-(t_{i-1}+\tau^{\prime}_k+1)}(D\cap\mathcal{B}^\prime)$ by $(\tau_{k+1}-\tau_{k}^{\prime}-1,\epsilon)$-Bowen balls. Proceeding by concatenation as before will give us $\mathcal{B}_{\tau_{k+1}}$. See Case C of Step \stepref{step:U8} in the proof of Proposition \ref{prop:UnstableStableIntersection} for more details on the strategy. Let then $D \in \mathcal{B}_{\tau_k^{\prime}}$.

Recall now the following crucial fact. For any $j \in \llbracket\tau_{k}^{\prime},\tau_{k+1}\rrbracket$, Hypothesis \ref{eq:HypothesisStableIntersection} gives that for any $i \in \{1,\dots,L\}$ and any connected component $V\subset \hf^{-(t_{i-1}+j)}(V^s_{loc}(\hx))$ such that $V \cap \pi^{-1}(T_i) \ne \emptyset$, we have:
\begin{equation*}
	V\subset \pi^{-1}(T_i).
\end{equation*}
For any connected component $V$ of $\hf^{-(t_{i-1}+\tau_k^{\prime})}(D\cap V^s_{loc}(\hx))$, denote by $V^{\prime}$ the set $V \cap \hf^{-(t_{i-1}+\tau_k^{\prime})}(\mathcal{B}^{\prime})$ when the intersection is non-empty.

\begin{claim}
	There exists $i_1\in\{1,\dots,L\}$ such that $\hf^{-(t_{i-1}+\tau_k^{\prime}+1)}(D\cap V^s_{loc}(\hx))\subset \pi^{-1}(T_{i_1})$. Moreover, for any connected component $V\subset \hf^{-(t_{i-1}+\tau_k^{\prime})}(D\cap V^s_{loc}(\hx))$ and any $j \in \llbracket2,\tau_{k+1}-\tau_k^{\prime}\rrbracket$, there exists $i_j \in \{1,\dots,L\}$ depending on $V$ such that:
	\begin{equation*}
		\hf^{-j}(V^{\prime}) \subset \pi^{-1}(T_{i_j}).
	\end{equation*}
	As a consequence, $V^{\prime}$ is contained in a single $(\tau_{k+1}-\tau^{\prime}_k,\epsilon/3)$-Bowen ball.
	\label{claim:StableStep6CaseA}
\end{claim}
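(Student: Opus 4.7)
I would combine Hypothesis~\ref{eq:HypothesisStableIntersection}, which traps every connected component of a backward iterate of $V^s_{loc}(\hx)$ touching a rectangle inside that rectangle, with the fact that $\hf$ is a homeomorphism of $M_f$, so that $\hf^{-1}$ preserves connectedness. During the rectangle segment, the condition $v_{t_{i-1}+\tau_k^{\prime}+j}=1$ forces $\hf^{-(t_{i-1}+\tau_k^{\prime}+j)}(\mathcal{B}^{\prime}) \subset \hat{U}_1 \subset \pi^{-1}(\cup_\ell T_\ell)$, so every connected component encountered automatically meets some rectangle and is therefore contained in it by the hypothesis. I would then extract the diameter estimate needed to pack everything into a single $(\tau_{k+1}-\tau_k^{\prime}, \epsilon/3)$-Bowen ball.

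\textbf{Proof of the second assertion, addressed first.} Fix a connected component $V$ of $\hf^{-(t_{i-1}+\tau_k^{\prime})}(D \cap V^s_{loc}(\hx))$ and let $j \in \llbracket 2, \tau_{k+1}-\tau_k^{\prime}\rrbracket$. Since $\hf^{-j}$ is a homeomorphism of $M_f$, $\hf^{-j}(V)$ is connected; it lies inside $\hf^{-(t_{i-1}+\tau_k^{\prime}+j)}(V^s_{loc}(\hx))$, so it is contained in a single connected component $W_j$ of this set. The piece $\hf^{-j}(V^{\prime}) \subset W_j \cap \hat{U}_1$ is non-empty and lies in $\pi^{-1}(\cup_\ell T_\ell)$, so one can pick some $i_j$ with $W_j \cap \pi^{-1}(T_{i_j}) \neq \emptyset$. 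Hypothesis~\ref{eq:HypothesisStableIntersection} then yields $W_j \subset \pi^{-1}(T_{i_j})$, and in particular $\hf^{-j}(V^{\prime}) \subset \pi^{-1}(T_{i_j})$, as desired.

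\textbf{The first assertion is the main obstacle.} The same reasoning applied at $j=1$ to each component $V$ already gives an index $i_1(V)$ with $\hf^{-1}(V) \subset \pi^{-1}(T_{i_1(V)})$, but one additionally needs $i_1$ to be \emph{independent} of $V$. For this I would slightly refine Case~B of Step~\sstepref{step:SI6}: instead of taking Bowen balls of radius $\epsilon/\norm{d\hf^{-1}}^{t_i-t_{i-1}}$, I would use balls of radius $\epsilon/(3\norm{d\hf^{-1}}^{t_i-t_{i-1}+1})$, so that $\hf^{-(t_{i-1}+\tau_k^{\prime}+1)}(D)$ has diameter strictly less than $\epsilon/3$. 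Since by Step~\sstepref{step:SI3} each connected component of every $\pi^{-1}(T_\ell)$ also has diameter less than $\epsilon/3$, and the $us$-rectangles can be arranged so that a small enough set of $\hat{U}_1$ fits into a single $\pi^{-1}(T_\ell)$ (up to replacing $\hat{U}_1$ with a slightly smaller open set that still has $\hnu_1$-measure $>1-\eta$ and is well-contained in $\pi^{-1}(\cup T_\ell)$), the diameter bound forces the whole of $\hf^{-(t_{i-1}+\tau_k^{\prime}+1)}(D \cap V^s_{loc}(\hx))$ to sit in a common $\pi^{-1}(T_{i_1})$. The cost is a factor $3\norm{d\hf^{-1}}$ per wild segment, which contributes only an $O(\eta)$ term to the final entropy bound and is absorbed by the existing estimates.

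\textbf{Third assertion.} Combining the two previous assertions, for every $j \in \llbracket 1, \tau_{k+1}-\tau_k^{\prime}\rrbracket$ the image $\hf^{-j}(V^{\prime})$ is contained in a single connected component of $\pi^{-1}(T_{i_j})$, which has diameter at most $\epsilon/3$. Consequently any two points of $V^{\prime}$ stay within distance $\epsilon/3 < \epsilon$ under every iterate $\hf^{-j}$ for $j$ in the relevant range, so $V^{\prime}$ is contained in the $(\tau_{k+1}-\tau_k^{\prime}, \epsilon/3)$-Bowen ball based at any fixed reference point of $V^{\prime}$, which completes the proof of Claim~\ref{claim:StableStep6CaseA}.
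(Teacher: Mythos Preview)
Your treatment of the second and third assertions is correct and matches the paper. The gap is in the first assertion, namely the independence of $i_1$ from the connected component $V$.

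Your proposed fix—shrinking the Bowen balls in Case~B so that $\hf^{-(t_{i-1}+\tau_k'+1)}(D)$ has diameter $<\epsilon/3$—does not by itself yield the conclusion. The rectangles $T_\ell$ form an open cover of $\pi(\hNUH)$ and necessarily overlap, so a set of small diameter in $M_f$ can still meet several $\pi^{-1}(T_\ell)$; replacing $\hat U_1$ by a slightly smaller open set does nothing to prevent this. Your sentence ``a small enough set of $\hat U_1$ fits into a single $\pi^{-1}(T_\ell)$'' is the unjustified step. More importantly, even granting a small diameter, the set $\hf^{-(t_{i-1}+\tau_k'+1)}(D\cap V^s_{loc}(\hx))$ is highly disconnected in $M_f$ (curve $\times$ Cantor set), and Hypothesis~\ref{eq:HypothesisStableIntersection} only traps \emph{connected} components in rectangles; you need to explain why all these components are trapped in the \emph{same} rectangle.

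The paper's argument is more direct and avoids modifying the construction. Since $D$ is already a Bowen ball up to time $t_{i-1}+\tau_k'$, for every $j\le t_{i-1}+\tau_k'+1$ the projection $\pi\bigl(\hf^{-j}(D)\bigr)$ has diameter $<\epsilon$ in $M$. Now $f$ is a local diffeomorphism, so distinct branches of $f^{-1}$ are separated by a uniform distance depending only on $f$; with $\epsilon$ below this separation, the projection of $\hf^{-(t_{i-1}+\tau_k'+1)}(D\cap V^s_{loc}(\hx))$ is forced to lie in a \emph{single} connected component of $f^{-(t_{i-1}+\tau_k'+1)}(\Ws_{loc}(\hx))$. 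That single curve meets some $T_{i_1}$ and, by Hypothesis~\ref{eq:HypothesisStableIntersection}, is entirely contained in it; hence every connected component upstairs sits in $\pi^{-1}(T_{i_1})$. This branch-separation argument is the missing idea in your proposal; once you insert it, your extra diameter refinement becomes unnecessary, since the Bowen-ball property already provides the required smallness at every intermediate time.
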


\begin{proof}
	Note first that $\hf^{-(t_{i-1}+\tau^{\prime}_k+1)}(D\cap V^s_{loc}(\hx)\cap\mathcal{B}^{\prime})$ has to project by $\pi$ onto a connected component of $f^{-(t_{i-1}+\tau^{\prime}_k+1)}(\Ws_{loc}(\hx))$. Indeed, suppose that $\hf^{-(t_{i-1}+\tau^{\prime}_k)}(D\cap\mathcal{B}^{\prime})$ projects onto two different connected components $W_1$ and $W_2$. There exists some $j\in\{1,\dots,t_{i-1}+\tau_k^{\prime}\}$ such that $f^j(W_1)=f^j(W_2)$ but $f^{j-1}(W_1)\cap f^{j-1}(W_2)=\emptyset$. But since $f$ is a local diffeomorphism, this would imply that $f^{j-1}(W_1)$ and $f^{j-1}(W_2)$ are separated by a uniform distance depending only on $f$, which contradicts the fact that the image by $f^j$ of points of $\pi(\hf^{-(t_{i-1}+\tau^{\prime}_k+1)}(D\cap\mathcal{B}^{\prime}))$ have to be $\epsilon$-close for $j=0,\dots,t_{i-1}+\tau_k^{\prime}+1$.
	
	In particular, $\pi(\hf^{-(t_{i-1}+\tau^{\prime}_k+1)}(D\cap\mathcal{B}^{\prime}))$ is a curve of diameter less than $\epsilon$ which intersects $\cup_i T_i$. Hypothesis \ref{eq:HypothesisStableIntersection} implies that there exists $i_0\in \{1,\dots,L\}$ such that:
	\begin{equation*}
		V^{\prime} \subset V\subset \pi^{-1}(T_{i_0}).
	\end{equation*}
	Now Claim \ref{claim:StableStep6CaseA} is a direct consequence of Hypothesis \ref{eq:HypothesisStableIntersection} and the fact that for any $j \in \llbracket1,\tau_{k+1}-\tau_k^{\prime}\rrbracket$ and any $\hy \in V^{\prime} \subset \hf^{-(t_{i-1}+\tau_k^{\prime})}(\mathcal{B}^{\prime})$ we have $\hf^{-j}(\hy) \in \cup_i\pi^{-1}(T_i)$. Indeed, since $\hf^{-j}(V)$ is connected, it has to be contained in a connected component of $\pi^{-1}(T_{i_j})$, which has diameter less than $\epsilon/3$.
\end{proof}

Choose an arbitrary $x_1 \in T_{i_1}\cap\pi(\hf^{-(t_{i-1}+\tau_k^{\prime}+1)}(V^s_{loc}(\hx)\cap D)$. We are going to build a cover by Bowen balls of the set $\pi^{-1}(x_1)$ with controlled cardinality.

\begin{claim}
	For any $n\geq0$, there exists a set $\mathcal{F}^n_{x_1}$ such that $|\mathcal{F}^n_{x_1}|\leq K \deg(f)^{n}$, for some constant $K>0$ independent of $n$, which satisfies the following:
	\begin{equation*}
		\pi^{-1}(x_1) \subset \bigcup_{\hy \in \mathcal{F}^n_{x_1}} B_{\hf^{-1}}(\hy,n,\epsilon/3).
	\end{equation*}
	\label{claim:StableBoundFiber}
\end{claim}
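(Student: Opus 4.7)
\textbf{Plan for the proof of Claim \ref{claim:StableBoundFiber}.} The key observation is that $\pi^{-1}(x_1)$ is a Cantor set parametrized by admissible pasts $(y_0,y_{-1},y_{-2},\ldots)$ with $y_0=x_1$ and $f(y_{-k-1})=y_{-k}$. Since $f$ is a local diffeomorphism of degree $\deg(f)$, at each backward step there are exactly $\deg(f)$ choices, so the set of truncated pasts of length $m$ has cardinality $\deg(f)^m$. My plan is to cover $\pi^{-1}(x_1)$ by cylinder sets obtained by fixing sufficiently many past coordinates, and to check that each such cylinder sits inside a single $(n,\epsilon/3)$-Bowen ball for $\hf^{-1}$.

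More precisely, first I would exploit the explicit form of the metric $d_{M_f}(\hx,\hy)=\sum_{i\ge 0} d(x_{-i},y_{-i})/2^i$. For two points $\hy,\hz\in \pi^{-1}(x_1)$ and $j\in\{0,\ldots,n-1\}$, I compute
\begin{equation*}
d\big(\hf^{-j}(\hy),\hf^{-j}(\hz)\big)=\sum_{i=0}^{+\infty}\frac{d(y_{-j-i},z_{-j-i})}{2^i}.
\end{equation*}
I then choose an integer $N=N(\epsilon)$, independent of $n$, such that $\mathrm{diam}(M)/2^{N-1}<\epsilon/6$. For this $N$, I define the cylinder set
\begin{equation*}
C(y_{-1},\ldots,y_{-(n+N-2)})=\{\hz\in\pi^{-1}(x_1)\ :\ z_{-k}=y_{-k}\ \text{for}\ k=1,\ldots,n+N-2\}.
\end{equation*}
There are at most $\deg(f)^{n+N-2}$ such cylinders and their union covers $\pi^{-1}(x_1)$.

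Next I verify that any cylinder $C$ lies inside a single $(n,\epsilon/3)$-Bowen ball for $\hf^{-1}$. For $\hy,\hz$ in the same cylinder and for $j\in\{0,\ldots,n-1\}$, all indices $-j-i$ with $0\leq i\leq N-1$ lie in $\{0,-1,\ldots,-(n+N-2)\}$, hence $y_{-j-i}=z_{-j-i}$; the corresponding terms in the sum above vanish. The tail, $\sum_{i\geq N} d(y_{-j-i},z_{-j-i})/2^i$, is bounded by $\mathrm{diam}(M)/2^{N-1}<\epsilon/6$. Thus $d(\hf^{-j}(\hy),\hf^{-j}(\hz))<\epsilon/6$ for every $j\in\{0,\ldots,n-1\}$, so by the triangle inequality $C$ is contained in $B_{\hf^{-1}}(\hy,n,\epsilon/3)$ for any choice of $\hy\in C$.

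Finally I choose one representative $\hy$ in each nonempty cylinder and let $\mathcal{F}^n_{x_1}$ be the resulting finite set, which has cardinality at most $\deg(f)^{n+N-2}$. Setting $K:=\deg(f)^{N-2}$ (a constant depending only on $\epsilon$ and $f$) yields the required bound $|\mathcal{F}^n_{x_1}|\leq K\deg(f)^n$. The only place where care is needed is fixing the depth $N$ before the induction and checking independence from $n$; there is no genuine obstacle here, this is a purely symbolic-dynamical computation on the Cantor fiber and does not invoke any differentiable information beyond the fact that the degree of $f$ is constant.
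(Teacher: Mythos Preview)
Your proof is correct and is essentially an explicit version of the paper's argument. The paper's proof simply observes that the dynamics of $\hf^{-1}$ on $\pi^{-1}(x_1)$ is conjugate to a one-sided shift on $\deg(f)$ symbols and then invokes the classical fact that the number of $(n,\epsilon/3)$-Bowen balls needed to cover such a shift is proportional to $\deg(f)^n$; you carry out precisely this symbolic computation with cylinder sets of depth $n+N-2$, which is exactly what underlies the ``classical'' statement.
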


\begin{proof}
	Recall that the distance in $M_f$ is given by:
	\begin{equation*}
		d(\hy,\hz) = \sum_{i=0}^{+\infty} \frac{d(y_i,z_i)}{2^i}.
	\end{equation*}
	Since $x_1$ is fixed and the number of pre-images is constant equal to $\deg(f)$, the dynamics of $\hf^{-1}$ on $\pi^{-1}(x_1)$ is conjugate to a Markov shift on a finite alphabet with $\deg(f)$ letters. It is now classical that the number of $(n,\epsilon/3)$-Bowen balls to cover such a system is proportional to $\deg(f)^n$, the proportionality constant depending only on $\epsilon$.
\end{proof}

Apply Claim \ref{claim:StableBoundFiber} with $\tilde{n}=\tau_{k+1}-\tau_{k}^{\prime}-1$. For any $\hy \in \mathcal{F}^{\tilde{n}}_{x_1}$, denote by $\hat{T}_{i_1,\hy}$ the union of all the connected components of $\pi^{-1}(T_{i_1})$ intersecting $B_{\hf^{-1}}(\hy,\tau_{k+1}-\tau_k^{\prime}-1,\epsilon/3)$.

\begin{claim}
	For any $\hy \in \mathcal{F}_{x_1}^{\tilde{n}}$ the set $\hat{T}_{i_1,\hy}\cap \hf^{-(t_{i-1}+\tau_k^{\prime}+1)}(D\cap V^s_{loc}(\hx))$ is contained in one $(\tau_{k+1}-\tau_{k}^{\prime}-1,\epsilon)$-Bowen ball.
	\label{claim:StableStep6Cover}
\end{claim}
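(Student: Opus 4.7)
The plan is to show that every point in $\hat{T}_{i_1,\hy}\cap \hf^{-(t_{i-1}+\tau_k^{\prime}+1)}(D\cap V^s_{loc}(\hx))$ actually lies in the $(\tilde n,\epsilon)$-Bowen ball of $\hf^{-1}$ centered at $\hy$ itself, where $\tilde n := \tau_{k+1}-\tau_k^{\prime}-1$ and $N := t_{i-1}+\tau_k^{\prime}+1$. This will give the claim for free with the Bowen ball centered at $\hy$. Concretely, I will verify $d(\hf^{-j}(\hz),\hf^{-j}(\hy))<\epsilon$ for every $\hz$ in this intersection and every $j\in\llbracket 0,\tilde n-1\rrbracket$, by triangle inequality through an intermediate witness point $\hw$ taken from the Bowen ball.

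First I would extract the witness. By definition of $\hat{T}_{i_1,\hy}$, the connected component $C\subset \pi^{-1}(T_{i_1})$ containing $\hz$ meets $B_{\hf^{-1}}(\hy,\tilde n,\epsilon/3)$, so I pick $\hw\in C\cap B_{\hf^{-1}}(\hy,\tilde n,\epsilon/3)$. By the choice made in Step~\sstepref{step:SI3}, each connected component of each $\pi^{-1}(T_i)$ has diameter $<\epsilon/3$, which gives $d(\hz,\hw)<\epsilon/3$ at time $j=0$. The Bowen ball membership of $\hw$ gives $d(\hf^{-j}(\hw),\hf^{-j}(\hy))<\epsilon/3$ for all $j\in\llbracket 0,\tilde n-1\rrbracket$, so the task reduces to proving $d(\hf^{-j}(\hz),\hf^{-j}(\hw))<2\epsilon/3$ at each of these times.

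The heart of the argument, and the place where Hypothesis~\ref{eq:HypothesisStableIntersection} together with Claim~\ref{claim:StableStep6CaseA} is used, is to track the two points $\hf^{-j}(\hz)$ and $\hf^{-j}(\hw)$. Let $V_{\hz}$ be the connected component of $\hf^{-N}(V^s_{loc}(\hx))$ through $\hz$; by Claim~\ref{claim:StableStep6CaseA}, for every $j\in\llbracket 0,\tilde n\rrbracket$ the image $\hf^{-j}(V_{\hz})$ is contained in a single connected component $C_j$ of some $\pi^{-1}(T_{i_j^{\hz}})$, in particular $\hf^{-j}(\hz)\in C_j$. The set $\hf^{-j}(C)$ is connected, contains $\hf^{-j}(\hz)$ and $\hf^{-j}(\hw)$, and $\hz,\hw$ share the same symbolic past (same Cantor coordinate under the product structure $\varphi_{i_1}$ from Step~\sstepref{step:SI3}), so $\hf^{-j}(C)$ corresponds to following the same branch of $f^{-j}$ above the set $T_{i_1}$. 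Since $\hf^{-j}(\hz)$ lies in $C_j\subset \pi^{-1}(T_{i_j^{\hz}})$ and the $T_i$ cover an open neighborhood of $\NUH$, continuity of the corresponding $f^{-j}$-branch on $T_{i_1}$ (with $\sigma$ chosen small enough in Step~\sstepref{step:SI3} so that the full branch sits inside one $T_{i_j^{\hz}}$) forces $\hf^{-j}(\hw)$ to lie in the same connected component $C_j$. Hence $d(\hf^{-j}(\hz),\hf^{-j}(\hw))<\diam(C_j)<\epsilon/3$, and combining the three $\epsilon/3$ bounds yields $d(\hf^{-j}(\hz),\hf^{-j}(\hy))<\epsilon$.

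The main obstacle I foresee is exactly this last step: controlling $\hf^{-j}(\hw)$, because $\hw$ is not required to lie on the stable lamination of $\hx$, so Hypothesis~\ref{eq:HypothesisStableIntersection} does not apply to it directly. The fix must come from the fact that $\hz$ and $\hw$ start in the same connected component of $\pi^{-1}(T_{i_1})$, i.e.\ they agree symbolically in the past, so under $\hf^{-1}$ their $\pi$-projections follow identical preimage branches of $f$; then the small diameter of $T_{i_1}$ (guaranteed by property~\ref{property:TR3} with $\sigma$ small compared to the scale on which local branches of $f^{-j}$ stay inside a single $T_i$) is what propagates the rectangle-confinement from $\hz$ to $\hw$. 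This is where the choice of $\sigma$ in Step~\sstepref{step:SI3} must be tight enough to accommodate the full $\tilde n$ iterates $\hf^{-j}$, which is the only nontrivial quantitative input of the argument.
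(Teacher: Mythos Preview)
Your overall strategy---triangle inequality through a witness point $\hw\in C\cap B_{\hf^{-1}}(\hy,\tilde n,\epsilon/3)$---is reasonable, but the argument breaks at the step you yourself flagged as the obstacle, and the proposed fix does not work.

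The claim that ``$\sigma$ chosen small enough in Step~\sstepref{step:SI3} so that the full branch [of $f^{-j}$] sits inside one $T_{i_j^{\hz}}$'' is false for the relevant range of $j$. The rectangle $T_{i_1}$ has diameter $<\sigma$, and so does every $T_{i_j^{\hz}}$. But the branch of $f^{-j}$ \emph{expands} in the stable direction (since $f$ contracts it), so the image of $T_{i_1}$ under this branch has stable length of order $\sigma\cdot\|df^{-1}|_{E^s}\|^{j}$, which cannot fit inside a rectangle of diameter $\sigma$ once $j$ is moderately large. Since $\tilde n=\tau_{k+1}-\tau_k'-1$ can be as large as $n_1$, and $\sigma$ is fixed in Step~\sstepref{step:SI3} \emph{before} $n_1$ is chosen in Step~\sstepref{step:SI4}, you cannot make $\sigma$ depend on $\tilde n$. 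More fundamentally, the requirement is circular: you would need the $f^{-j}$-branch to be a contraction on scale $\sigma$, which it is not.

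The paper avoids this by choosing a different intermediary. Instead of an arbitrary witness $\hw$ from the Bowen ball, it takes $\hx_1^l$: the unique lift of the fixed base point $x_1$ lying on the \emph{same connected component $V_l$ of the stable set} $\hf^{-N}(D\cap V^s_{loc}(\hx))$ as $\hz_l$. This is the crucial move. Because $\hx_1^l$ lies on the stable lamination, Claim~\ref{claim:StableStep6CaseA} (hence Hypothesis~\ref{eq:HypothesisStableIntersection}) applies directly and gives $d(\hf^{-j}(\hz_l),\hf^{-j}(\hx_1^l))<\epsilon/3$ for all $j$ in the range---no control on branches of $f^{-j}$ off the lamination is needed. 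The remaining comparison is between $\hx_1^1$ and $\hx_1^2$, both of which live in the \emph{fiber} $\pi^{-1}(x_1)$; this is exactly what the cover $\mathcal{F}^{\tilde n}_{x_1}$ from Claim~\ref{claim:StableBoundFiber} is designed to handle, via the symbolic (shift) structure on the fiber. In short: the missing idea is to route the triangle inequality through a point that is simultaneously on the stable lamination and in the fiber over $x_1$, so that one leg is controlled by the Hypothesis and the other by the fiber's symbolic dynamics.
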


\begin{proof}
	Let $j \in \llbracket0,\tau_{k+1}-\tau_{k}^{\prime}-1\rrbracket$ and let $\hz_1,\hz_2 \in \hat{T}_{i_1\hy}\cap \hf^{-(t_{i-1}+\tau_k^{\prime}+1)}(D\cap V^s_{loc}(\hx))$ for some $\hy \in \mathcal{F}^{\tilde{n}}_{x_1}$. Denote by $\hx_1^l$ the unique point in $\pi^{-1}(x_1)$ which belongs to the same connected component of $\hf^{-(t_{i-1}+\tau_k^{\prime}+1)}(D\cap V^s_{loc}(\hx))$ as $\hz_l$, for $l\in\{1,2\}$. Combining Claim \ref{claim:StableStep6CaseA} and the fact that $\hx_1^1$ and $\hx_1^2$ belong to the same $(\tau_{k+1}-\tau_{k}^{\prime},\epsilon/3)$-Bowen ball we have:
	\begin{equation*}
		d\big(\hf^{-j}(\hz_1),\hf^{-j}(\hz_2)\big) \leq d\big(\hf^{-j}(\hz_1),\hf^{-j}(\hx_1^1)\big) + d\big(\hf^{-j}(\hx_1^1),\hf^{-j}(\hx_1^2)\big) + d\big(\hf^{-j}(\hx_1^2),\hf^{-j}(\hz_2)\big) < \epsilon
	\end{equation*}
\end{proof}

Define now $\mathcal{B}(D,\tau_{k+1})$ to be the minimal collection of $(\tau_{k+1}-\tau_{k}^{\prime}-1,\epsilon)$-Bowen balls containing every set $\hat{T}_{i_1,\hy}\cap \hf^{-(t_{i-1}+\tau_k^{\prime}+1)}(D\cap V^s_{loc}(\hx))$ for $\hy \in \mathcal{F}^{\tilde{n}}_{x_1}$. Note that combining Claim \ref{claim:StableStep6CaseA} and Claim \ref{claim:StableBoundFiber} gives the following:
\begin{equation*}
	\hf^{-(t_{i-1}+\tau_k^{\prime}+1)}(D\cap\mathcal{B}^{\prime}) \subset \bigcup_{\hy \in \mathcal{F}^{\tilde{n}}_{x_1}}\hat{T}_{\hy}\cap \hf^{-(t_{i-1}+\tau_k^{\prime}+1)}(D\cap V^s_{loc}(\hx)).
\end{equation*}
In particular, $\mathcal{B}(D,\tau_{k+1})$ is a cover by $(\tau_{k+1}-\tau_k^{\prime}-1,\epsilon)$-Bowen balls of the set $\hf^{-(t_{i-1}+\tau_k^{\prime}+1)}(D\cap\mathcal{B}^{\prime})$. Note also that by Claim \ref{claim:StableStep6Cover}, we have the following bound on the cardinality of this cover:
\begin{equation*}
	|\mathcal{B}(D,\tau_{k+1})|\leq |\mathcal{F}^{\tilde{n}}_{x_1}|\leq K\deg(f)^{(\tau_{k+1}-\tau_{k}^{\prime})}.
\end{equation*}
This concludes the construction of the cover $\mathcal{B}_{\tau_{k+1}}$ and gives us the following:
\begin{equation*}
	|\mathcal{B}_{\tau_{k+1}}|\leq K\deg(f)^{(\tau_{k+1}-\tau_{k}^{\prime}-1)}|\mathcal{B}_{\tau_k^{\prime}}|.
\end{equation*}
To conclude with Case C, assume now that $\mathcal{B}_{\tau_k}$ is well-defined and that you want to construct $\mathcal{B}_{\tau_k^{\prime}}$. Here the argument is the same as in Case A and we construct our cover by taking balls of radius $\epsilon/\norm{d\hf^{-1}}^{\tau_k^{\prime}-\tau_k}$. This gives us the following bound on the cardinality of $\mathcal{B}_{\tau_k^{\prime}}$:
\begin{equation*}
	|\mathcal{B}_{\tau_k^{\prime}}|\leq C\norm{d\hf^{-1}}^{\tau_k^{\prime}-\tau_k}
\end{equation*}
for some constant $C>0$.

This concludes the induction and allows us to define $\mathcal{B}(B,t_i)$ and, by induction, it gives us the following bound on its cardinality:
\begin{equation*}
	|\mathcal{B}(B,t_i)|\le(KC)^{c_i^0}\times d(f)^{n_1} \times \norm{d\hf^{-1}}^{c_i^0}.
\end{equation*}
Repeating this construction for all $B\in \mathcal{B}_{t_{i-1}}^{\theta}$ allows us to define the cover $\mathcal{B}_{t_i}^{\theta}$, which has cardinality:
\begin{equation}
	|\mathcal{B}_{t_i}^{\theta}|\leq (KC)^{c_i^0}\times d(f)^{n_1} \times \norm{d\hf^{-1}}^{c_i^0}|\mathcal{B}_{t_{i-1}}^{\theta}|.
	\label{eq:StableBoundRectangle}
\end{equation}
This finishes the induction over the $t_i$ and allows us to define $\mathcal{B}_n^{\theta}$. Repeating all this construction for every decomposition type $\theta$ defines the cover $\mathcal{B}_n$.

\parbreak\SStep[A bound on the cardinality of the covers by Bowen balls.]\label{step:SI7}
This step can be compared to Step \sstepref{step:U9} in the proof of Proposition \ref{prop:UnstableStableIntersection}. By Step \sstepref{step:SI6}, gathering all the estimates \eqref{eq:StableBoundSmallEntropy}, \eqref{eq:StableBoundWild} and \eqref{eq:StableBoundRectangle} together, we obtain the following bound for $n>N_k$:
\begin{equation*}
	\begin{aligned}
		|\mathcal{B}_n|\leq &\exp\big(n(5\eta+2H(\eta)\big)\\
		&\times \exp\Big(\big((1-\alpha)n +2\eta n\big)\log d(f)(1+\delta)\big)\\
		&\times \big(2KC\norm{d\hf^{-1}}\big)^{2\eta n}\\
		&\times d(f)^{\alpha n + 2\eta n}.
	\end{aligned}
\end{equation*}
Using Proposition \ref{prop:StableEntropy}, this gives us the following bound on the entropy of $\hmu_k$ for $k\geq k_{\star}$:
\begin{equation*}
	\begin{aligned}
		h(f,\hmu_k,\epsilon) \leq \lim_{n\to +\infty} \frac{1}{n} \log |\mathcal{B}_n|&\leq 5\eta + 2H(\eta)\\
		&+(1-\alpha)\log d(f) + \big(\delta(1-\alpha)+2\eta\big)\log d(f)\\
		&+2\eta \log(2KC\norm{d\hf^{-1}})\\
		&+\alpha\log d(f) + 2\eta \log d(f).
	\end{aligned}
\end{equation*}
Choosing $\eta$ and $\delta$ small enough in Steps \sstepref{step:SI2} and \sstepref{step:SI3} leads to: $h(f,\hmu_k,\epsilon)\leq \log d(f) + h$.
On the other hand, we have, again choosing $\delta$ correctly:
\begin{equation*}
	h(f,\hmu_k,\epsilon) > h(f,\hmu_k) - \delta > \log d(f) + h
\end{equation*}
which leads to a contradiction. Hypothesis \ref{eq:HypothesisStableIntersection} can then not be true. This implies that there exists some $m\geq 0$, some $i \in \{1,\dots,L\}$ and a connected component $V\subset \hf^{-m}(V^s_{loc}(\hx))$ such that $V$ intersects $\pi^{-1}(T_i)$ and also $M_f\backslash\overline{\pi^{-1}(T_i)}$. Recall that the stable boundaries of any $us$-rectangle of $\mathcal{T}$ is contained in a stable manifold. Recall also that stable manifolds are either disjoint or equal. We then conclude that for any $k\geq k_{\star}$, there exists $i(k)\in \{1,\dots,L\}$ such that for any $\hx \in \hat{F}_k$, there exists some positive integer $m$ such that:
\begin{equation}
	\hf^{-m}(V^s_{loc}(\hx)) \cap V^u_{loc}(\hy_{i(k)}) \ne \emptyset
	\label{eq:StableUnstableIntersection}
\end{equation}
where $\Wu_{loc}(\hy_{i(k)})$ contains one of the unstable boundaries of $T_{i(k)}$.

\parbreak\SStep[End of the proof.]\label{step:SI8}
We are now able to conclude. This step uses Sard's theorem \ref{thm:Sard} and is very similar to Steps \estepref{step:HUI2} and \estepref{step:HUI3} in the proof of Proposition \ref{prop:IntersectionsUnstableMaxEntropy}.

Fix then $k\geq k_{\star}$, an integer $i(k) \in \{1,\dots,L\}$, a point $\hx \in \hat{F}_k$ such that $\hx$ satisfies Theorem \ref{thm:LipschitzHolonomiesLamination}, a positive integer $m\geq 0$ and a connected component $V\subset \hf^{-m}(V^s_{loc}(\hx))$ such that $V$ intersects both $T_{i(k)}$ and $M\backslash\overline{T_{i(k)}}$. Projecting by $\pi$, this implies that there exists a curve $\gamma:[0,1]\to M$ such that:
\begin{itemize}[label={--}]
	\item $f^m(\gamma([0,1])) \subset \Ws_{loc}(\hx)$,
	\item $\gamma$ intersects both $T_{i(k)}$ and $M\backslash\overline{T_{i(k)}}$,
	\item $\gamma([0,1])\cap \partial^{u,l} T_{i(k)} \ne \emptyset$ for some $l \in \{1,2\}$.
\end{itemize}
In particular, $\Ws_{loc}(\hx)$ intersects both $f^m(T_{i(k)})$ and $M\backslash\overline{f^m(T_{i(k)})}$. This implies that for any chart small enough sending $\Ws_{loc}(\hx)$ to the horizontal, the curve $f^m(\partial^{u,l}T_{i(k)})$ has a component above $\Ws_{loc}(\hx)$ and a component below $\Ws_{loc}(\hx)$. Let $\kappa:[0,1]\to f^m(\partial^{u,l}T_{i(k)})$ be an embedding which intersects $\Ws_{loc}(\hx)$ and has one component above and one below $\Ws_{loc}(\hx)$ in the previous sense. Since $\hx$ satisfies Theorem \ref{thm:LipschitzHolonomiesLamination}, there exists a compact set $\Lambda_{\hx}$ such that:
\begin{itemize}[label={--}]
	\item$\text{dim}_H\big(\Wu_{loc}(\hx)\cap\Lambda_{\hx}\big)>0$,
	\item $\pi(\hx)$ is accumulated on both sides of $\Wu(\hx)$ by points of $\Lambda_{\hx}$.
\end{itemize}
Up to taking $\epsilon>0$ small enough, the fact that $\pi(\hx)$ is accumulated on both sides of $\Wu(\hx)$ by points of $\Lambda_{\hx}$ implies that the curve $\kappa$ intersects any leaf of the stable lamination $\Ws_{loc}(\Lambda_x\cap B(\hx,\epsilon))$. Now, since $\text{dim}_H\big(\Wu_{\epsilon}(\hx)\cap\Lambda_{\hx}\big)>0$, Theorem \ref{thm:Sard} and the continuity of the stable lamination of $\Lambda_{\hx}$ for the $\cC^1$ topology implies that there exists a set $\Lambda_k$ of positive $\mu_k$-measure such that for any $x \in \Lambda_k$ we have:
\begin{equation*}
	\Ws(x)\pitchfork \Wu_{loc}(\hy_{i(k)}) \ne \emptyset.
\end{equation*}
Using Katok Theorem \ref{prop:HomRelation} and the $\lambda$-lemma \ref{lem:LambdaLemma}, we can show that $\Wu_{loc}(\hy_{i(k)})$, respectively $\Ws_{loc}(\hy_{i(k)})$, is $\cC^1$-accumulated by a sequence of discs belonging to the unstable manifold, respectively the stable manifold, of some hyperbolic saddle periodic orbit $\hat{\cO}_{i(k)}$. We then conclude that for any $x \in \Lambda_k$, we have:
\begin{equation*}
	\Ws(x)\pitchfork\Wu(\hat{\cO}_{i(k)})\ne\emptyset.
\end{equation*}
Note that since the set of $x \in M$ such that $\Ws(x)\pitchfork\Wu(\cO_{i(k)})\ne\emptyset$ is invariant and $\mu_k$ is ergodic, it holds on a set of full $\mu_k$-measure.

We consider the family of hyperbolic saddle periodic orbits $\{\cO_1,\dots,\cO_L\}$. Fix $i \in \{1,\dots,L\}$. We still have to show that $\hnu(\{\hx\in M_f, \hx \simh\hat{\cO}_i\})>0$. Since $\hy_i \in \hat{Y}^{\#}$, it is in the support of $\hnu$ restricted to some Pesin block $\hat{\Lambda}$ and $\hnu(B(\hy_i,\epsilon)\cap \hat{\Lambda})>0$ for any $\epsilon>0$. By continuity of the stable and unstable manifolds in Pesin blocks, for the $\cC^1$ topology, we can find some small $\epsilon>0$ such that any $\hz\in B(\hy_i,\epsilon)\cap \hat{\Lambda}$ is homoclinically related to $\hat{\cO}_i$, which concludes the proof of Proposition \ref{prop:StableUnstableIntersection}.

\section{Proof of the main theorems}\label{sec:ProofMainThm}
This last section is devoted to the proofs of the main theorems presented in the introduction. We begin by proving Theorem \ref{mainthm:A} assuming Theorem \ref{mainthm:B}, as a direct corollary.

\begin{proof}[Proof of Theorem \ref{mainthm:A} assuming Theorem \ref{mainthm:B}]
	By Theorem \ref{thm:CodingMme}, any measured homoclinic class contains at most one ergodic measure of maximal entropy. By Theorem \ref{mainthm:B}, the set of measured homoclinic classes containing a measure of maximal entropy is finite, which concludes the proof.
\end{proof}

We continue with the proof of Theorem \ref{mainthm:B}. We proceed by contradiction, which allows us to find a sequence of ergodic hyperbolic $f$-invariant measures $(\mu_k)_k$ of saddle type, which are not homoclinically related and such that their entropy converges to the topological entropy. The idea is then to apply first Proposition \ref{prop:StableUnstableIntersection} to the sequence $(\mu_k)_k$, which converges to $\nu$. We then obtain the existence of a finite number of hyperbolic periodic orbits such that the stable manifold of $\mu_k$-almost every point intersects transversely the unstable manifold of some of these orbits. Then, Proposition~\ref{prop:IntersectionsUnstableMaxEntropy} will imply that the unstable manifold of $\mu_k$-almost every point intersects transversely the stable manifold of all these hyperbolic periodic orbits. This will imply that for $k$ large enough, the $\mu_k$ are homoclinically related to a finite number of periodic hyperbolic orbits, which is a contradiction.

\begin{proof}[Proof of Theorem \ref{mainthm:B}]
	We proceed by contradiction. Suppose that for any $\epsilon>0$, the set of measured homoclinic classes $\mathcal{M}(\mu)$ containing a measure $\mu^{\prime}\in\mathcal{M}(\mu)$ such that $h(f,\mu^{\prime})>h_{\text{top}}(f)-\epsilon$ is not finite. This allows us to find a sequence of measures $(\mu_k)_k$ such that $\liminf_{k\to+\infty}h(f,\mu_k)=h_{\text{top}}(f)$ and such that for $i\ne j$, the measures $\mu_i$ and $\mu_j$ are not homoclinically related. Denote its lift to $M_f$ by $(\hmu_k)_k$. Up to taking a subsequence, we can assume that $(\hmu_k)_k$ weak-$\star$ converges to a $\hf$-invariant measure $\hnu$. Again, denote its projection to $M$ by $\nu$. Since $f$ is $\cC^{\infty}$, the entropy is upper semi-continuous and $\hnu$ is a measure of maximal entropy. In particular, $\hnu$-almost every ergodic component is a measure of maximal entropy. Since $h_{\text{top}}(f)>\log\deg(f)$, Ruelle's inequality (Proposition \ref{prop:Ruelle}) implies that any measure of maximal entropy is hyperbolic of saddle type. The sequence $(\mu_k)_k$ and its limit $\nu$ satisfy the hypotheses of Propositions \ref{prop:IntersectionsUnstableMaxEntropy} and \ref{prop:StableUnstableIntersection} with $\alpha=1$.
	
	We begin by applying Proposition \ref{prop:StableUnstableIntersection}. We obtain that there exists a finite number of hyperbolic saddle $\hf$-periodic orbits $\hat{\cO}_1,\dots,\hat{\cO}_{L}$ such that the following properties hold.
	\begin{itemize}[label={--}]
		\item There exists an integer $k_0\geq 0$ such that for any $k\geq k_0$ and for $\hmu_k$-almost every $\hx$, the following intersection holds:
		\begin{equation}
			\Ws(\hx)\pitchfork\Wu(\hat{\cO}_j)\ne\emptyset
			\label{eq:ProofMainThmAStable}
		\end{equation}
		for some $j\in\{1,\dots,L\}$.
		\item For any $i \in \{1,\dots,L\}$, the set $\{\hx \in M_f, \ \hx\simh\hat{\cO}_i\}$ has positive $\hnu$-measure.
	\end{itemize}
	We now apply Proposition \ref{prop:IntersectionsUnstableMaxEntropy} for all the orbits $\hat{\cO}_1,\dots,\hat{\cO}_L$, in the trivial case where $\alpha=1$. We obtain that there exist integers $k_1,\dots,k_L\geq 1$ satisfying the following property for any $i\in\{1,\dots,L\}$. For any $k\geq k_i$, for $\hmu_k$-almost every $\hx$, the following intersection holds:
	\begin{equation}
		\Wu(\hx)\pitchfork W^s(\hat{\cO}_i)\ne \emptyset.
		\label{eq:ProofMainThmAUnstable}
	\end{equation}
	
	Take now $k_{\star}=\max(k_0,\dots,k_L)$. Combining \eqref{eq:ProofMainThmAStable} and \eqref{eq:ProofMainThmAUnstable}, we obtain that for any $k\geq k_{\star}$, there exists an integer $i(k)\in \{1,\dots,L\}$ such that $\hmu_k$ is homoclinically related to $\hat{\cO}_{i(k)}$. This concludes the proof of Theorem \ref{mainthm:B}, since there exist $k_1,k_2\geq k_{\star}$ such that $i(k_1)=i(k_2)$ and then $\hmu_{k_1}\simh\hmu_{k_2}$, which is a contradiction.
\end{proof}

Let us now finish with the proof of Theorem \ref{mainthm:C}. It is a corollary of Proposition \ref{prop:UnstableStableIntersection}. The idea is to use Proposition \ref{prop:UnstableStableIntersection} to get topological intersections and conclude with Sard's theorem \ref{thm:Sard}.

\begin{proof}[Proof of Theorem \ref{mainthm:C}]
	Let $(\mu_k)_k$ be a sequence of ergodic hyperbolic measures of saddle type with maximal entropy. Let $(\hmu_k)_k$ be the lifted sequence to $M_f$. Up to taking a subsequence, we can assume that $\hmu_k$ weak-$\star$ converges to a $\hf$-invariant measure $\hnu$. Since $f$ is $\cC^{\infty}$, the entropy is upper semi-continuous and $\hnu$ is then a measure of maximal entropy. In particular, $\hnu$-almost every ergodic component is also a measure of maximal entropy.
	
	\begin{claim}
		There exists $\alpha\in (0,1]$ and two $\hf$-invariant probabilities $\hnu_1$ and $\hnu_0$ such that:
		\begin{itemize}[label={--}]
			\item $\hnu$-almost every ergodic component of $\hnu_1$ is hyperbolic of saddle type,
			\item $\hnu=\alpha\hnu_1+(1-\alpha)\hnu_0$.
		\end{itemize}
		\label{claim:ProofMainThmB}
	\end{claim}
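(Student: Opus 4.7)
My approach is to construct the decomposition directly from the ergodic decomposition of $\hnu$, and then to show $\alpha>0$ via a lower semi-continuity argument on the second Lyapunov exponent.

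First I would write the ergodic decomposition $\hnu = \int \hnu_{\hx}\, d\hnu(\hx)$, let $\hat{S} := \{\hx \in M_f : \hnu_{\hx}$ is hyperbolic of saddle type$\}$, and set $\alpha := \hnu(\hat{S})$. Define $\hnu_1 := \alpha^{-1}\int_{\hat{S}} \hnu_{\hx}\, d\hnu(\hx)$ when $\alpha>0$ and $\hnu_0 := (1-\alpha)^{-1}\int_{M_f\setminus \hat{S}} \hnu_{\hx}\, d\hnu(\hx)$ when $\alpha<1$; the cases $\alpha=0$ or $\alpha=1$ are handled by convention. Then $\hnu = \alpha\hnu_1 + (1-\alpha)\hnu_0$, and by construction $\hnu$-almost every ergodic component of $\hnu_1$ is saddle hyperbolic.

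The main obstacle is showing $\alpha > 0$. I would exploit lower semi-continuity of the averaged second Lyapunov exponent $\lambda^-(\hmu) := \int \lambda^-(\hx)\, d\hmu(\hx)$, defined for any $\hf$-invariant probability $\hmu$. By Kingman's subadditive ergodic theorem,
\[
\lambda^+(\hmu) = \inf_{n\geq 1} \tfrac{1}{n}\int \log\|d_{\hx}\hf^n\|\, d\hmu(\hx),
\]
an infimum of weak-$\star$ continuous functions, so $\lambda^+$ is upper semi-continuous. Moreover, Oseledets' theorem gives $\lambda^+(\hmu)+\lambda^-(\hmu) = \int \log|\det df|\, d\hmu$, which is continuous in $\hmu$; subtracting, $\lambda^-$ is lower semi-continuous. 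Since each $\mu_k$ is saddle, $\lambda^-(\hmu_k) < 0$, and $\hmu_k \to \hnu$ yields $\lambda^-(\hnu) \leq \liminf_{k\to\infty} \lambda^-(\hmu_k) \leq 0$.

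Suppose for contradiction that $\alpha = 0$. Then for $\hnu$-almost every $\hx$, $\hnu_{\hx}$ is hyperbolic (Theorem~\ref{mainthm:C} hypothesis) but not saddle, while still an MME with entropy $h_{top}(f)>0$. If $\lambda^+(\hnu_{\hx}) \leq 0$ on a set of positive $\hnu$-measure, both Lyapunov exponents would be negative on that set (by hyperbolicity), and Pesin's stable manifold theorem would force typical orbits to be absorbed into a periodic sink, contradicting $h(f,\hnu_{\hx})>0$. Hence $\lambda^+(\hnu_{\hx})>0$ for $\hnu$-almost every $\hx$; combined with hyperbolicity and the exclusion of the saddle case, this forces $\lambda^-(\hnu_{\hx}) > 0$. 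Integrating gives $\lambda^-(\hnu) > 0$, contradicting $\lambda^-(\hnu) \leq 0$. Hence $\alpha > 0$.
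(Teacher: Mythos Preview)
Your proof is correct and follows essentially the same route as the paper: both obtain $\lambda^-(\hnu)\le 0$ by combining upper semi-continuity of $\lambda^+$ with the weak-$\star$ continuity of $\hmu\mapsto\int\log|\det df|\,d\hmu$, then use hyperbolicity of the ergodic components (plus positive entropy) to exhibit a positive-measure set of saddle components. One small point: your exclusion of the case $\lambda^+(\hnu_{\hx})\le 0$ via ``absorption into a periodic sink'' is a diffeomorphism argument that does not directly carry over to endomorphisms; the clean justification here is the forward Ruelle inequality $h(f,\hnu_{\hx})\le\sum_{\lambda_i>0}\lambda_i$, which immediately gives $h(f,\hnu_{\hx})=0$ when both exponents are nonpositive, contradicting $h_{top}(f)>0$.
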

	
	\begin{proof}
		Write $\hnu=\int_{M_f}\hnu_{\hx} \ d\hnu(\hx)$ as the ergodic decomposition of the measure $\hnu$. The Lyapunov exponents of $\hnu$ are defined as:
		\begin{equation*}
			\lambda^{\pm}(\hnu)=\int_{M_f}\lambda^{\pm}(\hnu_{\hx}) \ d\hnu(\hx).
		\end{equation*}
		Using the subadditive ergodic theorem, it is well known that the upper Lyapunov exponent is upper semi-continuous:
		\begin{equation*}
			\limsup_{k\to+\infty}\lambda^+(\hmu_k) \leq \lambda^+(\hnu).
		\end{equation*}
		See \cite[Theorem 3.5]{buzzi2022continuity} for a proof. The following sequence of inequalities then holds:
		\begin{equation*}
			\begin{aligned}
				\lambda^+(\hnu) &\geq \limsup_{k\to +\infty} \lambda^+(\hmu_k)\\
				&\geq \limsup_{k\to+\infty} \lambda^+(\hmu_k)+\lambda^{-}(\hmu_k) = \limsup_{k\to +\infty}\int_{M_f} \log|\det d_{\hx}\hf| \ d\hmu_k(\hx)\\
				&= \int_{M_f} \log|\det d_{\hx}\hf| \ d\hnu(\hx) = \lambda^+(\hnu)+\lambda^-(\hnu)
			\end{aligned}
		\end{equation*}
		where the convergence of the integral holds since $(\hmu_k)_k$ converges weak-$\star$ to $\hnu$ and $\det d_{\hx}\hf$ is continuous. We then get that $\lambda^-(\hnu)\leq0$. But since almost-every ergodic component of $\hnu$ is hyperbolic, we have $\lambda^-(\hnu)<0$. Let $\hat{X}=\{\hx, \ \lambda^-(\hnu_{\hx})<0\}$. The fact that $\lambda^-(\hnu)$ is negative implies that $\hat{X}$ has positive $\hnu$-measure. Denote this measure by $\alpha$. The following two invariant probabilities give us the desired decomposition:
		\begin{equation*}
			\hnu_1=\frac{1}{\alpha}\int_{\hat{X}}\hnu_{\hx} \ d\hnu(\hx) \quad \text{and} \quad \hnu_0=\frac{1}{1-\alpha} \int_{\hat{X}^c} \hnu_{\hx} \ d\hnu(\hx).
		\end{equation*}
	\end{proof}
	
	Let us check that the decomposition given by Claim \ref{claim:ProofMainThmB} satisfies the hypothesis of Proposition \ref{prop:UnstableStableIntersection}. Denote by $\nu,\nu_1$ and $\nu_0$ the projections to $M$ by $\pi$ of the measures $\hnu,\hnu_1$ and $\hnu_0$. Recall that we supposed that $h_{\text{top}}(f)>0$. Recall also that $\hnu$-almost every ergodic component of $\hnu$ is a measure of maximal entropy. We then have that $\hnu$-almost every ergodic component of $\hnu_1$ has positive entropy and is hyperbolic of saddle type. Since $\liminf_{k\to +\infty} h(f,\mu_k)=h_{\text{top}}(f)$ and $\alpha>0$, we also have:
	\begin{equation*}
		h(f,\nu^{\prime}) < \frac{1}{1-\alpha}\liminf_{k\to+\infty} h(f,\mu_k)
	\end{equation*}
	for $\nu$-almost every ergodic component of $\nu_0$. We can then apply Proposition \ref{prop:UnstableStableIntersection} to the sequence $(\mu_k)_k$ and the measure $\nu$. There exists then a finite number of recurrent points $\hy_1,\dots,\hy_L$, which are accumulated on both sides of their unstable manifold by points such that their stable manifolds form a continuous lamination with Lipschitz holonomies (recall the definition of the set $\hat{Y}^{\#}$), and displaying the following property. For $k$ sufficiently large, there exists an integer $i(k)\in \{1,\dots,L\}$ and a set $\hat{X}_k$ of positive $\hmu_k$-measure such that for any $\hx\in \hat{X}_k$, the following intersection holds:
	\begin{equation*}
		\Wu(\hx)\cap \Ws_{loc}(\hy_{i(k)})\ne \emptyset.
	\end{equation*}
	We are going to apply now the same argument as in Step \estepref{step:HUI2} of the proof of Proposition \ref{prop:IntersectionsUnstableMaxEntropy}. Let us explain it again. Since $\hy_{i(k)}$ is accumulated by points of the same Pesin block on both sides of its unstable manifold, any $\Wu(\hx)$, for $\hx\in \hat{X}_k$, intersects a continuous lamination by stable local manifolds with Lipschitz holonomies. Since $f$ is $\cC^{\infty}$, we can apply Sard Theorem \ref{thm:Sard}, which implies that $\Wu(\hx)$ intersects transversely local stable manifolds of points belonging to a set of positive $\hnu_1$-measure. This implies that $\mu_k$ is homoclinically related to some ergodic, hyperbolic measure of saddle type $\hnu_{i(k)}$ which has positive entropy, since $\hnu_{i(k)}$ can be chosen as an ergodic component of $\hnu_1$. This concludes the proof of Theorem \ref{mainthm:C}.
\end{proof}

We conclude this work by explaining why our results extend to the setting of the Corollary in Subsection \ref{maincor}. Let $f$ be a map of a two-dimensional manifold and $K$ be a non-singular compact attractor. Recall that $K$ is a non-singular compact attractor if $K$ is compact, invariant, and there exists an open connected neighborhood $U$ of $K$ such that $f(\overline{U})\subset U$ and $U\cap (\cup_{n\geq 0}f^n(\mathscr{C}))=\emptyset$, where $\mathscr{C}$ is the set of critical points of $f$, and the orbit of any point in $U$ converges to $K$. First, since $U$ is connected, the number of pre-images is constant in $K$ and we denote it by $\deg(f_{|K})$. We can also suppose that for any hyperbolic measure of saddle type $f$-invariant measure supported in $K$, its local stable and unstable manifolds are contained in $U$. The result of Section \ref{sec:UnstableStableIntersection} holds trivially since we only iterate in the future. The results of Section \ref{sec:StableUnstableIntersections} also hold since the local stable manifolds are contained in $U$, which implies that the $n$-th pre-images of any point in such a curve cannot be a critical point, for any $n$. The setting is then the same as in Section \ref{sec:StableUnstableIntersections}. We conclude as in Section \ref{sec:ProofMainThm}.

\printbibliography

@article{buzzi2022measures,
	title={Measures of maximal entropy for surface diffeomorphisms},
	author={Buzzi, J{\'e}r{\^o}me and Crovisier, Sylvain and Sarig, Omri},
	journal={Annals of Mathematics},
	volume={195},
	number={2},
	pages={421--508},
	year={2022},
	publisher={Department of Mathematics, Princeton University Princeton, New Jersey, USA}
}

@article {MisuirewiczPrzytycki,
	AUTHOR = {Misiurewicz, Micha\l{} and Przytycki, Feliks},
	TITLE = {Topological entropy and degree of smooth mappings},
	JOURNAL = {Bull. Acad. Polon. Sci. S\'er. Sci. Math. Astronom. Phys.},
	FJOURNAL = {Bulletin de l'Acad\'emie Polonaise des Sciences. S\'erie des
	Sciences Math\'ematiques, Astronomiques et Physiques},
	VOLUME = {25},
	YEAR = {1977},
	NUMBER = {6},
	PAGES = {573--574},
	ISSN = {0001-4117},
	MRCLASS = {58F99},
	MRNUMBER = {458501},
	MRREVIEWER = {Peter\ Walters},
}

@article{smale1967differentiable,
	title={Differentiable dynamical systems},
	author={Smale, Stephen},
	journal={Bulletin of the American mathematical Society},
	volume={73},
	number={6},
	pages={747--817},
	year={1967}
}

@article {NewhouseHypLimSet,
	AUTHOR = {Newhouse, Sheldon E.},
	TITLE = {Hyperbolic limit sets},
	JOURNAL = {Trans. Amer. Math. Soc.},
	FJOURNAL = {Transactions of the American Mathematical Society},
	VOLUME = {167},
	YEAR = {1972},
	PAGES = {125--150},
}

@article{lima2024measures,
	title={Measures of maximal entropy for non-uniformly hyperbolic maps},
	author={Lima, Yuri and Obata, Davi and Poletti, Mauricio},
	journal={arXiv preprint arXiv:2405.04676},
	year={2024}
}

@article{sinai1972gibbs,
	title={Gibbs measures in ergodic theory},
	author={Sinai, Yakov G},
	journal={Russian Mathematical Surveys},
	volume={27},
	number={4},
	pages={21},
	year={1972},
	publisher={IOP Publishing}
}

@article{bowen1971periodic,
	title={Periodic points and measures for Axiom A diffeomorphisms},
	author={Bowen, Rufus},
	journal={Transactions of the American Mathematical Society},
	volume={154},
	pages={377--397},
	year={1971},
	publisher={JSTOR}
}

@article{hofbauer1981piecewisemono,
	title={On intrinsic ergodicity of piecewise monotonic transformations with positive entropy II},
	author={Hofbauer, Franz},
	journal={Israel Journal of Mathematics},
	volume={38},
	number={1},
	pages={107--115},
	year={1981},
	publisher={Springer}
}

@article{buzzi1997interval,
	title={Intrinsic ergodicity of smooth interval maps},
	author={Buzzi, J{\'e}r{\^o}me},
	journal={Israel Journal of Mathematics},
	volume={100},
	number={1},
	pages={125--161},
	year={1997},
	publisher={Springer}
}

@article {buzziSubshitQF,
	AUTHOR = {Buzzi, J\'er\^ome},
	TITLE = {Subshifts of quasi-finite type},
	JOURNAL = {Invent. Math.},
	FJOURNAL = {Inventiones Mathematicae},
	VOLUME = {159},
	YEAR = {2005},
	NUMBER = {2},
	PAGES = {369--406},
}

@inproceedings{lima2018symbolicsurface,
	title={Symbolic dynamics for non-uniformly hyperbolic surface maps with discontinuities},
	author={Lima, Yuri and Matheus, Carlos},
	booktitle={Annales scientifiques de l'{\'E}cole Normale Sup{\'e}rieure},
	volume={51},
	number={1},
	pages={1--38},
	year={2018}
}

@article{buzzi2009mmepiecewiseaffine,
	title={Maximal entropy measures for piecewise affine surface homeomorphisms},
	author={Buzzi, J{\'e}r{\^o}me},
	journal={Ergodic Theory and Dynamical Systems},
	volume={29},
	number={6},
	pages={1723--1763},
	year={2009},
	publisher={Cambridge University Press}
}

@inproceedings{newhouse2006dynamicsskew,
	title={Dynamics of certain skew products},
	author={Newhouse, Sheldon E and Young, Lai-Sang},
	booktitle={Geometric Dynamics: Proceedings of the International Symposium held at the Instituto de Mat{\'e}matica Pura e Aplicada Rio de Janeiro, Brasil, July--August 1981},
	pages={611--629},
	year={2006},
	organization={Springer}
}

@article{buzzi2012DA,
	title={Maximal entropy measures for certain partially hyperbolic, derived from Anosov systems},
	author={Buzzi, J{\'e}r{\^o}me and Fisher, Todd and Sambarino, Martin and V{\'a}squez, Carlos},
	journal={Ergodic theory and dynamical systems},
	volume={32},
	number={1},
	pages={63--79},
	year={2012},
	publisher={Cambridge University Press}
}

@article{ures2012intrinsic,
	title={Intrinsic ergodicity of partially hyperbolic diffeomorphisms with a hyperbolic linear part},
	author={Ures, Ra{\'u}l},
	journal={Proceedings of the American Mathematical Society},
	volume={140},
	number={6},
	pages={1973--1985},
	year={2012}
}

@article{buzzi2011entropic,
	title={Entropic stability beyond partial hyperbolicity},
	author={Buzzi, J{\'e}r{\^o}me and Fisher, Todd},
	journal={arXiv preprint arXiv:1103.2707},
	year={2011}
}

@article{climenhaga2018unique,
	title={Unique equilibrium states for Bonatti--Viana diffeomorphisms},
	author={Climenhaga, Vaughn and Fisher, Todd and Thompson, Daniel J},
	journal={Nonlinearity},
	volume={31},
	number={6},
	pages={2532},
	year={2018},
	publisher={IOP Publishing}
}

@article{hertz2012maximizing,
	title={Maximizing measures for partially hyperbolic systems with compact center leaves},
	author={Hertz, F Rodriguez and Hertz, MA Rodriguez and Tahzibi, Ali and Ures, Raul},
	journal={Ergodic Theory and Dynamical Systems},
	volume={32},
	number={2},
	pages={825--839},
	year={2012},
	publisher={Cambridge University Press}
}

@article{buzzi2019dichotomy,
	title={A dichotomy for measures of maximal entropy near time-one maps of transitive Anosov flows},
	author={Buzzi, J{\'e}r{\^o}me and Fisher, Todd and Tahzibi, Ali},
	journal={arXiv preprint arXiv:1904.07821},
	year={2019}
}

@article{sarig2013symbolic,
	title={Symbolic dynamics for surface diffeomorphisms with positive entropy},
	author={Sarig, Omri},
	journal={Journal of the American Mathematical Society},
	volume={26},
	number={2},
	pages={341--426},
	year={2013}
}

@article{mongez2024finite,
	title={Finite measures of maximal entropy for an open set of partially hyperbolic diffeomorphisms},
	author={Mongez, Juan and Pacifico, Maria},
	journal={Transactions of the American Mathematical Society},
	volume={377},
	number={12},
	pages={8695--8720},
	year={2024}
}

@article{mongez2023robustness,
	title={Robustness and uniqueness of equilibrium states for certain partially hyperbolic systems},
	author={Mongez, Juan Carlos and Pacifico, Maria Jos{\'e}},
	journal={arXiv preprint arXiv:2306.12323},
	year={2023}
}

@article {BenOvadiaCoding,
	AUTHOR = {Ben Ovadia, Snir},
	TITLE = {Symbolic dynamics for non-uniformly hyperbolic diffeomorphisms
	of compact smooth manifolds},
	JOURNAL = {J. Mod. Dyn.},
	FJOURNAL = {Journal of Modern Dynamics},
	VOLUME = {13},
	YEAR = {2018},
	PAGES = {43--113},
}

@book{araujo2024symbolic,
	title={Symbolic dynamics for nonuniformly hyperbolic maps with singularities in high dimension},
	author={Araujo, Ermerson and Lima, Yuri and Poletti, Mauricio},
	volume={301},
	number={1511},
	year={2024},
	publisher={American Mathematical Society}
}

@book{katok1995introduction,
	title={Introduction to the modern theory of dynamical systems},
	author={Katok, Anatole and Katok, AB and Hasselblatt, Boris},
	number={54},
	year={1995},
	publisher={Cambridge university press}
}

@book{qian2009PesinTheoryEndo,
	title={Smooth ergodic theory for endomorphisms},
	author={Qian, Min and Xie, Jian-Sheng and Zhu, Shu},
	year={2009},
	publisher={Springer}
}

@article{liu2003ruelle,
	title={Ruelle inequality relating entropy, folding entropy and negative Lyapunov exponents},
	author={Liu, Pei-Dong},
	journal={Communications in mathematical physics},
	volume={240},
	number={3},
	pages={531--538},
	year={2003},
	publisher={Springer}
}

@article{buzzi2022continuity,
	title={Continuity properties of Lyapunov exponents for surface diffeomorphisms},
	author={Buzzi, J{\'e}r{\^o}me and Crovisier, Sylvain and Sarig, Omri},
	journal={Inventiones mathematicae},
	volume={230},
	number={2},
	pages={767--849},
	year={2022},
	publisher={Springer}
}

@article{buzzi2025SPR,
	title={Strong positive recurrence and exponential mixing for diffeomorphisms},
	author={Buzzi, J{\'e}r{\^o}me and Crovisier, Sylvain and Sarig, Omri},
	journal={arXiv preprint arXiv:2501.07455},
	year={2025}
}

@article{qian2002srb,
	title={SRB measures and Pesin’s entropy formula for endomorphisms},
	author={Qian, Min and Zhu, Shu},
	journal={Transactions of the American Mathematical Society},
	volume={354},
	number={4},
	pages={1453--1471},
	year={2002}
}

@article{ledrappier1985metric1,
	title={The metric entropy of diffeomorphisms: Part I: Characterization of measures satisfying Pesin's entropy formula},
	author={Ledrappier, Fran{\c{c}}ois},
	journal={Annals of Mathematics},
	volume={122},
	number={3},
	pages={509--539},
	year={1985},
	publisher={JSTOR}
}

@article{ledrappier1985metric2,
	title={The metric entropy of diffeomorphisms: part II: relations between entropy, exponents and dimension},
	author={Ledrappier, Fran{\c{c}}ois},
	journal={Annals of Mathematics},
	volume={122},
	number={3},
	pages={540--574},
	year={1985},
	publisher={JSTOR}
}

@article{young1982dimension,
	title={Dimension, entropy and Lyapunov exponents},
	author={Young, Lai-Sang},
	journal={Ergodic theory and dynamical systems},
	volume={2},
	number={1},
	pages={109--124},
	year={1982},
	publisher={Cambridge University Press}
}

@article{liu2008invariant,
	title={Invariant measures satisfying an equality relating entropy, folding entropy and negative Lyapunov exponents},
	author={Liu, Pei-Dong},
	journal={Communications in mathematical physics},
	volume={284},
	number={2},
	pages={391--406},
	year={2008},
	publisher={Springer}
}

\parbreak\begin{center}
	\emph{Mat\'eo Ghezal} (mateoghezalmath@gmail.com)\\
	Laboratoire de Math\'ematiques d'Orsay\\
	CNRS - UMR 8628\\
	Universit\'e Paris-Saclay\\
	Orsay 91405, France.
\end{center}
\end{document}